\documentclass[letterpaper,11pt,oneside,reqno]{amsart}
\usepackage[T2A]{fontenc}%
\usepackage[utf8]{inputenc}%
\usepackage[english]{babel}%
\usepackage{amsmath,amssymb,amsthm,amsfonts}%
\usepackage{hyperref}%
\usepackage{enumerate}%
\usepackage{array}%
\usepackage{graphicx}
\usepackage[mathscr]{euscript}
\usepackage{color,tikz}
% \usepackage{mathpazo}
% \usetikzlibrary{calc}
% \usetikzlibrary{patterns}
% \usetikzlibrary{arrows}
% \usetikzlibrary{decorations.pathreplacing}
% \usetikzlibrary{decorations.pathmorphing}
\usetikzlibrary{shapes}
\usepackage[DIV15]{typearea}
\usepackage[width=.9\textwidth]{caption}
\allowdisplaybreaks%
\numberwithin{equation}{section}%
\usepackage{upgreek}

%%%%%%%%%%%%%%%%%%%%%%%%%%%%%%%%%%%%%%%%%%%%%%%%%%%%%%%%%%%%
%single letters in bbm etc.
\newcommand{\Z}{\mathbb{Z}}
\renewcommand{\C}{\mathbb{C}}
\newcommand{\R}{\mathbb{R}}
\DeclareMathOperator{\E}{\mathbb{E}}
\DeclareMathOperator{\sgn}{\mathrm{sgn}}
\DeclareMathOperator{\Span}{\mathrm{span}}
\DeclareMathOperator*{\Res}{\mathsf{Res}}
\renewcommand{\i}{\mathbf{i}}

%%%%%%%%%%%%%%%%%%%%%%%%%%%%%%%%%%%%%%%%%%%%%%%%%%%%%%%%%%%%
%greek abbreviations
\newcommand{\al}{\alpha}
\newcommand{\la}{\lambda}

\newcommand{\be}{\beta}

%%%%%%%%%%%%%%%%%%%%%%%%%%%%%%%%%%%%%%%%%%%%%%%%%%%%%%%%%%%%
%paper-specific

\newcommand{\conj}{\mathsf{c}}
\newcommand{\F}{\mathsf{F}}
\newcommand{\G}{\mathsf{G}}

\newcommand{\sign}[1]{\mathsf{Sign}_{#1}}
\newcommand{\signp}[1]{\mathsf{Sign}_{#1}^{\scriptscriptstyle+}}
\newcommand{\signpe}{\mathsf{Sign}^{\scriptscriptstyle+}}

\newcommand{\md}{\,|\,{}}
\newcommand{\Sym}{\mathfrak{S}}

\let\oldphi\phi \let\phi\varphi \let\varphi\oldphi%better \phi, for transition probabilities of q,mu,nu Boson

\newcommand{\Lmatr}{\mathsf{L}}
\newcommand{\LJ}[1]{\mathsf{L}^{{\scriptscriptstyle(#1)}}}
\newcommand{\WJ}[1]{w^{{\scriptscriptstyle(#1)}}}
\newcommand{\WTJ}[1]{{\widetilde w}^{{\scriptscriptstyle(#1)}}}
\newcommand{\ybspec}{u}			%spectral parameter notation for YB relation
\newcommand{\ip}{\upxi} 			%inhomogeneity parameter
\newcommand{\ipb}{{\Xi}} 			%inhomogeneity parameter
\newcommand{\ipbb}{{\overline{\ipb}}} 			%inhomogeneity parameter
\newcommand{\pow}{\upvarphi}			%power things
\newcommand{\AY}{\mathsf{A}}
\newcommand{\BY}{\mathsf{B}}
\newcommand{\CY}{\mathsf{C}}
\newcommand{\DY}{\mathsf{D}}
\newcommand{\ay}{\mathsf{a}}	%eigenvalues notation
\newcommand{\dy}{\mathsf{d}}
\newcommand{\Prob}{\mathsf{Prob}}

\newcommand{\TY}{\mathsf{T}}

\newcommand{\bv}{\mathsf{e}}

\newcommand{\adm}[2]{({#1},{#2})\in\mathsf{Adm}_{\ipb,\SPB}}
\newcommand{\admi}[2]{({#1},{#2})\in\mathsf{Adm}_{\ipbb,\SPB}}

\newcommand{\aind}{{\upalpha}}
\newcommand{\bind}{{\upbeta}}

\newcommand{\IS}{\mathcal{I}}
\newcommand{\JS}{\mathcal{J}}
\newcommand{\KS}{\mathcal{K}}
\newcommand{\ks}{\mathsf{k}}
\newcommand{\KSB}{{\mathcal{K}^{c}}}
\newcommand{\JSB}{{\mathcal{J}^{c}}}
\newcommand{\LS}{\mathcal{L}}

\newcommand{\sh}{\tau}
\newcommand{\SP}{{\mathsf{s}}}
\newcommand{\SPB}{\mathsf{S}}
\newcommand{\SPBB}{{\overline{\mathsf{S}}}}

\newcommand{\PI}[1]{{0^{\scriptscriptstyle{#1}}}}
\newcommand{\RHO}{{\boldsymbol\varrho}}

%%%%%%%%%%%%%%%%%%%%%%%%%%%%%%%%%%%%%%%%%%%%%%%%%%%%%%%%%%%%

\newcommand{\MM}{\mathscr{M}}
\newcommand{\UU}{\mathbf{u}}
\newcommand{\VV}{\mathbf{v}}

\newcommand{\ZZZ}{\mathbf{z}}
\newcommand{\equalsignforQLa}{\circ}
\newcommand{\plusQLa}{{\scriptscriptstyle+}}
\newcommand{\minusQLa}{{\scriptscriptstyle-}}
\newcommand{\Qe}{\mathscr{Q}^{\equalsignforQLa}}
\newcommand{\Qp}{\mathscr{Q}^{\plusQLa}}
\newcommand{\Lae}{\Lambda^{\equalsignforQLa}}
\newcommand{\Lam}{\Lambda^{\minusQLa}}

\newcommand{\tnu}{{\tilde\nu}}
\newcommand{\EF}[2]{\Psi^{#1}_{#2}}

\newcommand{\Xe}{\mathscr{X}^{\equalsignforQLa}}
\newcommand{\Xp}{\mathscr{X}^{\plusQLa}}
\newcommand{\Xii}{\mathscr{X}^{\infty}}
\newcommand{\Xeh}{\mathscr{X}_{\textnormal{$q$-Hahn}}^{\equalsignforQLa}}
\newcommand{\Xph}{\mathscr{X}_{\textnormal{$q$-Hahn}}^{\plusQLa}}
\newcommand{\Xih}{\mathscr{X}_{\textnormal{$q$-Hahn}}^{{\infty}}}

\newcommand{\smu}{\upmu}
\newcommand{\snu}{\upnu}

\newcommand{\gap}{\mathrm{gap}}

\newcommand{\mm}{\upvartheta} %variables for observables 
\newcommand{\corr}[3]{\mathcal{Q}_{#1}(#2)} %variable for observables
\newcommand{\corre}[2]{\mathcal{Q}_{#1}} %variable for observables
\newcommand{\II}{\mathsf{Int}}

\newcommand{\HT}{\mathfrak{h}}

\newcommand{\expancoeff}[1]{r_{#1}}

\newcommand{\inv}{\mathsf{inv}}

\newcommand{\funspat}[1]{\mathcal{W}^{#1}}
\newcommand{\funspec}[1]{\mathcal{C}^{#1}}

\newcommand{\Pli}[1]{\mathscr{T}^{(#1)}}
\newcommand{\Pltrans}[1]{\mathscr{F}}
\newcommand{\Pltransi}[1]{\mathscr{J}}

%%%%%%%%%%%%%%%%%%%%%%%%%%%%%%%%%%%%%%%%%%%%%%%%%%%%%%%%%%%%
%%% Integration contours

\newcommand{\contq}[2]{{\boldsymbol\gamma}^{\scriptscriptstyle+}_{#2}[#1]}
\newcommand{\contqe}[1]{{\boldsymbol\gamma}^{\scriptscriptstyle+}_{#1}}
\newcommand{\contqi}[2]{{\boldsymbol\gamma}^{\scriptscriptstyle-}_{#2}[#1]}
\newcommand{\contqie}[1]{{\boldsymbol\gamma}^{\scriptscriptstyle-}_{#1}}
\newcommand{\contn}[1]{{\boldsymbol\gamma}[#1]}
\newcommand{\contnz}[2]{{\boldsymbol\gamma}[#1|{#2}]}
\newcommand{\contqiz}[2]{{\boldsymbol\gamma}^{\scriptscriptstyle-}_{#2}[#1|{#2}]}

%%%%%%%%%%%%%%%%%%%%%%%%%%%%%%%%%%%%%%%%%%%%%%%%%%%%%%%%%%%%
%%% Graphic primitives

\def\epsx{.12}
\newcommand{\vertexoo}[1]{\scalebox{#1}{\begin{tikzpicture}
[scale=1, very thick]
\node (i1) at (0,-1) {$g$};
\node (j1) at (-1,0) {$0$};
\node (i2) at (0,1) {$g$};
\node (j2) at (1,0) {$0$};
% \draw (i1) -- (i2);
\draw[densely dotted] (j1) -- (j2);
% \draw[densely dotted] (i1) -- (i2);
\foreach \shi in {(0,0), (\epsx,0), (-\epsx,0)}
{\begin{scope}[shift=\shi]
\node (shi1) at (0,-1) {\phantom{$g$}};
\node (shi2) at (0,1) {\phantom{$g$}};
\draw[->] (shi1) -- (shi2);
\draw[->] (shi1) --++ (0,.5);
% \draw[->] (0,\epsx) -- (shi2);
\end{scope}}
% \draw[fill] (0,0) circle (3pt);
\end{tikzpicture}}}
\newcommand{\vertexol}[1]{\scalebox{#1}{\begin{tikzpicture}
[scale=1, very thick]
\node (i1) at (0,-1) {$g$};
\node (j1) at (-1,0) {$0$};
\node (i2) at (0,1) {$g-1$};
\node (j2) at (1,0) {$1$};
\foreach \shi in {(0,0), (-\epsx,0)}
{\begin{scope}[shift=\shi]
\node (shi1) at (0,-1) {\phantom{$g$}};
\node (shi2) at (0,1) {\phantom{$g$}};
\draw[->] (shi1) -- (shi2);
\draw[->] (shi1) --++ (0,.5);
% \draw[->] (0,\epsx) -- (shi2);
\end{scope}}
% \draw[fill] (0,0) circle (3pt);
\foreach \shi in {(\epsx,0)}
{\begin{scope}[shift=\shi]
\node (shi1) at (0,-1) {\phantom{$g$}};
\node (shi2) at (0,1) {\phantom{$g$}};
\draw[->] (shi1) -- (0,0) -- (j2);
\draw[->] (shi1) --++ (0,.5);
% \draw[->] (0,0) -- (shi2);
\end{scope}}
\draw[densely dotted] (j1) -- (j2);
% \draw[->] (\epsx,0) -- (j2);
\end{tikzpicture}}}

\newcommand{\vertexoll}[1]{\scalebox{#1}{\begin{tikzpicture}
[scale=1, very thick]
\node (i1) at (0,-1) {$g+1$};
\node (j1) at (-1,0) {$0$};
\node (i2) at (0,1) {$g$};
\node (j2) at (1,0) {$1$};
\foreach \shi in {(-1.5*\epsx,0), (.5*\epsx,0),(-.5*\epsx,0)}
{\begin{scope}[shift=\shi]
\node (shi1) at (0,-1) {\phantom{$g$}};
\node (shi2) at (0,1) {\phantom{$g$}};
\draw[->] (shi1) -- (shi2);
\draw[->] (shi1) --++ (0,.5);
% \draw[->] (0,\epsx) -- (shi2);
\end{scope}}
% \draw[fill] (0,0) circle (3pt);
\foreach \shi in {(1.5*\epsx,0)}
{\begin{scope}[shift=\shi]
\node (shi1) at (0,-1) {\phantom{$g$}};
\node (shi2) at (0,1) {\phantom{$g$}};
\draw[->] (shi1) -- (0,0) -- (j2);
\draw[->] (shi1) --++ (0,.5);
% \draw[->] (0,0) -- (shi2);
\end{scope}}
\draw[densely dotted] (j1) -- (j2);
% \draw[->] (\epsx,0) -- (j2);
\end{tikzpicture}}}

\newcommand{\vertexll}[1]{\scalebox{#1}{\begin{tikzpicture}
[scale=1, very thick]
% \draw[fill] (0,0) circle (3pt);
\node (i1) at (0,-1) {$g$};
\node (i1shm1) at (-\epsx,-1) {\phantom{$g$}};
\node (i1sh1) at (\epsx,-1) {\phantom{$g$}};
\node (j1) at (-1,0) {$1$};
\node (i2) at (0,1) {$g$};
\node (i2shm1) at (-\epsx,1) {\phantom{$g$}};
\node (i2sh1) at (\epsx,1) {\phantom{$g$}};
\node (j2) at (1,0) {$1$};
\draw[densely dotted] (j1) -- (j2);
\draw[densely dotted] (i1) -- (i2);
\draw[->] (j1) -- (-\epsx*1.5,0)--++(.5*\epsx,.5*\epsx) -- (i2shm1);
\draw[->] (i1shm1) -- (-\epsx,-\epsx) -- (0,\epsx) -- (i2);
\draw[->] (i1) -- (0,-\epsx) -- (\epsx,\epsx) -- (i2sh1);
\draw[->] (i1sh1) -- (\epsx,-.5*\epsx)--++(.5*\epsx,.5*\epsx) -- (j2);
\draw[->] (j1) --++ (.5,0);
\draw[->] (i1) --++ (0,.5);
\draw[->] (i1sh1) --++ (0,.5);
\draw[->] (i1shm1) --++ (0,.5);
\end{tikzpicture}}}
\newcommand{\vertexlo}[1]{\scalebox{#1}{\begin{tikzpicture}
[scale=1, very thick]
\node (i1) at (0,-1) {$g$};
\node (j1) at (-1,0) {$1$};
\node (i2) at (0,1) {$g+1$};
\node (i2shm2) at (-3/2*\epsx,1) {\phantom{$g$}};
\node (j2) at (1,0) {$0$};
\draw[densely dotted] (j1) -- (j2);
\draw[->] (j1) -- (-3/2*\epsx,0) -- (i2shm2);
\foreach \shi in {(1/2*\epsx,0), (3/2*\epsx,0), (-1/2*\epsx,0)}
{\begin{scope}[shift=\shi]
\node (shi1) at (0,-1) {\phantom{$g$}};
\node (shi2) at (0,1) {\phantom{$g$}};
\draw[->] (shi1) -- (shi2);
\draw[->] (shi1) --++ (0,.5);
\end{scope}}
\draw[->] (j1) --++ (.5,0);
% \draw[fill] (0,0) circle (3pt);
\end{tikzpicture}}}

\newcommand{\emptyvertex}[1]{\raisebox{-1.5pt}{\scalebox{#1}{\begin{tikzpicture}
	[scale=1,very thick]
	\draw[dotted] (0,0)--++(0,.5);
	\draw[dotted] (-.25,.25)--++(.5,0);
\end{tikzpicture}}}}

%%%%%%%%%%%%%%%%%%%%%%%%%%%%%%%%%%%%%%%%%%%%%%%%%%%%%%%%%%%%
%draft-specific
\synctex=1%
% \newcommand{\note}[1]{\textsc{\color{blue}(#1)}}
% \usepackage{refcheck}
%KILL TIKZ
	% \usepackage{environ}
	% \makeatletter
	% % \providecommand{\env@tikzpicture@save@env}{}
	% % \providecommand{\env@tikzpicture@process}{}
	% \RenewEnviron{tikzpicture}[1][]{notikz}
	% \makeatother
 
%%%%%%%%%%%%%%%%%%%%%%%%%%%%%%%%%%%%%%%%%%%%%%%%%%%%%%%%%%%%
\newtheorem{proposition}{Proposition}[section]
\newtheorem{lemma}[proposition]{Lemma}
\newtheorem{corollary}[proposition]{Corollary}
\newtheorem{theorem}[proposition]{Theorem}
\newtheorem*{theorem*}{Theorem}
%%%%%%%%%%%%%%%%%%%%%%%%%%%%%%%%%%%%%%%%%%%%%%%%%%%%%%%%%%%%
\theoremstyle{definition}
\newtheorem{definition}[proposition]{Definition}
\newtheorem{remark}[proposition]{Remark}
\newtheorem*{example}{Example}
%%%%%%%%%%%%%%%%%%%%%%%%%%%%%%%%%%%%%%%%%%%%%%%%%%%%%%%%%%%%

\begin{document}

\title[Higher spin six vertex model]
{Higher spin six vertex model\\ and symmetric rational functions}
\author[A. Borodin]{Alexei Borodin}
\address{A. Borodin, Department of Mathematics, 
Massachusetts Institute of Technology,
77 Massachusetts ave.,
Cambridge, MA 02139, USA,
\newline{}and Institute for Information Transmission Problems, Bolshoy Karetny per. 19, Moscow, 127994, Russia}
\email{borodin@math.mit.edu}

\author[L. Petrov]{Leonid Petrov}
\address{L. Petrov, Department of Mathematics, University of Virginia, 
141 Cabell Drive, Kerchof Hall,
P.O. Box 400137,
Charlottesville, VA 22904, USA,
\newline{}and Institute for Information Transmission Problems, Bolshoy Karetny per. 19, Moscow, 127994, Russia}
\email{lenia.petrov@gmail.com}
\date{}
\begin{abstract}
	We consider a fully inhomogeneous stochastic higher spin six vertex model in a quadrant. For this model we derive concise integral representations for multi-point $q$-moments of the height function and for the $q$-correlation functions. At least in the case of the step initial condition, our formulas degenerate in appropriate limits to many known formulas of such type for integrable probabilistic systems in the (1+1)d KPZ universality class, including the stochastic six vertex model, ASEP, various $q$-TASEPs, and associated zero range processes. 

	Our arguments are largely based on properties of a family of symmetric rational functions that can be defined as partition functions of the inhomogeneous higher spin six vertex model for suitable domains. In the homogeneous case, such functions were previously studied in \cite{Borodin2014vertex}; they also generalize classical Hall--Littlewood and Schur polynomials. 
	A key role is played by Cauchy-like summation identities for these functions, which are obtained as a direct corollary of the Yang--Baxter equation for the higher spin six vertex model. 
\end{abstract}
\maketitle

\setcounter{tocdepth}{1}
\tableofcontents
\setcounter{tocdepth}{3}

\section{Introduction} % (fold)
\label{sec:introduction}

\subsection{Preface}
The last two decades have seen remarkable progress in understanding the so-called KPZ universality class in (1+1) dimensions. This is a rather broad and somewhat vaguely defined class of probabilistic systems describing random interface growth, named after a seminal physics paper of Kardar--Parisi--Zhang of 1986 \cite{KPZ1986}. A key conjectural property of the systems in this class is that the large time fluctuations of the interfaces should be the same for all of them. 
See Corwin \cite{CorwinKPZ} for an extensive survey. 

While proving such a universality principle remains largely out of reach, by now many concrete systems have been found, for which the needed asymptotics was actually computed (the universality principle appears to hold so far). 

The first wave of these solved systems started in late 1990's with the papers of Johansson \cite{Johansson1999} and Baik--Deift--Johansson \cite{baik1999distribution}, and the key to their solvability, or \emph{integrability}, was in (highly non-obvious) reductions to what physicists would call \emph{free-fermion models} --- probabilistic systems, many of whose observables are expressed in terms of determinants and Pfaffians. Another domain where free-fermion models are extremely important is Random Matrix Theory. Perhaps not surprisingly, the large time fluctuations of the (1+1)d KPZ models are very similar to those arising in (largest eigenvalues of) random matrices with real spectra. 

The second wave of integrable (1+1)d KPZ systems started in late 2000's. The reasons for their solvability are harder to see, but one way or another they can be traced to quantum integrable systems. For example, looking at the earlier papers of the second wave we see that: (a) The pioneering work of Tracy--Widom \cite{TW_ASEP1}, \cite{TW_ASEP2}, \cite{TW_ASEP4} on the Asymmetric Simple Exclusion Process (ASEP) was based on the famous idea of Bethe \cite{Bethe1931} of looking for eigenfunctions of a quantum many-body system in the form of superposition of those for noninteracting bodies (coordinate Bethe ansatz); (b) The work of O'Connell \cite{Oconnell2009_Toda} and Borodin--Corwin \cite{BorodinCorwin2011Macdonald} on semi-discrete Brownian polymers utilized properties of eigenfunctions of the Macdonald--Ruijsenaars quantum integrable system --- the celebrated Macdonald polynomials and their degenerations; (c) The physics papers of Dotsenko \cite{Dotsenko} and Calabrese--Le Doussal--Rosso \cite{Calabrese_LeDoussal_Rosso}, and a later work of Borodin--Corwin--Sasamoto \cite{BorodinCorwinSasamoto2012} used a duality trick to show that certain observables of infinite-dimensional models solve finite-dimensional quantum many-body systems that are, in their turn, solvable by the coordinate Bethe ansatz; etc.

In fact, most currently known integrable (1+1)d KPZ models of the second wave come from \emph{one and the same} quantum integrable system: Corwin--Petrov \cite{CorwinPetrov2015} recently showed that they can be realized as suitable limits of what they called a \emph{stochastic higher spin vertex model}, see the introduction to their paper for a description of degenerations.\footnote{The integrable (1+1)d KPZ models that have not yet been shown to arise as limits of the stochastic vertex models are various versions of PushTASEP, cf. \cite{CorwinPetrov2013}, \cite{MatveevPetrov2014}. This appears to be simply an oversight as those models are diagonalized by the same wavefunctions, which means that needed reductions should also exist.}  They used duality and coordinate Bethe ansatz to show the integrability of their model; the Bethe ansatz part relied on previous works of Borodin--Corwin--Petrov--Sasamoto \cite{BorodinCorwinPetrovSasamoto2013}, \cite{BCPS2014}, and Borodin \cite{Borodin2014vertex}. 

The main subject of the present paper is the \emph{inhomogeneous} higher spin six vertex model in infinite volume; one homogeneous version of it is the model of \cite{CorwinPetrov2015}. Our main result is an integral representation for certain multi-point $q$-moments of this model. Such formulas are well known to be a source of meaningful asymptotic results, but we leave asymptotic questions outside of the scope of this paper. Several of those are forthcoming, and they will be presented as separate publications. 

At least for the simplest \emph{step initial condition}, our formula for $q$-moments degenerates, in appropriate limits, to most known formulas of this type, which includes all the models mentioned above. It also generalizes those quite a bit. 

We introduce two (infinite) sets of inhomogeneities (space inhomogeneities and spins). Together with another set of time inhomogeneities (spectral parameters) that were already present in \cite{CorwinPetrov2015}, they supply a large amount of freedom in the model, which can be used to induce unusual phase transitions (that work is also forthcoming).  Remarkably, all the parameters enter the expressions for $q$-moments in a rather benign way, still allowing those to be used for the purpose of accessing the asymptotics. In a way, this is similar to parameters of the Schur and Macdonald processes, cf. Okounkov--Reshetikhin \cite{okounkov2003correlation} and Borodin--Corwin \cite{BorodinCorwin2011Macdonald}, but at the moment this is no more than a vague comparison. 

Our methods are also new. The core of our proofs consists of the so-called (skew) Cauchy identities for certain rational symmetric functions. They also depend on two families of parameters (the third family --- spectral parameters --- are their arguments). Homogeneous versions of these functions were introduced in \cite{Borodin2014vertex}. For special parameter values, these functions turn into (skew) Hall--Littlewood and Schur symmetric functions, and the name ``Cauchy identities'' is borrowed from the theory of those, cf. Macdonald \cite{Macdonald1995}. 

Our symmetric rational functions can be defined as partition functions of the higher spin six vertex model for domains with special boundary conditions. Following \cite{Borodin2014vertex}, we use the \emph{Yang--Baxter equation}, or rather its infinite-volume limit, to derive the Cauchy identities. A similar approach to Cauchy-like identities for the Hall--Littlewood polynomials was also realized by Wheeler--Zinn--Justin in \cite{wheeler2015refined}.

Remarkably, the Cauchy identities themselves are essentially sufficient to define our probabilistic models, show their connection to KPZ interfaces, prove orthogonality and completeness of our symmetric rational functions in appropriate functional spaces, and evaluate averages of a large family of observables with respect to our measures. The last bit can be derived from comparing Cauchy identities with different sets of parameters. 

While the Cauchy identities also played an important role in the theory of Schur and Macdonald processes, they were never the main heroes there. In the present work they really take the central stage. Given their direct relation to the Yang--Baxter equation, one could thus say that the \emph{integrability of the (1+1)d KPZ models takes its origin in the Yang--Baxter integrability of the six vertex model.} 

Our present approach circumvents the duality trick that has been so powerful in treating the integrable (1+1)d KPZ models (including that of \cite{CorwinPetrov2015}). We do explain how the duality can be discovered from our results, but we do not prove or rely on it. Unfortunately, for the moment the use and success of the duality approach remains somewhat mysterious and ad hoc; the form of the duality functional needs to be guessed from previously known examples (some of which have better explanations, cf. Sch\"utz \cite{schutz1997dualityASEP}, Borodin--Corwin \cite{BorodinCorwin2013discrete}). We hope that the path that we present here is more straightforward, and that it can be used to shed further light on the existence of nontrivial dualities.  

Let us now describe our results in more detail. 

\subsection{The inhomogeneous model in a quadrant}\label{sec:intro-model}
Consider an ensemble $\mathcal P$ of infinite oriented up-right paths drawn in the first quadrant $\Z_{\ge 1}^2$ of the square lattice, with all the paths starting from a left-to-right arrow entering at each of the points $\{(1,m):m\in\Z_{\ge 1}\}$ on the left boundary (no path enters through the bottom boundary). Assume that no two paths share any horizontal piece (but common vertices and vertical pieces are allowed). See Fig.~\ref{fig:intro}. 

\begin{figure}[htb]
	\begin{tikzpicture}
		[scale=.7,thick]
		\def\d{.1}
		\foreach \xxx in {1,...,6}
		{
		\draw[dotted, opacity=.4] (\xxx-1,5.5)--++(0,-5);
		\node[below] at (\xxx-1,.5) {$\xxx$};
		}
		\foreach \xxx in {1,2,3,4,5}
		{
		\draw[dotted, opacity=.4] (0,\xxx)--++(5.5,0);
		\node[left] at (-1,\xxx) {$\xxx$};
		\draw[->, line width=1.7pt] (-1,\xxx)--++(.5,0);
		}
		\draw[->, line width=1.7pt] (-1,5)--++(1-3*\d,0)--++(\d,\d)--++(0,1-\d);
		\draw[->, line width=1.7pt] (-1,4)--++(1-2*\d,0)--++(\d,\d)--++(0,1-2*\d)--++(\d,2*\d)--++(0,1-\d);
		\draw[->, line width=1.7pt] (-1,3)--++(1-\d,0)--++(\d,\d)--++(0,1-2*\d)--++(\d,2*\d)--++(0,1-2*\d)--++(\d,2*\d)--++(0,1-\d);
		\draw[->, line width=1.7pt] (-1,2)--++(1,0)--++(0,1-\d)--++(\d,\d)--++(1-\d,0)
		--++(0,1)--++(2-\d,0)--++(\d,\d)--++(0,2-\d);
		\draw[->, line width=1.7pt] 
		(-1,1)--++(3,0)--++(0,2)--++(1,0)--++(0,1-\d)--++(\d,\d)--++(1-\d,0)
		--++(0,2);
		\draw[densely dashed] (-.5,4.5)--++(4,-4) node[above,anchor=west,yshift=16,xshift=-9] {$x+y=5$};
	\end{tikzpicture}
	\caption{A path collection $\mathcal P$.}
	\label{fig:intro}
\end{figure}
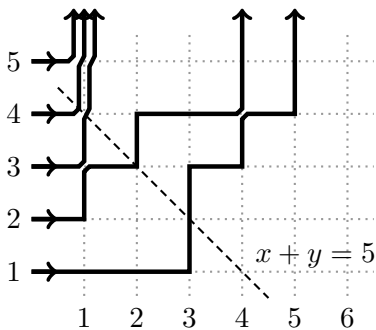
Define a probability measure on the set of such path ensembles in the following Markovian way. For any $n\ge 2$, assume that we already have a probability distribution on the intersections $\mathcal P_n$ of $\mathcal P$ with the triangle $T_n=\{(x,y)\in \Z_{\ge 1}^2: x+y\le n\}$. We are going to increase $n$ by 1. For each point $(x,y)$ on the upper boundary of $T_n$, i.e., for $x+y=n$, every $\mathcal P_n$ supplies us with two inputs: (1) The number of paths that enter $(x,y)$ from the bottom --- denote it by $i_1\in\Z_{\ge 0}$; (2) The number of paths $j_1\in\{0,1\}$ that enter $(x,y)$ from the left. Now choose, independently for all $(x,y)$ on the upper boundary of $T_n$, the number of paths $i_2$ that leave $(x,y)$ in the upward direction, and the number of paths $j_2$ that leave $(x,y)$ in the rightward direction, using the probability distribution with weights
of the transitions $(i_1,j_1)\to (i_2,j_2)$ given by 
(throughout the text $\mathbf{1}_{A}$ stands for the indicator function of the event $A$)
\begin{align}
	\label{intro-weights}
	\begin{array}{rclrcl}
		\Prob((i_1,0)\to (i_2,0))=&\dfrac{1-q^{i_1} \SP_x \ip_x u_y}{1-\SP_x \ip_x u_y}\,\mathbf 1_{i_1=i_2},\\
		\rule{0pt}{22pt}
		\Prob((i_1,0)\to (i_2,1))=&\dfrac{(q^{i_1}-1)\SP_x \ip_x u_y}{1-\SP_x \ip_x u_y}\,\mathbf 1_{i_1=i_2+1},\\
		\rule{0pt}{22pt}
		\Prob((i_1,1)\to (i_2,1))=&\dfrac{q^{i_1} \SP_x^{2}-\SP_x \ip_x u_y}{1-\SP_x \ip_x u_y}\,\,\mathbf 1_{i_1=i_2},\\
		\rule{0pt}{22pt}
		\Prob((i_1,1)\to (i_2,0))=&\dfrac{1-q^{i_1} \SP_x^{2}}{1-\SP_x \ip_x u_y}\,\mathbf 1_{i_1=i_2-1}.
	\end{array}
\end{align}
Assuming that all above expressions are nonnegative, which happens e.g. if $q\in (0,1)$, $\ip_x,u_y> 0$, $\SP_x\in (-1,0)$ for all $x,y\in \Z_{\ge 1}$, this procedure defines a probability measure on the set of all $\mathcal P$'s because we always have $\sum_{i_2,j_2} \Prob((i_1,j_1)\to (i_2,j_2))=1$, and $\Prob((i_1,j_1)\to (i_2,j_2))$ vanishes unless $i_1+j_1=i_2+j_2$. 

The right-hand sides in \eqref{intro-weights} are closely related to matrix elements of the R-matrix for $U_q(\widehat{\mathfrak{sl}_2})$, with one representation being an arbitrary Verma module and the other one being tautological. 

Let us briefly discuss the parameters of the model. The parameter $q$ is fixed throughout the paper. Two sets of parameters $\{\ip_x\}$ and $\{u_y\}$ play symmetric roles in \eqref{intro-weights}, and they can indeed be mapped to each other by a suitably defined transposition of the quadrant. 
In our exposition they will play different roles though, with the $u_y$'s considered \emph{spectral parameters} and the $\ip_x$'s considered \emph{spatial inhomogeneities}.

The parameters $\{\SP_x\}$ are related to \emph{spins}. If $\SP_x^2=q^{-I}$ for a positive integer $I$, then we have $\Prob((I,1)\to (I+1,0))=0$, which means that no more than $I$ paths can share the same vertical piece located at the horizontal coordinate $x$. This corresponds to replacing the arbitrary Verma module in the R-matrix with its $(I+1)$-dimensional irreducible quotient. The spin $\frac12$ situation $\SP_x\equiv q^{-\frac 12}$ gives rise to the stochastic six vertex introduced over 20 years ago by Gwa--Spohn \cite{GwaSpohn1992} (see \cite{BCG6V} for its detailed treatment).

The spin parameters $\{\SP_x\}$ are related to columns, and there are no similar row parameters: recall that no two paths can share the same horizontal piece. This restriction can be repaired using the procedure of \emph{fusion} that goes back to \cite{KulishReshSkl1981yang}. In plain words, fusion means grouping the spectral parameters $\{u_y\}$ into subsequences of the form $\{u,qu,\dots,q^{J-1}u\}$, and collapsing the corresponding $J$ rows onto a single one. Here the positive integer $J$ plays the same role as $I$ in the previous paragraph. The reason we did not use the second set of spin parameters in \eqref{intro-weights} is that the transition probabilities then become rather cumbersome, and one needs to specialize other parameters to achieve simpler expressions. A detailed exposition of the fusion is contained in \S\ref{sec:stochastic_weights_and_fusion} below. 

Let us also note that there are several substantially different possibilities of making the weights
\eqref{intro-weights} nonnegative; 
some of those we consider in detail. 
Since our techniques are algebraic, our results actually apply 
to any generic parameter values, with typically only minor 
modifications needed in case of some denominators vanishing. 

\subsection{The main result} Encode each path ensemble $\mathcal P$ by a \emph{height function}
$\mathfrak h:\Z_{\ge 1}^{2} \to \Z_{\ge 0}$ which assigns to each vertex $(x,y)$ the number $\mathfrak h(x,y)$ of paths in $\mathcal P$ that pass through or to the right of this vertex. 

\begin{theorem*}[Theorem \ref{thm:multi_moments} in the text]Assume that $q\in (0,1)$, $\ip_x,u_y>0$, $\SP_x\in (-1,0)$ for all $x,y\in \Z_{\ge 1}$, $u_i\ne qu_j$ for any $i,j\ge 1$, and
$$
\inf_{i\ge 1}(\ip_i^{-1}|\SP_i|)>q\cdot \sup_{i\ge 1}(\ip_i^{-1}|\SP_i|),\qquad
\inf_{i\ge 1}(\ip_i^{-1}|\SP_i|^{-1})>\sup_{i\ge 1}(\ip_i^{-1}|\SP_i|).
$$
Then for any integers $x_1\ge \ldots\ge x_{\ell}\ge1$ and $y\ge 1$,
\begin{multline}\label{intro-multi_moments}
		\E \prod_{i=1}^{\ell}q^{\HT(x_i,y)}=
		q^{\frac{\ell(\ell-1)}2}
		\oint\limits_{\contnz{\bar\UU}1}\frac{d w_1}{2\pi\i}
		\ldots
		\oint\limits_{\contnz{\bar\UU}\ell}\frac{d w_\ell}{2\pi\i}
		\prod_{1\le \aind<\bind\le \ell}\frac{w_\aind-w_\bind}{w_\aind-qw_\bind}
		\\\times
		\prod_{i=1}^{\ell}\bigg(
		w_i^{-1}
		\prod_{j=1}^{x_i-1}
		\frac{\ip_j-\SP_jw_i}{\ip_j-\SP_j^{-1}w_i}
		\prod_{j=1}^{y}\frac{1-qu_jw_i}{1-u_jw_i}
		\bigg),
	\end{multline}
where the expectation is taken with respect to the probability measure defined in \S\ref{sec:intro-model} above, and the integration contours are described in Definitions \ref{def:cont_gat_U} and \ref{def:circular_contours_around_0} and pictured in Fig.~\ref{fig:big_Gamma_contours} below.
\end{theorem*}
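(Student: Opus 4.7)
The plan is to reduce the $q$-moment computation to skew Cauchy identities for the symmetric rational functions $\F_\lambda,\G_\lambda$ (whose homogeneous versions are those of \cite{Borodin2014vertex}), which in turn are consequences of the Yang--Baxter equation for the higher spin six vertex weights \eqref{intro-weights}.

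First I would identify the joint law of the path profile crossing the horizontal line immediately above row $y$ as a Cauchy measure of the form
\[
\Prob(\lambda)=Z^{-1}\,\F_\lambda(u_1,\ldots,u_y)\,\G_\lambda(\SP_1,\SP_2,\ldots;\ip_1,\ip_2,\ldots),
\]
where the signature $\lambda$ records the columns through which the up-arrows pass, and $Z$ is the normalization supplied by the Cauchy identity. This follows from the construction in \S\ref{sec:intro-model} together with the identification of $\F_\lambda$ and $\G_\lambda$ as row/column partition functions of the model (developed in earlier sections). The crucial point is that the step-by-step Markov rule \eqref{intro-weights} is precisely the local building block whose product over rows $1,\ldots,y$ reproduces the partition-function decomposition of the Yang--Baxter-based Cauchy identity.

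Next I would insert $\ell$ extra spectral parameters $w_1,\ldots,w_\ell$ and compare $Z(u_1,\ldots,u_y,w_1,\ldots,w_\ell)$ with $Z(u_1,\ldots,u_y)$. Up to an explicit product prefactor in the $w_i$'s, their ratio equals the expectation of a multiplicative observable in $\lambda$ of the form $\prod_i\prod_{j}(\ip_j-\SP_jw_i)/(\ip_j-\SP_j^{-1}w_i)$ modified by $q$-dependent corrections that telescope across $j=1,\ldots,x_i-1$ to produce exactly $\prod_{i=1}^\ell q^{\HT(x_i,y)}$, provided $x_1\ge\cdots\ge x_\ell$ so that the nested telescopes are compatible. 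The non-symmetric kernel $\prod_{\aind<\bind}(w_\aind-w_\bind)/(w_\aind-qw_\bind)$ appearing in \eqref{intro-multi_moments} arises from the standard reordering identity for skew $\F$/$\G$ (this is the mechanism by which the $R$-matrix intertwines variables), exactly as in the Hall--Littlewood or Macdonald setting; symmetrizing the sum and converting to residues yields precisely the contour integral in \eqref{intro-multi_moments}.

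The main obstacle is contour bookkeeping and convergence. The integrand has three relevant pole families: $w_i=\ip_j\SP_j$ (from $\ip_j-\SP_j^{-1}w_i$), $w_i=u_j^{-1}$ (from $1-u_jw_i$), and $w_\aind=qw_\bind$ from the symmetrization kernel. The contours $\contnz{\bar\UU}{i}$ must enclose $0$ and all $u_j^{-1}$ but avoid the $\ip_j\SP_j$, and be nested so that the $w_\aind$-contour strictly contains $q\cdot(\text{$w_\bind$-contour})$ for $\aind<\bind$; the two hypotheses $\inf\ip_i^{-1}|\SP_i|>q\sup\ip_i^{-1}|\SP_i|$ and $\inf\ip_i^{-1}|\SP_i|^{-1}>\sup\ip_i^{-1}|\SP_i|$ are exactly the geometric conditions guaranteeing that such a nested family exists. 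A subsidiary technical issue is the interchange of the summation over infinite signatures $\lambda$ with the column-wise infinite product in $\G_\lambda$; I would handle this by first working with the truncation to the triangle $T_n$ in which only finitely many columns contribute, and then passing to $n\to\infty$ after checking uniform convergence of both the sum and the integral under the stated hypotheses.
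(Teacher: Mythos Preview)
Your high-level intuition matches the paper's narrative---compare Cauchy identities with and without extra spectral variables $w_1,\ldots,w_\ell$, then extract the desired observable---but the step where you say the ratio ``telescopes across $j=1,\ldots,x_i-1$ to produce exactly $\prod_{i=1}^\ell q^{\HT(x_i,y)}$'' is where the real difficulty lies, and your sketch does not confront it. When one inserts $w_1,\ldots,w_k$, the observable one can average directly is $\G_\nu(\RHO,w_1,\ldots,w_k)/\G_\nu(\RHO)$; already for $k=1$ this equals $q^n+\sum_{i=1}^n q^{i-1}(-\SPB)^{-\nu_i}\tfrac{1-\SP_0\ip_0^{-1}w}{1-\SP_0^{-1}\ip_0^{-1}w}\,\pow_{\nu_i}(w)$, which is not a simple telescoping product and is not $q^{\HT_\nu(x)}$ for any $x$. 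Turning this ``generating function'' into $q$-moments is the nontrivial content of \S8--\S9: the paper first applies an integral transform built from the $\pow_m$'s to isolate the \emph{$q$-correlation functions} $\mathcal Q_{\mm}(\nu)$ (Prop.~8.4--8.6, Thm.~8.8), then in the one-point case sums these over $\mm$ via an explicit inversion identity (Lemma~9.4) to reach $q^{\ell\HT_\nu(x)}$.

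For the multi-point formula the paper does not even attempt a direct derivation along the lines you sketch. Instead it gives a \emph{verification} proof: start from the right-hand integral, strip the circles around $0$ (Lemma~9.7), evaluate by residues at $w_i=u_{\sigma(i)}^{-1}$, and then use the spatial orthogonality of the $\F_\la$'s (the Plancherel theory of \S7) to expand the result as $\sum_\la r_\la\F_\la^{\conj}$ and compute $r_\la$ (Lemmas~9.8--9.10). The coefficients turn out to be exactly $(-\sh_1\SPB)^\la\prod_i(q^{i-1}-q^{\HT_\la(x_i-1)})$, matching the left side. The orthogonality/Plancherel input is essential here and is entirely absent from your outline; without it there is no mechanism to pass from the $\G_\nu(\RHO,\mathbf w)/\G_\nu(\RHO)$ observable to $\prod_i q^{\HT_\nu(x_i)}$. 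Your contour description is also off: the $\contnz{\bar\UU}j$'s are not single nested contours enclosing both $0$ and the $u_j^{-1}$, but disjoint unions of a small piece around the $u_j^{-1}$ and graded circles $r^jc_0$ around $0$ (Definitions~8.7 and~9.3), and the points $\ip_j\SP_j$ are \emph{negative} under the hypotheses, so the geometric role of the two displayed inequalities is different from what you state.
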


Let us emphasize that the inequalities on the parameters here are exceedingly restrictive; the statement can be analytically continued with suitable modifications of the contours and the integrand. Examples of such analytic continuation can be found in \S\ref{sec:degenerations_of_moment_formulas}, where they are used to degenerate the above result to various $q$-versions of the T(otally)ASEP. 

We also prove integral formulas similar to \eqref{intro-multi_moments} for another set of observables of our model that we call \emph{$q$-correlation functions}. The two are related, but in a rather nontrivial way, and one set of formulas does not immediately imply the other. 

While at the moment averages \eqref{intro-multi_moments} seem more useful for asymptotic analysis (and that is the reason we list them as our main result), it is entirely possible that the $q$-correlation functions will become useful for other asymptotic regimes. The definition and the expressions for the $q$-correlation functions can be found in \S\ref{sec:observables_of_interacting_particle_systems} below. 

\subsection{Symmetric rational functions} One consequence of the Yang--Baxter integrability of our model is that one can explicitly compute the distribution of intersection points of the paths in $\mathcal P$ with any horizontal line. More exactly, let $X_1\ge \ldots\ge X_n\ge 1$ be the $x$-coordinates of the points where our paths intersect the line $y=const$ with $n<const<n+1$; there are exactly $n$ of those, counting the multiplicities. Then
\begin{equation}\label{intro-section_probabilities}
\Prob\{X_1=\nu_1+1,\dots,X_n=\nu_n+1\}=\prod_{i=1}^n\prod_{j=1}^{\nu_i}(-\SP_j) \cdot\prod_{k\ge 0} \frac{(\SP_{k+1};q)_{n_k}}{(q;q)_{n_k}}\cdot \F_\nu\bigl(u_1,\dots,u_n\md \{\ip_x\}_{x\ge 1}, \{\SP_x\}_{x\ge 1}\bigr),
\end{equation}
where $(a;q)_{m}=(1-a)(1-aq)\ldots(1-aq^{m-1})$ are the $q$-Pochhammer symbols, 
$n_k$ is the multiplicity of $k$ in the sequence $\nu=(\nu_1\ge\ldots\ge\nu_n)=0^{n_0}1^{n_1}\cdots$, and for any $\mu=(\mu_1\ge\dots\ge\mu_M\ge 0)$ we define
\begin{align}\label{intro-F_symm_formula}
		\F_\mu\bigl(u_1,\ldots,u_M\md \{\ip_x\}_{x\ge 0},\{\SP_x\}_{x\ge 0}\bigr)=
		\sum_{\sigma\in\Sym_M}
		\sigma\Bigg(
		\prod_{1\le \aind<\bind\le M}\frac{u_\aind-qu_\bind}{u_\aind-u_\bind}
		\prod_{i=1}^{M}
		\pow_{\mu_i}\bigl(u_i\md\{\ip_x\}_{x\ge 0},\{\SP_x\}_{x\ge 0}\bigr)\Bigg)
\end{align}
with $\displaystyle\pow_k\bigl(u\md \{\ip_x\}_{x\ge 0},\{\SP_x\}_{x\ge 0}\bigr)=
\frac{1-q}{1-\SP_k\ip_k u}\prod\nolimits_{j=0}^{k-1}\frac{\ip_j u-\SP_j}{1-\SP_j\ip_ju}$, $k\ge0$.
Here $\Sym_M$ is the symmetric group of degree $M$, and its elements $\sigma$ permute the variables $\{u_i\}_{i=1}^M$ in the right-hand side of \eqref{intro-F_symm_formula}. Note that in the definition \eqref{intro-F_symm_formula} we shifted the lower limit of the index $x$ in $\{\ip_x\}$ and $\{\SP_x\}$ from 1 to 0 to conform with the rest of the paper. 

The symmetric rational functions $\{\F_\mu\}$ play a central role in our work (we view spectral variables $\{u_y\}$ as their arguments, and $q$, $\{\ip_x\}$, $\{\SP_x\}$ as parameters). The right-hand side of \eqref{intro-F_symm_formula} can be viewed as a coordinate Bethe ansatz expression for the eigenfunctions of the transfer-matrix of the higher spin six vertex model. Note that one would need to additionally impose Bethe equations on the $u$'s for periodic in the $x$-direction boundary conditions.

The probabilistic interpretation of $\F_\nu$ given above is equivalent to saying that $\F_\nu$ is the partition function for ensembles of $n$ up-right lattice paths that enter the semi-infinite strip $\Z_{\ge 0}\times\{1,\dots,n\}$ at the left boundary at $(0,1),\dots,(0,n)$ and exit at the top of the strip at locations $(\nu_1,n),\dots,(\nu_n,n)$. The weight of such an ensemble is equal to the product of weights over all vertices of the strip. The vertex weights for $\F_\nu$ itself are slightly modified right-hand sides of \eqref{intro-weights} given by \eqref{weights} below. Allowing some paths to enter at the bottom boundary gives a definition of the skew functions $\F_{\mu/\la}$; removing the paths entering from the left gives a definition of the skew functions $\G_{\mu/\la}$ and non-skew $\G_\mu:=\G_{\mu/0^M}$, cf. Fig.~\ref{fig:paths_FG} below. A symmetrization formula for $\G_\mu$ which is similar to \eqref{intro-F_symm_formula} is given in Theorem \ref{thm:symmetrization} below. 

Homogeneous versions of the functions $\F$ and $\G$ were introduced in \cite{Borodin2014vertex}. As explained there, further degenerations turn them into skew and non-skew Hall--Littlewood and Schur symmetric polynomials. 

\subsection{Cauchy identities} A basic fact about functions $\F$ and $\G$ that we heavily 
use is the following skew Cauchy identity.
Let $u,v\in\C$ satisfy
\begin{align*}%\label{intro-admissible_good_condition_general}
	\lim_{L\to+\infty}\prod_{j=0}^{L}
	\left|\frac{\ip_j u-\SP_j}{1-\SP_j\ip_j u}
	\cdot
	\frac{\ip_j^{-1}v-\SP_j}{1-\SP_j\ip_j^{-1}v}\right|=0.
\end{align*}
Then for any nonincreasing integer sequences 
$\la$ and $\nu$ as above, 
with notation $\ipb=\{\ip_x\}_{x\ge 0}$,  $\ipbb=\{\ip_x^{-1}\}_{x\ge 0}$, and 
$\SPB=\{\SP_x\}_{x\ge 0}$,  we have
\begin{align}\label{intro-skew_Cauchy_good}
	\sum_{\kappa}
	\frac{\conj_{\SPB}(\kappa)}{\conj_{\SPB}(\la)}\G_{\kappa/\la}(v\md\ipbb,\SPB)
	\F_{\kappa/\nu}(u\md\ipb,\SPB)
	= \frac{1-quv}{1-uv}
	\sum_{\mu}
	\F_{\la/\mu}(u\md\ipb,\SPB)
	\frac{\conj_{\SPB}(\nu)}{\conj_{\SPB}(\mu)}\G_{\nu/\mu}(v\md\ipbb,\SPB)
	,
\end{align}
where $\conj_{\SPB}(\al)=\displaystyle
\prod\nolimits_{i\ge 0}\frac{(\SP_i^2;q)_{a_i}}{(q;q)_{a_i}}$ for $\al=0^{a_0}1^{a_1}\cdots$.
This identity is a direct consequence of the Yang--Baxter equation for the R-matrix of the higher spin six vertex model. It involves only two spectral parameters $u$ and $v$ and corresponds
to permuting two single-row transfer matrices. 
Identity \eqref{intro-skew_Cauchy_good} can be immediately iterated to include any finite number of $u$'s and $v$'s, and also to involve non-skew functions (by setting $\nu$ to $\varnothing$ and/or $\lambda$ to $0^L$).

We put different versions of Cauchy identities to multiple uses:

\begin{enumerate}[\bf1.]
	\item The fact that probabilities \eqref{intro-section_probabilities} add up to 1 is a limiting instance of a Cauchy identity. Thus, we can think of the weights of the probability measures we are interested in as of (normalized) terms in a Cauchy identity. Such an interpretation (for other Cauchy identities) lies at the basis of the theory of Schur and Macdonald measures and processes \cite{okounkov2001infinite}, \cite{okounkov2003correlation}, \cite{BorodinCorwin2011Macdonald}. 

	\item Markov chains that connect measures of the form \eqref{intro-section_probabilities} with different values of $n$ are instances of skew Cauchy identities. Such an interpretation was also previously used in the Schur/Macdonald setting, cf. 
	\cite{BorFerr2008DF}, \cite{Borodin2010Schur}, \cite{BorodinCorwin2011Macdonald}.

	\item Comparing two Cauchy identities which differ by adding a few extra variables 
	leads to the average of an observable with respect to the measure whose weights are given by the terms of the other identity. This fact by itself is a triviality, but we show that it can be used to extract nontrivial consequences. To our knowledge,
	such use of Cauchy identities is new. 

	\item In extracting those consequences, a key role is played by a Plancherel theory for the functions 
	$\{\F_\mu\}$, and Cauchy identities 
	can be employed to establish certain orthogonality properties of the $\F_\mu$'s.
	This link goes back to \cite{Borodin2014vertex}.
\end{enumerate} 

\subsection{Plancherel theory}
Let us give more details regarding 
the Plancherel theory for the $\F_\mu$'s.
The (obvious from \eqref{intro-F_symm_formula}) shift property
\begin{align*}
		\F_{\mu+r^{M}}\bigl(u_1,\ldots,u_M\md \{\xi_x\}_{x\ge 0},\{\SPB_x\}_{x\ge 0}\bigr)=
		\bigg(\prod_{i=1}^{M}\prod_{j=0}^{r-1}\frac{\ip_ju_i-\SP_j}
		{1-\SP_j\ip_j u_i}\bigg)		
		\F_{\mu}\bigl(u_1,\ldots,u_M\md\{\xi_x\}_{x\ge r},\{\SPB_x\}_{x\ge r}\bigr)
\end{align*}
allows to extend the definition of the $\F_\mu$'s to arbitrary $\mu=(\mu_1\ge\dots\ge\mu_M)\in\Z^M$.
It turns out that these extended $\F_\mu$'s
form a nice Fourier-like basis in the space of
functions $\{f(\mu)\}$. 
More exactly, we prove that two maps $\mathscr{F}$ and $\mathscr J$ 
which map functions $f(\mu)$ to symmetric rational functions and backwards defined via
\begin{align*}
	(\Pltrans n f)(u_1,\ldots,u_n)&=\sum_{\la=(\la_1\ge \dots\ge\la_n)\in\Z^n}f(\la)\,\F_\la(u_1,\ldots,u_n\md\ipb,\SPB),
	\\
	(\Pltransi n R)(\la_1,\ldots,\la_n)&=
	\frac{\conj_{\SPB}(\la)}{(1-q)^{n}}
	\oint\limits_{\contq {\ipbb\SPB}1}\frac{d u_1}{2\pi\i}
	\ldots
	\oint\limits_{\contq {\ipbb\SPB}n}\frac{d u_n}{2\pi\i}
	\prod_{1\le \aind<\bind\le n}\frac{u_\aind-u_\bind}{u_\aind-qu_\bind}
	\frac{R(u_1,\ldots,u_n)}{u_1 \ldots u_n}
	\prod_{i=1}^{n}\pow_{\la_{i}}(u_{i}^{-1}\md\ipbb,\SPB),
\end{align*}
(with integration contours as in Definition \ref{def:orthogonality_contours} and Fig.~\ref{fig:contours} below), are inverses to each other on appropriately defined functional spaces. 
In the homogeneous case, those were the main results of Borodin--Corwin--Petrov--Sasamoto \cite{BCPS2014}, see also \cite{BorodinCorwinPetrovSasamoto2013}. In the case of the inhomogeneous $q$-Boson model which is slightly lower in the hierarchy than the model we consider here, a very recent work of Wang--Waugh \cite{Wang2015inhomqTASEP} shows that for appropriate limits of the above transforms one has $\mathscr{J}\circ\mathscr{F}=\mathrm{Id}$. 

The two identities $\mathscr{F}\circ\mathscr{J}=\mathrm{Id}$ and $\mathscr{J}\circ\mathscr{F}=\mathrm{Id}$ can be equivalently stated in terms of orthogonality relations for the basis $\{\F_\mu\}$. The latter relation corresponds to the \emph{spatial} orthogonality, where the product of two $\F$'s integrated over their common arguments results in a delta-function in their indices. The former relation corresponds to the \emph{spectral} orthogonality, where the product of two $\F$'s summed over their common index results in a delta-function in the arguments. 
Our proofs for both types of orthogonality use ideas from existing results in the homogeneous case, that of \cite{BCPS2014} for the spatial one and of \cite{Borodin2014vertex} for the spectral one (this is where a Cauchy identity is heavily used).

The orthogonality relations play an important role in simplifying the form of the observables extracted from comparisons of Cauchy identities, which eventually results in concise integral formulas for the observables in \eqref{intro-multi_moments} and for the $q$-correlation functions.

\subsection{Organization of the paper} In \S\ref{sec:vertex_weights} we define the higher spin six vertex model in the language which is used throughout the paper. 
The Yang--Baxter integrability of our model is discussed in \S\ref{sec:yang_baxter_relation}. 
In \S\ref{sec:symmetric_rational_functions} we take the infinite volume limit of the Yang--Baxter equation, introduce functions $\F$ and $\G$, and derive Cauchy identities and symmetrization type formulas for them. 
Fusion --- a procedure of collapsing several horizontal rows with suitable spectral parameters onto a single one
--- is discussed in \S\ref{sec:stochastic_weights_and_fusion}. 
In \S\ref{sec:markov_kernels_and_stochastic_dynamics} we use skew Cauchy identities to define various Markov dynamics for our model, and also show how known integrable (1+1)d KPZ models can be obtained from those.  
In \S\ref{sec:orthogonality_relations} we prove the Plancherel isomorphisms (equivalently, two types of orthogonality relations for the $\F_\mu$'s).
In \S\ref{sec:observables_of_interacting_particle_systems} we derive integral representations for the $q$-correlation functions. 
In \S\ref{sec:_q_moments_of_the_height_function_of_interacting_particle_systems} we prove our main result --- the integral formula \eqref{intro-multi_moments} for the $q$-moments of the height function. 
The final \S\ref{sec:degenerations_of_moment_formulas} demonstrates how our main result degenerates to various similar known results for the models which are hierarchically lower: stochastic six vertex model, ASEP, various $q$-TASEPs, and associated zero range processes.

\subsection*{Acknowledgments} 
We are grateful to Ivan Corwin and Vadim Gorin for valuable discussions. 
We also gratefully acknowledge hospitality and support from the
Galileo Galilei Institute for Theoretical Physics during the program
``Statistical Mechanics, Integrability and Combinatorics'' where parts of this work 
were completed.
A.~B. was partially supported by the NSF grant DMS-1056390.

% section introduction (end)

\section{Vertex weights} % (fold)
\label{sec:vertex_weights}

\begin{figure}[htpb]
		\begin{tikzpicture}
			[scale=.6,thick]
			\foreach \xxx in {0,4,5,1,2,3,6,7}
			{
				\draw[dotted] (\xxx,5.5)--++(0,-5);
			}
			\foreach \xxx in {1,2,3,4,5}
			{
				\draw[dotted] (-.5,\xxx)--++(8,0);
			}
			\def\dist{.1}
			\draw[line width=1.7pt] (-1,1)--++(1,0);
			\draw[line width=1.7pt] (-1,2)--++(1,0);
			\draw[->, line width=1.7pt] 
			(0,2)--++(1,0)--++(1-\dist,0)--++(\dist,\dist)--++(0,1-\dist)
			--++(1,0)--++(1-\dist,0)--++(\dist,\dist)--++(0,1-\dist)
			--++(0,1)--++(0,1);
			\draw[->, line width=1.7pt] (0,1)--++(1,0)--++(1-2*\dist,0)--++(\dist,\dist)--++(0,1-3*\dist)--++(3*\dist,3*\dist)
			--++(1,0)--++(1-3*\dist,0)--++(\dist,\dist)--++(0,1-3*\dist)--++(\dist,\dist)--++(1-\dist,0)--++(0,1)--++(1-\dist,0)--++(\dist,\dist)
			--++(0,1-\dist)--++(0,1);
			\draw[->, line width=1.7pt] 
			(2,0)--++(0,1-\dist)--++(\dist,2*\dist)--++(0,1-3*\dist)--++(\dist,\dist)--++(1,0)--++(1-3*\dist,0)
			--++(2*\dist,\dist)
			--++(1,0)--++(1-2*\dist,0)--++(\dist,\dist)--++(0,1-\dist)--++(0,1-\dist)--++(\dist,\dist)--++(1-3*\dist,0)
			--++(\dist,\dist)--++(0,2-\dist);
			\draw[->, line width=1.7pt] (5,0)--++(0,1)--++(1,0)--++(0,1-\dist)--++(\dist,\dist)
			--++(1-\dist,0)--++(0,1)--++(0,1-1*\dist)--++(\dist,2*\dist)--++(0,2-\dist);
			\draw[->, line width=1.7pt] (2,0)--++(0,.5);
			\draw[->, line width=1.7pt] (5,0)--++(0,.5);
			\draw[->, line width=1.7pt] (-1,1)--++(.5,0);
			\draw[->, line width=1.7pt] (-1,2)--++(.5,0);
			\foreach \ppt in {(0,1),(1,1),(5,1),(6,1),
			(0,2),(1,2),(2,3),(3,3),(4,4),(4,5),(5,2),(5,3),(5,4),
			(6,3),(6,5),(7,2),(7,3)}
			{
				\draw \ppt circle(5pt);
			}
			\foreach \ppt in {(2,1),(2,2),(3,2),(4,2),(4,3),(6,2),(6,4),(7,4),(7,5)}
			{
				\draw \ppt circle(10pt);
			}
		\end{tikzpicture}
	\caption{An example of a collection of up-right paths in a region in $\Z^{2}$.
	Note that several paths are allowed to pass along the same edge.
	At each vertex the total number of incoming arrows ($=$~coming from the left or from below)
	must be equal to the total number of outgoing ones ($=$~pointing to the right or upwards), cf. Fig.~\ref{fig:vertex}.
	The circles indicate nonempty vertices
	which contribute
	to the weight of the path collection.}
	\label{fig:paths_example}
\end{figure}

\subsection{Higher spin six vertex model} % (fold)
\label{sub:higher_spin_vertex_model}

The higher spin six vertex model can be viewed as a way of assigning weights to collections of up-right paths
in a finite region of $\Z^{2}$, subject to certain boundary conditions. 
An example of such a collection of paths is given in Fig.~\ref{fig:paths_example}.
The \emph{weight} of a path collection is equal to the product of weights of all the vertices that
belong to the paths.
We will always assume that the weight of the empty vertex
\emptyvertex{.6}
is equal to $1$. Thus, the weight of a path collection
can be equivalently defined
as
the product of weights of all vertices in $\Z^{2}$.

Note that the weight of a collection of paths is in general
not equal to the product of weights of individual paths (defined in an obvious way). But if
the paths in a collection have no 
vertices in common, then the weight of this collection
will in fact be equal to the product of weights of individual paths.

% subsection higher_spin_vertex_model (end)

\subsection{Vertex weights} % (fold)
\label{sub:vertex_weights}

We choose the weights of vertices in a special way. 
First, we postulate that the number of incoming arrows $i_1+j_1$ into any vertex
must be the same as the number of outgoing arrows $i_2+j_2$, see Fig.~\ref{fig:vertex}. 
\begin{figure}[htbp]
	\begin{tikzpicture}
		[scale=1.5, ultra thick]
		\def\d{.1}
		\draw[->] (-1,3*\d)--++(1,0)--++(0,1);
		\draw[->] (-1,2*\d)--++(1,0)--++(\d,\d)--++(0,1);
		\draw[->] (-1,\d)--++(1,0)--++(2*\d,2*\d)--++(0,1);
		\draw[->] (-1,0)--++(1,0)--++(3*\d,3*\d)--++(0,1);
		\draw[->] (-1,-\d)--++(1,0)--++(4*\d,4*\d)--++(0,1);
		\draw[->] (-1,-2*\d)--++(1,0)--++(4*\d,4*\d)--++(1,0);
		\draw[->] (-1,-3*\d)--++(1,0)--++(4*\d,4*\d)--++(1,0);
		\draw[->] (1.5*\d,-1-2.5*\d)--++(0,1)--++(2.5*\d,2.5*\d)--++(1,0);
		\draw[->] (2.5*\d,-1-2.5*\d)--++(0,1)--++(1.5*\d,1.5*\d)--++(1,0);
		\node at (.2,-1.55) {$i_1=2$};
		\node at (-2,0) {$j_1=7$};
		\node at (.2,1.65) {$i_2=5$};
		\node at (2.2,0) {$j_2=4$};
		\node[rotate=-45] at (-.75,-1) {input};
		\node[rotate=-45] at (1.1,1.1) {output};
		\draw[->] (-1,3*\d)--++(.2,0);
		\draw[->] (-1,2*\d)--++(.2,0);
		\draw[->] (-1,1*\d)--++(.2,0);
		\draw[->] (-1,0*\d)--++(.2,0);
		\draw[->] (-1,-1*\d)--++(.2,0);
		\draw[->] (-1,-2*\d)--++(.2,0);
		\draw[->] (-1,-3*\d)--++(.2,0);
		\draw[->] (1.5*\d,-1-2.5*\d)--++(0,.2);
		\draw[->] (2.5*\d,-1-2.5*\d)--++(0,.2);
		\draw[very thick] (1.75*\d,0) circle (15pt);
	\end{tikzpicture}
	\caption{Incoming and outgoing vertical and horizontal arrows
	at a vertex
	which we will denote by
	$(i_1,j_1;i_2,j_2)=(2,7;5,4)$.}
	\label{fig:vertex}
\end{figure}
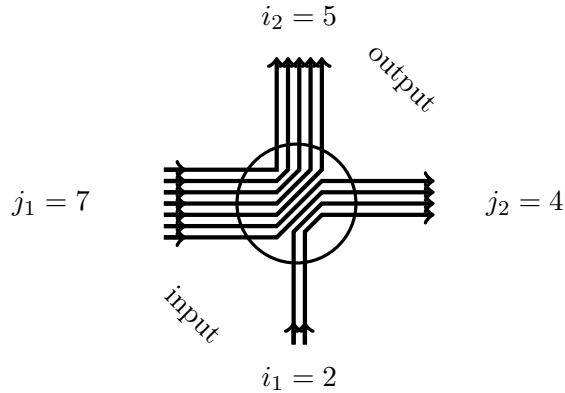
\begin{remark}
	This \emph{arrow preservation} condition obviously fails at the boundaries, so one should either
	fix boundary conditions
	in some way, or specify weights on the boundary independently.
\end{remark}
The vertex weights will depend on two (generally speaking, complex)
parameters that we denote by $q$ and $\SP$,
and 
on an additional
\emph{spectral parameter}
$u\in\C$.
All these parameters 
are assumed to be \emph{generic}\footnote{That is, vanishing of certain algebraic expressions 
in the parameters
may make some of our statements meaningless.
We will not focus on these special cases.}.
The vertex weights are explicitly given by (see also Fig.~\ref{fig:vertex_weights})
\begin{align}\label{weights}
	\begin{array}{rclrcl}
		w_{u,\SP}(g,0;g,0)&:=&\dfrac{1-\SP q^{g}u}{1-\SP u},&\qquad \qquad
		\rule{0pt}{22pt}
		w_{u,\SP}(g+1,0;g,1)&:=&\dfrac{(1-\SP ^{2}q^{g})u}{1-\SP u},\\
		\rule{0pt}{22pt}
		w_{u,\SP}(g,1;g,1)&:=&\dfrac{u-\SP q^{g}}{1-\SP u},&\qquad \qquad
		\rule{0pt}{22pt}
		w_{u,\SP}(g,1;g+1,0)&:=&\dfrac{1-q^{g+1}}{1-\SP u},
	\end{array}
\end{align}
where $g$ is any nonnegative integer.
All other weights are assumed to be zero.
Note that 
the weight of the empty vertex
\emptyvertex{.6} (that is, $(0,0;0,0)$)
is indeed equal to $1$.
Throughout the text, 
the parameter $q$ is assumed fixed, and 
$u$ will be regarded as an indeterminate. The dependence on $\SP$ will also be reflected in the notation.

Observe that the weights $w_{u,\SP}$ \eqref{weights} are nonzero
only for $j_1,j_2\in\{0,1\}$, that is, the multiplicities of 
horizontal edges are bounded by $1$. This restriction will be 
removed later (in \S\ref{sec:stochastic_weights_and_fusion}).
\begin{figure}[htbp]
	\begin{tabular}{c|c|c|c|c}
	&\vertexoo{1}
	&\vertexol{1}
	&\vertexll{1}
	&\vertexlo{1}
	\\\hline\rule{0pt}{20pt}
	$w_{u,\SP}$&
	$\dfrac{1-\SP q^{g}u}{1-\SP u}$&
	$\dfrac{(1-\SP ^{2}q^{g-1})u}{1-\SP u}$&
	$\dfrac{u-\SP q^{g}}{1-\SP u}$&
	$\dfrac{1-q^{g+1}}{1-\SP u}$
	\phantom{\Bigg|}\\
	\end{tabular}
	\caption{Vertex weights
	\eqref{weights}.
	Here $g\in\Z_{\ge0}$ and
	by agreement, $w_{u,\SP}(0,0;-1,1)=0$.}
	\label{fig:vertex_weights}
\end{figure}

% subsection vertex_weights (end)

\subsection{Motivation} % (fold)
\label{sub:motivation}

Weights defined in \eqref{weights} are closely related to matrix
elements of the higher spin R-matrix 
associated with $U_q(\widehat{\mathfrak{sl}_2})$
(e.g., see \cite{Mangazeev2014} and also \cite{baxter2007exactly}, \cite{reshetikhin2010lectures} for a general introduction).
Because of this, they satisfy
a version of the Yang--Baxter equation
which we discuss in \S \ref{sec:yang_baxter_relation} below.
The exact connection of weights \eqref{weights} with R-matrices is written down
in \cite[\S2]{Borodin2014vertex}, and here we follow the notation of that paper.

For the weights \eqref{weights}, the R-matrix in question
corresponds to one of the highest weight representations (the ``vertical'' one) being 
a generic Verma module (associated with the parameter~$\SP$),
while the other representation (``horizontal'')
is two-dimensional. This choice of the ``horizontal''
representation dictates the restriction on the horizontal multiplicities 
$j_1,j_2\in\{0,1\}$. Vertex weights associated with other ``horizontal'' 
representations 
(finite-dimensional of dimension $J+1$, or generic Verma modules)
are discussed in \S \ref{sec:stochastic_weights_and_fusion}
below.

If we set 
$\SP^{2}=q^{-I}$ with $I$ a positive integer,
then matrix elements of the 
generic Verma module turn into those of the
$(I+1)$-dimensional
highest weight representation (of weight $I$), and thus the 
multiplicities of vertical edges will be bounded by $I$.
In particular, setting $I=1$ leads to the well-known six vertex
model (we discuss it in \S \ref{sub:asep_degeneration}). 
Throughout most of the text we will assume,
however, that the parameter $\SP$ is generic, and so 
there is no restriction on the vertical multiplicity.

% subsection motivation (end)

\subsection{Conjugated weights and stochastic weights} % (fold)
\label{sub:S2_stochastic_weights}

Here we write down two related versions 
of the vertex weights which will be later useful for probabilistic
applications.

Throughout the text we will employ the $q$-Pochhammer symbols
\begin{align*}
	(z;q)_n := \begin{cases} \prod_{k=0}^{n-1} (1-zq^k), & n>0,\\ 1,& n=0,\\ \prod_{k=0}^{-n-1}(1-zq^{n+k})^{-1}, &n<0.\end{cases}
\end{align*}
If $|q|<1$ and $n=+\infty$, then the $q$-Pochhammer 
symbol $(z;q)_\infty$ also makes sense. We will also use the $q$-binomial coefficients
\begin{align*}
	\binom{n}{k}_{q}:=\frac{(q;q)_{n}}{(q;q)_{k}(q;q)_{n-k}}.
\end{align*}

Define the following \emph{conjugated vertex weights}:
\begin{align}
	w^{\conj}_{u,\SP}(i_1,j_1;i_2,j_2):=
	\frac{(\SP ^{2};q)_{i_2}}{(q;q)_{i_2}}
	\frac{(q;q)_{i_1}}{(\SP ^{2};q)_{i_1}}\, w_{u,\SP}(i_1,j_1;i_2,j_2).
	\label{weights_conj}
\end{align}
Also define
\begin{align}\label{vertex_weights_stoch}
	\Lmatr_{u,\SP}(i_1,j_1;i_2,j_2):= 
	(-\SP)^{j_2}w^{\conj}_{u,\SP}(i_1,j_1;i_2,j_2).
\end{align}
The above quantities are given in Fig.~\ref{fig:vertex_weights_conj_stoch}.
Note that for any $i_1\in\Z_{\ge0}$, $j_1\in\{0,1\}$ we have
\begin{align}\label{L_sum_to_one}
	\sum_{i_2,j_2\in\Z_{\ge0}\colon i_2+j_2=i_1+j_1}
	\Lmatr_{u,\SP}(i_1,j_1;i_2,j_2)=1.
\end{align}
Therefore, if the $\Lmatr_{u,\SP}$'s are nonnegative, 
they can be interpreted
as defining a \emph{probability distribution} 
on all possible output configurations
$\{(i_2,j_2)\colon i_2+j_2=i_1+j_1\}$
given the input configuration $(i_1,j_1)$, cf.~Fig.~\ref{fig:vertex}.
We will discuss values of parameters leading to nonnegative
$\Lmatr_{u,\SP}$'s in \S \ref{sub:stochastic_weights} below.

A motivation for introducing the conjugated weights $w_{u,\SP}^{\conj}$ can be found in 
\S \ref{sub:semi_infinite_operator_D} below.

\begin{figure}[htbp]
	\begin{tabular}{c|c|c|c|c}
	&\vertexoo{1}
	&\vertexol{1}
	&\vertexll{1}
	&\vertexlo{1}
	\\
	\hline\rule{0pt}{20pt}
	$w_{u,\SP}^{\conj}$&
	$\dfrac{1-\SP q^{g}u}{1-\SP u}$&
	$\dfrac{(1-q^{g})u}{1-\SP u}$&
	$\dfrac{u-\SP q^{g}}{1-\SP u}$&
	$\dfrac{1-\SP ^{2}q^{g}}{1-\SP u}$
	\phantom{\Bigg|}\\\hline\rule{0pt}{20pt}
	$\Lmatr_{u,\SP}$&
	$\dfrac{1-\SP q^{g}u}{1-\SP u}$&
	$\dfrac{-\SP u+\SP q^{g}u}{1-\SP u}$&
	$\dfrac{-\SP u+\SP ^{2}q^{g}}{1-\SP u}$&
	$\dfrac{1-\SP ^{2}q^{g}}{1-\SP u}$
	\phantom{\Bigg|}
	\end{tabular}
	\caption{Vertex weights
	\eqref{weights_conj} and \eqref{vertex_weights_stoch}.
	Note that they automatically vanish at the forbidden
	configuration $(0,0;-1,1)$.}
	\label{fig:vertex_weights_conj_stoch}
\end{figure}

% subsection stochastic_weights (end)

% section vertex_weights (end)

\section{The Yang--Baxter equation} % (fold)
\label{sec:yang_baxter_relation}

\subsection{The Yang--Baxter equation in coordinate language} % (fold)
\label{sub:yang_baxter_relation_in_coordinate_form}

The Yang--Baxter equation deals with 
vertex weights at two vertices connected by a vertical edge,
with spectral parameters $\ybspec_1,\ybspec_2$.
Define the two-vertex weights by
\begin{align}\label{two-vertex}
	w_{\ybspec_1,\ybspec_2;\,\SP}^{(m,n)}(k_1,k_2;k_1',k_2'):=\sum_{l\ge 0} w_{\ybspec_1,\SP}(m,k_1;l,k_1')w_{\ybspec_2,\SP}(l,k_2;n,k_2'),
	\qquad
	k_1,k_2,k_1',k_2'\in\{0,1\}.
\end{align}
The expression \eqref{two-vertex}
is the weight of the two-vertex configuration
as in Fig.~\ref{fig:YB}, left, 
with numbers of incoming and outgoing arrows
$m,n,k_{1,2},k'_{1,2}$ fixed.
The number of arrows $l\ge0$ along the inside edge is arbitrary,
but due to the arrow preservation,
no more than one value of $l$ contributes to the sum.

Also define
\begin{align}\label{two-vertex_tilde}
	\widetilde w_{\ybspec_1,\ybspec_2;\,\SP}^{(m,n)}(k_1,k_2;k_1',k_2'):=
	w_{\ybspec_1,\ybspec_2;\,\SP}^{(m,n)}(k_2,k_1;k_2',k_1');
\end{align}
this is the weight of the configuration as in 
Fig.~\ref{fig:YB}, right.
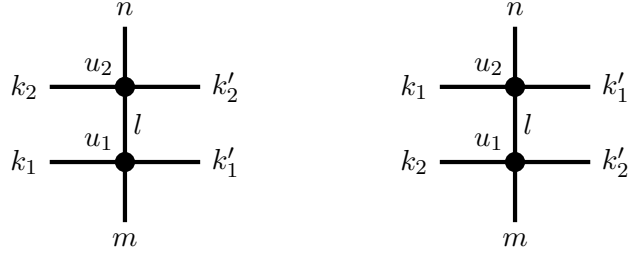
\begin{figure}[htbp]
	\begin{tabular}{cc}
	\begin{tikzpicture}
		[scale=1,ultra thick]
		\draw (0,0)--++(1,0) node [right] {$k_1'$};
		\draw (0,0)--++(-1,0) node [left] {$k_1$};
		\draw (0,1)--++(1,0) node [right] {$k_2'$};
		\draw (0,1)--++(-1,0) node [left] {$k_2$};
		\draw (0,0)--++(0,-.8) node[below] {$m$};
		\draw (0,1)--++(0,.8) node[above] {$n$};
		\draw (0,0)--++(0,1) node[midway, xshift=5pt] {$l$};
		\draw[fill] (0,0) circle (3pt) node [above left] {$\ybspec_1$};
		\draw[fill] (0,1) circle (3pt) node [above left] {$\ybspec_2$};
	\end{tikzpicture}
	&\hspace{40pt}
	\begin{tikzpicture}
		[scale=1,ultra thick]
		\draw (0,0)--++(1,0) node [right] {$k_2'$};
		\draw (0,0)--++(-1,0) node [left] {$k_2$};
		\draw (0,1)--++(1,0) node [right] {$k_1'$};
		\draw (0,1)--++(-1,0) node [left] {$k_1$};
		\draw (0,0)--++(0,-.8) node[below] {$m$};
		\draw (0,1)--++(0,.8) node[above] {$n$};
		\draw (0,0)--++(0,1) node[midway, xshift=5pt] {$l$};
		\draw[fill] (0,0) circle (3pt) node [above left] {$\ybspec_1$};
		\draw[fill] (0,1) circle (3pt) node [above left] {$\ybspec_2$};
	\end{tikzpicture}
	\end{tabular}
	\caption{Two-vertex configurations
	corresponding to \eqref{two-vertex} and \eqref{two-vertex_tilde}, 
	respectively.}
	\label{fig:YB}
\end{figure}

Let us organize the weights \eqref{two-vertex} into $4\times 4$ matrices
\begin{align*}
w_{\ybspec_1,\ybspec_2;\,\SP}^{(m,n)}=\begin{bmatrix} 
w_{\ybspec_1,\ybspec_2;\,\SP}^{(m,n)}(0,0;0,0)&w_{\ybspec_1,\ybspec_2;\,\SP}^{(m,n)}(0,0;0,1)&w_{\ybspec_1,\ybspec_2;\,\SP}^{(m,n)}(0,0;1,0)&w_{\ybspec_1,\ybspec_2;\,\SP}^{(m,n)}(0,0;1,1)\\
w_{\ybspec_1,\ybspec_2;\,\SP}^{(m,n)}(0,1;0,0)&w_{\ybspec_1,\ybspec_2;\,\SP}^{(m,n)}(0,1;0,1)&w_{\ybspec_1,\ybspec_2;\,\SP}^{(m,n)}(0,1;1,0)&w_{\ybspec_1,\ybspec_2;\,\SP}^{(m,n)}(0,1;1,1)\\
w_{\ybspec_1,\ybspec_2;\,\SP}^{(m,n)}(1,0;0,0)&w_{\ybspec_1,\ybspec_2;\,\SP}^{(m,n)}(1,0;0,1)&w_{\ybspec_1,\ybspec_2;\,\SP}^{(m,n)}(1,0;1,0)&w_{\ybspec_1,\ybspec_2;\,\SP}^{(m,n)}(1,0;1,1)\\
w_{\ybspec_1,\ybspec_2;\,\SP}^{(m,n)}(1,1;0,0)&w_{\ybspec_1,\ybspec_2;\,\SP}^{(m,n)}(1,1;0,1)&w_{\ybspec_1,\ybspec_2;\,\SP}^{(m,n)}(1,1;1,0)&w_{\ybspec_1,\ybspec_2;\,\SP}^{(m,n)}(1,1;1,1)
\end{bmatrix},
\end{align*}
and similarly for $\widetilde w_{\ybspec_1,\ybspec_2;\,\SP}^{(m,n)}$.
\begin{proposition}[The Yang--Baxter equation]\label{prop:YB}
	We have
	\begin{align}\label{YB_main_relation}
		\widetilde w_{\ybspec_2,\ybspec_1;\,\SP}^{(m,n)}=
		Xw_{\ybspec_1,\ybspec_2;\,\SP}^{(m,n)}
		X^{-1},
	\end{align}
	where the matrix $X$ depending on 
	$\ybspec_1$ and $\ybspec_2$
	is given by
	\begin{align}
		X=\begin{bmatrix}  
			\ybspec_1-q\ybspec_2&0&0&0\\ \\
			0&
			q(\ybspec_1-\ybspec_2)
			& 
			(1-q)\ybspec_1
			&0\\ \\
			0&
			(1-q)\ybspec_2
			&
			\ybspec_1-\ybspec_2
			&0\\ \\
			0&0&0&\ybspec_1-q\ybspec_2
		\end{bmatrix}.
		\label{matrix_X}
	\end{align}
\end{proposition}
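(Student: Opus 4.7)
The plan is to verify the equivalent identity $\widetilde w^{(m,n)}_{u_2,u_1;\,\SP}\,X = X\,w^{(m,n)}_{u_1,u_2;\,\SP}$ entrywise. First I would use arrow preservation at each of the two vertices, which forces every nonzero entry of $w^{(m,n)}$ and $\widetilde w^{(m,n)}$ to satisfy $m+k_1+k_2=n+k_1'+k_2'$ and collapses the sum in \eqref{two-vertex} to a single term at $l = m+k_1-k_1'$. This reduces the proposition to finitely many scalar identities, indexed by admissible tuples $(k_1,k_2,k_1',k_2')$ with $n-m\in\{-2,-1,0,1,2\}$, each of whose two sides is an explicit rational function of $u_1,u_2,q,\SP$ read off from \eqref{weights}.

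The second ingredient is the block structure of $X$: with respect to the ordering $(0,0),(0,1),(1,0),(1,1)$, the matrix $X$ is block diagonal in the horizontal charge $k_1+k_2\in\{0,1,2\}$, with $1\times 1$ blocks at the extremes (both equal to $u_1-qu_2$) and a single $2\times 2$ block in the middle acting on $\mathrm{span}\{(0,1),(1,0)\}$. Consequently the matrix equation decouples. In the pure sectors — the four entries $(0,0;0,0)$, $(1,1;1,1)$, $(0,0;1,1)$, and $(1,1;0,0)$ — the common factor $u_1-qu_2$ cancels from the two sides and what remains is the identity $w^{(m,n)}_{u_1,u_2;\,\SP}(k_1,k_2;k_1',k_2')=w^{(m,n)}_{u_2,u_1;\,\SP}(k_2,k_1;k_2',k_1')$, which I would verify by direct inspection: in each case the unique nonzero product of two weights from \eqref{weights} is manifestly symmetric under $u_1 \leftrightarrow u_2$.

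The substantive work sits in the charge-$1$ sector $k_1+k_2=k_1'+k_2'=1$, which by arrow preservation forces $n=m$. Here the $2\times 2$ block of $X$ genuinely mixes $(0,1)$ and $(1,0)$, so one must check the full $2\times 2$ equation on this block. I would tabulate the four products $w_{u_1,\SP}(m,k_1;l,k_1')\,w_{u_2,\SP}(l,k_2;m,k_2')$ (each with $l\in\{m-1,m,m+1\}$ pinned by arrow preservation), clear the common denominator $(1-\SP u_1)(1-\SP u_2)$, and verify the four resulting polynomial identities in $u_1,u_2,q,\SP$ and $q^m$ by direct expansion.

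I expect the bookkeeping in the charge-$1$ block to be the only real obstacle, as the mixing entries $(1-q)u_1$ and $(1-q)u_2$ of $X$ must cancel against the interference between the two nonvanishing terms on each side. A cleaner route, outlined in \S\ref{sub:motivation}, is to recognize \eqref{weights} as matrix elements of the $U_q(\widehat{\mathfrak{sl}_2})$ R-matrix with a generic Verma module in the vertical slot and the tautological $2$-dimensional representation in the horizontal slot; then \eqref{YB_main_relation} is the standard RLL form of the Yang--Baxter equation with $X$ the two-dimensional trigonometric R-matrix, and the computation becomes classical (cf.\ \cite{Borodin2014vertex,Mangazeev2014}).
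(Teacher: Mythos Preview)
Your approach --- direct entrywise verification, with the $R$-matrix Yang--Baxter route as an alternative --- is exactly what the paper does; its proof reads in full: ``This equation can be checked directly. Alternatively, as shown in \cite[Prop.\;2.5]{Borodin2014vertex}, it can be derived from the Yang--Baxter equation for the $R$-matrices.''

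There is one oversight in your case analysis. The block structure of $X$ does split the $4\times 4$ identity $\widetilde w\,X = X\,w$ into nine blocks indexed by the pair (row charge, column charge) $\in\{0,1,2\}^2$, but you account for only five of them: the four ``pure'' $1\times 1$ corners and the central $2\times 2$ block at charge $(1,1)$. The four remaining blocks --- at $(0,1)$, $(1,0)$, $(1,2)$, $(2,1)$ --- are $1\times 2$ or $2\times 1$ and comprise the eight entries with $|n-m|=1$, e.g.\ $(k_1,k_2;k_1',k_2')=(0,0;0,1)$ at $n=m-1$. In those blocks $X$ acts as the scalar $u_1-qu_2$ on one side and via its $2\times 2$ middle block on the other, so the resulting identities are \emph{not} consequences of symmetry under $u_1\leftrightarrow u_2$; they require the same kind of two-term expansion you outline for the $(1,1)$ block (and they do go through, factoring as $(u_1-qu_2)$ times the remaining piece after clearing $(1-\SP u_1)(1-\SP u_2)$). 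Your claim that ``the substantive work sits in the charge-$1$ sector'' is therefore off by these eight checks, though none of them introduces a new obstacle.
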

Note that $X$ is independent of $m$ and $n$, and it is this fact that 
makes the weights $w_{u,\SP}$ \eqref{weights} very special.
Note also that $X$ matters only up to an overall factor
(which can depend on $\ybspec_1$ and $\ybspec_2$).
Therefore, $X$ in fact depends only on the ratio
of spectral parameters $\ybspec_1$ and $\ybspec_2$.
Finally, observe that $X$ is independent of the parameter $\SP$.
\begin{proof}
	This equation can be checked directly.
	Alternatively, as shown in 
	\cite[Prop.\;2.5]{Borodin2014vertex},
	it can be derived from the 
	Yang--Baxter equation for the R-matrices.
\end{proof}
The conjugated and the stochastic weights 
(\eqref{weights_conj} and \eqref{vertex_weights_stoch}, respectively),
also satisfy certain versions of the Yang--Baxter equation,
see, e.g. \cite[Appendix C]{CorwinPetrov2015}.

\begin{remark}
	The matrix $X$ \eqref{matrix_X}
	itself can be viewed as a 
	version of the R-matrix corresponding to 
	both representations being two-dimensional 
	(details may be found in
	the proof of 
	Proposition 2.5 in 
	\cite{Borodin2014vertex}). 
	% Namely,
	% we have the following equality of $X$
	% with a matrix of stochastic vertex weights
	% \eqref{vertex_weights_stoch}:
	% \begin{align*}
	% 	X\big\vert_{q\to q^{-1}}
	% 	=\big(\ybspec_1-q^{-1}\ybspec_2\big)\begin{bmatrix}
	% 		\Lmatr_{u,1/\sqrt{q}}(0,0;0,0)&\Lmatr_{u,1/\sqrt{q}}(0,0;0,1)&\Lmatr_{u,1/\sqrt{q}}(0,0;1,0)&\Lmatr_{u,1/\sqrt{q}}(0,0;1,1)\\
	% 		\Lmatr_{u,1/\sqrt{q}}(0,1;0,0)&\Lmatr_{u,1/\sqrt{q}}(0,1;0,1)&\Lmatr_{u,1/\sqrt{q}}(0,1;1,0)&\Lmatr_{u,1/\sqrt{q}}(0,1;1,1)\\
	% 		\Lmatr_{u,1/\sqrt{q}}(1,0;0,0)&\Lmatr_{u,1/\sqrt{q}}(1,0;0,1)&\Lmatr_{u,1/\sqrt{q}}(1,0;1,0)&\Lmatr_{u,1/\sqrt{q}}(1,0;1,1)\\
	% 		\Lmatr_{u,1/\sqrt{q}}(1,1;0,0)&\Lmatr_{u,1/\sqrt{q}}(1,1;0,1)&\Lmatr_{u,1/\sqrt{q}}(1,1;1,0)&\Lmatr_{u,1/\sqrt{q}}(1,1;1,1)
	% 	\end{bmatrix},
	% \end{align*}
	% where
	% $u=\ybspec_2/(\ybspec_1\sqrt q)$.
\end{remark}

% subsection yang_baxter_relation_in_coordinate_form (end)

\subsection{The Yang--Baxter equation in operator language} % (fold)
\label{sub:yang_baxter_relation_in_operator_language}

Before drawing corollaries from the Yang--Baxter equation, let us restate it in a different language which is sometimes more
convenient. 

Consider a vector space $V=\Span\{\bv_i\colon i=0,1,2,\ldots\}$,
and linear operators 
$\AY(u)$,
$\BY(u)$,
$\CY(u)$,
$\DY(u)$ on this space which depend on a spectral parameter $u\in\C$ 
and act in this basis as follows (cf. \S \ref{sub:vertex_weights}):
\begin{align}\label{ABCD_elementary_operators}
	\begin{array}{ll}
		\AY(u)\,\bv_g
		:=w_{u,\SP}\Big(\raisebox{-15pt}{\mbox{\vertexoo{.5}}}\Big)\,\bv_g
		=\dfrac{1-\SP q^{g}u}{1-\SP u}\,\bv_g,&
		\DY(u)\,\bv_g:=
		w_{u,\SP}\Big(\raisebox{-15pt}{\mbox{\vertexll{.5}}}\Big)\,\bv_g
		=\dfrac{u-\SP q^{g}}{1-\SP u}\,\bv_g,\\
		\BY(u)\,\bv_g:=
		w_{u,\SP}\Big(\raisebox{-15pt}{\mbox{\vertexlo{.5}}}\Big)\,\bv_{g+1}
		=\dfrac{1-q^{g+1}}{1-\SP u}\,\bv_{g+1},&
		\CY(u)\,\bv_g:=
		w_{u,\SP}\Big(\raisebox{-15pt}{\mbox{\vertexol{.5}}}\Big)\,\bv_{g-1}
		=\dfrac{(1-\SP ^{2}q^{g-1})u}{1-\SP u}\,\bv_{g-1},
	\end{array}
\end{align}
where $g\in\Z_{\ge0}$, and, by agreement, $w_{u,\SP}(0,0;-1,1)=0$.
Note that in every vertex $(i_1,j_1;i_2,j_2)$
above, 
$i_1$ corresponds to the index of the vector that 
the operator is applied to, and $i_2$ corresponds to the index of the image vector.
The four possibilities for $j_1,j_2\in\{0,1\}$  
correspond to the four operators.

These four operators are conveniently united into a $2\times 2$
matrix with operator entries
\begin{align*}
	\TY(u):=\begin{bmatrix}
		\AY(u)&\BY(u)\\
		\CY(u)&\DY(u)\\
	\end{bmatrix},
\end{align*}
known as the \emph{monodromy matrix}.
It can be viewed as an operator 
$\TY(u)\colon \C^{2}\otimes V\to\C^{2}\otimes V$.
The space $\C^{2}$ is often called the \emph{auxiliary space},
and $V$ is referred to as the \emph{physical}, or \emph{quantum space}.

In terms of the monodromy matrices, the Yang--Baxter equation (Proposition \ref{prop:YB})
takes the form
\begin{align}\label{YB_operator_form}
	\big(
	\TY(\ybspec_1)\otimes\TY(\ybspec_2)
	\big)=Y
	\big(
	\TY(\ybspec_2)\otimes\TY(\ybspec_1)
	\big)Y^{-1},
\end{align}
where
\begin{align*}
	Y:=(X^{-1})^{\textnormal{transpose}}
	=
	\frac{1}{(\ybspec_1-q\ybspec_2)(\ybspec_2-q \ybspec_1)}
	\begin{bmatrix}
		 \ybspec_2-q \ybspec_1 & 0 & 0 & 0 \\
		 0 & \ybspec_2-\ybspec_1 & (1-q)\ybspec_2 & 0 \\
		 0 & (1-q)\ybspec_1 & q (\ybspec_2-\ybspec_1) & 0 \\
		 0 & 0 & 0 & \ybspec_2-q \ybspec_1
	\end{bmatrix}
	,
\end{align*}
with $X$ given by \eqref{matrix_X}.

The tensor product in both sides of \eqref{YB_operator_form} is taken with respect to the
two different auxiliary spaces corresponding to $(k_1,k_2)$ in Fig.~\ref{fig:YB}. 
Namely, we have
\begin{align}
	\TY(\ybspec_1)\otimes\TY(\ybspec_2)=\begin{bmatrix}
		\begin{matrix}
			\AY(\ybspec_1)\AY(\ybspec_2)&\AY(\ybspec_1)\BY(\ybspec_2)\\
			\AY(\ybspec_1)\CY(\ybspec_2)&\AY(\ybspec_1)\DY(\ybspec_2)
		\end{matrix}
		&
		\begin{matrix}
			\BY(\ybspec_1)\AY(\ybspec_2)&\BY(\ybspec_1)\BY(\ybspec_2)\\
			\BY(\ybspec_1)\CY(\ybspec_2)&\BY(\ybspec_1)\DY(\ybspec_2)
		\end{matrix}
		\\\\
		\begin{matrix}
			\CY(\ybspec_1)\AY(\ybspec_2)&\CY(\ybspec_1)\BY(\ybspec_2)\\
			\CY(\ybspec_1)\CY(\ybspec_2)&\CY(\ybspec_1)\DY(\ybspec_2)
		\end{matrix}
		&
		\begin{matrix}
			\DY(\ybspec_1)\AY(\ybspec_2)&\DY(\ybspec_1)\BY(\ybspec_2)\\
			\DY(\ybspec_1)\CY(\ybspec_2)&\DY(\ybspec_1)\DY(\ybspec_2)
		\end{matrix}
	\end{bmatrix},
	\label{T1T2}
\end{align}
and similarly,
\begin{align}
	\label{T2T1}
	\TY(\ybspec_2)\otimes\TY(\ybspec_1)=\begin{bmatrix}
		\begin{matrix}
			\AY(\ybspec_2)\AY(\ybspec_1)&\BY(\ybspec_2)\AY(\ybspec_1)\\
			\CY(\ybspec_2)\AY(\ybspec_1)&\DY(\ybspec_2)\AY(\ybspec_1)
		\end{matrix}
		&
		\begin{matrix}
			\AY(\ybspec_2)\BY(\ybspec_1)&\BY(\ybspec_2)\BY(\ybspec_1)\\
			\CY(\ybspec_2)\BY(\ybspec_1)&\DY(\ybspec_2)\BY(\ybspec_1)
		\end{matrix}
		\\\\
		\begin{matrix}
			\AY(\ybspec_2)\CY(\ybspec_1)&\BY(\ybspec_2)\CY(\ybspec_1)\\
			\CY(\ybspec_2)\CY(\ybspec_1)&\DY(\ybspec_2)\CY(\ybspec_1)
		\end{matrix}
		&
		\begin{matrix}
			\AY(\ybspec_2)\DY(\ybspec_1)&\BY(\ybspec_2)\DY(\ybspec_1)\\
			\CY(\ybspec_2)\DY(\ybspec_1)&\DY(\ybspec_2)\DY(\ybspec_1)
		\end{matrix}
	\end{bmatrix}.
\end{align}
See also Fig.~\ref{fig:YB_operator} for an example.
\begin{figure}[htbp]
	\begin{tabular}{cc}
	\begin{tikzpicture}
		[scale=1,ultra thick]
		\draw (0,0)--++(1,0) node [right] {$0$};
		\draw (0,0)--++(-1,0) node [left] {$1$};
		\draw (0,1)--++(1,0) node [right] {$0$};
		\draw (0,1)--++(-1,0) node [left] {$0$};
		\draw (0,0)--++(0,-.8) node[below] {$g$};
		\draw (0,1)--++(0,.8) node[above] {$g+1$};
		\draw (0,0)--++(0,1) node[midway, xshift=18pt] {$g+1$};
		\draw[fill] (0,0) circle (3pt) node [above left] {$\ybspec_2$};
		\draw[fill] (0,1) circle (3pt) node [above left] {$\ybspec_1$};
	\end{tikzpicture}
	&\hspace{40pt}
	\begin{tikzpicture}
		[scale=1,ultra thick]
		\draw (0,0)--++(1,0) node [right] {$0$};
		\draw (0,0)--++(-1,0) node [left] {$1$};
		\draw (0,1)--++(1,0) node [right] {$0$};
		\draw (0,1)--++(-1,0) node [left] {$0$};
		\draw (0,0)--++(0,-.8) node[below] {$g$};
		\draw (0,1)--++(0,.8) node[above] {$g+1$};
		\draw (0,0)--++(0,1) node[midway, xshift=18pt] {$g+1$};
		\draw[fill] (0,0) circle (3pt) node [above left] {$\ybspec_1$};
		\draw[fill] (0,1) circle (3pt) node [above left] {$\ybspec_2$};
	\end{tikzpicture}
	\end{tabular}
	\caption{The operator $\AY(\ybspec_1)\BY(\ybspec_2)$ applied to the basis vector $\bv_g$ corresponds to 
	the configuration on the left, and 
	$\AY(\ybspec_1)\BY(\ybspec_2)\,\bv_{g}=\widetilde w^{(g,g+1)}_{\ybspec_2,\ybspec_1;\,\SP}(0,1;0,0)\,\bv_{g+1}$.
	Similarly, we have
	$\AY(\ybspec_2)\BY(\ybspec_1)\,\bv_{g}=w^{(g,g+1)}_{\ybspec_1,\ybspec_2;\,\SP}(1,0;0,0)\,\bv_{g+1}$,
	which corresponds to the configuration on the right.}
	\label{fig:YB_operator}
\end{figure}

The Yang--Baxter equation \eqref{YB_operator_form} in the matrix form allows to extract individual commutation relations 
between the operators $\AY$, $\BY$, $\CY$, and $\DY$. 
Let us write down relations which will be useful in what follows.
Comparing matrix elements $(1,1)$ on both sides of \eqref{YB_operator_form} implies
\begin{align}\label{A_commute}
	\AY(\ybspec_1)\AY(\ybspec_2)=\AY(\ybspec_2)\AY(\ybspec_1).
\end{align}
Similarly, looking at matrix elements $(1,4)$ and $(4,4)$ gives rise to 
\begin{align}\label{B_commute}
	\BY(\ybspec_1)\BY(\ybspec_2)& =\BY(\ybspec_2)\BY(\ybspec_1),
	\\
	\label{D_commute}
	\DY(\ybspec_1)\DY(\ybspec_2)& =\DY(\ybspec_2)\DY(\ybspec_1),
\end{align}
respectively. Looking at matrix elements $(2,4)$ leads to
\begin{align}
	\label{YB_relationBD}
	\BY(\ybspec_1)\DY(\ybspec_2)=
	\frac{\ybspec_1-\ybspec_2}{q\ybspec_1-\ybspec_2}\DY(\ybspec_2)\BY(\ybspec_1)+\frac{(1-q)\ybspec_2}{\ybspec_2-q\ybspec_1}\BY(\ybspec_2)\DY(\ybspec_1)
	.
\end{align}
Finally, considering matrix elements 
$(2,1)$ and
$(1,3)$ implies, respectively,
\begin{align}
	\label{YB_relationAC}
	\AY(\ybspec_1)\CY(\ybspec_2)&=
	\frac{u_1-u_2}{qu_1-u_2}
	\CY(\ybspec_2)\AY(\ybspec_1)
	+
	\frac{(1-q)u_2}{u_2-qu_1}
	\AY(\ybspec_2)\CY(\ybspec_1)
	,
	\\
	\label{YB_relationAB}
	\BY(\ybspec_1)\AY(\ybspec_2)&=
	\frac{\ybspec_1-\ybspec_2}{\ybspec_1-q\ybspec_2}\AY(\ybspec_2)\BY(\ybspec_1)+\frac{(1-q)\ybspec_2}{\ybspec_1-q\ybspec_2}\BY(\ybspec_2)\AY(\ybspec_1)
	.
\end{align}

\begin{remark}
	All operators entering the matrices \eqref{T1T2} and \eqref{T2T1}
	and the relations
	\eqref{A_commute}--\eqref{YB_relationAB}
	correspond to one and the same physical space, and 
	contain the same parameter $\SP$.
\end{remark}

% subsection yang_baxter_relation_in_operator_language (end)

\subsection{Attaching vertical columns} % (fold)
\label{sub:attaching_vertical_columns}

A very important 
property of the Yang--Baxter equation is that it survives 
when one attaches several vertical columns on the side, 
with the requirement that the
conjugating matrix
is the same across the vertical columns.
Let us consider the situation of two vertical columns (cf.~Fig.~\ref{fig:YB_attach}),
with different $\SP$--parameters $\SP_1$ and $\SP_2$, and
spectral parameters $\ip_1\ybspec_1$ and $\ip_1\ybspec_2$ in one column
and
$\ip_2\ybspec_1$ and $\ip_2\ybspec_2$ in the other column, respectively.
Here the parameters $u_{1}$ and $u_2$ are 
as usual constant along horizontal rows,
and $\ip_{1}$ and $\ip_2$ are the inhomogeneity parameters 
which are constant along vertical columns.
The spectral parameter at a vertex is the product of the corresponding
``$u$'' and ``$\ip$'' parameters.
Recall that the conjugating matrix $X$ of Proposition \ref{prop:YB}
depends only on the ratio of spectral parameters in a column and does not depend on $\SP$, 
so it is the same for our two vertical columns.

Attaching two vertical columns on the side involves summing over all possible 
intermediate numbers of arrows
$k_1'$ and $k_2'$,
i.e., this corresponds to taking the product of two $4\times 4$ matrices
$w_{\ip_1\ybspec_1,\ip_1\ybspec_2;\,\SP_1}^{(m_1,n_1)}$ and
$w_{\ip_2\ybspec_1,\ip_2\ybspec_2;\,\SP_2}^{(m_2,n_2)}$.
Clearly, for this product the Yang--Baxter equation 
\eqref{YB_main_relation} is not going to change. 
One can similarly attach an arbitrary finite number of vertical columns
with $\SP$--parameters $\SP_j$ and 
spectral parameters $\ip_j\ybspec_1$ and $\ip_j \ybspec_2$ in the $j$-th column,
and the Yang--Baxter equation will continue to hold.
\begin{figure}[htbp]
	\begin{tikzpicture}
		[scale=1,ultra thick]
		\draw (0,0)--++(2.9,0) node [right] {$k_1''$};
		\draw (0,0)--++(-1.3,0) node [left] {$k_1$};
		\draw (0,1)--++(2.9,0) node [right] {$k_2''$};
		\draw (0,1)--++(-1.3,0) node [left] {$k_2$};
		\draw (0,0)--++(0,-.8) node[below] {$m_1$};
		\draw (0,1)--++(0,.8) node[above] {$n_1$};
		\draw (1.6,0)--++(0,-.8) node[below] {$m_2$};
		\draw (1.6,1)--++(0,.8) node[above] {$n_2$};
		\draw (0,0)--++(0,1) node[midway, xshift=7pt] {$l_1$};
		\draw (1.6,0)--++(0,1) node[midway, xshift=-6pt] {$l_2$};
		\node at (.8,-.3) {$k_1'$};
		\node at (.8,1.3) {$k_2'$};
		\draw[fill] (0,0) circle (3pt) node [above left] {$\ip_1\ybspec_1$};
		\draw[fill] (0,1) circle (3pt) node [above left] {$\ip_1\ybspec_2$};
		\draw[fill] (1.6,1) circle (3pt) node [above right] {$\ip_2\ybspec_2$};
		\draw[fill] (1.6,0) circle (3pt) node [above right] {$\ip_2\ybspec_1$};
		\node[circle, draw, line width=.6pt,
		minimum height=1em] at (0,-1.8) {$\SP_1$};
		\node[circle, draw, line width=.6pt,
		minimum height=1em] at (1.6,-1.8) {$\SP_2$};
	\end{tikzpicture}
	\caption{Attaching two vertical columns with 
	$\SP$--parameters $\SP_j$
	and spectral parameters
	$\ip_j\ybspec_1$ and $\ip_j \ybspec_2$ in the $j$-th column, $j=1,2$.
	The corresponding Yang--Baxter equation continues to hold.}
	\label{fig:YB_attach}
\end{figure}
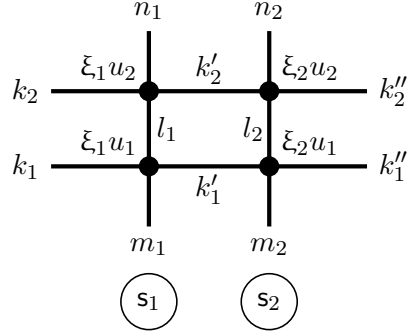

In the operator language 
attaching two vertical columns 
is equivalent to 
taking a tensor product $V=V_1\otimes V_2$ of two different physical spaces $V_1$ and $V_2$ with the same conjugating matrix $X$. 
The monodromy matrix in the space $V$ has the form
\begin{align}
	\TY=\begin{bmatrix}
		\AY&\BY\\
		\CY&\DY
	\end{bmatrix}
	=
	\begin{bmatrix}
		\AY_2&\BY_2\\
		\CY_2&\DY_2
	\end{bmatrix}\begin{bmatrix}
		\AY_1&\BY_1\\
		\CY_1&\DY_1
	\end{bmatrix}
	=\begin{bmatrix}
		\AY_2\AY_1+\BY_2\CY_1&\AY_2\BY_1+\BY_2\DY_1\\
		\CY_2\AY_1+\DY_2\CY_1&\CY_2\BY_1+\DY_2\DY_1
	\end{bmatrix}.
	\label{T_V1V2}
\end{align}
Here the lower index $1$ or $2$ in the operators above corresponds to the 
vertical (=~physical) space in which they act, i.e., $\AY_2=\AY_2(\ip_2 \ybspec\md\SP_2)$ acts in the second
vertical column corresponding to the parameter $\SP_2$, and $\AY_1=\AY_1(\ip_1 \ybspec\md\SP_1)$ acts in the same way in the first vertical column corresponding to $\SP_1$,
and similarly for $\BY_{1,2}$, $\CY_{1,2}$, and $\DY_{1,2}$ (we have omitted
spectral parameters in the notation in \eqref{T_V1V2}).
Note that any two operators with different lower indices commute.

The monodromy matrix $\TY=\TY(\ybspec\md\ipb,\SPB)$ in \eqref{T_V1V2}, 
where $\ipb=(\ip_1,\ip_2)$ and $\SPB=(\SP_1,\SP_2)$, corresponds to one horizontal row of vertices, and $\ybspec$
is the spectral parameter that is constant along this horizontal row.
That is, the four matrix elements of $\TY(\ybspec\md\ipb,\SPB)$ correspond to the following four configurations:
\begin{align*}
	\begin{bmatrix}
		\begin{tikzpicture}
			[scale=1,very thick]
			\draw[densely dotted] (0,0)--++(2.9,0) node [right] {$0$};
			\draw[densely dotted] (0,0)--++(-1.3,0) node [left] {$0$};
			\draw (0,.5)--++(0,-1);
			\draw (1.6,.5)--++(0,-1);
			\node at (.8,-.3) {$k'$};
			\draw[fill] (0,0) circle (3pt) node [above left] {$\ip_1\ybspec$};
			\draw[fill] (1.6,0) circle (3pt) node [above right] {$\ip_2\ybspec$};
		\end{tikzpicture}
		&
		\begin{tikzpicture}
			[scale=1,very thick]
			\draw[densely dotted] (0,0)--++(2.9,0) node [right] {$0$};
			\draw[densely dotted] (0,0)--++(-1.3,0) node [left] {$1$};
			\draw (0,.5)--++(0,-1);
			\draw (1.6,.5)--++(0,-1);
			\node at (.8,-.3) {$k'$};
			\draw[fill] (0,0) circle (3pt) node [above left] {$\ip_1\ybspec$};
			\draw[fill] (1.6,0) circle (3pt) node [above right] {$\ip_2\ybspec$};
			\draw (-1.3,0)--++(1.3,0);
			\draw[->] (-1.3,0)--++(.3,0);
		\end{tikzpicture}
		\\\\
		\begin{tikzpicture}
			[scale=1,very thick]
			\draw[densely dotted] (0,0)--++(2.9,0) node [right] {$1$};
			\draw[densely dotted] (0,0)--++(-1.3,0) node [left] {$0$};
			\draw (0,.5)--++(0,-1);
			\draw (1.6,.5)--++(0,-1);
			\node at (.8,-.3) {$k'$};
			\draw[fill] (0,0) circle (3pt) node [above left] {$\ip_1\ybspec$};
			\draw[fill] (1.6,0) circle (3pt) node [above right] {$\ip_2\ybspec$};
			\draw[->] (1.6,0)--++(1.3,0);
		\end{tikzpicture}
		&
		\begin{tikzpicture}
			[scale=1,very thick]
			\draw[densely dotted] (0,0)--++(2.9,0) node [right] {$1$};
			\draw[densely dotted] (0,0)--++(-1.3,0) node [left] {$1$};
			\draw (0,.5)--++(0,-1);
			\draw (1.6,.5)--++(0,-1);
			\node at (.8,-.3) {$k'$};
			\draw[fill] (0,0) circle (3pt) node [above left] {$\ip_1\ybspec$};
			\draw[fill] (1.6,0) circle (3pt) node [above right] {$\ip_2\ybspec$};
			\draw (-1.3,0)--++(1.3,0);
			\draw[->] (-1.3,0)--++(.3,0);
			\draw[->] (1.6,0)--++(1.3,0);
		\end{tikzpicture}
	\end{bmatrix},
\end{align*}
and the two summands in each matrix element in \eqref{T_V1V2} correspond to 
$k'$ being $0$ or $1$.

Furthermore, tensor products of two monodromy matrices like \eqref{T1T2} and \eqref{T2T1} correspond to 
configurations as in Fig.~\ref{fig:YB_attach}. As follows from the above discussion, these tensor
products satisfy the same Yang--Baxter equation \eqref{YB_operator_form}.

% subsection attaching_vertical_columns (end)

% section yang_baxter_relation (end)

\section{Symmetric rational functions} % (fold)
\label{sec:symmetric_rational_functions}

We will now discuss
how the setup of \S \ref{sec:yang_baxter_relation}
can be applied to the physical space 
corresponding to the semi-infinite horizontal strip. 
This will lead to an introduction of 
certain symmetric rational functions 
which are one of our main objects.

\subsection{Signatures} % (fold)
\label{sub:signatures}

Let us first introduce some necessary notation.
By a \emph{signature}
of length $N$ we mean a sequence 
$\la=(\la_1\ge \la_2\ge \ldots\ge\la_N)$, $\la_i\in\Z$.
The set of all signatures of length $N$
will be denoted by $\sign{N}$, 
and $\signp{N}$ will stand for the 
set of signatures with $\la_N\ge0$.
By agreement, by $\sign{0}=\signp{0}$ we will denote the 
set consisting of the single empty signature $\varnothing$
of length $0$. Also let 
$\signpe{}:=\bigsqcup_{N\ge0}\signp{N}$
denote the set of all possible nonnegative signatures (including the empty one).
We will also use the multiplicative notation
$\mu=0^{m_0}1^{m_1}2^{m_2}\ldots\in\signpe$
for signatures, which means that $m_j:=|\{i\colon\mu_i=j\}|$
is the number of parts in $\mu$ that are equal to $j$ ($m_j$ is called the \emph{multiplicity} of $j$).
% For $\mu\in\signe{}$, we will also occasionally write 
% $\mu=\ldots(-1)^{m_{-1}}0^{m_0}1^{m_1}2^{m_2}\ldots$.

% subsection signatures (end)

\subsection{Semi-infinite operators $\AY$ and $\BY$ and definition of symmetric rational functions} % (fold)
\label{sub:semi_infinite_operators_ay_and_by_definition_of_FG}

Let us consider the physical space $V=V_0\otimes V_1\otimes V_2\otimes \ldots$,
i.e., a tensor product 
of countably many ``elementary'' physical spaces (each of the latter has basis $\{\bv_j\}_{j\ge0}$ marked by $\Z_{\ge0}$). 
We will think that $V$ corresponds to the semi-infinite (to the right) row
of vertices attached to one another on the side. 
We will make sense of the infinite tensor product $V$
by requiring that we only consider \emph{finitary} vectors $V^{\mathrm{fin}}\subset V$, i.e., 
those in which almost all tensor factors are equal to $\bv_0$.
Therefore, a natural basis
in the space $V^{\mathrm{fin}}$ is indexed by nonnegative signatures:
\begin{align*}
	\bv_{\mu}=\bv_{m_0}\otimes \bv_{m_1}\otimes \bv_{m_2}\otimes \ldots,
	\qquad \mu=0^{m_0}1^{m_1}2^{m_2}\ldots\in\signpe
\end{align*}
($m_0+m_1+\ldots$ is the length of the signature $\mu$ which is finite).
We will work in the space
${\bar V}^{\mathrm{fin}}$
of all possible linear combinations of $\bv_{\mu}$ with complex coefficients.

\begin{definition}\label{def:a_param}
	Let us 
	fix (generic complex nonzero) 
	inhomogeneity parameters $\ipb=\{\ip_{j}\}_{j=0,1,2,\ldots}$
	and $\SP$--parameters
	$\SPB=\{\SP_{j}\}_{j=0,1,2,\ldots}$,
	similarly to what was done in \S \ref{sub:attaching_vertical_columns} before.
	Parameters $\ip_j$ and $\SP_j$ correspond to the ``elementary'' physical space $V_j$
	representing the $j$-th column in our semi-infinite horizontal row of vertices, $j=0,1,\ldots$.
	Also by $\ipbb=\{\ip_{j}^{-1}\}_{j=0,1,2,\ldots}$ we will denote 
	the inverses of the parameters $\ipb$ (and similarly for $\SPBB$).
\end{definition}

Defining the operators $\AY$ and $\BY$ acting in ${\bar V}^{\mathrm{fin}}$ causes no problems.
Indeed, we have for any $N\in\Z_{\ge0}$ and $\la\in\signp N$:
\begin{align}\label{AY_one_var_semi_infinite}
	\AY(u\md\ipb,\SPB)\,\bv_{\la}=\sum_{\mu\in\signp N}
	\textnormal{weight}_{\SPB}
	\Bigg(
	\raisebox{-29pt}{\scalebox{.9}{\begin{tikzpicture}
		[scale=1.2,very thick]
		\def\d{.07}
		\draw[densely dotted] (-.5,0)--++(8.2,0);
		\foreach \ii in {0,1,2,3,4,5,6}
		{
			\draw[densely dotted] (1.2*\ii,-.5)--++(0,1);
			\node[above left, xshift=-1pt] at (1.2*\ii,0) {$\ip_{\ii}u$};
		}
		\node[below] at (0,-.5) {0};
		\node[below] at (1.2,-.5) {$\la_N$};
		\node[below] at (3.6,-.5) {$\la_3$};
		\node[below] at (6,-.5) {$\la_2=\la_1$};
		\node[above] at (7.2,.5) {$\mu_1$};
		\node[above] at (6,.5) {$\mu_3=\mu_2$};
		\node[above] at (2.4,.5) {$\mu_N$};
		\draw[line width=2pt,->] (6+\d,-.5) --++ (0,.2);
		\draw[line width=2pt,->] (6+\d,-.5) --++ (0,.5-\d)--++(\d,\d)--++(1.2-2*\d,0)--++(0,.5);
		\draw[line width=2pt,->] (6-\d,-.5) --++ (0,.2);
		\draw[line width=2pt,->] (6-\d,-.5) --++ (0,.5-\d)--++(2*\d,2*\d)--++(0,.5-\d);
		\draw[line width=2pt,->] (3.6,-.5) --++ (0,.2);
		\draw[line width=2pt,->] (3.6,-.5) --++ (0,.5)--++(2.4-2*\d,0)--++(\d,\d)--++(0,.5-\d);
		\draw[line width=2pt,->] (1.2,-.5) --++ (0,.2);
		\draw[line width=2pt,->] (1.2,-.5) --++ (0,.5)--++(1.2,0)--++(0,.5);
	\end{tikzpicture}}}
	\Bigg)\,\bv_{\mu},
\end{align}
and 
\begin{align}\label{BY_one_var_semi_infinite}
	\BY(u\md\ipb,\SPB)\,\bv_{\la}=\sum_{\mu\in\signp {N+1}}
	\textnormal{weight}_{\SPB}
	\Bigg(
	\raisebox{-29pt}{\scalebox{.9}{\begin{tikzpicture}
		[scale=1.2,very thick]
		\def\d{.07}
		\draw[densely dotted] (-.5,0)--++(8.2,0);
		\foreach \ii in {0,1,2,3,4,5,6}
		{
			\draw[densely dotted] (1.2*\ii,-.5)--++(0,1);
			\node[above left, xshift=-1pt] at (1.2*\ii,0) {$\ip_{\ii}u$};
		}
		\node[below] at (0,-.5) {0};
		\node[below] at (1.2,-.5) {$\la_N$};
		\node[below] at (3.6,-.5) {$\la_3$};
		\node[below] at (6,-.5) {$\la_2=\la_1$};
		\node[above] at (7.2,.5) {$\mu_1$};
		\node[above] at (6,.5) {$\mu_3=\mu_2$};
		\node[above] at (2.4,.5) {$\mu_N$};
		\node[above] at (1.2,.5) {$\mu_{N+1}$};
		\draw[line width=2pt,->] (6+\d,-.5) --++ (0,.2);
		\draw[line width=2pt,->] (6+\d,-.5) --++ (0,.5-\d)--++(\d,\d)--++(1.2-2*\d,0)--++(0,.5);
		\draw[line width=2pt,->] (6-\d,-.5) --++ (0,.2);
		\draw[line width=2pt,->] (6-\d,-.5) --++ (0,.5-\d)--++(2*\d,2*\d)--++(0,.5-\d);
		\draw[line width=2pt,->] (3.6,-.5) --++ (0,.2);
		\draw[line width=2pt,->] (3.6,-.5) --++ (0,.5)--++(2.4-2*\d,0)--++(\d,\d)--++(0,.5-\d);
		\draw[line width=2pt,->] (1.2,-.5) --++ (0,.2);
		\draw[line width=2pt,->] (1.2,-.5) --++ (0,.5-\d)--++(\d,\d)--++(1.2-\d,0)--++(0,.5);
		\draw[line width=2pt,->] (-.5,0) --++ (.2,0);
		\draw[line width=2pt,->] (-.5,0) --++ (.5,0)--++(1.2-\d,0)--++(\d,\d)--++(0,.5-\d);
	\end{tikzpicture}}}
	\Bigg)\,\bv_{\mu}.
\end{align}
That is, in \eqref{AY_one_var_semi_infinite} and \eqref{BY_one_var_semi_infinite}
we sum over 
all possible signatures $\mu$, and for each fixed $\mu$
the coefficient is equal to the weight of the unique path collection connecting the arrow configuration
$\la$ to the configuration $\mu$, as shown pictorially
(the coefficient is $0$ if no admissible path collection 
exists).\footnote{Recall that the weight of a path collection is defined as
the product of weights of all (nonempty) vertices in the corresponding
region of $\Z^{2}$, and that the
weight of the empty vertex 
\emptyvertex{.5} is 1.}
The subscript $\SPB$ in the weights 
corresponds to taking parameter $\SP_j$ in the $j$-th
vertex, $j=0,1,\ldots$.
The difference between the action of
the operators
\eqref{AY_one_var_semi_infinite} and \eqref{BY_one_var_semi_infinite}
is that in \eqref{AY_one_var_semi_infinite} the path collection contains $N$
paths connecting $\la_j$ to $\mu_j$, $j=1,\ldots,N$,
and in \eqref{BY_one_var_semi_infinite} there is one additional path 
starting horizontally at the left boundary, 
and ending at $\mu_{N+1}$.

% \begin{remark}\label{rmk:formal_lin_comb_AY_BY}
% 	For now it is enough to think of the right-hand sides of 
% 	\eqref{AY_one_var_semi_infinite}
% 	and 
% 	\eqref{BY_one_var_semi_infinite}
% 	as of formal linear combinations of basis vectors of ${\bar V}^{\mathrm{fin}}$. 
% 	However, if we require that
% 	\begin{align*}
% 		\left|w_{\ip_j u,\SP_j}(0,1;0,1)\right|=
% 		\left|
% 		\frac{\ip_j u-\SP_j}{1-\SP_j\ip_j u}
% 		\right|<1- \epsilon
% 		\qquad
% 		\textnormal{for some fixed $\epsilon\in(0,1)$ and all $j=0,1,2,\ldots$}
% 		,
% 	\end{align*}
% 	then the right-hand sides of 
% 	\eqref{AY_one_var_semi_infinite}
% 	and 
% 	\eqref{BY_one_var_semi_infinite}
% 	clearly belong to the Hilbert space ${\bar V}^{\mathrm{fin}}$.
% 	Similar analytic conditions on the parameters
% 	will occur constantly throughout the notes.
% \end{remark}

Let us denote the coefficients in the sums in 
\eqref{AY_one_var_semi_infinite} and \eqref{BY_one_var_semi_infinite}
by $\G_{\mu/\la}(u\md\ipb,\SPB)$ and $\F_{\mu/\la}(u\md\ipb,\SPB)$, respectively.
Here $u$ is the spectral parameter, and we also explicitly 
indicate the dependence on the parameters 
$\ipb$ and $\SPB$ (like in the operators $\AY(u\md\ipb,\SPB)$ and $\BY(u\md\ipb,\SPB)$).
% If for some signature $\mu$ there are no paths connecting $\la$ to $\mu$
% as in \eqref{AY_one_var_semi_infinite} or \eqref{BY_one_var_semi_infinite}, 
% then, by agreement, the corresponding coefficient 
% $\G_{\mu/\la}(u\md\ipb,\SPB)$ or $\F_{\mu/\la}(u\md\ipb,\SPB)$ is zero.
\begin{remark}\label{rmk:A_B_semi_infinite_definition}
	Each coefficient $\G_{\mu/\la}(u\md\ipb,\SPB)$ and $\F_{\mu/\la}(u\md\ipb,\SPB)$
	in the semi-infinite setting
	is the same as if we took it in a finite tensor product,
	with the number of factors $\ge \mu_1+1$.
	It follows that the semi-infinite operators \eqref{AY_one_var_semi_infinite} and 
	\eqref{BY_one_var_semi_infinite} satisfy the commutation relations
	\begin{align}\label{AB_semi_infinite_commute}
		\AY(\ybspec_1\md\ipb,\SPB)\AY(\ybspec_2\md\ipb,\SPB)=\AY(\ybspec_2\md\ipb,\SPB)\AY(\ybspec_1\md\ipb,\SPB),
		\quad
		\BY(\ybspec_1\md\ipb,\SPB)\BY(\ybspec_2\md\ipb,\SPB)=\BY(\ybspec_2\md\ipb,\SPB)\BY(\ybspec_1\md\ipb,\SPB)
	\end{align}
	(cf. \eqref{A_commute} and \eqref{B_commute}). 
	Indeed, to check the commutation relations, apply them to 
	$\bv_\la$ and read off the coefficient by each $\bv_\mu$.
	One readily sees that 
	each such coefficient by $\bv_\mu$
	involves only finite summation.
\end{remark}

Similarly, we define the coefficients 
$\G_{\mu/\la}(u_1,\ldots,u_n\md\ipb,\SPB)$ and $\F_{\mu/\la}(u_1,\ldots,u_n\md\ipb,\SPB)$ arising 
from products of our operators in the following way:
\begin{align}
	\label{AY_semi_infinite}
	\AY(u_1\md\ipb,\SPB)\ldots\AY(u_n\md\ipb,\SPB)\,\bv_\la&=\sum_{\mu\in\signp N}
	\G_{\mu/\la}(u_1,\ldots,u_n\md\ipb,\SPB)\,\bv_{\mu},\\
	\BY(u_1\md\ipb,\SPB)\ldots\BY(u_n\md\ipb,\SPB)\,\bv_\la&=\sum_{\mu\in\signp {N+n}}
	\F_{\mu/\la}(u_1,\ldots,u_n\md\ipb,\SPB)\,\bv_{\mu},
	\label{BY_semi_infinite}
\end{align}
where $N\in\Z_{\ge0}$ and $\la\in\signp N$ are arbitrary. 

\medskip

Equivalently, the quantities
$\G_{\mu/\la}(u_1,\ldots,u_n\md\ipb,\SPB)$
and 
$\F_{\mu/\la}(u_1,\ldots,u_n\md\ipb,\SPB)$ 
can be defined as certain partition functions in the higher spin six vertex model:

\begin{definition}\label{def:G}
	Let $N,n\in\Z_{\ge0}$, $\la,\mu\in\signp{N}$.
	Assign to each 
	vertex $(x,y)\in\Z\times\{1,2,\ldots,n\}$ the spectral parameter
	$\ip_x u_y$ and the $\SP$--parameter $\SP_x$.
	Define
	$\G_{\mu/\la}(u_1,\ldots,u_{n}\md\ipb,\SPB)$
	to be the sum of weights of all possible
	collections of $N$ up-right paths such that they
	\begin{itemize}
		\item start with $N$ vertical edges
		$(\la_i,0)\to(\la_i,1)$, $i=1,\ldots,N$,
		\item end with $N$ vertical edges
		$(\mu_i,n)\to(\mu_i,n+1)$, $i=1,\ldots,N$.
	\end{itemize}
	See Fig.~\ref{fig:paths_FG}, left.
	We will also use the abbreviation $\G_{\mu}:=\G_{\mu/(0,0,\ldots,0)}$,
	which corresponds to the decomposition of $\AY(u_1\md\ipb,\SPB)\ldots\AY(u_n\md\ipb,\SPB)(\bv_{N}\otimes \bv_{0}\otimes \bv_{0}\otimes\ldots)$.
\end{definition}
\begin{definition}\label{def:F}
	Let $N,n\in\Z_{\ge0}$, $\la\in\signp{N}$,
	$\mu\in\signp{N+n}$. As before, assign to each 
	vertex $(x,y)\in\Z\times\{1,2,\ldots,n\}$ the spectral parameter
	$\ip_x u_y$ and the $\SP$--parameter $\SP_x$.
	Define
	$\F_{\mu/\la}(u_1,\ldots,u_{n}\md\ipb,\SPB)$
	to be the sum of weights of all possible
	collections of $N+n$ up-right paths such that they
	\begin{itemize}
		\item start with $N$ vertical edges $(\la_i,0)\to(\la_i,1)$, $i=1,\ldots,N$,
		and with $n$ horizontal edges $(-1,y)\to(0,y)$, $y=1,\ldots,n$,
		\item end with $N+n$ vertical edges
		$(\mu_{i},n)\to(\mu_{i},n+1)$, $i=1,\ldots,N+n$.
	\end{itemize}
	See Fig.~\ref{fig:paths_FG}, right.
	We will also use the abbreviation $\F_{\mu}:=\F_{\mu/\varnothing}$,
	which corresponds to the decomposition of 
	$\BY(u_1\md\ipb,\SPB)\ldots\BY(u_n\md\ipb,\SPB)(\bv_{0}\otimes \bv_{0}\otimes\ldots)$.
\end{definition}

In both definitions above,
if a collection of paths has no interior vertices, we define its weight to be~$1$. 
Also, the weight of an empty collection of paths is $0$.
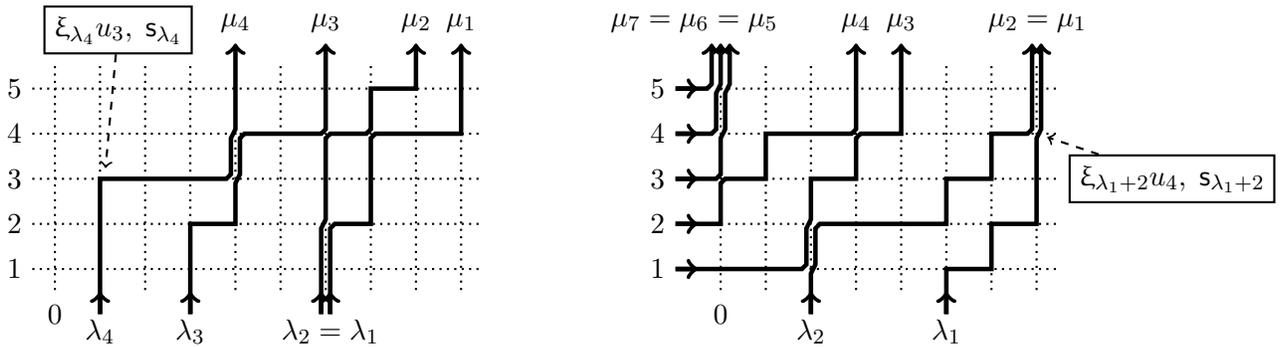
\begin{figure}[htbp]
	\begin{tabular}{cc}
		\begin{tikzpicture}
			[scale=.6,thick]
			\def\d{.1}
			\foreach \xxx in {0,4,5,1,2,3,6,7,8,9}
			{
				\draw[dotted] (\xxx,5.5)--++(0,-5);
			}
			\foreach \xxx in {1,2,3,4,5}
			{
				\draw[dotted] (-.5,\xxx)--++(10,0);
				\node[left] at (-.5,\xxx) {$\xxx$};
			}
			\draw[->, line width=1.7pt] (3,0)--++(0,.5) node [below, yshift=-6pt] {$\la_3$};
			\draw[->, line width=1.7pt] (1,0)--++(0,.5) node [below, yshift=-6pt] {$\la_4$};
			\draw[->, line width=1.7pt] (6+\d,0)--++(0,.5) node [below, yshift=-6pt] {$\la_2=\la_1$};
			\draw[->, line width=1.7pt] (6-\d,0)--++(0,.5);
			\draw[->, line width=1.7pt] (1,0) --++(0,3)--++(3-2*\d,0)--++(\d,\d)--++(0,1-2*\d)--++(\d,2*\d)--++(0,1-\d)--++(0,1) node [above] {$\mu_4$};
			\draw[->, line width=1.7pt] (3,0) --++(0,2)--++(1,0) --++(0,1-\d)--++(\d,2*\d)--++(0,1-2*\d)--++(\d,\d)
			--++(2-3*\d,0)--++(\d,\d)--++(0,2-\d) node [above] {$\mu_3$};
			\draw[->, line width=1.7pt]	(6-\d,0)--++(0,2-\d)--++(\d,2*\d)--++(0,2-2*\d)--++(\d,\d)
			--++(1-2*\d,0)--++(\d,\d)--++(0,1-\d)--++(1,0)--++(0,1)
			node [above] {$\mu_2$};
			\draw[->, line width=1.7pt]	(6+\d,0)--++(0,2-\d)--++(\d,\d)--++(1-2*\d,0)--++(0,2-\d)--++(\d,\d)--++(2-\d,0)--++(0,2)
			node [above] {$\mu_1$};
			\node[rectangle, draw, fill=white] (au_label) at (1.4,6.3) {$\ip_{\la_4}u_3,\; \SP_{\la_4}$};
			\node (au) at (1.1,3) {};
			\draw[->, dashed] (au_label) -- (au);
			\node(0,0) {$0$};
		\end{tikzpicture}
		&\hspace{30pt}
		\begin{tikzpicture}
			[scale=.6,thick]
			\def\d{.1}
			\foreach \xxx in {0,4,5,1,2,3,6,7}
			{
				\draw[dotted] (\xxx,5.5)--++(0,-5);
			}
			\foreach \xxx in {1,2,3,4,5}
			{
				\draw[dotted] (0,\xxx)--++(7.5,0);
				\node[left] at (-1,\xxx) {$\xxx$};
				\draw[->, line width=1.7pt] (-1,\xxx)--++(.5,0);
			}
			\draw[->, line width=1.7pt] (2,0)--++(0,.5) node [below, yshift=-6pt] {$\la_2$};
			\draw[->, line width=1.7pt] (5,0)--++(0,.5) node [below, yshift=-6pt] {$\la_1$};
			\node(0,0) {$0$};
			\draw[->, line width=1.7pt] (-1,5)--++(1-3*\d,0)--++(\d,\d)--++(0,1-\d);
			\draw[->, line width=1.7pt] (-1,4)--++(1-2*\d,0)--++(\d,\d)--++(0,1-2*\d)--++(\d,2*\d)--++(0,1-\d);
			\draw[->, line width=1.7pt] (-1,3)--++(1-\d,0)--++(\d,\d)--++(0,1-2*\d)--++(\d,2*\d)--++(0,1-2*\d)--++(\d,2*\d)--++(0,1-\d)
			node[above, xshift=-10pt] at (0,6) {$\mu_7=\mu_6=\mu_5$};
			\draw[->, line width=1.7pt] (-1,2)--++(1,0)--++(0,1-\d)--++(\d,\d)--++(1-\d,0)
			--++(0,1)--++(2-\d,0)--++(\d,\d)--++(0,2-\d)
			node[above] {$\mu_4$};
			\draw[->, line width=1.7pt] 
			(-1,1)--++(3-2*\d,0)--++(\d,\d)--++(0,1-2*\d)--++(\d,2*\d)--++(0,1-\d)--++(1,0)--++(0,1-\d)--++(\d,\d)--++(1-\d,0)
			--++(0,2) node [above] {$\mu_3$};
			\draw[->, line width=1.7pt] 
			(2,0)--++(0,1-\d)--++(\d,2*\d)--++(0,1-2*\d)--++(\d,\d)--++(3-2*\d,0)--++(0,1)--++(1,0)--++(0,1)
			--++(1-2*\d,0)--++(\d,\d)--++(0,2-\d) node[above, xshift=2pt] {$\mu_2=\mu_1$};
			\draw[->, line width=1.7pt] 
			(5,0)--++(0,1)--++(1,0)--++(0,1)--++(1,0)--++(0,2-\d)--++(\d,2*\d)
			--++(0,2-\d);
			\node[rectangle, draw, fill=white] (au_label) at (10,3) {$\ip_{\la_1+2}u_4,\; \SP_{\la_1+2}$};
			\node (au) at (7,4) {};
			\draw[->, dashed] (au_label) -- (au);
		\end{tikzpicture}
	\end{tabular}
	\caption{Path collections
	used in the definitions of  
	$\G_{\mu/\la}$ (left)
	and $\F_{\mu/\la}$ (right).
	The weight of a path collection is the product of weights of 
	all nonempty vertices (cf. Fig.~\ref{fig:paths_example}).
	Labels in boxes
	show examples
	of 
	spectral parameters $\ip_x u_y$ and parameters $\SP_x$ 
	at the corresponding vertex $(x,y)$.}
	\label{fig:paths_FG}
\end{figure}

Clearly, both quantities $\G_{\mu/\la}(u_1,\ldots,u_n\md\ipb,\SPB)$ and $\F_{\mu/\la}(u_1,\ldots,u_n\md\ipb,\SPB)$
depend on the spectral parameters $u_1,\ldots,u_n$ in a \emph{rational} way.
When the parameters $\ip_j\equiv 1$ and $\SP_j\equiv \SP$ are constant, the functions 
$\G_{\mu/\la}$ and $\F_{\mu/\la}$
reduce to the ones defined in
\cite[\S3]{Borodin2014vertex}.
Therefore, here we consider
\emph{inhomogeneous} versions of the functions
from \cite{Borodin2014vertex}.

\begin{proposition}\label{prop:symmetric}
	The rational functions 
	$\F_{\mu/\la}(u_1,\ldots,u_n\md\ipb,\SPB)$ and
	$\G_{\mu/\la}(u_1,\ldots,u_n\md\ipb,\SPB)$ 
	defined above
	are symmetric with respect to permutations of the 
	$u_j$'s.
\end{proposition}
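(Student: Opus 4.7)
The plan is to deduce the symmetry directly from the commutation relations \eqref{A_commute} and \eqref{B_commute}, which themselves are a direct consequence of the Yang--Baxter equation (Proposition \ref{prop:YB}). Indeed, by Definitions \ref{def:G} and \ref{def:F}, the functions $\G_{\mu/\la}(u_1,\ldots,u_n\mid\ipb,\SPB)$ and $\F_{\mu/\la}(u_1,\ldots,u_n\mid\ipb,\SPB)$ are precisely the coefficients in the expansions \eqref{AY_semi_infinite} and \eqref{BY_semi_infinite}, i.e., they are the matrix elements of the ordered products $\AY(u_1\mid\ipb,\SPB)\cdots\AY(u_n\mid\ipb,\SPB)$ and $\BY(u_1\mid\ipb,\SPB)\cdots\BY(u_n\mid\ipb,\SPB)$ in the basis $\{\bv_\mu\}$. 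If we knew that these products were invariant under permutations of the spectral parameters, then the coefficients, being uniquely determined, would also be invariant under such permutations.

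The first step is therefore to establish the pairwise commutation of the semi-infinite operators $\AY(u_i\mid\ipb,\SPB)$ (resp.\ $\BY(u_i\mid\ipb,\SPB)$) for different spectral parameters. As observed in Remark \ref{rmk:A_B_semi_infinite_definition}, the commutation relations \eqref{AB_semi_infinite_commute} follow from the analogous relations for operators acting in the truncated tensor product $V_0\otimes V_1\otimes\cdots\otimes V_L$ with $L$ sufficiently large, since the matrix coefficient by any fixed $\bv_\mu$ in $\AY(u_i)\AY(u_j)\bv_\la$ involves only finitely many nonzero contributions (the paths must connect the finitely many starting columns of $\la$ to the finitely many ending columns of $\mu$). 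The finite-volume version is in turn reduced, via the column-attachment argument of \S\ref{sub:attaching_vertical_columns}, to the two-column Yang--Baxter equation \eqref{YB_operator_form} --- and reading off the $(1,1)$ and $(4,4)$ matrix entries on both sides yields exactly $\AY(u_1)\AY(u_2)=\AY(u_2)\AY(u_1)$ and $\BY(u_1)\BY(u_2)=\BY(u_2)\BY(u_1)$.

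The second step is to transfer this to arbitrary permutations. Since any permutation in $\Sym_n$ is a product of adjacent transpositions, the pairwise commutation relations imply
\begin{equation*}
\AY(u_{\sigma(1)}\mid\ipb,\SPB)\cdots\AY(u_{\sigma(n)}\mid\ipb,\SPB)=\AY(u_1\mid\ipb,\SPB)\cdots\AY(u_n\mid\ipb,\SPB)
\end{equation*}
for every $\sigma\in\Sym_n$, and similarly for $\BY$. Applying both sides to $\bv_\la$ and equating coefficients of $\bv_\mu$ yields
\begin{equation*}
\G_{\mu/\la}(u_{\sigma(1)},\ldots,u_{\sigma(n)}\mid\ipb,\SPB)=\G_{\mu/\la}(u_1,\ldots,u_n\mid\ipb,\SPB),
\end{equation*}
and the same for $\F_{\mu/\la}$, which is the desired symmetry.

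The only point that requires care --- and would be the main potential obstacle --- is the passage from the finite-volume to the semi-infinite setting, namely, ensuring that the commutation relations really do pass to the limit. This is handled precisely by the finiteness observation in Remark \ref{rmk:A_B_semi_infinite_definition}: for a fixed input $\bv_\la$ and a fixed output $\bv_\mu$, all intermediate path configurations contributing to either side of the commutation relation are supported on columns with index at most $\max(\mu_1)+1$, so they are identical to the corresponding finite-volume contributions where Proposition \ref{prop:YB} directly applies.
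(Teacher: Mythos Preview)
Your proof is correct and follows the same approach as the paper: the symmetry is an immediate consequence of the commutation relations \eqref{AB_semi_infinite_commute}, which the paper simply cites in a one-line proof. You have spelled out in more detail the reduction to finite volume (via Remark \ref{rmk:A_B_semi_infinite_definition}) and the passage from pairwise commutation to arbitrary permutations, but the underlying argument is identical.
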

\begin{proof}
	This immediately follows from the commutation relations
	\eqref{AB_semi_infinite_commute}.
\end{proof}

The functions
$\F_{\mu/\la}$
and $\G_{\mu/\la}$
satisfy the following branching rules:
\begin{proposition}
\label{prop:branching}
	\noindent{\bf1.\/}
	For any 
	$N,n_1,n_2\in\Z_{\ge0}$,
	$\la\in\signp{N}$, and $\mu\in \signp{N+n_1+n_2}$,
	one has
	\begin{equation}\label{branching-F}
		\F_{\mu/\la}(u_1,\dots,u_{n_1+n_2}\md\ipb,\SPB)=
		\sum_{\kappa\in\signp{N+n_1}}  
		\F_{\mu/\kappa}(u_{n_1+1},\dots,u_{n_1+n_2}\md\ipb,\SPB)\F_{\kappa/\la}(u_1,\ldots,u_{n_1}\md\ipb,\SPB). 
	\end{equation}
\smallskip

	\noindent{\bf2.\/}
	For any $N,n_1,n_2\in\Z_{\ge0}$, and $\la,\mu\in\signp{N}$, one has
	\begin{equation}\label{branching-G}
		\G_{\mu/\la}(u_1,\ldots,u_{n_1+n_2}\md\ipb,\SPB)
		=
		\sum_{\kappa\in\signp N} 
		\G_{\mu/\kappa}(u_{n_1+1},\ldots,u_{n_1+n_2}\md\ipb,\SPB)
		\G_{\kappa/\la}(u_1,\ldots,u_{n_1}\md\ipb,\SPB)
		.
	\end{equation}
\end{proposition}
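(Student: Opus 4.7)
My plan is to derive both branching rules directly from the operator definitions \eqref{AY_semi_infinite} and \eqref{BY_semi_infinite} together with the commutation relations \eqref{AB_semi_infinite_commute}. The key observation is that the $\BY$'s (and separately the $\AY$'s) mutually commute on $\bar V^{\mathrm{fin}}$, so the product of $n_1+n_2$ of them can be split into a front group of $n_1$ and a back group of $n_2$ without reordering the spectral parameters.

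Concretely, for \eqref{branching-F} I would start from
\begin{align*}
\BY(u_1\md\ipb,\SPB)\cdots\BY(u_{n_1+n_2}\md\ipb,\SPB)\,\bv_\la
&=\bigl[\BY(u_{n_1+1}\md\ipb,\SPB)\cdots\BY(u_{n_1+n_2}\md\ipb,\SPB)\bigr]\\
&\qquad\times\bigl[\BY(u_1\md\ipb,\SPB)\cdots\BY(u_{n_1}\md\ipb,\SPB)\,\bv_\la\bigr],
\end{align*}
which is an equality of vectors by the commutativity of the $\BY$'s. Applying \eqref{BY_semi_infinite} to the inner bracket gives $\sum_{\kappa\in\signp{N+n_1}}\F_{\kappa/\la}(u_1,\ldots,u_{n_1}\md\ipb,\SPB)\,\bv_\kappa$, and then applying \eqref{BY_semi_infinite} again to each $\bv_\kappa$ in the outer bracket produces $\sum_{\mu}\F_{\mu/\kappa}(u_{n_1+1},\ldots,u_{n_1+n_2}\md\ipb,\SPB)\,\bv_\mu$. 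Reading off the coefficient of $\bv_\mu$ on the left-hand side via \eqref{BY_semi_infinite} and on the right-hand side via the combined sum yields \eqref{branching-F}. The argument for \eqref{branching-G} is identical, with $\AY$ replacing $\BY$ (and $\signp N$ replacing $\signp{N+n_1}$ because each $\AY$ preserves the length of the signature).

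Equivalently, and perhaps more transparently, one can argue directly from the partition-function interpretation of Definitions \ref{def:F} and \ref{def:G}: for a path ensemble contributing to $\F_{\mu/\la}(u_1,\dots,u_{n_1+n_2}\md\ipb,\SPB)$, look at the horizontal section between rows $n_1$ and $n_1+1$, and let $\kappa\in\signp{N+n_1}$ record the $x$-coordinates of the vertical edges crossing this section. The multiplicativity of vertex weights factors the weight of the ensemble as the product of the sub-ensemble in rows $1,\dots,n_1$ (which has configuration $\la$ below and $\kappa$ above, hence contributes $\F_{\kappa/\la}(u_1,\ldots,u_{n_1}\md\ipb,\SPB)$) and the sub-ensemble in rows $n_1+1,\dots,n_1+n_2$ (contributing $\F_{\mu/\kappa}(u_{n_1+1},\ldots,u_{n_1+n_2}\md\ipb,\SPB)$). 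Summing over all intermediate $\kappa$ gives \eqref{branching-F}; the analogous splitting with no incoming horizontal edges on the boundary gives \eqref{branching-G}.

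I do not anticipate a real obstacle here; the only point requiring a small amount of care is matching the order of arguments and the length of the intermediate signature $\kappa$ (which must increase by $n_1$ in the $\F$-case because each application of $\BY$ adds one path from the left boundary, but stays the same in the $\G$-case because $\AY$ does not change the number of up-going arrows). Both checks are immediate from \eqref{AY_one_var_semi_infinite}--\eqref{BY_one_var_semi_infinite}.
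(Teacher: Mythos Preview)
Your proposal is correct and matches the paper's own proof, which simply says the identities follow ``in a straightforward way'' from the operator definitions \eqref{AY_semi_infinite}--\eqref{BY_semi_infinite} and amount to ``splitting of summation over path collections \ldots\ such that the signature $\kappa$ keeps track of the cross-section of the path collection at height $n_1$.'' Your second (partition-function) argument is exactly this; note that it already yields the stated ordering of the variable groups directly, so the appeal to commutativity in your first argument, while correct, is not strictly needed.
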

\begin{proof}
	Follows from the definitions \eqref{AY_semi_infinite} and \eqref{BY_semi_infinite} in a straightforward way.
	In other words, 
	identities \eqref{branching-F} and \eqref{branching-G} 
	simply mean 
	the splitting of summation over path collections
	in $\F_{\mu/\la}$
	and $\G_{\mu/\la}$, such that
	the signature
	$\kappa$ keeps track of the cross-section 
	of the path collection at height $n_1$.
\end{proof}

Along with Proposition \ref{prop:symmetric}, one can 
also establish the following partial symmetry 
property of the functions $\F_{\mu/\la}$
and $\G_{\mu/\la}$
with respect to the inhomogeneity parameters:
\begin{proposition}\label{prop:a_symmetry}
	For any interval of consecutive integers
	$\{i,i+1,\ldots,j\}\subset\Z\setminus (\la\cup\mu)$,
	the functions 
	$\G_{\mu/\la}(u_1,\ldots,u_{n}\md\ipb,\SPB)$
	and 
	$\F_{\mu/\la}(u_1,\ldots,u_{n}\md\ipb,\SPB)$
	from Definitions \ref{def:G} and \ref{def:F}
	are symmetric with respect to any permutation of the inhomogeneity parameters
	$\ip_i,\ip_{i+1},\ldots,\ip_{j}$ and the same (simultaneous) permutation
	of the $\SP$--parameters $\SP_i,\SP_{i+1},\ldots,\SP_{j}$.
\end{proposition}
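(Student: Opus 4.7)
The plan is to reduce to a single adjacent transposition and then invoke a ``column'' version of the Yang--Baxter equation within a region where paths can neither start nor end. Any permutation of the pairs $(\ip_i, \SP_i), \ldots, (\ip_j, \SP_j)$ factors into adjacent transpositions, so it suffices to handle a single swap $(\ip_k, \SP_k) \leftrightarrow (\ip_{k+1}, \SP_{k+1})$ with $k, k+1 \in \{i, \ldots, j\}$. The hypothesis $\{i, \ldots, j\} \subset \Z \setminus (\la \cup \mu)$ gives $k, k+1 \notin \la \cup \mu$, so in every path configuration contributing to $\F_{\mu/\la}$ or $\G_{\mu/\la}$ (see Definitions \ref{def:G}, \ref{def:F}) all vertical edges at the top (row $n \to n+1$) and at the bottom (row $0 \to 1$) of the two columns $k, k+1$ carry zero arrows. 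Splitting the sum over path configurations into ``outside the strip $\{k, k+1\} \times \{1, \ldots, n\}$'' and ``inside the strip'' reduces the problem to showing that, for any fixed horizontal boundary data on the left and right sides of the strip, the strip partition function is invariant under the swap.

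I would prove this strip invariance via a column analog of Proposition \ref{prop:YB}: an R-matrix $\mathcal{R}$ acting on the tensor product of the two vertical (Verma-module) spaces at columns $k$ and $k+1$, which at each row $y$ intertwines the two-column vertex weight with its parameter-swapped counterpart, and which satisfies $\mathcal{R}(\bv_0 \otimes \bv_0) = c \cdot \bv_0 \otimes \bv_0$ for a nonzero scalar $c$ (since $\bv_0 \otimes \bv_0$ is a highest-weight vector in the tensor product). Inserting $\mathcal{R}$ at the top and $\mathcal{R}^{-1}$ at the bottom of the strip, sandwiched between the vacuum states on both ends, has no net effect (the scalars $c$ and $c^{-1}$ cancel); iterating the Yang--Baxter intertwiner through the $n$ rows then swaps the two columns' parameters, yielding the desired invariance of the strip partition function (and hence, after summing over outside configurations, of $\F_{\mu/\la}$ and $\G_{\mu/\la}$).

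The main obstacle is that the required column Yang--Baxter equation is not developed in Sections \ref{sec:vertex_weights}--\ref{sec:yang_baxter_relation}, which treat only the row version (where one representation is two-dimensional and the R-matrix is the simple $4 \times 4$ matrix $X$). The $\mathcal{R}$ above is the $U_q(\widehat{\mathfrak{sl}_2})$ R-matrix between two generic Verma modules---an infinite-dimensional object that is standard but would need to be constructed or imported. A self-contained alternative for the non-skew case $\la = \varnothing$ uses the explicit symmetrization formula \eqref{intro-F_symm_formula}: the factor $\pow_m(u)$ is manifestly invariant under the swap whenever $m \notin \{k, k+1\}$, since then the product $\prod_{j=0}^{m-1} \frac{\ip_j u - \SP_j}{1 - \SP_j \ip_j u}$ simply permutes two of its factors while the prefactor $\frac{1-q}{1 - \SP_m \ip_m u}$ is unaffected. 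The hypothesis yields $\mu_i \notin \{k, k+1\}$ for all $i$, so $\F_\mu$ is invariant; an analogous symmetrization formula for $\G_\mu$ (Theorem \ref{thm:symmetrization}) handles the $\G$ side in the non-skew case, and the general skew case can then be pushed through via the branching rules \eqref{branching-F}--\eqref{branching-G} together with an inversion argument that isolates $\F_{\mu/\la}$ and $\G_{\mu/\la}$ from the relevant sums.
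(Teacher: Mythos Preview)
Your proposal is correct and follows essentially the same route as the paper: the paper's proof invokes a ``horizontal'' Yang--Baxter equation (your ``column'' version) with a more complicated intertwiner than the $4\times 4$ matrix of Proposition~\ref{prop:YB}, acknowledges that this $\mathcal{R}$ is the higher-spin R-matrix (related to the general~$J$ weights of \S\ref{sec:stochastic_weights_and_fusion}) and does not spell out its construction, and remarks that the non-skew case $\la=\varnothing$ follows alternatively from the symmetrization formulas of Theorem~\ref{thm:symmetrization}. Your write-up actually supplies more of the mechanism (the reduction to adjacent swaps, the vacuum-eigenvector property $\mathcal{R}(\bv_0\otimes\bv_0)=c\,\bv_0\otimes\bv_0$, the zipper through the $n$ rows) than the paper does; the only part the paper does not endorse is your final speculative suggestion of recovering the skew case from the non-skew formulas via branching plus inversion, but since your primary argument already covers the skew case this is immaterial.
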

\begin{proof}
	This is a straightforward corollary of a ``horizontal'' version of the Yang--Baxter equation
	which is similar to Proposition \ref{prop:YB}, but with a more complicated 
	conjugating matrix
	related to the general $J$ vertex weights 
	(about those see \S \ref{sec:stochastic_weights_and_fusion} below).
	We will not discuss details of this Yang--Baxter equation here,
	but will note that for $\la=\varnothing$ the 
	claim would alternatively follow from the 
	explicit formulas for our symmetric functions, see Theorem \ref{thm:symmetrization} below.
\end{proof}

% subsection semi_infinite_operators_ay_and_by_definition_of_FG (end)

\subsection{Semi-infinite operator $\DY$} % (fold)
\label{sub:semi_infinite_operator_D}

It is slightly more difficult to define the action of the other two operators,
$\CY$ and $\DY$, in the semi-infinite context. 
We will not need the operator $\CY$,
so let us focus on $\DY=\DY(u\md\ipb,\SPB)$. 
The action of $\DY$ (in a finite tensor product) corresponds to the following 
configuration (cf. \eqref{AY_one_var_semi_infinite} and \eqref{BY_one_var_semi_infinite}): 
\begin{align*}
	\raisebox{-29pt}{\scalebox{.9}{\begin{tikzpicture}
		[scale=1.2,very thick]
		\def\d{.07}
		\draw[densely dotted] (-.5,0)--++(8.2,0);
		\foreach \ii in {0,1,2,3,4,5,6}
		{
			\draw[densely dotted] (1.2*\ii,-.5)--++(0,1);
			\node[above left, xshift=-1pt] at (1.2*\ii,0) {$\ip_{\ii}u$};
		}
		\node[below] at (0,-.5) {0};
		\node[below] at (1.2,-.5) {$\la_N$};
		\node[below] at (3.6,-.5) {$\la_3$};
		\node[below] at (6,-.5) {$\la_2=\la_1$};
		\draw[line width=2pt,->] (6+\d,-.5) --++ (0,.2);
		\draw[line width=2pt,->] (6+\d,-.5) --++ (0,.5-\d)--++(\d,\d)--++(1.8-2*\d,0);
		\draw[line width=2pt,->] (6-\d,-.5) --++ (0,.2);
		\draw[line width=2pt,->] (6-\d,-.5) --++ (0,.5-\d)--++(\d,2*\d)--++(0,.5-\d) node [above] {$\mu_1$};
		\draw[line width=2pt,->] (3.6,-.5) --++ (0,.2);
		\draw[line width=2pt,->] (3.6,-.5) --++ (0,.5)--++(1.2,0)--++(0,.5) node [above] {$\mu_2$};
		\draw[line width=2pt,->] (1.2,-.5) --++ (0,.2);
		\draw[line width=2pt,->] (1.2,-.5) --++ (0,.5-\d)--++(\d,\d)--++(1.2-\d,0)--++(0,.5)
		node [above] {$\mu_{N-1}$};
		\draw[line width=2pt,->] (-.5,0) --++ (.2,0);
		\draw[line width=2pt,->] (-.5,0) --++ (.5,0)--++(1.2-\d,0)--++(\d,\d)--++(0,.5-\d) node [above] {$\mu_N$};
		\node[circle, draw, line width=.6pt,minimum height=1em] at (0,-1.3) {$\SP_0$};
		\node[circle, draw, line width=.6pt,minimum height=1em] at (1.2,-1.3) {$\SP_1$};
		\node[circle, draw, line width=.6pt,minimum height=1em] at (2.4,-1.3) {$\SP_2$};
		\node[circle, draw, line width=.6pt,minimum height=1em] at (3.6,-1.3) {$\SP_3$};
		\node[circle, draw, line width=.6pt,minimum height=1em] at (4.8,-1.3) {$\SP_4$};
		\node[circle, draw, line width=.6pt,minimum height=1em] at (6,-1.3) {$\SP_5$};
		\node[circle, draw, line width=.6pt,minimum height=1em] at (7.2,-1.3) {$\SP_6$};
	\end{tikzpicture}}}
\end{align*}
For the semi-infinite horizontal strip, the weight of this configuration
would involve an infinite product of the form
\begin{align*}
	\prod_{j\ge\la_1+1}w_{\ip_j u,\SP_j}(0,1;0,1)=
	\prod_{j\ge\la_1+1}\frac{\ip_j u-\SP_j}{1-\SP_j\ip_j u},
\end{align*}
which means that one cannot define the operator $\DY(u\md\ipb,\SPB)$ in the semi-infinite setting
directly. 

However, the definition of $\DY$ can be easily corrected, by considering strips of finite length $L+1$
and the operators $\DY(u\md\ipb,\SPB)$ in $V_0\otimes \ldots\otimes V_L$. For a fixed $L$
denote such an operator by $\DY_L=\DY_L(u\md\ipb,\SPB)$.
Dividing $\DY_L$ by
$\prod_{j=0}^{L}w_{\ip_j u,\SP_j}(0,1;0,1)$, and sending $L\to+\infty$, we would arrive at a meaningful object.
Indeed, under this transformations the weights of individual vertices will turn into
\begin{align}
	\begin{array}{rclclcl}
		\dfrac{1}
		{w_{u,\SP}(0,1;0,1)}w_{u,\SP}\Big(\raisebox{-18pt}{\mbox{\vertexoo{.6}}}\Big)
		&=&\dfrac{1-\SP q^{g}u}{u-\SP }
		&=&w_{u^{-1},\SP}\Big(\raisebox{-18pt}{\mbox{\vertexll{.6}}}\Big)&=&
		w_{u^{-1},\SP}^{\conj}\Big(\raisebox{-18pt}{\mbox{\vertexll{.6}}}\Big),
		\\
		\rule{0pt}{22pt}
		\dfrac{1}
		{w_{u,\SP}(0,1;0,1)}w_{u,\SP}\Big(\raisebox{-18pt}{\mbox{\vertexoll{.6}}}\Big)&=&\dfrac{(1-\SP ^{2}q^{g})u}{u-\SP }
		&=&
		w_{u^{-1},\SP}\Big(\raisebox{-18pt}{\mbox{\vertexlo{.6}}}\Big)\dfrac{1-\SP ^{2}q^{g}}{1-q^{g+1}}&=&
		w_{u^{-1},\SP}^{\conj}\Big(\raisebox{-18pt}{\mbox{\vertexlo{.6}}}\Big),\\
		\rule{0pt}{22pt}
		\dfrac{1}
		{w_{u,\SP}(0,1;0,1)}w_{u,\SP}\Big(\raisebox{-18pt}{\mbox{\vertexll{.6}}}\Big)&=&\dfrac{u-\SP q^{g}}{u-\SP }
		&=&w_{u^{-1},\SP}\Big(\raisebox{-18pt}{\mbox{\vertexoo{.6}}}\Big)&=&w_{u^{-1},\SP}^{\conj}\Big(\raisebox{-18pt}{\mbox{\vertexoo{.6}}}\Big),\\
		\rule{0pt}{22pt}
		\dfrac{1}
		{w_{u,\SP}(0,1;0,1)}w_{u,\SP}\Big(\raisebox{-18pt}{\mbox{\vertexlo{.6}}}\Big)&=&\dfrac{1-q^{g+1}}{u-\SP }
		&=&w_{u^{-1},\SP}\Big(\raisebox{-18pt}{\mbox{\vertexoll{.6}}}\Big)\dfrac{1-q^{g+1}}{1-\SP ^{2}q^{g}}
		&=&w_{u^{-1},\SP}^{\conj}\Big(\raisebox{-18pt}{\mbox{\vertexoll{.6}}}\Big),
	\end{array}
	\label{D_Bar_computation}
\end{align}
where we have used the conjugated weights \eqref{weights_conj}.
Note that the numbers of vertical incoming and outgoing arrows at a vertex were \emph{swapped} under the above transformations.
Therefore, for any $L\ge \la_1+1$ we have
\begin{align*}
	\frac{[\textnormal{coefficient of $\bv_\mu$ in
	$\DY_{L}(u\md\ipb,\SPB)\,\bv_\la$}]}{\prod_{j=0}^{L}w_{\ip_j u,\SP_j}(0,1;0,1)}=
	[\textnormal{coefficient of $\bv_\la$ in
	$\AY(u^{-1}\md\ipbb,\SPB)\,\bv_\mu$}]\cdot \frac{\conj_{\SPB}(\la)}{\conj_{\SPB}(\mu)},
\end{align*}
where for any signature $\nu\in\signpe$ we have denoted
\begin{align}\label{conj_definition}
	\conj_{\SPB}(\nu):=\prod_{k=0}^{\infty}\frac{(\SP_{k}^{2};q)_{n_k}}{(q;q)_{n_k}},
	\qquad \nu=0^{n_0}1^{n_1}2^{n_2}\cdots
\end{align}
(this product has finitely many factors not equal to $1$).
Recall that $\ipbb$ means inverting the inhomogeneity parameters, as 
dictated by the transformations \eqref{D_Bar_computation}.
The operator $\AY(u^{-1}\md\ipbb,\SPB)$ above can be regarded as acting either in a finite tensor product,
or in the semi-infinite space ${\bar V}^{\mathrm{fin}}$, since 
matrix elements corresponding to $(\bv_\mu,\bv_\la)$
of 
these two versions of $\AY(u^{-1}\md\ipb,\SPB)$ coincide 
for fixed $\mu,\la$ and large enough $L$ (cf. Remark \ref{rmk:A_B_semi_infinite_definition}).

We see that it is natural to define the normalized operator 
\begin{align}\label{D_Bar}
	\overline\DY(u\md\ipb,\SPB):=
	\lim_{L\to+\infty}
	\frac{
	\DY_{L}(u\md\ipb,\SPB)}{\prod_{j=0}^{L}w_{\ip_j u,\SP_j}(0,1;0,1)},
\end{align}
where the limit is taken in the sense of matrix elements corresponding to the basis vectors
$\{\bv_\la\}_{\la\in\signpe}$. The matrix elements of
$\overline\DY(u\md\ipb,\SPB)$ are (cf. \eqref{AY_one_var_semi_infinite})
\begin{align*}
	\overline\DY(u\md\ipb,\SPB)\,\bv_{\la}=\sum_{\mu\in\signp N}
	\frac{\conj_{\SPB}(\la)}{\conj_{\SPB}(\mu)}\G_{\la/\mu}(u^{-1}\md\ipbb,\SPB)\,\bv_\mu.
\end{align*}
Observe that the above sum over $\mu$ is \emph{finite},
in contrast with the operators \eqref{AY_one_var_semi_infinite}
and \eqref{BY_one_var_semi_infinite}.
From \eqref{D_Bar} and \eqref{D_commute} it follows that 
the operators 
$\overline\DY(u\md\ipb,\SPB)$ commute for different $u$.

% \begin{remark}
% 	Taking a finite strip and sending its size to infinity
% 	is a well-known operation which connects the Yang--Baxter equations to 
% 	vertex operator-like constructions (about the latter see \cite{okounkov2001infinite}, \cite{okounkov2003correlation}
% 	and references therein).
% 	Vertex models (on finite and infinite strips)
% 	have recently been employed in studying symmetric functions
% 	in, e.g., \cite{korff2013cylindric},
% 	\cite{wheeler2015refined}, \cite{duval2015pieri}.
% \end{remark}

In what follows we will use the notation
\begin{align*}
	\F_{\la/\mu}^{\conj}:=
	\frac{\conj_{\SPB}(\la)}{\conj_{\SPB}(\mu)}\F_{\la/\mu}\,,
	\qquad
	\G_{\la/\mu}^{\conj}:=
	\frac{\conj_{\SPB}(\la)}{\conj_{\SPB}(\mu)}\G_{\la/\mu}\,.
\end{align*}

% subsection semi_infinite_operator_D (end)

\subsection{Cauchy-type identities from the Yang--Baxter commutation relations} % (fold)
\label{sub:limits_of_yang_baxter_commutation_relations_cauchy_type_identities}

Let us consider the semi-infinite limit as $L\to+\infty$ (similar to what was done
in \S \ref{sub:semi_infinite_operator_D} above)
of the Yang--Baxter commutation relation \eqref{YB_relationBD}.
Looking at \eqref{YB_relationBD},
we immediately face the question of what
we need to normalize the two sides by: 
$\prod_{j=0}^{L}w_{\ip_j u_{1},\SP_j}(0,1;0,1)$
or 
$\prod_{j=0}^{L}w_{\ip_j u_{2},\SP_j}(0,1;0,1)$\,? 
Since out of the three terms in 
\eqref{YB_relationBD}
two require the normalization involving $u_2$, let us use that one. 
To be able to take the limit as $L\to+\infty$, 
we will also require that
\begin{align}\label{admissible_bad_condition_general}
	\lim_{L\to+\infty}
	\prod_{j=0}^{L}\left|\frac{w_{\ip_j u_{1},\SP_j}(0,1;0,1)}{w_{\ip_j u_{2},\SP_j}(0,1;0,1)}\right|
	=
	\lim_{L\to+\infty}\prod_{j=0}^{L}
	\left|\frac{\ip_j u_{1}-\SP_j}{1-\SP_j\ip_j u_{1}}
	\cdot
	\frac{1-\SP_j\ip_j u_{2}}{\ip_j u_{2}-\SP_j}\right|=0.
\end{align}
% Clearly, a sufficient condition for \eqref{admissible_bad_condition_general} is
% \begin{align*}
% 	\left|\frac{\ip_j u_{1}-\SP_j}{1-\SP_j\ip_j u_{1}}
% 	\cdot
% 	\frac{1-\SP_j\ip_j u_{2}}{\ip_j u_{2}-\SP_j}\right|<1- \epsilon
% 	\qquad\textnormal{for some $\epsilon\in(0,1)$ and all $j=0,1,2,\ldots$}.
% \end{align*}
Under \eqref{admissible_bad_condition_general}, we can take the normalized 
(by $\prod_{j=0}^{L}w_{\ip_j u_{2},\SP_j}(0,1;0,1)$)
limit of the 
relation \eqref{YB_relationBD}, and, using \eqref{D_Bar}, conclude that 
\begin{align}\label{B_DBar_YB_relation}
	\BY(u_1\md\ipb,\SPB)\overline\DY(u_{2}\md\ipb,\SPB)=
	\frac{u_1-u_2}{qu_1-u_2}
	\overline\DY(u_{2}\md\ipb,\SPB)\BY(u_1\md\ipb,\SPB).
\end{align}
Indeed, before the limit the normalized 
second term of \eqref{YB_relationBD}
contains
\begin{align*}
	\frac{\DY_{L}(u_{1}\md\ipb,\SPB)}{\prod_{j=0}^{L}w_{\ip_j u_{2},\SP_j}(0,1;0,1)}
	=
	\frac{\DY_{L}(u_{1}\md\ipb,\SPB)}
	{\prod_{j=0}^{L}w_{\ip_j u_{1},\SP_j}(0,1;0,1)}
	\cdot\prod_{j=0}^{L}\frac{w_{\ip_j u_{1},\SP_j}(0,1;0,1)}{w_{\ip_j u_{2},\SP_j}(0,1;0,1)},
\end{align*}
which converges to zero by 
\eqref{admissible_bad_condition_general}.

Using the notation 
$\F_{\mu/\la}$ and $\G_{\mu/\la}$ 
introduced in \S \ref{sub:semi_infinite_operators_ay_and_by_definition_of_FG},
relation \eqref{B_DBar_YB_relation} becomes
\begin{align}\label{skew_Cauchy_bad}
	\sum_{\mu\in\signpe}
	\F_{\la/\mu}(u_1\md\ipb,\SPB)
	\G^{\conj}_{\nu/\mu}(u_2^{-1}\md\ipbb,\SPB)
	= \frac{u_1-u_2}{qu_1-u_2}
	\sum_{\kappa\in\signpe}
	\G^{\conj}_{\kappa/\la}(u_2^{-1}\md\ipbb,\SPB)
	\F_{\kappa/\nu}(u_1\md\ipb,\SPB)
	.
\end{align}
Therefore, we have established the following fact:
\begin{proposition}\label{prop:skew_Cauchy}
	Let $u,v\in\C$ satisfy
	\begin{align}\label{admissible_good_condition_general}
		\lim_{L\to+\infty}\prod_{j=0}^{L}
		\left|\frac{\ip_j u-\SP_j}{1-\SP_j\ip_j u}
		\cdot
		\frac{\ip_j^{-1}v-\SP_j}{1-\SP_j\ip_j^{-1}v}\right|=0.
	\end{align}
	Then for any $\la,\nu\in\signpe$ we have
	\begin{align}\label{skew_Cauchy_good}
		\sum_{\kappa\in\signpe}
		\G^{\conj}_{\kappa/\la}(v\md\ipbb,\SPB)
		\F_{\kappa/\nu}(u\md\ipb,\SPB)
		= \frac{1-quv}{1-uv}
		\sum_{\mu\in\signpe}
		\F_{\la/\mu}(u\md\ipb,\SPB)
		\G^{\conj}_{\nu/\mu}(v\md\ipbb,\SPB)
		.
	\end{align}
\end{proposition}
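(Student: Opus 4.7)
The plan is to read off the identity \eqref{skew_Cauchy_good} by extracting matrix elements of the operator identity \eqref{B_DBar_YB_relation} and changing spectral variables. All the substantive work has been completed in the excerpt: the Yang--Baxter commutation \eqref{YB_relationBD} together with the admissibility hypothesis \eqref{admissible_bad_condition_general} ensures that the unwanted term $\BY(u_2)\DY(u_1)$ in \eqref{YB_relationBD}, which after normalization by $\prod_{j=0}^{L}w_{\ip_j u_2,\SP_j}(0,1;0,1)$ acquires the anomalous factor $\prod_{j=0}^{L}w_{\ip_j u_1,\SP_j}(0,1;0,1)/w_{\ip_j u_2,\SP_j}(0,1;0,1)$, drops out in the semi-infinite limit.

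First I would apply \eqref{B_DBar_YB_relation} to a basis vector $\bv_\nu$ and compute both sides in the basis $\{\bv_\la\}$. On the left, $\overline\DY(u_2)\bv_\nu=\sum_\mu \G^{\conj}_{\nu/\mu}(u_2^{-1}\md\ipbb,\SPB)\,\bv_\mu$ (a finite sum, as emphasized in \S\ref{sub:semi_infinite_operator_D}), followed by $\BY(u_1)\bv_\mu=\sum_\la \F_{\la/\mu}(u_1\md\ipb,\SPB)\,\bv_\la$. On the right, $\BY(u_1)\bv_\nu=\sum_\kappa\F_{\kappa/\nu}(u_1\md\ipb,\SPB)\,\bv_\kappa$, followed by $\overline\DY(u_2)\bv_\kappa=\sum_\la\G^{\conj}_{\kappa/\la}(u_2^{-1}\md\ipbb,\SPB)\,\bv_\la$. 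Matching the coefficient of $\bv_\la$ on both sides produces identity \eqref{skew_Cauchy_bad}; no convergence issue arises because each such coefficient involves only finitely many nonzero summands.

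Finally I would perform the substitution $u_1=u$, $u_2=v^{-1}$. A short calculation gives $(u_1-u_2)/(qu_1-u_2)=(u-v^{-1})/(qu-v^{-1})=(1-uv)/(1-quv)$, and dividing through by this factor rearranges \eqref{skew_Cauchy_bad} into the form \eqref{skew_Cauchy_good}. Likewise, multiplying numerator and denominator of the second factor of \eqref{admissible_bad_condition_general} by $v/\ip_j$ converts it verbatim into \eqref{admissible_good_condition_general}, so the admissibility hypothesis of the proposition is exactly what guarantees the validity of \eqref{B_DBar_YB_relation}. The only genuinely delicate point is the semi-infinite limit underlying \eqref{B_DBar_YB_relation} itself, but since that step has been carried out in \S\ref{sub:semi_infinite_operator_D}--\ref{sub:limits_of_yang_baxter_commutation_relations_cauchy_type_identities}, the present proposition reduces to the matrix element bookkeeping described above.
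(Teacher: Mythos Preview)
Your approach is essentially identical to the paper's: read off \eqref{skew_Cauchy_bad} as the matrix-element form of the operator identity \eqref{B_DBar_YB_relation}, then substitute $(u_1,u_2)=(u,v^{-1})$.

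One correction, however: your claim that ``no convergence issue arises because each such coefficient involves only finitely many nonzero summands'' is not accurate. The sum over $\mu$ in \eqref{skew_Cauchy_bad} is indeed finite, but the sum over $\kappa$ is genuinely infinite---there is no upper bound on $\kappa_1$ compatible with both $\F_{\kappa/\nu}(u_1)\neq 0$ and $\G^{\conj}_{\kappa/\la}(u_2^{-1})\neq 0$. Its convergence is precisely what the admissibility hypothesis \eqref{admissible_good_condition_general} supplies: in the finite-$L$ Yang--Baxter identity the first term on the right of \eqref{YB_relationBD} contributes the partial sum $\sum_{\kappa:\kappa_1\le L}$, the left-hand side stabilizes for $L$ large, and the second term carries the anomalous product and tends to zero by \eqref{admissible_bad_condition_general}. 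Hence the partial sums converge and the infinite $\kappa$-series equals the claimed value. The paper notes this explicitly just after the proposition. Your argument is otherwise sound; just replace the finiteness claim with this limiting justification.
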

\begin{proof}
	Indeed, this is just \eqref{skew_Cauchy_bad}
	under the replacement of
	$(u_1,u_2)$
	by $(u,v^{-1})$.
\end{proof}
Identity \eqref{skew_Cauchy_good} is nontrivial 
only if $\nu\in\signp N$ and $\la\in\signp{N+1}$.
In this case the sum in the right-hand side of \eqref{skew_Cauchy_good}
is over $\mu\in\signp N$ and is finite,
while in the left-hand side it
is over $\kappa\in\signp{N+1}$ and is infinite 
(but converges due to \eqref{admissible_good_condition_general}).

We will call \eqref{skew_Cauchy_good}
the \emph{skew Cauchy identity} for the symmetric functions 
$\F_{\mu/\la}$ and $\G_{\mu/\la}$ 
because of its similarity with the skew Cauchy
identities for the Schur, Hall--Littlewood, or 
Macdonald symmetric functions \cite[Ch.\;I.5, Ex.\;26, and Ch.\;VI.7, Ex.\;6]{Macdonald1995}. 
In fact, if $\ip_j\equiv 1$ and $\SP_j\equiv0$, our identity
\eqref{skew_Cauchy_good} becomes the skew Cauchy identity for the Hall--Littlewood
symmetric functions. Further letting $q\to0$,
we recover the Schur case.

When the parameters $\ip_j\equiv 1$ and $\SP_j\equiv \SP$ are constant, 
identity \eqref{skew_Cauchy_good} (and its 
corollaries below in this subsection) appeared in \cite{Borodin2014vertex}.
\begin{definition}\label{def:admissible}
	Let us say that 
	two complex numbers $u,v\in\C$ are \emph{admissible}
	with respect to the parameters $\ipb$ and $\SPB$, denoted $\adm uv$,
	if \eqref{admissible_good_condition_general} holds.
	A sufficient condition which implies \eqref{admissible_good_condition_general} is
	if for some $\epsilon\in(0,1)$, 
	\begin{align}
		\left|\frac{\ip_j u-\SP_j}{1-\SP_j\ip_j u}
		\cdot
		\frac{\ip_j^{-1}v-\SP_j}{1-\SP_j\ip_j^{-1}v}\right|<1-\epsilon
		\qquad\textnormal{for all $j=0,1,2,\ldots$}.
		\label{admissible_sufficient}
	\end{align}
	Note that the relation $\adm uv$ is \emph{not} symmetric in $u$ and $v$:
	$\adm uv\Leftrightarrow \admi vu$.
\end{definition}

The skew Cauchy identity can obviously be iterated with the following result:
\begin{corollary}\label{cor:skew_Cauchy_many}
	Let $u_1,\ldots,u_M$ and $v_1,\ldots,v_N$ be complex
	numbers such that $\adm {u_i}{v_j}$ for all
	$i=1,\ldots,M$ and $j=1,\ldots,N$.
	Then for any $\la,\nu\in\signpe$
	one has
	\begin{multline}\label{skew_Cauchy_many}
		\sum_{\kappa\in\signpe}
		\G^{\conj}_{\kappa/\la}(v_1,\ldots,v_N\md\ipbb,\SPB)
		\F_{\kappa/\nu}(u_1,\ldots,u_M\md\ipb,\SPB)
		\\=\prod_{i=1}^{M}\prod_{j=1}^{N}
		\frac{1-qu_iv_j}{1-u_iv_j}
		\sum_{\mu\in\signpe}
		\F_{\la/\mu}(u_1,\ldots,u_M\md\ipb,\SPB)
		\G^{\conj}_{\nu/\mu}(v_1,\ldots,v_N\md\ipbb,\SPB).
	\end{multline}
\end{corollary}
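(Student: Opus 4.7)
The plan is to lift Proposition \ref{prop:skew_Cauchy} to an operator identity on ${\bar V}^{\mathrm{fin}}$ and iterate it $MN$ times. After the substitution $u_2=v^{-1}$, relation \eqref{B_DBar_YB_relation} reads $\BY(u\md\ipb,\SPB)\,\overline\DY(v^{-1}\md\ipb,\SPB) = \tfrac{1-uv}{1-quv}\,\overline\DY(v^{-1}\md\ipb,\SPB)\,\BY(u\md\ipb,\SPB)$, and Proposition \ref{prop:skew_Cauchy} is obtained by applying both sides to $\bv_\nu$ and reading off the coefficient of $\bv_\la$. My target is the iterated operator identity
\begin{multline*}
\BY(u_1\md\ipb,\SPB)\cdots\BY(u_M\md\ipb,\SPB)\,\overline\DY(v_1^{-1}\md\ipb,\SPB)\cdots\overline\DY(v_N^{-1}\md\ipb,\SPB)\\
= \prod_{i=1}^{M}\prod_{j=1}^{N}\frac{1-u_iv_j}{1-qu_iv_j}\cdot\overline\DY(v_1^{-1}\md\ipb,\SPB)\cdots\overline\DY(v_N^{-1}\md\ipb,\SPB)\,\BY(u_1\md\ipb,\SPB)\cdots\BY(u_M\md\ipb,\SPB),
\end{multline*}
valid on ${\bar V}^{\mathrm{fin}}$.

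To establish this, I would move $\BY(u_M\md\ipb,\SPB)$ past $\overline\DY(v_1^{-1}\md\ipb,\SPB),\ldots,\overline\DY(v_N^{-1}\md\ipb,\SPB)$ one at a time, each swap contributing the scalar $(1-u_Mv_j)/(1-qu_Mv_j)$ via the single-variable commutation above; then repeat with $\BY(u_{M-1}\md\ipb,\SPB)$, and so on, accumulating the full product $\prod_{i,j}\tfrac{1-u_iv_j}{1-qu_iv_j}$. The mutual commutativity \eqref{AB_semi_infinite_commute} of the $\BY$'s among themselves, and of the $\overline\DY$'s among themselves (a consequence of \eqref{D_commute} together with the normalization \eqref{D_Bar}), guarantees that no extra summands appear during these reorderings. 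Each application of the swap requires only admissibility of the corresponding pair $(u_i,v_j)$, which is assumed.

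Finally, I would apply the operator identity to $\bv_\nu$ and read off the coefficient of $\bv_\la$. Iterating the branching rules of Proposition \ref{prop:branching} yields $\BY(u_1\md\ipb,\SPB)\cdots\BY(u_M\md\ipb,\SPB)\,\bv_\rho = \sum_\tau \F_{\tau/\rho}(u_1,\ldots,u_M\md\ipb,\SPB)\,\bv_\tau$ and $\overline\DY(v_1^{-1}\md\ipb,\SPB)\cdots\overline\DY(v_N^{-1}\md\ipb,\SPB)\,\bv_\sigma = \sum_\rho \G^\conj_{\sigma/\rho}(v_1,\ldots,v_N\md\ipbb,\SPB)\,\bv_\rho$, the latter being a \emph{finite} sum for each $\sigma$. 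Extracting the $\bv_\la$-coefficient from the left-hand side produces $\sum_\mu \F_{\la/\mu}(u_1,\ldots,u_M\md\ipb,\SPB)\,\G^\conj_{\nu/\mu}(v_1,\ldots,v_N\md\ipbb,\SPB)$, and from the right-hand side produces the prefactor times $\sum_\kappa \F_{\kappa/\nu}(u_1,\ldots,u_M\md\ipb,\SPB)\,\G^\conj_{\kappa/\la}(v_1,\ldots,v_N\md\ipbb,\SPB)$; rearrangement gives \eqref{skew_Cauchy_many}.

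The only genuine subtlety is analytic: the $\kappa$-sum arising on the right-hand side (equivalently, the left-hand side of \eqref{skew_Cauchy_many}) is genuinely infinite, and one must verify that the admissibility hypotheses ensure absolute convergence at every intermediate stage of the iterated swap so that the rearrangements are legitimate. In case of any borderline issue, one can first establish the identity under the sufficient condition \eqref{admissible_sufficient} holding uniformly in $j$, and then extend to all admissible parameters by analytic continuation, since both sides are rational functions of the $u_i$ and $v_j$.
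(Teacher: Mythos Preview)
Your proposal is correct and follows the paper's approach: the paper simply states that the single-variable skew Cauchy identity \eqref{skew_Cauchy_good} ``can obviously be iterated'', and your argument is an explicit implementation of that iteration, phrased in the operator language of $\BY$ and $\overline\DY$.

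One minor caveat: in your fallback clause you assert that ``both sides are rational functions of the $u_i$ and $v_j$'', but the left-hand side of \eqref{skew_Cauchy_many} is an infinite sum and is not \emph{a priori} rational---it only becomes rational once the identity is known. So the analytic continuation route, as stated, is circular. This does not damage your main argument, however: the direct iteration of \eqref{B_DBar_YB_relation} (equivalently, of \eqref{skew_Cauchy_good} via the branching rules) goes through under the admissibility hypothesis $\adm{u_i}{v_j}$, provided one verifies at each stage that the relevant infinite sums converge absolutely so that reordering is legitimate. That verification is routine given the exponential decay implicit in Definition~\ref{def:admissible}, and is at the same level of rigor the paper itself assumes.
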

Furthermore, the skew Cauchy identity \eqref{skew_Cauchy_many} can be simplified
by specializing some of the indices. 
Recall the abbreviations $\G_\mu$ and $\F_\mu$ from Definitions \ref{def:G} and \ref{def:F}.
The identity of Corollary \ref{cor:skew_Cauchy_many}
readily implies the following facts:
\begin{corollary}
\label{cor:Pieri}
	\noindent{\rm{}\bf{}1.\/} For any $N\in\Z_{\ge0}$, $\la\in\signp N$, 
	and any complex $u_1,\ldots,u_N$ and $v$ such that $\adm{u_i}{v}$ for all $i$, 
	we have
	\begin{align}\label{Pieri1}
		\sum_{\kappa\in\signp{N}}
		\G^{\conj}_{\kappa/\la}(v\md\ipbb,\SPB)
		\F_{\kappa}(u_1,\ldots,u_N\md\ipb,\SPB)
		=\prod_{i=1}^{N}
		\frac{1-qu_iv}{1-u_iv}\,
		\F_{\la}(u_1,\ldots,u_N\md\ipb,\SPB).
	\end{align}
\smallskip

	\noindent{\rm{}\bf{}2.\/} For any $N,n\in\Z_{\ge0}$ any $\nu\in\signp N$,
	and any complex $u$ and $v_1,\ldots,v_n$ such that $\adm u{v_j}$ for all $j$, 
	we have
	\begin{align}\label{Pieri2}
		\sum_{\kappa\in\signp{N+1}}
		\G^{\conj}_{\kappa}(v_1,\ldots,v_n\md\ipbb,\SPB)
		\F_{\kappa/\nu}(u\md\ipb,\SPB)
		=
		\frac{1-q^{N+1}}{1-\SP_0\ip_0 u}
		\prod_{j=1}^{n}\frac{1-quv_j}{1-uv_j}\,
		\G^{\conj}_{\nu}(v_1,\ldots,v_n\md\ipbb,\SPB).
	\end{align}
\end{corollary}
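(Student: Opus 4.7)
Both parts will follow from a direct specialization of the iterated skew Cauchy identity \eqref{skew_Cauchy_many}, with one of the two ``external'' signatures collapsing to a single admissible choice for simple combinatorial reasons.

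For Part 1 (the Pieri rule \eqref{Pieri1}), the plan is to apply \eqref{skew_Cauchy_many} with $M = N$ variables $u_1, \ldots, u_N$, a single $v$-variable, and the choice $\nu = \varnothing$. On the right-hand side, the summand $\F_{\la/\mu}(u_1,\ldots,u_N\md\ipb,\SPB)$ forces $\mu \in \signp{\,|\la|-N\,} = \signp{0}$, so only $\mu = \varnothing$ contributes. Then $\G^{\conj}_{\varnothing/\varnothing}(v\md\ipbb,\SPB) = 1$ (the empty-input, empty-output configuration has weight $1$), and $\F_{\la/\varnothing} = \F_\la$. The Cauchy identity thus collapses to precisely \eqref{Pieri1}.

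For Part 2 (the dual Pieri rule \eqref{Pieri2}), the plan is symmetric: apply \eqref{skew_Cauchy_many} with $M = 1$ (a single $u$), $N \to n$ variables $v_1, \ldots, v_n$, and the choice $\la = 0^{N+1}$. The left-hand side becomes the left-hand side of \eqref{Pieri2} since $\G^{\conj}_{\kappa/0^{N+1}} = \G^{\conj}_{\kappa}$ by definition. On the right-hand side, the summand $\F_{0^{N+1}/\mu}(u\md\ipb,\SPB)$ requires $\mu \in \signp{N}$, and I claim it vanishes unless $\mu = 0^{N}$. Indeed, by Definition \ref{def:F} the partition function $\F_{0^{N+1}/\mu}(u)$ involves $N$ paths entering at vertical edges $(\mu_i, 0) \to (\mu_i, 1)$ and one path entering at the horizontal edge $(-1,1)\to(0,1)$, all of which must exit above the single row at $x$-coordinates $0^{N+1}$. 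Since the path rules are strictly up-right, no part $\mu_i$ can be positive, forcing $\mu = 0^N$. The single non-zero configuration then has exactly one nontrivial vertex at $(0,1)$, of type $(N,1;N+1,0)$, contributing the weight $\frac{1-q^{N+1}}{1-\SP_0\ip_0 u}$ from \eqref{weights}. Substituting this collapses the $\mu$-sum on the right-hand side to the desired expression in \eqref{Pieri2}.

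The main point to verify carefully is the monotonicity argument ruling out $\mu \neq 0^N$ in Part 2 and the evaluation of the single surviving vertex weight; after that, both identities are immediate specializations of \eqref{skew_Cauchy_many}. No hard estimates are needed beyond checking the admissibility hypothesis $\adm{u_i}{v_j}$ transfers unchanged from the statement of the corollary to the hypotheses of \eqref{skew_Cauchy_many}.
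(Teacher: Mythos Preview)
Your proposal is correct and follows essentially the same approach as the paper: specialize the iterated skew Cauchy identity \eqref{skew_Cauchy_many} with $\nu=\varnothing$ for Part~1 and with $\la=0^{N+1}$ for Part~2, then observe that the $\mu$-sum on the right collapses to a single term. Your added justification that $\F_{0^{N+1}/\mu}(u)$ vanishes unless $\mu=0^N$ (via the up-right constraint on paths) and the explicit identification of the surviving vertex weight $(N,1;N+1,0)$ match exactly what the paper invokes ``by the very definition of $\F$.''
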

\begin{proof}
	Identity \eqref{Pieri1} follows from \eqref{skew_Cauchy_many} by taking $\nu=\varnothing$ 
	and a single $v$-variable.
	Then the sum 
	over $\mu$ in the right-hand side of \eqref{skew_Cauchy_many} reduces to just $\mu=\varnothing$.
		
	Identity \eqref{Pieri2} follows by taking $\la=0^{N+1}$ 
	and a single $u$-variable
	in \eqref{skew_Cauchy_many}, and observing that
	$\F_{0^{N+1}/\mu}(u\md\ipb,\SPB)=\frac{1-q^{N+1}}{1-\SP_0\ip_0 u}
	\mathbf{1}_{\mu=0^N}$ by the very definition of $\F$.
\end{proof}
Identities \eqref{Pieri1} and \eqref{Pieri2}
are analogous to the \emph{Pieri rules}
for Schur, Hall--Littlewood, or Macdonald
symmetric functions \cite[Ch.\;I.5, formula\;(5.16), and Ch.\;VI.6]{Macdonald1995}.

\begin{remark}\label{rmk:Pieri_as_eigenrelations}
	Identity \eqref{Pieri1} shows that the functions 
	$\{\F_{\la}(u_1,\ldots,u_N\md\ipb,\SPB)\}_{\la\in\signp N}$
	for each set of the $u$'s form an eigenvector
	of the transfer matrix $\{\G_{\nu/\la}^{\conj}(v\md\ipbb,\SPB)\}_{\la,\nu\in\signp N}$
	viewed as acting in the spatial variables corresponding to signatures
	(i.e., with rows indexed by $\la$ and columns indexed by $\nu$).
	Equivalently, 
	$\{\F_{\la}^{\conj}(u_1,\ldots,u_N\md\ipb,\SPB)\}_{\la\in\signp N}$
	is an eigenvector of the transfer matrix 
	$\{\G_{\nu/\la}(v\md\ipbb,\SPB)\}_{\la,\nu\in\signp N}$ (i.e., the conjugation ``$\conj$'' can be moved).
	This statement is parallel (and simpler)
	to the fact that on a finite lattice,
	the vector
	$\BY(u_1\md\ipb,\SPB)\ldots\BY(u_n\md\ipb,\SPB)(\bv_{0}\otimes \ldots \otimes\bv_0)$
	is an eigenvector of the operator $\AY(v\md\ipb,\SPB)+\DY(v\md\ipb,\SPB)$
	given certain nonlinear \emph{Bethe equations} on $u_1,\ldots,u_N$.
	In our case the Bethe equations disappeared, and only one of the terms
	in $\AY(v\md\ipb,\SPB)+\DY(v\md\ipb,\SPB)$ has survived.

	One can also obtain analogous statements 
	when the number of $v$-variables in \eqref{Pieri1} 
	is greater than one --- this would correspond to applying a sequence of 
	transfer matrices with varying spectral parameters.
\end{remark}

Taking $\nu=\varnothing$ and $\la=0^{M}$ 
in \eqref{skew_Cauchy_many}
and noting that
\begin{align}\label{F_at_zero_signature}
	\F_{0^{M}}(u_1,\ldots,u_M\md\ipb,\SPB)=
	\frac{(q;q)_{M}}{\prod_{i=1}^{M}(1-\SP_0\ip_0u_i)},
\end{align}
we arrive at the following analogue of the 
\emph{usual} 
(\emph{non-skew}) \emph{Cauchy identity} 
(see \cite[Ch.\;I.4, formula\;(4.3), and Ch.\;VI.4, formula\;(4.13)]{Macdonald1995} for the corresponding Schur and Macdonald 
Cauchy identities):
\begin{corollary}\label{cor:usual_Cauchy}
	For $M,N\ge0$ and complex numbers 
	$u_1,\ldots,u_M$
	and $v_1,\ldots,v_N$
	such that
	$\adm{u_i}{v_j}$ for all $i$ and $j$, one has
	\begin{align}
		\sum_{\mu\in\signp M}
		\F_{\mu}(u_1,\ldots,u_M\md\ipb,\SPB)
		\G^{\conj}_{\mu}(v_1,\ldots,v_N\md\ipbb,\SPB)
		=
		\frac{(q;q)_{M}}{\prod_{i=1}^{M}(1-\SP_0\ip_0u_i)}
		\prod_{i=1}^{M}\prod_{j=1}^{N}
		\frac{1-qu_iv_j}{1-u_iv_j}
		.
		\label{usual_Cauchy}
	\end{align}
\end{corollary}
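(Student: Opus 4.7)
The plan is to specialize the iterated skew Cauchy identity \eqref{skew_Cauchy_many} of Corollary~\ref{cor:skew_Cauchy_many} at $\nu=\varnothing$ and $\la=0^{M}$, and then evaluate both sides explicitly. With these choices, the summation index on the left-hand side of \eqref{skew_Cauchy_many} runs over $\kappa\in\signp M$, and since $\F_{\kappa/\varnothing}=\F_{\kappa}$ and $\G^{\conj}_{\kappa/0^{M}}=\G^{\conj}_{\kappa}$ straight from Definitions~\ref{def:G} and~\ref{def:F}, this side reduces to
$$\sum_{\kappa\in\signp M}\G^{\conj}_{\kappa}(v_1,\ldots,v_N\md\ipbb,\SPB)\,\F_{\kappa}(u_1,\ldots,u_M\md\ipb,\SPB),$$
which is precisely the left-hand side of \eqref{usual_Cauchy}.

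On the right-hand side of \eqref{skew_Cauchy_many} the sum over $\mu\in\signpe$ collapses: $\G^{\conj}_{\varnothing/\mu}$ is defined only when $\mu$ and $\varnothing$ have equal length, forcing $\mu=\varnothing$, and then $\G^{\conj}_{\varnothing/\varnothing}=1$ since the corresponding path ensemble consists of zero paths and has no interior vertices. What remains is
$$\prod_{i=1}^{M}\prod_{j=1}^{N}\frac{1-qu_iv_j}{1-u_iv_j}\cdot\F_{0^{M}}(u_1,\ldots,u_M\md\ipb,\SPB),$$
so the proof is complete as soon as the evaluation \eqref{F_at_zero_signature} of $\F_{0^{M}}$ is justified.

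For this last step I would argue directly from the path-ensemble Definition~\ref{def:F}. A collection contributing to $\F_{0^{M}}$ has $M$ horizontal arrows entering on the left at rows $y=1,\ldots,M$ and all $M$ endpoints at the top of column $0$; since up-right paths cannot move leftward and only one horizontal arrow per edge is allowed, arrow preservation leaves a unique admissible configuration in which the path entering at row $y$ turns upward at the vertex $(0,y)$. At that vertex the local configuration is $(y-1,1;y,0)$, whose weight from \eqref{weights} with spectral parameter $\ip_0 u_y$ and spin $\SP_0$ is $(1-q^{y})/(1-\SP_0\ip_0 u_y)$, while every other vertex is empty. Multiplying over $y$,
$$\F_{0^{M}}(u_1,\ldots,u_M\md\ipb,\SPB)=\prod_{y=1}^{M}\frac{1-q^{y}}{1-\SP_0\ip_0 u_y}=\frac{(q;q)_{M}}{\prod_{i=1}^{M}(1-\SP_0\ip_0 u_i)},$$
and substituting back yields \eqref{usual_Cauchy}. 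There is no genuine obstacle here: the analytic content is already packed into Corollary~\ref{cor:skew_Cauchy_many}, and the only verification required is the short uniqueness-of-configuration computation above, which is combinatorially immediate.
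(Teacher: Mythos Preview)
Your proof is correct and follows exactly the same approach as the paper: specialize the iterated skew Cauchy identity \eqref{skew_Cauchy_many} at $\nu=\varnothing$ and $\la=0^{M}$, then use the evaluation \eqref{F_at_zero_signature} of $\F_{0^{M}}$. You supply more detail than the paper does, in particular the explicit uniqueness-of-configuration argument verifying \eqref{F_at_zero_signature} from Definition~\ref{def:F}, which the paper merely states.
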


% subsection limits_of_yang_baxter_commutation_relations_cauchy_type_identities (end)

\subsection{Symmetrization formulas} % (fold)
\label{sub:symmetrization_formulas}

So far, our definition of the symmetric functions 
$\F_{\mu/\la}$ and $\G_{\nu/\la}$ was not too explicit --- they were defined
as large sums over all possible path collections with certain boundary conditions (see Definitions \ref{def:G} and \ref{def:F}).
However, it turns out that the non-skew symmetric functions $\F_\mu$ and $\G_\nu$
can be evaluated more explicitly.

We will need some notation. Set
\begin{align}\label{pow_function}
	\pow_k(u)&=\pow_k(u\md\ipb,\SPB):=
	\frac{1-q}{1-\SP_k\ip_k u}\prod_{j=0}^{k-1}\frac{\ip_j u-\SP_j}{1-\SP_j\ip_ju},\qquad k\ge0.
\end{align}
By agreement, for $k=0$ the empty product in \eqref{pow_function}
is equal to $1$.
Note that $\pow_k(u)$ is equal to $\F_{(k)}(u\md\ipb,\SPB)$, where $(k)$ is the 
signature with a single part equal to $k$.
Indeed, for such a signature the path collection of Definition \ref{def:F}
consists of a single path whose weight is \eqref{pow_function},
cf. \eqref{weights}.

\begin{theorem}\label{thm:symmetrization}
	\noindent{\bf1.\/}
	For any $M\ge0$, any $\mu\in\signp M$,
	and any $u_1,\ldots,u_M\in\C$ we 
	have\footnote{In both formulas 
	\eqref{F_symm_formula}
	and \eqref{G_symm_formula} the 
	permutation $\sigma$ (belonging, respectively, to $\Sym_M$ or $\Sym_N$) acts by permuting the 
	indeterminates $u_i$ or $v_j$, respectively.
	The same convention is 
	used throughout the text.}
	\begin{align}\label{F_symm_formula}
		\F_\mu(u_1,\ldots,u_M\md\ipb,\SPB)=
		\sum_{\sigma\in\Sym_M}
		\sigma\Bigg(
		\prod_{1\le \aind<\bind\le M}\frac{u_\aind-qu_\bind}{u_\aind-u_\bind}
		\prod_{i=1}^{M}
		\pow_{\mu_i}(u_i\md\ipb,\SPB)\Bigg).
	\end{align}
	
\smallskip
	\noindent{\bf2.\/}
	For any $n\ge0$, $\nu\in\signp n$,
	let $k$ be the number of 
	zero coordinates in $\nu$, 
	i.e., $\nu_{n-k+1}=\ldots=\nu_n=0$.
	Then for any $N\ge n-k$
	and any
	$v_1,\ldots,v_N\in\C$ we have
	\begin{multline}\label{G_symm_formula}
		\G_\nu(v_1,\ldots,v_N\md\ipb,\SPB)=
		\frac{(\SP_0^{2};q)_{n}}{(q;q)_{N-n+k}(\SP_0^{2};q)_{k}}
		\\\times
		\sum_{\sigma\in \Sym_N}
		\sigma\Bigg(
		\prod_{1\le \aind<\bind\le N}\frac{v_\aind-qv_\bind}{v_\aind-v_\bind}
		\prod_{j=1}^{N}\pow_{\nu_j}(v_j\md\ipb,\SPB)
		\prod_{i=1}^{n-k}
		\frac{\ip_0v_i}{\ip_0v_i-\SP_0}
		\prod_{j=n-k+1}^{N}
		({1-\SP_0 q^{k}\ip_0 v_j})
		\Bigg).
	\end{multline}
	By agreement, if needed to make sense of the expressions
	$\pow_{\nu_j}(v_j)$ for $j>n$, 
	the signature $\nu$ is appended by zeros.
	If $N<n-k$, the function $\G_\nu(v_1,\ldots,v_N\md\ipb,\SPB)$
	vanishes for trivial reasons.
\end{theorem}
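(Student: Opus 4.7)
The plan is to prove both formulas by induction on the number of spectral parameters, using the operator realizations $\F_\mu(u_1,\ldots,u_M\md\ipb,\SPB)=\langle \bv_\mu,\BY(u_1\md\ipb,\SPB)\cdots\BY(u_M\md\ipb,\SPB)\bv_\varnothing\rangle$ and $\G_\nu(v_1,\ldots,v_N\md\ipb,\SPB)=\langle \bv_\nu,\AY(v_1\md\ipb,\SPB)\cdots\AY(v_N\md\ipb,\SPB)\bv_{0^N}\rangle$ from \S\ref{sub:semi_infinite_operators_ay_and_by_definition_of_FG}, combined with the Yang--Baxter commutation relations of \S\ref{sub:yang_baxter_relation_in_operator_language}.

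For Part 1, the base case $M=1$ is immediate: the unique up-right path defining $\F_{(k)}(u\md\ipb,\SPB)$ enters at the left boundary, traverses row 1, and turns up at column $k$; the product of the weights \eqref{weights} along it is precisely $\pow_k(u\md\ipb,\SPB)$. For the induction step I apply $\BY(u_{M+1}\md\ipb,\SPB)$ to the inductive expansion of $\BY(u_1\md\ipb,\SPB)\cdots\BY(u_M\md\ipb,\SPB)\bv_\varnothing$. Organizing the sum over $\Sym_{M+1}$ by the preimage of $M+1$ exhibits the proposed formula as a sum over $i\in\{1,\ldots,M+1\}$ of an insertion factor $\prod_{\aind\ne i}\frac{u_\aind-qu_{M+1}}{u_\aind-u_{M+1}}\cdot\pow_{\mu_i}(u_{M+1}\md\ipb,\SPB)$ times the claimed formula with $\mu_i$ removed and the remaining $M$ variables. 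The matching between these summands and the actual one-path insertion coefficients derived from \eqref{BY_one_var_semi_infinite} is the standard coordinate Bethe ansatz computation and is controlled by the commutation \eqref{YB_relationAB} together with \eqref{B_commute}; the argument runs parallel to the homogeneous case treated in \cite{Borodin2014vertex}, with the inhomogeneities entering only through the substitution $u_i\mapsto\ip_j u_i$ and $\SP\mapsto\SP_j$ at each column.

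For Part 2 the strategy is to reduce to Part 1 by extracting the contribution of column $0$. All $N$ paths entering the configuration for $\G_\nu$ start at column $0$ from below. For each choice of the subset $\mathcal J=\{j_1<\ldots<j_{n-k}\}\subset\{1,\ldots,N\}$ of horizontal rows in which a path exits column $0$ rightward, the restriction of the path collection to columns $\ge 1$ is exactly of the $\F$-type: it has $n-k$ horizontal inputs at the rows in $\mathcal J$ and vertical outputs at positions $\nu_1\ge\ldots\ge\nu_{n-k}$. The column-$0$ contribution, on the other hand, is an explicit product of vertex weights \eqref{weights} at $\SP=\SP_0,\ \ip=\ip_0$, which accumulates $q$-Pochhammer factors reflecting the multiplicity of paths remaining at column $0$. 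Summing over $\mathcal J$, applying Part 1 to the columns-$\ge 1$ factor, and reorganizing the outer $\Sym_{n-k}$-symmetrization together with the sum over $\mathcal J$ into a single $\Sym_N$-symmetrization yields the asymmetric factors $\prod_{i\le n-k}\frac{\ip_0 v_i}{\ip_0 v_i-\SP_0}$ and $\prod_{j>n-k}(1-\SP_0 q^k\ip_0 v_j)$ in \eqref{G_symm_formula}.

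The main obstacle is the $q$-combinatorial identity required in Part 2 to reassemble the subset sum over $\mathcal J$ into the single symmetric-group sum, producing the exact scalar prefactor $(\SP_0^2;q)_n/[(q;q)_{N-n+k}(\SP_0^2;q)_k]$. Once Part 1 is established, this reduces to a $q$-analogue of an elementary symmetric-function identity that can be verified by a generating-function argument or by a secondary induction on $n-k$ using the Pieri rule \eqref{Pieri2}; all other steps are routine once the bookkeeping of inhomogeneity parameters along each column is set up carefully.
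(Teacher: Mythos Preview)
Your Part~2 strategy---isolate column~$0$, then apply Part~1 to the remaining columns---is essentially what the paper does (in operator language, via the decomposition $\AY=\AY_1\AY_2+\CY_1\BY_2$ with the first factor being column~$0$). The recombination of the subset sum over $\mathcal J$ into a single $\Sym_N$-sum is exactly the Lemma~\ref{lemma:partial_symmetrization}-type manipulation the paper performs.

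Your Part~1, however, has a genuine gap. The decomposition you write for the right-hand side of \eqref{F_symm_formula} is incorrect. If you group terms by the position $j$ with $\sigma(j)=M+1$, the cross-term factor involving $u_{M+1}$ is
\[
\prod_{\alpha<j}\frac{u_{\sigma(\alpha)}-qu_{M+1}}{u_{\sigma(\alpha)}-u_{M+1}}
\prod_{\beta>j}\frac{u_{M+1}-qu_{\sigma(\beta)}}{u_{M+1}-u_{\sigma(\beta)}},
\]
which is \emph{asymmetric}: which of $u_1,\ldots,u_M$ fall into the ``$u-qu_{M+1}$'' slot and which into the ``$u_{M+1}-qu$'' slot depends on the full permutation $\sigma$, not just on $j$. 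It is not the uniform product $\prod_{\alpha\ne i}\frac{u_\alpha-qu_{M+1}}{u_\alpha-u_{M+1}}$ you wrote, and the residual factor is not $\F_{\hat\mu_i}(u_1,\ldots,u_M)$ either. So the proposed induction step does not even get off the ground on the RHS side; on the LHS side, $\F_{\mu/\lambda}(u_{M+1})$ is a genuinely multi-vertex path weight, and the ``matching'' you defer to \eqref{YB_relationAB} and \eqref{B_commute} is not a standard manipulation in this setting.

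The paper avoids this entirely by inducting in a different direction: it splits the \emph{physical space} rather than the set of spectral parameters. One writes $\BY(u\md\ipb,\SPB)=\BY_1(\ip_1 u)\AY_2(\ip_2 u)+\DY_1(\ip_1 u)\BY_2(\ip_2 u)$ for a two-column decomposition, expands the product $\BY(u_1)\cdots\BY(u_M)$, and uses the commutation relations \eqref{YB_relationBD}, \eqref{YB_relationAB} to normal-order. A uniqueness argument (borrowed from Felder--Varchenko) shows the coefficients are well-defined, and an elegant symmetry argument forces the index sets to be complementary; the coefficients $C_\KS$ can then be read off from a single ordered expansion. Iterating over all columns yields \eqref{F_symm_formula} directly. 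This algebraic Bethe ansatz argument is what produces the asymmetric cross factors $\prod_{\alpha\in\KS_i,\beta\in\KS_j,\,i<j}\frac{u_\beta-qu_\alpha}{u_\beta-u_\alpha}$ in the correct form; there is no shortcut via single-variable insertion.
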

When $\ip_j\equiv 1$ and $\SP_j\equiv\SP$, 
this theorem was established in \cite{Borodin2014vertex}.
Here we present a different proof which involves 
the operators $\AY,\BY,\CY,\DY$ from 
\S \ref{sub:yang_baxter_relation_in_operator_language},
and closely follows the algebraic Bethe ansatz 
framework
\cite{FelderVarchenko1996},
\cite{QISM_book}. 
Let us first discuss certain straightforward corollaries of Theorem \ref{thm:symmetrization}. We will denote by $\sh_r$, $r\in\Z_{\ge0}$, 
the \emph{shift operation} applied to
the sequence $\ipb$ or $\SPB$:
\begin{align}\label{sh_operation}
	(\sh_r \ipb)_j:=\ip_{j+r},\qquad
	(\sh_r \SPB)_j:=\SP_{j+r}.
\end{align}
Also, for $\mu\in\signp M$, let $\mu+r^{M}$ denote the shifted
signature $(\mu_1+r,\mu_2+r,\ldots,\mu_M+r)$.

\begin{corollary}\label{cor:shifts}
	\noindent{\bf1.\/} For any $\mu\in\signp M$ and any $r\in\Z_{\ge0}$ one has
	\begin{align}\label{F_shifts}
		\F_{\mu+r^{M}}(u_1,\ldots,u_M\md\ipb,\SPB)=
		\bigg(\prod_{i=1}^{M}\prod_{j=0}^{r-1}\frac{\ip_ju_i-\SP_j}
		{1-\SP_j\ip_j u_i}\bigg)		
		\F_{\mu}(u_1,\ldots,u_M\md\sh_r\ipb,\sh_r\SPB).
	\end{align}
\smallskip

	\noindent{\bf2.\/}
	For any $\nu\in\signp N$ with $\nu_N\ge1$ one has
	\begin{align}\label{G_via_F_shifts}
		\G_\nu(v_1,\ldots,v_N\md\ipb,\SPB)
		=(\SP_0^{2};q)_{N}\bigg(\prod_{i=1}^{N}
		\frac{\ip_0v_i}{\ip_0v_i-\SP_0}\bigg)
		\F_\nu(v_1,\ldots,v_N\md\ipb,\SPB).
	\end{align}
	That is, when $k=0$ and $N=n$ in \eqref{G_symm_formula}, the function
	$\G_\nu(v_1,\ldots,v_N\md\ipb,\SPB)$ almost coincides 
	with $\F_{\nu}$.
\end{corollary}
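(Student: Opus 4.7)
The plan is to derive both parts directly from the explicit symmetrization formulas of Theorem \ref{thm:symmetrization}, which in both cases reduces to a routine matching of factors.

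For Part 1, I would begin by expanding $\pow_{\mu_i+r}(u_i\md\ipb,\SPB)$ via the definition \eqref{pow_function} and splitting the product
\[
\prod_{j=0}^{\mu_i+r-1}\frac{\ip_j u_i-\SP_j}{1-\SP_j\ip_j u_i}
=\prod_{j=0}^{r-1}\frac{\ip_j u_i-\SP_j}{1-\SP_j\ip_j u_i}\cdot\prod_{j=r}^{\mu_i+r-1}\frac{\ip_j u_i-\SP_j}{1-\SP_j\ip_j u_i}.
\]
Reindexing $j'=j-r$ in the second product and noting that by the shift convention \eqref{sh_operation} one has $\ip_{j'+r}=(\sh_r\ipb)_{j'}$ and $\SP_{j'+r}=(\sh_r\SPB)_{j'}$, the second product together with the prefactor $(1-q)/(1-\SP_{\mu_i+r}\ip_{\mu_i+r}u_i)$ is precisely $\pow_{\mu_i}(u_i\md\sh_r\ipb,\sh_r\SPB)$. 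The residual factor $\prod_{j=0}^{r-1}(\ip_j u_i-\SP_j)/(1-\SP_j\ip_j u_i)$ is independent of $\mu_i$, and after taking the product over $i=1,\ldots,M$ it becomes symmetric in the $u_i$, so it can be pulled outside the $\Sym_M$-symmetrization in \eqref{F_symm_formula}. This yields exactly \eqref{F_shifts}.

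For Part 2, I would apply \eqref{G_symm_formula} with the number of variables equal to the length of $\nu$ (so $N=n$ in the notation of Theorem \ref{thm:symmetrization}) and with $k=0$, which is legitimate because the hypothesis $\nu_N\ge1$ says that $\nu$ has no zero parts. With these choices the two Pochhammer denominators collapse as $(q;q)_{N-n+k}=(q;q)_0=1$ and $(\SP_0^2;q)_k=1$, while the product $\prod_{j=n-k+1}^{N}(1-\SP_0 q^k\ip_0 v_j)$ is empty. The remaining product $\prod_{i=1}^{N}\ip_0 v_i/(\ip_0 v_i-\SP_0)$ is symmetric in $v_1,\ldots,v_N$, hence can be extracted from the $\Sym_N$-symmetrization, after which the surviving sum is precisely $\F_\nu(v_1,\ldots,v_N\md\ipb,\SPB)$ by \eqref{F_symm_formula}. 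Combining these observations gives \eqref{G_via_F_shifts}.

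There is no real obstacle here beyond careful bookkeeping; the content of the corollary is algebraic rearrangement of the Theorem \ref{thm:symmetrization} formulas, and the only point requiring attention is the re-indexing of inhomogeneities in Part 1 and the verification that the relevant factors are symmetric functions of the spectral variables so that they commute with the $\Sym_M$- or $\Sym_N$-symmetrization.
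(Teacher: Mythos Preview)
Your proposal is correct and takes essentially the same approach as the paper's proof, which simply notes that both identities follow by straightforward verification from the symmetrization formulas \eqref{F_symm_formula} and \eqref{G_symm_formula}. The paper additionally remarks that the claims can alternatively be read off directly from the path-collection definitions of $\F$ and $\G$, but your symmetrization-based argument is one of the two routes it explicitly indicates.
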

\begin{proof}
	A straightforward 
	verification using \eqref{F_symm_formula} and \eqref{G_symm_formula}.
	Alternatively, the claims immediately follow
	from the definitions of the functions $\F$ and $\G$
	as partition functions of path collections
	(Definitions \ref{def:F} and \ref{def:G}).
\end{proof}
The next corollary utilizes the explicit formulas \eqref{F_symm_formula}
and \eqref{G_symm_formula} in an essential way:
\begin{corollary}\label{cor:prin_spec}
	\noindent{\bf1.\/}
	For any $M\ge0$, $\mu\in\signp M$, and $u\in\C$ we have
	\begin{align}\label{F_prin_spec_simple}
		\F_\mu(u,qu,\ldots,q^{M-1}u\md\ipb,\SPB)=
		(q;q)_{M}
		\prod_{i=1}^{M}\bigg(
		\frac{1}{1-\SP_{\mu_i}\ip_{\mu_i} q^{i-1}u}
		\prod_{j=0}^{{\mu_i}-1}\frac{\ip_j q^{i-1}u-\SP_j}{1-\SP_j\ip_jq^{i-1}u}
		\bigg).
	\end{align}
\smallskip

	\noindent{\bf2.\/}
	For any $n\ge0$ and $\nu\in\signp n$
	with $k$ zero coordinates,
	any $N\ge n-k$, and any $v\in\C$ we have
	\begin{multline}\label{G_prin_spec_simple}
		\G_\nu(v,qv,\ldots,q^{N-1}v\md\ipb,\SPB)=
		\frac{(q;q)_{N}}{(q;q)_{N-n+k}}
		\frac{(\SP_0\ip_0v;q)_{N+k}}{(\SP_0\ip_0v;q)_{n}}
		\frac{(\SP_0^{2};q)_{n}}{(\SP_0^{2};q)_{k}}
		\frac{1}{(\SP_0/(\ip_0v);q^{-1})_{n-k}}
		\\\times\prod_{j=1}^{N}
		\bigg(
		\frac{1}{1-\SP_{\nu_j}\ip_{\nu_j} q^{j-1}v}
		\prod_{\ell=0}^{{\nu_j}-1}
		\frac{\ip_\ell q^{j-1}v-\SP_\ell}{1-\SP_\ell\ip_\ell q^{j-1}v}
		\bigg).
	\end{multline}
\end{corollary}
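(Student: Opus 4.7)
The plan is to derive both formulas directly from Theorem \ref{thm:symmetrization} via a standard ``collapse to the identity permutation'' phenomenon under principal specialization. The key observation is that, after substituting $u_i = q^{i-1}u$, the factor
$$\prod_{1 \le \aind < \bind \le M} \frac{u_{\sigma(\aind)} - q u_{\sigma(\bind)}}{u_{\sigma(\aind)} - u_{\sigma(\bind)}}$$
vanishes for every non-identity $\sigma \in \Sym_M$. Indeed, for any non-identity $\sigma$ there must exist positions $\aind < \bind$ with $\sigma(\aind) = \sigma(\bind) + 1$: if no such pair existed, then each value $k+1$ would always appear to the right of $k$ in the one-line notation of $\sigma$, forcing $\sigma = \mathrm{id}$. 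For such a pair, $u_{\sigma(\aind)} - q u_{\sigma(\bind)} = q^{\sigma(\bind)} u - q^{\sigma(\bind)} u = 0$, while all denominators $u_{\sigma(\aind)} - u_{\sigma(\bind)}$ remain nonzero since $\sigma(\aind) \neq \sigma(\bind)$.

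For part 1, only the $\sigma = \mathrm{id}$ term survives in \eqref{F_symm_formula}. A telescoping computation at $u_i = q^{i-1}u$ gives
$$\prod_{1 \le \aind < \bind \le M} \frac{u_\aind - q u_\bind}{u_\aind - u_\bind} = \prod_{1 \le \aind < \bind \le M} \frac{1 - q^{\bind - \aind + 1}}{1 - q^{\bind - \aind}} = \frac{(q;q)_M}{(1-q)^M},$$
as can be verified by grouping factors according to the gap $\bind - \aind$. Since $\prod_{i=1}^{M} \pow_{\mu_i}(q^{i-1} u)$ contributes a factor $(1-q)^M$ from the definition \eqref{pow_function}, these $(1-q)^M$'s cancel and the remaining expression is exactly \eqref{F_prin_spec_simple}.

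For part 2, the same vanishing argument reduces the sum in \eqref{G_symm_formula} to the single term $\sigma = \mathrm{id}$. Evaluating the four ingredients at $v_j = q^{j-1} v$ gives: the Vandermonde-like factor $(q;q)_N/(1-q)^N$; the product $\prod_j \pow_{\nu_j}(q^{j-1} v)$ yielding $(1-q)^N$ times the stated double product over $j$; the ``left block''
$$\prod_{i=1}^{n-k} \frac{\ip_0 q^{i-1} v}{\ip_0 q^{i-1} v - \SP_0} = \prod_{i=0}^{n-k-1} \frac{1}{1 - \SP_0 q^{-i}/(\ip_0 v)} = \frac{1}{(\SP_0/(\ip_0 v); q^{-1})_{n-k}};$$
and the ``right block'' $\prod_{j=n-k+1}^{N}(1 - \SP_0 q^k \ip_0 q^{j-1} v) = (\SP_0 \ip_0 q^n v; q)_{N-n+k}$. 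Rewriting this last $q$-Pochhammer as $(\SP_0 \ip_0 v; q)_{N+k}/(\SP_0 \ip_0 v; q)_n$ and combining with the prefactor $(\SP_0^2;q)_n/((q;q)_{N-n+k}(\SP_0^2;q)_k)$ yields precisely \eqref{G_prin_spec_simple}. The main obstacle is purely bookkeeping: tracking how the shifted $q$-Pochhammer symbols combine, and making sure that the convention of appending zeros to $\nu$ (for $N > n$) is consistent with the $\pow_0$ factors coming out of the $\BY$-operator interpretation.
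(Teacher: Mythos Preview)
Your proof is correct and follows exactly the same approach as the paper: reduce the symmetrization formulas of Theorem~\ref{thm:symmetrization} to the single term $\sigma=\mathrm{id}$ via the vanishing of the cross factor under principal specialization, then evaluate what remains. You have simply supplied the explicit bookkeeping that the paper omits as ``straightforward.''
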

Substituting a geometric sequence with ratio $q$
into a function 
$\F$ or $\G$ will be referred to as the 
\emph{principal specialization} of these symmetric functions.
\begin{proof}
	The substitutions of geometric sequences 
	into $\F$ or $\G$ make all terms except the one with 
	$\sigma=\mathrm{id}$ vanish due to the presence
	of the cross term 
	$\sigma\Big(\prod_{1\le\aind<\bind\le M}
	\frac{u_\aind-qu_\bind}{u_\aind-u_\bind}\Big)$.
	For $\sigma=\mathrm{id}$ this cross term is equal to 
	$(q;q)_M/(1-q)^M$.
	The rest is obtained in a straightforward way 
	by evaluating the remaining parts of the formulas.
\end{proof}

The proof of Theorem \ref{thm:symmetrization} occupies the rest of this subsection.
\begin{proof}[Proof of \eqref{F_symm_formula}]
\textbf{Step 1.}
To obtain an explicit formula for 
$\F_\mu(u_1,\ldots,u_M\md\ipb,\SPB)$,
we need to understand how 
the operator
$\BY(u_1\md\ipb,\SPB)\ldots\BY(u_M\md\ipb,\SPB)$
acts on the vector
$(\bv_{0}\otimes \bv_{0}\otimes\ldots)$. 
Let us first consider
what happens in the physical space 
containing just two tensor factors,
which puts us into the setting described in \S \ref{sub:attaching_vertical_columns}.
Let the inhomogeneity parameters in this setting be denoted by $(\ip_1,\ip_2)$, and the $\SP$--parameters be
$(\SP_1,\SP_2)$, as usual.
We have from \eqref{T_V1V2}:
\begin{align}\label{B_two_factor_tensor_F_proof}
	\BY(u\md\ipb,\SPB)=
	\BY_1(\ip_1 u\md\SP_1)\AY_2(\ip_2 u\md\SP_2)+\DY_1(\ip_1 u\md\SP_1)\BY_2(\ip_2 u\md\SP_2),
\end{align}
where the lower indices in the operators 
in the right-hand side stand for
the spaces in which they act (and also determine which of the parameters
$\SP_1$ or $\SP_2$ we take). 
The operators in the right-hand side 
act as in \eqref{ABCD_elementary_operators}.
Recall that any two operators with different lower indices commute.

When we multiply together a number of operators $\BY(u\md\ipb,\SPB)$
(with different spectral $u$-parameters)
and open the parentheses, we collect several 
factors $\BY_1$ and $\DY_1$, 
and several other factors
$\AY_2$ and $\BY_2$. Using the Yang--Baxter commutation
relations \eqref{YB_relationBD} and \eqref{YB_relationAB}, 
we can swap these operators
at the expense of picking certain prefactors, 
and also this swapping of operators could lead to an exchange of their spectral parameters.
Therefore, we can write 
$\BY(u_1\md\ipb,\SPB)\ldots\BY(u_M\md\ipb,\SPB)(\bv_{0}\otimes \bv_{0})$
as a linear combination of vectors of the form
\begin{multline}\label{vectors_of_the_form_F_proof}
	\BY_1(\ip_1 u_{k_1}\md\SP_1)\ldots
	\BY_1(\ip_1 u_{k_{M-s}}\md\SP_1)
	\DY_1(\ip_1 u_{\ell_1}\md\SP_1)\ldots
	\DY_1(\ip_1 u_{\ell_s}\md\SP_1)\,\bv_0\\\otimes
	\BY_2(\ip_2 u_{i_1}\md\SP_2)\ldots
	\BY_2(\ip_2 u_{i_s}\md\SP_2)
	\AY_2(\ip_2 u_{j_1}\md\SP_2)\ldots
	\AY_2(\ip_2 u_{j_{M-s}}\md\SP_2)\,\bv_0,
\end{multline}
with
\begin{align*}
	\begin{array}{lll}
		\IS=\{i_1<\ldots<i_s\},&\quad \JS=\{j_1<\ldots<j_{M-s}\},&\quad
		\IS\sqcup\JS=\{1,\ldots,M\},\\\rule{0pt}{14pt}
		\KS=\{k_1<\ldots<k_{M-s}\},&\quad \LS=\{\ell_1<\ldots<\ell_{s}\},&\quad
		\KS\sqcup\LS=\{1,\ldots,M\}.
	\end{array}
\end{align*}

\smallskip\noindent\textbf{Step 2.}
The coefficients of the vectors \eqref{vectors_of_the_form_F_proof} 
are computed using only the commutation relations 
\eqref{YB_relationBD} and \eqref{YB_relationAB},
and we argue that these coefficients 
\emph{do not depend} on how exactly we apply the commutation relations
to reach the result.
This property is based on the fact that for generic spectral parameters, there exists a representation
of $\begin{bmatrix}
	\AY(u)&\BY(u)\\\CY(u)&\DY(u)
\end{bmatrix}$
subject to the same commutation relations, and a \emph{highest weight vector} $\mathsf{v}_0$
in that representation,\footnote{Meaning that $\mathsf{v}_0$ is annihilated by $\CY(u)$ and is an eigenvector
for $\AY(u)$ and $\DY(u)$.} such that 
vectors $\Big(\prod_{j\in\JS}\BY(u_j)\Big)\mathsf{v}_0$, with $\JS$ ranging over all subsets of $\{1,2,\ldots,M\}$,
are linearly independent.
This is shown in \cite[Lemma\;14]{FelderVarchenko1996}, and we will not repeat the argument here.

Knowing this fact, if we have two ways of applying the commutation relations 
which yield different coefficients of the
vectors \eqref{vectors_of_the_form_F_proof},
then we can apply these commutation relations in the above highest weight representation,
which leads to a contradiction with the linear independence property.\footnote{Note that we perform the commutations in each of the two tensor factors
separately, and thus the statement that the coefficients are uniquely determined 
is not affected by the presence of the parameters
$\ipb$ and $\SPB$.}

\smallskip\noindent\textbf{Step 3.}
Our next goal is to show that the 
coefficient of each vector of the form \eqref{vectors_of_the_form_F_proof}
vanishes unless $\IS\cap\KS=\varnothing$. We argue by induction on $M$.
For $M=1$, the application of the operator \eqref{B_two_factor_tensor_F_proof}
(with $u=u_1$)
to $\bv_0\otimes\bv_0$
obviously has this property.
When we apply the next operator 
$\BY(u_2\md\ipb,\SPB)$, we see that 
the sets $\IS$ and $\KS$ could grow by the element $2$,
and that they can also lose 
the element $1$
in the process of commuting the 
$\DY$'s and the $\AY$'s to the right.
However, the sets $\IS$ and $\KS$
cannot gain the element $1$. This means that $1\notin \IS\cap\KS$.
However, we could have applied  
$\BY(u_1\md\ipb,\SPB)\BY(u_2\md\ipb,\SPB)=\BY(u_2\md\ipb,\SPB)\BY(u_1\md\ipb,\SPB)$
in the opposite order, which implies (by the uniqueness of the coefficients)
that $2\notin\IS\cap\KS$. Therefore, 
$\IS\cap\KS=\varnothing$ for $M=2$. Clearly, we can continue this argument
with more factors in the same way, and conclude that 
$\IS\cap\KS=\varnothing$ for any $M$. 

\smallskip\noindent\textbf{Step 4.}
Since $\IS\sqcup\JS=\KS\sqcup\LS=\{1,\ldots,M\}$, we see that $\IS=\LS$ and $\KS=\JS$.
This implies that the desired action of a product of the $\BY$ operators takes the form
\begin{multline}
	\BY(u_1\md\ipb,\SPB)\ldots
	\BY(u_M\md\ipb,\SPB)(\bv_0\otimes\bv_0)\\=
	\sum_{\KS\subseteq\{1,2,\ldots,M\}}
	C_{\KS}
	\bigg(
	\prod_{k\in\KS}\BY_1(\ip_1 u_k\md\SP_1)
	\prod_{\ell\notin\KS}\DY_1(\ip_1 u_\ell\md\SP_1)
	\bigg)\,\bv_0\otimes
	\bigg(
	\prod_{\ell\notin\KS}\BY_2(\ip_2 u_\ell\md\SP_2)
	\prod_{k\in\KS}\AY_2(\ip_2 u_k\md\SP_2)
	\bigg)\,\bv_0,
	\label{desired_action_F_proof}
\end{multline}
with some uniquely defined coefficients 
$C_\KS(u_1,\ldots,u_M)$, where
$\KS\subseteq\{1,\ldots,M\}$.

Now, since we obviously can permute the spectral parameters
$u_j$ without changing the desired action \eqref{desired_action_F_proof},
by uniqueness of the coefficients
we must have
\begin{align*}
	C_{\KS}(u_{\sigma(1)},\ldots,u_{\sigma(M)})
	=C_{\sigma(\KS)}(u_1,\ldots,u_M)
	\qquad \textnormal{for all $\sigma\in\Sym_M$}.
\end{align*}
Thus, it suffices to compute these coefficients
for $\KS=\{1,2,\ldots,r\}$ for each $r=1,2,\ldots,M$. This can be done by simply opening the 
parentheses in
\begin{multline}
	\big(
		\BY_1(\ip_1 u_1\md\SP_1)\AY_2(\ip_2 u_1\md\SP_2)+\DY_1(\ip_1 u_1\md\SP_1)\BY_2(\ip_2 u_1\md\SP_2)
	\big)\ldots
	\\\cdots
	\big(
		\BY_1(\ip_1 u_M\md\SP_1)\AY_2(\ip_2 u_M\md\SP_2)+\DY_1(\ip_1 u_M\md\SP_1)\BY_2(\ip_2 u_M\md\SP_2)
	\big),
	\label{opening_parentheses_F_proof}
\end{multline}
because the only way to end up with 
the vector
\begin{multline*}
	\BY_1(\ip_1u_1\md\SP_1)\cdots\BY_1(\ip_1u_r\md\SP_1)
	\DY_1(\ip_1u_{r+1}\md\SP_1)\cdots\DY_1(\ip_1u_{M}\md\SP_1)\,\bv_0
	\\\otimes
	\BY_2(\ip_2 u_{r+1}\md\SP_2)\cdots\BY_2(\ip_2 u_{M}\md\SP_2)
	\AY_2(\ip_2 u_1\md\SP_2)\cdots\AY_2(\ip_2 u_r\md\SP_2)\,\bv_0
\end{multline*}
is to use the first summand in \eqref{opening_parentheses_F_proof}
for $j=1,\ldots,r$, the second summand 
for $j=r+1,\ldots,M$, and commute all the $\AY_2$'s through the $\BY_2$'s
without swapping the spectral parameters.
From \eqref{YB_relationAB} we readily have
\begin{align}
	\AY(w_1)\BY(w_2)=
	\frac{w_2-qw_1}{w_2-w_1}\,
	\BY(w_2)\AY(w_1)
	-
	\frac{(1-q)w_1}{w_2-w_1}\,
	\BY(w_1)\AY(w_2)
	,\label{AB=BA_BA_YB_relation}
\end{align}
and we are only interested in the first summand above.
Our commutations thus give the coefficient
\begin{align*}
	C_{\{1,2,\ldots,r\}}(u_1,\ldots,u_M)=\prod_{\aind=1}^{r}\prod_{\bind=r+1}^{M}\frac{\ip_2u_{\bind}-q\ip_2u_{\aind}}{\ip_2u_{\bind}-\ip_2u_{\aind}}
	=\prod_{\aind=1}^{r}\prod_{\bind=r+1}^{M}\frac{u_{\bind}-qu_{\aind}}{u_{\bind}-u_{\aind}},
\end{align*}
and so we have
\begin{multline}
	\BY(u_1\md\ipb,\SPB)\ldots
	\BY(u_M\md\ipb,\SPB)(\bv_0\otimes\bv_0)
	\\=\sum_{\KS\subseteq\{1,\ldots,M\}}
	\prod_{\substack{\aind\in\KS\\\bind\notin \KS}}
	\frac{u_{\bind}-qu_{\aind}}{u_{\bind}-u_{\aind}}
	\bigg(
	\prod_{k\in\KS}\BY_1(\ip_1 u_k\md\SP_1)
	\prod_{\ell\notin\KS}\DY_1(\ip_1 u_\ell\md\SP_1)
	\bigg)\,\bv_0\\\otimes
	\bigg(
	\prod_{\ell\notin\KS}\BY_2(\ip_2 u_\ell\md\SP_2)
	\prod_{k\in\KS}\AY_2(\ip_2 u_k\md\SP_2)
	\bigg)\,\bv_0.\label{pre_final_two_factors_F_proof}
\end{multline}
Recall that $\bv_0$ is an eigenvector for $\DY_1$ and $\AY_2$, and
introduce the notation $\ay_{1,2}$ and $\dy_{1,2}$ by
\begin{align}
	\DY_{j}(\ip_j u\md\SP_j)\,\bv_0=\dy_{j}(\ip_ju\md\SP_j)\,\bv_0,
	\qquad
	\AY_{j}(\ip_j u\md\SP_j)\,\bv_0=\ay_{j}(\ip_ju\md\SP_j)\,\bv_0,
	\qquad j=1,2.
	\label{ay_dy_notation}
\end{align}
Thus, $\ay_{1,2}$ and $\dy_{1,2}$
are eigenvalues (scalars).\footnote{
When $\bv_0$ is the highest weight vector in the 
representation $V$ of \S \ref{sub:yang_baxter_relation_in_operator_language}, these eigenvalues
can be read off \eqref{ABCD_elementary_operators}.
However, in Step 5 below we will use
notation \eqref{ay_dy_notation}
for highest weight vectors of representations obtained
by tensoring several such $V$'s.}
Hence our final result \eqref{pre_final_two_factors_F_proof} 
for two tensor factors can be rewritten in the following form:
\begin{align}
	\BY(u_1\md\ipb,\SPB)\ldots
	\BY(u_M\md\ipb,\SPB)(\bv_0\otimes\bv_0)
	=\sum_{\KS\subseteq\{1,\ldots,M\}}
	\dy_1(\KSB)\ay_2(\KS)
	\prod_{\substack{\aind\in\KS\\\bind\notin \KS}}
	\frac{u_{\bind}-qu_{\aind}}{u_{\bind}-u_{\aind}}\,
	(\BY_1(\KS)\,\bv_0)\otimes
	(\BY_2(\KSB)\,\bv_0),
	\label{final_two_factors_F_proof}
\end{align}
where we have abbreviated
\begin{align}\label{abbreviation_F_proof}
	\KSB:=\{1,\ldots,M\}\setminus\KS,\qquad
	f_j(\KS):=\prod_{k\in\KS}f_j(\ip_ju_k\md\SP_j),\qquad j=1,2,
\end{align}
so $\dy_1$ and $\BY_1$ include the parameters $\ip_1$ and $\SP_1$,
and $\ay_2$ and $\BY_2$ contain $\ip_2$ and $\SP_2$.

\smallskip\noindent\textbf{Step 5.}
In this form the 
formula \eqref{final_two_factors_F_proof} 
for two tensor factors
can be immediately extended to arbitrarily many tensor factors.
Indeed, let us think of the second vector $\bv_0$
as $\tilde\bv_0\otimes\tilde\bv_0$. Then we can use 
\eqref{final_two_factors_F_proof}
to evaluate $\BY_2(\KSB)\,\bv_0=\BY_2(\KSB)(\tilde\bv_0\otimes\tilde\bv_0)$,
split the second $\tilde \bv_0$ again, and so on. 

Therefore, we obtain the 
final formula for the action of
$\BY(u_1\md\ipb,\SPB)\ldots\BY(u_M\md\ipb,\SPB)$
on the vector
$(\bv_{0}\otimes \bv_{0}\otimes\ldots)$:
\begin{multline}
	\BY(u_1\md\ipb,\SPB)\ldots\BY(u_M\md\ipb,\SPB)
	(\bv_{0}\otimes \bv_{0}\otimes\ldots)
	\\=\sum_{\substack{\KS_0,\KS_1,\ldots\subseteq\{1,\ldots,M\}\\
	\KS_0\sqcup\KS_1\sqcup \ldots
	=\{1,2,\ldots,M\}}}
	\prod_{0\le i<j}\dy_i(\KS_j)\ay_j(\KS_i)
	\prod_{\substack{\aind\in\KS_i\\\bind\in \KS_j}}
	\frac{u_{\bind}-qu_{\aind}}{u_{\bind}-u_{\aind}}
	\Big(
	\BY_0(\KS_0)\,\bv_0\otimes
	\BY_1(\KS_1)\,\bv_0\otimes \ldots
	\Big),
	\label{final_many_factors_F_proof}
\end{multline}
where
from now on
we denote
the inhomogeneity parameters
and the $\SP$--parameters
by
$\ip_0,\ip_1,\ip_2,\ldots$
and 
$\SP_0,\SP_1,\SP_2,\ldots$,
respectively
(as in \eqref{F_symm_formula}).

To finish the derivation of \eqref{F_symm_formula}, we need to recall the
action \eqref{ABCD_elementary_operators}
of the operators $\AY,\BY$, and $\DY$
in the ``elementary'' physical space $\Span\{\bv_i\colon i=0,1,2,\ldots\}$.
We have 
\begin{align}\nonumber
	\ay_j(\KS)&=1
	,\qquad \qquad
	\dy_j(\KS)=\prod_{k\in\KS}
	\frac{\ip_ju_k-\SP_j}{1-\SP_j\ip_ju_k},
	\\
	\BY_j(\KS)\,\bv_0&=
	\frac{(q;q)_{|\KS|}}{\prod_{k\in\KS}(1-\SP_j\ip_j u_k)}\,\bv_{|\KS|}
	=
	\frac{(1-q)^{|\KS|}}{\prod_{k\in\KS}(1-\SP_j\ip_j u_k)}
	\bigg(\sum_{\sigma\in\Sym(\KS)}
	\sigma\Big(\prod_{\substack{\aind<\bind\\\aind,\bind\in\KS}}
	\frac{u_\aind-qu_\bind}{u_\aind-u_\bind}\Big)
	\bigg)\,
	\bv_{|\KS|}
	,\label{BjK_cross_F_proof}
\end{align}
where for $\BY_j(\KS)$ we have used the
symmetrization formula
\cite[Ch.\;III.1, formula\;(1.4)]{Macdonald1995}\footnote{That is, for any $r\in\Z_{\ge1}$,
we have
$\displaystyle \sum_{\omega\in\Sym_r}\prod_{1\le \aind<\bind\le r}
\frac{u_{\omega(\aind)}-q u_{\omega(\bind)}}
{u_{\omega(\aind)}-u_{\omega(\bind)}}=\frac{(q;q)_{r}}{(1-q)^{r}}$.\label{symm_footnote}}
to insert an additional sum over permutations
of $\KS$ (here $\Sym(\KS)$ denotes the group of permutations of $\KS$,
and $\sigma$ acts by permuting the corresponding variables).

To read off the coefficient of
$\bv_\mu=\bv_{m_0}\otimes \bv_{m_1}\otimes \bv_{m_2}\otimes \ldots$,
$\mu\in\signp M$, in \eqref{final_many_factors_F_proof}, we must have
$|\KS_i|=m_i$ for all $i\ge0$.
Let us fix one such partition 
$\KS_0\sqcup\KS_1\sqcup \ldots=\{1,\ldots,M\}$.
For each $\aind\in\{1,\ldots,M\}$, let $\ks(\aind)\in\Z_{\ge0}$ 
denote the number $j$ such that $\aind\in\KS_j$.
Then we can write
\begin{align*}
	\prod_{0\le i<j}\dy_i(\KS_j)
	=
	\prod_{r=1}^{M}\prod_{\ell=0}^{\ks(r)-1}
	\frac{\ip_\ell u_r-\SP_\ell}{1-\SP_\ell\ip_\ell u_r},
\end{align*}
which, combined with the factors
$\dfrac{(1-q)^{|\KS_{j}|}}{\prod_{k\in\KS_{j}}(1-\SP_j\ip_j u_k)}$
coming from $\BY_j(\KS_{j})\,\bv_0$,
produces
$\displaystyle\prod_{i=1}^{M}\pow_{\ks(i)}(u_i)$.
Note that this product does
not change if we permute 
the $u_i$'s within the sets $\KS_j$.
Furthermore,
we can also write
\begin{align}
	\prod_{0\le i<j}
	\prod_{\substack{\aind\in\KS_i\\\bind\in \KS_j}}
	\frac{u_{\bind}-qu_{\aind}}{u_{\bind}-u_{\aind}}
	=
	\prod_{\substack{1\le \aind,\bind\le M\\\ks(\aind)<\ks(\bind)}}
	\frac{u_{\bind}-qu_{\aind}}{u_{\bind}-u_{\aind}}.
	\label{dj_cross_F_proof}
\end{align}
We then combine this with the remaining coefficients 
coming from $\BY_j(\KS_{j})\,\bv_0$
which involve summations over
permutations within the sets $\KS_j$, and 
compare the result with the desired formula \eqref{F_symm_formula}.

Clearly, fixing a partition into the $\KS_j$'s
corresponds to considering
only permutations $\sigma\in\Sym_M$ in \eqref{F_symm_formula}
which place each $i\in\{1,\ldots,M\}$ into $\KS_{\ks(i)}$.
This is the mechanism which gives rise 
to the summations
over
permutations within the sets $\KS_j$ as in \eqref{BjK_cross_F_proof}.
One can readily check that the summands agree, and thus \eqref{F_symm_formula} 
is established.
\end{proof}

\begin{remark}
	Formula \eqref{F_symm_formula} that we just established
	links the algebraic and the coordinate Bethe ansatz.
	Its proof given above closely follows 
	the proof of Theorem 5 in Section 8 of
	\cite{FelderVarchenko1996}.
	The key relation \eqref{final_many_factors_F_proof} without proof can be found in 
	\cite[Appendix VII.2]{QISM_book}.
\end{remark}

\begin{proof}[Proof of \eqref{G_symm_formula}]
\textbf{Step 1.}
We will use the same approach as in the proof of \eqref{F_symm_formula}
to get an explicit formula for 
$\G_\nu(v_1,\ldots,v_N\md\ipb,\SPB)$.
That is, we need to compute
$\AY(v_1\md\ipb,\SPB)\ldots\AY(v_N\md\ipb,\SPB)(\bv_{n}\otimes \bv_{0}\otimes \bv_{0}\otimes\ldots)$.
We start with just two tensor factors, and consider the application of 
this operator to $\bv_n\otimes\bv_0$. 
After that we will use \eqref{F_symm_formula} to turn the second $\bv_0$
into $\bv_0\otimes\bv_0\otimes \ldots$.

For two tensor factors we have from \eqref{T_V1V2}:
\begin{align}\label{AY_two_factor_G_proof}
	\AY(v\md\ipb,\SPB)=\AY_1(\ip_1 v\md\SP_1)\AY_2(\ip_2 v\md\SP_2)+\CY_1(\ip_1 v\md\SP_1)\BY_2(\ip_2 v\md\SP_2).
\end{align}
Taking the product $\AY(v_1\md\ipb,\SPB)\ldots\AY(v_N\md\ipb,\SPB)$ and opening the parentheses,
we can use the commutation relations \eqref{YB_relationAC} and \eqref{YB_relationAB} to 
express the result as a linear combination of vectors of the form
\begin{multline}\label{vectors_of_the_form_G_proof}
	\AY_1(\ip_1 v_{k_1}\md\SP_1)\ldots
	\AY_1(\ip_1 v_{k_{N-s}}\md\SP_1)
	\CY_1(\ip_1 v_{\ell_1}\md\SP_1)\ldots
	\CY_1(\ip_1 v_{\ell_s}\md\SP_1)\,\bv_n\\\otimes
	\BY_2(\ip_2 v_{i_1}\md\SP_2)\ldots
	\BY_2(\ip_2 v_{i_s}\md\SP_2)
	\AY_2(\ip_2 v_{j_1}\md\SP_2)\ldots
	\AY_2(\ip_2 v_{j_{N-s}}\md\SP_2)\,\bv_0,
\end{multline}
with 
\begin{align*}
	\begin{array}{lll}
		\IS=\{i_1<\ldots<i_s\},&\quad \JS=\{j_1<\ldots<j_{N-s}\},&\quad
		\IS\sqcup\JS=\{1,\ldots,N\},\\\rule{0pt}{14pt}
		\KS=\{k_1<\ldots<k_{N-s}\},&\quad \LS=\{\ell_1<\ldots<\ell_{s}\},&\quad
		\KS\sqcup\LS=\{1,\ldots,N\}.
	\end{array}
\end{align*}

\smallskip\noindent\textbf{Step 2.}
Again, the key point is that the 
coefficients by vectors of the form \eqref{vectors_of_the_form_G_proof}
are uniquely determined by the commutation relations, and do not depend on the order of commuting.
The uniqueness argument here is very similar
to the one in Step 2 of the proof of \eqref{F_symm_formula}, and we will 
not repeat it.

\smallskip\noindent\textbf{Step 3.}
We now observe that we must have $\IS=\LS$ and $\JS=\KS$.
Indeed, let us show that $\IS\cap\KS=\varnothing$, which would imply the claim. 
We argue by induction. The case of $N=1$ is obvious. 
When we then apply the next operator
$\AY(v_2\md\ipb,\SPB)$ to \eqref{AY_two_factor_G_proof}
(with $v=v_1$)
and use the commutation relations to write all vectors in the required form
\eqref{vectors_of_the_form_G_proof},
neither $\IS$ nor $\KS$ can gain index $1$, 
exactly in the same way as 
in Step 3 of the proof of \eqref{F_symm_formula}. The fact that
$1\notin\IS\cap\KS$ does not change after we apply all other operators
$\AY(v_j\md\ipb,\SPB)$, $j=3,\ldots,N$. Since the order of factors
in 
$\AY(v_1\md\ipb,\SPB)\ldots\AY(v_N\md\ipb,\SPB)$ does not matter, we conclude that $\IS\cap\KS=\varnothing$
for any $N$.

\smallskip\noindent\textbf{Step 4.}
We thus conclude that 
\begin{align}\label{step4_action_of_A_G_proof}
	\AY(v_1\md\ipb,\SPB)\ldots\AY(v_N\md\ipb,\SPB)
	(\bv_n\otimes\bv_0)
	=
	\sum_{\KS\subseteq\{1,2,\ldots,N\}}
	C_\KS
	\big(\AY_1(\KS)\CY_1(\KSB)\big)\,\bv_n\otimes
	\big(\BY_2(\KSB)\AY_2(\KS)\big)\,\bv_0,
\end{align}
where we are using the abbreviation \eqref{abbreviation_F_proof}.
Here the coefficients $C_\KS$
are uniquely determined,
and satisfy
\begin{align*}
	C_{\KS}(v_{\sigma(1)},\ldots,v_{\sigma(N)})
	=C_{\sigma(\KS)}(v_1,\ldots,v_N)
	\qquad \textnormal{for all $\sigma\in\Sym_N$}.
\end{align*}
Thus, we need to compute only the coefficients $C_\KS$
for $\KS=\{r+1,\ldots,N\}$, where $r=1,2,\ldots,N$. 
Since $\nu\in\signp n$ has exactly $k$ zero coordinates
(see \eqref{G_symm_formula}), we must have $|\KSB|=r=n-k$. 
Indeed, this is because $\CY_1(\KSB)$ is responsible for 
moving some of $n$ arrows to the right from the location $0$.

The coefficients $C_{\{r+1,\ldots,N\}}$ can thus 
be computed by simply opening the 
parentheses in
\begin{multline*}
	\big(\AY_1(\ip_1 v_N\md\SP_1)\AY_2(\ip_2 v_N\md\SP_2)+
	\CY_1(\ip_1 v_N\md\SP_1)\BY_2(\ip_2 v_N\md\SP_2)\big)\ldots
	\\\ldots
	\big(\AY_1(\ip_1 v_1\md\SP_1)\AY_2(\ip_2 v_1\md\SP_2)+
	\CY_1(\ip_1 v_1\md\SP_1)\BY_2(\ip_2 v_1\md\SP_2)\big)
	,
\end{multline*}
and noting that there is a unique way of reaching $\KS=\{r+1,\ldots,N\}$: 
pick the first summands in the first $N-r=N-n+k$ factors, 
the second summands in the last $r= n-k$ factors,
and after that move $\AY_2(\KS)$ to the right of $\BY_2(\KSB)$
without swapping the spectral parameters in the process of commuting.
Using \eqref{AB=BA_BA_YB_relation} (where we are 
interested only in the first term in the right-hand side),
we thus get the product
\begin{align*}
	C_{\{n-k+1,\ldots,N\}}(v_1,\ldots,v_N)=
	\prod_{\aind=1}^{n-k}\prod_{\bind=n-k+1}^{N}
	\frac{\ip_2v_\aind-q \ip_2v_\bind}{\ip_2v_\aind-\ip_2v_\bind}
	=
	\prod_{\aind=1}^{n-k}\prod_{\bind=n-k+1}^{N}
	\frac{v_\aind-q v_\bind}{v_\aind-v_\bind}.
\end{align*}
Next, we note that $\AY_2(\KS)\,\bv_0=\bv_0$ and that (from 
\eqref{ABCD_elementary_operators})
\begin{align*}
	\AY_1(\KS)\CY_1(\KSB)\,\bv_n=
	\frac{(1-\SP_1^{2}q^{n-1})\ldots(1-\SP_1^{2}q^{k})\ip_1^{n-k}v_1 \ldots v_{n-k}}{(1-\SP_1\ip_1v_1)\ldots(1-\SP_1\ip_1v_{n-k})}
	\prod_{j=n-k+1}^{N}\frac{1-\SP_1 q^{k}\ip_1 v_j}{1-\SP_1\ip_1 v_j}\cdot\bv_k.
\end{align*}

\smallskip\noindent\textbf{Step 5.}
What remains unaccounted for in \eqref{step4_action_of_A_G_proof} 
is $\BY_2(\KS)\,\bv_0=\BY_2(v_1\md\ipb,\SPB)\ldots\BY_2(v_{n-k}\md\ipb,\SPB)\,\bv_0$. 
But this was computed earlier in the proof of 
\eqref{F_symm_formula}, and we can also immediately 
take the second vector to be $\bv_0\otimes\bv_0\otimes \ldots$ instead of just $\bv_0$. 
Let now
the 
parameters be denoted by $\ip_0,\ip_1,\ip_2,\ldots$ 
and $\SP_0,\SP_1,\SP_2,\ldots$,
as in \eqref{G_symm_formula},
and note that 
in the part corresponding to $\BY_2(\KS)$
we need to take the shifted parameters $\sh_1\ipb$ and $\sh_1\SPB$ 
(cf. \eqref{sh_operation}).
Thus, by \eqref{BY_semi_infinite}, we have
\begin{align}\label{B_action_G_proof}
	\BY(v_1\md\sh_1\ipb,\sh_1\SPB)\ldots\BY(v_{n-k}\md\sh_1\ipb,\sh_1\SPB)\big(\bv_0\otimes\bv_0\otimes \ldots\big)
	=\sum_{\kappa\in\signp {n-k}}
	\F_\kappa(v_1,\ldots,v_{n-k}\md\sh_1\ipb,\sh_1\SPB)\,\bv_\kappa,
\end{align}
and so the coefficient of $\bv_\nu$ (with 
$\nu\in\signp n$ having exactly $k$ zero coordinates)
in 
\begin{align*}
	\AY(v_1\md\ipb,\SPB)\ldots\AY(v_N\md\ipb,\SPB)(\bv_{n}\otimes \bv_{0}\otimes \bv_{0}\otimes\ldots)
\end{align*}
is equal to
\begin{align}\label{G_coeff_by_enu_G_proof}
	\frac{(\SP_0^{2};q)_{n}}{(\SP_0^{2};q)_{k}}
	\sum_{\substack{\KSB\subseteq\{1,\ldots,N\}\\|\KSB|=n-k}}
	\prod_{i\in\KSB}\frac{\ip_0 v_i}{1-\SP_0\ip_0 v_i}
	\prod_{j\in\KS}\frac{1-\SP_0 q^{k}\ip_0v_j}{1-\SP_0\ip_0v_j}
	\prod_{\substack{\aind\in\KSB\\\bind\in\KS}}
	\frac{v_\aind-q v_\bind}{v_\aind-v_\bind}
	\F_{(\nu_1-1,\ldots,\nu_{n-k}-1)}(\{v_i\}_{i\in\KSB}\md\sh_1\ipb,\sh_1\SPB).
\end{align}
Indeed, the signature $\kappa$ in \eqref{B_action_G_proof} 
corresponds to nonzero parts in $\nu$, and coordinates in $\kappa$ are counted starting from location 1
(hence the shifts $\nu_i-1$).

To match \eqref{G_coeff_by_enu_G_proof} to \eqref{G_symm_formula},
we use formula \eqref{F_symm_formula} to write 
$\F_{(\nu_1-1,\ldots,\nu_{n-k}-1)}$ as a sum over permutations of $\KSB$, and 
insert an additional symmetrization over $\KS$ 
(see footnote$^{\ref{symm_footnote}}$):
\begin{align*}
	1=\frac{(1-q)^{N-n+k}}{(q;q)_{N-n+k}}
	\sum_{\substack{\sigma\colon\KS\to\KS\\\textnormal{$\sigma$ is a bijection}}}
	\sigma\Bigg(\prod_{\substack{\aind,\bind\in\KS\\\aind<\bind}}
	\frac{v_{\aind}-qv_{\bind}}{v_{\aind}-v_{\bind}}
	\Bigg).
\end{align*}
After that one readily checks that \eqref{G_coeff_by_enu_G_proof}
coincides with the desired expression.
This concludes the proof of 
Theorem \ref{thm:symmetrization}.
\end{proof}

% subsection symmetrization_formulas (end)

% section symmetric_rational_functions (end)

\section{Stochastic weights and fusion} % (fold)
\label{sec:stochastic_weights_and_fusion}

One key object we will consider
is the set of probability measures afforded by the Cauchy identities of 
\S \ref{sub:limits_of_yang_baxter_commutation_relations_cauchy_type_identities}.
We will describe and study 
them in \S \ref{sec:markov_kernels_and_stochastic_dynamics} below.
The present section is devoted to a preliminary 
discussion of the \emph{fusion procedure} 
on which some 
of the constructions of \S \ref{sec:markov_kernels_and_stochastic_dynamics}
are based.

\subsection{Stochastic weights $\Lmatr_{u,\SP}$} % (fold)
\label{sub:stochastic_weights}

If we assume that
\begin{align}\label{stochastic_weights_condition_qsxi}
\parbox{.85\textwidth}{
	\begin{itemize}
	\item
	$0<q<1$;
	\smallskip\item
	all inhomogeneity parameters $\ip_j$ are positive
	and are uniformly (in $j$) bounded away from 
	$0$ and $+\infty$;
	\smallskip\item
	all $\SP$--parameters $\SP_j$
	belong to $(-1,0)$
	and are 
	uniformly (in $j$) bounded away from 
	$-1$ and $0$,
	\end{itemize}
}
\end{align}
and, moreover, that
\begin{align}\label{stochastic_weights_condition_u}
\parbox{.85\textwidth}{
	\begin{itemize}
	\item
	all spectral parameters $u_i$
	are nonnegative,
	\end{itemize}
}
\end{align}
then all the vertex weights
$w_{u,\SP}$, 
$w_{u,\SP}^{\conj}$, 
and 
$\Lmatr_{u,\SP}$
(see Fig.~\ref{fig:vertex_weights} and \ref{fig:vertex_weights_conj_stoch})
are nonnegative.
Under these assumptions, \eqref{L_sum_to_one} implies that 
the stochastic weights
$\Lmatr_{u,\SP}(i_1,j_1;i_2,j_2)$, where $i_1,i_2\in\Z_{\ge0}$
and $j_1,j_2\in\{0,1\}$,
define a \emph{probability distribution}
on all possible output arrow configurations
$\big\{(i_2,j_2)\in\Z_{\ge0}\times\{0,1\}\colon i_2+j_2=i_1+j_1\big\}$
given the input arrow configuration $(i_1,j_1)$.
We will use conditions 
\eqref{stochastic_weights_condition_qsxi}--\eqref{stochastic_weights_condition_u}
to define Markov dynamics in \S \ref{sec:markov_kernels_and_stochastic_dynamics} below.

The conditions \eqref{stochastic_weights_condition_qsxi}--\eqref{stochastic_weights_condition_u} are sufficient but not necessary 
for the nonnegativity of the $\Lmatr_{u}$'s; 
for other conditions see \cite[Prop.\;2.3]{CorwinPetrov2015}
and also \S \ref{sub:asep_degeneration} and
\S \ref{sub:general_j_dynamics_and_q_hahn_degeneration} below.

\begin{remark}\label{rmk:q_or_s_zero}
	We will
	always 
	assume that the parameters $q$ and $\SP_j$
	are nonzero. In fact, without this assumption
	the weights
	$\Lmatr_{u,\SP}$ may still
	define probability distributions. If $q$ or the $\SP_j$'s
	vanish, then some of our statements remain valid and simplify, but
	we will not focus on the necessary modifications.
\end{remark}

\begin{remark}\label{rmk:sign_of_SP}
	Since the stochastic weights $\Lmatr_{u,\SP}$
	depend on $\SP$ and $u$ only through 
	$\SP u$ and $\SP^{2}$, they are invariant under 
	the simultaneous change of sign of both $\SP$ and $u$.
	We have chosen $\SP$ to be negative, and $u$ will be 
	nonnegative.
\end{remark}

% subsection stochastic_weights_L (end)

\subsection{Fusion of stochastic weights} % (fold)
\label{sub:fusion_of_stochastic_weights}

For each $J\in\Z_{\ge1}$,
we will now define certain
more general stochastic vertex weights 
$\LJ{J}_{u,\SP}(i_1,j_1;i_2,j_2)$, 
where $(i_1,j_1), (i_2,j_2)\in\Z_{\ge0}\times\{0,1,\ldots,J\}$.
That is, we want to relax the restriction that the horizontal arrow
multiplicities are bounded by 1, and consider multiplicities bounded by any fixed $J\ge1$.
When $J=1$, the vertex weights $\LJ{J}_{u,\SP}$ will coincide with $\Lmatr_{u,\SP}$.
Of course, we want the new weights
$\LJ{J}_{u,\SP}$ to share some of the nice properties of the $\Lmatr_{u,\SP}$'s;
most importantly, the $\LJ{J}_{u,\SP}$'s should satisfy a version of the Yang--Baxter equation.
The construction of the weights 
$\LJ{J}_{u,\SP}$ follows the so-called \emph{fusion procedure}, which was invented 
in a representation-theoretic context \cite{KulishReshSkl1981yang} (see also \cite{KR1987Fusion})
to produce higher-dimensional solutions of the Yang--Baxter equation
from lower-dimensional ones. Following \cite{CorwinPetrov2015}, here
we describe the fusion procedure in purely combinatorial/probabilistic terms.

We will need the following definition.
\begin{definition}\label{def:q_exch}
	A probability distribution $P$ on $\{0,1\}^{J}$
	is called \emph{$q$-exchangeable} if 
	the probability weights $P(\vec h)$, 
	$\vec h=(h^{\scriptscriptstyle(1)},\ldots,h^{\scriptscriptstyle(J)})\in\{0,1\}^{J}$,
	depend on $\vec h$ in the following way:
	\begin{align}\label{P_bar_exchangeable}
		P(\vec h)=\tilde P(j)\cdot \frac{q^{\sum_{r=1}^{J}(r-1)h^{\scriptscriptstyle(r)}}}{Z_{j}(J)},
		\qquad \qquad
		j:=\sum_{r=1}^{J}h^{\scriptscriptstyle(r)},
	\end{align}
	where $\tilde P$ is a probability distribution on $\{0,1,\ldots,J\}$.
	In words, 
	for 
	a fixed sum of coordinates $j$,
	the weights of the conditional distribution  
	of $\vec h$
	are proportional to the product of the factors
	$q^{r-1}$ for each 
	coordinate ``1''
	at location $r\in\{1,\ldots,J\}$. 
	The normalization constant $Z_j(J)$
	is given by the following expression involving the $q$-binomial 
	coefficient:\footnote{Indeed, 
	$Z_j(J)$ is the sum of
	$q^{\sum_{r=1}^{J}(r-1)h^{(r)}}$ over all $\vec h\in\{0,1\}^{J}$ with 
	$\sum_{r=1}^{J}h^{(r)}=j$.
	Considering two cases $h^{(J)}=1$ or $h^{(J)}=0$, we see that it satisfies the recursion
	$Z_j(J)=q^{J-1}Z_{j-1}(J-1)+Z_{j}(J-1)$,
	with 
	$Z_0(J)=1$. This recursion is solved by \eqref{Zjj_formula}.}
	\begin{align}\label{Zjj_formula}
		Z_j(J)=q^{\frac{j(j-1)}2}\binom{J}{j}_{q}=
		q^{\frac{j(j-1)}2}\frac{(q;q)_{J}}{(q;q)_{j}(q;q)_{J-j}}.
	\end{align}

	The name ``$q$-exchangeable'' refers to the fact that 
	any exchange in $\vec h$
	of the form $10\to 01$
	multiplies the weight of $\vec h$
	by $q$.
	See \cite{Gnedin2009}, \cite{GnedinOlsh2009q} for a detailed treatment of 
	$q$-exchangeable distributions.
\end{definition}

Returning to vertex weights,
a key probabilistic feature observed in \cite{CorwinPetrov2015}
which triggers the fusion procedure is the following. Attach vertically
$J$
vertices with spectral parameters $u,qu,\ldots,q^{J-1}u$
(see Fig.~\ref{fig:J_vertex}), and assign 
to them the corresponding weights $\Lmatr_{q^{i}u,\SP}$ given by \eqref{vertex_weights_stoch}.
Fixing the numbers $i_1$ and $i_2$ of arrows at the bottom and at the top, 
we see that this vertex configuration
maps probability distributions $P_1$
on incoming arrows
$h_1^{\scriptscriptstyle(1)},\ldots,h_1^{\scriptscriptstyle(J)}$
to probability distributions
$P_2$
on outgoing arrows
$h_2^{\scriptscriptstyle(1)},\ldots,h_2^{\scriptscriptstyle(J)}$.
\begin{figure}[htbp]
	\scalebox{.9}{\begin{tikzpicture}
		[scale=1,very thick]
		\draw (0,0)--++(1.4,0) node [right] {$h_2^{\scriptscriptstyle(1)}$};
		\draw (0,0)--++(-1.4,0) node [left] {$h_1^{\scriptscriptstyle(1)}$};
		\draw (0,1)--++(1.4,0) node [right] {$h_2^{\scriptscriptstyle(2)}$};
		\draw (0,1)--++(-1.4,0) node [left] {$h_1^{\scriptscriptstyle(2)}$};
		\draw (0,3.2)--++(1.4,0) node [right] {$h_2^{\scriptscriptstyle(J)}$};
		\draw (0,3.2)--++(-1.4,0) node [left] {$h_1^{\scriptscriptstyle(J)}$};
		\draw (0,0)--++(0,-.8) node[below] {$i_1$};
		\draw (0,1)--++(0,.8) node [right, yshift=-9pt] {$l^{\scriptscriptstyle(2)}$};
		\draw (0,2.4)--++(0,.8) node [right, yshift=-13pt] {$l^{\scriptscriptstyle(J-1)}$};;
		\draw (0,3.2)--++(0,.8) node[above] {$i_2$};
		\draw (0,0)--++(0,1) node [right, yshift=-15pt] {$l^{\scriptscriptstyle(1)}$};
		\draw[fill] (0,0) circle (3pt) node [above left] {$\ybspec$};
		\draw[fill] (0,1) circle (3pt) node [above left] {$q\ybspec$};
		\draw[fill] (0,3.2) circle (3pt) node [above left] {$q^{J-1}\ybspec$};
		\node at (0.05,2.1) {$\cdots$};
	\end{tikzpicture}}
	\caption{Attaching $J$ vertices 
	with spectral parameters $u,qu,\ldots,q^{J-1}u$
	vertically.}
	\label{fig:J_vertex}
\end{figure}
\begin{proposition}\label{prop:L_q_exch}
	The 
	mapping $P_1\mapsto P_2$ 
	described above 
	preserves the class of
	$q$-exchangeable distributions.
\end{proposition}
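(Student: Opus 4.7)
My strategy is to reduce preservation of $q$-exchangeability to a local two-vertex identity and then derive that identity from the Yang--Baxter equation \eqref{YB_main_relation} at the \emph{fusion point} $u_1/u_2=q^{-1}$. Using \eqref{P_bar_exchangeable}, a distribution $P$ on $\{0,1\}^J$ is $q$-exchangeable iff for every $r\in\{1,\ldots,J-1\}$,
\[
P(\ldots,0_r,1_{r+1},\ldots) \;=\; q\cdot P(\ldots,1_r,0_{r+1},\ldots)
\]
(all other coordinates fixed). So it suffices to transfer each such adjacent-swap relation from $P_1$ to $P_2$. Writing
\[
P_2(\vec h_2) \;=\; \sum_{\vec h_1,\vec l} P_1(\vec h_1)\prod_{s=1}^{J} \Lmatr_{q^{s-1}u,\SP}\bigl(l^{(s-1)},h_1^{(s)};l^{(s)},h_2^{(s)}\bigr)
\]
with $l^{(0)}=i_1$, $l^{(J)}=i_2$, fix $r$ and compare $P_2(\vec h_2)$ to $P_2(\sigma_r\vec h_2)$, where $\sigma_r$ is the swap at positions $r,r+1$. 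The vertices at levels $\ne r,r+1$ depend only on external data and factor out of the comparison, reducing the problem to an identity for the two-vertex sub-stack at levels $r,r+1$ with spectral parameters $u_1=q^{r-1}u$ and $u_2=q^{r}u$.

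At the fusion ratio $u_1/u_2=q^{-1}$, the conjugating matrix \eqref{matrix_X} degenerates: substituting the specialized parameters, $X$ becomes (up to an overall scalar)
\[
\begin{bmatrix} 1-q^2 & 0 & 0 & 0 \\ 0 & q(1-q) & 1-q & 0 \\ 0 & q(1-q) & 1-q & 0 \\ 0 & 0 & 0 & 1-q^2 \end{bmatrix},
\]
which is singular with one-dimensional kernel spanned by $v := e_{(0,1)}-q\,e_{(1,0)}$. Rewriting \eqref{YB_main_relation} as $\widetilde w\,X = X\,w$ and applying both sides to $v$ gives $X(wv)=\widetilde w(Xv)=0$, so $wv\in\ker X$, i.e.\ $wv=\lambda(m,n)\cdot v$ for some scalar $\lambda(m,n)$. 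Reading off the four scalar components of this identity yields simultaneously: at pure inputs $A\in\{(0,0),(1,1)\}$ one obtains automatically $w^{(m,n)}(A;0,1)=q\,w^{(m,n)}(A;1,0)$; at mixed inputs one obtains the combined identity
\[
w^{(m,n)}(1,0;0,1) + q\,w^{(m,n)}(0,1;0,1) \;=\; q\bigl[w^{(m,n)}(1,0;1,0) + q\,w^{(m,n)}(0,1;1,0)\bigr].
\]
These identities pass from $w$ to $\Lmatr$ without change: the conjugation \eqref{vertex_weights_stoch} introduces a factor depending only on $(m,n,j_1'+j_2')$, and $j_1'+j_2'=1$ for both outputs under comparison.

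Substituting these identities into the partial sums appearing in the definitions of $P_2(\vec h_2)$ and $P_2(\sigma_r\vec h_2)$, and using the $q$-exchangeability of $P_1$ (which gives relative weights $1$ and $q$ to inputs $(h_1^{(r)},h_1^{(r+1)})=(1,0)$ and $(0,1)$ with other coordinates held fixed), the factor of $q$ emerges exactly as required, yielding $P_2(\sigma_r\vec h_2)=q\,P_2(\vec h_2)$. The pure-input identities handle the boundary cases where $h_1^{(r)}+h_1^{(r+1)}\in\{0,2\}$ is determined by arrow conservation. The main obstacle is confirming the fusion-point degeneration of $X$ and extracting the scalar identities cleanly from the kernel relation $wv=\lambda v$; once this is in hand, the combinatorial bookkeeping against the $q$-exchangeable weights of $P_1$ is routine.
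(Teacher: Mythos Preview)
Your proof is correct and takes a genuinely different route from the paper. The paper reduces (as you do) to the $J=2$ case and then directly verifies three two-vertex weight identities---one each for inputs $(h_1^{(r)},h_1^{(r+1)})\in\{(0,0),(1,1),(\text{mixed})\}$---by plugging in the explicit formulas for $\Lmatr_{u,\SP}$ and $\Lmatr_{qu,\SP}$ from Fig.~\ref{fig:vertex_weights_conj_stoch}. Your argument instead derives these same three identities structurally: you observe that at the fusion ratio $u_2=qu_1$ the conjugating matrix $X$ of \eqref{matrix_X} drops rank, with one-dimensional kernel spanned by $v=e_{(0,1)}-q\,e_{(1,0)}$; then the polynomial form $\widetilde w X = Xw$ of \eqref{YB_main_relation} (valid at the degenerate point by continuity) forces $wv\in\ker X$, i.e.\ $wv=\lambda v$, whose four scalar components are exactly the paper's three identities. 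Your passage from $w$-weights to $\Lmatr$-weights via the observation that the conjugation factor in \eqref{weights_conj}--\eqref{vertex_weights_stoch} depends only on $(m,n,j_1'+j_2')$ is also correct.

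What each approach buys: the paper's direct check is self-contained and requires no appeal to the Yang--Baxter machinery for this particular step. Your approach is more conceptual---it explains \emph{why} $q$-exchangeability is preserved (namely, because fusion sits precisely at a rank-drop point of the $R$-matrix), and it would transfer verbatim to any other vertex model whose Yang--Baxter conjugator degenerates in the same way at the fusion ratio.
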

\begin{proof}
	Let us fix 
	the numbers $i_1,i_2\in\Z_{\ge0}$
	of bottom and top arrows, 
	as well as 
	the total number 
	$j_1=\sum_{\ell=1}^{J}h_1^{\scriptscriptstyle(\ell)}\in\{0,1,\ldots,J\}$
	of incoming arrows from the left.
	Under these conditions, the 
	incoming $q$-exchangeable distribution $P_1$
	is unique (its partition function is $Z_{j_1}(J)$).
	It suffices to show that 
	for any $\vec h_2\in\{0,1\}^{J}$ 
	with $h_2^{\scriptscriptstyle(r)}=0$, $h_2^{\scriptscriptstyle(r+1)}=1$ for some $r$, we have
	\begin{align*}
		P_2(h_2^{\scriptscriptstyle(1)},\ldots,h_2^{\scriptscriptstyle(r)},h_2^{\scriptscriptstyle(r+1)},\ldots
		h_2^{\scriptscriptstyle(J)})=
		q\cdot P_2
		(h_2^{\scriptscriptstyle(1)},\ldots,h_2^{\scriptscriptstyle(r+1)},h_2^{\scriptscriptstyle(r)},\ldots
		h_2^{\scriptscriptstyle(J)}).
	\end{align*}

	Since this property involves only two neighboring vertices, it suffices to consider the case $J=2$.
	The desired statement now follows from the relations
	(here $g$ is arbitrary):
	\def\dd{.12}\begin{align*}
		\textnormal{weight}
		\Bigg(\raisebox{-27pt}{\scalebox{.6}{\begin{tikzpicture}
			[scale=1, very thick]
			\node (i1) at (0,-1) {$g$};
			\node (i1p) at (\dd,-1) {\phantom{$g$}};
			\node (i1m) at (-\dd,-1) {\phantom{$g$}};
			\node (i2) at (0,2) {$g-1$};
			\node (i2p) at (\dd,2) {\phantom{$g-1$}};
			\node (i2m) at (-\dd,2) {\phantom{$g-1$}};
			\node (h11) at (-1,0) {$0$};
			\node (h12) at (-1,1) {$0$};
			\node (h21) at (1,0) {$0$};
			\node (h22) at (1,1) {$1$};
			\draw[densely dotted] (h11) -- (h21);
			\draw[densely dotted] (h12) -- (h22);
			% \draw[densely dotted] (i1) -- (i2);
			\draw[->] (i1m) -- (i2m);
			\draw[->] (i1) -- (i2);
			\draw[->] (i1p) -- (\dd,1) -- (h22);
		\end{tikzpicture}}}\Bigg)&=q\cdot
		\textnormal{weight}
		\Bigg(\raisebox{-27pt}{\scalebox{.6}{\begin{tikzpicture}
			[scale=1, very thick]
			\node (i1) at (0,-1) {$g$};
			\node (i1p) at (\dd,-1) {\phantom{$g$}};
			\node (i1m) at (-\dd,-1) {\phantom{$g$}};
			\node (i2) at (0,2) {$g-1$};
			\node (i2p) at (\dd,2) {\phantom{$g-1$}};
			\node (i2m) at (-\dd,2) {\phantom{$g-1$}};
			\node (h11) at (-1,0) {$0$};
			\node (h12) at (-1,1) {$0$};
			\node (h21) at (1,0) {$1$};
			\node (h22) at (1,1) {$0$};
			\draw[densely dotted] (h11) -- (h21);
			\draw[densely dotted] (h12) -- (h22);
			% \draw[densely dotted] (i1) -- (i2);
			\draw[->] (i1m) -- (i2m);
			\draw[->] (i1) -- (i2);
			\draw[->] (i1p) -- (\dd,0) -- (h21);
		\end{tikzpicture}}}\Bigg),\\
		\textnormal{weight}
		\Bigg(\raisebox{-27pt}{\scalebox{.6}{\begin{tikzpicture}
			[scale=1, very thick]
			\node (i1) at (0,-1) {$g$};
			\node (i1p) at (\dd,-1) {\phantom{$g$}};
			\node (i1m) at (-\dd,-1) {\phantom{$g$}};
			\node (i2) at (0,2) {$g+1$};
			\node (i2m1) at (-\dd/2,2) {\phantom{$g+1$}};
			\node (i2m2) at (-3*\dd/2,2) {\phantom{$g+1$}};
			\node (i2p2) at (3*\dd/2,2) {\phantom{$g+1$}};
			\node (i2p1) at (\dd/2,2) {\phantom{$g+1$}};
			\node (h11) at (-1,0) {$1$};
			\node (h12) at (-1,1) {$1$};
			\node (h21) at (1,0) {$0$};
			\node (h22) at (1,1) {$1$};
			\draw[densely dotted] (h11) -- (h21);
			\draw[densely dotted] (h12) -- (h22);
			\draw[->] (i1) -- (0,1-\dd) --++ (3/2*\dd,2*\dd) -- (i2p2);
			\draw[->] (i1m) -- (-\dd,1-\dd) --++ (3/2*\dd,2*\dd)  -- (i2p1);
			\draw[->] (h11) -- (-2*\dd,0) --++ (0,1-\dd)--++ (3/2*\dd,2*\dd)  -- (i2m1);
			\draw[->] (h12) -- (-2.5*\dd,1)--++(\dd,\dd)--(i2m2);
			\draw[->] (i1p) -- (\dd,1-\dd) --++(\dd,\dd)--(h22);
		\end{tikzpicture}}}\Bigg)&=q\cdot
		\textnormal{weight}
		\Bigg(\raisebox{-27pt}{\scalebox{.6}{\begin{tikzpicture}
			[scale=1, very thick]
			\node (i1) at (0,-1) {$g$};
			\node (i1p) at (\dd,-1) {\phantom{$g$}};
			\node (i1m) at (-\dd,-1) {\phantom{$g$}};
			\node (i2) at (0,2) {$g+1$};
			\node (i2m1) at (-\dd/2,2) {\phantom{$g+1$}};
			\node (i2m2) at (-3*\dd/2,2) {\phantom{$g+1$}};
			\node (i2p2) at (3*\dd/2,2) {\phantom{$g+1$}};
			\node (i2p1) at (\dd/2,2) {\phantom{$g+1$}};
			\node (h11) at (-1,0) {$1$};
			\node (h12) at (-1,1) {$1$};
			\node (h21) at (1,0) {$1$};
			\node (h22) at (1,1) {$0$};
			\draw[densely dotted] (h11) -- (h21);
			\draw[densely dotted] (h12) -- (h22);
			\draw[->] (i1) -- (0,0-\dd) --++ (3/2*\dd,2*\dd) -- (i2p2);
			\draw[->] (i1m) -- (-\dd,0-\dd) --++ (3/2*\dd,2*\dd)  -- (i2p1);
			\draw[->] (h11) -- (-1.5*\dd,0) --++ (\dd,\dd)  -- (i2m1);
			\draw[->] (h12) -- (-1.5*\dd,1)--(i2m2);
			\draw[->] (i1p) -- (\dd,0-\dd) --++(\dd,\dd)--(h21);
		\end{tikzpicture}}}\Bigg),\\
		q\cdot \textnormal{weight}
		\Bigg(\raisebox{-27pt}{\scalebox{.6}{\begin{tikzpicture}
			[scale=1, very thick]
			\node (i1) at (0,-1) {$g$};
			\node (i1p) at (\dd,-1) {\phantom{$g$}};
			\node (i1m) at (-\dd,-1) {\phantom{$g$}};
			\node (i2) at (0,2) {$g$};
			\node (i2m) at (-\dd,2) {\phantom{$g$}};
			\node (i2p) at (\dd,2) {\phantom{$g$}};
			\node (h11) at (-1,0) {$0$};
			\node (h12) at (-1,1) {$1$};
			\node (h21) at (1,0) {$0$};
			\node (h22) at (1,1) {$1$};
			\draw[densely dotted] (h11) -- (h21);
			\draw[densely dotted] (h12) -- (h22);
			\draw[->] (i1p) -- (\dd,1-\dd) --++(\dd,\dd) --++ (h22);
			\draw[->] (i1) -- (0,1-\dd)--++(\dd,2*\dd) -- (i2p);
			\draw[->] (i1m) -- (-\dd,1-\dd)--++(\dd,2*\dd)-- (i2);
			\draw[->] (h12) -- (-2*\dd,1) --++(\dd,\dd) -- (i2m);
		\end{tikzpicture}}}\Bigg){}+{}\textnormal{weight}
		\Bigg(\raisebox{-27pt}{\scalebox{.6}{\begin{tikzpicture}
			[scale=1, very thick]
			\node (i1) at (0,-1) {$g$};
			\node (i1p) at (\dd,-1) {\phantom{$g$}};
			\node (i1m) at (-\dd,-1) {\phantom{$g$}};
			\node (i2) at (0,2) {$g$};
			\node (i2m) at (-\dd,2) {\phantom{$g$}};
			\node (i2p) at (\dd,2) {\phantom{$g$}};
			\node (h11) at (-1,0) {$1$};
			\node (h12) at (-1,1) {$0$};
			\node (h21) at (1,0) {$0$};
			\node (h22) at (1,1) {$1$};
			\draw[densely dotted] (h11) -- (h21);
			\draw[densely dotted] (h12) -- (h22);
			\draw[->] (i1p) -- (\dd,-\dd) --++(\dd/2,2*\dd)--(3/2*\dd,1-\dd)--++(\dd,\dd)--(h22);
			\draw[->] (i1) -- (0,-\dd) --++(\dd/2,2*\dd)--(1/2*\dd,1-\dd)--++(\dd/2,2*\dd)--(i2p);
			\draw[->] (i1m) -- (-\dd,-\dd) --++(\dd/2,2*\dd)--(-1/2*\dd,1-\dd)--++(\dd/2,2*\dd)--(i2);
			\draw[->] (h11) -- (-2.5*\dd,0) --++(\dd,\dd) -- (-3/2*\dd,1-\dd)--++(\dd/2,2*\dd)--(i2m);
		\end{tikzpicture}}}\Bigg)&=
		q\cdot \left(q\cdot 
		\textnormal{weight}
		\Bigg(\raisebox{-27pt}{\scalebox{.6}{\begin{tikzpicture}
			[scale=1, very thick]
			\node (i1) at (0,-1) {$g$};
			\node (i1p) at (\dd,-1) {\phantom{$g$}};
			\node (i1m) at (-\dd,-1) {\phantom{$g$}};
			\node (i2) at (0,2) {$g$};
			\node (i2m) at (-\dd,2) {\phantom{$g$}};
			\node (i2p) at (\dd,2) {\phantom{$g$}};
			\node (h11) at (-1,0) {$0$};
			\node (h12) at (-1,1) {$1$};
			\node (h21) at (1,0) {$1$};
			\node (h22) at (1,1) {$0$};
			\draw[densely dotted] (h11) -- (h21);
			\draw[densely dotted] (h12) -- (h22);
			\draw[->] (i1p) -- (\dd,0-\dd) --++(\dd,\dd) --++ (h21);
			\draw[->] (i1) -- (0,-\dd)--++(\dd/2,2*\dd)--(\dd/2,1-\dd)--++(\dd/2,2*\dd)--(i2p);
			\draw[->] (i1m) -- (-\dd,-\dd)--++(\dd/2,2*\dd)--(-\dd/2,1-\dd)--++(\dd/2,2*\dd)--(i2);
			\draw[->] (h12) -- (-2*\dd,1) --++(\dd,\dd) -- (i2m);
		\end{tikzpicture}}}\Bigg){}+{}\textnormal{weight}
		\Bigg(\raisebox{-27pt}{\scalebox{.6}{\begin{tikzpicture}
			[scale=1, very thick]
			\node (i1) at (0,-1) {$g$};
			\node (i1p) at (\dd,-1) {\phantom{$g$}};
			\node (i1m) at (-\dd,-1) {\phantom{$g$}};
			\node (i2) at (0,2) {$g$};
			\node (i2m) at (-\dd,2) {\phantom{$g$}};
			\node (i2p) at (\dd,2) {\phantom{$g$}};
			\node (h11) at (-1,0) {$1$};
			\node (h12) at (-1,1) {$0$};
			\node (h21) at (1,0) {$1$};
			\node (h22) at (1,1) {$0$};
			\draw[densely dotted] (h11) -- (h21);
			\draw[densely dotted] (h12) -- (h22);
			\draw[->] (i1p) -- (\dd,-\dd) --++(\dd,\dd) --++ (h21);
			\draw[->] (i1) -- (0,-\dd)--++(\dd,2*\dd) -- (i2p);
			\draw[->] (i1m) -- (-\dd,-\dd)--++(\dd,2*\dd)-- (i2);
			\draw[->] (h11) -- (-2*\dd,0) --++(\dd,\dd) -- (i2m);
		\end{tikzpicture}}}\Bigg)
		\right).
	\end{align*}
	In each of the relations 
	the right-hand side differs by 
	moving the outgoing arrow
	down,
	and ``weight'' means the product of the weights
	$\Lmatr_{u,\SP}$ at the bottom and $\Lmatr_{qu,\SP}$ at the top vertex.
	The above 
	relations are readily verified 
	from the definition of 
	$\Lmatr_{u,\SP}$
	\eqref{vertex_weights_stoch}
	(see also Fig.~\ref{fig:vertex_weights_conj_stoch}).
\end{proof}

This proposition implies that 
for any fixed $i_1,i_2\in\Z_{\ge0}$,
the Markov operator mapping 
$P_1$ to $P_2$
(where $P_1$, $P_2$ are probability distributions on $\{0,1\}^{J}$), 
can be projected to another Markov operator
which maps $\tilde P_1$ to $\tilde P_2$ (cf. \eqref{P_bar_exchangeable}),
i.e., acts on probability distributions on the smaller space $\{0,1,\ldots,J\}$.
We will denote the matrix elements of this ``collapsed'' 
Markov operator by 
$\LJ{J}_{u,\SP}(i_1,j_1;i_2,j_2)$, 
where $(i_1,j_1), (i_2,j_2)\in\Z_{\ge0}\times\{0,1,\ldots,J\}$.

The definition of $\LJ{J}_{u,\SP}$ implies that
these matrix elements
satisfy a certain
recursion relation in $J$. 
This relation is obtained by 
considering two cases,
whether there is a left-to-right arrow at the very bottom, or not
(i.e., $h_1^{\scriptscriptstyle(1)}=0$ or $h_1^{\scriptscriptstyle(1)}=1$).
Therefore, we obtain the following recursion:
\begin{align}
	\LJ{J}_{u,\SP}(i_1,j_1;i_2,j_2)
	=\sum_{a,b\in\{0,1\}}\sum_{l\ge0}
	P(h_{1}^{\scriptscriptstyle(1)}=a)\,
	\Lmatr_{u,\SP}(i_1,a;l,b)
	\LJ{J-1}_{qu,\SP}(l,j_1-a;i_2,j_2-b).
	\label{recursion_relation}
\end{align}
Here the probability $P(h_{1}^{\scriptscriptstyle(1)}=a)$ 
corresponds to our division into two cases. 
It can be readily computed using Definition \ref{def:q_exch}:
\begin{align*}
	P(h_{1}^{\scriptscriptstyle(1)}=0)=\frac{q^{j_1}Z_{j_1}(J-1)}{Z_{j_1}(J)}=
	\frac{q^{j_1}-q^{J}}{1-q^{J}},
	\qquad
	P(h_{1}^{\scriptscriptstyle(1)}=1)=
	\frac{q^{j_1-1}Z_{j_1-1}(J-1)}{Z_{j_1}(J)}=
	\frac{1-q^{j_1}}{1-q^{J}}.
\end{align*}

The recursion relation \eqref{recursion_relation} has a solution
expressible in terms of 
terminating $q$-hypergeometric functions 
(here we follow
the notation of \cite{Mangazeev2014}, \cite{Borodin2014vertex}):
\begin{multline*}
	{}_{r+1}\bar{\varphi}_r\left(\begin{matrix} q^{-n};a_1,\dots,a_r\\b_1,\dots,b_r\end{matrix}
	\Bigl| q,z\right):=\sum_{k=0}^n z^k\,\frac{(q^{-n};q)_k}{(q;q)_k}	\prod_{i=1}^r (a_i;q)_k(b_iq^k;q)_{n-k}\\
	=\prod_{i=1}^r (b_i;q)_n\cdot {}_{r+1}{\varphi}_r\left(\begin{matrix} q^{-n},a_1,\dots,a_r\\b_1,\dots,b_r\end{matrix}
	\Bigl| q,z\right),
\end{multline*}
where here $n\in\Z_{\ge0}$.
The solution $\LJ{J}_{u,\SP}$
looks as follows:
\begin{multline}
	\LJ{J}_{u,\SP}(i_1,j_1;i_2,j_2)
	=
	\mathbf{1}_{i_1+j_1=i_2+j_2}
	\frac{(-1)^{i_1}q^{
	\frac12 i_1(i_1+2j_1-1)}
	u^{i_1}\SP^{j_1+j_2-i_2}(u \SP^{-1};q)_{j_2-i_1}}
	{(q;q)_{i_2} (\SP u;q)_{i_2+j_2}
	(q^{J+1-j_1};q)_{j_1-j_2}}
	\\\times{}_{4}\bar{\varphi}_3\left(\begin{matrix} q^{-i_2};q^{-i_1},\SP u q^{J},q \SP/u\\
	\SP^{2},q^{1+j_2-i_1},q^{J+1-i_2-j_2}\end{matrix}
	\Bigl|\, q,q\right).\label{LJ_formula}
\end{multline}
Formula \eqref{LJ_formula} for fused vertex weights 
is essentially due to \cite{Mangazeev2014}.
In the present form \eqref{LJ_formula}
it was obtained 
in \cite[Thm.\;3.15]{CorwinPetrov2015}
by matching the recursion
\eqref{recursion_relation}
to the recursion relation for the classical
$q$-Racah orthogonal polynomials.\footnote{The 
parameters in \eqref{LJ_formula} match those in
\cite[Thm.\;3.15]{CorwinPetrov2015}
as $\beta=\al q^{J}$, $\al=-\SP u$, and $\nu=\SP^{2}$.}
About the latter see \cite[Ch.\;3.2]{Koekoek1996}.

% subsection fusion_of_stochastic_weights (end)

\subsection{Principal specializations of skew functions} % (fold)
\label{sub:principal_specializations_of_skew_functions}

The fused stochastic vertex weights discussed
in \S \ref{sub:fusion_of_stochastic_weights}
can be used to describe principal specializations of the 
skew functions $\F_{\mu/\la}$ and $\G_{\mu/\la}$,
in analogy to the non-skew principal specializations
of Corollary \ref{cor:prin_spec}.

Mimicking \eqref{weights_conj}--\eqref{vertex_weights_stoch}, 
we will use the general $J$
stochastic weights $\LJ{J}_{u,\SP}$ \eqref{LJ_formula} to define the 
weights which are general $J$ versions of the $w_u$'s \eqref{weights}:
\begin{align}
	\WJ{J}_{u,\SP}(i_1,j_1;i_2,j_2):=\frac1{(-\SP)^{j_2}}
	\frac{(q;q)_{i_2}}{(\SP ^{2};q)_{i_2}}
	\frac{(\SP ^{2};q)_{i_1}}{(q;q)_{i_1}}
	\LJ{J}_{u,\SP}(i_1,j_1;i_2,j_2),
	\label{WJ_defn}
\end{align}
where $i_1,i_2\in\Z_{\ge0}$ and $j_1,j_2\in\{0,1,\ldots,J\}$
are such that $i_1+j_1=i_2+j_2$ (otherwise the above weight is set to zero).
These weights are expressed
via the $q$-hypergeometric function
as follows:
\begin{multline}
	\WJ{J}_{u,\SP}(i_1,j_1;i_2,j_2)=
	\frac{(-1)^{i_1+j_2}q^{\frac12 i_1(i_1+2j_1-1)}\SP^{j_2-i_1}u^{i_1}
	(q;q)_{j_1}(u\SP^{-1};q)_{j_1-i_2}}{(q;q)_{i_1}(q;q)_{j_2} (u\SP;q)_{i_1+j_1}}
	\\\label{WJ_formula}
	\times
	{}_{4}\bar{\varphi}_3\left(\begin{matrix} q^{-i_1};q^{-i_2},q^J \SP u,q\SP u^{-1}\\\SP^2,q^{1+j_1-i_2},q^{1+J-i_1-j_1}\end{matrix}
	\Bigl|\, q,q\right).
\end{multline}
We see that the weights $\WJ{J}_{u,\SP}$
depend on the spectral parameter
$u$ in a rational manner.
One can also check that for $J=1$, the weights $\WJ{J}_{u,\SP}$ turn into \eqref{weights}.

\begin{proposition}\label{prop:skew_prin_spec}
	\noindent{\bf1.\/}
	For any $J\in\Z_{\ge1}$, $N\in\Z_{\ge0}$, 
	$\la\in\signp N$, $\mu\in\signp{N+J}$, and $u\in\C$, the principal specialization
	of the skew function
	\begin{align*}
		\F_{\mu/\la}(u,qu,\ldots,q^{J-1}u\md\ipb,\SPB)
	\end{align*}
	is equal to the weight of the unique collection of $N+J$ up-right paths in 
	the semi-infinite horizontal strip of height $1$, 
	where the weight at each vertex $x\in\Z_{\ge0}$ is
	equal to $\WJ{J}_{\ip_x u,\SP_x}$ (so that at most $J$ horizontal arrows
	per edge are allowed). The paths in the collection start with 
	$N$ vertical edges $(\la_i,0)\to(\la_i,1)$ and with $J$ horizontal edges $(-1,1)\to(0,1)$,
	and end with $N+J$ vertical edges 
	$(\mu_j,1)\to(\mu_j,2)$, see Fig.~\ref{fig:J_paths}, top.

\smallskip

	\noindent{\bf2.\/}
	For any $J\in\Z_{\ge1}$, any $\la,\mu\in\sign N$, and any $v\in\C$,
	the principal specialization 
	of the skew function
	\begin{align*}
		\G_{\mu/\la}(v,qv,\ldots,q^{J-1}v\md\ipb,\SPB)
	\end{align*}
	is equal to the weight of the unique collection of $N$ up-right paths in 
	the semi-infinite horizontal strip of height $1$, 
	where the weight at each vertex $x\in\Z_{\ge0}$ is
	equal to $\WJ{J}_{\ip_x v,\SP_x}$ (so that at most $J$ horizontal arrows
	per edge are allowed). 
	The paths in the collection start with 
	$N$ vertical edges $(\la_i,0)\to(\la_i,1)$ and end with $N$ vertical edges 
	$(\mu_i,1)\to(\mu_i,2)$, see Fig.~\ref{fig:J_paths}, bottom.
\end{proposition}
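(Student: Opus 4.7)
The plan is to prove both statements by induction on $J$, using the branching property of Proposition \ref{prop:branching} to peel off one horizontal row and matching the resulting column-wise sum with the recursion \eqref{recursion_relation} that defines the fused weights. The two parts are parallel, so I would write the argument for $\F$ in detail and indicate the (identical) modification for $\G$.

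\emph{Base case.} When $J=1$, a direct evaluation of \eqref{LJ_formula} at $J=1$ combined with \eqref{WJ_defn} yields $\WJ{1}_{u,\SP} = w_{u,\SP}$, so both claims reduce immediately to Definitions \ref{def:F} and \ref{def:G} in the single-row strip. Uniqueness of the single-row path collection, given the vertical boundary data $\la,\mu$ and the prescribed left horizontal input, follows from arrow conservation by scanning columns left to right: at each column the horizontal output is determined by the left input and the fixed vertical I/O.

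\emph{Inductive step for $\F$.} Applying \eqref{branching-F} with $n_1 = 1$ and $n_2 = J-1$,
$$\F_{\mu/\la}(u,qu,\ldots,q^{J-1}u\mid\ipb,\SPB) = \sum_{\kappa\in\signp{N+1}} \F_{\mu/\kappa}(qu,\ldots,q^{J-1}u\mid\ipb,\SPB)\,\F_{\kappa/\la}(u\mid\ipb,\SPB).$$
By the inductive hypothesis, the first factor is the (unique) single-row $\WJ{J-1}_{\ip_x qu,\SP_x}$ product with $J-1$ left entries and boundary $\kappa\to\mu$, while the second factor is the single-row $w_{\ip_x u,\SP_x}$ product with one left entry and boundary $\la\to\kappa$. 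Both are products over columns $x$, so summing over $\kappa$ amounts to a column-wise sum over the intermediate vertical multiplicity $\kappa_x$ at each site; this also forces the correct horizontal I/O in each of the two effective layers by arrow conservation. Converting $w\to\Lmatr$ via \eqref{vertex_weights_stoch} and $\WJ{J-1}\to\LJ{J-1}$ via \eqref{WJ_defn}, one recognizes the column-wise combination as the right-hand side of the recursion \eqref{recursion_relation}: the $(a,b)$ sum matches the binary choice of horizontal flow through the bottom row, the intermediate $l$ matches $\kappa_x$, and the $q$-exchangeable factors $P(h_1^{(1)}=a)$ emerge from the arithmetic of the normalizations. A final application of \eqref{WJ_defn} re-converts $\LJ{J}$ into $\WJ{J}_{\ip_x u,\SP_x}$ in each column, giving the asserted identity. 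The argument for $\G$ is word-for-word the same, using \eqref{branching-G} and replacing the one left-horizontal entry with zero, so that the bottom $w$-row contributes no horizontal flow at all.

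\emph{Main obstacle.} The technical heart of the argument is checking that the normalization factors line up exactly. One must verify that the $(\SP_x^{2};q)_\bullet/(q;q)_\bullet$ ratios at the intermediate vertical edge telescope column-wise into the ratios appearing in \eqref{WJ_defn}, that the $(-\SP_x)^{j}$ prefactors collect correctly along horizontal outputs, and most subtly that the $q$-exchangeable coefficients $(q^{j_1}-q^{J})/(1-q^{J})$ and $(1-q^{j_1})/(1-q^{J})$ in \eqref{recursion_relation} actually appear from the column-wise arithmetic, even though the bottom row has no external $q$-weighting. This last point is precisely the content of Proposition \ref{prop:L_q_exch}: the position of the single bottom layer beneath the $J-1$ layers provided by induction forces the $q$-exchangeable structure to materialize. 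Once these normalizations are matched, uniqueness of the single-row configuration (arrow conservation, applied to the $\WJ{J}$-weighted strip) identifies the result with the $\WJ{J}$-weight of that unique configuration, completing the induction.
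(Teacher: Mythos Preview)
Your inductive framework has a genuine gap at the heart of the inductive step. After branching off the bottom row and applying the inductive hypothesis to the top $J-1$ rows, you obtain a two-layer strip: a bottom $w_{\ip_x u,\SP_x}$ row and a top $\WJ{J-1}_{\ip_x qu,\SP_x}$ row. You then claim that summing over the intermediate signature $\kappa$ reproduces, column by column, the right-hand side of the recursion \eqref{recursion_relation}. But that recursion contains an explicit average over $a\in\{0,1\}$ with the $q$-exchangeable weights $P(h_1^{(1)}=a)$, whereas in your two-layer strip the horizontal input $(a_x,j_{1,x}')$ at column $x$ is a \emph{single deterministic pair}, fixed by the output of column $x-1$. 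There is no sum over $a$ at each site, and these $P(\cdot)$ factors do \emph{not} ``emerge from the arithmetic of the normalizations'': converting $w\to\Lmatr$ and $\WJ{J-1}\to\LJ{J-1}$ only telescopes the $(\SP^2;q)_\bullet/(q;q)_\bullet$ and $(-\SP)^\bullet$ ratios. One can check in small cases (e.g.\ $J=2$, $j_1=1$) that the fixed-$a$ column sum $\sum_l w(i_1,a;l,b)\,\WJ{J-1}(l,j_1-a;i_2,j_2-b)$ is not proportional to $\WJ{J}(i_1,j_1;i_2,j_2)$, so no column-wise identity of the kind you need exists.

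Invoking Proposition \ref{prop:L_q_exch} does not rescue this: that proposition concerns the \emph{uncollapsed} $J$-row column of $\Lmatr$-weights, asserting that a $q$-exchangeable input distribution on $\{0,1\}^J$ is mapped to a $q$-exchangeable output. After your inductive collapse the top input is a single integer, not a $J-1$-tuple, so the hypothesis no longer applies. The paper's proof avoids this entirely by working directly with the full $J$-row strip: it first converts all $J$ layers to the stochastic weights $\Lmatr_{\ip_x q^{j-1}v,\SP_x}$, observes that the leftmost horizontal input (empty for $\G$, fully packed for $\F$) is trivially $q$-exchangeable, uses Proposition \ref{prop:L_q_exch} to propagate $q$-exchangeability across all columns, and then collapses \emph{all $J$ rows at once} via the very definition of $\LJ{J}$. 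A single conversion back through \eqref{WJ_defn} finishes. No induction on $J$ is needed, and the delicate $P(h_1^{(1)}=a)$ bookkeeping is absorbed into the definition of the fused weight rather than having to be produced by hand.
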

\begin{figure}[htbp]
	\scalebox{1}{\begin{tikzpicture}
	[scale=1.16,ultra thick]
	\def\d{.105}
	\def\h{1.2} 
	\draw[densely dotted] (-.8,0)--++(8.8+2*\h,0);
	\foreach \ii in {0,1,2,3,4,5,6,7,8}
	{
		\draw[densely dotted] (\h*\ii,-.8)--++(0,1.6);
		\node[above left, xshift=-.5pt,yshift=1pt] at (1.2*\ii,0) {$\ip_{\ii}u$};
	}
	\node[below] at (-.1,-.8) {0};
	\node[below] at (\h,-.8) {$\la_{N-1}=\la_N$};
	\node[below] at (6*\h,-.8) {$\la_{3}=\la_{2}$};
	\node[below] at (7*\h,-.8) {$\la_{1}$};
	\node[below] at (3*\h,-.8) {$\la_{4}$};
	\node[above] at (8*\h,.8) {$\mu_{2}=\mu_1$};
	\node[above] at (6*\h,.8) {$\mu_3$};
	\node[above] at (4*\h,.8) {$\mu_4$};
	\node[above] at (0*\h-.1,.8) {$\mu_{N+J}$};
	\node[above] at (1*\h,.8) {$\mu_{N+J-1}$};
	\draw[->] (7*\h,-.8)--++(0,.8-\d)--++(\d,\d/2)--++(\h-\d*3/2,0)--++(\d,\d)--++(0,.8-\d/2);
	\draw[->] (6*\h+\d/2,-.8)--++(0,.8-\d)--++(\d,\d)--++(\h-2*\d,0)--++(2*\d,\d/2)--++(\h-2*\d,0)--++(0,.8-\d/2);
	\draw[->] (6*\h-\d/2,-.8)--++(0,.8-\d)--++(\d/2,2*\d)--++(0,.8-\d);
	\draw[->] (3*\h,-.8)--++(0,.8)--++(\h,0)--++(0,.8);
	\draw[->] (-.8,\d)--++(.8-\d,0)--++(\d,\d)--++(0,.8-2*\d);
	\draw[->] (-.8,0)--++(.8-\d,0)--++(2*\d,\d/2)--++(\h-\d*5/2,0)--++(3/2*\d,3/2*\d)--++(0,.8-2*\d);
	\draw[->] (-.8,-\d)--++(.8-\d,0)--++(2*\d,\d/2)--++(\h-2*\d,0)--++(3/2*\d,3/2*\d)--++(\h-\d/2-\d-\d,0)--++(\d,\d)
	--++(0,.8-2*\d);
	\draw[->] (\h-\d/2,-.8)--++(0,.8-3/2*\d)--++(3/2*\d,3/2*\d)--++(\h-2*\d,0)--++(\d,\d)--++(0,.8-\d);
	\draw[->] (\h+\d/2,-.8)--++(0,.8-\d*2)--++(\d,\d)--++(\h-3/2*\d,0)--++(\d,\d)--++(0,.8);
\end{tikzpicture}}
\\\bigskip
\scalebox{1}{\begin{tikzpicture}
	[scale=1.16,ultra thick]
	\def\d{.105}
	\def\h{1.2} 
	\draw[densely dotted] (-.8,0)--++(8.8+2*\h,0);
	\foreach \ii in {0,1,2,3,4,5,6,7,8}
	{
		\draw[densely dotted] (\h*\ii,-.8)--++(0,1.6);
		\node[above left, xshift=-.5pt,yshift=1pt] at (1.2*\ii,0) {$\ip_{\ii}v$};
	}
	\node[below] at (-.1,-.8) {0};
	\node[below] at (\h,-.8) {$\la_{N-1}=\la_N$};
	\node[below] at (6*\h,-.8) {$\la_{3}=\la_{2}$};
	\node[below] at (7*\h,-.8) {$\la_{1}$};
	\node[above] at (8*\h,.8) {$\mu_{2}=\mu_1$};
	\node[above] at (6*\h,.8) {$\mu_3$};
	\node[above] at (5*\h,.8) {$\mu_4$};
	\node[below] at (3*\h,-.8) {$\la_4$};
	\node[above] at (3*\h,.8) {$\mu_N$};
	\draw[->] (7*\h,-.8)--++(0,.8-\d)--++(\d,\d/2)--++(\h-\d*3/2,0)--++(\d,\d)--++(0,.8-\d/2);
	\draw[->] (6*\h+\d/2,-.8)--++(0,.8-\d)--++(\d,\d)--++(\h-2*\d,0)--++(2*\d,\d/2)--++(\h-2*\d,0)--++(0,.8-\d/2);
	\draw[->] (6*\h-\d/2,-.8)--++(0,.8-\d)--++(\d/2,2*\d)--++(0,.8-\d);
	\draw[->] (\h-\d/2,-.8)--++(0,.8-\d/2)--++(\d,\d)--++(\h-\d,0)--++(2*\d,\d/2)--++(\h-\d*3/2-\d,0)--++(\d,\d)
	--++(0,.8-3/2*\d);
	\draw[->] (\h+\d/2,-.8)--++(0,.8-\d)--++(\d/2,\d/2)--++(\h-\d*3/2,0)--++(2*\d,\d/2)--++(\h-\d*3/2-\d,0)
	--++(2*\d,\d)--++(\h-\d-\d*3/2,0)--++(\d,\d)--++(0,.8-\d*3/2);
	\draw[->] (2*\h,-.8)--++(0,.8-\d*2)--++(\d,\d)--++(\h-\d-\d,0)--++(2*\d,\d)--++(\h-\d-\d/2,0)--++(\d,\d)
	--++(0,.8-\d/2);
	\draw[->] (3*\h,-.8)--++(0,.8-2*\d)--++(\d,\d)--++(\h-\d*2,0)--++(2*\d,\d)--++(\h-\d,0)--++(0,.8);
\end{tikzpicture}}
	\caption{
	A unique path collection with horizontal multiplicities bounded by $J=3$
	corresponding to the function 
	$\F_{\mu/\la}(u,qu,\ldots,q^{J-1}u\md\ipb,\SPB)$ (top)
	or
	$\G_{\mu/\la}(v,qv,\ldots,q^{J-1}v\md\ipb,\SPB)$ (bottom).
	Each $j$-th vertex having the spectral parameter $\ip_j u$ or $\ip_j v$, respectively,
	contains the $\SP$--parameter $\SP_j$.}
	\label{fig:J_paths}
\end{figure}
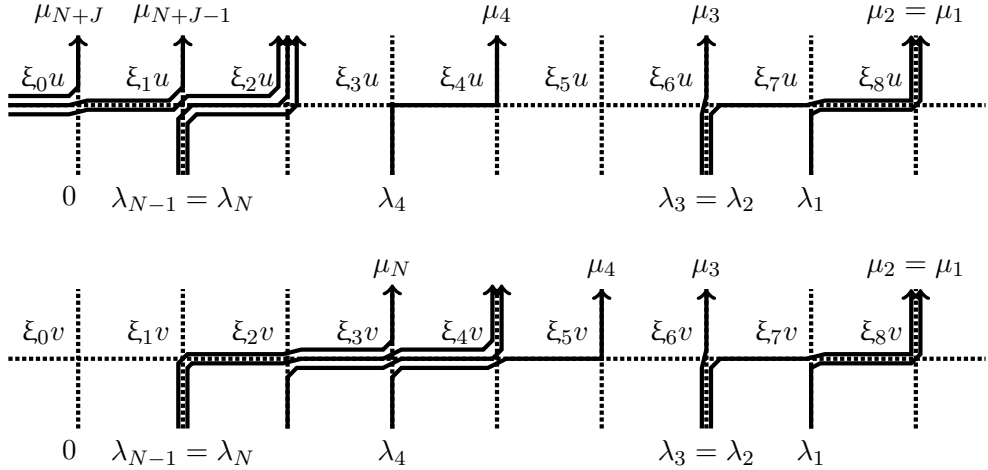
\begin{proof}
	For $\la\in\signp N$ and our $\SP$--parameters $\SPB=\{\SP_{j}\}$, denote
	\begin{align}\label{SPB_la_notation}
		(-\SPB)^{\la}:=\prod_{i=1}^{N}\prod_{j=0}^{\la_i-1}(-\SP_j).
	\end{align}
	
	Let us focus on the second claim.
	Relation \eqref{weights_conj}--\eqref{vertex_weights_stoch} 
	between the $J=1$ weights 
	$w_{u,\SP}$ and
	$\Lmatr_{u,\SP}$
	readily implies that for any
	$z\in\C$,
	any $L\in\Z_{\ge0}$ and
	$\nu,\kappa\in\signp L$,
	the quantity
	\begin{align*}
		\frac{(-\SPB)^{\kappa}}{(-\SPB)^{\nu}}
		\frac{\conj_{\SPB}(\kappa)}{\conj_{\SPB}(\nu)}\G_{\kappa/\nu}(z\md\ipb,\SPB)
		=\frac{(-\SPB)^{\kappa}}{(-\SPB)^{\nu}}
		\G^{\conj}_{\kappa/\nu}(z\md\ipb,\SPB)
	\end{align*}
	is equal to the weight of the unique collection
	of $L$ paths 
	in the semi-infinite horizontal strip of height~$1$
	connecting $\nu$ to $\kappa$,
	but with horizontal arrow multiplicities bounded by~$1$.
	The weight at a vertex $x\in\Z_{\ge0}$ in 
	this path collection is the stochastic weight $\Lmatr_{\ip_x z,\SP_x}$.

	Therefore, by Proposition \ref{prop:branching}, the quantity
	\begin{align}\label{ssconjconj_G_fusion_proof}
		\frac{(-\SPB)^{\mu}}{(-\SPB)^{\la}}
		\G_{\mu/\la}^{\conj}(v,qv,\ldots,q^{J-1}v\md\ipb,\SPB)
	\end{align}
	is equal to the sum of weights of 
	collections of $N$ paths in $\{1,2,\ldots,J\}\times\Z_{\ge0}$
	connecting $\la$ to $\mu$ (as in Definition \ref{def:G}), 
	in which the weight at each vertex $(j,x)\in\{1,2,\ldots,J\}\times\Z_{\ge0}$
	is $\Lmatr_{\ip_x q^{j-1}v,\SP_x}$
	(hence the horizontal arrow multiplicities are bounded by~$1$).
	In \eqref{ssconjconj_G_fusion_proof},
	the configuration of input horizontal arrows
	at location $0$ is empty, and hence its distribution is $q$-exchangeable.
	Thus, we may use
	the fusion of stochastic weights from \S \ref{sub:fusion_of_stochastic_weights}
	to collapse the $J$ horizontals 
	into one, with horizontal arrow multiplicities bounded by $J$, and 
	with fused vertex weights 
	$\LJ{J}_{\ip_x v,\SP_x}$.
	In this way all path collections in
	$\{1,2,\ldots,J\}\times\Z_{\ge0}$
	connecting $\la$ to $\mu$
	map to the unique collection 
	in $\{1\}\times\Z_{\ge0}$
	with horizontal edge multiplicities
	bounded by $J$.
	Using \eqref{WJ_defn}, we conclude that the second claim holds.

	The first claim about
	$\F_{\mu/\la}(u,qu,\ldots,q^{J-1}u\md\ipb,\SPB)$
	is analogous, because 
	the corresponding configuration of input arrows
	in $\{1,2,\ldots,J\}\times\Z_{\ge0}$
	is the fully packed one, whose distribution is also $q$-exchangeable.
	This completes the proof.
\end{proof}

\begin{remark}\label{rmk:anal_cont}
	Since the general $J$ vertex weights 
	$\WJ{J}_{u,\SP}(i_1,j_1;i_2,j_2)$
	\eqref{WJ_formula}
	depend on $q^{J}$ in a rational manner, they make sense for
	$q^{J}$ being an arbitrary (generic) complex parameter.
	Thus, we can consider the 
	principal specializations
	$\G_{\mu/\la}(v,qv,\ldots,q^{J-1}v\md\ipb,\SPB)$
	for a generic $q^{J}\in\C$. In other words, 
	this quantity admits an analytic continuation 
	in $q^{J}$.
	The second part of Proposition \ref{prop:skew_prin_spec}
	thus states that when $J\in\Z_{\ge1}$,
	the function $\G_{\mu/\la}(v,qv,\ldots,q^{J-1}v\md\ipb,\SPB)$
	can be expressed as a substitution of
	the values $(v,qv,\ldots,q^{J-1}v)$ into the symmetric function
	$\G_{\mu/\la}(v_{1},v_{2},\ldots,v_{J}\md\ipb,\SPB)$.

	In contrast with the functions $\G_{\mu/\la}$
	in which the number of indeterminates
	does not depend on $\la$ and $\mu$,
	the
	number of arguments in the
	functions $\F_{\mu/\la}$ 
	is completely determined by the signatures $\la$ and $\mu$. Therefore, 
	the parameter $J$ in 
	$\F_{\mu/\la}(u,qu,\ldots,q^{J-1}u\md\ipb,\SPB)$
	has to 
	remain a positive integer.
\end{remark}

\begin{remark}
	The weights \eqref{WJ_defn} are related to 
	the weights 
	$\WTJ{J}_{v}(i_1,j_1;i_2,j_2)$ of \cite[(6.8)]{Borodin2014vertex}
	via
	\begin{align*}
		\WJ{J}_{v,\SP}(i_1,j_1;i_2,j_2)=
		q^{\frac14 (j_2^{2}-j_1^{2})}
		(-\SP)^{j_2-j_1}\frac{(q;q)_{j_1}}{(q;q)_{j_2}}
		\WTJ{J}_{v}(i_1,j_1;i_2,j_2).
	\end{align*}
	The structure of the path collection
	for $\G_{\mu/\la}$
	implies that 
	for the purposes of computing
	$\G_{\mu/\la}$,
	any prefactors
	in the vertex weights
	of the form $f(j_1)/f(j_2)$
	are irrelevant. 
	Therefore, the principal specializations
	$\G_{\mu/\la}(v,qv,\ldots,q^{J-1}v\md\ipb,\SPB)$
	for $\ip_j\equiv1$ and $\SP_j\equiv \SP$
	coincide with those in \cite[\S6]{Borodin2014vertex}.
	Note that, however, factors
	of the form $f(j_1)/f(j_2)$ in the vertex weights
	do make a difference for the functions
	$\F_{\mu/\la}(u,qu,\ldots,q^{J-1}u\md\ipb,\SPB)$.
\end{remark}

% subsection principal_specializations_of_skew_functions (end)

% section stochastic_weights_and_fusion (end)

\section{Markov kernels and stochastic dynamics} % (fold)
\label{sec:markov_kernels_and_stochastic_dynamics}

In this section we describe 
probability distributions on signatures arising from the 
Cauchy identity (Corollary \ref{cor:usual_Cauchy}),
as well as
discrete time stochastic systems
(i.e., discrete time Markov chains)
which act nicely on these measures.
Some of the stochastic systems we consider 
are inhomogeneous generalizations of 
the ones from \cite{CorwinPetrov2015}.

\subsection{Probability measures associated with the Cauchy identity} % (fold)
\label{sub:probability_measures_associated_with_the_cauchy_identity}

The idea that summation identities for symmetric functions
lead to interesting
probability measures dates back at least to 
\cite{fulman1997probabilistic}, \cite{okounkov2001infinite},
and it was further developed in
\cite{okounkov2003correlation},
\cite{vuletic2007shifted},
\cite{Borodin2010Schur},
\cite{BorodinCorwin2011Macdonald},
\cite{BCGS2013}.
Similar ideas in our context lead to
the definition of the following probability measures
which are analogous to the Schur or Macdonald measures:

\begin{definition}\label{def:MM}
	Let $M,N\in\Z_{\ge0}$,\footnote{Here and below if $M=0$,
	then $\signp M$ consists of the single empty signature $\varnothing$, 
	and thus all probability measures and Markov operators on this space are trivial.}
	and the parameters 
	$q$, $\ipb$, $\SPB$, and 
	$\UU=(u_1,\ldots,u_M)$,
	$\VV=(v_1,\ldots,v_N)$
	satisfy 
	\eqref{stochastic_weights_condition_qsxi}--\eqref{stochastic_weights_condition_u}.\footnote{These 
	conditions are assumed throughout \S \ref{sec:markov_kernels_and_stochastic_dynamics}
	except \S \ref{sub:asep_degeneration}
	and \S \ref{sub:general_j_dynamics_and_q_hahn_degeneration}.}
	Moreover, assume that 
	$\adm{u_i}{v_j}$ 
	for all $i,j$ (for the admissibility 
	it is enough to require that all $u_i$ and 
	$v_j$ are sufficiently small, cf. Definition \ref{def:admissible}).
	Define the 
	probability measure on 
	$\signp M$ via
	\begin{align}\label{MM_measure}
		\MM_{\UU;\VV}(\nu\md\ipb,\SPB)
		=\MM_{\UU;\VV}(\nu):=\frac{1}{Z(\UU;\VV\md\ipb,\SPB)}\,
		\F_\nu(u_1,\ldots,u_M\md\ipb,\SPB)
		\G_\nu^{\conj}(v_1,\ldots,v_N\md\ipbb,\SPB),
		\qquad \nu\in\signp M,
	\end{align}
	where the normalization constant is given by
	\begin{align}\label{Z_formula}
		Z(\UU;\VV\md\ipb,\SPB)
		:=(q;q)_{M}\prod_{i=1}^{M}
		\bigg(\frac1{1-\SP_0\ip_0u_i}
		\prod_{j=1}^{N}
		\frac{1-qu_iv_j}{1-u_iv_j}\bigg).
	\end{align}
\end{definition}
The fact that the unnormalized weights 
$Z(\UU;\VV\md\ipb,\SPB)\,\MM_{\UU;\VV}(\nu\md\ipb,\SPB)$
are nonnegative follows from \eqref{stochastic_weights_condition_qsxi}--\eqref{stochastic_weights_condition_u}.
Indeed, these conditions 
imply that the vertex weights 
$w_{u,\SP}$
and 
$w^{\conj}_{u,\SP}$
are nonnegative, and hence so are the functions 
$\F_\nu$ and $\G_\nu^{\conj}$.
The form \eqref{Z_formula} of the normalization constant
follows from the Cauchy identity 
\eqref{usual_Cauchy}.\footnote{The sum of the unnormalized weights 
converges due to the admissibility conditions, and hence the normalization
constant
$Z(\UU;\VV\md\ipb,\SPB)$ is nonnegative.
This constant is positive whenever the measure 
$\MM_{\UU;\VV}$ is nontrivial.}
Note that the length of the tuple $\UU$ determines the length of the 
signatures on which the measure $\MM_{\UU;\VV}$ lives. In contrast, the 
length of the tuple $\VV$ may be arbitrary.

In two degenerate cases, 
$\MM_{\varnothing;\VV}$ is the delta measure
at the empty configuration (for any $\VV$),
and $\MM_{(u_1,\ldots,u_M);\varnothing}$ is the delta measure
at the configuration $0^{M}$ (that is, all $M$ particles are at zero).

\begin{figure}[htbp]
	\scalebox{1}{\begin{tikzpicture}
		[scale=.7,thick]
		\def\d{.11}
		\foreach \xxx in {0,...,16}
		{
			\draw[dotted] (\xxx,.5)--++(0,9);
		}
		\foreach \xxx in {1,...,9}
		{
			\draw[dotted] (-.85,\xxx)--++(17.35,0);
			\draw[->, line width=1.7pt] (-.85,\xxx)--++(.5,0);
		}
		\node[left] at (-.85,1) {$u_1$};
		\node[left] at (-.85,2) {$u_{2}$};
		\node[left] at (-.85,3.15) {$\vdots$};
		\node[left] at (-.85,4) {$u_M$};
		\node[left] at (-.85,5) {$v_N$};
		\node[left] at (-.85,6.15) {$\vdots$};
		\node[left] at (-.85,7) {$v_3$};
		\node[left] at (-.85,8) {$v_2$};
		\node[left] at (-.85,9) {$v_1$};
		\node at (0,0) {$0$};
		\node at (1,0) {$1$};
		\node at (2,0) {$2$};
		\node at (3,0) {$3$};
		\node at (4,0) {$\ldots$};
		\draw[->, line width=1.7pt] (-.85,5)--++(.85-\d,0)--++(\d,\d)
		--++(0,1-2*\d)--++(\d,\d)--++(4-3*\d,0)--++(0,1-\d)
		--++(\d,2*\d)--++(0,1-\d)--++(\d,\d)--++(0,1-\d)--++(12.85,0);
		\draw[->, line width=1.7pt] (-.85,6)--++(.85-\d,0)
		--++(\d,\d)--++(0,1-2*\d)--++(\d,2*\d)--++(0,1-2*\d)
		--++(\d,2*\d)--++(0,1-2*\d)--++(\d,2*\d)--++(0,1-\d);
		\draw[->, line width=1.7pt] (-.85,7)--++(.85-\d,0)--++(0,1-\d)--++(\d,2*\d)--++(0,1-2*\d)--++(\d,2*\d)--++(0,1-\d);
		\draw[->, line width=1.7pt] (-.85,8)--++(.85-2*\d,0)--++(0,1-\d)
		--++(\d,2*\d)--++(0,1-\d);
		\draw[->, line width=1.7pt] (-.85,9)--++(.85-3*\d,0)--++(0,1);
		\draw[->, line width=1.7pt] (-.85,4)--++(.85,0)--++(0,1-\d)--++(\d,\d)
		--++(1-\d,0)--++(3-\d,0)--++(0,1-\d)--++(\d,2*\d)--++(0,1-2*\d)
		--++(\d,2*\d)--++(0,1-\d)--++(3-\d,0)--++(9.85,0);
		\draw[->, line width=1.7pt] (-.85,3)--++(.85,0)--++(2-\d,0)
		--++(\d,\d)--++(0,1-\d)--++(2,0)
		--++(0,1-\d)--++(\d,2*\d)--++(0,1-2*\d)--++(\d,2*\d)--++(0,1-\d)
		--++(4-2*\d,0)--++(8.85,0);
		\draw[->, line width=1.7pt] (-.85,2)--++(.85,0)--++(2,0)
		--++(0,1-\d)--++(\d,\d)--++(5-2*\d,0)--++(\d,\d)--++(0,1-\d)
		--++(1-\d,0)--++(\d,\d)--++(0,1-\d)--++(2,0)--++(0,1)
		--++(6.85,0);
		\draw[->, line width=1.7pt] (-.85,1)--++(.85,0)
		--++(4,0)--++(0,1)--++(3,0)--++(0,1-\d)--++(\d,\d)
		--++(1-\d,0)--++(0,1-\d)--++(\d,\d)--++(5-\d,0)
		--++(0,1)--++(3.85,0);
		\node[right, xshift=2pt] at (0,4.5) {$\nu_M$};
		\node[right, xshift=2pt] at (8,4.5) {$\nu_{2}$};
		\node[right, xshift=2pt] at (13,4.5) {$\nu_{1}$};
		\draw[->, line width=3.1pt, opacity=.55, red]
		(3*\d,9.85)--++(0,-1+.15)--++(4-3*\d,0)--++(0,-1+\d)
		--++(\d,-2*\d)--++(0,-1+2*\d)--++(\d,-2*\d)--++(0,-1+\d)
		--++(6-\d*2,0)--++(0,-1)--++(3,0)--++(0,-.75);
		\draw[->, line width=3.1pt, opacity=.55, draw=red]
		(1*\d,9.85)--++(0,-1+\d+.15)--++(\d,-2*\d)--++(0,-1+\d)--++(4-3*\d,0)
		--++(0,-1+\d)--++(\d,-2*\d)--++(0,-1+2*\d)--++(\d,-2*\d)--++(0,-1+\d)
		--++(4-\d,0)--++(0,-.75);
		\draw[->, line width=3.1pt, opacity=.55, red]
		(-1*\d,9.85)--++(0,-1+\d+.15)--++(\d,-2*\d)--++(0,-1+2*\d)
		--++(\d,-2*\d)--++(0,-1+\d)--++(4-3*\d,0)--++(0,-1+\d)
		--++(\d,-2*\d)--++(0,-1+2*\d)--++(\d,-2*\d)--++(0,-.75+\d);
		\draw[->, line width=3.1pt, opacity=.55, red]
		(-3*\d,9.85)--++(0,-1+\d+.15)--++(\d,-2*\d)--++(0,-1+2*\d)
		--++(\d,-2*\d)--++(0,-1+2*\d)
		--++(\d,-2*\d)--++(0,-1+\d)--++(0,-1.75);
	\end{tikzpicture}}
	\caption{
	Probability weights
	$\MM_{\UU;\VV}(\nu\md\ipb,\SPB)$
	as partition functions.
	The bottom part of height $M$ corresponds to 
	$\BY(u_{1}\md\ipb,\SPB)\ldots\BY(u_{M}\md\ipb,\SPB)$,
	and the top half of height $N$ --- to
	$\DY(v_1^{-1}\md\ipb,\SPB)
	\ldots
	\DY(v_N^{-1}\md\ipb,\SPB)$.
	The initial configuration at the bottom
	is empty, and the 
	final configuration of the solid (black) paths at the top
	is $\bv_{(0^{M})}=\bv_M\otimes \bv_0\otimes \bv_0 \ldots$.
	The locations where the
	solid paths cross the horizontal division line
	correspond to the signature $\nu=(\nu_1,\ldots,\nu_M)$.
	The opaque (red) paths complement the 
	solid (black) paths in the top part, this corresponds 
	to the renormalization \eqref{D_Bar_computation}
	employed in the passage from the operators 
	$\DY(v_j^{-1}\md\ipb,\SPB)$ to $\overline\DY(v_j^{-1}\md\ipbb,\SPB)$
	(the latter
	involves coefficients
	$\G^{\conj}_{\la/\mu}(v_j\md\ipbb,\SPB)$).
	After the renormalization, we let the width 
	of the grid go to infinity.}
	\label{fig:MM_pictorially}
\end{figure}

The measures $\MM_{\UU;\VV}$ can be 
represented pictorially, see Fig.~\ref{fig:MM_pictorially}.
Let us look at the bottom part of the path collection as in 
Fig.~\ref{fig:MM_pictorially}, and 
let us denote the positions of the vertical edges
at the $k$-th horizontal by
$\nu^{(k)}_{i}$, $1\le i\le k\le M$
see Fig.~\ref{fig:GT_scheme_paths}, bottom.
In the top part of the path collection, let us denote the 
coordinates of the vertical edges by 
$\tnu^{(\ell)}_{j}$,
$1\le \ell\le N$, $1\le j\le M$
(see Fig.~\ref{fig:GT_scheme_paths}, top).
We have
$\nu^{(M)}_{i}=\tnu^{(N)}_{i}=\nu_i$, $i=1,\ldots,M$.
By construction, these coordinates 
satisfy \emph{interlacing constraints}:
\begin{align}\label{interlacing_constraints}
	\nu^{(k)}_{i}\le \nu^{(k-1)}_{i-1}\le \nu^{(k)}_{i-1},
	\qquad
	\tnu^{(\ell)}_{j}\le \tnu^{(\ell-1)}_{j-1}\le \tnu^{(\ell)}_{j-1}
\end{align}
for all meaningful values of $k,i$ and $\ell,j$.
Arrays of the form 
$\{\nu^{(k)}_{i}\}_{1\le i\le k\le M}$
satisfying the above interlacing properties
are also sometimes called
\emph{Gelfand--Tsetlin schemes/patterns}.
By the very definition of the skew 
$\F$ and $\G$ functions, the distribution
of the sequence of signatures
$(\nu^{(1)},\ldots,\nu^{(M)}=\tnu^{(N)},\ldots,\tnu^{(1)})$
has the form
\begin{multline}\label{MM_Process}
	\MM_{\UU;\VV}(\nu^{(1)},\ldots,\nu^{(M)}=\tnu^{(N)},\ldots,\tnu^{(1)}\md\ipb,\SPB)
	\\=
	\frac{1}{Z(\UU;\VV\md\ipb,\SPB)}\,
	\F_{\nu^{(1)}}(u_1\md\ipb,\SPB)
	\F_{\nu^{(2)}/\nu^{(1)}}(u_2\md\ipb,\SPB)
	\ldots
	\F_{\nu^{(M)}/\nu^{(M-1)}}(u_M\md\ipb,\SPB)
	\\\times
	\G_{\tnu^{(1)}}^{\conj}(v_1\md\ipbb,\SPB)
	\G_{\tnu^{(2)}/\tnu^{(1)}}^{\conj}(v_2\md\ipbb,\SPB)\ldots
	\G_{\tnu^{(N)}/\tnu^{(N-1)}}^{\conj}(v_N\md\ipbb,\SPB).
\end{multline}
The probability distribution \eqref{MM_Process} on 
interlacing arrays is an analogue of 
Schur or Macdonald processes of 
\cite{okounkov2003correlation}, 
\cite{BorodinCorwin2011Macdonald}.
It readily follows from the Pieri rules
(Corollary \ref{cor:Pieri})
that under \eqref{MM_Process}, the marginal distribution of 
$\nu^{(k)}$ for any $k=1,\ldots,M$ is 
$\MM_{(u_1,\ldots,u_k); (v_1,\ldots,v_N)}$, and
similarly the marginal distribution of 
$\tnu^{(\ell)}$, $\ell=1,\ldots,N$,
is
$\MM_{(u_1,\ldots,u_M); (v_1,\ldots,v_\ell)}$.

\begin{figure}[htbp]
	\scalebox{.9}{\begin{tikzpicture}
		[scale=.8,thick]
		\def\d{.11}
		\foreach \xxx in {0,...,14}
		{
			\draw[dotted, opacity=.7] (\xxx,4.5)--++(0,5);
		}
		\foreach \xxx in {5,...,9}
		{
			\draw[dotted, opacity=.7] (-.85,\xxx)--++(15.35,0);
		}
		\draw[->, line width=1.7pt] (3*\d,9.85)--++(0,-1+.5);
		\draw[->, line width=1.7pt] (1*\d,9.85)--++(0,-1+.5);
		\draw[->, line width=1.7pt] (-1*\d,9.85)--++(0,-1+.5);
		\draw[->, line width=1.7pt] (-3*\d,9.85)--++(0,-1+.5);
		\draw[->, line width=1.7pt]
		(3*\d,9.85)--++(0,-1+.15)--++(4-3*\d,0)--++(0,-1+\d)
		--++(\d,-2*\d)--++(0,-1+2*\d)--++(\d,-2*\d)--++(0,-1+\d)
		--++(6-\d*2,0)--++(0,-1)--++(3,0)--++(0,-.75) 
		node[left] {$\tnu^{(6)}_{1}$};
		\draw[->, line width=1.7pt]
		(1*\d,9.85)--++(0,-1+\d+.15)--++(\d,-2*\d)--++(0,-1+\d)--++(4-3*\d,0)
		--++(0,-1+\d)--++(\d,-2*\d)--++(0,-1+2*\d)--++(\d,-2*\d)--++(0,-1+\d)
		--++(4-\d,0)--++(0,-.75) node[left] {$\tnu^{(6)}_{2}$};
		\draw[->, line width=1.7pt]
		(-1*\d,9.85)--++(0,-1+\d+.15)--++(\d,-2*\d)--++(0,-1+2*\d)
		--++(\d,-2*\d)--++(0,-1+\d)--++(4-3*\d,0)--++(0,-1+\d)
		--++(\d,-2*\d)--++(0,-1+2*\d)--++(\d,-2*\d)--++(0,-.75+\d)
		node[left] {$\tnu^{(6)}_{3}$};
		\draw[->, line width=1.7pt]
		(-3*\d,9.85)--++(0,-1+\d+.15)--++(\d,-2*\d)--++(0,-1+2*\d)
		--++(\d,-2*\d)--++(0,-1+2*\d)
		--++(\d,-2*\d)--++(0,-1+\d)--++(0,-1.75)
		node[left] {$\tnu^{(6)}_{4}$};
		\node[left, anchor=east] at (0, 5.55) {$\tnu^{(5)}_{4}$};
		\node[left, anchor=east] at (0, 6.55) {$\tnu^{(4)}_{4}$};
		\node[left, anchor=east] at (-\d, 7.55) {$\tnu^{(3)}_{4}=\tnu^{(3)}_{3}$};
		\node[left, anchor=east] at (-2*\d, 8.55) {$\tnu^{(2)}_{4}=\tnu^{(2)}_{3}=\tnu^{(2)}_{2}$};
		\node[left, anchor=east] at (-3*\d, 9.55) {$\tnu^{(1)}_{4}=\tnu^{(1)}_{3}=\tnu^{(1)}_{2}=\tnu^{(1)}_{1}$};
		\node[left, anchor=east] at (4-\d, 5.55) {$\tnu^{(5)}_{3}=\tnu^{(5)}_{2}$};
		\node[left, anchor=east] at (10, 5.55) {$\tnu^{(5)}_{1}$};
		\node[right, anchor=west] at (4+2*\d, 6.55) 
		{$\tnu^{(4)}_{3}=\tnu^{(4)}_{2}=\tnu^{(4)}_{1}$};
		\node[left, anchor=east] at (4-\d, 7.55) {$\tnu^{(3)}_{2}=\tnu^{(3)}_{1}$};
		\node[left, anchor=east] at (4, 8.55) {$\tnu^{(2)}_{1}$};
	\end{tikzpicture}}

	\scalebox{.9}{\begin{tikzpicture}
		[scale=.8,thick]
		\def\d{.1}
		\foreach \xxx in {0,...,14}
		{
			\draw[dotted, opacity=.7] (\xxx,.5)--++(0,4);
		}
		\foreach \xxx in {1,...,4}
		{
			\draw[dotted, opacity=.7] (-.85,\xxx)--++(15.35,0);
			\draw[->, line width=1.7pt] (-.85,\xxx)--++(.5,0);
		}
		\draw[->, line width=1.7pt] (-.85,4)--++(.85,0)--++(0,1)
		node[left] {${\nu^{(4)}_{4}}$};
		\draw[->, line width=1.7pt] (-.85,3)--++(.85,0)--++(2-\d,0)
		--++(\d,\d)--++(0,1-\d)--++(2,0)
		--++(0,1) node[left] {${\nu^{(4)}_{3}}$};
		\draw[->, line width=1.7pt] (-.85,2)--++(.85,0)--++(2,0)
		--++(0,1-\d)--++(\d,\d)--++(5-2*\d,0)--++(\d,\d)--++(0,1-\d)
		--++(1-\d,0)--++(\d,\d)--++(0,1-\d)
		node[left] {${\nu^{(4)}_{2}}$};
		\draw[->, line width=1.7pt] (-.85,1)--++(.85,0)
		--++(4,0)--++(0,1)--++(3,0)--++(0,1-\d)--++(\d,\d)
		--++(1-\d,0)--++(0,1-\d)--++(\d,\d)--++(5-\d,0)
		--++(0,1) node[left] {${\nu^{(4)}_{1}}$};
		\node[left] at (2,3.55) {$\nu^{(3)}_{3}$};
		\node[left] at (7,3.55) {$\nu^{(3)}_{2}$};
		\node[right] at (8,3.55) {$\nu^{(3)}_{1}$};
		\node[left] at (7,2.55) {$\nu^{(2)}_{1}$};
		\node[left] at (2,2.55) {$\nu^{(2)}_{2}$};
		\node[left] at (4,1.55) {$\nu^{(1)}_{1}$};
		\node[left, anchor=east] at (-3*\d, 3.55) {\phantom{$\tnu^{(1)}_{4}=\tnu^{(1)}_{3}=\tnu^{(1)}_{2}=\tnu^{(1)}_{1}$}};
		\node at (0,0) {$0$};
		\node at (1,0) {$1$};
		\node at (2,0) {$2$};
		\node at (3,0) {$3$};
		\node at (4,0) {$\ldots$};
	\end{tikzpicture}}
	\caption{A pair of interlacing arrays from path collections.
	Horizontal parts
	of the paths are for illustration.}
	\label{fig:GT_scheme_paths}
\end{figure}
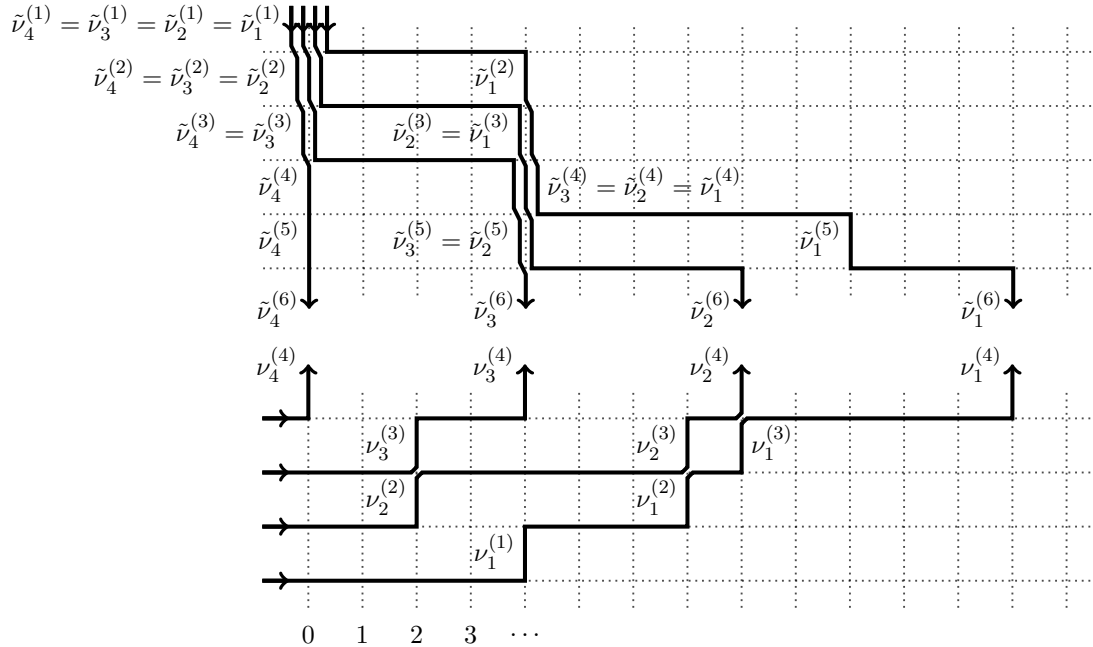

% subsection probability_measures_associated_with_the_cauchy_identity (end)

\subsection{Four Markov kernels} % (fold)
\label{sub:four_markov_kernels}

Let us now define four 
Markov kernels
which map the measure
$\MM_{\UU;\VV}$
to a measure of the same form, but with 
modified parameters $\UU$ or $\VV$.

The first two Markov kernels, $\Lam$ and $\Lae$, correspond 
to taking conditional distributions given $\nu^{(M)}=\tnu^{(N)}$
of 
$\nu^{(k)}$ or
$\tnu^{(\ell)}$, respectively,
in the ensemble \eqref{MM_Process}.
Namely, let us define for any $m$:
\begin{align}\label{Lam}
	\Lam_{u\md\UU}(\nu\to\mu):=
	\frac{\F_\mu(u_1,\ldots,u_m\md\ipb,\SPB)}{\F_\nu(u_1,\ldots,u_m,u\md\ipb,\SPB)}\,
	\F_{\nu/\mu}(u\md\ipb,\SPB),
\end{align}
where $\UU=(u_1,\ldots,u_m)$, and 
$\nu\in\signp{m+1}$, $\mu\in\signp{m}$.
Also, let us define for any $n$:
\begin{align}\label{Lae}
	\Lae_{v\md\VV}(\la\to\mu):=
	\frac{\G^{\conj}_{\mu}(v_1,\ldots,v_n\md\ipbb,\SPB)}
	{\G^{\conj}_{\la}(v_1,\ldots,v_n,v\md\ipbb,\SPB)}\,
	\G^{\conj}_{\la/\mu}(v\md\ipbb,\SPB),
\end{align}
where $\VV=(v_1,\ldots,v_n)$, and $\la,\mu\in\signp m$ for some $m$.
The facts that the quantities \eqref{Lam} and \eqref{Lae}
sum to 1 (over all $\mu\in\signp m$; note that these sums are finite) 
follow from 
the branching rules (Proposition \ref{prop:branching}). Hence, 
$\Lam_{u\md\UU}\colon\signp{m+1}\dashrightarrow\signp m$
and 
$\Lae_{v\md\VV}\colon\signp{m}\dashrightarrow\signp m$
define Markov kernels.\footnote{We use the notation ``$\dashrightarrow$''
to indicate that $\Lam_{u\md\UU}$ and 
$\Lae_{v\md\VV}$ are Markov kernels, i.e., they are
functions in the first variable 
(belonging to the space on the left of ``$\dashrightarrow$'')
and probability distributions in the second variable
(belonging to the space on the right of ``$\dashrightarrow$'').}
Note that in \eqref{Lam} and \eqref{Lae}
one can replace all functions by $\F^{\conj}$
or $\G$, respectively, and get the same kernels.

The kernels $\Lam$ and $\Lae$
act on the measures \eqref{MM_measure}
as 
\begin{align}\label{M_and_Lambdas}
	\MM_{\UU\cup u;\VV}\Lam_{u\md\UU}=
	\MM_{\UU;\VV}
	,\qquad \qquad
	\MM_{\UU;\VV\cup v}\Lae_{v\md\VV}=
	\MM_{\UU;\VV},
\end{align}
this follows from the Pieri rules (Corollary \ref{cor:Pieri}).
The matrix products above are understood in a natural way, for example,
$\big(\MM_{\UU\cup u;\VV}\Lam_{u\md\UU}\big)(\mu)=
\sum_{\nu}\MM_{\UU\cup u;\VV}(\nu)\Lam_{u\md\UU}(\nu\to\mu)$.
% In the first relation in \eqref{M_and_Lambdas}, in order to ensure convergence, it must be that 
% $\adm u{v_j}$ for all $j$,
% and in the second relation one must have
% $\adm {u_i}v$ for all $i$.

\begin{remark}[Gibbs measures]
	Conditioned on 
	any $\nu^{(k)}$ (where $k=1,\ldots,M$),
	the distribution 
	of the lower levels
	$\nu^{(1)},\ldots,\nu^{(k-1)}$
	under \eqref{MM_Process}
	is independent of $\VV$ and is given by 
	\begin{align}\label{Gibbs_property}
		\Lam_{u_k\md(u_1,\ldots,u_{k-1})}(\nu^{(k)}\to\nu^{(k-1)})\ldots
		\Lam_{u_3\md(u_1,u_2)}(\nu^{(3)}\to\nu^{(2)})
		\Lam_{u_2\md(u_1)}(\nu^{(2)}\to\nu^{(1)}),
	\end{align}
	and a similar expression can be 
	written for conditioning on $\tnu^{(\ell)}$, 
	yielding a distribution
	which is independent
	of $\UU$ 
	and involves the kernels $\Lae$.

	It is natural to call a measure on 
	a sequence of interlacing signatures
	$(\nu^{(1)},\ldots,\nu^{(M)})$
	whose conditional distributions are given by \eqref{Gibbs_property}
	a \emph{Gibbs measure} (with respect to the $\UU$ parameters).
	In fact, when $q=0$, $\SP_j\equiv 0$, $\ip_j\equiv 1$, and $u_i\equiv 1$, 
	this Gibbs property turns into the following:
	conditioned on any 
	$\nu^{(k)}$,
	the distribution 
	of the lower levels
	$\nu^{(1)},\ldots,\nu^{(k-1)}$
	is \emph{uniform}
	among all sequences of signatures satisfying the interlacing constraints
	\eqref{interlacing_constraints}.

	This Gibbs property (as well as commutation relations discussed 
	below in this subsection) can be used to construct ``multivariate''
	Markov kernels on
	arrays of interlacing signatures which act nicely on 
	distributions of the form \eqref{MM_Process}, 
	but we will not address this construction here
	(about similar constructions
	see references given in 
	Remark \ref{rmk:Doob_h_transform} below). For details of 
	such constructions in the case of Macdonald
	processes see \cite{BorodinPetrov2013NN},
	\cite{MatveevPetrov2014}.
\end{remark}

The other two Markov kernels, $\Qp$ and $\Qe$, increase the number of 
parameters in the measures $\MM_{\UU;\VV}$, as opposed to \eqref{M_and_Lambdas}, where the number of parameters is decreased. 
These kernels are defined as follows.
For any $n,m\in\Z_{\ge0}$,
define
\begin{align}\label{Qp}
	\Qp_{u;\VV}(\la\to\nu):=\frac{1-\SP_0\ip_0u}{1-q^{m+1}}
	\bigg(\prod_{j=1}^{n}\frac{1-uv_j}{1-quv_j}\bigg)
	\frac{\G_{\nu}^{\conj}(v_1,\ldots,v_n\md\ipbb,\SPB)}
	{\G_{\la}^{\conj}(v_1,\ldots,v_n\md\ipbb,\SPB)}
	\,\F_{\nu/\la}(u\md\ipb,\SPB),
\end{align}
where 
$\VV=(v_1,\ldots,v_n)$ such that
$\adm u{v_j}$ for all $j$, with 
$\la\in\signp m$ and $\nu\in\signp {m+1}$.
Also, for any $m\in\Z_{\ge0}$, define
\begin{align}\label{Qe}
	\Qe_{\UU;v}(\mu\to\nu):=
	\bigg(\prod_{i=1}^{m}\frac{1-u_iv}{1-qu_iv}\bigg)
	\frac{\F_{\nu}(u_1,\ldots,u_m\md\ipb,\SPB)}{\F_{\mu}(u_1,\ldots,u_m\md\ipb,\SPB)}
	\,\G^{\conj}_{\nu/\mu}(v\md\ipbb,\SPB),
\end{align}
where $\UU=(u_1,\ldots,u_M)$
such that $\adm{u_i}v$ for all $i$,
with $\mu,\nu\in\signp m$.
By the Pieri rules of Corollary \ref{cor:Pieri},
$\Qp_{u;\VV}\colon\signp m\dashrightarrow \signp {m+1}$
and 
$\Qe_{\UU;v}\colon\signp m\dashrightarrow \signp {m}$
define Markov kernels (i.e., they sum to one in the
second argument).

\begin{remark}
	In $\Qe_{\UU;v}$ \eqref{Qe}, moving the conjugation from the function $\G$ to 
	both functions
	$\F$ does not change the kernel. 
	However, doing so in $\Qp_{u;\VV}$
	\eqref{Qp} requires modifying the prefactor:
	\begin{align*}
		\Qp_{u;\VV}(\la\to\nu)=\frac{1-\SP_0\ip_0u}{1-\SP_0^{2}q^{m}}
		\bigg(\prod_{j=1}^{n}\frac{1-uv_j}{1-quv_j}\bigg)
		\frac{\G_{\nu}(v_1,\ldots,v_n\md\ipbb,\SPB)}
		{\G_{\la}(v_1,\ldots,v_n\md\ipbb,\SPB)}
		\,\F^{\conj}_{\nu/\la}(u\md\ipb,\SPB).
	\end{align*}
\end{remark}
From the branching rules (Proposition \ref{prop:branching})
it readily follows that the kernels
$\Qp$ and $\Qe$ act on the measures \eqref{MM_measure} as
\begin{align}\label{MM_and_Q}
	\MM_{\UU;\VV}\Qp_{u;\VV}=\MM_{\UU\cup u;\VV},\qquad
	\qquad
	\MM_{\UU;\VV}\Qe_{\UU;v}=\MM_{\UU;\VV\cup v}.
\end{align}

The Markov kernels defined above enter the following 
commutation relations:
\begin{proposition}\label{prop:Q_La_commutation}
\noindent{\bf1.\/}
For any $\UU=(u_1,\ldots,u_m)$ and $u,v\in\C$ such that $\adm uv$,
we have 
$\Qe_{\UU\cup u;v}\Lam_{u\md\UU}=
\Lam_{u\md\UU}\Qe_{\UU;v}$ 
(as Markov kernels $\signp {m+1}\dashrightarrow\signp m$), or, in more detail,
\begin{align*}
	\sum_{\la\in\signp{m+1}}
	\Qe_{\UU\cup u;v}(\nu\to\la)\Lam_{u\md\UU}(\la\to\mu)
	=
	\sum_{\kappa\in\signp m}
	\Lam_{u\md\UU}(\nu\to\kappa)\Qe_{\UU;v}(\kappa\to\mu),
\end{align*}
where $\nu\in\signp{m+1}$ 
and 
$\mu\in\signp m$.

\noindent{\bf2.\/}
For any $\VV=(v_1,\ldots,v_n)$
and $u,v\in\C$ such that $\adm uv$, we have
$\Qp_{u;\VV\cup v}\Lae_{v\md\VV}
=
\Lae_{v\md\VV}\Qp_{u;\VV}$
(as Markov kernels $\signp {m}\dashrightarrow\signp {m+1}$),
which is unabbreviated in the same way as the first relation.
\end{proposition}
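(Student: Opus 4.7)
The plan is to reduce both commutation relations directly to the skew Cauchy identity of Proposition~\ref{prop:skew_Cauchy}. The kernels are built out of ratios of $\F$'s and $\G$'s with prescribed prefactors, and these prefactors are precisely the ones that turn the skew Cauchy identity into an equality of Markov transition probabilities. No Yang--Baxter manipulation beyond what is already contained in Proposition~\ref{prop:skew_Cauchy} should be needed; the proof is really just careful bookkeeping.

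I will begin with Part 1. Substituting the definitions \eqref{Lam} and \eqref{Qe}, the left-hand side becomes
\begin{align*}
\sum_{\la\in\signp{m+1}}\Qe_{\UU\cup u;v}(\nu\to\la)\Lam_{u\md\UU}(\la\to\mu)
&=\Bigg(\prod_{i=1}^m\frac{1-u_iv}{1-qu_iv}\Bigg)\frac{1-uv}{1-quv}\,\frac{\F_\mu(u_1,\ldots,u_m)}{\F_\nu(u_1,\ldots,u_m,u)}\\
&\qquad\times\sum_{\la\in\signp{m+1}}\G^{\conj}_{\la/\nu}(v\md\ipbb,\SPB)\,\F_{\la/\mu}(u\md\ipb,\SPB),
\end{align*}
since the factors $\F_\la(u_1,\ldots,u_m,u)$ cancel. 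The right-hand side, after substituting the definitions, telescopes in exactly the same way and equals
\begin{align*}
\Bigg(\prod_{i=1}^m\frac{1-u_iv}{1-qu_iv}\Bigg)\frac{\F_\mu(u_1,\ldots,u_m)}{\F_\nu(u_1,\ldots,u_m,u)}\sum_{\kappa\in\signp m}\F_{\nu/\kappa}(u\md\ipb,\SPB)\,\G^{\conj}_{\mu/\kappa}(v\md\ipbb,\SPB).
\end{align*}
Thus the desired commutation is equivalent to
\begin{align*}
\sum_{\la}\G^{\conj}_{\la/\nu}(v)\,\F_{\la/\mu}(u)\;=\;\frac{1-quv}{1-uv}\sum_{\kappa}\F_{\nu/\kappa}(u)\,\G^{\conj}_{\mu/\kappa}(v),
\end{align*}
which is exactly the skew Cauchy identity \eqref{skew_Cauchy_good} applied with the pair $(\nu,\mu)$ in place of $(\la,\nu)$ there. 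The assumption $\adm uv$ supplies precisely the admissibility hypothesis needed to invoke Proposition~\ref{prop:skew_Cauchy}.

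Part 2 is analogous. When I plug in \eqref{Qp} and \eqref{Lae} and sum the two sides of $\Qp_{u;\VV\cup v}\Lae_{v\md\VV}=\Lae_{v\md\VV}\Qp_{u;\VV}$ over the intermediate signature, the ratios of non-skew $\G$'s telescope, the combinatorial prefactor $(1-\SP_0\ip_0 u)/(1-q^{m+1})$ and the product $\prod_{j=1}^{n}(1-uv_j)/(1-quv_j)$ appear identically on both sides, and what remains is the equality
\begin{align*}
\frac{1-uv}{1-quv}\sum_{\la\in\signp{m+1}}\F_{\la/\mu}(u)\,\G^{\conj}_{\la/\nu}(v)\;=\;\sum_{\kappa\in\signp m}\G^{\conj}_{\mu/\kappa}(v)\,\F_{\nu/\kappa}(u),
\end{align*}
which is once again Proposition~\ref{prop:skew_Cauchy} (with the roles of $\mu$ and $\nu$ interchanged relative to Part~1).

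The only potentially delicate point is to check that the ranges of summation in Proposition~\ref{prop:skew_Cauchy} truly match those in the definitions of the kernels, so that one does not accidentally pick up spurious boundary terms. Since $\G^{\conj}_{\la/\nu}$ vanishes unless $\la$ and $\nu$ have the same length, and $\F_{\la/\mu}$ vanishes unless the length of $\la$ exceeds that of $\mu$ by exactly the number of spectral arguments, the sums over $\la\in\signpe$ in \eqref{skew_Cauchy_good} automatically collapse onto $\signp{m+1}$ (respectively, $\signp m$ for the dual side), which matches the kernel definitions exactly. This is where the verification is mildly finicky but presents no real obstacle, and it completes the proof.
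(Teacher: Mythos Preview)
Your proof is correct and follows exactly the approach the paper indicates: the paper's own proof is a single sentence, ``A straightforward corollary of the skew Cauchy identity (Proposition~\ref{prop:skew_Cauchy}),'' and you have carried out that straightforward corollary in full detail, with the telescoping of the non-skew $\F$'s and $\G$'s and the correct identification of the residual identity with \eqref{skew_Cauchy_good}.
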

\begin{proof}
	A straightforward corollary of the skew Cauchy identity
	(Proposition \ref{prop:skew_Cauchy}).
\end{proof}

\begin{remark}
	In the context of Schur functions, the 
	Markov kernels 
	$\Qp$ and $\Lam$
	are often referred to as \emph{transition} and \emph{cotransition probabilities}.
	In \cite[\S9]{Borodin2010Schur} and \cite[\S2.3.3]{BorodinCorwin2011Macdonald}
	similar kernels are denoted by $p^{\uparrow}$ and $p^{\downarrow}$, respectively. 
	The kernels 
	$\Qp$ and $\Lam$ involve the skew functions $\F$ in the $\UU$ parameters, 
	and similarly
	$\Qe$ and $\Lae$ correspond to the $\G$'s in the $\VV$ parameters.
	The latter operators differ form the former ones because 
	(unlike in the Schur or Macdonald setting)
	the functions $\F$ and $\G$ are not proportional to each other.
\end{remark}

We will treat the Markov kernels $\Qe_{\UU;v}$ and $\Qp_{u;\VV}$
as one-step transition operators of certain discrete time
Markov chains. 

\begin{remark}\label{rmk:EF_relation_Qe}
	One can readily write down eigenfunctions
	of $\Qe_{\UU;v}$ viewed as an operator
	on functions on $\signp m$.
	Here we mean 
	algebraic (or formal) eigenfunctions, 
	i.e., we do not address the question 
	of how they decay at infinity.
	We have for any $\mu\in\signp m$:
	\begin{multline}
		\big(\Qe_{\UU;v}\EF{\UU}\bullet(z_1,\ldots,z_m)\big)(\mu)
		=\sum_{\nu\in\signp m}\Qe_{\UU;v}(\mu\to\nu)
		\EF{\UU}\nu(z_1,\ldots,z_m)\\=
		\bigg(\prod_{i=1}^{m}\frac{1-qz_i v}{1-z_i v}
		\frac{1-u_iv}{1-qu_iv}
		\bigg)\EF{\UU}\mu(z_1,\ldots,z_m),
		\label{EF_relation_Qe}
	\end{multline}
	where 
	the eigenfunction 
	$\EF\UU\la(\ZZZ)$
	depends on the spectral variables $\ZZZ=(z_1,\ldots,z_m)$
	satisfying 
	the admissibility conditions $\adm{z_i}v$ for all $i$,
	and is defined as follows:
	\begin{align}
		\label{EF_Qe}
		\EF\UU\la(z_1,\ldots,z_m):=
		\frac{1}{{\F_\la(u_1,\ldots,u_m\md\ipb,\SPB)}}\,{\F_\la(z_1,\ldots,z_m\md\ipb,\SPB)}.
	\end{align}
	Relation \eqref{EF_relation_Qe} readily follows from the Pieri rules (Corollary \ref{cor:Pieri}).
	This eigenrelation can be employed to 
	write down a 
	spectral decomposition of the operator $\Qe$,
	see Remark \ref{rmk:spec_decomp} below.
\end{remark}

% subsection four_markov_kernels (end)

\subsection{Specializations} % (fold)
\label{sub:specializations_pi_and_rho_}

Let us now discuss special choices of parameters $\UU$ and $\VV$ which 
greatly simplify the Markov kernels 
$\Qe_{\UU;v}$ and $\Qp_{u;\VV}$, respectively. First, observe that
for any $m\in\Z_{\ge0}$ and any $\mu\in\signp m$ we have
\begin{align*}
	\F_\mu(\PI m\md\ipb,\SPB)=
	\F_\mu(\underbrace{0,0,\ldots,0}_{\textnormal{$m$ times}}\md\ipb,\SPB)
	=(-\SPB)^{\mu}(q;q)_m,
\end{align*}
where the last equality is due to 
\eqref{F_prin_spec_simple}
because we can take $u=0$ 
in that formula 
(note that by \eqref{F_symm_formula}, the function $\F_\mu(u_1,\ldots,u_m)$ is continuous
at $\UU=\PI m$).
We have also used the notation \eqref{SPB_la_notation}.

A similar limit for the functions $\G_\mu$ is given in the next proposition:
\begin{proposition}\label{prop:RHO_spec}
	For any $n\in\Z_{\ge0}$ and $\nu\in\signp n$, we have
	\begin{align}
		\label{RHO_spec}
		\G_\nu(\RHO\md\ipbb,\SPB):=
		\lim_{\epsilon\to0}\Big(\G_\nu(\epsilon,q\epsilon,\ldots,q^{J-1}\epsilon\md\ipbb,\SPB)\Big\vert_{q^{J}=
		\ip_0/(\SP_0\epsilon)}\Big)=
		\begin{cases}
			(-\SPB)^{\nu}(\SP_0^{2};q)_{n}\SP_0^{-2n},&\textnormal{if $\nu_n>0$};
			\\
			0,&\textnormal{if $\nu_n=0$}.
		\end{cases}
	\end{align}
\end{proposition}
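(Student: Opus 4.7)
The plan is to invoke Proposition~\ref{prop:skew_prin_spec}(2), which expresses $\G_\nu(v,qv,\ldots,q^{J-1}v\md\ipbb,\SPB)$ as the weight of the unique $n$-path collection in the semi-infinite strip of height $1$ that connects the bottom signature $(0,0,\ldots,0)$ to $\nu$ at the top, with vertex weights $\WJ{J}_{\ip_x^{-1}v,\SP_x}$ from \eqref{WJ_formula}. Since these weights are rational in $q^J$ (Remark~\ref{rmk:anal_cont}), the factorization persists when we substitute $q^J=\ip_0/(\SP_0\epsilon)$, so we may take $\epsilon\to 0$ termwise. Setting $m_x:=\#\{i:\nu_i=x\}$ and $N^+(x):=\#\{i:\nu_i>x\}$, the path collection forces vertex type $(n,0;m_0,n-m_0)$ at $x=0$ and $(0,N^+(x-1);m_x,N^+(x))$ at each $x\ge 1$.

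The key observation will be that our substitution gives $q^J\SP_0\cdot\ip_0^{-1}\epsilon=1$, so in the ${}_4\bar\varphi_3$ series of $\WJ{J}_{\ip_0^{-1}\epsilon,\SP_0}$ the parameter $(q^J\SP u;q)_k$ becomes $(1;q)_k$, which vanishes for every $k\ge 1$. Only the $k=0$ term survives, yielding
\[
{}_4\bar\varphi_3\longrightarrow (\SP_0^2;q)_n(q^{1-m_0};q)_n(q^{1+J-n};q)_n.
\]
If $\nu_n=0$ then $m_0\ge 1$, and $(q^{1-m_0};q)_n$ contains the zero factor $1-q^0$ at $i=m_0-1$; hence the $x=0$ weight---and the whole product---vanishes, proving the second case of \eqref{RHO_spec}. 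From now on assume $\nu_n>0$, so $m_0=0$ and $N^+(0)=n$.

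For the limit of $(q^{1+J-n};q)_n$, factoring out the dominant power via $1-q^{1+J-n+i}=-q^{1+J-n+i}(1-q^{n-1-i}q^{-J})$ gives $(q^{1+J-n};q)_n=(-q^J)^n q^{-\binom{n}{2}}\prod_{i=0}^{n-1}(1-q^{n-1-i}q^{-J})$, which tends to $(-q^J)^n q^{-\binom{n}{2}}$ as $q^{-J}\to 0$. The divergent $(-q^J)^n$ is exactly compensated by the $(\ip_0^{-1}\epsilon)^n$ in the prefactor of \eqref{WJ_formula}, since $\ip_0^{-1}\epsilon\cdot(-q^J)=-\SP_0^{-1}$; using also that $(\SP_0\ip_0^{-1}\epsilon;q)_n\to 1$, the $x=0$ weight tends to
\[
\WJ{J}_{\ip_0^{-1}\epsilon,\SP_0}(n,0;0,n)\longrightarrow \frac{(-1)^n(\SP_0^2;q)_n}{\SP_0^n(q;q)_n}.
\]
For each $x\ge 1$ the condition $i_1=0$ truncates the ${}_4\bar\varphi_3$ to its $k=0$ term which equals $1$, and the rational prefactor simplifies in the limit $\ip_x^{-1}\epsilon\to 0$ to $\frac{(-\SP_x)^{N^+(x)}(q;q)_{N^+(x-1)}}{(q;q)_{N^+(x)}}$.

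Multiplying over all $x$ will finish the proof. The ratios $\prod_{x\ge 1}(q;q)_{N^+(x-1)}/(q;q)_{N^+(x)}$ telescope to $(q;q)_{N^+(0)}=(q;q)_n$, cancelling the $(q;q)_n$ from the $x=0$ factor. The $\SP$-factors recombine via
\[
\prod_{x\ge 1}(-\SP_x)^{N^+(x)}=\prod_{i=1}^n\prod_{x=1}^{\nu_i-1}(-\SP_x)=\frac{(-\SPB)^\nu}{(-\SP_0)^n},
\]
where $\nu_i\ge 1$ is used to split off the $j=0$ factors from $(-\SPB)^\nu=\prod_i\prod_{j=0}^{\nu_i-1}(-\SP_j)$, as defined in \eqref{SPB_la_notation}. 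Assembling everything yields $(-\SPB)^\nu(\SP_0^2;q)_n\SP_0^{-2n}$, matching the first case of \eqref{RHO_spec}. The main obstacle will be the careful balancing of the divergent $(q^{1+J-n};q)_n$ against the vanishing $(\ip_0^{-1}\epsilon)^n$ in the $x=0$ vertex weight; all other limits are routine.
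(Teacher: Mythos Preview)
Your proof is correct and takes a genuinely different route from the paper's. The paper's argument invokes Corollary~\ref{cor:prin_spec} (the closed-form principal specialization coming directly from the symmetrization formula of Theorem~\ref{thm:symmetrization}), rewrites the resulting prefactor as a rational function of $q^J$, substitutes $q^J=\ip_0/(\SP_0\epsilon)$, and sends $\epsilon\to 0$ in one shot. You instead use the fusion-based representation of Proposition~\ref{prop:skew_prin_spec}(2), expressing $\G_\nu$ as a finite product of fused vertex weights $\WJ{J}_{\ip_x^{-1}\epsilon,\SP_x}$ and computing the limit vertex-by-vertex. Your approach is more combinatorial and makes the vanishing for $\nu_n=0$ transparent: it is localized entirely at the $x=0$ vertex via the exact zero $(q^{1-m_0};q)_n=0$, while the remaining vertices stay finite. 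It would also extend with no extra effort to $\G_{\nu/\lambda}(\RHO)$ for general $\lambda$. The paper's approach is slightly more elementary in that it avoids the fusion machinery of \S\ref{sec:stochastic_weights_and_fusion}; it also notes (Remark after the proof) a third proof via the integral formula of Corollary~\ref{cor:G_integral_formula}, carried out in \S\ref{sub:computation_of_gnurhow}.
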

Note that 
\eqref{RHO_spec}
does not depend on the inhomogeneity parameters $\ipb$.
\begin{proof}
	Let $k$ be the number of zero coordinates in $\nu$.
	From \eqref{G_prin_spec_simple} we have for $J\ge n-k$:
	\begin{multline*}
		\G_\nu(\epsilon,q\epsilon,\ldots,q^{J-1}\epsilon\md\ipbb,\SPB)=
		\frac{(q;q)_{J}}{(q;q)_{J-n+k}}
		\frac{(\SP_0\ip_0^{-1}\epsilon ;q)_{J+k}}{(\SP_0\ip_0^{-1}\epsilon ;q)_{n}}
		\frac{(\SP_0^{2};q)_{n}}{(\SP_0^{2};q)_{k}}
		\frac{1}{(\ip_0\SP_0/\epsilon;q^{-1})_{n-k}}
		\\\times
		\frac{1}{(\SP_0\ip_0^{-1}q^{n-k}\epsilon;q)_{J-n+k}}
		\prod_{j=1}^{n-k}
		\bigg(
		\frac{1}{1-\SP_{\nu_j}\ip_{\nu_j}^{-1} q^{j-1}\epsilon }
		\prod_{\ell=0}^{{\nu_j}-1}
		\frac{\ip_\ell^{-1} q^{j-1}\epsilon -\SP_\ell}{1-\SP_\ell\ip_\ell^{-1} q^{j-1}\epsilon }
		\bigg).
	\end{multline*}
	The $\epsilon\to0$ limit of the product over $j$ above (which is independent of $q^J$)
	gives $(-\SPB)^{\nu}$. We can rewrite the prefactor as follows:
	\begin{align*}
		&\frac{(q;q)_{J}(\SP_0 ^{2};q)_{n}(\SP_0 \epsilon/\ip_0;q)_{J+k}}{(q;q)_{J-n+k}
		(\SP_0 ^{2};q)_{k}(\SP_0 \epsilon/\ip_0;q)_{n}(\SP_0 \ip_0/\epsilon;q^{-1})_{n-k}(\SP_0 \epsilon q^{n-k}/\ip_0;q)_{J-n+k}}
		\\&\hspace{80pt}=
		\frac{(\SP_0 ^{2}q^{k};q)_{n-k}}{(\SP_0 \ip_0/\epsilon;q^{-1})_{n-k}(\SP_0 \epsilon/\ip_0;q)_{n}}
		(q^{J+1+k-n};q)_{n-k}(\SP_0 \epsilon/\ip_0;q)_{n-k}(\SP_0 \epsilon q^{J}/\ip_0;q)_{k}.
	\end{align*}
	One readily sees that 
	the above quantity 
	depends on $q^{J}$ in a rational manner.\footnote{This can also
	be thought of as a consequence of the fusion procedure
	(\S \ref{sub:principal_specializations_of_skew_functions}),
	but the statement of the proposition does not require fusion.}
	This allows to analytically continue in $q^{J}$, and set
	$q^{J}=\ip_0/(\SP_0\epsilon)$. Observe that 
	the result involves 
	$(1;q)_k$, which vanishes unless $k=0$. 
	For $k=0$ we obtain:
	\begin{align*}
		\frac{(\SP_0^{2};q)_{n}}{(\SP_0\ip_0/\epsilon;q^{-1})_{n}(\SP_0\epsilon/\ip_0;q)_{n}}
		(\ip_0(\SP_0\epsilon)^{-1}q^{1-n};q)_{n}(\SP_0\epsilon/\ip_0;q)_{n}=
		\frac{(\SP_0^{2};q)_{n}}{(\SP_0\ip_0/\epsilon;q^{-1})_{n}}
		(\ip_0(\SP_0\epsilon)^{-1}q^{1-n};q)_{n},
	\end{align*}
	and in the $\epsilon\to0$ limit this turns into 
	$(\SP_0^{2};q)_{n}\SP_0^{-2n}$, which completes the proof.
\end{proof}
\begin{remark}
	An alternative proof of Proposition \ref{prop:RHO_spec} 
	(and in fact a computation of a more general specialization) using the 
	integral formula of Corollary \ref{cor:G_integral_formula} below
	is discussed in \S\ref{sub:computation_of_gnurhow}.
\end{remark}

Let us substitute the above specializations $\PI m$ and
$\RHO$ into the Markov kernels.
The kernel $\Qe_{\PI m;v}$ looks as follows:
\begin{align}\label{Qe_PI}
	\Qe_{\PI m;v}(\mu\to\nu)=
	\frac{(-\SPB)^{\nu}}{(-\SPB)^{\mu}}
	\,\G^{\conj}_{\nu/\mu}(v\md\ipbb,\SPB),
	\qquad\mu,\nu\in\signp m.
\end{align}
Similarly, the kernel $\Qp_{u;\RHO}$ has the form
\begin{align*}
	\Qp_{u;\RHO}(\la\to\nu)=
	\frac{1-\SP_0\ip_0u}{\SP_0(\SP_0-u\ip_0)}
	\frac{(-\SPB)^{\nu}}
	{(-\SPB)^{\la}}
	\,\F^{\conj}_{\nu/\la}(u\md\ipb,\SPB),
\end{align*}
where
$\la\in\signp m$, $\nu\in\signp{m+1}$ are such that $\la_m,\nu_{m+1}>0$.
Because of this latter condition, we can subtract $1$ from all parts of $\la$ and $\nu$, and rewrite 
$\Qp_{u;\RHO}$ as follows:
\begin{align}
	\label{Qp_RHO}
	\Qp_{u;\RHO}(\la\to\nu)=
	\frac{(-\SPB)^{\nu}}
	{(-\SPB)^{\la}}
	\frac{1}{\Lmatr_{\ip_0u,\SP_0}(0,1;0,1)}
	\,\F^{\conj}_{\nu/\la}(u\md\ipb,\SPB)=
	\frac{(-\sh_1\SPB)^{\nu-1^{m+1}}}
	{(-\sh_1\SPB)^{\la-1^{m}}}
	\,\F^{\conj}_{(\nu-1^{m+1})/(\la-1^{m})}(u\md\sh_1\ipb,\sh_1\SPB),
\end{align}
where $\sh_1$ is the shift \eqref{sh_operation}.

% subsection specializations_pi_and_rho_ (end)

\subsection{Interacting particle systems} % (fold)
\label{sub:interacting_particle_systems}

Fix $M\in\Z_{\ge0}$. Let us interpret $\signp M$ 
as the space of $M$-particle configurations on $\Z_{\ge0}$,
in which putting an arbitrary number of particles per site is allowed (particles are assumed to be identical).
That is, each
$\la=0^{\ell_0}1^{\ell_1}2^{\ell_2}\ldots\in\signp M$ corresponds to 
having $\ell_0$ particles at site $0$, $\ell_1$ particles at site $1$, and so on.

We can interpret the Markov kernels
$\Qe_{\UU;v}$ and $\Qp_{u;\VV}$ (for any $\UU$ or $\VV$)
as one-step transition operators of two discrete time Markov chains.
Denote these Markov chains by 
$\Xe_{\UU;\{v_t\}}$ 
and $\Xp_{\{u_t\};\VV}$, respectively.
Here $\{v_t\}_{t\in\Z_{\ge0}}$
and $\{u_t\}_{t\in\Z_{\ge0}}$
are time-dependent parameters which are 
added during one step of 
$\Xe$ or $\Xp$, respectively (we tacitly assume that all parameters $u_i$ and $v_j$
satisfy the necessary admissibility conditions as in \S \ref{sub:four_markov_kernels}).

For generic $\UU$ and $\VV$ parameters, the Markov chains
$\Xe_{\UU;\{v_t\}}$ 
and $\Xp_{\{u_t\};\VV}$, respectively, are \emph{nonlocal}, 
i.e., transitions at a given location depend on the whole particle configuration.
However, taking $\UU=\PI m$ or $\VV=\RHO$ in the corresponding chain
makes them \emph{local}
(in fact, we will get certain \emph{sequential update} rules). 
\begin{remark}\label{rmk:Doob_h_transform}
	The origin of nonlocality in the above Markov chains 
	is the conjugation of the skew functions that is necessary
	for the transition probabilities to add up to 1
	(cf. \eqref{Qp} and \eqref{Qe}). This conjugation may be viewed as 
	an instance of the 
	classical Doob's $h$-transform
	(we refer to, e.g., \cite{konig2002non}, \cite{Konig2005} for details).
	
	Another way of introducing locality to 
	Markov chains $\Xe_{\UU;\{v_t\}}$ 
	and $\Xp_{\{u_t\};\VV}$ that works for generic $\UU$ and~$\VV$, respectively, 
	could be to consider 
	``multivariate'' chains on whole interlacing arrays (similarly to, e.g., 
	\cite{OConnell2003Trans}, \cite{OConnell2003}, \cite{BorFerr2008DF}, \cite{BorodinPetrov2013NN},
	\cite{MatveevPetrov2014},
	with \cite{BorodinBufetov2015} 
	providing an application
	to the six vertex model on the torus),
	but we will not discuss this here.
\end{remark}

Let us discuss update rules 
of the dynamics 
$\Xe_{\PI M;\{v_t\}}$ 
and $\Xp_{\{u_t\};\RHO}$ in detail. They follow from 
\eqref{Qe_PI} and \eqref{Qp_RHO}
combined with 
the interpretation
of functions $\F$ and $\G$
as partition functions of path collections with stochastic vertex weights \eqref{vertex_weights_stoch}.

\subsubsection{Dynamics $\Xe_{\PI M;\{v_t\}}$}
\label{ssub:dyn_xe}

Fix $M\in\Z_{\ge0}$.
During each time step $t\to t+1$ of
the chain $\Xe_{\PI M;\{v_t\}}$,
the current configuration
$\mu=0^{m_0}1^{m_1}2^{m_2}\ldots\in\signp M$ 
is randomly changed to
$\nu=0^{n_0}1^{n_1}2^{n_2}\ldots\in\signp M$ 
according to the following sequential (left to right) update.
First, 
choose $n_0\in\{0,1,\ldots,m_0\}$ from the probability distribution
$$\Lmatr_{\ip_0^{-1}v_{t+1},\SP_0}(m_0,0;n_0,m_0-n_0),$$ and set $h_1:=m_0-n_0\in\{0,1\}$,
Then, having $h_1$
and $m_1$, choose $n_1\in\{0,1,\ldots,m_1+h_1\}$ 
from the probability distribution
$$\Lmatr_{\ip_1^{-1}v_{t+1},\SP_1}(m_1,h_1;n_1,m_1+h_1-n_1),$$
and set $h_2:=m_1+h_1-n_1\in\{0,1\}$.
Continue in the same manner for $x=2,3,\ldots$ 
by 
choosing
$n_{x}\in\{0,1,\ldots,m_x+h_x\}$ 
from the distribution
$$\Lmatr_{\ip_x^{-1}v_{t+1},\SP_x}(m_x,h_x;n_{x},m_x+h_x-n_{x}),$$
and setting $h_{x+1}:=m_x+h_x-n_{x}\in\{0,1\}$.
Since at each step the probability that 
$h_{x+1}=1$ is strictly less than $1$,
eventually for some $x>\mu_1$ we will have $h_{x+1}=0$, which means that the update will  
terminate (all the above choices are independent).
See Fig.~\ref{fig:particle_systems}, left, for an example.

\subsubsection{Dynamics $\Xp_{\{u_t\};\RHO}$}
\label{ssub:dyn_xp}

During each time step $t\to t+1$ of
the chain $\Xp_{\{u_t\};\RHO}$,
the current configuration
$\mu=1^{m_1}2^{m_2}\ldots\in\signp M$ 
is randomly changed to
$\nu=1^{n_1}2^{n_2}\ldots\in\signp {M+1}$
according to the following sequential (left to right) update
(note that here $M$ is increased with time,
and also that there cannot be any particles at location $0$).

First, choose $n_1\in\{0,1,\ldots,m_1+1\}$ from the probability distribution
$$\Lmatr_{\ip_1u_{t+1},\SP_1}(m_1,1;n_1,m_1+1-n_1),$$ and set $h_2:=m_1+1-n_1\in\{0,1\}$. 
The fact that $j_1=1$ in this stochastic vertex weight 
accounts for the incoming arrow from $0$.
For $x=2,3,\ldots$ continue in the same way, for each $x$ choosing 
$n_{x}\in\{0,1,\ldots,m_x+h_x\}$ from the probability distribution
$$\Lmatr_{\ip_x u_{t+1},\SP_x}(m_x,h_x;n_x,m_x+h_x-n_x),$$
and setting $h_{x+1}=m_x+h_x-n_x\in\{0,1\}$.
The update will eventually terminate when $h_{x+1}=0$ for some $x>\mu_1$.
See Fig.~\ref{fig:particle_systems}, right, for an example.

\begin{figure}[htbp]
	\begin{tabular}{cc}
		\scalebox{.9}{\begin{tikzpicture}
		[scale=1, very thick]
		\def\hh{-1.8}
		\def\d{.06}
		\draw[->] (-.5,0)--++(8.2,0);
		\draw[thick, densely dotted, opacity=.7] (-.5,\hh)--++(8,0);
		\foreach \iii in {0,1,2,3,4,5,6,7}
		{
			\draw (\iii,.1)--++(0,-.2) node [below] {$\iii$};
			\draw[thick, densely dotted, opacity=.7] (\iii,\hh-.5)--++(0,1);
		}
		\foreach \ppt in {(0,.3),(0,.6),(1,.3),(3,.3),(3,.6),(3,.9),(6,.3),(6,.6)}
		{
			\draw[thick] \ppt circle(3pt);
		}
		\draw[->] (0,.8) to [in=100, out=45] (1,.6);
		\draw[->] (1,.5) to [in=100, out=45] (2,.3);
		\draw[->] (3,1.1) to [out=35, in=180] (4.5,1.5) to [in=120, out=0] (6,.9);
		\draw[->] (-\d,\hh-.5)--++(0,.5-\d)--++(\d,2*\d)--++(0,.5-\d);
		\draw[->] (\d,\hh-.5)--++(0,.5)--++(1-\d-\d,0)--++(\d,\d)--++(0,.5-\d);
		\draw[->] (1,\hh-.5)--++(0,.5-\d)--++(\d,\d)--++(1-\d,0)--++(0,.5);
		\draw[->] (3-2*\d,\hh-.5)--++(0,.5-\d)--++(\d,2*\d)--++(0,.5-\d);
		\draw[->] (3,\hh-.5)--++(0,.5-\d)--++(\d,2*\d)--++(0,.5-\d);
		\draw[->] (3+2*\d,\hh-.5)--++(0,.5)--++(1-2*\d,0)--++(2-2*\d,0)--++(0,.5);
		\draw[->] (6-\d,\hh-.5)--++(0,.5-\d)--++(\d,2*\d)--++(0,.5-\d);
		\draw[->] (6+\d,\hh-.5)--++(0,.5-\d)--++(\d,2*\d)--++(0,.5-\d);
		\node at (1.9,-3.5)
		{\parbox{100pt}
		{
			$\Lmatr_0(2,0;1,1)\Lmatr_1(1,1;1,1)
						$

			$\qquad\times\Lmatr_2(0,1;1,0)\Lmatr_3(3,0;2,1)\Lmatr_4(0,1;0,1)
						$
			
			$\qquad \qquad\times\Lmatr_5(0,1;0,1)\Lmatr_6(2,1;3,0)	$
		}
		};
	\end{tikzpicture}}
	&\hspace{20pt}
	\scalebox{.9}{\begin{tikzpicture}
		[scale=1, very thick]
		\def\hh{-1.8}
		\def\d{.06}
		\draw[->] (-.5,0)--++(8.2,0);
		\draw[thick, densely dotted, opacity=.7] (-.5,\hh)--++(8,0);
		\foreach \iii in {0,1,2,3,4,5,6,7}
		{
			\draw (\iii,.1)--++(0,-.2) node [below] {$\iii$};
			\draw[thick, densely dotted, opacity=.7] (\iii,\hh-.5)--++(0,1);
		}
		\foreach \ppt in {(1,.3),(1,.6),(3,.3),(3,.6),(3,.9),(6,.3),(6,.6)}
		{
			\draw[thick] \ppt circle(3pt);
		}
		\draw[->] (1,.8) to [in=100, out=45] (2,.3);
		\draw[thick, densely dotted] (-1,.3) circle(3pt);
		\draw[->] (-1,.5) to [in=120, out=45] (1,.9);
		\draw[->] (3,1.1) to [out=35, in=180] (4.5,1.5) to [in=120, out=0] (6,.9);
		\draw[->] (-1,\hh)--++(2-3*\d,0)--++(2*\d,\d)--++(0,.5-\d);
		\draw[->] (1-\d,\hh-.5)--++(0,.5-\d)--++(2*\d,2*\d)--++(0,.5-\d);
		\draw[->] (1+\d,\hh-.5)--++(0,.5-\d)--++(2*\d,\d)--++(1-\d*3,0)--++(0,.5);
		\draw[->] (3-2*\d,\hh-.5)--++(0,.5-\d)--++(\d,2*\d)--++(0,.5-\d);
		\draw[->] (3,\hh-.5)--++(0,.5-\d)--++(\d,2*\d)--++(0,.5-\d);
		\draw[->] (3+2*\d,\hh-.5)--++(0,.5)--++(1-2*\d,0)--++(2-2*\d,0)--++(0,.5);
		\draw[->] (6-\d,\hh-.5)--++(0,.5-\d)--++(\d,2*\d)--++(0,.5-\d);
		\draw[->] (6+\d,\hh-.5)--++(0,.5-\d)--++(\d,2*\d)--++(0,.5-\d);
		\node at (1.9,-3.5)
		{\parbox{100pt}
		{
			$\Lmatr_1(2,1;2,1)\Lmatr_2(0,1;1,0)
						$

			$\qquad\times\Lmatr_3(3,0;2,1)\Lmatr_4(0,1;0,1)
						$
			
			$\qquad \qquad\times\Lmatr_5(0,1;0,1)\Lmatr_6(2,1;3,0)	$
		}
		};
	\end{tikzpicture}}
	\end{tabular}
	\caption{
	Left: a possible move under the chain $\Xe_{\PI M;\{v_t\}}$ with $M=8$
	(depicted in terms of particle and path configurations). 
	The probability of this move is also given, 
	where $\Lmatr_j=\Lmatr_{\ip_j^{-1}v_{t+1},\SP_{j}}$.
	Right: a possible move under the chain 
	$\Xp_{\{u_t\};\RHO}$
	with $M=7$ (so that the resulting configuration has $8$ particles).
	The probability of this move is 
	also given, with 
	$\Lmatr_j=\Lmatr_{\ip_ju_{t+1},\SP_{j}}$.
	}
	\label{fig:particle_systems}
\end{figure}
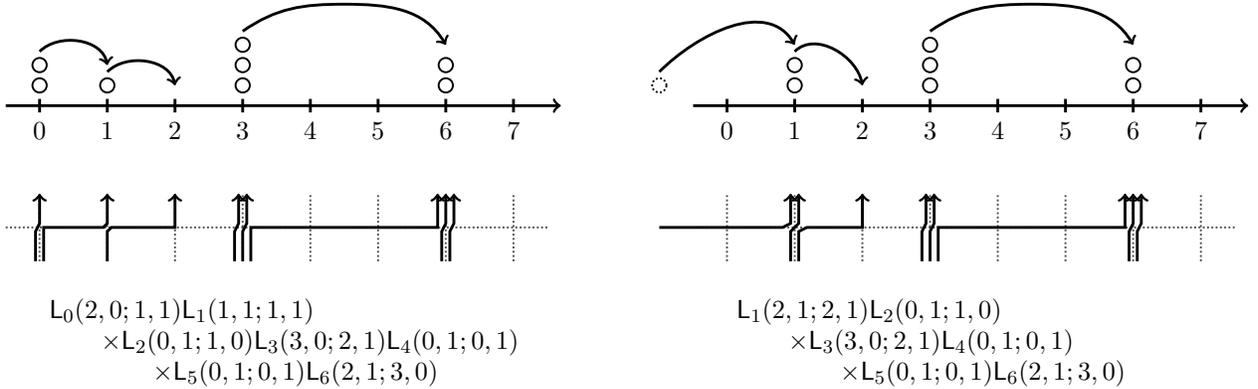

\subsubsection{Properties of dynamics} % (fold)
\label{ssub:properties_of_dynamics}

We will now list a number of immediate properties of the Markov chains 
$\Xe_{\PI M;\{v_t\}}$ and $\Xp_{\{u_t\};\RHO}$ described above.

\smallskip\noindent$\bullet$
Under both dynamics, particles move only to the right.
Moreover, at most one particle can leave 
any given stack of particles
and it can move only as far as the next nonempty stack of particles.

\smallskip\noindent$\bullet$
The property that 
at most one particle can leave 
any given stack of particles
is a $J=1$ feature. 
One can readily define \emph{fused dynamics}
involving stochastic vertex weights 
$\LJ{J}_{z,\SP}$
for any $J\in\Z_{\ge1}$ (see \S \ref{sub:fusion_of_stochastic_weights}). In these general $J$ dynamics,
at most $J$ particles 
can leave 
any given stack.
One step of a general $J$ dynamics 
(say, an analogue of $\Xp$)
can be thought of as simply 
combining $J$ steps of the $J=1$ dynamics with parameters
$u_t,qu_t,\ldots,q^{J-1}u_t$.
Results of \S \ref{sub:fusion_of_stochastic_weights}
show that one can then forget about the intermediate configurations 
during these $J$ steps, and still obtain a Markov chain. 
We will utilize these general $J$ Markov chains 
in \S \ref{sub:general_j_dynamics_and_q_hahn_degeneration} below.

% \smallskip\noindent$\bullet$
% Consider the dynamics $\Xe_{u;\{v_t\}}$ with $M=1$ and a generic parameter $u$. 
% Under this dynamics, during each time step $t\to t+1$,
% the only particle in $\signp 1$ 
% jumps from a site $k\in\Z_{\ge0}$ to any site $m>k$ with probability (here $v=v_{t+1}$)
% \begin{align*}
% 	\Qe_{u;v}(k\to m)=\frac{1-uv}{1-quv}\frac{\pow_m(u)}{\pow_k(u)}\cdot
% 	\frac{(1-q)\ip_k^{-1}v}{1-\SP_k \ip_k^{-1}v}
% 	\bigg(\prod_{j=k+1}^{m-1}\frac{\ip_j^{-1}v-\SP_j}{1-\SP_j\ip_j^{-1}v}\bigg)
% 	\frac{1-\SP_m^{2}}{1-\SP_m\ip_m^{-1}v},
% \end{align*}
% and stays put with probability 
% \begin{align*}
% 	\Qe_{u;v}(k\to k)=\frac{1-uv}{1-quv}\cdot \frac{1-q\SP_k\ip_k^{-1}v}{1-\SP_k\ip_k^{-1}v}.
% \end{align*}
% The fact that these transition probabilities sum to one can be verified directly
% by telescoping, which is triggered by the
% observation that 
% \begin{align*}
% 	\frac{1}{1-\SP_m\ip_m u}\frac{1-\SP_m^{2}}{1-\SP_m\ip_m^{-1} v}
% 	=\frac{1}{1-uv}\left(1-
% 	\frac{(\ip_mu-\SP_m)(\ip_m^{-1}v-\SP_m)}{(1-\SP_m\ip_mu)(1-\SP_m\ip_m^{-1}v)}
% 	\right).
% \end{align*}

\smallskip\noindent$\bullet$
If the dynamics 
$\Xe_{\UU;\{v_t\}}$
is started from the initial configuration 
$0^{M}$ 
(that is, all $M$ particles are at zero),
then at any time $t$ the distribution of the 
particle configuration is 
given by $\MM_{\UU;(v_1,\ldots,v_t)}$.
Similarly, if 
$\Xp_{\{u_t\};\VV}$ starts from the empty initial configuration, 
then at any time $t$ the distribution of the 
particle configuration is 
given by $\MM_{(u_1,\ldots,u_t);\VV}$. This follows
from \eqref{MM_and_Q}.

\smallskip\noindent$\bullet$
Let us return to local dynamics.
As follows from \eqref{EF_relation_Qe}--\eqref{EF_Qe},
the eigenfunctions of the 
transition operator 
$\Qe_{\PI M;v}$ corresponding to the dynamics $\Xe$ (on $M$-particle configurations) are 
\begin{align*}
	\EF{}\la(z_1,\ldots,z_M)=
	\frac{1}{(q;q)_{M}(-\SPB)^{\la}}\,{\F_\la(z_1,\ldots,z_M\md\ipb,\SPB)},
	\qquad
	\Qe_{\PI M;v}\EF{}\la(\ZZZ) = 
	\bigg(
	\prod_{j=1}^{M}\frac{1-qz_jv}{1-z_jv}
	\bigg)\EF{}\la(\ZZZ)
\end{align*}
(here and below for $\UU=\PI M$ we write 
$\EF{}\la$ instead of
$\EF{\UU}\la$).

\smallskip\noindent$\bullet$
In the homogeneous case $\ip_j\equiv 1$ and $\SP_j\equiv \SP$, the dynamics 
$\Xe_{\PI M;\{v_t\}}$
on $M$-particle configurations appeared in 
\cite{CorwinPetrov2015}
(under the name \emph{$J=1$ higher spin zero range 
process}). In this homogeneous setting, 
\cite{CorwinPetrov2015} established certain duality relations
for this dynamics.\footnote{Similar duality results 
also appeared earlier in
\cite{BorodinCorwinSasamoto2012}, \cite{BorodinCorwin2013discrete},
\cite{Corwin2014qmunu}
for $q$-TASEP and $q$-Hahn degenerations of the general higher spin six vertex model.
They also hold in an inhomogeneous setting, cf. 
\S\ref{sub:duality_from_observables} below.}
Some of the results in \cite{CorwinPetrov2015}
also deal with infinite-particle 
process like $\Xe_{\PI M;\{v_t\}}$,
which starts from the
initial configuration $0^{\infty}1^{0}2^{0}\ldots$
(interpreting the zero range process as an exclusion process, this 
would correspond to the most well-studied \emph{step initial data}).
In this case, during each time step $t\to t+1$,
one particle can escape the location $0$ 
with probability $\Lmatr_{\ip_0^{-1}v_{t+1},\SP_0}
(\infty,0;\infty,1)=(-\SP_0\ip_0^{-1}v_{t+1})/(1-\SP_0\ip_0^{-1}v_{t+1})$ (note that under
\eqref{stochastic_weights_condition_qsxi}--\eqref{stochastic_weights_condition_u}
this number is between $0$ and $1$). 
In \S \ref{sub:general_j_dynamics_and_q_hahn_degeneration} below we will discuss how 
this initial condition can be obtained by a straightforward
limit transition from the dynamics 
$\Xp_{\{u_t\};\RHO}$.
Thus, considering the latter dynamics 
without this limit transition 
adds a new boundary condition, 
under which during each time step,
a new particle is \emph{always} added at the leftmost location.

% subsubsection properties_of_dynamics (end)

% subsection interacting_particle_systems (end)

\subsection{Degeneration to the six vertex model and the ASEP} % (fold)
\label{sub:asep_degeneration}

In this subsection we do not assume that 
our parameters satisfy \eqref{stochastic_weights_condition_qsxi}--\eqref{stochastic_weights_condition_u}.
However, all algebraic statements discussed above in this section
(e.g., Proposition \ref{prop:Q_La_commutation}) 
continue to hold without this assumption --- they just become statements
about linear operators. Moreover, 
one can say that these are statements about \emph{formal} Markov
operators, i.e., in which the matrix elements sum up to one
along each row,
but are not necessarily nonnegative.

Observe that taking $\SP^{2}=q^{-I}$ for $I\in\Z_{\ge1}$
makes the weight
\begin{align*}
	\Lmatr_{u,\SP}(I,1;I+1,0)=\frac{1-\SP^{2}q^{I}}{1-\SP u}
\end{align*}
vanish, regardless of $u$.
If, moreover, all other weights 
$\Lmatr_{u,\SP}(i_1,j_1;i_2,j_2)$ with
$i_{1,2}\in\{0,1,\ldots,I\}$ and 
$j_{1,2}\in\{0,1\}$
are nonnegative, 
then we can restrict our 
attention to path ensembles in which 
the multiplicities of all
vertical edges are bounded by $I$,
and still talk about interacting particle systems
as in \S \ref{sub:interacting_particle_systems} above.

Let us consider the simplest case and take $I=1$, so 
$\SP=q^{-\frac12}$.
For this choice of $\SP$,
there are six possible arrow configurations
at a vertex, and their weights are given in 
Fig.~\ref{fig:six_vertex}. 
These weights are nonnegative if
either $0<q<1$ and $u\ge q^{-\frac12}$,
or
$q>1$ and $0\le u\le q^{-\frac12}$ (these are the 
new nonnegativity conditions
replacing \eqref{stochastic_weights_condition_qsxi}--\eqref{stochastic_weights_condition_u} for $\SP=q^{-\frac12}$).
Observe the following symmetry of the vertex weights:
\begin{align}\label{stoch_6V_symmetry}
	\Lmatr_{u,q^{-\frac12}}(i_1,j_1;i_2,j_2)
	=
	\Lmatr_{u^{-1},q^{-\frac12}}(1-i_1,1-j_1;1-i_2,1-j_2)\big\vert_{q\to q^{-1}},
	\qquad i_1,j_1,i_2,j_2\in\{0,1\}.
\end{align}

\begin{figure}[htbp]
	\def\d{0.1}\def\scl{.8}
	\begin{tabular}{c|c|c|c|c|c|c}
	&\scalebox{\scl}{\begin{tikzpicture}
		[scale=1, very thick]
		\node (i1) at (0,-1) {$0$};
		\node (j1) at (-1,0) {$0$};
		\node (i2) at (0,1) {$0$};
		\node (j2) at (1,0) {$0$};
		\draw[densely dotted] (j1) -- (j2);
		\draw[densely dotted] (i1) -- (i2);
	\end{tikzpicture}}
	&\scalebox{\scl}{\begin{tikzpicture}
		[scale=1, very thick]
		\node (i1) at (0,-1) {$1$};
		\node (j1) at (-1,0) {$0$};
		\node (i2) at (0,1) {$1$};
		\node (j2) at (1,0) {$0$};
		\draw[densely dotted] (j1) -- (j2);
		\draw[densely dotted] (i1) -- (i2);
		\draw[->, line width=1.7pt] (i1)--(i2);
	\end{tikzpicture}}
	&\scalebox{\scl}{\begin{tikzpicture}
		[scale=1, very thick]
		\node (i1) at (0,-1) {$1$};
		\node (j1) at (-1,0) {$0$};
		\node (i2) at (0,1) {$0$};
		\node (j2) at (1,0) {$1$};
		\draw[densely dotted] (j1) -- (j2);
		\draw[densely dotted] (i1) -- (i2);
		\draw[->, line width=1.7pt] (i1)--(0,0)--(j2);
	\end{tikzpicture}}
	&\scalebox{\scl}{\begin{tikzpicture}
		[scale=1, very thick]
		\node (i1) at (0,-1) {$0$};
		\node (j1) at (-1,0) {$1$};
		\node (i2) at (0,1) {$0$};
		\node (j2) at (1,0) {$1$};
		\draw[densely dotted] (j1) -- (j2);
		\draw[densely dotted] (i1) -- (i2);
		\draw[->, line width=1.7pt] (j1)--(j2);
	\end{tikzpicture}}
	&\scalebox{\scl}{\begin{tikzpicture}
		[scale=1, very thick]
		\node (i1) at (0,-1) {$0$};
		\node (j1) at (-1,0) {$1$};
		\node (i2) at (0,1) {$1$};
		\node (j2) at (1,0) {$0$};
		\draw[densely dotted] (j1) -- (j2);
		\draw[densely dotted] (i1) -- (i2);
		\draw[->, line width=1.7pt] (j1)--(0,0)--(i2);
	\end{tikzpicture}}
	&\scalebox{\scl}{\begin{tikzpicture}
		[scale=1, very thick]
		\node (i1) at (0,-1) {$1$};
		\node (j1) at (-1,0) {$1$};
		\node (i2) at (0,1) {$1$};
		\node (j2) at (1,0) {$1$};
		\draw[densely dotted] (j1) -- (j2);
		\draw[densely dotted] (i1) -- (i2);
		\draw[->, line width=1.7pt] (i1)--(0,-\d)--++(\d,\d)--(j2);
		\draw[->, line width=1.7pt] (j1)--(-\d,0)--++(\d,\d)--(i2);
	\end{tikzpicture}}
	\\
	\hline\rule{0pt}{20pt}
	$\Lmatr_{u,q^{-\frac12}}$
	&1
	% &$\dfrac{1-uq^{\frac12}}{1-uq^{-\frac12}}$
	% &$\dfrac{-uq^{-\frac12}+uq^{\frac12}}{1-uq^{-\frac12}}$
	% &$\dfrac{-uq^{-\frac12}+q^{-1}}{1-uq^{-\frac12}}$
	% &$\dfrac{1-q^{-1}}{1-uq^{-\frac12}}$
	&$b_1$
	&$1-b_1$
	&$b_2$
	&$1-b_2$
	&1
	\phantom{\Bigg|}
	\end{tabular}
	\caption{All six stochastic vertex weights
	corresponding to $\SP=q^{-\frac12}$.
	The weights $b_{1,2}$ are
	expressed through $u$ and $q$ as
	$b_1=\dfrac{1-uq^{\frac12}}{1-uq^{-\frac12}}$
	and 
	$b_2=\dfrac{-uq^{-\frac12}+q^{-1}}{1-uq^{-\frac12}}$.}
	\label{fig:six_vertex}
\end{figure}

In the semi-infinite horizontal strip
one must set $\SP_x\equiv q^{-\frac12}$ for all $x\in\Z_{\ge0}$.
While this eliminates the inhomogeneity in the $\SP$-parameters,
one can still take inhomogeneous spectral parameters, so that
at the intersection of the $i$-th horizontal and 
the $j$-th vertical lines the parameter
is equal to $\ip_j u_i$ (cf. Fig.~\ref{fig:paths_FG}). 
This leads to the \emph{inhomogeneous stochastic
six vertex model}.
A homogeneous version of the model 
(corresponding to $u_i\equiv u$ and $\ip_j\equiv 1$) was
introduced in \cite{GwaSpohn1992} and studied recently in
\cite{BCG6V}.
Simulations of the stochastic six vertex model (both homogeneous and inhomogeneous)
are given in~Fig.~\ref{fig:stoch6v}.

The paper \cite{BCG6V} deals with the homogeneous stochastic six vertex model
in which the vertical arrows are entering from below, and no arrows enter from the left (cf. Fig.~\ref{fig:stoch6v}, left).
Moreover, to get a nontrivial limit shape,
one should take $q>1$. However, with the help of the symmetry
\eqref{stoch_6V_symmetry} (leading to the swapping
of arrows with empty edges), these boundary conditions
are equivalent to considering the process
$\Xp_{\{u_t\};\RHO}$ with $0<q<1$, which is our usual assumption throughout the 
text. Simulations of the latter dynamics
can be obtained from the pictures in Fig.~\ref{fig:stoch6v}
by reflecting them with respect to the diagonal of the first quadrant.
\begin{figure}[htbp]
	\includegraphics[width=.47\textwidth]{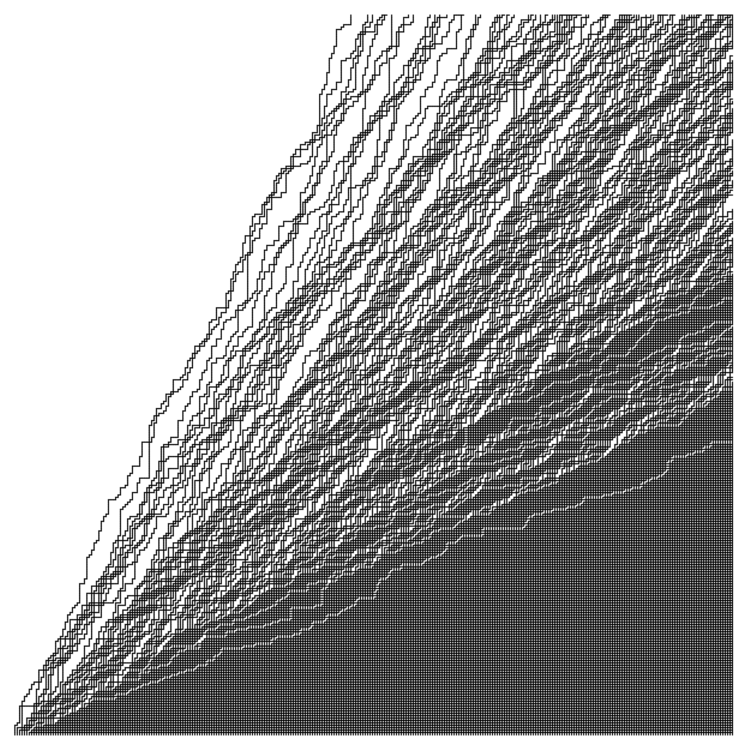}
	\qquad
	\includegraphics[width=.47\textwidth]{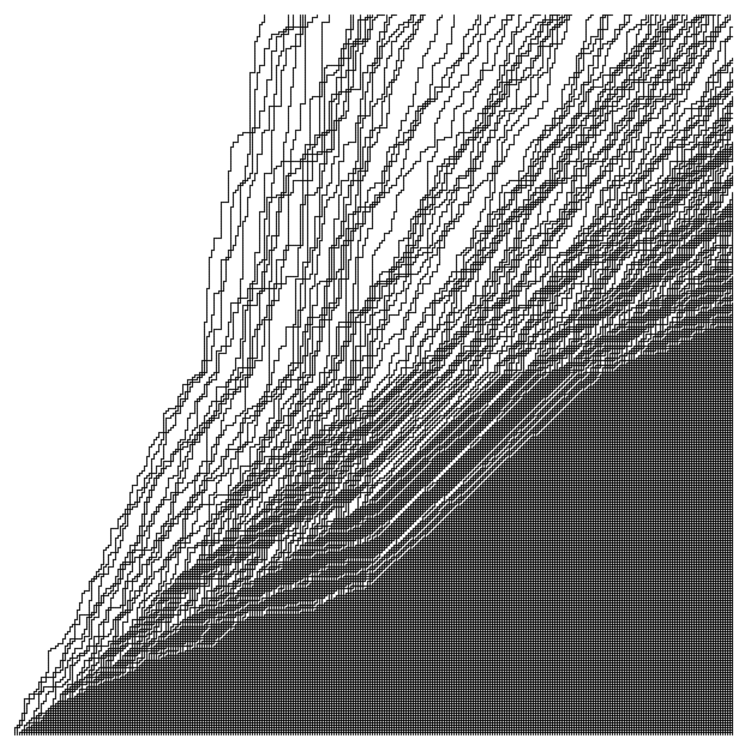}
	\caption{Left: A simulation of 
	the homogeneous
	stochastic six vertex model 
	of size $300$ 
	with 
	boundary conditions as in \cite{BCG6V}
	and parameters
	$\Lmatr(0,1;0,1)=0.3$,
	$\Lmatr(1,0;1,0)=0.7$. 
	Right: A simulation of the inhomogeneous stochastic
	six vertex model 
	of size $300$ with the 
	same boundary conditions.
	The parameters 
	$(\Lmatr(0,1;0,1),\Lmatr(1,0;1,0))$
	are
	$(0.3,0.7)$
	in the lower left and the upper right quarters,
	${\approx(0.38, 0.88)}$
	in the upper left quarter,
	and 
	${\approx(0.041, 0.096)}$
	in the lower right quarter (note that
	the ratio $q$ 
	of the parameters must be the same).}
	\label{fig:stoch6v}
\end{figure}

The stochastic 
six vertex model which is inhomogeneous in 
both the vertical and the horizontal directions can be studied 
(in the sense of computing certain observables)
using the technique developed here, see \S \ref{sub:moments_of_six_vertex_model_and_asep} below. 
In fact, the tools of \cite{BCG6V} also allow to 
study the stochastic six vertex model which is inhomogeneous in one direction (varying spectral parameters).

Let us briefly discuss two continuous time 
limits of the stochastic six vertex model. 
Here we restrict our attention to 
systems of the type $\Xe_{\PI M;\{v_t\}}$, i.e., with a fixed finite number of particles 
(about other boundary and initial conditions see also \S \ref{sub:moments_of_six_vertex_model_and_asep} below).
The first of the limits is the well-known 
ASEP (Asymmetric Simple Exclusion Process) introduced in 
\cite{Spitzer1970}
(see Fig.~\ref{fig:ASEP}), which is obtained as follows.
Observe that for $u=q^{-\frac12}+(1-q)q^{-\frac12}\epsilon$, 
we have as $\epsilon\searrow0$:
\begin{align*}
	\Lmatr_{u,q^{-\frac12}}(0,1;0,1)=
	\epsilon
	+O(\epsilon^{2}),\qquad \qquad
	\Lmatr_{u,q^{-\frac12}}(1,0;1,0)=
	q\epsilon
	+O(\epsilon^{2}).
\end{align*}
Therefore, taking 
$\ip_j\equiv 1$
and
$\epsilon$ small,
the particles in the stochastic six vertex model
will mostly travel
to the right by $1$ at every step.
If we subtract this deterministic shift
and look at times of order $\epsilon^{-1}$, 
then the rescaled discrete time process will converge to the 
continuous time
ASEP with $r=1$ and $\ell=q$, see Fig.~\ref{fig:six_to_ASEP}
(note that multiplying both $r$ and $\ell$
by a constant is the same as a deterministic
rescaling of the continuous time in the ASEP, and thus is a harmless
operation).
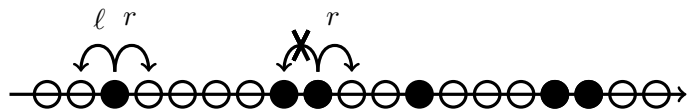
\begin{figure}[htbp]
	\begin{tikzpicture}
		[scale=1,very thick]
			\def\pt{.17}
			\def\ee{.1}
			\def\h{.45}
			\draw[->] (-.5,0) -- (8.5,0);
			\foreach \ii in {(0,0),(\h,0),(3*\h,0),(4*\h,0),(5*\h,0),(6*\h,0) ,(8*\h,0),(10*\h,0),(9*\h,0),(12*\h,0),(13*\h,0),(14*\h,0),(15*\h,0),(16*\h,0),(17*\h,0),(18*\h,0)}
			{
				\draw \ii circle(\pt);
			}
			\foreach \ii in {(2*\h,0),(7*\h,0),(11*\h,0),(15*\h,0),(8*\h,0),(16*\h,0)}
			{
				\draw[fill] \ii circle(\pt);
			}
		    \draw[->, very thick] (2*\h,.3) to [in=180,out=90] (2.5*\h,.65) to [in=90, out=0] (3*\h,.3) node [xshift=-7,yshift=20] {$r$};
		    \draw[->, very thick] (2*\h,.3) to [in=0,out=90] (1.5*\h,.65) to [in=90, out=180] (1*\h,.3) node [xshift=7,yshift=20] {$\ell$};
		    \draw[->, very thick] (8*\h,.3) to [in=0,out=90] (7.5*\h,.65) to [in=90, out=180] (7*\h,.3);
		    \draw[ultra thick] (7.5*\h,.65)--++(.1,.2)--++(-.2,-.4)--++(.1,.2)--++(-.1,.2)
		    --++(.2,-.4);
		    \draw[->, very thick] (8*\h,.3) to [in=180,out=90] (8.5*\h,.65) to [in=90, out=0] (9*\h,.3) node [xshift=-7,yshift=20] {$r$};
	\end{tikzpicture}
  	\caption{The ASEP is a continuous time Markov
  	chain on particle configurations on $\Z$
  	(in which there is at most one particle per site).
  	Each particle 
  	has two 
  	exponential clocks of rates
  	$r$ and $\ell$, respectively (all exponential clocks
  	in the process are assumed independent).
	When the ``$r$''
	clock of a particle rings, it immediately tries to
	jump to the right by one,
	and similarly for the ``$\ell$'' clock and left jumps.
	If the destination of a jump is already occupied, then the jump is blocked. (This describes the
	ASEP with finitely many particles, but one can also 
	construct the infinite-particle ASEP
	following, e.g., the graphical method of \cite{Harris1978}.)}
  	\label{fig:ASEP}
\end{figure}
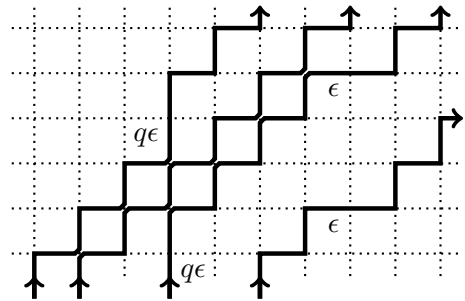
\begin{figure}[htbp]
	\begin{tikzpicture}
			[scale=.6,thick]
			\def\d{.1}
			\foreach \xxx in {0,...,9}
			{
				\draw[dotted] (\xxx,0.5)--++(0,6);
			}
			\foreach \xxx in {1,...,6}
			{
				\draw[dotted] (-.5,\xxx)--++(10,0);
			}
			\draw[->, line width=1.7pt] (0,0)--++(0,.5);
			\draw[->, line width=1.7pt] (1,0)--++(0,.5);
			\draw[->, line width=1.7pt] (3,0)--++(0,.5);
			\draw[->, line width=1.7pt] (5,0)--++(0,.5);
			\draw[->, line width=1.7pt] (0,0)--++(0,1)--++(1-\d,0)
			--++(\d,\d)--++(0,1-\d)--++(1-\d,0)--++(\d,\d)
			--++(0,1-\d)--++(1-\d,0)--++(\d,\d)
			--++(0,1-\d)--++(0,1)--++(1,0)--++(0,1)--++(1,0)
			--++(0,.5);
			\draw[->, line width=1.7pt] (1,0)--++(0,1-\d)
			--++(\d,\d)--++(1-\d,0)--++(0,1-\d)--++(\d,\d)--++(1-2*\d,0)
			--++(\d,\d)--++(0,1-2*\d)--++(\d,\d)
			--++(1-2*\d,0)--++(\d,\d)--++(0,1-\d)
			--++(1-\d,0)--++(\d,\d)--++(0,1-\d)
			--++(1-\d,0)--++(\d,\d)
			--++(0,1-\d)--++(1,0)--++(0,.5);
			\draw[->, line width=1.7pt] (3,0)--++(0,2-\d)
			--++(\d,\d)--++(1-\d,0)--++(0,1-\d)--++(\d,\d)
			--++(1-\d,0)--++(0,1-\d)--++(\d,\d)
			--++(1-\d,0)--++(0,1-\d)--++(\d,\d)
			--++(1-\d,0)--++(1,0)--++(0,1)--++(1,0)--++(0,.5);
			\draw[->, line width=1.7pt] (5,0)--++(0,1)
			--++(1,0)--++(0,1)
			--++(1,0)--++(1,0)--++(0,1)--++(1,0)--++(0,1)--++(.5,0);

			\node[below right] at (3,1) {$q \epsilon$};
			\node[below left] at (3,4) {$q \epsilon$};
			\node[below left] at (7,2) {$\epsilon$};
			\node[below left] at (7,5) {$\epsilon$};
		\end{tikzpicture}
  	\caption{Limit of the six vertex model to the ASEP.}
  	\label{fig:six_to_ASEP}
\end{figure}

\begin{remark}
	Because we are subtracting the deterministic shift, 
	it seems unlikely that one can 
	utilize the inhomogeneous stochastic six vertex model 
	to produce an inhomogeneous 
	extension of the ASEP as a continuous time limit. 
	
	It is worth noting that
	duality for the ASEP with bond-dependent 
	jump rates exists (cf. \cite[Rmk. 4.4]{BorodinCorwinSasamoto2012};
	such a duality was essentially established in \cite{schutz1997dualityASEP}),
	but moment formulas (similar to the ones in 
	\S \ref{sec:_q_moments_of_the_height_function_of_interacting_particle_systems} below)
	for that inhomogeneous ASEP do not seem to be known.
\end{remark}

Another continuous time limit 
is obtained by setting:
\begin{align*}
	q=\frac{1- \epsilon}{\al},\qquad
	u=\frac{\epsilon\al^{\frac12}}{1-\al},
\end{align*}
where $0<\al<1$, so that 
as $\epsilon\to 0$ we have
\begin{align*}
	\Lmatr_{\ip_j u,q^{-\frac12}}(0,1;0,1)=
	\al+\al(1-\ip_j)\epsilon
	+O(\epsilon^{2}),\qquad \qquad
	\Lmatr_{\ip_j u,q^{-\frac12}}(1,0;1,0)=
	1-\ip_j \epsilon
	+O(\epsilon^{2}).
\end{align*}
At times of order $\epsilon^{-1}$, 
the system behaves as follows. 
Each particle at a location $j$
has an exponential clock with rate $\ip_j$.
When the clock rings, the particle wakes up
and performs a jump to the right having the geometric
distribution with parameter $\al$. 
However, if in the process of the jump this particle
runs into another particle (i.e., its first neighbor on the right),
then the moving particle stops at this neighbor's location, 
and the neighbor 
wakes up (and subsequently performs a geometrically distributed
jump). See Fig.~\ref{fig:second_limit}.

\begin{figure}[htbp]
	\begin{tikzpicture}
		[scale=1,very thick]
			\def\pt{.17}
			\def\ee{.1}
			\def\h{.45}
			\draw[->] (-.5,0) -- (8.5,0);
			\foreach \ii in {(0,0),(\h,0),(3*\h,0),(4*\h,0),
			(5*\h,0),(6*\h,0) ,(8*\h,0),(10*\h,0),(9*\h,0),(7*\h,0),
			(12*\h,0),(14*\h,0),(15*\h,0),(16*\h,0),(17*\h,0),
			(18*\h,0)}
			{
				\draw \ii circle(\pt);
			}
			\foreach \ii in {(2*\h,0),(13*\h,0),(11*\h,0),(15*\h,0),(5*\h,0),(16*\h,0)}
			{
				\draw[fill] \ii circle(\pt);
			}
			\node[below, yshift=-5pt] at (2*\h,0) {$x_{k}$};
			\node[below, yshift=-5pt] at (5*\h,0) {$x_{k+1}$};
			\node[above, yshift=20pt, anchor=east, 
			xshift=2pt] at (2*\h,0) 
			{$\textnormal{waking up rate}=\ip_{x_k}$};
			\node[above, yshift=35pt, xshift=-20pt] at (4.5*\h,0) 
			{$\textnormal{geometric}(\al)$ jump};
			\node[above, yshift=22pt] at (12*\h,0) 
			{$\textnormal{geometric}(\al)$ jump};
			\draw[->, very thick] (2*\h,.3) 
			to [in=180,out=90] (4.5*\h,1.25) 
			to [in=90, out=0] (7*\h,.3);
			\draw[->, very thick, densely dotted] (2*\h,.3) 
			to [in=180,out=90] (3.5*\h,.8) 
			to [in=90, out=0] (5*\h,.3);
			\draw[->, very thick] (5*\h,.3) 
			to [in=180,out=90] (7*\h,1) 
			to [in=90, out=0] (9*\h,.3);
	\end{tikzpicture}
  	\caption{A possible jump in the second
  	limit of the (inhomogeneous)
  	stochastic six vertex model.
  	The particle at $x_k$ wakes up at rate
  	$\ip_{x_k}$ and decides to jump by 5 with probability
  	$(1-\al)\al^{4}$ (waking up means that the particle 
  	will jump by at least one). 
  	However, $x_{k+1}$ is closer than the intended jump of $x_{k}$,
  	and so $x_k$ stops at the location of
  	$x_{k+1}$, and the latter particle wakes up.
  	Then $x_{k+1}$ decides to jump by 4 with probability
  	$(1-\al)\al^{3}$.}
  	\label{fig:second_limit}
\end{figure}
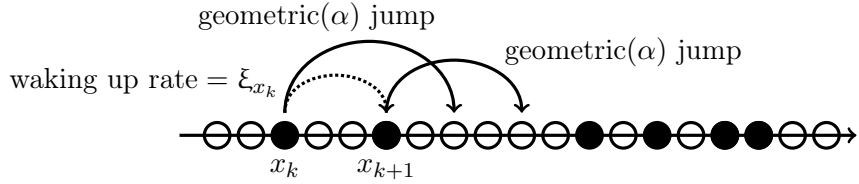

% subsection asep_degeneration (end)

\subsection{Degeneration to $q$-Hahn and $q$-Boson systems} % (fold)
\label{sub:general_j_dynamics_and_q_hahn_degeneration}

In this subsection we will consider another 
family of 
degenerations of the higher spin six vertex model
which puts no restrictions on the vertical multiplicities.
For these degenerations we will need to employ the general $J$
stochastic vertex weights 
$\LJ{J}_{u,\SP}(i_1,j_1;i_2,j_2)$
described in \S \ref{sub:fusion_of_stochastic_weights}.

\begin{proposition}
	\label{prop:LJ_simplifying}
	When $u=\SP$, formula \eqref{LJ_formula}
	for the weights 
	$\LJ{J}_{u,\SP}$
	simplifies to the following product form:
	\begin{align}\label{LJ_simplifying}
		\LJ{J}_{\SP,\SP}(i_1,j_1;i_2,j_2)=
		\mathbf{1}_{i_1+j_1=i_2+j_2}\cdot
		\mathbf{1}_{j_2\le i_1}\cdot
		(\SP^{2}q^{J})^{j_2}\frac{(q^{-J};q)_{j_2}(\SP^{2}q^{J};q)_{i_1-j_2}}{(\SP^2;q)_{i_1}} 
		\frac{(q;q)_{i_1}}{(q;q)_{j_2}(q;q)_{i_1-j_2}}.
	\end{align}
\end{proposition}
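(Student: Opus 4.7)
The plan is to prove the identity by specializing the explicit formula \eqref{LJ_formula} at $u=\SP$ and carrying out the resulting algebraic simplifications. Two simultaneous specializations drive the argument. In the prefactor, $(u\SP^{-1};q)_{j_2-i_1}$ becomes $(1;q)_{j_2-i_1}$, which vanishes for $j_2>i_1$ (yielding the indicator $\mathbf{1}_{j_2\le i_1}$) and, for $j_2\le i_1$, is evaluated via the inversion identity $(1;q)_{-n}=(-1)^n q^{n(n+1)/2}/(q;q)_n$. Combined with arrow conservation $i_1+j_1=i_2+j_2$, this reduces the prefactor to an expression proportional to $(-\SP^2)^{j_2}\,q^{i_1i_2+\binom{j_2}{2}}$ divided by a product of $q$-Pochhammer symbols.

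In the ${}_4\bar\varphi_3$ series, the numerator parameter $q\SP/u$ equals $q$ at $u=\SP$, and via the $\bar\varphi\to\varphi$ conversion the resulting $(q;q)_k$ cancels the $(q;q)_k$ in the denominator. Moreover, $(q^{1+j_2-i_1+k};q)_{i_2-k}$ in the $\bar\varphi$ summand vanishes for $k<i_1-j_2$ (the vanishing index lies in range because $i_1-j_2\le i_2$ by arrow conservation and $j_1\ge 0$), so the sum effectively starts at $k=i_1-j_2$. Reindexing $k=i_1-j_2+l$ with $l=0,\ldots,\min(j_1,j_2)$ and factoring out $l$-independent parts using $(a;q)_{n+m}=(a;q)_m(aq^m;q)_n$, the remaining sum is the terminating balanced
\[
{}_3\varphi_2\!\left(\begin{matrix} q^{-j_1};\,q^{-j_2},\,\SP^2 q^{J+i_1-j_2}\\ \SP^2 q^{i_1-j_2},\,q^{J+1-j_1-j_2}\end{matrix}\;\Bigl|\;q,q\right).
\]
One checks the Saalschütz balance $cd=abq^{1-n}$ (the case $j_2>j_1$ is symmetric under swapping $q^{-j_1}\leftrightarrow q^{-j_2}$), then applies $q$-Pfaff--Saalschütz to evaluate the sum as $(\SP^2 q^{i_1};q)_{j_1}(q^{-J};q)_{j_1}/[(\SP^2 q^{i_1-j_2};q)_{j_1}(q^{j_2-J};q)_{j_1}]$.

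The final step collects the simplified prefactor, the $(b_i;q)_{i_2}$ factors from the $\bar\varphi\to\varphi$ conversion, and the Saalschütz evaluation. The essential identities for the collection are: $(q^{-i};q)_{i_1-j_2}=(-1)^{i_1-j_2}q^{\binom{i_1-j_2}{2}-i(i_1-j_2)}(q;q)_{i}/(q;q)_{i-(i_1-j_2)}$ for $i\in\{i_1,i_2\}$; the cancellation $(\SP^2;q)_{i_1}/(\SP^2;q)_{i_2+j_2}=1/(\SP^2 q^{i_1};q)_{j_1}$; the splittings $(q^{-J};q)_{j_1}=(q^{-J};q)_{j_2}(q^{j_2-J};q)_{j_1-j_2}$ and $(q^{J+1-j_1-j_2};q)_{j_1}=(q^{J+1-j_1-j_2};q)_{j_2}(q^{J+1-j_1};q)_{j_1-j_2}$; and the inversion $(q^{j_1-J};q)_{j_2}=(-1)^{j_2}q^{(j_1-J)j_2+\binom{j_2}{2}}(q^{J+1-j_1-j_2};q)_{j_2}$. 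After these substitutions the accumulated sign, the power of $q$, and the ratio of the remaining Pochhammer symbols all collapse to match \eqref{LJ_simplifying}. The main obstacle is precisely this bookkeeping: organizing the many splittings so that the symbols pair up correctly is error-prone, though conceptually routine.
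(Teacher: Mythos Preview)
Your proposal is correct and follows precisely the second approach the paper sketches: directly transforming the ${}_4\bar\varphi_3$ at $u=\SP$ into product form (the paper does not give details, referring instead to \cite[Prop.~6.7]{Borodin2014vertex}). Your identification of the vanishing indicator from $(1;q)_{j_2-i_1}$, the $(q;q)_k$ cancellation from the parameter $q\SP/u\to q$, the truncation of the sum below $k=i_1-j_2$, the reindexing to a terminating balanced ${}_3\varphi_2$, and the application of $q$-Pfaff--Saalsch\"utz are all sound; the remaining Pochhammer bookkeeping is indeed routine. The paper also notes an alternative route---verifying that the product formula satisfies the recursion \eqref{recursion_relation} at $u=\SP$---which would avoid the hypergeometric manipulations entirely at the cost of an inductive check.
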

\begin{proof}
	To show this, one can directly check that 
	\eqref{LJ_simplifying} satisfies the 
	corresponding recursion relation for $u=\SP$ \eqref{recursion_relation}.
	Alternatively, one can transform the 
	${}_{4}\bar{\varphi}_3$ $q$-hypergeometric
	function to the desired form.
	We refer to \cite[Prop.\;6.7]{Borodin2014vertex}
	for the complete proof following the second approach.
\end{proof}

We see that this degeneration turns the 
higher spin interacting particle systems
described in \S \ref{sub:interacting_particle_systems}
with sequential update into simpler systems with parallel update.

\subsubsection{Distribution $\phi_{q,\smu,\snu}$} % (fold)
\label{ssub:distribution_phi}

Before discussing interacting particle systems
arising from the vertex weights \eqref{LJ_simplifying},
let us focus on the \emph{$q$-deformed Beta-binomial distribution} 
appearing in the right-hand side of that formula:
\begin{align}\label{qHahn_distr}
	\phi_{q,\smu,\snu}(j\md m):=
	\smu^j\frac{(\snu/\smu;q)_{j}(\smu;q)_{m-j}}{(\snu;q)_{m}} \frac{(q;q)_{m}}{(q;q)_{j}(q;q)_{m-j}},
	\qquad
	j\in\{0,1,\ldots,m\}.
\end{align}
Here $m\in\Z_{\ge0}\cup\{+\infty\}$,
and the 
case $m=+\infty$ corresponds to a straightforward limit of \eqref{qHahn_distr}, 
see \eqref{qHahn_distr_infinity} below.
If the parameters 
belong to one of the following families:
\begin{align}\label{qHahn_weights_condition}
\parbox{.85\textwidth}{
	\begin{enumerate}
	\item $0<q<1$, $0\le \smu\le 1$, and $\snu\le \smu$;
	\smallskip\item $0<q<1$, $\smu=q^{J}\snu$ for some $J\in\Z_{\ge0}$, and $\snu\le 0$;
	\smallskip\item $m$ is finite, $q>1$, $\smu=q^{-J}\snu$ 
	for some $J\in\Z_{\ge0}$, and $\snu\le 0$;
	\smallskip\item $m$ is finite, $q>0$, $\smu=q^{\tilde\smu}$, 
	and $\snu=q^{\tilde\snu}$ 
	with $\tilde\smu,\tilde\snu\in\Z$, such that 
	\begin{itemize}
		\smallskip\item either $\tilde\smu,\tilde\snu\ge0$, and $\tilde\snu\ge\tilde\smu$,
		\smallskip\item or $\tilde\smu,\tilde\snu\le0$, $\tilde\snu\le -m$, and $\tilde\snu\le\tilde\smu$,
	\end{itemize}
\end{enumerate}}
\end{align}
then the 
weights \eqref{qHahn_distr} are 
nonnegative.\footnote{These are sufficient conditions for nonnegativity, 
and in fact some of these families intersect nontrivially.
We do not 
attempt to list all the necessary conditions (as, for example, for $q<0$ there also exist 
values of $\smu$ and $\snu$ leading to nonnegative weights).}
The above conditions \eqref{qHahn_weights_condition} replace the 
nonnegativity conditions \eqref{stochastic_weights_condition_qsxi}--\eqref{stochastic_weights_condition_u}
for this subsection.

We will now discuss several interpretations of the distribution \eqref{qHahn_distr}
which, in particular, will justify its name. 
The significance of the probability distribution $\phi_{q,\smu,\snu}$
for interacting particle systems
was first realized by Povolotsky \cite{Povolotsky2013},
who showed that it corresponds to the 
most general ``chipping model'' (i.e., a particle system as in
Fig.~\ref{fig:particle_systems} with possibly multiple particles 
leaving a given stack at a time)
having
parallel update, product-form steady state,
and such that the system is solvable by the coordinate Bethe ansatz. 
He also provided an algebraic interpretation of this distribution:
\begin{proposition}[{\cite[Thm.\;1]{Povolotsky2013}}]
	Let $A$ and $B$ be two letters satisfying the following quadratic 
	commutation relation:
	\begin{align*}
		BA=\al A^2+\be AB+\gamma B^2,\qquad
		\al+\be+\gamma=1.
	\end{align*}
	Then
	\begin{align}\label{q_binomial_relation}
		\big(pA+(1-p)B\big)^{m}=\sum_{j=0}^{m}
		\phi_{q,\smu,\snu}(j\md m) A^{j}B^{m-j},
	\end{align}
	where
	\begin{align*}
		\al=\frac{\snu(1-q)}{1-q\snu}
		,\qquad
		\be=\frac{q-\snu}{1-q\snu}
		,\qquad
		\gamma=
		\frac{1-q}{1-q\snu}
		,
		\qquad
		\smu=p+\snu(1-p).
	\end{align*}
\end{proposition}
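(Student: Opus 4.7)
The plan is to prove \eqref{q_binomial_relation} by induction on $m$, reducing the algebraic identity to a $q$-Pochhammer manipulation. The base case $m = 1$ is verified directly: using $\smu = p + (1-p)\snu$ one has $\phi_{q,\smu,\snu}(1\md 1) = (\smu-\snu)/(1-\snu) = p$ and $\phi_{q,\smu,\snu}(0\md 1) = (1-\smu)/(1-\snu) = 1-p$, which matches $pA + (1-p)B$ on the nose.

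For the inductive step, the plan is to expand $(pA+(1-p)B)^{m+1} = (pA+(1-p)B)(pA+(1-p)B)^m$, apply the hypothesis, and collect terms:
\begin{align*}
	(pA+(1-p)B)^{m+1} = p\sum_{j=0}^m \phi_{q,\smu,\snu}(j\md m)\, A^{j+1} B^{m-j} + (1-p) \sum_{j=0}^m \phi_{q,\smu,\snu}(j\md m)\, BA^j B^{m-j}.
\end{align*}
The key auxiliary ingredient will be a normal-ordering lemma expressing $BA^j = \sum_{i=0}^{j+1} r_i^{(j)} A^i B^{j+1-i}$ for explicit coefficients $r_i^{(j)}$ obtained by iterating the quadratic relation $BA = \alpha A^2 + \beta AB + \gamma B^2$; a secondary induction on $j$, together with $\alpha+\beta+\gamma=1$, should yield closed-form $q$-hypergeometric expressions for the $r_i^{(j)}$. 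Substituting into the display above and collecting the coefficient of $A^i B^{m+1-i}$ reduces the inductive step to a $q$-Chu--Vandermonde-type identity that expresses $\phi_{q,\smu,\snu}(i\md m+1)$ as a weighted sum of the previous-level weights $\phi_{q,\smu,\snu}(\cdot\md m)$, and one then verifies this directly from the explicit product formula for $\phi_{q,\smu,\snu}$.

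The main obstacle is the $q$-series verification in the inductive step: although it is ultimately a specialization of the $q$-Chu--Vandermonde theorem, the algebra with both $\smu$ and $\snu$ active is combinatorially intricate. A cleaner reorganization that I would try first is the change of variable $\hat B := (B - \snu A)/(1-\snu)$. Using $\alpha+\beta+\gamma=1$ and the explicit forms $\alpha = \snu(1-q)/(1-q\snu)$, $\beta = (q-\snu)/(1-q\snu)$, $\gamma = (1-q)/(1-q\snu)$, a direct check shows that $\hat B$ satisfies the simpler ``$\snu = 0$'' relation $\hat B A = q A \hat B + (1-q)\hat B^2$, while simultaneously $pA + (1-p)B = \smu A + (1-\smu)\hat B$ with $1-\smu = (1-p)(1-\snu)$. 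This decouples the argument into two easier subcases: the $\snu = 0$ case of \eqref{q_binomial_relation} (where the normal-ordering of $BA^j$ reduces to a single $q$-binomial expansion), and the $\smu = 0$ case, which amounts to expanding $(B-\snu A)^m$ in normal-ordered monomials $A^i B^{m-i}$ in the original algebra. The ultimate input in either route is the $q$-binomial theorem.
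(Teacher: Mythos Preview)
The paper does not prove this proposition; it explicitly says ``The proof of the above statement is nontrivial, and we will not reproduce it here'' and defers to \cite{Povolotsky2013}. So there is no paper-side argument to compare against.

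Your outline is sound, and the substitution $\hat B=(B-\snu A)/(1-\snu)$ is a genuine simplification --- the verification that $\hat B A=qA\hat B+(1-q)\hat B^2$ and that $pA+(1-p)B=\smu A+(1-\smu)\hat B$ goes through exactly as you say. One can push this one step further: setting $C=(A-B)/(1-\snu)$ gives $A=C+\hat B$, $B=\snu C+\hat B$, and the single relation $\hat B C=qC\hat B$, so that $pA+(1-p)B=\smu C+\hat B$ and the noncommutative $q$-binomial theorem yields $(\smu C+\hat B)^m=\sum_j\binom{m}{j}_q\smu^j C^j\hat B^{m-j}$ in one line. The remaining task is the change of basis back to $A^iB^{m-i}$. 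Your decoupling into the $\snu=0$ and $\smu=0$ subcases handles this, but note that combining them still leaves the identity
\[
\sum_{j=0}^{k}\phi_{q,\smu,0}(j\mid m)\,\phi_{q,0,\snu}(k-j\mid m-j)=\phi_{q,\smu,\snu}(k\mid m),
\]
which is a terminating $q$-Chu--Vandermonde summation. This is standard, but it is the step where the actual $q$-series content lives, and it should be stated and verified (or cited) rather than left implicit.
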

In particular, taking $A=B=1$ 
in \eqref{q_binomial_relation} 
implies that the weights \eqref{qHahn_distr} sum to 
$1$ over $j=0,1,\ldots,m$.
The proof of the above statement is nontrivial, and we will not reproduce it here.

Another interpretation of the $q$-deformed Beta-binomial distribution 
can be given via a $q$-version of the 
P\'olya's urn process due to Gnedin and Olshanski \cite{Gnedin2009}. Consider the Markov chain
on the Pascal triangle 
\begin{align*}
	\bigsqcup_{m=0}^{\infty}\{(k,\ell)\in\Z^{2}_{\ge0}\colon k+\ell=m\}
\end{align*}
with the following transition probabilities (here $m=k+\ell$ is the time in this chain)
\begin{align*}
	\begin{tikzpicture}
		[scale=1, thick]
		\node (a1) at (0,0) {$(k,\ell)$};
		\node (a2) at (6,.8) {$(k,\ell+1)$};
		\node (a3) at (6,-.8) {$(k+1,\ell)$};
		\draw[->] (a1)--(a2.west) node[midway, yshift=12pt] 
		{$\frac{1-q^{b+\ell}}{1-q^{a+b+m}}$};
		\draw[->] (a1)--(a3.west) node[midway, yshift=-15pt] 
		{$q^{\ell+b}\frac{1-q^{a+k}}{1-q^{a+b+m}}$};
	\end{tikzpicture}
\end{align*}
Then the distribution of this Markov chain (started from the initial vertex
$(0,0)$) at time $m$ is 
\begin{align*}
	\Prob\big((k,\ell)\big)=\phi_{q,q^{b},q^{a+b}}(k\md m),\qquad
	k=0,1,\ldots,m.
\end{align*}
More general Markov chains (on the space of interlacing arrays)
based on the distributions
$\phi_{q,q^{b},q^{a+b}}$ with negative $a$ and $b$
which have a combinatorial significance (they are $q$-deformations 
of the classical Robinson--Schensted--Knuth insertion algorithm) were constructed
recently in \cite{MatveevPetrov2014}.

Another feature of the distribution 
$\phi_{q,\smu,\snu}$ is that it is the weight function for the 
so-called \emph{$q$-Hahn orthogonal polynomials}. See \cite[\S3.6]{Koekoek1996}
about the polynomials, and \cite[\S5.2]{BCPS2014}
for the exact matching between $\phi_{q,\smu,\snu}$ and the notation
related to the $q$-Hahn polynomials.

% subsubsection distribution_phi (end)

\subsubsection{$q$-Hahn particle system} % (fold)
\label{ssub:_q_hahn_particle_system}

We will now discuss what the dynamics
$\Xe_{\PI M;\{v_t\}}$ and $\Xp_{\{u_t\};\RHO}$
look like under the degeneration
described in Proposition \ref{prop:LJ_simplifying}.
We will first consider
the dynamics 
$\Xe_{\PI M;\{v_t\}}$ which lives on 
particle configurations with a fixed number of particles
(say, $M\in\Z_{\ge0}$), and then will 
deal with $\Xp_{\{u_t\};\RHO}$. 
The resulting dynamics will be commonly referred to as the 
\emph{$q$-Hahn particle system} 
with different initial or boundary 
conditions.

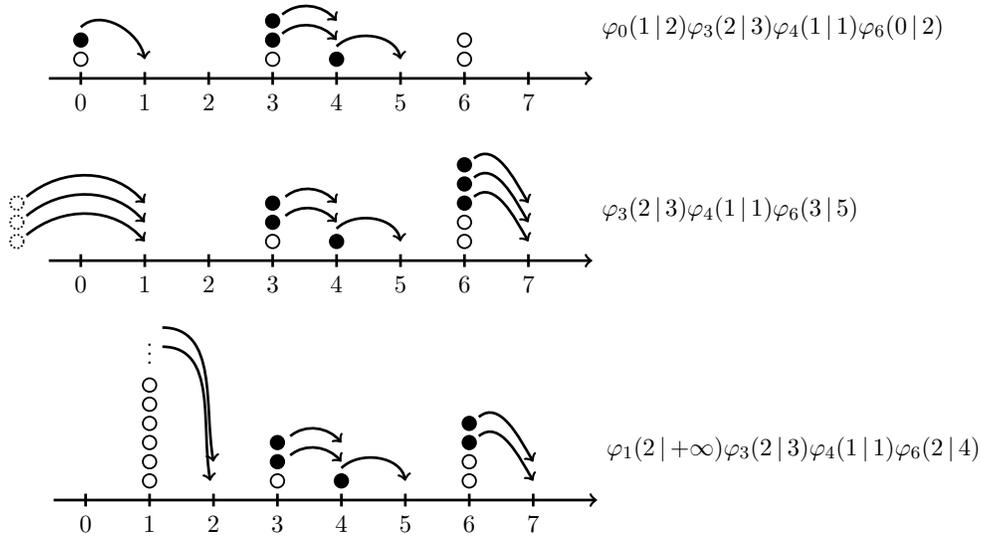
\begin{figure}[htbp]
	\scalebox{.85}{\begin{tikzpicture}
		[scale=1, very thick]
		\def\hh{-1.8}
		\draw[->] (-.5,0)--++(8.5,0);
		\node at (-1,0) {};
		\foreach \iii in {0,1,2,3,4,5,6,7}
		{
			\draw (\iii,.1)--++(0,-.2) node [below] {$\iii$};
		}
		\foreach \ppt in {(0,.3),(3,.3),(6,.3),(6,.6)}
		{
			\draw[thick] \ppt circle(3pt);
		}
		\foreach \ppt in {(0,.6),(3,.6),(3,.9),(4,.3)}
		{
			\draw[thick, fill] \ppt circle(3pt);
		}
		\draw[->] (0,.8) to [in=100, out=45] (1,.3);
		\draw[->] (4,.5) to [in=100, out=45] (5,.3);
		\draw[->] (3.15,.6+.1) to [in=130, out=40] (4,.6);
		\draw[->] (3.15,.9+.1) to [in=130, out=40] (4,.9);
		\node[anchor=west] at (8,.8) {$\phi_0(1\md 2)\phi_3(2\md 3)\phi_4(1\md 1)\phi_6(0\md 2)$};
		\node[anchor=west] at (8,.8) {\phantom{$\phi_1(2\md{+\infty})\phi_3(2\md 3)\phi_4(1\md 1)\phi_6(2\md 4)$}};
	\end{tikzpicture}}
	\vspace{7pt}

	\scalebox{.85}{\begin{tikzpicture}
		[scale=1, very thick]
		\def\hh{-1.8}
		\draw[->] (-.5,0)--++(8.5,0);
		\draw[thick, densely dotted] (-1,.3) circle(3pt);
		\draw[thick, densely dotted] (-1,.6) circle(3pt);
		\draw[thick, densely dotted] (-1,.9) circle(3pt);
		\node at (-1,0) {};
		\foreach \iii in {0,1,2,3,4,5,6,7}
		{
			\draw (\iii,.1)--++(0,-.2) node [below] {$\iii$};
		}
		\foreach \ppt in {(3,.3),(6,.3),(6,.6)}
		{
			\draw[thick] \ppt circle(3pt);
		}
		\foreach \ppt in {(3,.6),(3,.9),(4,.3),(6,.9),(6,1.2),(6,1.5)}
		{
			\draw[thick, fill] \ppt circle(3pt);
		}
		\draw[->] (-.85,.9+.1) to [in=130, out=40] (1,.9);
		\draw[->] (-.85,.6+.1) to [in=130, out=40] (1,.6);
		\draw[->] (-.85,.3+.1) to [in=130, out=40] (1,.3);
		\draw[->] (6.15,.9+.1) to [in=120, out=40] (7,.3);
		\draw[->] (6.15,1.2+.1) to [in=120, out=40] (7,.6);
		\draw[->] (6.15,1.5+.1) to [in=120, out=40] (7,.9);
		\draw[->] (4,.5) to [in=100, out=45] (5,.3);
		\draw[->] (3.2,.6+.1) to [in=130, out=40] (4,.6);
		\draw[->] (3.2,.9+.1) to [in=130, out=40] (4,.9);
		\node[anchor=west] at (8,.8) {$\phi_3(2\md 3)\phi_4(1\md 1)\phi_6(3\md 5)$};
		\node[anchor=west] at (8,.8) {\phantom{$\phi_1(2\md{+\infty})\phi_3(2\md 3)\phi_4(1\md 1)\phi_6(2\md 4)$}};
	\end{tikzpicture}}
	\vspace{7pt}

	\scalebox{.85}{\begin{tikzpicture}
		[scale=1, very thick]
		\def\hh{-1.8}
		\node at (-1,0) {};
		\draw[->] (-.5,0)--++(8.5,0);
		\foreach \iii in {0,1,2,3,4,5,6,7}
		{
			\draw (\iii,.1)--++(0,-.2) node [below] {$\iii$};
		}
		\foreach \ppt in {(3,.3),(6,.3),(6,.6),
		(1,.3),(1,.6),(1,.9),(1,1.2),(1,1.5),(1,1.8)}
		{
			\draw[thick] \ppt circle(3pt);
		}
		\foreach \ppt in {(3,.6),(3,.9),(4,.3),(6,.9),(6,1.2)}
		{
			\draw[thick, fill] \ppt circle(3pt);
		}
		\node at (1,2.4) {$\vdots$};
		\draw[->] (6.15,.9+.1) to [in=120, out=40] (7,.3);
		\draw[->] (6.15,1.2+.1) to [in=120, out=40] (7,.6);
		\draw[->] (4,.5) to [in=100, out=45] (5,.3);
		\draw[->] (3.2,.6+.1) to [in=130, out=40] (4,.6);
		\draw[->] (3.2,.9+.1) to [in=130, out=40] (4,.9);
		\draw[->] (1.2,2.4) to [in=100, out=-00] (2-0.05,.3);
		\draw[->] (1.2,2.7) to [in=100, out=-00] (2,.6);
		\node[anchor=west] at (8,.8) {$\phi_1(2\md{+\infty})\phi_3(2\md 3)\phi_4(1\md 1)\phi_6(2\md 4)$};
	\end{tikzpicture}}
  	\caption{Possible transitions of the $q$-Hahn particle system,
  	with probabilities given on the right (here $\phi_x\equiv \phi_{q,q^{J}\SP_x^{2},\SP_x^{2}}$). 
  	Top: dynamics $\Xeh$
	living on configurations with a fixed number of particles.
  	Middle: dynamics $\Xph$,
  	in which at each time step $J$ new particles 
  	are added at location $1$ ($J=3$ on the picture).
  	Bottom: dynamics 
  	$\Xih$ with the initial condition
  	$1^{\infty}2^{0}3^{0}\ldots$.}
  	\label{fig:qhahn}
\end{figure}

In order to perform the desired degeneration of $\Xe$, let 
us fix the $\SP$-parameters $\{\SP_j\}_{j\in\Z_{\ge0}}$
indexed by the semi-infinite 
lattice, and 
set $\ip_j=\SP_j^{-1}$ for all $j$.
Also fix $J\in\Z_{\ge1}$, and 
take the time-dependent parameters $\{v_t\}$ to be
\begin{align*}
	(v_1,v_2,\ldots)=(1,q,\ldots,q^{J-1},1,q,\ldots,q^{J-1},\ldots).
\end{align*}
We will consider the \emph{fused dynamics}
in which one time step corresponds
to $J$ steps of the original dynamics.
The fused dynamics is Markovian
due to the results of \S \ref{sub:fusion_of_stochastic_weights}.
As follows from \S \ref{ssub:dyn_xe},
each time step 
$\mu=0^{m_0}1^{m_1}2^{m_2}\ldots
\to
\nu=0^{n_0}1^{n_1}2^{n_2}\ldots$ 
($\mu,\nu\in\signp M$) 
of the fused dynamics
looks as follows.
For each location $x\in\Z_{\ge0}$, sample $j_x\in\{0,1,\ldots,m_x\}$
independently of other locations according to the probability 
distribution
$\phi_{q,q^{J}\SP_x^{2},\SP_x^{2}}(j_x\md m_x)$
(clearly, $j_x=m_x=0$ for all large enough $x$). 
Then, in parallel, move $j_x$ particles from location $x$ to location $x+1$
for each $x\in\Z_{\ge0}$, that is, set $n_x=m_x-j_x+j_{x+1}$.
Denote this dynamics by $\Xeh$ (see Fig.~\ref{fig:qhahn}, top).

Note that the weights $\phi_{q,q^{J}\SP_x^{2},\SP_x^{2}}$ are nonnegative
for $J\in\Z_{\ge1}$ if $\SP_x^{2}\le 0$ 
(case 2 in 
\eqref{qHahn_weights_condition}), 
and we are assuming this in our construction.

\begin{remark}\label{rmk:analityc_continuation_in_Xe}
	For $J\in\Z_{\ge1}$, at most $J$ particles can leave 
	any given location during one time step. However, since the weights of
	the distribution $\phi_{q,q^{J}\SP_x^{2},\SP_x^{2}}$ depend
	on $q^{J}$ in a rational way, we may analytically continue 
	$\Xeh$
	from the case $q^J\in q^{\Z_{\ge1}}$ and $\SP_x^{2}\le 0$,
	and let 
	the parameters
	$\smu_x=q^{J}\snu_x$ and $\snu_x=\SP_x^{2}$ 
	belong to one of the other families in \eqref{qHahn_weights_condition}.
	If $\smu_x/\snu_x\notin q^{\Z_{\ge1}}$, 
	then an arbitrary number of particles
	can leave 
	any given location during one time step.
\end{remark}

Let us now discuss the degeneration of the dynamics 
$\Xp_{\{u_t\};\RHO}$. 
Fix $J\in\Z_{\ge1}$,
take 
$\ip_j=\SP_j$ for all $j\in\Z_{\ge0}$, 
and let the time-dependent parameters $\{u_t\}$ be
\begin{align}
	(u_1,u_2,\ldots)=(1,q,\ldots,q^{J-1},1,q,\ldots,q^{J-1},\ldots).
	\label{u_specialization_to_generalize}
\end{align} 
From \S \ref{ssub:dyn_xp} we see that 
the corresponding fused dynamics
is very similar to $\Xeh$,
and the only difference is
in the behavior at locations $0$ and $1$.
Namely, 
location $0$ cannot be occupied,
and at each time step,
exactly $J$ new particles are added at location $1$.
Denote this degeneration of 
$\Xp_{\{u_t\};\RHO}$ by $\Xph$
(see Fig.~\ref{fig:qhahn}, middle).

Because $J\in\Z_{\ge1}$ particles are added 
to the configuration at each time step,
dynamics $\Xph$ cannot be analytically continued in $J$
similarly to Remark \ref{rmk:analityc_continuation_in_Xe}.
However, we can simplify this dynamics, by generalizing
\eqref{u_specialization_to_generalize} to
\begin{align*}
	(u_1,u_2,\ldots)=
	(1,q,\ldots,q^{K-1},1,q,\ldots,q^{J-1},1,q,\ldots,q^{J-1},\ldots),
\end{align*}
where $K\in\Z_{\ge1}$ is a new parameter. 
If we start the corresponding fused dynamics
from the empty initial configuration 
$1^{0}2^{0}\ldots$, then after the first step of this dynamics
the configuration will be simply $1^{K}2^{0}3^{0}\ldots$. 
Moreover, during the evolution, the number of particles at location 
$1$ will always be $\ge K$.
Assume that $|q|<1$, and take $K\to+\infty$.
Under the limiting dynamics, at all subsequent times the 
number of particles leaving 
location $1$ has the distribution
\begin{align}\label{qHahn_distr_infinity}
	\lim_{i_1\to+\infty}\LJ{J}_{\SP_{1},\SP_{1}}(i_1,j_1;i_2,j_2)=
	\mathbf{1}_{i_2=+\infty}\cdot 
	\phi_{q,q^{J}\SP_1^{2},\SP_1^{2}}(j_2\md {+\infty}),
	\quad
	\phi_{q,\smu,\snu}(j\md {+\infty})=
	\smu^j\frac{(\snu/\smu;q)_{j}}{(q;q)_{j}} 
	\frac{(\smu;q)_{\infty}}{(\snu;q)_{\infty}}.
\end{align}
The limit as $K\to+\infty$ clearly does not affect 
probabilities of particle jumps at all other locations. 
We will denote the limiting dynamics by 
$\Xih$ (see Fig.~\ref{qHahn_distr}, bottom).
When the parameters $\SP_j\equiv \SP$ are homogeneous, 
this particle system was introduced in \cite{Povolotsky2013}.
The system $\Xih$ 
readily admits an analytic continuation from
$q^J\in q^{\Z_{\ge1}}$ and $\SP_x^{2}\le 0$
as in Remark \ref{rmk:analityc_continuation_in_Xe}.

\begin{remark}\label{rmk:infinitely_at_one}
	It is possible to start any dynamics
	$\Xp_{\{u_t\};\RHO}$ 
	(i.e., with arbitrary admissible parameters
	$\ipb$, $\SPB$, and $\{u_t\}$)
	from the initial configuration 
	$1^{\infty}2^{0}3^{0}\ldots$. Indeed, for that
	one simply must take $\ip_1=\SP_1$, and 
	take $K\to+\infty$
	as above. 
	Under the resulting 
	(non-fused, $J=1$)
	dynamics, the
	number of particles leaving 
	location~$1$ during time step $t\to t+1$ has the distribution
	\begin{align*}
		\lim_{i_1\to+\infty}\Lmatr_{\SP_1u_{t+1},\SP_1}(i_1,j_1;i_2,j_2)=
		\mathbf{1}_{i_2=+\infty}\cdot
		\begin{cases}
			\dfrac{1}{1-\SP_1^{2}u_{t+1}},&j_2=0;\\\rule{0pt}{20pt}
			\dfrac{-\SP_1^{2}u_{t+1}}{1-\SP_1^{2}u_{t+1}},&j_2=1.
		\end{cases}
	\end{align*}
	Note that these probabilities are between $0$ and $1$ if
	$\SP_1^{2}\le0$ and $u_{t+1}\ge0$, as in the above discussion.
	We will denote this dynamics started from the infinite number of particles at
	location $1$ by 
	$\Xii_{\{u_t\}}$. We analyze 
	its observables in \S \ref{sub:starting_from_infinitely_many_particles_at_zero}
	below.
\end{remark}

\begin{figure}[htbp]
	\begin{tikzpicture}
		[scale=1,very thick]
			\def\pt{.17}
			\def\ee{.1}
			\def\h{.45}
			\draw[->] (-1.7,0) -- (8.5,0);
			\foreach \ii in {18,17,16,13,12,11,9,6,5,4,3}
			{
				\draw (\ii*\h,0) circle(\pt);
			}
			\foreach \ii in {15,14,10,8,7,2,1,0,-1,-2,-3}
			{
				\draw[fill] (\ii*\h,0) circle(\pt);
			}
			\node[below] at (15*\h,-.2) {$x_1$};
			\node[below] at (14*\h,-.2) {$x_2$};
			\node[below] at (10*\h,-.2) {$x_3$};
			\node[below] at (8*\h,-.2) {$x_4$};
			\node[below] at (7*\h,-.2) {$x_5$};
			\node[below] at (2*\h,-.2) {$x_6$};
			\node[below] at (1*\h,-.2) {$x_7$};
			\node[below] at (-1*\h,-.27) {$\ldots$};
		    \draw[->, very thick] (2*\h,.3) to [in=180,out=90] (3*\h,.85) to [in=90, out=0] (4*\h,.3) node [xshift=-7,yshift=25] {$\phi_6(2\md4)$};
		    \draw[->, very thick] (8*\h,.3) to [in=180,out=90] (8.5*\h,.65) to [in=90, out=0] (9*\h,.3) node [xshift=-10,yshift=20] {$\phi_4(1\md1)$};
		    \draw[->, very thick] (10*\h,.3) to [in=180,out=90] (11*\h,.85) to [in=90, out=0] (12*\h,.3) node [xshift=0,yshift=25] {$\phi_3(2\md3)$};
		    \draw[->, very thick] (15*\h,.3) to [in=180,out=90] (16*\h,.85) to [in=90, out=0] (17*\h,.3) node [xshift=-7,yshift=25] {$\phi_1(2\md{+\infty})$};
	\end{tikzpicture}
  	\caption{The transition of the $q$-Hahn system $\Xih$ from Fig.~\ref{fig:qhahn}, bottom, 
  	interpreted in terms of the $q$-Hahn TASEP
  	(here $\phi_i\equiv \phi_{q,q^{J}\SP_i^{2},\SP_i^{2}}$).}
  	\label{fig:qhahn_tasep}
\end{figure}

Thanks to infinitely many particles at location $1$,
the system $\Xih$ admits another nice particle interpretation. 
Namely, consider right-finite particle configurations 
$\{x_1>x_2>x_3>\ldots\}$
in $\Z$,
in which there can be at most one particle at a given location.
For a configuration $\la=1^{\infty}2^{\ell_2}3^{\ell_3}\ldots$
of $\Xih$, let 
$\ell_j=x_{j-1}-x_j-1$ (with $x_0=+\infty$)
be the number of empty spaces between consecutive particles.
Let the process start from the \emph{step initial configuration}
$x_i(0)=-i$ for all $i$ 
(corresponding to $\la=1^{\infty}2^{0}3^{0}\ldots$). 
Then during each time step, 
each particle $x_i$ jumps to the right according to the distribution
$\phi_{q,q^{J}\SP_i^{2},\SP^{2}_i}(\cdot\md\gap_i)$,
where $\gap_i=\ell_i$ is the distance to the nearest right neighbor of $x_i$
(the first particle uses the distribution with $\gap_1=+\infty$).
This system is called the \emph{$q$-Hahn TASEP}, it was also introduced
in \cite{Povolotsky2013} (in the homogeneous case $\SP_j\equiv \SP$). 
See Fig.~\ref{fig:qhahn_tasep}.

\begin{remark}
	In all the above $q$-Hahn systems, one can clearly let the 
	parameter $J$ to depend on time. 
\end{remark}

% subsubsection _q_hahn_particle_system (end)

\subsubsection{$q$-TASEP and $q$-Boson} % (fold)
\label{ssub:qBoson}

Let us now perform a further degeneration of the $q$-Hahn
TASEP corresponding to the parameters
$q$ and $\{\smu_i\}$, $\{\snu_i\}$,
by setting $\smu_i=q\snu_i$ for all $i$
(that is, we take $J=1$, and thus must consider $\snu\le 0$).
Then \eqref{qHahn_distr} implies that 
$\phi_{q,\smu,\snu}(j\md m)$ 
vanishes unless $j\le 1$, and 
\begin{align*}
	&\phi_{q,\smu,\snu}(0\md m)=\frac{1-q^{m}\snu}{1-\snu},
	\qquad \qquad&
	\phi_{q,\smu,\snu}(1\md m)=\frac{-\snu(1-q^{m})}{1-\snu},
	\\
	&\phi_{q,\smu,\snu}(0\md {+\infty})=\frac{1}{1-\snu},
	\qquad \qquad&
	\phi_{q,\smu,\snu}(1\md {+\infty})=\frac{-\snu}{1-\snu}.
\end{align*}
Taking $\snu_i=- \epsilon a_i$ (with $a_i>0$) 
and speeding up
the time by $\epsilon^{-1}$, we
arrive at the \emph{$q$-TASEP} --- a continuous time particle
system on 
configurations 
$\{x_1>x_2>x_3>\ldots\}$
on
$\Z$ (with no more than one particle per location)
in which each particle $x_i$ jumps to the right by one at rate 
$a_i(1-q^{\gap_{i}})$, where, as before, 
$\gap_i=x_{i-1}-x_i-1$
is the distance to the right neighbor of $x_i$.

The $q$-TASEP was introduced in \cite{BorodinCorwin2011Macdonald}
(see also \cite{BorodinCorwinSasamoto2012}),
and an ``arrow'' interpretation of the
$q$-TASEP (on configurations
$\la=1^{\infty}2^{\gap_2}3^{\gap_3}\ldots$)
had been considered much earlier
\cite{BogoliubovBulloughTimonen94},
\cite{BogoliubovIzerginKitanine98},
\cite{SasamotoWadati1998} under the name of the 
(\emph{stochastic}) \emph{$q$-Boson system}.

It is worth noting that the $q$-Hahn system has a variety of other degenerations,
see \cite{Povolotsky2013} and \cite{CorwinBarraquand2015Beta} for examples.

% subsubsection further_degenerations (end)

% subsection general_j_dynamics_and_q_hahn_degeneration (end)

% section markov_kernels_and_stochastic_dynamics (end)

\section{Orthogonality relations} % (fold)
\label{sec:orthogonality_relations}

In this section we describe 
two types of (bi)orthogonality 
relations for the symmetric rational functions
$\F_\la$
from \S \ref{sec:symmetric_rational_functions}.
These relations imply certain Plancherel
isomorphism theorems.
% , which in turn
% lead to the completeness of the Bethe ansatz.
We also apply biorthogonality to get an integral representation 
for the functions $\G_\mu$.
The results of this section provide us with tools which will
eventually allow to explicitly evaluate averages of certain observables 
of the interacting particle systems described in 
\S \ref{sec:markov_kernels_and_stochastic_dynamics} above.

\subsection{Spatial biorthogonality} % (fold)
\label{sub:spatial_biorthogonality}

First, we will need the following general statement:

\begin{lemma}\label{lemma:general_lemma}
	Let $\{f_m(u)\}$,
	$\{g_\ell(u)\}$ be two families of rational functions in $u\in\C$
	such that there exist two disjoint sets 
	$P_1,P_2\subset\C\cup\{\infty\}$\footnote{For the applications below, these sets should be countable. 
	However, since our integrands are 
	rational functions, only finitely many of the points of $P_1$ or $P_2$
	can serve as singularities of the integrand, and so there are no issues 
	with accumulation points of $P_{1,2}$.}
	and positively oriented pairwise nonintersecting
	closed contours $c_1,\ldots,c_k$
	with the following properties:
	\begin{itemize}
		\item All singularities of all the functions $f_m(u),g_\ell(u)$ lie inside $P_1\cup P_2$.
		\item The product $f_m(u)g_\ell(u)$ does not have singularities in $P_1$ if $m<\ell$, and 
		the same product does not have singularities in $P_2$ if $m>\ell$.
		\item For any $i>j$ the contour $c_i$ can be shrunk to $P_1$ without intersecting the 
		contour\footnote{Here and below $R \boldsymbol\gamma$ denotes the image of the 
		contour $\boldsymbol\gamma$ under the multiplication by the constant $R$.} $q^{-1} c_j$
		(equivalently, for any $j<i$ the contour $c_j$ can be shrunk to $P_2$ without
		intersecting $q\cdot c_i$). 
		Shrinking takes place on the Riemann
		sphere $\C\cup\{\infty\}$.
	\end{itemize}
	Fix $k\in\Z_{\ge1}$ and two signatures $\mu,\la\in\signp k$.
	If $\mu\ne\la$, then for any permutation $\sigma\in \Sym_k$ we have
	\begin{align}\label{general_lemma_integral}
		\oint\limits_{c_1}d u_1
		\ldots
		\oint\limits_{c_k}d u_k
		\prod_{1\le \aind<\bind\le k}
		\bigg(
		\frac{u_\aind-u_\bind}{u_\aind-q u_\bind}\cdot
		\frac{u_{\sigma(\aind)}-q u_{\sigma(\bind)}}
		{u_{\sigma(\aind)}-u_{\sigma(\bind)}}
		\bigg)
		\prod_{j =1}^{k}f_{\mu_j}(u_j)g_{\la_{\sigma^{-1}(j)}}(u_j)=0.
	\end{align}
\end{lemma}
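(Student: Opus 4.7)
The natural first step is to absorb the permutation $\sigma$ into a relabeling of the integration variables. Substituting $w_i := u_{\sigma(i)}$ turns the $\sigma$-twisted cross product in the integrand into the plain $\prod_{a<b}(w_a-qw_b)/(w_a-w_b)$, rearranges the first cross product into a $\sigma$-dependent analogue of $\prod_{a<b}(w_a-w_b)/(w_a-qw_b)$, and reshuffles the function factors into $\prod_{i}f_{\mu_{\sigma(i)}}(w_i)\,g_{\la_i}(w_i)$; the contour of $w_i$ becomes $c_{\sigma(i)}$. Since $\mu\ne\la$ the reshuffled signatures $(\mu_{\sigma(i)})_i$ and $(\la_i)_i$ still differ, so the reduction is faithful and makes the analysis of the function factors cleaner.

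From here the plan is to argue by induction on $k$. The base case $k=1$ is immediate: the cross products are absent, and the single-variable integrand $f_{\mu_1}(w_1)g_{\la_1}(w_1)$ is regular in $P_1$ if $\mu_1<\la_1$ and in $P_2$ if $\mu_1>\la_1$, so the contour can be collapsed into the regular region to give zero. For the inductive step, I would deform the contours one at a time in the order dictated by the nesting in the third bullet point of the hypothesis. Each such deformation crosses at most finitely many poles of the form $w_a=qw_b$, depositing residues. Evaluating one of these residues identifies two variables, reducing the $k$-fold integral to a $(k-1)$-fold integral of exactly the same shape, with one variable eliminated and the signatures truncated accordingly; the inductive hypothesis then dispatches such terms. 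The final ``bulk'' piece, with every contour pushed onto $P_1$, has an integrand free of singularities inside the new contour whenever at least one of the inequalities $\mu_{\sigma(i)}<\la_i$ holds, which it must for $\mu\ne\la$ after an appropriate choice of deformation order.

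The main technical obstacle will be verifying that the recursion truly closes: one needs the residue of the product $\prod_{a<b}(w_a-qw_b)/(w_a-w_b)$ at $w_i=qw_j$ to combine with the remaining $\sigma$-twisted cross product so as to yield a product of precisely the right form over the $k-1$ surviving variables, together with an induced permutation on the reduced index set. The combinatorics of this collapse, and the bookkeeping needed to show that the reduced signatures still satisfy $\mu'\ne\la'$ (or else that the residue vanishes outright by the structural condition on $f_m g_\ell$), is the heart of the argument. Proofs of analogous biorthogonality statements in \cite{Borodin2014vertex} and \cite{BCPS2014} proceed along exactly this template, and the present proof should follow the same pattern.
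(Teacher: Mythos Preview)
Your inductive scheme has a genuine gap at the step you flag as ``the heart of the argument''. When you take a residue at $w_a=qw_b$, the resulting $(k-1)$-fold integral is \emph{not} of the same shape as the original: the function factor attached to the surviving variable $w_b$ becomes
\[
f_{\mu_{\sigma(a)}}(qw_b)\,g_{\la_a}(qw_b)\cdot f_{\mu_{\sigma(b)}}(w_b)\,g_{\la_b}(w_b),
\]
which is a product of four functions, two of them evaluated at $qw_b$ rather than $w_b$. Their poles now lie in $q^{-1}(P_1\cup P_2)$, not in $P_1\cup P_2$, so the hypotheses of the lemma are not inherited; there is also no natural way to produce new signatures of length $k-1$ from $\mu$ and $\la$. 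The cross product likewise fails to collapse cleanly into the two-factor form required by the statement. So the induction does not close, and the residue terms cannot be disposed of by the inductive hypothesis as you hope.

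The paper's proof avoids this trap by not attempting induction at all. Instead it rewrites the cross-product ratio as
\[
\prod_{\aind<\bind}\frac{u_\aind-u_\bind}{u_\aind-qu_\bind}\cdot
\frac{u_{\sigma(\aind)}-qu_{\sigma(\bind)}}{u_{\sigma(\aind)}-u_{\sigma(\bind)}}
=\sgn(\sigma)\prod_{\aind<\bind:\,\sigma(\aind)>\sigma(\bind)}
\frac{u_{\sigma(\aind)}-qu_{\sigma(\bind)}}{u_\aind-qu_\bind},
\]
and reads off from this which contours can be shrunk to $P_1$ (resp.\ expanded to $P_2$) \emph{without crossing any cross-product poles}: these are exactly the $c_{\sigma(i)}$ for which $\sigma(i)$ is a left-to-right maximum (resp.\ right-to-left minimum) of the permutation. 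Each such free deformation yields an inequality $\mu_{\sigma(i)}\ge\la_i$ (resp.\ $\le$), and a purely combinatorial argument (the arrow-diagram analysis of \cite[Lemma~3.5]{BCPS2014}) shows that these inequalities, taken together over all running maxima and minima, force $\mu=\la$. No residues are ever picked up, and no recursion is needed.
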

\begin{proof}
	This is a straightforward generalization of Lemma 3.5 in \cite{BCPS2014}.
	Let us outline the steps of the proof.

	We will assume that the integral \eqref{general_lemma_integral}
	is nonzero, and will show that then it must be that $\la=\mu$.
	First, we observe that, by our assumed structure of the poles, 
	\begin{itemize}
		\item If it is possible to shrink the contour $c_i$ to $P_1$, then for the 
		integral to be nonzero we must have $\mu_i\ge\la_{\sigma^{-1}(i)}$.
		\item If it is possible to shrink the contour $c_i$ to $P_2$, then for the 
		integral to be nonzero we must have $\mu_i\le\la_{\sigma^{-1}(i)}$.
	\end{itemize}
	Next, using
	\begin{align}
		\prod_{1\le \aind<\bind\le k}
		\frac{u_\aind- u_\bind}{u_\aind-q u_\bind}\cdot
		\frac{u_{\sigma(\aind)}-q u_{\sigma(\bind)}}
		{u_{\sigma(\aind)}-u_{\sigma(\bind)}}
		=\sgn(\sigma)
		\prod_{\aind<\bind\colon \sigma(\aind)>\sigma(\bind)}\frac{u_{\sigma(\aind)}-q u_{\sigma(\bind)}}{u_\aind-q u_\bind}
		\label{general_lemma_proof1}
	\end{align}
	and the properties of the integration contours, we see that
	\begin{itemize}
		\item If for some $i\in\{1,\ldots,k\}$ one has $\sigma(i)>\max(\sigma(1),\ldots,\sigma(i-1))$, then 
		the numerator in the left-hand side of \eqref{general_lemma_proof1}
		contains all terms of the form 
		$(u_{\sigma(i)}-q u_{\sigma(i)+1}),(u_{\sigma(i)}-q u_{\sigma(i)+2}),\ldots,
		(u_{\sigma(i)}-q u_{k})$, and thus the expression in the right-hand side of
		\eqref{general_lemma_proof1} does not have poles at $u_{\sigma(i)}=q u_j$ for all $j>\sigma(i)$.
		This means that we can shrink the contour $c_{\sigma(i)}$ to $P_1$
		without picking any residues.
		This implies that for the integral \eqref{general_lemma_integral} to 
		be nonzero, we must have $\mu_{\sigma(i)}\ge \la_{i}$.
		\item Similarly, if for some 
		$i\in\{1,\ldots,k\}$ one has $\sigma(i)<\min(\sigma(i+1),\ldots,\sigma(k))$, then 
		the contour $c_{\sigma(i)}$ can be shrunk to $P_2$, and so the integral
		\eqref{general_lemma_integral} can be nonzero only if 
		$\mu_{\sigma(i)}\le \la_{i}$.
	\end{itemize}

	This completes 
	the argument analogous to
	Step I of the proof of 
	\cite[Lemma\;3.5]{BCPS2014}. Further steps of the proof have a purely 
	combinatorial nature and can be repeated without change. 
	We will not reproduce the full combinatorial argument here, 
	but will 
	illustrate it on a concrete example. 

	% \begin{example}
	Take $\sigma=(3,2,4,5,1,8,6,7)$, and consider an arrow diagram
	as in Fig.~\ref{fig:arrow_diagram}. That is,
	think of the bottom row as $\la_1\ge\la_2\ge \ldots\ge\la_8$
	and of the top row as $\mu_1\ge\mu_2\ge \ldots\ge\mu_8$,
	and draw the corresponding horizontal arrows from larger to smaller 
	integers. Labels in the nodes correspond to 
	the permutation $\sigma$ itself.\begin{figure}[htbp]
		\begin{center}
		\parbox{.8\textwidth}{\begin{tikzpicture}
				[scale=1,nodes={draw},very thick]
				\def\h{2.7}
				\def\x{1.5}
				\node[draw=none] at (-1*\x,\h) {$\mu$};
				\node[circle] (x1) at (0,\h) {$1$};
				\node[circle] (x2) at (\x,\h) {$2$};
				\node[circle] (x3) at (2*\x,\h) {$3$};
				\node[circle] (x4) at (3*\x,\h) {$4$};
				\node[circle] (x5) at (4*\x,\h) {$5$};
				\node[circle] (x6) at (5*\x,\h) {$6$};
				\node[circle] (x7) at (6*\x,\h) {$7$};
				\node[circle] (x8) at (7*\x,\h) {$8$};
				\node[draw=none] at (-1*\x,0) {$\la$};
				\node[circle] (y1) at (0,0) {$3$};
				\node[circle] (y2) at (\x,0) {$2$};
				\node[circle] (y3) at (2*\x,0) {$4$};
				\node[circle] (y4) at (3*\x,0) {$5$};
				\node[circle] (y5) at (4*\x,0) {$1$};
				\node[circle] (y6) at (5*\x,0) {$8$};
				\node[circle] (y7) at (6*\x,0) {$6$};
				\node[circle] (y8) at (7*\x,0) {$7$};
				\draw[ultra thick, ->] (x1)--(x2);
				\draw[ultra thick, ->] (x2)--(x3);
				\draw[ultra thick, ->] (x3)--(x4);
				\draw[ultra thick, ->] (x4)--(x5);
				\draw[ultra thick, ->] (x5)--(x6);
				\draw[ultra thick, ->] (x6)--(x7);
				\draw[ultra thick, ->] (x7)--(x8);
				\draw[ultra thick, ->] (y1)--(y2);
				\draw[ultra thick, ->] (y2)--(y3);
				\draw[ultra thick, ->] (y3)--(y4);
				\draw[ultra thick, ->] (y4)--(y5);
				\draw[ultra thick, ->] (y5)--(y6);
				\draw[ultra thick, ->] (y6)--(y7);
				\draw[ultra thick, ->] (y7)--(y8);
				\draw[line width = 2, ->, dashed, red] (x3) -- (y1);
				\draw[line width = 2, ->, dashed, red] (x4) -- (y3);
				\draw[line width = 2, ->, dashed, red] (x5) -- (y4);
				\draw[line width = 2, ->, dashed, red] (x8) -- (y6);
				\draw[line width = 2, ->, blue] (y5) -- (x1);
				\draw[line width = 2, ->, blue] (y7) -- (x6);
				\draw[line width = 2, ->, blue] (y8) -- (x7);
			\end{tikzpicture}}
			\end{center}
		\caption{Arrow diagram for $\sigma=(3,2,4,5,1,8,6,7)$.}
  		\label{fig:arrow_diagram}
	\end{figure}
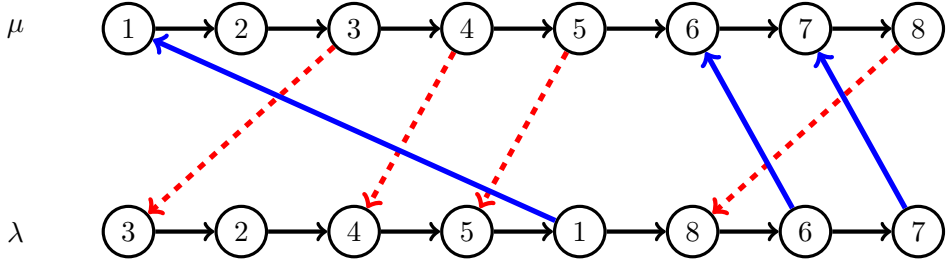
	As we read this permutation from left to right, we see
	running maxima
	$\sigma(1)$, $\sigma(3)>\max(\sigma(1),\sigma(2))$, 
	$\sigma(4)>\max(\sigma(1),\sigma(2),\sigma(3))$,
	and 
	$\sigma(6)>\max(\sigma(1),\sigma(2),\sigma(3),\sigma(4),\sigma(5))$,
	and correspondingly we add 
	(red dashed) arrows
	$\mu_{\sigma(i)}\to\la_i$.
	Similarly, reading the 
	permutation from right to left, we see running minima
	$\sigma(8)$, $\sigma(7)<\sigma(8)$, and 
	$\sigma(5)<\min(\sigma(6),\sigma(7),\sigma(8))$, 
	and we add (blue solid) arrows
	$\la_i\to\mu_{\sigma(i)}$.
	As one examines the arrow diagram, it becomes obvious
	that for the integral to be nonzero, we must have
	\begin{align*}
		\la_1=\ldots=\la_5=\mu_1=\ldots=\mu_5,
		\qquad
		\la_6=\la_7=\la_8=\mu_6=\mu_7=\mu_8.
	\end{align*}
	We also see that any permutation $\sigma$
	yielding a nonzero integral \eqref{general_lemma_integral}
	splits into two blocks 
	permuting $\{1,2,3,4,5\}$ and $\{5,6,7\}$, 
	which 
	are the clusters of equal 
	parts in $\la$ and $\mu$.
	A similar clustering occurs in the general situation, too.
	% \end{example}

	This implies that for the 
	integral \eqref{general_lemma_integral} to be nonzero,
	we must have
	$\la=\mu$, as desired.
\end{proof}

Before discussing 
the first of the 
orthogonality statements,
we will introduce some notation and assumptions.
For any set 
$\mathbf{a}=\{a_i\}_{i\in\Z_{\ge0}}$, 
denote 
\begin{align}\label{min_max_notation}
	M_{\mathbf{a}}:=\sup_{i\in\Z_{\ge0}} a_i,
	\qquad \qquad
	m_{\mathbf{a}}:=\inf_{i\in\Z_{\ge0}} a_i. 
\end{align}
Throughout the text, we will use 
this notation in expressions like
$m_{\ipbb|\SPB|}=\inf_{i\in\Z_{\ge0}}(\ip_{i}^{-1}|\SP_i|)$
(here and below $\ipbb|\SPB|$ stands for the 
set of products of elements of $\ipbb$ and $\SPB$ with the 
\emph{same indices}, i.e., we \emph{do not include all possible pairwise products}; and similarly
for other sets of products).

For the orthogonality statements below in this subsection we need to make 
certain assumptions about our parameters.
Namely, we assume that 
$q$, $\SPB$, and $\ipb$ satisfy \eqref{stochastic_weights_condition_qsxi}, 
and, moreover,
\begin{align}
	\label{assumptions_one_better}
	m_{\ipbb|\SPB|}>q M_{\ipbb|\SPB|},
	\qquad
	m_{\ipbb|\SPBB|}>M_{\ipbb|\SPB|},
\end{align}
which is needed for the existence of the contours 
in the following definition.

\begin{definition}\label{def:orthogonality_contours}
	Let 
	$q$, $\SPB$, and $\ipb$
	satisfy \eqref{stochastic_weights_condition_qsxi} and \eqref{assumptions_one_better}.
	For any $k\ge1$,
	let $\contq {\ipbb\SPB}1,\ldots,\contq {\ipbb\SPB}k$
	be 
	positively oriented closed contours
	such that
	(see Fig.~\ref{fig:contours})
	\begin{itemize}
		\item 
		Each contour
		$\contq {\ipbb\SPB}\aind$ encircles all the points of the set
		$P_1:=\ipbb\SPB=\{\ip_i^{-1}\SP_i\}_{i\in\Z_{\ge0}}$, 
		while leaving outside all the points of
		$P_2:=\ipbb\SPBB=\{\ip_i^{-1}\SP_i^{-1}\}_{i\in\Z_{\ge0}}$.
		This is possible because $m_{\ipbb|\SPBB|}>M_{\ipbb|\SPB|}$.
		\item For any $\bind>\aind$, the interior of $\contq {\ipbb\SPB}\aind$ contains 
		the contour $q\contq {\ipbb\SPB}\bind$. 
		Note that this is possible because 
		$m_{\ipbb|\SPB|}>q M_{\ipbb|\SPB|}$.
		\item 
		The contour $\contq {\ipbb\SPB}k$ is sufficiently small so that it does not intersect with $q\contq {\ipbb\SPB}k$.
	\end{itemize}
	Also, let $\contn{\ipbb\SPB;0}$ be a positively oriented closed 
	contour encircling all points
	of
	$\ipbb\SPB\cup q\ipbb\SPB\cup \ldots\cup q^{k-1}\ipbb\SPB$,
	which also contains $q\contn{\ipbb\SPB;0}$,
	and leaves outside the points of
	$\ipbb\SPBB$. Note that $0$ must be inside this contour.
\end{definition}
\begin{remark}
	The superscript ``$+$'' in the contours 
	of Definition \ref{def:orthogonality_contours}
	refers to the property that they 
	are $q$-nested, as opposed to 
	$q^{-1}$-nested contours $\contqi{\bullet}j$ 
	which we will consider later in \S \ref{sec:observables_of_interacting_particle_systems}.
	Throughout the text the set of points encircled by a contour is
	explicitly indicated in the square brackets.
\end{remark}

\begin{figure}[htbp]
	\begin{center}
	\begin{tikzpicture}
		[scale=3]
		\def\pt{0.02}
		\def\q{.6}
		\def\ss{.56}
		\draw[->, thick] (-2.8,0) -- (.8,0);
	  	\draw[->, thick] (0,-.9) -- (0,.9);
	  	\draw[fill] (-1.3,0) circle (\pt) node [below, xshift=3pt] {$\ip_i^{-1}\SP_i$};
	  	\draw[fill] (-1.1,0) circle (\pt);
	  	\draw[fill] (-1.4,0) circle (\pt);
	  	\draw[fill] (-1.3/\ss,0) circle (\pt) node [below, xshift=7pt] {$\ip_i^{-1}\SP_i^{-1}$};
	  	\draw[fill] (-1.1/\ss,0) circle (\pt);
	  	\draw[fill] (-1.4/\ss,0) circle (\pt);
	  	\draw[fill] (0,0) circle (\pt) node [below left] {$0$};
	  	\draw (-1.25,0) circle (.25) node [below,xshift=-7,yshift=24] {$\contqe3$};
	  	\draw[dotted] (-1.25*\q,0) circle (.25*\q);
	  	\draw[dotted] (-1.25*\q*\q,0) circle (.25*\q*\q);
	  	\draw (-.74,0) circle (.9) node [below,xshift=-64,yshift=80] {$\contn {\ipbb\SPB;0}$};
	  	\draw (-1.05,0) ellipse (.48 and .4) node [below,xshift=40,yshift=34] {$\contqe{2}$};
	  	\draw (-.92,0) ellipse (.65 and .5) node [below,xshift=25,yshift=57] {$\contqe{1}$};
	\end{tikzpicture}
	\end{center}
  	\caption{A possible choice of nested integration contours 
  	$\contqe{i}=\contq {\ipbb\SPB}i$, $i=1,2,3$
  	(Definition~\ref{def:orthogonality_contours}).
  	Contours $q\contq{\ipbb\SPB}3$ and $q^{2}\contq{\ipbb\SPB}3$ are shown dotted.
  	The large contour $\contn {\ipbb\SPB;0}$ 
  	is also shown.}
  	\label{fig:contours}
\end{figure}

\begin{theorem}\label{thm:orthogonality_F}
	Under assumptions 
	\eqref{stochastic_weights_condition_qsxi} and \eqref{assumptions_one_better}
	on
	$q$, $\SPB$, and $\ipb$,
	for any $k\in\Z_{\ge1}$
	and $\la,\mu\in\signp k$, we have
	\begin{align}
		(1-q)^{-k}
		\oint\limits_{\contq {\ipbb\SPB}1}\frac{d u_1}{2\pi\i}
		\ldots
		\oint\limits_{\contq {\ipbb\SPB}k}\frac{d u_k}{2\pi\i}
		\prod_{1\le \aind<\bind\le k}\frac{u_\aind-u_\bind}{u_\aind-qu_\bind}\,
		\F_{\la}^{\conj}(u_1,\ldots,u_k\md\ipb,\SPB)
		\prod_{i=1}^{k}u_i^{-1}\pow_{\mu_{i}}(u_{i}^{-1}\md\ipbb,\SPB)
		=\mathbf{1}_{\la=\mu},
		\label{orthogonality_formulation}
	\end{align}
	where the $\pow_{\mu_i}$'s and $\F_\la$ are as in \S \ref{sub:symmetrization_formulas}.
\end{theorem}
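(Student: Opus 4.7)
The plan is to expand $\F_\la^\conj$ via the symmetrization formula~\eqref{F_symm_formula} and thereby reduce to a sum of integrals each of the form treated by Lemma~\ref{lemma:general_lemma}. Writing
\begin{equation*}
\F_\la^\conj(u_1,\ldots,u_k\md\ipb,\SPB) = \conj_\SPB(\la)\sum_{\sigma\in\Sym_k}\prod_{\aind<\bind}\frac{u_{\sigma(\aind)}-qu_{\sigma(\bind)}}{u_{\sigma(\aind)}-u_{\sigma(\bind)}}\prod_j \pow_{\la_{\sigma^{-1}(j)}}(u_j\md\ipb,\SPB),
\end{equation*}
the left-hand side of \eqref{orthogonality_formulation} becomes a sum over $\sigma\in\Sym_k$ of integrals matching Lemma~\ref{lemma:general_lemma} exactly, with $f_m(u):=u^{-1}\pow_m(u^{-1}\md\ipbb,\SPB)$ and $g_\ell(u):=\pow_\ell(u\md\ipb,\SPB)$.

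Before invoking the lemma I would verify its pole-structure hypotheses. Clearing negative powers gives
\begin{equation*}
f_m(u) = \frac{1-q}{u-\SP_m\ip_m^{-1}}\prod_{j=0}^{m-1}\frac{\ip_j^{-1}-\SP_j u}{u-\SP_j\ip_j^{-1}},\qquad g_\ell(u) = \frac{1-q}{1-\SP_\ell\ip_\ell u}\prod_{j=0}^{\ell-1}\frac{\ip_j u-\SP_j}{1-\SP_j\ip_j u},
\end{equation*}
from which one reads off that $f_m$ has simple poles only at $\{\SP_j\ip_j^{-1}\}_{0\le j\le m}\subset P_1=\ipbb\SPB$ and vanishes at $\{\SP_j^{-1}\ip_j^{-1}\}_{0\le j\le m-1}\subset P_2=\ipbb\SPBB$, while $g_\ell$ has the mirror structure. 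The zeros of $g_\ell$ then cancel the $P_1$-poles of $f_m$ whenever $m<\ell$, and symmetrically the zeros of $f_m$ cancel the $P_2$-poles of $g_\ell$ whenever $m>\ell$, exactly as required. The contour-nesting hypothesis is built into Definition~\ref{def:orthogonality_contours}. Consequently, for $\la\ne\mu$ Lemma~\ref{lemma:general_lemma} forces every $\sigma$-summand to integrate to zero, and the off-diagonal case is settled.

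For the diagonal case $\la=\mu$, Lemma~\ref{lemma:general_lemma} does not apply, so I would compute the integral directly by residues. A re-examination of the arrow-diagram argument inside the lemma's proof shows that nonvanishing contributions can only come from permutations $\sigma$ that respect the decomposition $\la=a_1^{n_1}a_2^{n_2}\cdots a_s^{n_s}$ into blocks of equal parts; the sum over such $\sigma$ then factorizes into independent symmetrizations over each block. On each block of size $n$ the identity from footnote~\ref{symm_footnote}, $\sum_{\sigma\in\Sym_n}\sigma\big(\prod_{\aind<\bind}(u_\aind-qu_\bind)/(u_\aind-u_\bind)\big)=(q;q)_n/(1-q)^n$, collapses the cross term and reduces the block integral to a product of $n$ single-variable integrals of $f_a(u)g_a(u)$ over the contours $\contqe{j}$. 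Each single-variable integral picks up the unique uncancelled $P_1$-pole of $f_a g_a$ at $u=\SP_a\ip_a^{-1}$, and I expect a direct residue computation, combined with the prefactors $\conj_\SPB(\la)$ and $(1-q)^{-k}$ and the per-block factors $(q;q)_{n_j}/(1-q)^{n_j}$, to produce exactly $1$.

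The main obstacle is the residue bookkeeping in the diagonal case: one must verify that the $\ip$- and $\SP$-dependent rational factors in $\Res_{u=\SP_a\ip_a^{-1}}\big(f_a(u)g_a(u)\big)$ telescope across the block structure and combine with $\conj_\SPB(\la)=\prod_j (\SP_{a_j}^2;q)_{n_j}/(q;q)_{n_j}$ to give $1$. In the homogeneous case $\ip_j\equiv 1$, $\SP_j\equiv\SP$ this is exactly the calculation carried out in \cite{BCPS2014}, and the inhomogeneous version should follow the same scheme with the per-coordinate labels $(\ip_j,\SP_j)$ tracked through each step; no fundamentally new identity is needed, only careful accounting.
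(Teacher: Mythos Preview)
Your off-diagonal argument is correct and matches the paper: apply Lemma~\ref{lemma:general_lemma} with $f_m(u)=u^{-1}\pow_m(u^{-1}\md\ipbb,\SPB)$ and $g_\ell(u)=\pow_\ell(u\md\ipb,\SPB)$.

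The diagonal case, however, has a genuine gap. You claim that the symmetrization identity ``collapses the cross term and reduces the block integral to a product of $n$ single-variable integrals''. It does not. There are \emph{two} cross terms in the integrand: the one $\prod_{\aind<\bind}\frac{u_\aind-u_\bind}{u_\aind-qu_\bind}$ built into~\eqref{orthogonality_formulation}, and the one $\prod_{\aind<\bind}\frac{u_{\sigma(\aind)}-qu_{\sigma(\bind)}}{u_{\sigma(\aind)}-u_{\sigma(\bind)}}$ coming from the expansion of $\F_\la^\conj$. When you sum over $\sigma$ restricted to permutations within clusters, the between-cluster pieces of these two cross terms cancel exactly, which is why the clusters decouple. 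But within a cluster $C$ of size $n$, only the second cross term is being symmetrized; the identity of footnote~\ref{symm_footnote} turns it into the constant $(q;q)_n/(1-q)^n$, while the first cross term $\prod_{\aind<\bind\in C}\frac{u_\aind-u_\bind}{u_\aind-qu_\bind}$ remains. The per-cluster integral is therefore a genuinely nested contour integral
\[
\oint_{\contqe1}\cdots\oint_{\contqe n}\prod_{\aind<\bind}\frac{u_\aind-u_\bind}{u_\aind-qu_\bind}\prod_{i=1}^{n}\frac{\ip}{(\ip u_i-\SP)(1-\SP\ip u_i)}\,\frac{du_i}{2\pi\i},
\]
not a product of decoupled single-variable integrals. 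If you compute it your way you get $(1-\SP^2)^{-n}$ per cluster, which combines with $\conj_\SPB(\la)$ to give $\prod_j(\SP_{a_j}^2;q)_{n_j}/(1-\SP_{a_j}^2)^{n_j}\ne 1$ whenever some $n_j>1$.

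The paper evaluates this nested integral by expanding $u_1$ \emph{outward} (not shrinking to $P_1$): the only $u_1$-pole outside $\contqe1$ is $(\SP\ip)^{-1}\in P_2$, and taking minus that residue produces $\frac{1}{1-\SP^2}$ together with the replacement of the factor $1-\SP\ip u_j$ by $1-q\SP\ip u_j$ for $j\ge 2$ (from the cross term). Iterating, the outer pole for $u_2$ is now $(q\SP\ip)^{-1}$, contributing $\frac{1}{1-q\SP^2}$, and so on; the result is $1/(\SP^2;q)_n$. This is precisely what cancels $\conj_\SPB(\la)$ and the leftover $(q;q)_n$, $(1-q)$ factors to give~$1$. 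The cross term is essential to this $q$-shift mechanism, and that is what your decoupling misses.
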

An immediate corollary of Theorem \ref{thm:orthogonality_F}
is the following ``spatial'' biorthogonality property of the 
functions $\F_\mu$:
\begin{corollary}\label{cor:spatial_biorth}
	Under assumptions 
	\eqref{stochastic_weights_condition_qsxi} and \eqref{assumptions_one_better}
	on
	$q$, $\SPB$, and $\ipb$,
	for any $k\in\Z_{\ge1}$
	and $\la,\mu\in\signp k$, we have
	\begin{align}
		\frac{\conj_\SPB(\la)}{(1-q)^{k}k!}
		\oint\limits_{\contn {\ipbb\SPB;0}}\frac{d u_1}{2\pi\i u_1}
		\ldots
		\oint\limits_{\contn {\ipbb\SPB;0}}\frac{d u_k}{2\pi\i u_k}
		\prod_{\aind\ne \bind}\frac{u_\aind-u_\bind}{u_\aind-qu_\bind}\,
		\F_{\la}(u_1,\ldots,u_k\md\ipb,\SPB)
		\F_{\mu}(u_1^{-1},\ldots,u_k^{-1}\md\ipbb,\SPB)=
		\mathbf{1}_{\la=\mu},
		\label{orthogonality_formulation_cor}
	\end{align}
	with $\conj_\SPB(\cdot)$ as in \eqref{conj_definition}.
\end{corollary}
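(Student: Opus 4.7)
The plan is to deduce Corollary \ref{cor:spatial_biorth} from Theorem \ref{thm:orthogonality_F} by first massaging the integrand in \eqref{orthogonality_formulation_cor} into the integrand of \eqref{orthogonality_formulation}, and then deforming the common contour $\contn{\ipbb\SPB;0}$ to the $q$-nested family $\contq{\ipbb\SPB}1,\ldots,\contq{\ipbb\SPB}k$.

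First I would apply the symmetrization formula \eqref{F_symm_formula} with parameters $(\ipbb,\SPB)$ to expand
\[
\F_\mu(u_1^{-1},\ldots,u_k^{-1}\md\ipbb,\SPB)=\sum_{\sigma\in\Sym_k}\sigma\Bigg(\prod_{1\le\aind<\bind\le k}\frac{u_\bind-qu_\aind}{u_\bind-u_\aind}\prod_{i=1}^{k}\pow_{\mu_i}(u_i^{-1}\md\ipbb,\SPB)\Bigg),
\]
where I have used $\frac{u_\aind^{-1}-qu_\bind^{-1}}{u_\aind^{-1}-u_\bind^{-1}}=\frac{u_\bind-qu_\aind}{u_\bind-u_\aind}$. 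The rest of the integrand in \eqref{orthogonality_formulation_cor}---namely $\F_\la(u_1,\dots,u_k)$, the full symmetric cross product $\prod_{\aind\ne\bind}\frac{u_\aind-u_\bind}{u_\aind-qu_\bind}$, and the measure $\prod_i du_i/(2\pi\i u_i)$---is symmetric in $u_1,\dots,u_k$, and the product contour $\contn{\ipbb\SPB;0}^{\,k}$ is of course permutation-invariant. Hence each of the $k!$ terms in the $\sigma$-sum contributes the same integral, which absorbs the prefactor $1/k!$.

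Second I would collapse the remaining combination of cross terms by the direct calculation
\[
\prod_{\aind\ne\bind}\frac{u_\aind-u_\bind}{u_\aind-qu_\bind}\cdot\prod_{1\le\aind<\bind\le k}\frac{u_\bind-qu_\aind}{u_\bind-u_\aind}=\prod_{1\le\aind<\bind\le k}\frac{u_\aind-u_\bind}{u_\aind-qu_\bind},
\]
which is exactly the (half) cross product in \eqref{orthogonality_formulation}. Together with the identification $\conj_{\SPB}(\la)\F_\la=\F_\la^\conj$ from the definition just above Proposition \ref{prop:skew_Cauchy}, this reduces the left-hand side of \eqref{orthogonality_formulation_cor} to the integrand of \eqref{orthogonality_formulation}, the only remaining difference being that integration takes place over the common contour $\contn{\ipbb\SPB;0}^{\,k}$ rather than the nested family $\contq{\ipbb\SPB}1\times\cdots\times\contq{\ipbb\SPB}k$.

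Finally I would deform the common contour to the nested one variable by variable, say in the order $u_k,u_{k-1},\dots,u_1$ (choosing the contours, as is always possible within the freedom of Definition \ref{def:orthogonality_contours}, so that $\contq{\ipbb\SPB}i\subset\contn{\ipbb\SPB;0}$). At each stage the integrand in the deformation variable $u_i$ has poles only at the points of $\ipbb\SPB$ (from $\pow_{\mu_i}(u_i^{-1})$), at the points of $\ipbb\SPBB$ (from $\F_\la^\conj$), at the shifts $qu_j$ for $j>i$ (from the factor $(u_i-qu_j)^{-1}$ in the surviving half cross term), and at $u_j/q$ for $j<i$ (from $(u_j-qu_i)^{-1}$). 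Both $\contn{\ipbb\SPB;0}$ and $\contq{\ipbb\SPB}i$ enclose every point of $\ipbb\SPB$ and leave every point of $\ipbb\SPBB$ outside, so the stationary poles cause no residues. For the moving poles, the nesting conditions $\contn{\ipbb\SPB;0}\supset q\contn{\ipbb\SPB;0}$ and $\contq{\ipbb\SPB}j\supset q\contq{\ipbb\SPB}i$ for $j<i$ (imposed in Definition \ref{def:orthogonality_contours}) guarantee that $qu_j$ remains in the interior of both the old and new contour for $j>i$, while $u_j/q$ remains in their common exterior for $j<i$. So no residue is ever crossed, the common-contour integral equals the nested-contour one, and the corollary follows from Theorem \ref{thm:orthogonality_F}. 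The main delicate step is this final deformation, since one must keep careful bookkeeping of the positions of the moving poles $qu_j$ and $u_j/q$ at each stage; the nesting conditions of Definition \ref{def:orthogonality_contours} are tailored precisely so that this bookkeeping goes through uniformly.
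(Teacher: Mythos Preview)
Your proof is correct and follows essentially the same approach as the paper's: both arguments hinge on (i) the symmetrization formula \eqref{F_symm_formula} to pass between the product $\prod_i\pow_{\mu_i}(u_i^{-1})$ and the symmetric function $\F_\mu(u_1^{-1},\ldots,u_k^{-1})$, and (ii) a residue-free deformation between the nested family $\contq{\ipbb\SPB}j$ and the common contour $\contn{\ipbb\SPB;0}$. The only difference is the direction---the paper starts from Theorem~\ref{thm:orthogonality_F}, deforms the contours $\contq{\ipbb\SPB}1,\ldots,\contq{\ipbb\SPB}k$ outward (in that order) to $\contn{\ipbb\SPB;0}$, and then symmetrizes, whereas you run the same steps in reverse.
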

We call this property \emph{spatial biorthogonality}
because for, say, fixed $\la$ and varying $\mu$, 
the right-hand side 
is the delta function in the \emph{spatial} variables
$\mu$. This should be compared to the \emph{spectral biorthogonality}
of Theorem \ref{thm:spec_orthogonality}
with delta functions
in spectral variables in the right-hand side.

When the parameters $\ip_j\equiv 1$ and $\SP_j\equiv \SP$ are homogeneous, 
Theorem \ref{thm:orthogonality_F} and Corollary \ref{cor:spatial_biorth}
were conjectured in \cite{Povolotsky2013} 
and proved in
\cite[\S3]{BCPS2014}.
See also \cite[Thm. 7.2]{Borodin2014vertex}.
\begin{proof}[Proof of Corollary \ref{cor:spatial_biorth}]
	Assuming Theorem \ref{thm:orthogonality_F}, 
	deform the integration contours 
	$\contq {\ipbb\SPB}1,\ldots,\contq {\ipbb\SPB}k$ (in this order)
	to $\contn {\ipbb\SPB;0}$. One readily sees that this does not lead to  
	any additional residues.
	Next, observe that
	the integral over all $u_j\in\contn {\ipbb\SPB;0}$ 
	is invariant under permutations of the $u_j$'s, and thus
	one can perform the symmetrization and divide by $k!$. This leads to
	\begin{multline*}
		\mathbf{1}_{\la=\mu}=
		\frac{(1-q)^{-k}}{k!}
		\oint\limits_{\contn {\ipbb\SPB;0}}\frac{d u_1}{2\pi\i u_1}
		\ldots
		\oint\limits_{\contn {\ipbb\SPB;0}}\frac{d u_k}{2\pi\i u_k}
		\prod_{1\le \aind\ne \bind\le k}\frac{u_\aind-u_\bind}{u_\aind-qu_\bind}
		\F_{\la}^{\conj}(u_1,\ldots,u_k\md\ipb,\SPB)
		\\\times
		\underbrace{\sum_{\sigma\in \Sym_k}
		\prod_{1\le \bind<\aind\le k}\frac{u_{\sigma(\aind)}-qu_{\sigma(\bind)}}{u_{\sigma(\aind)}-u_{\sigma(\bind)}}
		\prod_{i=1}^{k}\pow_{\mu_{i}}(u_{\sigma(i)}^{-1}\md\ipbb,\SPB)}_
		{\textnormal{$\F_{\mu}(u_1^{-1},\ldots,u_k^{-1}\md\ipbb,\SPB)$ 
		by \eqref{F_symm_formula}}},
	\end{multline*}
	as desired.
\end{proof}

\begin{proof}[Proof of Theorem \ref{thm:orthogonality_F}]
	Fix $k$ and signatures $\la$ and $\mu$.
	In order to apply Lemma \ref{general_lemma_integral}, set 
	\begin{align*}
		f_m(u):=
		\frac{\pow_m(u^{-1}\md\ipbb,\SPB)}{u}
		=\frac{\ip_m(1-q)}{\ip_mu-\SP_m}\prod_{j=0}^{m-1}\frac{1-\SP_j\ip_ju}{\ip_ju-\SP_j},
		\quad
		g_\ell(u):=\pow_\ell(u\md\ipb,\SPB)
		=\frac{1-q}{1-\SP_\ell\ip_\ell u}\prod_{j=0}^{\ell-1}\frac{\ip_ju-\SP_j}{1-\SP_j\ip_ju},
	\end{align*}
	and use the sets $P_1$, $P_2$ described
	in Definition \ref{def:orthogonality_contours}.
	If $m>\ell$, then all singularities of
	\begin{align*}
		f_m(u)g_\ell(u)=
		\frac{\ip_m(1-q)}{\ip_mu-\SP_m}
		\frac{1-q}{1-\SP_\ell\ip_\ell u}\prod_{j=0}^{m-1}\frac{1-\SP_j\ip_ju}{\ip_ju-\SP_j}
		\prod_{j=0}^{\ell-1}\frac{\ip_ju-\SP_j}{1-\SP_j\ip_ju}
	\end{align*} 
	are in $P_1$, and if $m<\ell$, then all singularities of this product are in $P_2$.
	By virtue of the symmetrization formula for $\F_\la$ \eqref{F_symm_formula}, 
	we see that the integrand in \eqref{orthogonality_formulation}
	is 
	(up to a multiplicative constant)
	the same as the one in Lemma \ref{lemma:general_lemma}
	(with the above specialization of $f_m$ and $g_\ell$). 
	The structure of our integration contours $\contq {\ipbb\SPB}j$
	and the fact that $\infty\notin P_1\cup P_2$ (so
	the contours can be dragged through infinity without picking the residues)
	implies that the third condition of Lemma \ref{lemma:general_lemma} is also 
	satisfied.
	Thus, we 
	conclude that the integral in \eqref{orthogonality_formulation} vanishes unless $\mu=\la$.

	\begin{example}
		To better illustrate the application of Lemma 
		\ref{lemma:general_lemma} here, consider contours 
		$\contq {\ipbb\SPB}1,\contq {\ipbb\SPB}2$, and $\contq {\ipbb\SPB}3$
		as in Fig.~\ref{fig:contours}. Depending on $\sigma$, 
		the denominator 
		in the integrand in \eqref{general_lemma_integral} contains 
		some of the factors $u_1-q u_2$, $u_1-q u_3$, and $u_2-q u_3$.
		The contour $\contq {\ipbb\SPB}3$ can always be shrunk to $P_1$ without picking residues at
		$u_3=q^{-1}u_1$ and $u_3=q^{-1}u_2$ (this is the assumption 
		on the contours in Lemma~\ref{lemma:general_lemma}). 
		Moreover, if, for example, the permutation
		$\sigma$ provides a cancellation of the factor 
		$u_2-qu_3$ in the denominator, then the contour $\contq {\ipbb\SPB}2$
		can also be shrunk to $P_1$ without picking the 
		residue at $u_2=qu_3$. Similarly, the contour
		$\contq {\ipbb\SPB}1$ can always be expanded (``shrunk'' on the Riemann sphere) to $P_2$ 
		(recall that infinity does not supply any residues),
		and if the factor $u_1-qu_2$ in the denominator is canceled for a certain $\sigma$,
		then $\contq {\ipbb\SPB}2$ can also be expanded to $P_2$ without picking the residue at $u_2=q^{-1}u_1$.
	\end{example}
	
	Now we must consider the 
	case when
	$\mu=\la$, that is, evaluate the ``squared norm''
	of $\F_\mu$. 
	Arguing similarly to the example in the proof of 
	Lemma \ref{general_lemma_integral}
	(see \cite[Lemma\;3.5]{BCPS2014} for more detail), we see that the
	integral 
	\begin{multline}
		\sum_{\sigma\in\Sym_k}
		\oint\limits_{\contq {\ipbb\SPB}1}\frac{d u_1}{2\pi\i}
		\ldots
		\oint\limits_{\contq {\ipbb\SPB}k}\frac{d u_k}{2\pi\i}
		\prod_{1\le \aind<\bind\le k}
		\bigg(\frac{u_\aind-u_\bind}{u_\aind-qu_\bind}
		\frac{u_{\sigma(\aind)}-qu_{\sigma(\bind)}}{u_{\sigma(\aind)}-u_{\sigma(\bind)}}\bigg)
		\\\times\prod_{i=1}^{k}u_i^{-1}\pow_{\la_{i}}(u_{i}^{-1}\md\ipbb,\SPB)
		\pow_{\la_{\sigma^{-1}(i)}}(u_i\md\ipb,\SPB)
		\label{orthogonality_F_proof1}
	\end{multline}
	(this is the same as the left-hand side of \eqref{orthogonality_formulation} with $\mu=\la$,
	up to the constant $(1-q)^{-k}\conj_{\SPB}(\la)$)
	vanishes unless $\sigma\in\Sym_k$ permutes within
	clusters of the signature $\la$, that is,
	$\sigma$ must preserve each maximal 
	set of indices $\{a,a+1,\ldots,b\}\subseteq\{1,\ldots,k\}$
	for which $\la_{a}=\la_{a+1}=\ldots=\la_{b}$. Let $c_\la$ be the 
	number of such clusters in $\la$. 
	Denote the set of all permutations 
	permuting within
	clusters of $\la$
	by $\Sym_k^{(\la)}$. 
	Any permutation $\sigma\in\Sym_k^{(\la)}$
	can be represented as a product of $c_\la$ permutations
	$\sigma_1,\ldots,\sigma_{c_\la}$, with each $\sigma_{i}$ fixing all elements
	of $\{1,\ldots,k\}$ except those belonging to the $i$-th cluster of $\la$.
	We will denote
	the set of indices within the $i$-th cluster by $C_i(\la)$,
	and write $\sigma_i\in \Sym_k^{(\la,i)}$.
	% For example, if $k=5$ and $\la_1=\la_2=\la_3>\la_4=\la_5$,
	% then $c_\la=2$, and the permutation $\sigma\in \Sym_k^{(\la)}$ must stabilize the sets
	% $C_1(\la)=\{1,2,3\}$ and $C_2(\la)=\{4,5\}$; $\Sym_k^{(\la,1)}$ is the
	% set of all permutations of $\{1,2,3\}$, and $\Sym_k^{(\la,2)}$
	% is the set of all permutations of $\{4,5\}$.

	Therefore, the sum in \eqref{orthogonality_F_proof1}
	is only over $\sigma\in\Sym_k^{(\la)}$. Let us now compute it. We have
	\begin{multline*}
		\sum_{\sigma\in \Sym_k^{(\la)}}
		\prod_{1\le \aind<\bind\le k}
		\frac{u_{\sigma(\aind)}-qu_{\sigma(\bind)}}{u_{\sigma(\aind)}-u_{\sigma(\bind)}}
		\prod_{i=1}^{k}u_i^{-1}\pow_{\la_{i}}(u_{i}^{-1}\md\ipbb,\SPB)
		\pow_{\la_{\sigma^{-1}(i)}}(u_i\md\ipb,\SPB)
		\\=
		\prod_{r=1}^{k}\frac{\ip_{\la_r}(1-q)^{2}}{(\ip_{\la_r}u_r-\SP_{\la_r})(1-\SP_{\la_r}\ip_{\la_r}u_r)}\cdot
		\prod_{1\le i<j\le c_\la}
		\prod_{\substack{\aind\in C_i(\la)\\ \bind\in C_j(\la)}}
		\frac{u_\aind-qu_\bind}{u_\aind-u_\bind}
		\cdot
		\prod_{i=1}^{c_\la}
		\sum_{\sigma_i\in \Sym_k^{(\la,i)}}
		\prod_{\substack{1\le \aind<\bind\le k\\
		\aind,\bind\in C_i(\la)}}
		\frac{u_{\sigma_i(\aind)}-qu_{\sigma_i(\bind)}}
		{u_{\sigma_i(\aind)}-u_{\sigma_i(\bind)}}.
	\end{multline*}
	For each sum 
	over $\sigma_i\in \Sym_k^{(\la,i)}$
	we can use the symmetrization identity
	(footnote$^{\ref{symm_footnote}}$).
	Thus, \eqref{orthogonality_F_proof1}
	becomes
	\begin{multline*}
		\sum_{\sigma\in\Sym_k}
		\oint\limits_{\contq {\ipbb\SPB}1}\frac{d u_1}{2\pi\i}
		\ldots
		\oint\limits_{\contq {\ipbb\SPB}k}\frac{d u_k}{2\pi\i}
		\prod_{\aind<\bind}
		\bigg(\frac{u_\aind-u_\bind}{u_\aind-qu_\bind}
		\frac{u_{\sigma(\aind)}-qu_{\sigma(\bind)}}{u_{\sigma(\aind)}-u_{\sigma(\bind)}}\bigg)
		\prod_{i=1}^{k}u_i^{-1}\pow_{\la_{i}}(u_{i}^{-1}\md\ipbb,\SPB)
		\pow_{\la_{\sigma^{-1}(i)}}(u_i\md\ipb,\SPB)
		\\=
		(1-q)^{k}\prod_{i\ge0}(q;q)_{\ell_i}
		\oint\limits_{\contq {\ipbb\SPB}1}\frac{d u_1}{2\pi\i}
		\ldots
		\oint\limits_{\contq {\ipbb\SPB}k}\frac{d u_k}{2\pi\i}
		\prod_{i=1}^{c_\la}
		\prod_{\substack{1\le \aind<\bind\le k\\
		\aind,\bind\in C_i(\la)}}\frac{u_\aind-u_\bind}{u_\aind-qu_\bind}
		\prod_{r=1}^{k}\frac{\ip_{\la_r}}
		{(\ip_{\la_r}u_r-\SP_{\la_r})(1-\SP_{\la_r}\ip_{\la_r}u_r)},
	\end{multline*}
	where we have used the usual multiplicative notation 
	$\la=0^{\ell_0}1^{\ell_1}2^{\ell_2}\ldots$.
	The integration variables above corresponding to each cluster are now independent, and thus
	the integral reduces to a product of
	$c_\la$ smaller nested contour integrals of similar form.
	In each such separate contour integral
	the inhomogeneity parameters $\ip_{\la_r}$ and $\SP_{\la_r}$
	are the same because the $\la_r$'s belong to the same cluster.
	Thus,
	each of these integrals can be computed
	as follows (here we denote $\ip=\ip_{\la_r}$ and $\SP=\SP_{\la_r}$
	to shorten the notation):\footnote{In fact, a more general 
	integral of this sort can also be computed, see \cite[Prop.\;3.7]{BCPS2014}.}
	\begin{align*}
		&\oint\limits_{\contq {\ipbb\SPB}1}\frac{du_1}{2\pi\i}
		\oint\limits_{\contq {\ipbb\SPB}2}\frac{du_2}{2\pi\i}
		\ldots
		\oint\limits_{\contq {\ipbb\SPB}\ell}\frac{du_\ell}{2\pi\i}
		\prod_{1\le \aind<\bind\le \ell}\frac{u_\aind-u_\bind}{u_\aind-qu_\bind}
		\prod_{i=1}^{\ell}\frac{\ip}{(\ip u_i-\SP)(1-\SP\ip u_i)}
		\\&\hspace{40pt}=\frac{1}{1-\SP^2}
		\oint\limits_{\contq {\ipbb\SPB}2}\frac{du_2}{2\pi\i}
		\ldots
		\oint\limits_{\contq {\ipbb\SPB}\ell}\frac{du_\ell}{2\pi\i}
		\prod_{j=2}^{\ell}\frac{1-\SP\ip u_j}{1-q\SP \ip u_j}
		\prod_{2\le \aind<\bind\le \ell}\frac{u_\aind-u_\bind}{u_\aind-qu_\bind}
		\prod_{i=2}^{\ell}\frac{\ip}{(\ip u_i-\SP)(1-\SP \ip u_i)}
		\\&\hspace{40pt}=
		\frac{1}{1-\SP^2}
		\oint\limits_{\contq {\ipbb\SPB}2}\frac{du_2}{2\pi\i}
		\ldots
		\oint\limits_{\contq {\ipbb\SPB}\ell}\frac{du_\ell}{2\pi\i}
		\prod_{2\le \aind<\bind\le \ell}\frac{u_\aind-u_\bind}{u_\aind-qu_\bind}
		\prod_{i=2}^{\ell}\frac{\ip}{(\ip u_i-\SP)(1-q\SP \ip u_i)}
		\\&\hspace{40pt}=
		\textnormal{etc.}
		\\&\hspace{40pt}=
		\frac{1}{(\SP^{2};q)_{\ell}}.
	\end{align*}
	Indeed, there is only one $u_1$-pole 
	outside the contour $\contq {\ipbb\SPB}1$, namely, $u_1=(\SP\ip)^{-1}$.
	Evaluating the integral over $u_1$ by taking the 
	minus residue at $u_1=(\SP\ip)^{-1}$ leads to a smaller similar integral 
	with the outside pole $(\SP\ip)^{-1}$ replaced by $(q\SP\ip)^{-1}$.
	Continuing in the same way with integration over $u_2,\ldots,u_\ell$, we obtain
	${1}/{(\SP^{2};q)_{\ell}}$.
	Putting together all of the above components, we see that 
	we have established the desired claim.
\end{proof}

\subsection{Plancherel isomorphisms and completeness} % (fold)
\label{sub:plancherel_isomorphisms_and_completeness}

Here we discuss Plancherel isomorphism results
related to the functions $\F_\la$. Similar results
were obtained in \cite{BorodinCorwinPetrovSasamoto2013}, \cite{BCPS2014},
and \cite{Borodin2014vertex} in the homogeneous case $\ip_j\equiv1$,
$\SP_j\equiv \SP$. 
Detailed discussion of Plancherel-type results for other integrable
interacting particle systems can also be found in the first two of these
references.

Let us fix the number of variables $n\in\Z_{\ge1}$. 
Let the parameters $\ipb=\{\ip_x\}_{x\in\Z}$ and 
$\SPB=\{\SP_x\}_{x\in\Z}$ be indexed by 
all integers, and assume that $\ip_x^{-1}\SP_x^{\pm1}$
for all $x\in\Z$ are pairwise distinct.\footnote{Most definitions and 
statements below in this 
subsection are still valid (with suitable modifications) when some of these points
coincide, and follow by a simple limit transition. However,
we will not focus on these details.}
We will assume that conditions
\eqref{stochastic_weights_condition_qsxi} and \eqref{assumptions_one_better} hold for these
$\Z$-indexed families of parameters, which implies that
the nested integration contours 
$\contq {\ipbb\SPB}j$ of Definition~\ref{def:orthogonality_contours}~exist.

Extend the definition of the functions
$\F_\la(u_1,\ldots,u_n\md\ipb,\SPB)$ to all $\la\in\sign n$ (i.e., with 
possibly negative parts)
using the shifting property \eqref{F_shifts}.
In other words, define
$\F_\la$ for all $\la\in\sign n$ by the same symmetrization
formula \eqref{F_symm_formula}, but extend
$\pow_k(u\md\ipb,\SPB)$ \eqref{pow_function} to negative integers as
\begin{align*}
	% \label{pow_function_negative}
	\pow_{-k}(u\md\ipb,\SPB)=
	\frac{1-q}{1-\ip_{-k}\SP_{-k}u}\prod_{j=-k}^{-1}
	\frac{1-\ip_j\SP_ju}{\ip_ju-\SP_j}, \qquad k\in\Z_{\ge1},
\end{align*}
so that $\{\pow_k\}_{k\in\Z}$ also satisfy \eqref{F_shifts} with $M=1$.

\begin{definition}[Function spaces]\label{def:function_spaces}
	Denote the space of functions $f(\la)$ on $\sign n$ with finite support
	by $\funspat{n}$. 
	Also, denote by $\funspec{n}$ the space of symmetric rational functions
	$R(u_1,\ldots,u_n)$ which can have poles only at $u_i=\ip_x^{-1}\SP_x^{-1}$, $x\in\Z_{\ge0}$ 
	and $u_i=\ip_x^{-1}\SP_x$, 
	$x\in\Z_{<0}$, $i=1,\ldots,n$,
	and all the poles are simple. 
	In other words,
	\begin{align*}
		R(u_1,\ldots,u_n)\cdot\prod_{i=1}^{n}\bigg(
		\prod_{x=0}^{M}(u_i-\ip_x^{-1}\SP_x^{-1})
		\prod_{x=-M}^{-1}(u_i-\ip_x^{-1}\SP_x)
		\bigg)
	\end{align*}
	is a polynomial in the $u_i$'s for large enough $M$.
	Moreover, we require that the functions from 
	$\funspec{n}$ converge to zero as $|u_i|\to\infty$ for any $i$.
\end{definition}

Note that as functions in $\la$, the $\F_\la(u_1,\ldots,u_n\md\ipb,\SPB)$'s clearly do not belong to 
$\funspat{n}$. However, as functions in the $u_i$'s, they belong to $\funspec{n}$.
Indeed, to verify the latter statement, use \eqref{F_symm_formula}
and bring all summands to the common denominator. 
This denominator is a product of
factors of the form $u_i-\ip_x\SP_x^{\pm1}$
times the Vandermonde $\prod_{1\le\aind<\bind\le n}(u_\aind-u_\bind)$.
Observe that the numerator is an antisymmetric polynomial in $u_1,\ldots,u_n$, 
so it can be divided by the Vandermonde, thus 
removing it from the denominator.
Finally, $\F_\la(u_1,\ldots,u_n\md\ipb,\SPB)$ clearly goes to zero as $|u_i|\to\infty$.

Let us first formulate two main statements 
(Theorems \ref{thm:spec_orthogonality} and \ref{thm:Plancherel})
which we prove in this subsection. 
\begin{theorem}\label{thm:spec_orthogonality}
	The functions $\F_\la$ satisfy the spectral biorthogonality 
	relation\footnote{
	Throughout the text
	we will use the abbreviated notation
	$\bar\VV=(v_1^{-1},\ldots,v_n^{-1})$,
	and $d\VV$ stands for
	$dv_1 \ldots dv_n$.
	Similarly for $\bar\UU$ and $d\UU$. Also,
	$\sigma\UU$ stands for the
	permutation $\sigma$ of the variables
	$\UU$.}
	\begin{multline}\label{spec_biorth}
		\sum_{\la\in\sign n}\mathop{\oint \ldots \oint}
		\frac{d\UU}{(2\pi\i)^{n}}
		\mathop{\oint \ldots \oint}
		\frac{d\VV}{(2\pi\i)^{n}}
		\varphi(\UU)\psi(\VV)
		\F_\la(\UU\md\ipb,\SPB)
		\F^{\conj}_{\la}(\bar\VV\md\ipbb,\SPB)
		% \\\times
		\prod_{1\le\aind<\bind\le n}
		(u_{\aind}-u_{\bind})
		(v_{\aind}-v_{\bind})
		\\=
		(-1)^{\frac{n(n-1)}2}\mathop{\oint \ldots \oint}
		\frac{d\UU}{(2\pi\i)^{n}}
		\prod_{1\le \aind,\bind\le n}(u_{\aind}-qu_{\bind})\cdot
		\varphi(\UU)\sum_{\sigma\in\Sym_n}
		\sgn(\sigma)\psi(\sigma \UU),
	\end{multline}
	where $\varphi$ and $\psi$ are arbitrary test functions satisfying
	(see also Remark \ref{rmk:optimal_phi_psi} below for a
	discussion of these conditions)
	\begin{align}\label{R_Q_conditions}
		\parbox{.85\textwidth}{
		\begin{itemize}
			\item The function $\varphi(\UU)\prod_{\aind<\bind}
			(u_{\aind}-u_{\bind})$ is a rational function in $\{u_i\}$ 
			and all parameters $q$, $\ipb$, and $\SPB$,
			which 
			can have at most simple poles at $u_i=\ip_x^{-1}\SP_x^{\pm1}$, $x\in\Z$, $i=1,\ldots,n$,
			and is regular at infinity,
			$u_i=\ip_x^{-1}\SP_x^{-1}$, $x\ge0$,
			and $u_i=\ip_x^{-1}\SP_x$, $x<0$;
			\item The function
			$\psi(\VV)\prod_{\aind,\bind}(v_{\aind}-qv_{\bind})$ 
			is rational 
			in $\{v_j\}$ 
			and all parameters $q$, $\ipb$, and $\SPB$,
			and 
			can have at most simple poles at $v_j=\ip_x^{-1}\SP_x^{\pm1}$, 
			$x\in\Z$, $j=1,\ldots,n$,
		\end{itemize}}
	\end{align}
	and each integration 
	in \eqref{spec_biorth} is performed over 
	one and the same positively oriented closed contour 
	which encircles
	all of the points $\ip_x^{-1}\SP_x$, $x\in\Z$,
	and leaves all $\ip_x^{-1}\SP_x^{-1}$ outside.
\end{theorem}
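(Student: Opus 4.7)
The proof will follow the Cauchy-identity-based strategy of \cite[Thm.\;7.2]{Borodin2014vertex} for the homogeneous case, carefully extended to accommodate the inhomogeneities $\ipb,\SPB$. It proceeds in three main steps.

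\textbf{Step 1 (Reduction to a kernel identity.)} Applying the symmetrization formula \eqref{F_symm_formula} to $\F_\la(\UU)$ and using antisymmetry of the Vandermonde $\prod_{\aind<\bind}(u_\aind-u_\bind)$ yields
\begin{equation*}
\F_\la(\UU)\prod_{\aind<\bind}(u_\aind-u_\bind)=\sum_{\sigma\in\Sym_n}\sgn(\sigma)\,\sigma\bigg[\prod_{\aind<\bind}(u_\aind-qu_\bind)\prod_{i=1}^n\pow_{\la_i}(u_i)\bigg].
\end{equation*}
Doing the same on the $\VV$-side (recalling $\F^{\conj}_\la(\bar\VV)=\conj_{\SPB}(\la)\F_\la(\bar\VV)$) and transferring each permutation to the test function via the elementary identity $\oint\varphi\cdot(\sigma g)\,d\UU=\oint(\sigma^{-1}\varphi)\cdot g\,d\UU$ --- valid because all $n$ integration contours coincide --- the LHS of \eqref{spec_biorth} reduces to
\begin{equation*}
\oint\!\cdots\!\oint\frac{d\UU\,d\VV}{(2\pi\i)^{2n}}\,\mathrm{Asym}(\varphi)(\UU)\,\mathrm{Asym}(\psi)(\VV)\prod_{\aind<\bind}(u_\aind-qu_\bind)(v_\aind-qv_\bind)\cdot K_n(\UU,\VV),
\end{equation*}
where $\mathrm{Asym}(f):=\sum_{\sigma\in\Sym_n}\sgn(\sigma)\,\sigma f$ and
\begin{equation*}
K_n(\UU,\VV):=\sum_{\la\in\sign n}\conj_{\SPB}(\la)\prod_{i=1}^n\pow_{\la_i}(u_i\md\ipb,\SPB)\,\pow_{\la_i}(v_i^{-1}\md\ipbb,\SPB).
\end{equation*}

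\textbf{Step 2 (Closed-form evaluation of $K_n$.)} The central computation is an explicit evaluation of $K_n(\UU,\VV)$ as a rational function whose only poles relevant inside the contours lie at $v_j^{-1}=u_i$. The plan is to combine the Cauchy identity (Corollary \ref{cor:usual_Cauchy}) with the shift identities (\eqref{F_shifts} and Corollary \ref{cor:shifts}\,(2)) so as to extend the summation from $\signp n$ to all of $\sign n$. The weighting $\conj_{\SPB}(\la)=\prod_k(\SP_k^2;q)_{n_k}/(q;q)_{n_k}$ supplies the correct $q$-multinomial coefficients which, combined with the multiplicities of $\la$, convert the sum over ordered $\la$ into a product form. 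One expects
\begin{equation*}
K_n(\UU,\VV)\propto\prod_{i,j=1}^n\frac{1-qu_iv_j^{-1}}{1-u_iv_j^{-1}}\cdot(\textrm{factors regular inside the contours}),
\end{equation*}
which exhibits the required pole structure at $v_j^{-1}=u_i$.

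\textbf{Step 3 (Residue extraction.)} Under the conditions \eqref{R_Q_conditions} on $\varphi,\psi$, the only poles of the integrand inside the $\VV$-contour are the kernel poles at $v_j^{-1}=u_i$. Shrinking each $v_j$-contour collects residues indexed by bijections $\tau\colon\{1,\ldots,n\}\to\{1,\ldots,n\}$, and the antisymmetry of $\mathrm{Asym}(\psi)$ together with the Vandermondes $\prod(v_\aind-qv_\bind)$ collapses this sum into the signed symmetrizer $\sum_\tau\sgn(\tau)\psi(\tau\UU)$ on the RHS of \eqref{spec_biorth}. The overall sign $(-1)^{n(n-1)/2}$ and the full product $\prod_{\aind,\bind}(u_\aind-qu_\bind)$ emerge from the accumulated residue Jacobians together with the leftover Vandermondes in $\UU$.

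\textbf{Main obstacle.} The principal technical difficulty is Step 2. The summand in $K_n$ is \emph{not} invariant under arbitrary permutations of the $\la_i$ (the pairing of $\la_i$ with the specific variables $(u_i,v_i^{-1})$ breaks the symmetry), so the sum does not factor by inspection; it is only the $\conj_{\SPB}$-weighted sum over ordered $\la$ that admits a clean closed form. In the homogeneous case of \cite{Borodin2014vertex} this is handled by a one-variable telescoping combined with the Cauchy identity; in the present inhomogeneous setting one must carefully track how the distinct parameters $\{\ip_x,\SP_x\}$ at varying spatial positions interact with the multiplicities of $\la$ so as to preserve the product structure of $K_n$.
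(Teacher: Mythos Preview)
Your proposal leaves the central computation --- your own declared ``main obstacle'' in Step~2 --- unresolved. You only state that ``one expects'' $K_n(\UU,\VV)\propto\prod_{i,j}\frac{1-qu_iv_j^{-1}}{1-u_iv_j^{-1}}$, without proving it, and this expectation is in fact doubtful as written: the diagonal pairing $(u_i,v_i)$ in the definition of $K_n$ makes the full off-diagonal product $\prod_{i,j}$ unlikely to arise before antisymmetrization; and you never address convergence of the infinite $\la$-sum for $\UU,\VV$ on the contours, nor the analytic-continuation argument needed to remove the convergence restrictions.

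The paper avoids Step~2 altogether. Rather than expanding both $\F$'s by symmetrization and trying to resum the resulting kernel, it works with the Cauchy identity (Corollary~\ref{cor:usual_Cauchy}) at the level of the full symmetric functions $\F_\mu(\UU)\,\G^{\conj}_\mu(\bar\VV)$, whose closed form $\prod_{i,j}\frac{v_j-qu_i}{v_j-u_i}$ is already known. The conversion from $\G^{\conj}$ to $\F^{\conj}$ uses Corollary~\ref{cor:shifts}(2), valid for $\mu_n\ge1$; the paper therefore writes the Cauchy identity with shifted parameters $\sh_{-M}\ipb,\sh_{-M}\SPB$, splits the sum into $\la_n=0$ (which vanishes after integration against the test functions for large $M$) and $\la_n\ge1$ (which after the $\F$-shift \eqref{F_shifts} becomes $\sum_{\la\in\sign n,\,\la_n\ge-M+1}\F_\la\F^{\conj}_\la$ and stabilizes as $M\to\infty$), and finally evaluates the $\VV$-integral of the known Cauchy product by residues at $v_j=u_{\sigma(j)}$. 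The bulk of the technical work is not algebraic but analytic: constructing deformed contours $C_{\UU},C_{\VV}$ and an open parameter region where the Cauchy sum converges (admissibility), then using rationality to extend to the full claim. Your outline does not engage with this issue, and without it the reduction in Step~1 (which places the infinite sum inside the integral) is formal.
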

\begin{remark}\label{rmk:spec_biorth_inform}
	Informally, the spectral biorthogonality can be written as
	\begin{multline*}
		% \label{spec_biorth_inform}
		\prod_{1\le\aind<\bind\le n}
		(u_{\aind}-u_{\bind})
		(v_{\aind}-v_{\bind})
		\sum_{\la\in\sign n}
		\F_\la(\UU\md\ipb,\SPB)
		\F^{\conj}_{\la}(\bar\VV\md\ipbb,\SPB)\\=
		(-1)^{\frac{n(n-1)}2}\prod_{1\le \aind,\bind\le n}(u_{\aind}-qu_{\bind})\cdot
		\det[\delta(v_i-u_j)]_{i,j=1}^{n}.
	\end{multline*}
	This identity should be understood 
	in the integrated sense with suitable test 
	functions as above.
\end{remark}
\begin{definition}[Plancherel transforms]\label{def:Plancherel_transforms}
	The direct transform $\Pltrans n$ maps a function $f$ from $\funspat n$ 
	to $\Pltrans n f\in\funspec n$ and acts as
	\begin{align*}
		(\Pltrans n f)(u_1,\ldots,u_n):=\sum_{\la\in\sign n}f(\la)\F_\la(u_1,\ldots,u_n\md\ipb,\SPB).
	\end{align*}
	The inverse transform $\Pltransi n$ takes $R\in\funspec n$ to 
	$\Pltransi n R\in\funspat n$ and acts as
	\begin{align*}
		(\Pltransi n R)(\la):=
		\frac{\conj_{\SPB}(\la)}{(1-q)^{n}}
		\oint\limits_{\contq {\ipbb\SPB}1}\frac{d u_1}{2\pi\i}
		\ldots
		\oint\limits_{\contq {\ipbb\SPB}n}\frac{d u_n}{2\pi\i}
		\prod_{1\le \aind<\bind\le n}\frac{u_\aind-u_\bind}{u_\aind-qu_\bind}
		\frac{R(u_1,\ldots,u_n)}{u_1 \ldots u_n}
		\prod_{i=1}^{n}\pow_{\la_{i}}(u_{i}^{-1}\md\ipbb,\SPB),
	\end{align*}
	where $\conj_{\SPB}(\la)$ is defined by \eqref{conj_definition}.
	Let us explain why $\Pltransi n R$ has finite support in $\la$. 
	If $\la_1\ge M$
	for sufficiently large $M>0$, then the integrand has no poles $\ip_x^{-1}\SP_x^{-1}$
	outside $\contq {\ipbb\SPB}1$, and thus vanishes. (It is crucial 
	that $R$ vanishes at $u_1=\infty$, so that
	the integrand has no residue at $u_1=\infty$.)
	Similarly,
	if $\la_n\le -M$, then 
	there are no $u_n$-poles 
	$\ip_x^{-1}\SP_x$
	inside 
	$\contq {\ipbb\SPB}n$, and so the integral also vanishes. 
	Clearly, the bound $M$ depends on the function $R$.
\end{definition}

\begin{remark}\label{rmk:bilinear_pairing_Tinv}
	Similarly to \cite[Proposition\;3.2]{BCPS2014}, the
	nested contours in the inverse Plancherel transform
	transform $\Pltransi n$ can be replaced by 
	two different families of 
	identical contours, which allows to 
	symmetrize under the integral and 
	interpret $\Pltransi n R$ as a bilinear 
	pairing between $R$ and $\F^{\conj}_\la(u_1^{-1},\ldots,u_n^{-1}\md\ipbb,\SPB)$.
	One of the choices of these identical contours
	is $\contn{\ipbb\SPB;0}$, cf. Corollary \ref{cor:spatial_biorth}.
	Another one is the small contour $\contq {\ipbb\SPB}1$ 
	around $\ipbb\SPB$, but the formula 
	for $\Pltransi n$ would then involve string specializations of the $u_i$'s.
	We refer to \cite{BorodinCorwinPetrovSasamoto2013} 
	and \cite{BCPS2014} for details.
\end{remark}

\begin{theorem}[Plancherel isomorphisms]\label{thm:Plancherel}
	The operator $f\mapsto \Pltransi n (\Pltrans n f)$ acts as the identity 
	on $\funspat n$. The operator
	$R\mapsto \Pltrans n (\Pltransi n R)$
	acts as the identity on $\funspec n$.
\end{theorem}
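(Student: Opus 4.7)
The plan is to establish the two identities separately: $\Pltransi n \circ \Pltrans n = \mathrm{Id}$ on $\funspat n$ will follow directly from the spatial biorthogonality of Theorem \ref{thm:orthogonality_F}, while $\Pltrans n \circ \Pltransi n = \mathrm{Id}$ on $\funspec n$ will be extracted from the spectral biorthogonality of Theorem \ref{thm:spec_orthogonality} by testing against arbitrary admissible $\varphi$.

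For the first identity, fix $f\in\funspat n$ and $\mu\in\sign n$. Since $f$ has finite support, the sum over $\la$ in $\Pltrans n f$ and the integral in $\Pltransi n$ can be swapped, so
\[
(\Pltransi n \Pltrans n f)(\mu) = \conj_{\SPB}(\mu)\sum_{\la}f(\la)\,\mathcal I_{\la,\mu},
\]
where $\mathcal I_{\la,\mu}$ is precisely the integral in \eqref{orthogonality_formulation}. For $\la,\mu\in\signp n$, Theorem \ref{thm:orthogonality_F} gives $\mathcal I_{\la,\mu}=\mathbf{1}_{\la=\mu}/\conj_\SPB(\la)$, and the result collapses to $f(\mu)$. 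The extension to signatures with negative parts uses the shift property \eqref{F_shifts} in tandem with the analogous identity
\[
\pow_{k+r}(u\md\ipb,\SPB)=\prod_{j=0}^{r-1}\frac{\ip_j u-\SP_j}{1-\SP_j\ip_j u}\cdot \pow_k(u\md\sh_r\ipb,\sh_r\SPB),
\]
which under the substitution $(u,\ipb)\mapsto(u^{-1},\ipbb)$ generates exactly the reciprocal of the shift factor in $\F_{\la+r^n}$. Shifting both $\la$ and $\mu$ by $r^n$ for sufficiently large $r$, adjusting the contours $\contq {\ipbb\SPB}j$ and the parameters accordingly, and invoking Theorem \ref{thm:orthogonality_F} in the nonnegative regime closes this direction.

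For the second identity, fix $R\in\funspec n$, set $\widetilde R:=\Pltrans n\Pltransi n R$, and test $\widetilde R$ against an arbitrary $\varphi$ as in \eqref{R_Q_conditions}. Substituting the definitions and interchanging the (formal) sum with the integrals,
\[
\oint\!\!\ldots\!\!\oint\varphi(\UU)\widetilde R(\UU)\!\prod_{\aind<\bind}\!(u_\aind-u_\bind)\,\frac{d\UU}{(2\pi\i)^n}
=\sum_\la\Bigl(\oint\!\!\ldots\!\!\oint\varphi(\UU)\F_\la(\UU)\!\prod_{\aind<\bind}\!(u_\aind-u_\bind)\frac{d\UU}{(2\pi\i)^n}\Bigr)(\Pltransi n R)(\la).
\]
Writing $(\Pltransi n R)(\la)$ as an integral over $\VV$, and using the symmetry of $R$ to symmetrize the integrand over $\sigma\in\Sym_n$ acting on $\VV$ (this is where the nested contours $\contq{\ipbb\SPB}j$ are first deformed to common contours in the spirit of Remark \ref{rmk:bilinear_pairing_Tinv}), the combination $\conj_\SPB(\la)\prod\pow_{\la_i}(v_i^{-1}\md\ipbb,\SPB)$ together with the appropriate cross factor assembles into $\F^{\conj}_\la(\bar\VV\md\ipbb,\SPB)$ via the symmetrization formula \eqref{F_symm_formula}. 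The resulting double integral has the form
\[
\sum_\la\mathop{\oint\!\ldots\!\oint}\frac{d\UU}{(2\pi\i)^n}\mathop{\oint\!\ldots\!\oint}\frac{d\VV}{(2\pi\i)^n}\,\varphi(\UU)\psi(\VV)\,\F_\la(\UU)\F^\conj_\la(\bar\VV)\prod_{\aind<\bind}(u_\aind-u_\bind)(v_\aind-v_\bind),
\]
with $\psi(\VV)$ a concrete rational modification of $R(\bar\VV)/(v_1\cdots v_n)$ chosen so that \eqref{R_Q_conditions} holds. Applying Theorem \ref{thm:spec_orthogonality} collapses the right-hand side to a single symmetrized $\UU$-integral, and unwinding the symmetrization recovers $\oint\!\!\ldots\!\!\oint\varphi(\UU)R(\UU)\prod(u_\aind-u_\bind)\frac{d\UU}{(2\pi\i)^n}$. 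Since $\varphi$ ranges over a class large enough to separate elements of $\funspec n$, we conclude $\widetilde R=R$.

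The first direction is essentially mechanical once the shift identity has been checked. The main obstacle is the second direction: the sum $\sum_\la\F_\la(\UU)\F^\conj_\la(\bar\VV)$ does not converge in any pointwise sense, so every manipulation must be carried out under both integrals, in the precise integrated form of Theorem \ref{thm:spec_orthogonality}. The second, more technical, source of difficulty is the careful bookkeeping of contour deformations and of the interplay between the nested-contour form of $\Pltransi n$ and the symmetric bilinear pairing form alluded to in Remark \ref{rmk:bilinear_pairing_Tinv}, so that the symmetrization of $\prod\pow_{\la_i}(v_i^{-1}\md\ipbb,\SPB)$ really produces $\F^\conj_\la(\bar\VV\md\ipbb,\SPB)$ rather than some signed or $q$-twisted variant.
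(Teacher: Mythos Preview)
Your approach is essentially the same as the paper's: the first direction is reduced to Theorem \ref{thm:orthogonality_F} via the shift property \eqref{F_shifts}, and the second direction is obtained by testing against suitable $\varphi$, passing to common contours as in Remark \ref{rmk:bilinear_pairing_Tinv}, symmetrizing to rebuild $\F^{\conj}_\la(\bar\VV\md\ipbb,\SPB)$, and invoking Theorem \ref{thm:spec_orthogonality}. One small slip: in your $\psi(\VV)$ you wrote $R(\bar\VV)$, but from the definition of $\Pltransi n$ the function $R$ is evaluated at the integration variables themselves, so it should be $R(\VV)$; the paper's concrete choice is $\psi(\VV)=R(\VV)\prod_{\bind<\aind}(v_\aind-v_\bind)\prod_{\aind,\bind}(v_\aind-qv_\bind)^{-1}$, and the separating family of test functions is made explicit there via partial fractions.
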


The first statement of this theorem 
is clearly equivalent to Theorem \ref{thm:orthogonality_F} 
established above 
(note that by \eqref{F_shifts}, identities
\eqref{orthogonality_formulation} and \eqref{orthogonality_formulation_cor}
are invariant under simultaneous shifts in $\la$ and $\mu$, and 
thus also hold for all $\la,\mu\in\sign n$). The proof of 
the second statement relies on Theorem \ref{thm:spec_orthogonality} and 
is given below in this subsection.

\begin{example}[$n=1$]
	To illustrate
	our strategy of the proofs
	(and relate Theorems \ref{thm:spec_orthogonality}
	and \ref{thm:Plancherel} to the
	results of \S \ref{sub:spatial_biorthogonality} and
	the Cauchy identities
	of \S \ref{sub:limits_of_yang_baxter_commutation_relations_cauchy_type_identities}), 
	let us consider the simplest one-variable homogeneous case.
	For that, let us consider the following variant of the Cauchy identity:
	\begin{align*}
		\sum_{n=0}^{\infty}\frac{z^{n}}{w^{n+1}}=\frac{1}{w-z},\qquad \left|\frac{z}{w}\right|<1.
	\end{align*}
	By shifting the summation index towards $-\infty$, we can write
	\begin{align*}
		\sum_{n=-M}^{\infty}\frac{z^{n}}{w^{n+1}}=\frac{w^{M}}{z^{M}}\frac{1}{w-z},\qquad \left|\frac{z}{w}\right|<1.
	\end{align*}
	Now take contour integrals in $z$ and $w$ (over positively oriented
	circles with $|z|<|w|$) of both sides of this relation multiplied by 
	$P(z)Q(w)$, where $P$ and $Q$ are Laurent polynomials. Then in the left-hand side 
	we obtain the same sum for any $M\gg 1$, and in the right-hand side the 
	$w$ contour can be shrunk to zero, thus picking the residue at $w=z$. Therefore, we have an analogue of Theorem \ref{thm:spec_orthogonality}:
	\begin{align*}
		\sum_{n=-\infty}^{\infty}\oint\oint \frac{z^{n}}{w^{n+1}}\,P(z)Q(w)\frac{dz}{2\pi\i}\frac{dw}{2\pi\i}
		=
		\oint P(z)Q(z)\frac{dz}{2\pi\i}.
	\end{align*}
	The convergence condition $|z|<|w|$ is irrelevant for the left-hand side because the 
	sum over $n$ now contains only finitely many terms.
	The resulting spectral biorthogonality can be informally written as
	\begin{align*}
		\sum_{n=-\infty}^{\infty}\frac{z^{n}}{w^{n+1}}=\delta(w-z),
	\end{align*}
	cf. Remark \ref{rmk:spec_biorth_inform}.

	To get the other biorthogonality relation, integrate both sides of the above identity against 
	$w^{m}\frac{dw}{2\pi\i}$,
	$m\in\Z$. Since $\{z^{n}\}_{n\in\Z}$ are linearly independent, 
	we obtain
	\begin{align*}
		\oint_{|w|=\textnormal{const}}w^{m}\frac{1}{w^{n+1}}\frac{dw}{2\pi\i}=\mathbf{1}_{m=n}.
	\end{align*}
	This is the spatial biorthogonality relation
	(an analogue of Theorem \ref{thm:orthogonality_F}
	and Corollary \ref{cor:spatial_biorth}). This identity also
	readily follows from the Cauchy's integral formula.

	Similar considerations work for Cauchy identities in several variables. The second type of biorthogonality 
	relations can often be verified independently in a simpler fashion (as in the proof of 
	Theorem~\ref{thm:orthogonality_F}).
\end{example}

\begin{proof}[Proof of Theorem \ref{thm:spec_orthogonality}]
	The proof is similar to the one given in \cite{Borodin2014vertex},
	with suitable modifications required in the inhomogeneous case.
	The starting point is the Cauchy identity of Corollary \ref{cor:usual_Cauchy}
	written for parameters $\sh_{-M}\ipb$, $\sh_{-M}\SPB$,
	where $M>0$ is a large integer:
	\begin{align}
		\frac{\prod_{i=1}^{n}(1-\SP_{-M}\ip_{-M}u_i)}{(q;q)_{n}}
		\sum_{\mu\in\signp n}
		\F_{\mu}(\UU\md\sh_{-M}\ipb,\sh_{-M}\SPB)
		\G^{\conj}_{\mu}(\bar\VV\md\sh_{-M}\ipbb,\sh_{-M}\SPB)
		=
		\prod_{i=1}^{n}\prod_{j=1}^{n}
		\frac{v_j-qu_i}{v_j-u_i}.\label{Cauchy_identity_spectral_biorth_proof0}
	\end{align}
	Recall that the
	convergence of this sum
	requires
	$\adm{u_i}{v_j^{-1}}$ for all $i,j$ (Definition \ref{def:admissible}),
	and so we must explain how to achieve these conditions 
	on deformations $C_{\UU}\ni u_i$ 
	and $C_{\VV}\ni v_j$
	of our original integration contour 
	in \eqref{spec_biorth}.
	First, observe that due to the restrictions \eqref{R_Q_conditions} 
	on 
	$\varphi(\UU)$, the sum over 
	$\la$ in \eqref{spec_biorth}
	is finite for fixed test functions (the argument is similar 
	to the part of Definition \ref{def:Plancherel_transforms}
	explaining why the support in $\la$ is finite). Thus,
	\eqref{spec_biorth}
	is an identity of rational functions
	in the parameters $q$, $\ipb$, $\SPB$,
	and it is enough
	to verify it on an open subset of the space of 
	parameters.\footnote{Note that a deformation of contours
	passing through a singularity of the integrand may change the rational function
	represented by the contour integral.
	Thus, verifying an identity of rational functions involving contour integrals
	on an open subset in the space of parameters
	allows to then analytically continue this identity 
	as long as the contour integrals represent the same rational functions.
	We will employ this understanding throughout the text.\label{footnote:rational_continuation}}

	We will deform the contours to achieve 
	the following inequalities
	which imply admissibility:
	\begin{align}\label{spec_bio_admiss}
		\left|
		\frac{u_i-\ip_x^{-1}\SP_x}{u_i-\ip_x^{-1}\SP_x^{-1}}\right|
		<r,\qquad
		\left|
		\frac{v_j-\ip_x^{-1}\SP_x^{-1}}{v_j-\ip_x^{-1}\SP_x}
		\right|
		<R,
		\qquad
		\textnormal{for some $R>1$, $0<r<1$, with $rR<1$, and all $x$}.
	\end{align}
	That is, the points $u_i$ have to be closer to 
	$\ip_x^{-1}\SP_x$ than to $\ip_x^{-1}\SP_x^{-1}$, and the opposite for 
	the $v_j$'s.
	Consider the discs
	\begin{align*}
		B_x^{(r)}:=\{z\in\C\colon|z-\ip_x^{-1}\SP_x|<
		r|z-\ip_x^{-1}\SP_x^{-1}|\},
	\end{align*}
	and note that each $u_i$ must be inside
	$\bigcap_{x\in\Z}B_x^{(r)}$, while each $v_j$
	must be outside
	$\bigcup_{x\in\Z}B_x^{(1/R)}$.
	Thus, for the deformed contours 
	$C_{\UU}$ and $C_{\VV}$ to exist, it must be that
	$\bigcap_{x\in\Z}B_x^{(r)}$ is nonempty and 
	contains all $\ip_x^{-1}\SP_x$,
	while
	$\bigcup_{x\in\Z}B_x^{(1/R)}$
	must not contain any of the points 
	$\ip_x^{-1}\SP_x^{-1}$. Moreover, the deformation of the 
	contour
	in \eqref{spec_biorth} to $C_\VV$ must not cross the possible singularities
	at $v_\aind=q^{\pm1}v_{\bind}$, cf. \eqref{R_Q_conditions}. 
	The latter conditions can be ensured by requiring that $q\in(0,\delta)$
	for a sufficiently small $\delta>0$.

	Assume that
	the other parameters are restricted for all $x$ as follows (with $a,b,c,d$ to be determined):
	\begin{align}\label{spec_bio_admiss2}
		\ip_x\in(a,b),\quad 
		0<a<b<\infty;
		\qquad \qquad
		\SP_x\in(-d,-c),\quad 0<c<d<1.
	\end{align}
	The diameter of each $B_x^{(r)}$ 
	(i.e., $B_x^{(r)}\cap \R$)
	is 
	\begin{align*}
		\bigg(\frac{r+\SP_x^2}{(r+1) \SP_x \ip_x},
		\frac{r-\SP_x^2}{(r-1) \SP_x \ip_x}\bigg),
	\end{align*}
	and similarly for $B_x^{(1/R)}$.
	Under \eqref{spec_bio_admiss2} we can estimate for $r>d^{2}$ and 
	$1<R<d^{-2}$:
	\begin{align*}
		\frac{1/R+d^{2}}{-ac(1+1/R)}<\frac{1/R+\SP_x^2}{(1/R+1) \SP_x \ip_x},\qquad
		\frac{r+\SP_x^2}{(r+1) \SP_x \ip_x}<
		\frac{r+c^{2}}{-bd(1+r)},\qquad
		\frac{r-d^2}{-b d (r-1)}<\frac{r-\SP_x^2}{(r-1) \SP_x \ip_x},
	\end{align*}
	and $-d/a<\ip_x^{-1}\SP_x<-c/b$, $\ip_x^{-1}\SP_x<-1/(bd)$.
	
	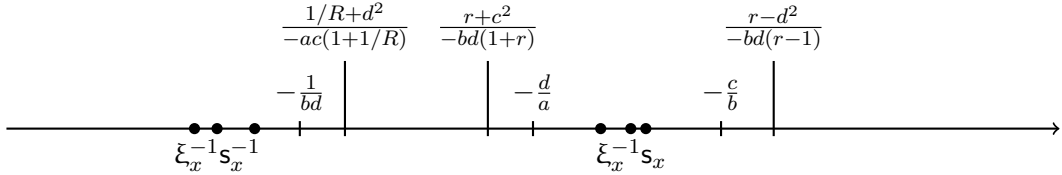
\begin{figure}[htbp]
		\begin{tikzpicture}
			[scale=1, thick]
			\def\pt{0.06}
			\draw[->] (-3,0)--++(14,0);
			\draw[fill] (4.9,0) circle (\pt);
			\draw[fill] (5.3,0) circle (\pt) node[below] {$\ip_x^{-1}\SP_x$};
			\draw[fill] (5.5,0) circle (\pt);
			\draw[fill] (.3,0) circle (\pt);
			\draw[fill] (-.2,0) circle (\pt) node[below] {$\ip_x^{-1}\SP_x^{-1}$};
			\draw[fill] (-0.5,0) circle (\pt);
			\draw (6.5,-.1)--++(0,.2) node[above] {$-\frac cb$};
			\draw (4,-.1)--++(0,.2) node[above] {$-\frac da$};
			\draw (.9,-.1)--++(0,.2) node[above] {$-\frac 1{bd}$};
			\draw (1.5,-.1)--++(0,1) node[above] {$\frac{1/R+d^{2}}{-ac(1+1/R)}$};
			\draw (3.4,-.1)--++(0,1) node[above] {$\frac{r+c^{2}}{-bd(1+r)}$};
			\draw (7.2,-.1)--++(0,1) node[above] {$\frac{r-d^2}{-b d (r-1)}$};
		\end{tikzpicture}
		\caption{Inequalities \eqref{intersections_of_discs_ineq} guarantee 
		this configuration of points, and thus
		the existence of the
		integration contours $C_{\UU}$ and $C_{\VV}$.} 
		\label{fig:intersections_of_discs}
	\end{figure}

	The existence of the contours $C_{\UU}$ and $C_{\VV}$ is thus implied by the following inequalities
	(see Fig.~\ref{fig:intersections_of_discs}):
	\begin{align}\label{intersections_of_discs_ineq}
		-\frac{1}{bd}<\frac{1/R+d^{2}}{-ac(1+1/R)}<
		\frac{r+c^{2}}{-bd(1+r)}<-\frac da<
		-\frac cb<\frac{r-d^2}{-b d (r-1)}.
	\end{align}
	One can check that these inequalities hold for, e.g., 
	\begin{align}\label{spec_bio_admiss3}
		a=\tfrac56,
		\quad b=1,
		\quad c=\tfrac16,
		\quad d=\tfrac14,
		\quad 1<R<16,
		\quad \tfrac1{16}<r<R^{-1}.
	\end{align}
	Thus, for sufficiently small $q$ and for other parameters satisfying
	\eqref{spec_bio_admiss2} and \eqref{spec_bio_admiss3},
	there exist deformations of contours in \eqref{spec_biorth} 
	to $C_{\UU}$ and $C_{\VV}$ not changing the integral, such 
	that on the deformed contours 
	one has $\adm{u_i}{v_j^{-1}}$, and so 
	\eqref{Cauchy_identity_spectral_biorth_proof0} holds.
		
	Let us now rewrite the left-hand side of \eqref{Cauchy_identity_spectral_biorth_proof0} 
	using \eqref{G_via_F_shifts} as follows:
	\begin{multline}\label{Cauchy_identity_spectral_biorth_proof}
		\frac{\prod_{i=1}^{n}(1-\SP_{-M}\ip_{-M}u_i)}{(q;q)_{n}}
		\sum_{{\la\in\signp n\colon\la_n=0}}
		\F_{\la}(\UU\md\sh_{-M}\ipb,\sh_{-M}\SPB)
		\G^{\conj}_{\la}(\bar\VV\md\sh_{-M}\ipbb,\sh_{-M}\SPB)
		\\+
		\prod_{i=1}^{n}\frac{1-\SP_{-M}\ip_{-M}u_i}{1-\SP_{-M}\ip_{-M}v_i}
		\sum_{{\la\in\signp n\colon\la_n\ge1}}
		\F_{\la}(\UU\md\sh_{-M}\ipb,\sh_{-M}\SPB)
		\F_\la^{\conj}(\bar\VV\md\sh_{-M}\ipbb,\sh_{-M}\SPB).
	\end{multline}
	Multiply \eqref{Cauchy_identity_spectral_biorth_proof} 
	by 
	\begin{align}
		\prod_{i=1}^{n}
		\prod_{j=-M}^{-1}
		\frac{1-\ip_j\SP_ju_i}{\ip_ju_i-\SP_j}
		\frac{\ip_jv_i-\SP_j}{1-\ip_j\SP_jv_i}
		\prod_{1\le\aind<\bind\le n}
		(u_{\aind}-u_{\bind})
		(v_{\aind}-v_{\bind})
		\cdot \varphi(u_1,\ldots,u_n)\psi(v_1,\ldots,v_n),
		\label{Cauchy_identity_spectral_biorth_proof1} 
	\end{align}
	where $\varphi$ and $\psi$ are test functions as in
	\eqref{R_Q_conditions}.
	We will integrate \eqref{Cauchy_identity_spectral_biorth_proof}
	multiplied by \eqref{Cauchy_identity_spectral_biorth_proof1}
	over the contours $C_{\UU}$ and $C_{\VV}$ described above, and observe the following:
\smallskip

	{\bf1.} The first summand in \eqref{Cauchy_identity_spectral_biorth_proof} multiplied
		by \eqref{Cauchy_identity_spectral_biorth_proof1} vanishes after the integration for large enough $M$. 
		Indeed,  
		each summand coming from 
		$\prod_{i=1}^{n}(1-\SP_{-M}\ip_{-M}u_i)
		\F_{\la}(\UU\md\sh_{-M}\ipb,\sh_{-M}\SPB)$ with $\la_n=0$
		is regular outside $C_{\UU}$, because there are no poles at $\ip_x^{-1}\SP_x^{-1}$ for $x\in \Z$.
		\smallskip

	{\bf2.} In the second summand we write, 
		using \eqref{F_shifts} and shifting the summation index:
		\begin{multline*}
			\prod_{i=1}^{n}
			\prod_{j=-M}^{-1}
			\frac{1-\ip_j\SP_ju_i}{\ip_ju_i-\SP_j}
			\frac{\ip_jv_i-\SP_j}{1-\ip_j\SP_jv_i}
			\sum_{{\la\in\signp n\colon \la_n\ge1}}
			\F_{\la}(\UU\md\sh_{-M}\ipb,\sh_{-M}\SPB)
			\F_\la^{\conj}(\bar\VV\md\sh_{-M}\ipbb,\sh_{-M}\SPB)\\=
			\sum_{{\la\in\sign n\colon \la_n\ge-M+1}}
			\F_{\la}(\UU\md\ipb,\SPB)
			\F_\la^{\conj}(\bar\VV\md\ipbb,\SPB).
		\end{multline*}
		Therefore, as $M\to+\infty$, we obtain the sum over all $\la\in\sign n$. 
		Since with our test functions the above sum over $\la$
		is actually finite, this limit procedure is a stabilization.
	\smallskip

	{\bf3.} Consider the integral 
		of the right-hand side of \eqref{Cauchy_identity_spectral_biorth_proof0}
		multiplied by \eqref{Cauchy_identity_spectral_biorth_proof1},
		and compute it by evaluating the residues 
		in the  $\VV$-integration variables. 
		Because $C_{\UU}$ is inside $C_{\VV}$,
		for large enough $M$ the integrand has no $v_i$-poles inside $C_{\VV}$
		except $v_i=u_{\sigma(i)}$ for some permutation $\sigma\in\Sym_{n}$.
		Indeed, the same $u_j$ cannot be utilized twice because of the prefactor 
		$\prod_{\aind<\bind}
		(v_{\aind}-v_{\bind})$. The sum over all $\sigma\in\Sym_n$
		yields the desired right-hand side of \eqref{spec_biorth}.
		\smallskip

		We have thus established \eqref{spec_biorth} for small $q$ and 
		other parameters $\ipb$ and $\SPB$ satisfying \eqref{spec_bio_admiss2} and \eqref{spec_bio_admiss3},
		and with integration contours $C_{\UU}$ and $C_{\VV}$. However, since the 
		sum in the left-hand side of \eqref{spec_biorth}
		is finite, we can deform the contours back to one and the same contour
		as described in the claim. Next, since both sides of 
		\eqref{spec_biorth} are rational functions 
		in $q$, $\ipb$, and $\SPB$, we can analytically continue this identity
		to the full range of parameters.
		This completes the proof.
\end{proof}

\begin{proof}[Proof of Theorem \ref{thm:Plancherel}]
	Let us show how the second statement
	follows from the spectral biorthogonality of Theorem \ref{thm:spec_orthogonality}.
	To prove $\Pltrans n (\Pltransi n R)=R$, rewrite the integration in 
	$\Pltransi n R$ using the contours $\contn{\ipbb\SPB;0}$
	(cf.~Remark \ref{rmk:bilinear_pairing_Tinv}). Thus, we must show 
	that
	\begin{align*}
	% \label{R_equal_spectral_biorth}
		R(\UU)=
		\sum_{\la\in\sign n}
		\F_\la(\UU\md\ipb,\SPB)
		\frac{1}{(1-q)^{n}n!}
		\mathop{\oint \ldots \oint}\limits_{(\contn {\ipbb\SPB;0})^{n}}
		\frac{d\VV}{2\pi\i}
		\prod_{i=1}^{n}v_i^{-1}
		\prod_{1\le \aind\ne\bind\le n}\frac{v_\aind-v_\bind}{v_\aind-qv_\bind}
		R(\VV)
		\F^{\conj}_{\la}(\bar\VV\md\ipbb,\SPB).
	\end{align*}
	
	It suffices to establish the following integrated version of the above
	identity (we have interchanged summation and integration in $\UU$
	because of the finitely many nonzero terms in the sum):
	\begin{multline*}
		\mathop{\oint \ldots \oint}\limits_{(\contn {\ipbb\SPB;0})^{n}}
		R(\UU)Q(\UU)
		\frac{d\UU}{(2\pi\i)^{n}}
		=
		\sum_{\la\in\sign n}
		\frac{1}{n!}
		\mathop{\oint \ldots \oint}\limits_{(\contn {\ipbb\SPB;0})^{n}}
		\frac{d\UU}{(2\pi\i)^{n}}
		\mathop{\oint \ldots \oint}\limits_{(\contn {\ipbb\SPB;0})^{n}}
		\frac{d\VV}{(2\pi\i)^{n}}
		\\\times
		\prod_{1\le \aind\ne\bind\le n}(v_\aind-v_\bind)
		\prod_{1\le \aind,\bind\le n}\frac1{v_\aind-qv_\bind}
		\F_\la(\UU\md\ipb,\SPB)
		\F^{\conj}_{\la}(\bar\VV\md\ipbb,\SPB)
		R(\VV)Q(\UU).
	\end{multline*}
	Indeed, 
	consider the partial fraction expansion $R(\UU)=\sum_{j,p} r_{j,p}
	(u_j-p)^{-1}$, where $j=1,\ldots,n$, and $p$ runs over a subset
	of possible poles described in Definition \ref{def:function_spaces}.
	To extract a single $r_{j,p}$, choose $Q(\UU)=
	\prod_{\tilde p}(u_j-\tilde p)
	\prod_{i\ne j}(u_i-\ip_x^{-1}\SP_x)^{-1}$, where $\tilde p$
	runs over all other $u_j$-poles of $R(\UU)$,
	and $x\in\Z_{\ge0}$ is arbitrary. The factors 
	$(u_i-\ip_x^{-1}\SP_x)^{-1}$ are needed to ensure that the integrals over
	all other $u_i$'s are nontrivial.
	Thus, it suffices to let $Q(\UU)$ be an arbitrary (not necessarily symmetric) rational function
	with possibly simple poles at $\ip_x^{-1}\SP_x$, $x\ge0$.

	Apply the spectral biorthogonality
	\eqref{spec_biorth} with functions 
	(which clearly satisfy \eqref{R_Q_conditions}):
	\begin{align*}
		\varphi(\UU)=Q(\UU)
		\prod_{1\le \aind <\bind\le n}\frac1{u_\aind-u_\bind}
		\qquad\text{and}\qquad
		\psi(\VV)=R(\VV)\prod_{1\le \bind <\aind\le n}(v_\aind-v_\bind)
		\prod_{1\le \aind,\bind\le n}\frac1{v_\aind-qv_\bind}
	\end{align*}
	to rewrite the above sum as
	\begin{align*}
		\frac{1}{n!}
		\mathop{\oint \ldots \oint}\limits_{(\contn {\ipbb\SPB;0})^{n}}
		\frac{d\UU}{(2\pi\i)^{n}}
		Q(\UU)\sum_{\sigma\in \Sym_k}\sgn(\sigma)R(\sigma\UU)
		\prod_{1\le \aind <\bind\le n}\frac{
		u_{\sigma(\aind)}-u_{\sigma(\bind)}}
		{u_\aind-u_\bind}.
	\end{align*}
	The cancellation in the last product yields another $\sgn(\sigma)$,
	and so we see that the desired identity is established.
	This concludes the proof of the theorem.
\end{proof}

We conclude this subsection with a number of remarks.

\begin{remark}[Completeness of the Bethe ansatz]
	Plancherel isomorphism results (Theorem \ref{thm:Plancherel})
	imply that the (coordinate) Bethe ansatz yielding the 
	eigenfunctions $\F_\la$ of the transfer matrices is \emph{complete}.
	That is, any function $f\in\funspat n$ 
	can be mapped into the spectral space, and then reconstructed back from 
	its image. One of the ways 
	to write down this completeness statement 
	(using the orthogonality relation \eqref{orthogonality_formulation_cor})
	is the following:
	\begin{multline*}
		f(\la)=
		\frac{1}{(1-q)^{n}n!}
		\oint\limits_{\contn {\ipbb\SPB;0}}\frac{d u_1}{2\pi\i u_1}
		\ldots
		\oint\limits_{\contn {\ipbb\SPB;0}}\frac{d u_n}{2\pi\i u_n}
		\prod_{1\le \aind\ne\bind\le n}\frac{u_\aind-u_\bind}{u_\aind-qu_\bind}
		\\\times(\Pltrans n f)(u_1,\ldots,u_n)
		\F^{\conj}_{\la}(u_1^{-1},\ldots,u_n^{-1}\md\ipbb,\SPB).
	\end{multline*}
\end{remark}

\begin{remark}[Spectral decomposition of $\Qe_{\PI m;v}$]
	\label{rmk:spec_decomp}
	Similarly, \eqref{orthogonality_formulation_cor}
	implies a spectral decomposition
	of the operator $\Qe_{\PI m;v}$ acting on functions 
	on $\signp m$, cf. Remark \ref{rmk:EF_relation_Qe}:
	\begin{multline}\label{spec_decomp}
		\Qe_{\PI m;v}(\mu\to\nu)=
		\frac{1}{(1-q)^{m}m!}\frac{(-\SPB)^{\nu}}{(-\SPB)^{\mu}}
		\oint\limits_{\contn {\ipbb\SPB;0}}\frac{d z_1}{2\pi\i z_1}
		\ldots
		\oint\limits_{\contn {\ipbb\SPB;0}}\frac{d z_m}{2\pi\i z_m}
		\prod_{1\le \aind\ne \bind\le m}
		\frac{z_\aind-z_\bind}{z_\aind-qz_\bind}
		\\\times
		\bigg(\prod_{i=1}^{m}\frac{1-qz_iv}{1-z_iv}\bigg)
		\F_{\mu}(z_1,\ldots,z_m\md\ipb,\SPB)
		\F_{\nu}^{\conj}(z_1^{-1},\ldots,z_m^{-1}\md\ipbb,\SPB).
	\end{multline}
	Indeed, by \eqref{EF_relation_Qe} this operator has eigenfunctions
	$\EF{}\la(z_1,\ldots,z_m)=\F_\la(z_1,\ldots,z_m\md\ipb,\SPB)/(-\SPB)^{\la}$
	with eigenvalues 
	$\displaystyle\prod\nolimits_{i=1}^{m}\frac{1-qz_iv}{1-z_iv}$
	(the constant $(q;q)_{m}$ can be ignored).
	Thus, 
	\eqref{spec_decomp} follows
	by multiplying the eigenrelation \eqref{EF_relation_Qe} by 
	$(-\SPB)^{\nu}\F^{\conj}_\nu(z_1^{-1},\ldots,z_m^{-1}\md\ipbb,\SPB)$
	and integrating as in \eqref{orthogonality_formulation_cor}. 
	Since the identity \eqref{EF_relation_Qe} 
	requires the admissibility 
	$\adm{z_i}v$
	(Definition \ref{def:admissible}) before the integration, 
	in \eqref{spec_decomp}
	the point $v^{-1}$ should be outside the integration
	contour ${\contn {\ipbb\SPB;0}}$ (the argument
	for this is similar to the proof of
	Proposition \ref{prop:skew_G_integral_formula} below).
\end{remark}

\begin{remark}[Extensions]
	\label{rmk:optimal_phi_psi}
	Function spaces 
	$\funspat n$ and $\funspec n$,
	as well as test functions
	in Theorem \ref{thm:spec_orthogonality},
	are far from being optimal.
	This is because we only address
	algebraic aspects of Plancherel isomorphisms.
		
	The concrete form of restrictions on the functions $\phi(\UU)$ and $\psi(\VV)$
	in Theorem \ref{thm:spec_orthogonality}
	is motivated by the application of this theorem
	in the proof of Theorem \ref{thm:Plancherel}. 
	However, as can be seen from the proof, these restrictions
	can be relaxed. 
	For example, for
	$\psi(\VV)$ it is enough that
	there
	exists an open subset $\Omega$ in the space of parameters $q$, $\ipb$, and $\SPB$, 
	such that for the parameters in $\Omega$ the
	function $\psi(\VV)$ is holomorphic in the interior of the deformed contour $C_{\VV}$
	(constructed in the proof of Theorem \ref{thm:spec_orthogonality})
	minus the points $\ip_x^{-1}\SP_x$ where $\psi(\VV)$ can have at most finitely many simple poles.

	An extension of the first Plancherel isomorphism 
	to larger spaces is described in \cite[Appendix~A]{CorwinPetrov2015}
	in the homogeneous case; the inhomogeneous situation is completely analogous.
\end{remark}

% subsection plancherel_isomorphisms_and_completeness (end)

\subsection{An integral representation for $\G_\mu$} % (fold)
\label{sub:integral_formula_for_g_mu_}

Using the orthogonality result of Theorem \ref{thm:orthogonality_F} and the Cauchy identity, 
we can obtain relatively simple nested contour integral formulas for the 
skew functions $\G_{\mu/\kappa}$
(and, in particular, for $\G_{\mu}$), which will
be useful later in \S \ref{sec:observables_of_interacting_particle_systems}
and \S \ref{sec:_q_moments_of_the_height_function_of_interacting_particle_systems}.

\begin{proposition}\label{prop:skew_G_integral_formula}
	Assume that 
	the parameters
	$q$, $\SPB$, and $\ipb$
	satisfy
	\eqref{stochastic_weights_condition_qsxi} and \eqref{assumptions_one_better}.
	For any $k,N\in\Z_{\ge1}$,
	$\mu,\kappa\in\signp k$, and $v_1,\ldots,v_N$ such that the $v_i^{-1}$'s are outside 
	of all the integration
	contours $\contq {\ipbb\SPB}j$ of Definition \ref{def:orthogonality_contours}, we have 
	\begin{multline}\label{skew_G_integral_formula}
		\G_{\mu/\kappa}(v_1,\ldots,v_N\md\ipbb,\SPB)=
		\frac{1}{(1-q)^{k}}
		\oint\limits_{\contq {\ipbb\SPB}1}\frac{d u_1}{2\pi\i}
		\ldots
		\oint\limits_{\contq {\ipbb\SPB}k}\frac{d u_k}{2\pi\i}
		\prod_{1\le \aind<\bind\le k}\frac{u_\aind-u_\bind}{u_\aind-qu_\bind}
		\\\times\F_{\kappa}^{\conj}(u_1,\ldots,u_k\md\ipb,\SPB)
		\prod_{i=1}^{k}u_i^{-1}\pow_{\mu_{i}}(u_{i}^{-1}\md\ipbb,\SPB)
		\prod_{\substack{1\le i\le k\\1\le j\le N}}
		\frac{1-qu_iv_j}{1-u_iv_j}.
	\end{multline}
\end{proposition}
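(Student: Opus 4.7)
The plan is to combine the Pieri rule from Corollary \ref{cor:Pieri} with the spatial orthogonality of Theorem \ref{thm:orthogonality_F}. Roughly speaking, we use Pieri to expand the product $\prod_{i,j}\frac{1-qu_iv_j}{1-u_iv_j}\F^{\conj}_\kappa(\UU)$ as a series in $\F^{\conj}_\nu(\UU)$ whose coefficients are $\G_{\nu/\kappa}(\VV)$, and then the orthogonality relation will pick out the single term $\nu=\mu$, leaving precisely $\G_{\mu/\kappa}(\VV)$.

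More concretely, I would first iterate the single-variable Pieri rule \eqref{Pieri1} by means of the branching rule \eqref{branching-G} (rewritten in the conjugated form $\G^{\conj}_{\nu/\kappa}=\sum_{\nu'}\G^{\conj}_{\nu/\nu'}\G^{\conj}_{\nu'/\kappa}$) to obtain, for any $\kappa\in\signp k$ and admissible $(u_i,v_j)$,
\begin{equation*}
\prod_{i=1}^{k}\prod_{j=1}^{N}\frac{1-qu_iv_j}{1-u_iv_j}\,\F_{\kappa}(\UU\md\ipb,\SPB)=\sum_{\nu\in\signp k}\G^{\conj}_{\nu/\kappa}(\VV\md\ipbb,\SPB)\,\F_{\nu}(\UU\md\ipb,\SPB).
\end{equation*}
Multiplying both sides by $\conj_{\SPB}(\kappa)$ and using $\G^{\conj}_{\nu/\kappa}=\frac{\conj_{\SPB}(\nu)}{\conj_{\SPB}(\kappa)}\G_{\nu/\kappa}$ recasts this as
\begin{equation*}
\prod_{i,j}\frac{1-qu_iv_j}{1-u_iv_j}\,\F^{\conj}_{\kappa}(\UU\md\ipb,\SPB)=\sum_{\nu\in\signp k}\G_{\nu/\kappa}(\VV\md\ipbb,\SPB)\,\F^{\conj}_{\nu}(\UU\md\ipb,\SPB).
\end{equation*}
I would substitute this identity into the integrand on the right-hand side of \eqref{skew_G_integral_formula} and interchange the sum with the nested contour integral. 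Applying Theorem \ref{thm:orthogonality_F} termwise then produces $\mathbf{1}_{\nu=\mu}$, so the whole expression collapses to $\G_{\mu/\kappa}(\VV\md\ipbb,\SPB)$, which is the claim.

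The main technical obstacle is justifying the substitution in the appropriate regime of $\VV$. The Pieri expansion is valid only under the admissibility $\adm{u_i}{v_j}$ of Definition \ref{def:admissible}, which is not automatic along the nested contours $\contq{\ipbb\SPB}j$ for the parameters hypothesized in the proposition (where $v_j^{-1}$ is merely outside the contours). My plan is to first establish the formula in the restricted regime where the $v_j$'s are taken sufficiently small in modulus, so that $\adm{u_i}{v_j}$ holds uniformly for $u_i$ on the contours; there the sum over $\nu$ converges absolutely and uniformly in $\UU$ on the contours (this uses the sufficient condition \eqref{admissible_sufficient} together with the boundedness of $\ipb,\SPB$), so the interchange of sum and integral is legitimate. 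With this regime handled, both sides of \eqref{skew_G_integral_formula} are rational functions of $v_1,\dots,v_N$, and the right-hand side remains a well-defined rational function as long as no $v_j^{-1}$ crosses one of the contours $\contq{\ipbb\SPB}j$ (the only $v_j$-dependent poles of the integrand sit at $u_i=v_j^{-1}$). Therefore a standard analytic continuation argument (as in the footnote to \ref{footnote:rational_continuation}) extends the identity to all $\VV$ satisfying the hypothesis that each $v_j^{-1}$ lies outside every $\contq{\ipbb\SPB}j$, finishing the proof.
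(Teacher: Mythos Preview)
Your proposal is correct and follows essentially the same route as the paper: expand $\prod_{i,j}\frac{1-qu_iv_j}{1-u_iv_j}\,\F^{\conj}_\kappa(\UU)$ via the iterated Pieri rule (Corollary \ref{cor:Pieri}.1), interchange the sum with the nested contour integral, and apply the orthogonality of Theorem \ref{thm:orthogonality_F} termwise, followed by analytic continuation. The only refinement in the paper's justification is that taking $|v_j|$ small does not by itself force $\adm{u_i}{v_j}$ uniformly on the nested contours (the $u$-factor in \eqref{admissible_sufficient} need not be below $|\SP_x|^{-1}$ there), so the paper first restricts $\ipb,\SPB$ to a slightly smaller open set (condition \eqref{skew_G_integral_formula_proof1}) and then analytically continues in all parameters rather than in $\VV$ alone.
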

When $\kappa=(0^{k})$, 
with the 
help of \eqref{F_at_zero_signature},
formula \eqref{skew_G_integral_formula} reduces to
\begin{corollary}\label{cor:G_integral_formula}
	Under the same assumptions as in 
	Proposition \ref{prop:skew_G_integral_formula} above, we have
	\begin{multline}\label{G_integral_formula}
		\G_\mu(v_1,\ldots,v_N\md\ipbb,\SPB)=
		\frac{(\SP_0^{2};q)_{k}}{(1-q)^{k}}
		\oint\limits_{\contq {\ipbb\SPB}1}\frac{d u_1}{2\pi\i}
		\ldots
		\oint\limits_{\contq {\ipbb\SPB}k}\frac{d u_k}{2\pi\i}
		\prod_{1\le \aind<\bind\le k}\frac{u_\aind-u_\bind}{u_\aind-qu_\bind}\\\times
		\prod_{i=1}^{k}\frac{u_i^{-1}\pow_{\mu_{i}}(u_{i}^{-1}\md\ipbb,\SPB)}{1-\SP_0\ip_0u_i}
		\prod_{\substack{1\le i\le k\\1\le j\le N}}
		\frac{1-qu_iv_j}{1-u_iv_j}.
	\end{multline}
\end{corollary}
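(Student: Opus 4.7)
The plan is to derive the identity \eqref{G_integral_formula} as a direct specialization of \eqref{skew_G_integral_formula} by setting $\kappa = (0^k)$ and evaluating the resulting $\F^{\conj}_{0^k}$ explicitly; all the analytic work (existence of contours, convergence of integrals, location of $v_j^{-1}$) is already handled by the hypothesis of Proposition \ref{prop:skew_G_integral_formula}, so the corollary is essentially an algebraic simplification.

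First, I would apply Proposition \ref{prop:skew_G_integral_formula} with $\kappa = (0^k)$, noting that $\G_{\mu/0^k} = \G_\mu$ by definition (cf.\ Definition \ref{def:G}, where $\G_\mu := \G_{\mu/(0,\dots,0)}$). The right-hand side then contains the factor $\F^{\conj}_{0^k}(u_1,\ldots,u_k \md \ipb, \SPB)$, which by the definition of the conjugation together with \eqref{F_at_zero_signature} equals
\begin{align*}
\F^{\conj}_{0^k}(u_1,\ldots,u_k\md\ipb,\SPB)
= \frac{\conj_\SPB(0^k)}{\conj_\SPB(\varnothing)}\,\F_{0^k}(u_1,\ldots,u_k\md\ipb,\SPB)
= \frac{(\SP_0^2;q)_k}{(q;q)_k}\cdot\frac{(q;q)_k}{\prod_{i=1}^k (1-\SP_0\ip_0 u_i)}.
\end{align*}
Here I am using the definition \eqref{conj_definition}, which for the signature $0^k$ (having multiplicity $n_0 = k$ and all other multiplicities zero) yields $\conj_\SPB(0^k) = (\SP_0^2;q)_k/(q;q)_k$, while $\conj_\SPB(\varnothing) = 1$.

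Substituting this into \eqref{skew_G_integral_formula}, the two factors $(q;q)_k$ cancel, leaving the prefactor $(\SP_0^2;q)_k/(1-q)^k$ and distributing the product $\prod_{i=1}^k (1-\SP_0\ip_0 u_i)^{-1}$ into the $i$-indexed product inside the integrand. This yields exactly
\begin{align*}
\G_\mu(v_1,\ldots,v_N\md\ipbb,\SPB)
= \frac{(\SP_0^2;q)_k}{(1-q)^k}
\oint\limits_{\contq{\ipbb\SPB}1}\!\!\frac{du_1}{2\pi\i}\cdots
\oint\limits_{\contq{\ipbb\SPB}k}\!\!\frac{du_k}{2\pi\i}
\prod_{\aind<\bind}\frac{u_\aind-u_\bind}{u_\aind-qu_\bind}
\prod_{i=1}^{k}\frac{u_i^{-1}\pow_{\mu_i}(u_i^{-1}\md\ipbb,\SPB)}{1-\SP_0\ip_0 u_i}
\prod_{i,j}\frac{1-qu_iv_j}{1-u_iv_j},
\end{align*}
which is the desired formula \eqref{G_integral_formula}. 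There is no genuine obstacle here: the only thing to verify is the bookkeeping of the $\conj$-normalization, which amounts to reading off multiplicities from the signature $0^k$, and the cancellation of $(q;q)_k$. Since the hypotheses of the corollary coincide verbatim with those of the proposition, no further analytic justification is needed.
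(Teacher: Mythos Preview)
Your proof is correct and follows the same approach as the paper: specialize Proposition \ref{prop:skew_G_integral_formula} at $\kappa = (0^k)$, apply \eqref{F_at_zero_signature} together with \eqref{conj_definition} to evaluate $\F^{\conj}_{0^k}$, and simplify. The paper states this reduction in a single sentence; your version spells out the bookkeeping for $\conj_\SPB(0^k)$ and the cancellation of $(q;q)_k$, but the argument is identical.
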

When  the parameters $\ip_j$ and $\SP_j$ are $j$-independent, 
formula \eqref{G_integral_formula} 
appeared in \cite[Prop. 7.3]{Borodin2014vertex}.
\begin{proof}[Proof of Proposition \ref{prop:skew_G_integral_formula}]
	Fix $\mu,\kappa\in\signp k$, 
	multiply both sides of \eqref{orthogonality_formulation}
	by $\G_{\la/\kappa}(v_1,\ldots,v_N\md\ipbb,\SPB)$, and sum over $\la\in\signp k$.
	The right-hand side obviously equals $\G_{\mu/\kappa}(v_1,\ldots,v_N\md\ipbb,\SPB)$, 
	while in the left-hand side we 
	have
	\begin{multline*}
		% \label{skew_G_integral_formula_proof}
		(1-q)^{-k}\sum_{\la\in\signp k}\,
		\oint\limits_{\contq {\ipbb\SPB}1}\frac{d u_1}{2\pi\i}
		\ldots
		\oint\limits_{\contq {\ipbb\SPB}k}\frac{d u_k}{2\pi\i}
		\prod_{1\le \aind<\bind\le k}\frac{u_\aind-u_\bind}{u_\aind-qu_\bind}
		\\\times\prod_{i=1}^{k}\frac{\pow_{\mu_{i}}(u_{i}^{-1}\md\ipbb,\SPB)}{u_i}
		\F_{\la}^{\conj}(u_1,\ldots,u_k\md\ipb,\SPB)
		\G_{\la/\kappa}(v_1,\ldots,v_N\md\ipbb,\SPB).
	\end{multline*}
	If one can perform the (infinite) 
	summation over $\la$
	inside the integral,
	then by the (iterated)
	Corollary \ref{cor:Pieri}.1 (which follows from the Cauchy identity),
	one readily gets the desired formula for the symmetric function
	$\G_{\mu/\kappa}(v_1,\ldots,v_N\md\ipbb,\SPB)$. It remains to justify 
	that we indeed can interchange summation and integration.

	The (absolutely convergent) summation can be performed inside the integral
	if $\adm{u_i}{v_j}$ for $u_i$ on the contours. The
	admissibility
	follows if
	\begin{align*}
		\left|\frac{\ip_x u_i-\SP_x}{1-\SP_x\ip_x u_i}
		\cdot
		\frac{\ip_x^{-1}v_j-\SP_x}{1-\SP_x\ip_x^{-1}v_j}\right|=
		\left|
		\frac{u_i-\ip_x^{-1}\SP_x}{u_i-\ip_x^{-1}\SP_x^{-1}}
		\cdot
		\frac{v_j^{-1}-\ip_x^{-1}\SP_x^{-1}}{v_j^{-1}-\ip_x^{-1}\SP_x}
		\right|
		<1-\epsilon
		\qquad
		\textnormal{for some $\epsilon>0$ and all $x$}.
	\end{align*}
	Therefore, it would be sufficient if
	$u_i$ is closer to $\ip_x^{-1}\SP_x$
	than to $\ip_x^{-1}\SP_x^{-1}$ for all $x$, 
	and, on the other hand, $v_j^{-1}$
	is closer to 
	$\ip_x^{-1}\SP_x^{-1}$ than to $\ip_x^{-1}\SP_x$.
	Since the $u$-contours encircle the points
	$\ip_x^{-1}\SP_x$, we can readily 
	achieve the above inequality in the case when 
	for each $x$, the midpoint 
	$\frac{1}{2}\ip_x^{-1}(\SP_x+\SP_x^{-1})$
	of
	$\ip_x^{-1}\SP_x^{\pm1}$ lies to the left 
	of the leftmost point $m_{\ipbb\SPB}$
	of $\ip_x^{-1}\SP_x$.
	A sufficient condition for that is
	\begin{align}\label{skew_G_integral_formula_proof1}
		\tfrac12\big(m_{\ipbb|\SPB|}+m_{\ipbb|\SPBB|}\big)>M_{\ipbb|\SPB|},
	\end{align}
	which is 
	more restrictive than the second condition
	in \eqref{assumptions_one_better} because clearly
	$m_{\ipbb|\SPB|}<m_{\ipbb|\SPBB|}$.

	If this more restrictive condition holds, we can slightly deform the 
	contours $\contq {\ipbb\SPB}i$ if needed, and choose the $v_j^{-1}$'s
	outside these contours with 
	real part being
	negative and
	sufficiently large in absolute value. This ensures the admissibility, 
	and so we can perform the summation inside the integral
	and establish the desired identity \eqref{skew_G_integral_formula}.
	
	Let $\Omega$ 
	be the set of parameters $(\{v_{j}\},\ipb,\SPB)$
	such that 
	\eqref{skew_G_integral_formula_proof1}
	holds and 
	the real parts of the $v_j^{-1}$'s are sufficiently negative. 
	This set is clearly nonempty and open, 
	and, moreover, 
	for fixed $\mu$ and $\kappa$
	both sides of \eqref{skew_G_integral_formula}
	are represented by rational functions which
	depend only on finitely many of the $\ip_x$'s and $\SP_x$'s.
	Thus, we can employ
	analytic continuation of rational functions 
	(cf. footnote$^{\ref{footnote:rational_continuation}}$) 
	and continue identity \eqref{skew_G_integral_formula}
	from $\Omega$ to a larger set of variables and parameters as in the claim of the 
	present proposition.
	Indeed, the restrictions on parameters in the claim of the proposition
	ensure that the integration contours
	$\contq {\ipbb\SPB}i$ exist, and also that the points $v_j^{-1}$ are outside these contours.
	Thus, for these parameters the right-hand side of \eqref{skew_G_integral_formula} represents
	the same rational function as on $\Omega$.
\end{proof}

% subsection integral_formula_for_g_mu_ (end)

\subsection{Another proof of symmetrization formula for $\G_\mu$} % (fold)
\label{sub:another_way_to_get_symmetrization_formula_for_g_mu_}

The nested contour integral formula for $\G_\mu$
of Corollary \ref{cor:G_integral_formula}
may be used as an alternative way to derive the 
symmetrization formula 
for $\G_\mu$
of Theorem \ref{thm:symmetrization}.
Note that 
Corollary \ref{cor:G_integral_formula}
in turn follows from the Cauchy identity
plus the
spatial orthogonality of the $\F_\la$'s,
and the latter is implied by the 
symmetrization formula for $\F_\la$.

\begin{remark}
	Another use of formula \eqref{G_integral_formula}
	is a straightforward alternative proof of
	Proposition \ref{prop:RHO_spec}
	(computation of the specialization
	$\G_\mu(\RHO\md\ipbb,\SPB)$),
	which can also be generalized to 
	other specializations of $\G_\mu$.
	This will be a starting point for averaging of
	observables in \S \ref{sub:computation_of_gnurhow}
	below.
\end{remark}

To get the symmetrization formula for $\G_\mu$,
we follow the approach of \cite[Prop. 7.3]{Borodin2014vertex}
and explicitly compute the integral in 
the right-hand side of \eqref{G_integral_formula}.
To ensure that this formula holds, we
assume \eqref{stochastic_weights_condition_qsxi} 
and \eqref{assumptions_one_better}, 
and that $v_1,\ldots,v_N\in\C$ are pairwise distinct
and are such that the points $v_i^{-1}$ lie 
outside the integration contours $\contq {\ipbb\SPB}j$
of Definition \ref{def:orthogonality_contours}.
 Observe the following properties of the integrand in 
\eqref{G_integral_formula}:
\begin{itemize}
	\item The integrand is regular at $u_j=\infty$ and $u_j=0$.
	\item If $\mu_i>0$, then the integrand is regular
	at $u_i=\ip_j^{-1}\SP_j^{-1}$ for all $j\ge0$ because then
	\begin{align*}
		\frac{\pow_{\mu_{i}}(u_{i}^{-1}\md\ipbb,\SPB)}{1-\SP_0\ip_0u_i}
		=
		\frac{1-q}{1-\SP_{\mu_i}u_i^{-1}/\ip_{\mu_i}}
		\frac{1}{\ip_0u_i-\SP_0}
		\prod_{j=1}^{\mu_i-1}\frac{u_i^{-1}/\ip_j-\SP_j}{1-\SP_ju_i^{-1}/\ip_j}.
	\end{align*}
	\item If $\mu_i=0$, then the integrand is regular at $u_i=\ip_j^{-1}\SP_j^{-1}$
	for all $j\ge1$, because then
	\begin{align*}
		\frac{\pow_{\mu_{i}}(u_{i}^{-1}\md\ipbb,\SPB)}{1-\SP_0\ip_0u_i}
		=\frac{1-q}{(1-\SP_0u_i^{-1}/\ip_0)(1-\SP_0\ip_0u_i)}.
	\end{align*}
\end{itemize}
We will now evaluate the integral in \eqref{G_integral_formula} as follows.
Assume that $\mu\in\signp k$ has exactly $\ell$ zeros:
$\mu_k=\mu_{k-1}=\ldots=\mu_{k-\ell+1}=0$, $\mu_{k-\ell}\ge1$. 
This allows to take the residues at
$u_k=s/a_0$, $u_{k-1}=qs/a_0$, \ldots, $u_{k-\ell+1}=q^{\ell-1}s/a_0$,
(in this order), because for each variable $u_k,\ldots,u_{k-\ell+1}$
the corresponding point is the only pole inside the integration contour.
Rewriting 
\eqref{G_integral_formula} as 
\begin{multline*}
	\G_\mu(v_1,\ldots,v_N\md\ipbb,\SPB)=
	\frac{(\SP_0^{2};q)_{k}}{(1-q)^{k-\ell}}
	\oint\limits_{\contq {\ipbb\SPB}1}\frac{d u_1}{2\pi\i}
	\ldots
	\oint\limits_{\contq {\ipbb\SPB}k}\frac{d u_k}{2\pi\i}
	\prod_{1\le \aind<\bind\le k}\frac{u_\aind-u_\bind}{u_\aind-qu_\bind}
	\prod_{\substack{1\le i\le k\\1\le j\le N}}
	\frac{1-qu_iv_j}{1-u_iv_j}\\\times
	\prod_{i=1}^{k-\ell}
	\frac{u_i^{-1}\pow_{\mu_{i}}(u_{i}^{-1}\md\ipbb,\SPB)}{1-\SP_0\ip_0u_i}
	\prod_{i=k-\ell+1}^{k}
	\frac{1}{{(u_i-\SP_0/\ip_0)(1-\SP_0\ip_0u_i)}},
\end{multline*}
we consecutively obtain:
\begin{align*}
	&\Res_{u_k=\SP_0/\ip_0}
	\bigg(
	\frac{1}{(u_k-\SP_0/\ip_0)(1-\SP_0\ip_0u_k)}\prod_{i=1}^{k-1}
	\frac{u_i-u_k}{u_i-qu_k}
	\prod_{j=1}^{N}\frac{1-qu_kv_j}{1-u_kv_j}
	\bigg)\\&\hspace{180pt}=\frac{1}{1-\SP_0^{2}}
	\prod_{i=1}^{k-1}
	\frac{u_i-\SP_0/\ip_0}{u_i-q\SP_0/\ip_0}
	\prod_{j=1}^{N}\frac{1-q\SP_0v_j/\ip_0}{1-\SP_0v_j/\ip_0},
	\\&
	\Res_{u_{k-1}=q\SP_0/\ip_0}
	\bigg(
	\frac{1}{(u_{k-1}-q\SP_0/\ip_0)(1-\SP_0\ip_0u_{k-1})}\prod_{i=1}^{k-2}
	\frac{u_i-u_{k-1}}{u_i-qu_{k-1}}
	\prod_{j=1}^{N}\frac{1-qu_{k-1}v_j}{1-u_{k-1}v_j}
	\bigg)\\&\hspace{180pt}=\frac{1}{1-q\SP_0^{2}}
	\prod_{i=1}^{k-2}
	\frac{u_i-q\SP_0/\ip_0}{u_i-q^{2}\SP_0/\ip_0}
	\prod_{j=1}^{N}\frac{1-q^{2}\SP_0v_j/\ip_0}{1-q\SP_0v_j/\ip_0},
	\qquad
	\qquad
	\textnormal{etc.,}
\end{align*}
and thus the integral for 
$\G_\mu(v_1,\ldots,v_N\md\ipbb,\SPB)$ takes the form
\begin{multline*}
	\G_\mu(v_1,\ldots,v_N\md\ipbb,\SPB)=
	\frac{(q^{\ell}\SP_0^{2};q)_{k-\ell}}{(1-q)^{k-\ell}}
	\prod_{j=1}^{N}
	\frac{1-q^{\ell}\SP_0v_j/\ip_0}{1-\SP_0v_j/\ip_0}
	\oint\limits_{\contq {\ipbb\SPB}1}\frac{d u_1}{2\pi\i}
	\ldots
	\oint\limits_{\contq {\ipbb\SPB}{k-\ell}}\frac{d u_{k-\ell}}{2\pi\i}
	\prod_{1\le \aind<\bind\le k-\ell}\frac{u_\aind-u_\bind}{u_\aind-qu_\bind}
	\\\times\prod_{\substack{1\le i\le k-\ell\\1\le j\le N}}
	\frac{1-qu_iv_j}{1-u_iv_j}	
	\prod_{i=1}^{k-\ell}
	\bigg(
	\frac{u_i-\SP_0/\ip_0}{u_i-q^{\ell}\SP_0/\ip_0}
	\frac{u_i^{-1}\pow_{\mu_{i}}(u_{i}^{-1}\md\ipbb,\SPB)}{1-\SP_0\ip_0u_i}
	\bigg).
\end{multline*}
Now the integral over $u_1,\ldots,u_{k-\ell}$
has no poles outside the integration contours 
except $u_i=v_j^{-1}$ for some $j$. Thus, the 
integral can be evaluated by taking minus residues 
at these points. The same $v_j^{-1}$ cannot be used twice
because of the product 
$\prod_{1\le \aind<\bind\le k-\ell}({u_\aind-u_\bind})$
in the numerator. Therefore, all possible ways to choose the residues
at $v_j^{-1}$ are encoded by injective maps $\sigma\colon\{1,\ldots,k-\ell\}
\to\{1,\ldots,N\}$,
and we would need to sum over them.
We thus have
\begin{multline*}
	\G_\mu(v_1,\ldots,v_N\md\ipbb,\SPB)=
	(q^{\ell}\SP_0^{2};q)_{k-\ell}
	\prod_{j=1}^{N}
	\frac{1-\SP_0q^{\ell}v_j/\ip_0}{1-\SP_0v_j/\ip_0}
	\sum_{\sigma\colon\{1,\ldots,k-\ell\}
	\to\{1,\ldots,N\}}
	\prod_{1\le \aind<\bind\le k-\ell}\frac{v_{\sigma(\bind)}-v_{\sigma(\aind)}}
	{v_{\sigma(\bind)}-qv_{\sigma(\aind)}}
	\\\times
	\prod_{\substack{1\le i\le k-\ell\\	1\le j\le N\\j\ne \sigma(i)}}
	\frac{v_{\sigma(i)}-qv_j}{v_{\sigma(i)}-v_j}	
	\prod_{i=1}^{k-\ell}
	\bigg(
	\frac{1-\SP_0v_{\sigma(i)}/\ip_0}{1-\SP_0q^{\ell}v_{\sigma(i)}/\ip_0}
	\frac{\pow_{\mu_{i}}(v_{\sigma(i)}\md\ipbb,\SPB)}{1-\SP_0\ip_0/v_{\sigma(i)}}
	\bigg).
\end{multline*}
Letting $\IS$ denote the range of the map $\sigma$, we can rewrite the above formula as
\begin{multline*}
	\G_\mu(v_1,\ldots,v_N\md\ipbb,\SPB)=
	(q^{\ell}\SP_0^{2};q)_{k-\ell}
	\sum_{\IS\subseteq\{1,\ldots,N\}}
	\prod_{j\notin \IS}
	\frac{1-\SP_0q^{\ell}v_j/\ip_0}{1-\SP_0v_j/\ip_0}
	\prod_{\substack{i\in \IS\\j\notin \IS}}
	\frac{v_i-qv_j}{v_i-v_j}
	\\\times
	\sum_{\substack{\sigma\colon \{1,\ldots,k-\ell\}\to \IS
	\\\textnormal{bijection}}}\;
	\prod_{1\le \aind<\bind\le k-\ell}
	\frac
	{v_{\sigma(\aind)}-qv_{\sigma(\bind)}}
	{v_{\sigma(\aind)}-v_{\sigma(\bind)}}
	\prod_{i=1}^{k-\ell}
	\frac{\pow_{\mu_{i}}(v_{\sigma(i)}\md\ipbb,\SPB)}{1-s\ip_0/v_{\sigma(i)}}.
\end{multline*}
If one symmetrizes
the above expression
over $\{v_j\}_{j\notin\IS}$,
then the result will match
formula \eqref{G_symm_formula}
for $\G_\mu(v_1,\ldots,v_N\md\ipbb,\SPB)$.

This completes the derivation of the expression 
for $\G_\mu(v_1,\ldots,v_N\md\ipbb,\SPB)$
in Theorem \ref{thm:symmetrization}
for all (generic) $v_1,\ldots,v_N$, $\ipb$, $\SPB$, and $q$,
because both sides of 
that formula
are a~priori rational functions in all these parameters.

% subsection another_way_to_get_symmetrization_formula_for_g_mu_ (end)

% section orthogonality_relations (end)

\section{$q$-correlation functions} % (fold)
\label{sec:observables_of_interacting_particle_systems}

In this section we compute $q$-correlation 
functions of the stochastic dynamics $\Xp_{\{u_t\};\RHO}$
of \S \ref{sub:interacting_particle_systems}
assuming it starts from the empty initial configuration.

\subsection{Computing observables via the Cauchy identity} % (fold)
\label{sub:computing_observables_via_cauchy_identity}

Let us first briefly 
explain main ideas behind our computations.
We are interested only in single-time observables
(i.e., the ones which depend 
on the state of $\Xp_{\{u_t\};\RHO}$
at a single time moment, say,~$t=n$),
and getting them is equivalent to 
computing expectations 
$\E_{\UU;\RHO}f(\nu)$
of certain functions
$f(\nu)$ of the configuration $\nu\in\signp n$
with respect to the probability measure
\begin{align}\nonumber
	\MM_{\UU;\RHO}(\nu\md\ipb,\SPB)&=
	\frac{1}{Z(\UU;\RHO\md\ipb,\SPB)}\F_\nu(u_1,\ldots,u_n\md\ipb,\SPB)\G_\nu^{\conj}(\RHO\md\ipbb,\SPB)\\&=
	\mathbf{1}_{\nu_n\ge1}
	\cdot (-\sh_1\SPB)^{\nu-1^{n}}\cdot\F^{\conj}_{\nu-1^{n}}(u_1,\ldots,u_n
	\md\sh_1\ipb,\sh_1\SPB),
	\label{MM_U_RHO_particular}
\end{align}
where $\UU=(u_1,\ldots,u_n)$,
and we use notation \eqref{sh_operation} and \eqref{SPB_la_notation}.
The measure $\MM_{\UU;\VV}$ \eqref{MM_measure}
takes the above form for $\VV=\RHO$ due to \eqref{Qp_RHO}.

The weights \eqref{MM_U_RHO_particular} are nonnegative if the parameters
satisfy \eqref{stochastic_weights_condition_qsxi}--\eqref{stochastic_weights_condition_u}.
To ensure that \eqref{MM_U_RHO_particular} defines a probability distribution
on the infinite set $\signp n$, we need to impose 
admissibility conditions (cf.~Definition~\ref{def:MM}).
The latter are implied by
\begin{align}\label{admissibility_RHO_conditions}
	\left|\SP_i\frac{\ip_iu_j-\SP_i}{1-\SP_i\ip_iu_j}\right|<1- \epsilon
	\qquad
	\textnormal{for some $\epsilon>0$ and all $i\in\Z_{\ge0}$ and $j=1,\ldots,n$.}
\end{align}
Indeed, these conditions ensure \eqref{admissible_sufficient} 
for very small $v$ (limit $v\to0$ is a part of the 
specialization~$\RHO$). Alternatively, 
interpret the probability weight
$(-\sh_1\SPB)^{\nu-1^{n}}\cdot\F^{\conj}_{\nu-1^{n}}(u_1,\ldots,u_n
\md\sh_1\ipb,\sh_1\SPB)$ in
\eqref{MM_U_RHO_particular}
as a partition function of path collections,
and fix $\nu$ with large $\nu_1$ (other parts can be large, too).
The only vertex weight
which enters 
the weight of a particular path collection 
a large number of times is
$\Lmatr_{\ip_xu_j,\SP_x}(0,1;0,1)=(-\SP_x\ip_xu_j+\SP_x^{2})/(1-\SP_x\ip_xu_j)$,
which is bounded in absolute value by \eqref{admissibility_RHO_conditions}.
One readily sees that conditions 
\eqref{stochastic_weights_condition_qsxi}--\eqref{stochastic_weights_condition_u} 
(which, in particular, require $u_i\ge0$)
automatically imply
\eqref{admissibility_RHO_conditions}.

Cauchy identity \eqref{usual_Cauchy} suggests a large family of 
observables of the measure 
$\MM_{\UU;\RHO}$ whose averages can be computed right away. Namely, let 
us fix variables $w_1,\ldots,w_k$, and set
\begin{align}\label{fnu_GRHO_www}
	f(\nu):=
	\frac{\G_\nu(\RHO,w_1,\ldots,w_k\md\ipbb,\SPB)}{\G_\nu(\RHO\md\ipbb,\SPB)},
\end{align}
where $(\RHO,w_1,\ldots,w_k)$ means that we add
$w_1,\ldots,w_k$ to the specialization $(\epsilon,q \epsilon,\ldots,q^{J-1}\epsilon)$, 
then set $q^{J}=\ip_0/(\SP_0 \epsilon)$, and finally send $\epsilon\to0$,
cf. \eqref{RHO_spec}.
Note that one can replace both $\G_\nu$ in \eqref{fnu_GRHO_www} by $\G_{\nu}^{\conj}$
without changing $f(\nu)$. The $\E_{\UU;\RHO}$ expectation of the function \eqref{fnu_GRHO_www} takes the form
\begin{multline}\label{EGoverG_computation}
	\E_{\UU;\RHO}f(\nu)=\sum_{\nu\in\signp n}
	\frac{1}{Z(\UU;\RHO\md\ipb,\SPB)}\F_\nu(u_1,\ldots,u_n\md\ipb,\SPB)\G_\nu^{\conj}(\RHO\md\ipbb,\SPB)
	\frac{\G_\nu^{\conj}(\RHO,w_1,\ldots,w_k\md\ipbb,\SPB)}{\G_\nu^{\conj}(\RHO\md\ipbb,\SPB)}
	\\=
	\frac{Z(\UU;\RHO,w_1,\ldots,w_k\md\ipb,\SPB)}{Z(\UU;\RHO\md\ipb,\SPB)}
	=\prod_{\substack{1\le i\le n\\1\le j\le k}}\frac{1-qu_iw_j}{1-u_iw_j},
\end{multline}
where the ratio of the partition functions $Z(\cdots)$
is computed via the corresponding $\RHO$ limit
of \eqref{Z_formula}. We will discuss admissibility conditions
(necessary for the convergence of the above sum)
in \S\ref{sub:_q_correlation_functions} below.

One now needs to understand 
the dependence of \eqref{fnu_GRHO_www} on $\nu$.
Using the integral formula 
\eqref{G_integral_formula}
for $\G_\nu$, we can compute for $k=1$:
\begin{align}\label{GoverG_as_sum_k1}
	\frac{\G_\nu(\RHO,w\md\ipbb,\SPB)}{\G_\nu(\RHO\md\ipbb,\SPB)}
	=
	q^{n}+
	\sum_{i=1}^{n}\frac{q^{i-1}}{(-\SPB)^{\nu_i}}
	\frac{1-\SP_0\ip_0^{-1}w}{1-\SP_0^{-1}\ip_0^{-1}w}
	\pow_{\nu_{i}}(w\md\ipbb,\SPB),\qquad
	\nu_n\ge1
\end{align}
(here and below in this section 
we are using notation similar to \eqref{SPB_la_notation}, so
$(-\SPB)^{\nu_i}=\prod_{j=0}^{\nu_i-1}\SP_j$).
A general result of this sort is given in Proposition \ref{prop:Gnu_spec_RHO_w} below.

Next, by a suitable contour integration in $w$
one can 
extract the term in \eqref{GoverG_as_sum_k1}
with $\nu_i=m$
for any fixed $m\ge1$.
Therefore, the same integration 
of the right-hand side of \eqref{EGoverG_computation} 
will yield a contour integral formula for 
\begin{align*}
	\E_{\UU;\RHO}\sum_{i=1}^{n}q^{i}\mathbf{1}_{\nu_i=m},
\end{align*}
which can be viewed as a \emph{$q$-analogue of the density function}
of the random configuration $\nu$.
Higher \emph{$q$-correlation functions} (defined in \S \ref{sub:_q_correlation_functions} below)
can be computed in a similar way by working with \eqref{fnu_GRHO_www} with 
general $k$. Therefore, for general $k$
the right-hand side of \eqref{EGoverG_computation} 
should be regarded as a generating function (in $w_1,\ldots,w_k$) for 
the $q$-correlation functions,
and the latter can be extracted by integrating over the $w_j$'s.

% subsection computing_observables_via_cauchy_identity (end)

\subsection{Computation of $\G_\nu(\RHO,w_1,\ldots,w_k\md\ipbb,\SPB)$} % (fold)
\label{sub:computation_of_gnurhow}

In this subsection we 
fix $n\ge k\ge0$ and a signature $\nu\in\signp n$,
and compute the specialization $\G_\nu(\RHO,w_1,\ldots,w_k\md\ipbb,\SPB)$.
The result of this computation is
a general $k$ version of \eqref{GoverG_as_sum_k1},
and it is given in Proposition 
\ref{prop:Gnu_spec_RHO_w} below.

For the computation we will assume that $w_p$, $p=1,\ldots,k$,
are pairwise distinct
and are such that the points $w_p^{-1}$
are outside the integration contours $\contq {\ipbb\SPB}j$
of Definition \ref{def:orthogonality_contours}. 
Assume in addition that the parameters 
$q$, $\SPB$, and $\ipb$
satisfy
\eqref{stochastic_weights_condition_qsxi} and 
\eqref{assumptions_one_better}, so 
the integral formula \eqref{G_integral_formula} holds.
Thus,
we can readily take the $\RHO$
limit \eqref{RHO_spec} in \eqref{G_integral_formula}, and 
write
\begin{multline}
	\G_\nu(\RHO,w_1,\ldots,w_k\md\ipbb,\SPB)=
	\frac{(\SP_0^{2};q)_{n}}{(1-q)^{n}}
	\oint\limits_{\contq {\ipbb\SPB}1}\frac{d u_1}{2\pi\i}
	\ldots
	\oint\limits_{\contq {\ipbb\SPB}n}\frac{d u_n}{2\pi\i}
	\prod_{1\le \aind<\bind\le n}\frac{u_\aind-u_\bind}{u_\aind-qu_\bind}
	\\\times
	\prod_{i=1}^{n}\bigg(\frac{u_i^{-1}(1-\SP_0^{-1}\ip_0u_i)\pow_{\nu_{i}}(u_{i}^{-1}\md\ipbb,\SPB)}{1-\SP_0\ip_0u_i}\prod_{j=1}^{k}\frac{1-qu_iw_j}{1-u_iw_j}\bigg).
	\label{G_nu_RHO_w_integral1}
\end{multline}
We have
\begin{align*}
	\frac{u_i^{-1}(1-\ip_0\SP_0^{-1}u_i)
	\pow_{\nu_{i}}(u_i^{-1}\md\ipbb,\SPB)}{1-\SP_0\ip_0u_i}=
	(-\SP_0)^{-1}
	\frac{1-q}{u_i-\SP_{\nu_i}\ip_{\nu_i}^{-1}}
	\frac{\ip_0u_i-\SP_0}{1-\SP_0\ip_0u_i}
	\prod_{j=0}^{\nu_i-1}
	\frac{1-\SP_j\ip_j u_i}{\ip_ju_i-\SP_j}.
\end{align*}
If $\nu_n=0$, then the integral \eqref{G_nu_RHO_w_integral1} vanishes because
there are no $u_n$-poles inside $\contq {\ipbb\SPB}n$. 
We will thus assume that $\nu_n\ge1$
(so all $\nu_i\ge1$),
and explicitly compute this integral.
Denote it by
$\II^{w_1,\ldots,w_k}_{1,\ldots,n}$.

We aim to peel off the contours $\contq {\ipbb\SPB}1,\ldots,\contq {\ipbb\SPB}n$ (in this order),
and take minus residues at poles outside these contours.
Observe that the integrand in $u_1$ has only two types of simple
poles outside $\contq {\ipbb\SPB}1$,
namely, $u_1=\infty$ and $u_1=w_p^{-1}$ for $p=1,\ldots,k$ (there are no singularities at $\SP_j^{-1}\ip_j^{-1}$).
We have 
\begin{align*}
	-\Res_{u_1=\infty}\bigg(
	(-\SP_0)^{-1}
	\frac{1-q}{u_1-\SP_{\nu_1}\ip_{\nu_1}^{-1}}
	\prod_{j=1}^{\nu_1-1}
	\frac{1-\SP_j\ip_j u_1}{\ip_ju_1-\SP_j}
	\bigg)=(1-q)(-\SP_0)^{-2}(-\SPB)^{\nu_1}.
\end{align*}
Thus, the whole minus residue of \eqref{G_nu_RHO_w_integral1} at $u_1=\infty$ is equal to 
$(1-\SP_0^2q^{n-1})(-\SP_0)^{-2}
(-\SPB)^{\nu_1}q^{k}\cdot
\II^{w_1,\ldots,w_k}_{2,\ldots,n}$, where $q^{k}$ comes from the product involving the $w_p$'s.

Next, for any $p=1,\ldots,k$, the pole $w_p^{-1}$ yields
\begin{multline*}
	-\Res_{u_1=w_p^{-1}}=
	(1-\SP_0^{2}q^{n-1})
	(-\SP_0)^{-2}
	\frac{1-\SP_0\ip_0^{-1}w_p}{1-\SP_0^{-1}\ip_0^{-1}w_p}
	\underbrace{\frac{1-q}{1-\SP_{\nu_1}\ip_{\nu_1}^{-1}w_p}
	\prod_{j=0}^{\nu_1-1}
	\frac{\ip_j^{-1}w_p-\SP_j}{1-\SP_j\ip_j^{-1}w_p}}_{\pow_{\nu_1}(w_p\md\ipbb,\SPB)}
	\prod_{\substack{1\le j\le k\\j\ne p}}\frac{w_p-qw_j}{w_p-w_j}
	\\\times
	\frac{(\SP_0^{2};q)_{n-1}}{(1-q)^{n-1}}\oint\limits_{\contq {\ipbb\SPB}2}\frac{d u_2}{2\pi\i}
	\ldots
	\oint\limits_{\contq {\ipbb\SPB}n}\frac{d u_n}{2\pi\i}
	\prod_{2\le \aind<\bind\le n}\frac{u_\aind-u_\bind}{u_\aind-qu_\bind}
	\\\times
	\prod_{i=2}^{n}\bigg((-\SP_0)^{-1}
	\frac{1-q}{u_i-\SP_{\nu_i}\ip_{\nu_i}^{-1}}
	\prod_{j=1}^{\nu_i-1}
	\frac{1-\SP_j\ip_j u_i}{\ip_ju_i-\SP_j}
	\prod_{\substack{1\le j\le k\\j\ne p}}\frac{1-qu_iw_j}{1-u_iw_j}\bigg).
\end{multline*}
Therefore, taking the minus residue at $u_1=w_p^{-1}$
leads to
\begin{align*}
	(1-\SP_0^{2}q^{n-1})
	(-\SP_0)^{-2}
	\frac{1-\SP_0\ip_0^{-1}w_p}{1-\SP_0^{-1}\ip_0^{-1}w_p}
	\pow_{\nu_1}(w_p\md\ipbb,\SPB)
	\prod_{\substack{1\le j\le k\\j\ne p}}\frac{w_p-qw_j}{w_p-w_j}
	\cdot \II^{w_1,\ldots,w_{p-1},w_{p+1},\ldots,w_k}_{2,\ldots,n}.
\end{align*}
One can continue with similar computations for $\contq {\ipbb\SPB}2,\ldots,\contq {\ipbb\SPB}n$.
Let us write down the final integral
with the only remaining contour $\contq {\ipbb\SPB}n$:
\begin{multline*}
	\II^{w_1,\ldots,w_k}_{n}=
	\frac{1-\SP_0^{2}}{1-q}
	\oint\limits_{\contq {\ipbb\SPB}n}\frac{d u_n}{2\pi\i}
	(-\SP_0)^{-1}
	\frac{1-q}{u_n-\SP_{\nu_n}\ip_{\nu_n}^{-1}}
	\prod_{j=1}^{\nu_n-1}
	\frac{1-\SP_j\ip_j u_n}{\ip_ju_n-\SP_j}\prod_{j=1}^{k}\frac{1-qu_nw_j}{1-u_nw_j}
	\\=
	(1-\SP_0^{2})(-\SP_0)^{-2}q^{k}(-\SPB)^{\nu_n}
	+
	(1-\SP_0^{2})
	(-\SP_0)^{-2}\sum_{p=1}^{k}
	\frac{1-\SP_0\ip_0^{-1}w_p}{1-\SP_0^{-1}\ip_0^{-1}w_p}
	\pow_{\nu_n}(w_p\md\ipbb,\SPB)
	\prod_{\substack{1\le j\le k\\j\ne p}}\frac{w_p-qw_j}{w_p-w_j}.
\end{multline*}

In general, the integral in \eqref{G_nu_RHO_w_integral1}
is equal to a summation of the following sort. 
For every $\ell=0,\ldots,k$, choose two collections
of indices
$\IS=\{i_1<\ldots<i_\ell\}\subseteq\{1,\ldots,n\}$
and 
$\JS=(j_1,\ldots,j_\ell)\subseteq\{1,\ldots,k\}$
(note that the order of the $j_p$'s in $\JS$ matters, while the $i_p$'s
are assumed already ordered).
We will take residues at
$u_{i_p}=w_{j_p}^{-1}$, $1\le p\le \ell$,
and the remaining residues at $u_i=\infty$ for $i\notin\IS$.
Denote the summand corresponding to these residues by $\Res_{\IS,\JS}$.
All these summands have a common prefactor 
$(-\SP_0)^{-2n}(\SP_0^{2};q)_{n}$. The contribution to $\Res_{\IS,\JS}$
from residues at infinity is equal to
\begin{multline}
	q^{k(i_1-1)+(k-1)(i_2-i_1-1)+\ldots+(k-\ell+1)(i_{\ell}-i_{\ell-1}-1)
	+(k-\ell)(n-i_{\ell})}\prod_{i\notin\IS}(-\SPB)^{\nu_i}
	\\=
	q^{-\frac12\ell(2k+1-\ell)+n(k-\ell)}
	q^{i_1+\ldots+i_\ell}
	\prod_{i\notin\IS}(-\SPB)^{\nu_i}.
	\label{G_nu_RHO_infty}
\end{multline}
The residues at $u_{i_p}=w_{j_p}^{-1}$ contribute
\begin{align}
	\prod_{p=1}^{\ell}
	\bigg(
	\frac{1-\SP_0\ip_0^{-1}w_{j_p}}{1-\SP_0^{-1}\ip_0^{-1}w_{j_p}}
	\pow_{\nu_{i_p}}(w_{j_p}\md\ipbb,\SPB)
	\prod_{j\in\{1,\ldots,k\}\setminus\JS}
	\frac{w_{j_p}-qw_j}{w_{j_p}-w_j}
	\bigg)
	\prod_{1\le \aind<\bind\le \ell}
	\frac{w_{j_{\aind}}-qw_{j_{\bind}}}{w_{j_{\aind}}-w_{j_{\bind}}}
	.
	\label{G_nu_RHO_wp}
\end{align}
We see that \eqref{G_nu_RHO_infty} depends only on the choice of $\IS$,
while \eqref{G_nu_RHO_wp} depends on both $\IS$ and $\JS$.
Thus, for a fixed $\IS$, one can first sum 
$\Res_{\IS,\JS}$
over all 
subsets
$\JS=\{j_1<\ldots<j_\ell\}\subseteq\{1,\ldots,k\}$, 
and, for a fixed such $\JS$, over all its permutations.
This summation over $\JS$ is performed using the following lemma:
\begin{lemma}\label{lemma:partial_symmetrization}
	Let $f_i(\zeta)$ be arbitrary functions in $\zeta\in\C$. 
	For any $m\ge1$ and $\ell\le m$, we have
	\begin{multline*}
		\sum_{\sigma\in \Sym_m}\sigma
		\bigg(
		\prod_{i=1}^{\ell}f_i(\zeta_i)
		\prod_{1\le \aind<\bind\le m}\frac{\zeta_\aind-q \zeta_\bind}{\zeta_\aind- \zeta_\bind}
		\bigg)\\=
		\frac{(q;q)_{m-\ell}}{(1-q)^{m-\ell}}
		\sum_{\JS=\{j_1<\ldots<j_\ell\}\subseteq\{1,\ldots,m\}}
		\bigg(
		\prod_{\substack{j\in \JS\\r\notin \JS}}
		\frac{\zeta_{j}-q \zeta_r}{\zeta_{j}- \zeta_r}
		\sum_{\sigma'\in \Sym_\ell}
		\prod_{i=1}^{\ell}f_{i}(\zeta_{j_{\sigma'(i)}})
		\prod_{1\le \aind<\bind\le \ell}
		\frac{\zeta_{j_{\sigma'(\aind)}}-q \zeta_{j_{\sigma'(\bind)}}}{\zeta_{j_{\sigma'(\aind)}}-\zeta_{j_{\sigma'(\bind)}}}
		\bigg),
	\end{multline*}
	where the permutation $\sigma$ in the left-hand side
	acts on the variables $\zeta_j$.
\end{lemma}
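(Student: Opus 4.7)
The plan is to organize the left-hand side by tracking which variables end up substituted into the $f_i$'s. A permutation $\sigma\in\Sym_m$ sends $\zeta_i\mapsto\zeta_{\sigma(i)}$, so the image $\JS:=\{\sigma(1),\ldots,\sigma(\ell)\}$ (taken as an unordered $\ell$-subset of $\{1,\ldots,m\}$) determines which $\ell$ variables appear as arguments of $f_1,\ldots,f_\ell$. Writing $\JS=\{j_1<\ldots<j_\ell\}$, the data of $\sigma$ is equivalent to a triple $(\JS,\sigma',\tau)$, where $\sigma'\in\Sym_\ell$ encodes the bijection $i\mapsto \sigma(i)=j_{\sigma'(i)}$ on $\{1,\ldots,\ell\}$, and $\tau$ is the permutation induced by $\sigma$ on $\{\ell+1,\ldots,m\}\to\JS^c=\{1,\ldots,m\}\setminus\JS$, i.e., an element of $\Sym_{m-\ell}$ after identifying $\JS^c$ with $\{1,\ldots,m-\ell\}$ in its natural order. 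Then $\prod_{i=1}^\ell f_i(\zeta_{\sigma(i)})=\prod_{i=1}^\ell f_i(\zeta_{j_{\sigma'(i)}})$, which produces exactly the $f$-factor on the right.

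Next I would split the cross-term into three groups according to whether both, one, or none of the indices $a<b$ lie in $\{1,\ldots,\ell\}$:
\begin{align*}
\prod_{1\le a<b\le m}\frac{\zeta_{\sigma(a)}-q\zeta_{\sigma(b)}}{\zeta_{\sigma(a)}-\zeta_{\sigma(b)}}
&=\prod_{1\le a<b\le \ell}\frac{\zeta_{j_{\sigma'(a)}}-q\zeta_{j_{\sigma'(b)}}}{\zeta_{j_{\sigma'(a)}}-\zeta_{j_{\sigma'(b)}}}\\
&\quad\times\prod_{\substack{a\le\ell<b}}\frac{\zeta_{\sigma(a)}-q\zeta_{\sigma(b)}}{\zeta_{\sigma(a)}-\zeta_{\sigma(b)}}\times\prod_{\ell<a<b\le m}\frac{\zeta_{\sigma(a)}-q\zeta_{\sigma(b)}}{\zeta_{\sigma(a)}-\zeta_{\sigma(b)}}.
\end{align*}
The middle (mixed) factor is the key simplification: since it ranges over all pairs $(a,b)$ with $\sigma(a)\in\JS$ and $\sigma(b)\in\JS^c$, and each such pair occurs exactly once independently of $\sigma'$ and $\tau$, it collapses to
$$\prod_{j\in\JS,\ r\in\JS^c}\frac{\zeta_j-q\zeta_r}{\zeta_j-\zeta_r},$$
which matches the outer factor on the right-hand side.

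Finally, having fixed $\JS$ and $\sigma'$, I would sum over $\tau\in\Sym_{m-\ell}$. Only the last factor above depends on $\tau$, and it is precisely the cross-term over the $m-\ell$ variables $\{\zeta_r\}_{r\in\JS^c}$. By the classical symmetrization identity (footnote$^{\ref{symm_footnote}}$ in the paper),
$$\sum_{\tau\in\Sym_{m-\ell}}\tau\Bigg(\prod_{\ell<a<b\le m}\frac{\zeta_{\sigma(a)}-q\zeta_{\sigma(b)}}{\zeta_{\sigma(a)}-\zeta_{\sigma(b)}}\Bigg)=\frac{(q;q)_{m-\ell}}{(1-q)^{m-\ell}}.$$
Combining these three observations and then summing over $(\JS,\sigma')$ yields the right-hand side. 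The only step that requires any care is verifying that the parametrization $\sigma\leftrightarrow(\JS,\sigma',\tau)$ is a bijection with the correct action on all three pieces of the cross-term; this is straightforward bookkeeping, so I do not anticipate a genuine obstacle.
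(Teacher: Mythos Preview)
Your proof is correct and follows essentially the same approach as the paper: both decompose $\sigma\in\Sym_m$ via the subset $\JS=\{\sigma(1),\ldots,\sigma(\ell)\}$, split the cross-term into the three groups (inside $\JS$, mixed, outside $\JS$), observe that the mixed factor depends only on $\JS$, and then apply the symmetrization identity of footnote$^{\ref{symm_footnote}}$ to the sum over permutations of $\JS^c$. Your explicit parametrization $\sigma\leftrightarrow(\JS,\sigma',\tau)$ makes the bookkeeping slightly more transparent than the paper's terse version, but the argument is the same.
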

\begin{proof}
	For each $\sigma\in \Sym_m$,
	let $\JS$ be the ordered list of elements of
	the set
	$\{\sigma(1),\ldots,\sigma(\ell)\}$.
	The left-hand side of the desired claim 
	contains
	\begin{multline*}
		\prod_{i=1}^{\ell}f_i(\zeta_{\sigma(i)})
		\prod_{1\le \aind<\bind\le m}\frac{\zeta_{\sigma(\aind)}-q \zeta_{\sigma(\bind)}}{\zeta_{\sigma(\aind)}-\zeta_{\sigma(\bind)}}
		\\=
		\prod_{i=1}^{\ell}f_i(\zeta_{j_i})
		\prod_{1\le \aind<\bind\le \ell}
		\frac{\zeta_{j_{\aind}}-q \zeta_{j_{\bind}}}{\zeta_{j_{\aind}}-\zeta_{j_{\bind}}}
		\underbrace{\prod_{\substack{j\in \JS\\r\notin \JS}}
		\frac{\zeta_{j}-q \zeta_r}{\zeta_{j}-\zeta_r}}_{\text{symmetric in $\JS$ and $\JSB$}}
		\prod_{\ell+1\le \aind<\bind\le m}
		\frac{\zeta_{\sigma(\aind)}-q \zeta_{\sigma(\bind)}}
		{\zeta_{\sigma(\aind)}-\zeta_{\sigma(\bind)}}.
	\end{multline*}
	Symmetrizing over $\sigma\in \Sym_m$ can be done in two steps: first, choose a subset $\JS$ 
	of $\{1,\ldots,m\}$ of size $\ell$, and then symmetrize 
	over indices inside and outside $\JS$. 
	For the symmetrization outside $\JS$ 
	we can use the symmetrization identity 
	of footnote$^{\ref{symm_footnote}}$.
	This yields the result.
\end{proof}
Applying this lemma to \eqref{G_nu_RHO_wp} 
with $m=k$, we 
arrive at the following formula for 
our specialization, which is the first step towards
$q$-correlation functions:
\begin{proposition}\label{prop:Gnu_spec_RHO_w}
	For $n\ge k\ge0$ and $\nu\in\signp n$, we have
	\begin{multline}\label{Gnu_spec_RHO_w_sum}
		\G_\nu(\RHO,w_1,\ldots,w_k\md\ipbb,\SPB)\\=
		\underbrace{\mathbf{1}_{\nu_n\ge1}
		(-\SPB)^{\nu}\frac{(\SP_0^{2};q)_{n}}{\SP_0^{2n}}}_{\G_\nu(\RHO\md\ipbb,\SPB)}
		\sum_{\ell=0}^{k}
		\frac{q^{-\frac12\ell(2k+1-\ell)+n(k-\ell)}(1-q)^{k-\ell}}{(q;q)_{k-\ell}}
		\sum_{\IS=\{i_1<\ldots<i_\ell\}\subseteq\{1,\ldots,n\}}
		q^{i_1+\ldots+i_\ell}
		\prod_{i\in\IS}\frac{1}{(-\SPB)^{\nu_i}}
		\\\times\sum_{\sigma\in\Sym_k}
		\sigma\bigg(
		\prod_{1\le \aind<\bind\le k}\frac{w_\aind-qw_\bind}{w_\aind-w_\bind}
		\prod_{p=1}^{\ell}
		\frac{1-\SP_0\ip_0^{-1}w_{p}}{1-\SP_0^{-1}\ip_0^{-1}w_{p}}
		\pow_{\nu_{i_p}}(w_{p}\md\ipbb,\SPB)
		\bigg),
	\end{multline}
	where the permutation $\sigma$ acts on $w_1,\ldots,w_k$.
\end{proposition}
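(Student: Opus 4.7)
The starting point will be the integral representation \eqref{G_integral_formula} from Corollary \ref{cor:G_integral_formula}, applied with the variables $(v_1,\ldots,v_N)$ replaced by $(\RHO, w_1, \ldots, w_k)$. Since the specialization $\RHO$ amounts to substituting a geometric sequence $(\epsilon, q\epsilon, \ldots, q^{J-1}\epsilon)$ with $q^J = \ip_0/(\SP_0 \epsilon)$ and sending $\epsilon \to 0$ (cf.\ \eqref{RHO_spec}), the first step is to track what this does to the factors $\prod_{i,j}(1-qu_iv_j)/(1-u_iv_j)$ in the integrand. A direct computation yields the prefactor $(1-\SP_0^{-1}\ip_0 u_i)/(1-\SP_0\ip_0 u_i)$ per integration variable, producing the integral representation \eqref{G_nu_RHO_w_integral1}. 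One should also verify at the outset that the integral vanishes whenever $\nu_n = 0$, since then there are no $u_n$-poles inside $\contq{\ipbb\SPB}n$, which matches the factor $\mathbf{1}_{\nu_n\ge 1}$ on the right-hand side.

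The heart of the proof is the evaluation of the resulting nested integral by peeling off the contours $\contq{\ipbb\SPB}1, \ldots, \contq{\ipbb\SPB}n$ from the outermost inward, picking up the corresponding minus residues. The key observation is that, thanks to the cancellation of the $\SP_j^{-1}\ip_j^{-1}$ singularities through the factor $(1-\SP_0^{-1}\ip_0 u_i)$ arising from $\RHO$, for each $u_i$ the only $u_i$-poles outside $\contq{\ipbb\SPB}i$ are $u_i=\infty$ and $u_i = w_p^{-1}$, $p=1,\ldots,k$. Consequently each configuration of residues is indexed by a choice of a subset $\IS = \{i_1 < \cdots < i_\ell\} \subseteq \{1,\ldots,n\}$ (the positions where one takes $u_{i_p} = w_{j_p}^{-1}$) together with an ordered selection $\JS = (j_1, \ldots, j_\ell)$ from $\{1,\ldots,k\}$. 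I will compute the two residue types separately: the contribution from $u_i = \infty$ produces the $q$-power of \eqref{G_nu_RHO_infty} together with a factor $(-\SPB)^{\nu_i}$ coming from the polynomial part of $\pow_{\nu_i}(u_i^{-1})$ at infinity, and the contribution from $u_{i_p} = w_{j_p}^{-1}$ produces the factor in \eqref{G_nu_RHO_wp}.

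The third step is to bring the summation over ordered selections $\JS$ into the symmetric form appearing in \eqref{Gnu_spec_RHO_w_sum}. For fixed $\IS$, the sum over $\JS$ decomposes into a sum over the underlying unordered subset together with a sum over its $\ell!$ orderings; the latter is precisely the symmetrization that can be handled by Lemma \ref{lemma:partial_symmetrization} with $m=k$, $f_p(\zeta) = \frac{1-\SP_0\ip_0^{-1}\zeta}{1-\SP_0^{-1}\ip_0^{-1}\zeta}\,\pow_{\nu_{i_p}}(\zeta\md \ipbb,\SPB)$, $\zeta_j = w_j$. This produces the full $\Sym_k$-symmetrization in the $w_j$'s, at the cost of the combinatorial prefactor $(q;q)_{k-\ell}/(1-q)^{k-\ell}$, which combines with the $q$-powers from the residues at infinity to give the prefactor displayed in \eqref{Gnu_spec_RHO_w_sum}. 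Finally, extracting the common factor $(-\SPB)^\nu (\SP_0^2;q)_n/\SP_0^{2n}$ identifies it with $\G_\nu(\RHO\md\ipbb,\SPB)$ by virtue of Proposition \ref{prop:RHO_spec}.

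The main obstacles will be bookkeeping rather than anything conceptual: correctly accounting for the $q$-weight carried by each residue at infinity (which depends on how many of the $k$ variables $w_p$ have already been ``used'' at earlier positions), and matching signs and prefactors coming from the reshuffling of $\pow_k$ evaluated at infinity. Once these are correctly organized, Lemma \ref{lemma:partial_symmetrization} delivers the final symmetric form. Since the identity is an equality of rational functions in the $w_p$'s, $\ipb$, $\SPB$, and $q$, the genericity assumptions made along the way (pairwise distinct $w_p$, the $w_p^{-1}$ outside the contours, conditions \eqref{stochastic_weights_condition_qsxi} and \eqref{assumptions_one_better}) can be lifted at the end by analytic continuation.
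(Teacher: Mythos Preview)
Your proposal is correct and follows essentially the same approach as the paper: the argument in \S\ref{sub:computation_of_gnurhow} starts from the integral formula of Corollary \ref{cor:G_integral_formula}, specializes to $(\RHO,w_1,\ldots,w_k)$ to obtain \eqref{G_nu_RHO_w_integral1}, peels off the contours $\contq{\ipbb\SPB}1,\ldots,\contq{\ipbb\SPB}n$ taking minus residues at $\infty$ and at the $w_p^{-1}$, and then invokes Lemma \ref{lemma:partial_symmetrization} with $m=k$ to reorganize the sum over ordered $\JS$ into the $\Sym_k$-symmetrized form; the final removal of restrictions by rationality is exactly the content of the short proof environment following the proposition.
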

\begin{proof}
	Identity \eqref{Gnu_spec_RHO_w_sum}
	is established above in this subsection
	under certain restrictions on the $w_p$'s
	and the parameters $q$, $\SPB$, and $\ipb$.
	Observe
	that both sides of the identity
	\eqref{Gnu_spec_RHO_w_sum}
	are a priori rational functions in all variables and parameters
	(and for fixed $\nu$ the number of parameters
	is finite).
	This is clear for the right-hand side,
	and the left-hand side of \eqref{Gnu_spec_RHO_w_sum} is also rational
	because using
	the branching rule of Proposition \ref{prop:branching}
	one can
	separate the specialization $\RHO$ and the variables $w_p$ (skew
	$\G$-functions in the $w_j$'s are rational by the very definition), 
	and then evaluate the specialization $\RHO$ by Proposition~\ref{prop:RHO_spec}.
	Thus,
	we can drop any restrictions,
	and so \eqref{Gnu_spec_RHO_w_sum} holds for generic values of the variables and parameters.
\end{proof}
Note that in the particular case $k=0$ 
the above proposition reduces to Proposition \ref{prop:RHO_spec}.

% subsection computation_of_gnurhow (end)

\subsection{Extracting terms by integrating over $w_i$} % (fold)
\label{sub:extracting_terms_by_integrating_over_w_i_}

Observe now that when $k=\ell$, the summation over $\sigma$
in \eqref{Gnu_spec_RHO_w_sum} above produces 
$(-\SP_0)^{k}\F_{(\nu_{i_1}-1,\ldots,\nu_{i_{\ell}}-1)}(w_1,\ldots,w_k\md \sh_1\ipbb,\sh_1\SPB)$.
Indeed, this is because
\begin{align*}
	\frac{1-\SP_0\ip_0^{-1}w_{p}}{1-\SP_0^{-1}\ip_0^{-1}w_{p}}
	\pow_{\nu_{i_p}}(w_{p}\md\ipbb,\SPB)
	=(-\SP_0)\pow_{\nu_{i_p}-1}(w_{p}\md\sh_1\ipbb,\sh_1\SPB),
\end{align*}
since all $\nu_{i_p}\ge1$ (here, as before, 
$\sh_1$ means the shift \eqref{sh_operation}).
The function 
$\F_{(\nu_{i_1}-1,\ldots,\nu_{i_{\ell}}-1)}$
then arises due to
\eqref{F_symm_formula}.

This observation
motivates our next step in computation of the $q$-correlation functions:
we will utilize orthogonality of the 
functions $\F_\mu$ (similar to Theorem~\ref{thm:orthogonality_F}),
and integrate \eqref{Gnu_spec_RHO_w_sum} over the $w_i$'s
to extract certain terms.
We will need the following nested integration contours:

\begin{definition}\label{def:bar_cont}
	For any $k\ge1$,
	let $\contqi {\ipb\SPBB}1,\ldots,\contqi {\ipb\SPBB}k$
	be 
	positively oriented closed contours
	such that
	\begin{itemize}
		\item 
		Each contour
		$\contqi {\ipb\SPBB}\aind$ encircles all the points of the set
		$\ipb\SPBB=\{\ip_i\SP_i^{-1}\}_{i\ge0}$, 
		while leaving outside all the points of
		$\ipb\SPB=\{\ip_i\SP_i\}_{i\ge0}$.
		\item For any $\bind>\aind$, the interior of $\contqi {\ipb\SPBB}\bind$ contains 
		the contour $q^{-1}\contqi {\ipb\SPBB}\aind$.
		\item The contour $\contqi {\ipb\SPBB}1$ is sufficiently small so that it does 
		not intersect with $q^{-1}\contqi {\ipb\SPBB}1$.
	\end{itemize}
	See Fig.~\ref{fig:bar_contours}. 
	The superscript ``$-$'' refers to the property that the 
	contours are $q^{-1}$-nested.
\end{definition}

\begin{figure}[htbp]
	\begin{center}
	\begin{tikzpicture}
		[scale=2.9]
		\def\pt{0.02}
		\def\q{.7}
		\def\ss{.56}
		\draw[->, thick] (-4.8,0) -- (.6,0);
	  	\draw[->, thick] (0,-.9) -- (0,.9);
	  	\draw[fill] (-.9,0) circle (\pt);
	  	\draw[fill] (-1,0) circle (\pt) node [below, xshift=7pt] {$\ip_i\SP_i\phantom{^{-1}}$};
	  	\draw[fill] (-1.05,0) circle (\pt);
	  	\draw[fill] (-.9/\ss,0) circle (\pt);
	  	\draw[fill] (-1/\ss,0) circle (\pt);
	  	\draw[fill] (-1.05/\ss,0) circle (\pt) node [below, xshift=12pt] {$\ip_i\SP_i^{-1}$};
	  	\draw[fill] (0,0) circle (\pt) node [below left] {$0$};
	  	\draw (-2.86,0) ellipse (1.47 and .95) node [below,xshift=-40,yshift=95] {$\contqie{3}$};
	  	\draw (-2.23,0) ellipse (.78 and .65) node [below,xshift=-20,yshift=71] {$\contqie{2}$};
	  	\draw (-1.78,0) circle (.29) node [below,xshift=-10,yshift=41] {$\contqie{1}$};
	  	\draw[dotted] (-1.78/\q,0) circle (.29/\q);
	  	\draw[dotted] (-1.78/\q/\q,0) circle (.29/\q/\q);
	\end{tikzpicture}
	\end{center}
  	\caption{A possible choice of nested integration contours 
  	$\contqie i=\contqi {\ipb\SPBB}i$, $i=1,2,3$ 
  	(Definition~\ref{def:bar_cont}).
  	Contours $q^{-1}\contqi {\ipb\SPBB}1$ and $q^{-2}\contqi {\ipb\SPBB}1$ are shown dotted.
	}
  	\label{fig:bar_contours}
\end{figure}

Conditions 
\eqref{stochastic_weights_condition_qsxi} and 
\eqref{assumptions_one_better} 
on $q$, $\SPB$, and $\ipb$
readily imply that the contours $\contqi {\ipb\SPBB}j$ exist. 
Throughout this subsection
we will assume that
these conditions hold.
\begin{remark}\label{rmk:drag_through_infinity}
	The integration contours $\contqi {\ipb\SPBB}j$
	can be obtained from the contours
	$\contq {\ipb\SPB}j$ of Definition 
	\ref{def:orthogonality_contours}\footnote{Note 
	the swapping $\ip_x\leftrightarrow\ip_x^{-1}$ in $\contq {\ipb\SPB}j$
	as compared to Definition \ref{def:orthogonality_contours}.
	Moreover, for the existence of the latter contours we must assume
	that $m_{\ipb|\SPB|}>q M_{\ipb|\SPB|}$, which is not equivalent
	to the first condition in \eqref{assumptions_one_better}.}
	by dragging 
	$\contq {\ipb\SPB}1,\ldots,\contq {\ipb\SPB}k$ (in this order)
	through
	infinity,
	if this operation is allowed for a particular integrand 
	(i.e., it must have no residues at infinity). 
\end{remark}

We will use the following integral transform:
\begin{definition}\label{def:int_transform}
	For $k\ge1$, let $R(w_1,\ldots,w_k)$ be a symmetric rational function 
	with singularities only occurring
	when some of the $w_j$'s belong to 
	$\sh_1(\ipb\SPB\cup\ipb\SPBB)=\{\ip_i\SP_i^{\pm1}\}_{i\in\Z_{\ge1}}$. 
	Let $\mm\in\signp k$. Define
	\begin{multline*}
		\big(\Pli k R\big)(\mm):=
		\frac{(-1)^{k}\conj_{\SPB}(\mm)}{(1-q)^{k}}
		\oint\limits_{\contqi {\ipb\SPBB}1}\frac{d w_1}{2\pi\i}
		\ldots
		\oint\limits_{\contqi {\ipb\SPBB}k}\frac{d w_k}{2\pi\i}
		\prod_{1\le \aind<\bind\le k}\frac{w_\aind-w_\bind}{w_\aind-qw_\bind}
		\\\times
		R(w_1,\ldots,w_k)
		\prod_{i=1}^{k}w_i^{-1}\pow_{\mm_{i}}(w_{i}^{-1}\md\sh_1\ipb,\sh_1\SPB),
	\end{multline*}
	where the integration contours are described in Definition \ref{def:bar_cont}.
\end{definition}

Let us denote for any $\la\in\signp \ell$:
\begin{align*}
	R^{(\ell)}_{\la}(w_1,\ldots,w_k):=
	\mathbf{1}_{\la_{\ell}\ge1}
	\sum_{\sigma\in\Sym_k}
	\sigma\bigg(
	\prod_{1\le \aind<\bind\le k}\frac{w_\aind-qw_\bind}{w_\aind-w_\bind}
	\prod_{p=1}^{\ell}
	\frac{1-\SP_0\ip_0^{-1}w_{p}}{1-\SP_0^{-1}\ip_0^{-1}w_{p}}
	\pow_{\la_p}(w_{p}\md\ipbb,\SPB)
	\bigg);
\end{align*}
these are the $w_j$-dependent summands in \eqref{Gnu_spec_RHO_w_sum}.
As mentioned above, for $\ell=k$,
\begin{align*}
	R^{(k)}_{\la}(w_1,\ldots,w_k)=(-\SP_0)^{k}\F_{\la-1^{k}}(w_1,\ldots,w_k\md\sh_1\ipbb,\sh_1\SPB).
\end{align*}
Moreover, the action of the transform $\Pli k$
on the $R^{(\ell)}_{\la}$'s for any $\ell$ and $\la$
turns out to be very simple:
\begin{lemma}\label{lemma:JkRl}
	For any $\ell=0,\ldots,k$ and any $\la\in\signp \ell$ we have
	\begin{align*}
		\big(\Pli k R^{(\ell)}_{\la}\big)(\mm)=
		(-\SP_0)^k\mathbf{1}_{\ell=k}\mathbf{1}_{\mm=\la-1^{k}},
		\qquad \mm\in\signp k.
	\end{align*}
\end{lemma}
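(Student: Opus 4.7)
The plan is to reduce the lemma to the spatial orthogonality relation of Theorem~\ref{thm:orthogonality_F} in the $\ell=k$ case, and to a residue-based vanishing in the $\ell<k$ case. The first step, common to both cases, is to absorb the edge factor using the identity
\[
\frac{1-\SP_0\ip_0^{-1}w_p}{1-\SP_0^{-1}\ip_0^{-1}w_p}\,\pow_{\la_p}(w_p\md\ipbb,\SPB)=-\SP_0\,\pow_{\la_p-1}(w_p\md\sh_1\ipbb,\sh_1\SPB),
\]
valid whenever $\la_p\ge 1$, which is guaranteed by the indicator $\mathbf{1}_{\la_\ell\ge 1}$. This rewrites $R^{(\ell)}_\la$ as $(-\SP_0)^\ell$ times a sum over $\sigma\in\Sym_k$ of a product involving $\ell$ of the $\pow_{\la_p-1}(\cdot\md\sh_1\ipbb,\sh_1\SPB)$, while the integrand of $\Pli{k}$ already carries $k$ factors $w_i^{-1}\pow_{\mm_i}(w_i^{-1}\md\sh_1\ipb,\sh_1\SPB)$.

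For $\ell=k$, Theorem~\ref{thm:symmetrization} identifies the sum above with $\F_{\la-1^k}(w_1,\dots,w_k\md\sh_1\ipbb,\sh_1\SPB)$. Plugging this into the definition of $\Pli{k}$ produces precisely the integral appearing in the spatial orthogonality Theorem~\ref{thm:orthogonality_F}, but with parameters shifted by $\sh_1$ and with the $q^{-1}$-nested contours $\contqi{\ipb\SPBB}j$ in place of the $q$-nested ones $\contq{\ipbb\SPB}j$. The two setups are interchanged by the inversion $w_i\leftrightarrow u_i^{-1}$, under which $\ipb\SPBB\leftrightarrow\ipbb\SPB$, the $q$-Vandermonde factors are preserved, and the nesting reverses appropriately; the factor $\conj_\SPB(\mm)$ and the sign $(-1)^k$ in Definition~\ref{def:int_transform} are exactly what is needed to match the normalizing constants. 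Applying the (shifted) orthogonality then collapses the integral to $\mathbf{1}_{\mm=\la-1^k}$, and the prefactor $(-\SP_0)^k$ is carried over unchanged.

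For $\ell<k$, I would use Lemma~\ref{lemma:partial_symmetrization} with $m=k$ to re-express $R^{(\ell)}_\la$ as
\[
(-\SP_0)^\ell\,\frac{(q;q)_{k-\ell}}{(1-q)^{k-\ell}}\sum_{\JS}\;\prod_{j\in\JS,\,r\notin\JS}\frac{w_j-qw_r}{w_j-w_r}\,\F_{\la-1^\ell}\!\bigl(\{w_j\}_{j\in\JS}\md\sh_1\ipbb,\sh_1\SPB\bigr),
\]
where $\JS$ ranges over $\ell$-subsets of $\{1,\ldots,k\}$. Fix such a $\JS$ and let $r^\star:=\min\JSB$, where $\JSB=\{1,\ldots,k\}\setminus\JS$. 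I would analyse the integrand of the $\JS$-term as a function of $w_{r^\star}$ alone. The factor $w_{r^\star}^{-1}\pow_{\mm_{r^\star}}(w_{r^\star}^{-1}\md\sh_1\ipb,\sh_1\SPB)$ has poles only at $\sh_1(\ipb\SPB)$, which lie outside $\contqi{\ipb\SPBB}{r^\star}$; the $\F_{\la-1^\ell}$-factor and the remaining $\pow_{\mm_r}$ factors do not depend on $w_{r^\star}$. In the combined cross-term $\prod_{\aind<\bind}\tfrac{w_\aind-w_\bind}{w_\aind-qw_\bind}\cdot\prod_{j\in\JS,\,r\notin\JS}\tfrac{w_j-qw_r}{w_j-w_r}$, the mixed pairs $(\aind,\bind)$ with $\aind\in\JS,\bind\in\JSB$ cancel exactly, and the bad pairs $(\aind,\bind)$ with $\aind\in\JSB,\bind\in\JS$ collapse to $-(w_\bind-qw_\aind)/(w_\aind-qw_\bind)$; since $r^\star=\min\JSB$, no pair with $\bind=r^\star$ of either type occurs, so the only $w_{r^\star}$-denominators that survive are $w_{r^\star}-qw_\bind$ with $\bind>r^\star$, whose zeroes $w_{r^\star}=qw_\bind$ all sit outside $\contqi{\ipb\SPBB}{r^\star}$ by the nesting. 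The integrand is therefore holomorphic inside $\contqi{\ipb\SPBB}{r^\star}$ (including at $w_{r^\star}=0$, where one checks $w^{-1}\pow_{\mm_{r^\star}}(w^{-1})$ is finite), so $\oint_{\contqi{\ipb\SPBB}{r^\star}}dw_{r^\star}=0$. Summing over $\JS$ yields the desired vanishing.

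The main obstacle is the bookkeeping in Step~3: one has to verify carefully that, after the Vandermonde and $\JS$-$\JSB$ factors are combined, no denominator factor involving $w_{r^\star}$ with $\bind<r^\star$ survives — in particular that the would-be poles at $w_{r^\star}=w_j$ for $j\in\JS$ with $j<r^\star$ are cancelled by the good-pair cancellation, and that the only uncancelled $w_{r^\star}$-denominators are of the outside-the-contour type $w_{r^\star}-qw_\bind$ with $\bind>r^\star$. Once this cancellation is established, the contour shrinkage to the origin is immediate, and the $\ell=k$ case reduces cleanly (via the $w\leftrightarrow u^{-1}$ change of variables) to the orthogonality already proved.
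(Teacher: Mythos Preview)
Your $\ell<k$ argument is correct and gives a clean alternative to the paper's route. The paper does not split cases: it complements $\la$ by zeroes to length $k$, sets $f_m(w)=w^{-1}\pow_{m-1}(w^{-1}\md\sh_1\ipb,\sh_1\SPB)$ and $g_l(w)=\pow_{l-1}(w\md\sh_1\ipbb,\sh_1\SPB)$ (with $g_0=1$), and applies Lemma~\ref{lemma:general_lemma} with $P_1'=\sh_1(\ipb\SPB)\cup\{\infty\}$ and $P_2'=\sh_1(\ipb\SPBB)$. This forces $\la_i=\mm_i+1$ for all $i$ in one stroke, hence $\ell=k$; the remaining normalization is then obtained by the same residue computation as at the end of the proof of Theorem~\ref{thm:orthogonality_F}. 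Your partial-symmetrization plus ``no $w_{r^\star}$-poles inside $\contqi{\ipb\SPBB}{r^\star}$'' argument accomplishes the $\ell<k$ vanishing by a different mechanism, and your cross-term bookkeeping is right: for $\aind<r^\star$ (necessarily $\aind\in\JS$) the pair $\frac{w_\aind-w_{r^\star}}{w_\aind-qw_{r^\star}}\cdot\frac{w_\aind-qw_{r^\star}}{w_\aind-w_{r^\star}}$ cancels, leaving only denominators $w_{r^\star}-qw_\bind$ with $\bind>r^\star$, which sit outside $\contqi{\ipb\SPBB}{r^\star}$ by $q^{-1}$-nesting.

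Your $\ell=k$ step, however, has a genuine gap. The substitution $w_i\mapsto u_i^{-1}$ does \emph{not} turn the $\Pli{k}$ integral into the integral of Theorem~\ref{thm:orthogonality_F}. Under that change, $\F_{\la-1^k}(w_1,\ldots,w_k\md\sh_1\ipbb,\sh_1\SPB)$ becomes $\F_{\la-1^k}(u_1^{-1},\ldots,u_k^{-1}\md\sh_1\ipbb,\sh_1\SPB)$, so the roles of the $\F$-factor and the $\pow$-factors are swapped rather than matched; likewise the image contours encircle $\ipbb\SPB$, not the $\sh_1(\ipb\SPB)$ required by the parameter-swapped Theorem~\ref{thm:orthogonality_F}. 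The correct bridge (noted in the remark following the lemma) is not an inversion but a deformation: drag $\contqi{\ipb\SPBB}j$ through $\infty$ to the negatively oriented $\contq{\ipb\SPB}j$, which is legitimate exactly because for $\ell=k$ the integrand is regular at each $w_i=\infty$. That move requires the extra hypothesis $m_{\ipb|\SPB|}>qM_{\ipb|\SPB|}$ for the target contours to exist. Absent this contour move (or the direct residue computation the paper actually performs), your $\ell=k$ reduction is incomplete.
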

\begin{proof}
	We may assume that $\la_{\ell}\ge1$.
	Let us complement $\la$ by zeros, $\la=(\la_1,\ldots,\la_\ell,0,\ldots,0)$,
	so that it has length $k$.
	We have
	\begin{multline}
		\big(\Pli k R^{(\ell)}_{\la}\big)(\mm)=
		\frac{(-1)^{k}\conj_{\SPB}(\mm)}{(1-q)^{k}}
		\sum_{\sigma\in\Sym_k}\,
		\oint\limits_{\contqi {\ipb\SPBB}1}\frac{d w_1}{2\pi\i}
		\ldots
		\oint\limits_{\contqi {\ipb\SPBB}k}\frac{d w_k}{2\pi\i}
		\prod_{1\le \aind<\bind\le k}\frac{w_\aind-w_\bind}{w_\aind-qw_\bind}
		\frac{w_{\sigma(\aind)}-qw_{\sigma(\bind)}}{w_{\sigma(\aind)}-w_{\sigma(\bind)}}
		\\\times
		(-\SP_0)^{\ell}
		\prod_{p=1}^{\ell}
		\underbrace{\pow_{\la_{p}-1}
		(w_{\sigma(p)}\md\sh_1\ipbb,\sh_1\SPB)}_{g_{\la_p}(w_{\sigma(p)})}\cdot
		\prod_{i=1}^{k}\underbrace{w_i^{-1}\pow_{\mm_{i}}(w_{i}^{-1}\md\sh_1\ipb,\sh_1\SPB)}_{f_{\mm_i+1 }(w_i)}.
		\label{JkRl_computation_proof}
	\end{multline}
	We now wish to apply Lemma \ref{lemma:general_lemma} with 
	\begin{align*}
		f_m(w)=
		\begin{cases}
			w^{-1}\pow_{m-1}(w^{-1}\md\sh_1\ipb,\sh_1\SPB),&m\ge1;\\
			1,&m=0,
		\end{cases}
		\qquad \qquad
		g_l(w)=\begin{cases}
			\pow_{l-1}(w\md\sh_1\ipbb,\sh_1\SPB),&l\ge1;
			\\
			1,&l=0,
		\end{cases}
	\end{align*}
	and $P_1'=\sh_1(\ipb\SPB)\cup\{\infty\}$,
	$P_2'=\sh_1(\ipb\SPBB)$ (we use notation $P_{1,2}'$ to distinguish from $P_{1,2}$
	in Definition \ref{def:orthogonality_contours}). 
	Indeed, since in our integral we always have $m\ge1$, 
	all singularities of $f_m(w)g_l(w)$ are in $P_1'\cup P_2'$.
	Moreover, for $m<l$, all poles of this product are in $P_2'$,
	and for $m>l$ (which may include $l=0$)
	all poles are in $P_1'$. Finally, observe that 
	we can deform the integration contours
	as in the hypothesis of Lemma \ref{lemma:general_lemma}.

	Therefore, since $\mm_i+1\ge1$ for all $i$
	in our integral, we can apply Lemma \ref{lemma:general_lemma},
	and conclude that it must be that $\la_i=\mm_i+1$ for all $i=1,\ldots,k$,
	in order for the integral to be nonzero. In particular, 
	the integral can be nonzero only for
	$\ell=k$.

	When $\ell=k$, the desired claim for $\mm=\la-1^{k}$
	follows by analogy with the last computation in the proof of Theorem~\ref{thm:orthogonality_F} 
	(with swapped parameters $\ip_x\leftrightarrow\ip_x^{-1}$).
	Namely, we first sum over $\sigma$
	(the integral vanishes unless $\sigma$
	permutes within clusters of $\mm$),
	and then
	compute the resulting smaller integrals by taking residues at 
	$w_1=\ip_{\mm_j-1}\SP_{\mm_j-1}^{-1}$,
	$w_2=q^{-1}\ip_{\mm_j-1}\SP_{\mm_j-1}^{-1}$, etc. This leads to the desired result.
\end{proof}
\begin{remark}
	Note that if 
	$m_{\ipb|\SPB|}>q M_{\ipb|\SPB|}$, then for $\ell=k$
	in the above proof we could simply
	drag the integration contours
	$\contqi {\ipb\SPBB}j$ 
	through infinity 
	to the negatively oriented $\contq {\ipb\SPB}j$
	(cf. Remark \ref{rmk:drag_through_infinity}).
	Indeed, this is because for $\ell=k$ the integrand 
	in \eqref{JkRl_computation_proof} 
	is regular at $w_i=\infty$ for all $i$.
	The passage to the contours 
	$\contq {\ipb\SPB}j$
	eliminates the sign $(-1)^{k}$,
	and the desired claim for $\mm=\la-1^{k}$
	directly follows from Theorem~\ref{thm:orthogonality_F} with $\ip_x\leftrightarrow\ip_x^{-1}$.
	In other words, under these additional assumptions on $q$, $\ipb$, and $\SPB$
	the transform $\Pli k$ 
	acts essentially as the inverse Plancherel transform
	(Definition \ref{def:Plancherel_transforms}). 
	It is however crucial that 
	the former is defined using the contours $\contqi {\ipb\SPBB}j$ and not $\contq {\ipb\SPB}j$
	because of nontrivial residues 
	at infinity in
	\eqref{JkRl_computation_proof} for $\ell<k$.
\end{remark}

Therefore, applying the transform $\Pli k$ to
the rational function
$\G_\nu(\RHO,w_1,\ldots,w_k\md\ipbb,\SPB)$
and using Proposition \ref{prop:Gnu_spec_RHO_w}, we 
arrive at the following statement summarizing the second step of the computation
of the $q$-correlation functions:
\begin{proposition}\label{prop:JkG}
	Under assumptions 
	\eqref{stochastic_weights_condition_qsxi} and \eqref{assumptions_one_better}
	on
	$q$, $\SPB$, and $\ipb$,
	for any $n\ge k\ge0$, $\nu\in\signp n$, and $\mm\in\signp k$, we have
	\begin{align}
		\big(\Pli k \G_\nu(\RHO,\bullet\md\ipbb,\SPB)\big)(\mm)
		=
		\G_\nu(\RHO\md\ipbb,\SPB)\,
		\frac{q^{-\frac12k(k+1)}}{(-\sh_1\SPB)^{\mm}}
		\sum_{\substack{\IS=\{i_1<\ldots<i_k\}\subseteq\{1,\ldots,n\}\\
		\nu_{i_1}=\mm_1+1,\ldots,\nu_{i_k}=\mm_k+1}}
		q^{i_1+\ldots+i_k},
		\label{computed_q_def_correlator}
	\end{align}
	where ``$\bullet$'' stands for the variables $w_1,\ldots,w_k$
	in which the transform $\Pli k$ is applied,
	and we are using the usual notation
	$(-\sh_1\SPB)^{\mm}=\prod_{i=1}^{k}(-\SP_1)\ldots(-\SP_{\mm_i})$.
	Note that for \eqref{computed_q_def_correlator} to be nontrivial
	one must have
	$\nu_n\ge1$.
\end{proposition}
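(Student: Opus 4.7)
The plan is to directly substitute the formula of Proposition \ref{prop:Gnu_spec_RHO_w} into the definition of $\Pli k$ and apply Lemma \ref{lemma:JkRl} to each summand. The key observation is that the $w$-dependent factor in the summand of \eqref{Gnu_spec_RHO_w_sum} indexed by $\ell$ and $\IS = \{i_1 < \ldots < i_\ell\}$ is exactly the function $R^{(\ell)}_{\la}(w_1,\ldots,w_k)$ with $\la = (\nu_{i_1},\ldots,\nu_{i_\ell})$, so after splitting the sum and exchanging it with the contour integrals (which is harmless because for fixed $n$ the sum is finite) we may apply $\Pli k$ term by term.

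Next, by Lemma \ref{lemma:JkRl} we have $(\Pli k R^{(\ell)}_\la)(\mm) = (-\SP_0)^k \mathbf{1}_{\ell = k}\mathbf{1}_{\mm = \la - 1^k}$, so only the $\ell = k$ contribution survives, and within that contribution only those subsets $\IS = \{i_1 < \ldots < i_k\} \subseteq \{1,\ldots,n\}$ with $\nu_{i_p} = \mm_p + 1$ for all $p = 1,\ldots,k$ contribute. The corresponding prefactor from \eqref{Gnu_spec_RHO_w_sum} at $\ell = k$ simplifies to $q^{-\frac{1}{2}k(k+1)}$ since $-\frac{1}{2}\ell(2k+1-\ell)|_{\ell=k} = -\frac{1}{2}k(k+1)$, $n(k-\ell) = 0$, and $(1-q)^{k-\ell}/(q;q)_{k-\ell} = 1$.

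Finally I would combine the constants. The surviving summand carries the factor $(-\SP_0)^k \prod_{i \in \IS} (-\SPB)^{-\nu_i}$. Using the identity $(-\SPB)^{\nu_{i_p}} = (-\SP_0) \cdot (-\sh_1 \SPB)^{\nu_{i_p}-1} = (-\SP_0)(-\sh_1\SPB)^{\mm_p}$, the $(-\SP_0)^k$ cancels the product of the leading $(-\SP_0)$ factors, leaving precisely $1/(-\sh_1\SPB)^{\mm}$, which matches the right-hand side of \eqref{computed_q_def_correlator}. There is essentially no obstacle in this proof beyond bookkeeping; the substantive work has already been carried out in Proposition \ref{prop:Gnu_spec_RHO_w} (the expansion of the specialized $\G_\nu$) and Lemma \ref{lemma:JkRl} (the orthogonality calculation showing that $\Pli k$ kills all $\ell < k$ components). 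The only point requiring a moment of care is tracking the three sources of $\SP_0$ so that the final prefactor comes out as $(-\sh_1 \SPB)^{-\mm}$ rather than $(-\SPB)^{-\mm}$, which ultimately reflects that the signature $\mm$ has been shifted by $1^k$ in the orthogonality relation.
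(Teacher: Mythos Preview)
Your proposal is correct and follows exactly the approach of the paper: the proposition is stated as an immediate consequence of applying $\Pli k$ termwise to the expansion of Proposition~\ref{prop:Gnu_spec_RHO_w} and invoking Lemma~\ref{lemma:JkRl}, and your write-up carries out precisely this bookkeeping (including the cancellation of $(-\SP_0)^k$ that converts $(-\SPB)^{-\nu_i}$ into $(-\sh_1\SPB)^{-\mm}$).
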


% subsection extracting_terms_by_integrating_over_w_i_ (end)

\subsection{$q$-correlation functions} % (fold)
\label{sub:_q_correlation_functions}

The structure of formula \eqref{computed_q_def_correlator}
suggests the following definition. 
For any $n\ge k\ge0$, 
any 
$\mm=(\mm_1\ge\mm_2\ge \ldots\ge\mm_k\ge0)\in\signp k$
and any $\nu\in\signp n$, set
\begin{align}\label{q_def_correlation_thing}
	\corr\mm\nu q:=
	\sum_{\substack{\IS=\{i_1<\ldots<i_k\}\subseteq\{1,\ldots,n\}
	\\\nu_{i_1}=\mm_1,
	\ldots,
	\nu_{i_k}=\mm_k}}q^{i_1+\ldots+i_k}.
\end{align}
If $n<k$, then the above expression is zero, by agreement.
In this subsection we will employ Proposition~\ref{prop:JkG} to 
compute the expectations
\begin{align}\label{q_def_correlation}
	\E_{\UU;\RHO}\big(\corre\mm q\big)=
	\sum_{\nu\in\signp n}\MM_{\UU;\RHO}(\nu\md\ipb,\SPB)\corr\mm\nu q.
\end{align}
Note that by \eqref{RHO_spec}, 
the above summation
ranges only over $\nu\in\signp n$ with $\nu_n\ge1$.
The above sum converges if 
the parameters $u_i$ satisfy \eqref{admissibility_RHO_conditions}.
\begin{remark}
	Expectations \eqref{q_def_correlation}
	can be viewed as \emph{$q$-analogues of the correlation functions} 
	of $\MM_{\UU;\RHO}$.
	Indeed, when $q=1$ and all $\mm_i$'s are distinct, 
	\eqref{q_def_correlation_thing} turns into
	\begin{align*}
		\corr\mm\nu 1\Big\vert_{q=1}=\prod_{i=1}^{k}\#\{j\colon 1\le j\le n\textnormal{ and } \nu_j=\mm_i\}.
	\end{align*}
	When all the $\nu_j$'s are also pairwise distinct, 
	let us interpret them as coordinates of distinct particles 
	on $\Z_{\ge0}$. In this case
	the above expression further simplifies to
	\begin{align}\label{simple_correlation_thing}
		\corr\mm\nu 1\Big\vert_{q=1}=\mathbf{1}_{\{\textnormal{there is a particle of the configuration $\nu$
			at each of the locations $\mm_1,\ldots,\mm_k$}\}}.
	\end{align}
	A probability distribution on
	configurations $\nu$
	of $n$ distinct particles on $\Z_{\ge0}$
	is often referred to as the ($n$-)\emph{point process} on $\Z_{\ge0}$.
	An expectation of \eqref{simple_correlation_thing} with respect to 
	a point process on $\Z_{\ge0}$ is known as the 
	($k$-th) \emph{correlation function} of this point process.
	% This correlation function depends on $\mm_1,\ldots,\mm_k\in\Z_{\ge0}$
	% which are assumed distinct.
	% Thus, we see that expectations \eqref{q_def_correlation} can be viewed as $q$-analogues
	% of the correlation functions.
\end{remark}

For the purpose of analytic continuation
in the parameter space, 
it is useful to establish that
our $q$-correlation functions are a priori rational:
\begin{lemma}\label{lemma:qcorr_rational}
	Fix $n\in\Z_{\ge0}$, and let \eqref{admissibility_RHO_conditions} hold.
	Then for any fixed $k=0,\ldots,n$ and $\mm\in\signp k$,
	the expectation
	$\E_{\UU;\RHO}\big(\corre\mm q\big)$
	is a rational function in $u_1,\ldots,u_n$ 
	and the 
	parameters $q$, $\ipb$, and $\SPB$.
\end{lemma}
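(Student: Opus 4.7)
The strategy is to express $\E_{\UU;\RHO}\bigl(\corre\mm q\bigr)$ as a single contour integral of a rational function, using Proposition~\ref{prop:JkG} to convert the combinatorial quantity $\corr\mm\nu q$ into the transform $\Pli k$ of $\G_\nu(\RHO,\bullet\md\ipbb,\SPB)$, and then using the Cauchy identity \eqref{EGoverG_computation} to perform the summation over $\nu$ before taking the integral.

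First dispose of the trivial case: since $\MM_{\UU;\RHO}$ is supported on $\{\nu\in\signp n:\nu_n\ge1\}$, if $\mm_k=0$ the summation in \eqref{q_def_correlation_thing} is empty for every such $\nu$, so $\E_{\UU;\RHO}\bigl(\corre\mm q\bigr)=0$. Assume then that $\mm_k\ge1$ and set $\tilde\mm:=\mm-1^k\in\signp k$. Proposition~\ref{prop:JkG}, applied to the signature $\tilde\mm$, reads
\begin{equation*}
\bigl(\Pli k\G_\nu(\RHO,\bullet\md\ipbb,\SPB)\bigr)(\tilde\mm)
=\G_\nu(\RHO\md\ipbb,\SPB)\,\frac{q^{-k(k+1)/2}}{(-\sh_1\SPB)^{\tilde\mm}}\,\corr\mm\nu q,
\end{equation*}
and solving for $\corr\mm\nu q$ gives a pointwise expression for the $q$-correlator in terms of a $k$-fold contour integral whose integrand is the rational symmetric function $\G_\nu(\RHO,w_1,\dots,w_k\md\ipbb,\SPB)/\G_\nu(\RHO\md\ipbb,\SPB)$ evaluated at the integration variables.

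Next I would substitute this pointwise expression into \eqref{q_def_correlation} and interchange the (convergent) sum over $\nu$ with the contour integral. After the interchange, the integrand becomes
\begin{equation*}
\sum_{\nu\in\signp n}\MM_{\UU;\RHO}(\nu\md\ipb,\SPB)\,\frac{\G_\nu(\RHO,w_1,\dots,w_k\md\ipbb,\SPB)}{\G_\nu(\RHO\md\ipbb,\SPB)}
=\prod_{i=1}^n\prod_{j=1}^k\frac{1-qu_iw_j}{1-u_iw_j},
\end{equation*}
by the Cauchy-type computation already carried out in \eqref{EGoverG_computation}. Therefore
\begin{equation*}
\E_{\UU;\RHO}\bigl(\corre\mm q\bigr)=q^{k(k+1)/2}(-\sh_1\SPB)^{\tilde\mm}\,\bigl(\Pli k R_{\UU}\bigr)(\tilde\mm),
\qquad R_{\UU}(w_1,\dots,w_k):=\prod_{i,j}\frac{1-qu_iw_j}{1-u_iw_j},
\end{equation*}
and the right-hand side, being a finite iterated contour integral of a rational function in $u_1,\dots,u_n$, $q$, $\ipb$, $\SPB$ over contours depending rationally on the same parameters, is itself a rational function. (Residue evaluation makes this manifest.)

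The one point needing care is the exchange of $\sum_\nu$ and the $k$-fold contour integral in $\Pli k$. Under \eqref{admissibility_RHO_conditions} the series $\sum_\nu\MM_{\UU;\RHO}(\nu)\,\G_\nu(\RHO,w_1,\dots,w_k)/\G_\nu(\RHO)$ converges absolutely and uniformly for $w_1,\dots,w_k$ in a suitable neighbourhood, which is enough to push the summation through the integral provided the contours $\contqi{\ipb\SPBB}j$ can be chosen inside that neighbourhood. If the integration contours as given in Definition~\ref{def:bar_cont} lie outside this region of absolute convergence, I would first deform them to small contours around $\sh_1(\ipb\SPBB)$ (using that the integrand of $\Pli k$ is rational with known singularities), interchange sum and integral there, obtain the rational identity, and then invoke analytic continuation in the parameters to remove the auxiliary restriction; this is the only genuinely fiddly step, but it is routine thanks to the rationality of both sides.
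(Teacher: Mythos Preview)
Your route is quite different from the paper's. The paper gives a self-contained combinatorial argument: it writes
\[
\E_{\UU;\RHO}\big(\corre\mm q\big)=\sum_{\IS}q^{i_1+\ldots+i_k}\sum_{\nu:\,\nu_{i_p}=\mm_p}\MM_{\UU;\RHO}(\nu),
\]
interprets the inner (infinite) sum as a sum over path collections with the edges at $\mm_1,\ldots,\mm_k$ pinned, and then splits each such collection at the vertical line $x=\mm_1$. To the left of this line there are only finitely many configurations (a rational contribution); to the right, the paths are unrestricted, so their total weight is $1$ by stochasticity of the vertex weights. Hence each inner sum is a finite sum of rational functions, and rationality follows with no contour integrals and no analytic continuation. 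This argument uses nothing beyond the definition of $\MM_{\UU;\RHO}$ and the sum-to-one property.

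Your approach instead pre-derives what later becomes Proposition~\ref{prop:prelim_qcorr}: you invert Proposition~\ref{prop:JkG}, push the sum through $\Pli k$, and apply the Cauchy computation \eqref{EGoverG_computation} to get a contour integral. That is a legitimate strategy, but there is a genuine gap in your last paragraph. You write that the analytic continuation is ``routine thanks to the rationality of both sides''---but rationality of the left-hand side is exactly what you are trying to prove, so this is circular as stated. What you actually need (and have not said) is that $\E_{\UU;\RHO}\big(\corre\mm q\big)$ is \emph{analytic} in the parameters wherever \eqref{admissibility_RHO_conditions} holds, which follows from locally uniform convergence of the series $\sum_\nu \MM_{\UU;\RHO}(\nu)\corr\mm\nu q$ (the $\corr\mm\nu q$ are bounded independently of $\nu$, and the probability weights decay geometrically). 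With analyticity in hand, agreement with a rational function on an open subset forces rationality everywhere. If you supply this, your argument closes---but note that in the paper's logical order, Proposition~\ref{prop:prelim_qcorr} itself cites the present lemma to justify the same continuation, so the paper deliberately chose the direct combinatorial proof to break that dependency.
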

\begin{proof}
	Write 
	\begin{align*}
		\E_{\UU;\RHO}\big(\corre\mm q\big)=
		\sum_{\IS=\{i_1<\ldots<i_k\}\subseteq\{1,\ldots,n\}}
		q^{i_1+\ldots+i_k}
		\sum_{\substack{\nu\in\signp n\\
		\nu_{i_1}=\mm_1,
		\ldots,
		\nu_{i_k}=\mm_k}}
		\MM_{\UU;\RHO}(\nu\md\ipb,\SPB)
		,
	\end{align*}
	and observe that only the second sum is infinite.
	Therefore, we may fix $\IS$
	and consider only the summation over $\nu$.
	By \eqref{MM_U_RHO_particular} and \eqref{Qp_RHO},
	the sum over $\nu$ 
	is the same as 
	the sum of products of stochastic vertex weights 
	$\Lmatr_{\ip_x u_y,\SP_x}$
	over certain collections of $n$
	paths in 
	$\{0,1,2,\ldots\}\times \{1,2,\ldots,n\}$,
	as in Definition \ref{def:F}.
	Namely, these paths start with $n$ horizontal edges $(-1,t)\to(0,t)$, $t=1,\ldots,n$,
	end with $n$ vertical edges 
	$(\nu_i,n)\to(\nu_i,n+1)$ (note that $\nu_n\ge1$),
	and the end edges are partially fixed
	by the condition
	$\nu_{i_1}=\mm_1,\ldots,\nu_{i_k}=\mm_k$. Therefore, only the 
	coordinates
	$\nu_1,\ldots,\nu_{i_1-1}$
	belong to the infinite range $\{\mm_1+1,\mm_1+2,\ldots\}$.
	
	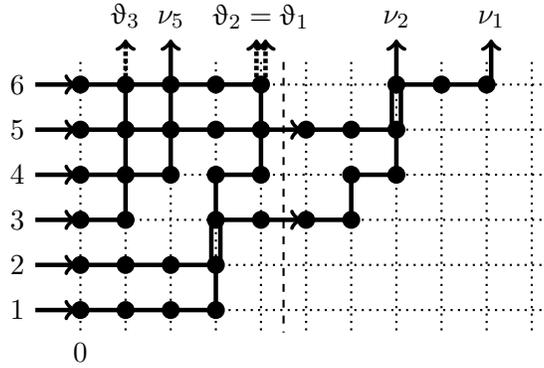
\begin{figure}[htbp]
			\begin{tikzpicture}
				[scale=.6,thick]
				\foreach \xxx in {0,1,2,3,4,5,6,7,8,9,10}
				{
					\draw[dotted] (\xxx,6.5)--++(0,-6);
				}
				\node [below] at (0,.5) {0};
				\foreach \xxx in {1,2,3,4,5,6}
				{
					\draw[dotted] (0,\xxx)--++(10.5,0);
					\node[left] at (-1,\xxx) {$\xxx$};
					\draw[->, line width=1.7pt] (-1,\xxx)--++(.9,0);
				}
				\draw[->, densely dotted, line width=1.7pt] (1,6)--++(0,1) node[above] {$\mm_3$};
				\draw[->, densely dotted, line width=1.7pt] (3.9,6)--++(0,1);
				\draw[->, densely dotted, line width=1.7pt] (4.1,6)--++(0,1);
				\draw[line width=1.7pt] (0,6)--(3.9,6);
				\draw[->, line width=1.7pt] (0,5)--(4.9,5);
				\draw[line width=1.7pt] (0,4)--++(1,0)--++(0,2);
				\draw[->, line width=1.7pt] (0,3)--++(1,0)--++(0,1)--++(1,0)--++(0,3)
				node [above] {$\nu_5$};
				\draw[line width=1.7pt] (0,2)--++(2.9,0)--++(0,1)--++(.1,0)--++(0,1)
				--++(1,0)--++(0,2);
				\draw[line width=1.7pt] (0,1)--++(3,0)--++(0,1);
				\draw[->, line width=1.7pt] (5,5)--++(1.9,0)--++(0,1)--++(.1,0)--++(0,1)
				node [above] {$\nu_2$};
				\draw[->, line width=1.7pt] (5,3)--++(1,0)--++(0,1)--++(1,0)--++(0,1)
				--++(.1,0)--++(0,1)--++(2,0)--++(0,1)
				node [above] {$\nu_1$};
				\draw[->, line width=1.7pt] (3.1,2)--++(0,1)--++(1.8,0);
				\node [above] at (4,7) {$\mm_2=\mm_1$};
				\draw[dashed] (4.5,6.5)--++(0,-6); 
				\foreach \ptt in {
				(0,1),(0,2),(0,3),(0,4),(0,5),(0,6),
				(1,1),(1,2),(1,3),(1,4),(1,5),(1,6),
				(2,1),(2,2),(2,4),(2,5),(2,6),
				(3,1),(3,2),(3,3),(3,4),(3,5),(3,6),
				(4,3),(4,4),(4,5),(4,6),
				(5,3),(5,5),
				(6,3),(6,4),(6,5),
				(7,4),(7,5),(7,6),
				(8,6),(9,6)
				}
				{
					\draw[fill] \ptt circle(5pt);
				}
			\end{tikzpicture}
		\caption{Splitting of summation over path collections 
		in the proof of Lemma \ref{lemma:qcorr_rational}
		for $n=6$, $k=3$, and $r=2$.
		The dotted arrows on the top correspond to fixed
		vertical edges prescribed by $\mm$,
		and here $\nu_3=\mm_1$, $\nu_4=\mm_2$, $\nu_6=\mm_3$,
		and $\JS=\{3,5\}$.}
		\label{fig:path_splitting}
	\end{figure}

	Assume that 
	$r\le i_1-1$
	out of our $n$ paths
	go strictly to the right of $\mm_1$ 
	(i.e., we have $\nu_r>\mm_1$ and $\nu_{r+1}=\ldots=\nu_{i_1}=\mm_1$).
	Let these paths contain edges 
	$(\mm_1,j_i)\to(\mm_1+1,j_i)$ 
	for some $\JS=\{j_1<\ldots<j_r\}\subseteq\{1,\ldots,n\}$.
	Fixing $r\le i_1-1$ and such $\JS$ (there are only finitely many ways to 
	choose this data), we may now
	split the summation over our $n$
	paths to paths in 
	$\{0,1,\ldots,\mm_1\}\times\{1,\ldots,n\}$
	and in 
	$\{\mm_1+1,\mm_1+2,\ldots\}\times \{1,\ldots,n\}$.
	The first sum over paths is also finite.
	See Fig.~\ref{fig:path_splitting} for an illustration of this splitting of paths.

	Since finite sums clearly produce rational functions,
	it now suffices to fix $r$ and $\JS$ as above,
	and consider the corresponding
	sum over collections of $r$ paths in 
	$\{\mm_1+1,\mm_1+2,\ldots\}\times \{1,\ldots,n\}$
	starting with 
	horizontal edges $(\mm_1,j_i)\to(\mm_1+1,j_i)$
	and ending with vertical edges
	$(\nu_i,n)\to(\nu_i,n+1)$, $i=1,\ldots,r$.
	Because this final infinite sum 
	involves stochastic vertex weights and 
	is over all unrestricted path collections, it is simply equal to $1$ 
	(recall that the $u_i$'s satisfy \eqref{admissibility_RHO_conditions}, so this sum converges).
	This completes the proof of the lemma.
\end{proof}

We are now in a position to compute the 
$q$-correlation functions \eqref{q_def_correlation}.
First, we will obtain a nested contour integration formula
when the points $u_i^{-1}$, $i=1,\ldots,n$, are inside 
the integration contour
$\contqi {\ipb\SPBB}1$ of Definition \ref{def:bar_cont}.
Note that this requires $\Re(u_i)<0$ for all $i$, which is  
incompatible with \eqref{stochastic_weights_condition_u}.
However, the correlation functions \eqref{q_def_correlation} 
are clearly well-defined as sums of possibly negative terms, 
and we have the following formula for them:
\begin{proposition}\label{prop:prelim_qcorr}
	Assume that 
	$q$, $\SPB$, and $\ipb$
	satisfy
	\eqref{stochastic_weights_condition_qsxi} and \eqref{assumptions_one_better}.
	Fix $n\ge k\ge0$,
	and let $u_1,\ldots,u_n$ satisfy \eqref{admissibility_RHO_conditions}
	and be such that 
	the 
	points $u_i^{-1}$ are inside the integration contour $\contqi {\ipb\SPBB}1$.
	Then for any $\mm=(\mm_1\ge\mm_2\ge \ldots\ge\mm_k\ge0)\in\signp k$ we have
	\begin{multline}\label{qcorr_prelim}
		\E_{\UU;\RHO}\big(\corre{\mm+1^{k}} q\big)=
		(-\sh_1\SPB)^{\mm}\conj_{\SPB}(\mm)\,
		\frac{(-1)^{k}q^{\frac{k(k+1)}{2}}}{(1-q)^{k}}
		\oint\limits_{\contqi {\ipb\SPBB}1}\frac{d w_1}{2\pi\i}
		\ldots
		\oint\limits_{\contqi {\ipb\SPBB}k}\frac{d w_k}{2\pi\i}
		\prod_{1\le \aind<\bind\le k}\frac{w_\aind-w_\bind}{w_\aind-qw_\bind}
		\\\times
		\prod_{i=1}^{k}\bigg(w_i^{-1}\pow_{\mm_{i}}(w_{i}^{-1}\md\sh_1\ipb,\sh_1\SPB)
		\prod_{j=1}^{n}\frac{1-qu_jw_i}{1-u_jw_i}
		\bigg),
	\end{multline}
	where $\conj_{\SPB}(\mm)$ is defined by \eqref{conj_definition}.
\end{proposition}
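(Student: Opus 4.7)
\medskip

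\textbf{Proof proposal.} The plan is to apply the integral transform $\Pli k$ (Definition \ref{def:int_transform}), acting in the variables $w_1,\ldots,w_k$, to both sides of the identity
\begin{equation}\label{eq:plan-cauchy}
\sum_{\nu\in\signp n,\,\nu_n\ge 1} \MM_{\UU;\RHO}(\nu\md\ipb,\SPB)\,\frac{\G_\nu(\RHO,w_1,\ldots,w_k\md\ipbb,\SPB)}{\G_\nu(\RHO\md\ipbb,\SPB)} \;=\; \prod_{i=1}^{n}\prod_{j=1}^{k}\frac{1-qu_iw_j}{1-u_iw_j}
\end{equation}
that was derived in \eqref{EGoverG_computation} from Corollary \ref{cor:usual_Cauchy}. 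On the right-hand side, the integrand is already the symmetric rational function whose integral against the $\Pli k$-kernel is exactly the nested contour expression appearing in \eqref{qcorr_prelim}; the scalar factor $(-1)^{k}\conj_{\SPB}(\mm)/(1-q)^{k}$ is present by definition of $\Pli k$.

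Assuming for the moment that we may exchange $\Pli k$ with the sum over $\nu$ on the left-hand side of \eqref{eq:plan-cauchy}, Proposition \ref{prop:JkG} evaluates each summand:
\begin{equation*}
\big(\Pli k \G_\nu(\RHO,\bullet\md\ipbb,\SPB)\big)(\mm) \;=\; \G_\nu(\RHO\md\ipbb,\SPB)\cdot \frac{q^{-\frac12 k(k+1)}}{(-\sh_1\SPB)^{\mm}}\sum_{\substack{\IS=\{i_1<\ldots<i_k\}\\ \nu_{i_\ell}=\mm_\ell+1\,\forall \ell}} q^{i_1+\ldots+i_k},
\end{equation*}
so the ratio $\G_\nu(\RHO,\bullet)/\G_\nu(\RHO)$ in the sum collapses cleanly and one obtains $q^{-k(k+1)/2}(-\sh_1\SPB)^{-\mm}\,\E_{\UU;\RHO}(\corre{\mm+1^k}q)$ in view of the definition \eqref{q_def_correlation_thing}. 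Equating this with the right-hand side processed through $\Pli k$ and rearranging yields exactly \eqref{qcorr_prelim}, the prefactor $q^{\frac12 k(k+1)}(-\sh_1\SPB)^{\mm}$ matching up with $(-1)^k\conj_\SPB(\mm)/(1-q)^k$ as claimed.

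The main obstacle is justifying the interchange of the infinite sum over $\nu\in\signp n$ with the multiple contour integration defining $\Pli k$. I would handle this by first restricting to a parameter subregime in which the sum on the left of \eqref{eq:plan-cauchy} converges absolutely and uniformly in $(w_1,\ldots,w_k)$ on compact neighborhoods of the contours $\contqi{\ipb\SPBB}j$ (for instance, by keeping the $u_i$ small enough that \eqref{admissibility_RHO_conditions} holds with ample margin, while still placing $u_i^{-1}$ inside $\contqi{\ipb\SPBB}1$; the latter forces $\Re u_i<0$, incompatible with \eqref{stochastic_weights_condition_u}, but compatible with \eqref{admissibility_RHO_conditions}). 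In that subregime Fubini's theorem legitimizes the exchange and the identity \eqref{qcorr_prelim} is established.

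To reach the full range of parameters stated in the proposition, I would then invoke Lemma \ref{lemma:qcorr_rational}: the left-hand side of \eqref{qcorr_prelim} is a rational function of $u_1,\ldots,u_n,q,\ipb,\SPB$, and the right-hand side is manifestly a rational function of the same variables (it is a finite sum of residues in the $w_j$'s). Since the identity holds on a nonempty open subset of the parameter space and both sides are rational, it extends by analytic continuation to the full domain described in the hypothesis, with the contour configuration prescribed as in Definition \ref{def:bar_cont} and the assumption that the $u_i^{-1}$ lie inside $\contqi{\ipb\SPBB}1$ (which governs which residues the contour integral picks up and is preserved under the continuation). This completes the plan.
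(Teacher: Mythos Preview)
Your proposal is correct and follows essentially the same approach as the paper: apply $\Pli k$ to both sides of \eqref{EGoverG_computation}, use Proposition~\ref{prop:JkG} termwise, justify the interchange of summation and integration by first working in a restricted parameter subregime where admissibility $\adm{u_i}{w_j}$ holds on the contours, and then extend by analytic continuation in view of the a~priori rationality of both sides (Lemma~\ref{lemma:qcorr_rational}). The paper carries out the restricted-regime step a bit more concretely by deforming the contours $\contqi{\ipb\SPBB}j$ (not only the $u_i$'s) via a disc argument parallel to the proof of Theorem~\ref{thm:spec_orthogonality}, but this is exactly the mechanism you are gesturing at, and the overall logic is the same.
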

After proving this proposition, 
we will relax the conditions on the $u_i$'s
(to include $u_i\ge0$)
by suitably deforming the integration contours.
\begin{proof}
	The desired identity \eqref{qcorr_prelim}
	formally follows 
	from Proposition \ref{prop:JkG}
	combined with the Cauchy identity summation
	\eqref{EGoverG_computation}.	
	However, one needs to justify that this 
	summation can be performed inside the integral.
	
	Observe that
	if \eqref{admissibility_RHO_conditions} holds, 
	then the left-hand side of
	\eqref{qcorr_prelim}
	is a rational function in the $u_i$'s. 
	Therefore, the desired identity 
	would follow by
	analytic continuation of rational functions 
	(cf. footnote$^{\ref{footnote:rational_continuation}}$)
	if we can show that for certain restricted 
	values of $q$, $\SPB$, $\ipb$, and $\{u_i\}$
	and on certain deformed contours
	we have $\adm {u_i}{w_j}$ for all $i,j$, so that the summation
	can be performed inside the integral.
	
	This can be done similarly to the proof of Theorem \ref{thm:spec_orthogonality}. 
	Namely, it suffices to show that
	\begin{align*}
		\left|
		\frac{u_i^{-1}-\ip_x\SP_x^{-1}}{u_i^{-1}-\ip_x\SP_x}\right|
		<r,\qquad
		\left|
		\frac{w_j-\ip_x\SP_x}{w_j-\ip_x\SP_x^{-1}}
		\right|
		<R,
		\qquad
		\textnormal{for some $R>1$, $0<r<1$, with $rR<1$, and all $x$},
	\end{align*}
	which implies admissibility \eqref{admissible_sufficient}
	(note also that the first of these inequalities implies \eqref{admissibility_RHO_conditions}).
	That is, the points $u_i^{-1}$ should be closer to 
	$\ip_x\SP_x^{-1}$ than to $\ip_x\SP_x$, and the opposite for 
	the $w_j$'s. Considering the discs
	$\tilde B_x^{(r)}:=\{z\in\C\colon|z-\ip_x\SP_x^{-1}|<r|z-\ip_x\SP_x|\}$,
	one can check that for 
	restricted values of the parameters
	$\ipb$ and $\SPB$ similarly to \eqref{spec_bio_admiss2}
	and for $q$ sufficiently small, we have
	\begin{enumerate}[\bf1.]
		\item $\bigcap_{x=0}^{\infty}\tilde B_x^{(r)}$ is nonempty;
		\item $\bigcup_{x=0}^{\infty}\tilde B_x^{(1/R)}$ does not contain any 
		of the points $\ip_x\SP_x$;
		\item $\bigcup_{x=0}^{\infty}\tilde B_x^{(1/R)}$
		does not intersect with 
		$\bigcup_{x=0}^{\infty}q^{-1}\tilde B_x^{(1/R)}$,
	\end{enumerate}
	and so the deformed contours $\contqi {\ipb\SPBB}1',\ldots,\contqi {\ipb\SPBB}k'$
	exist.
	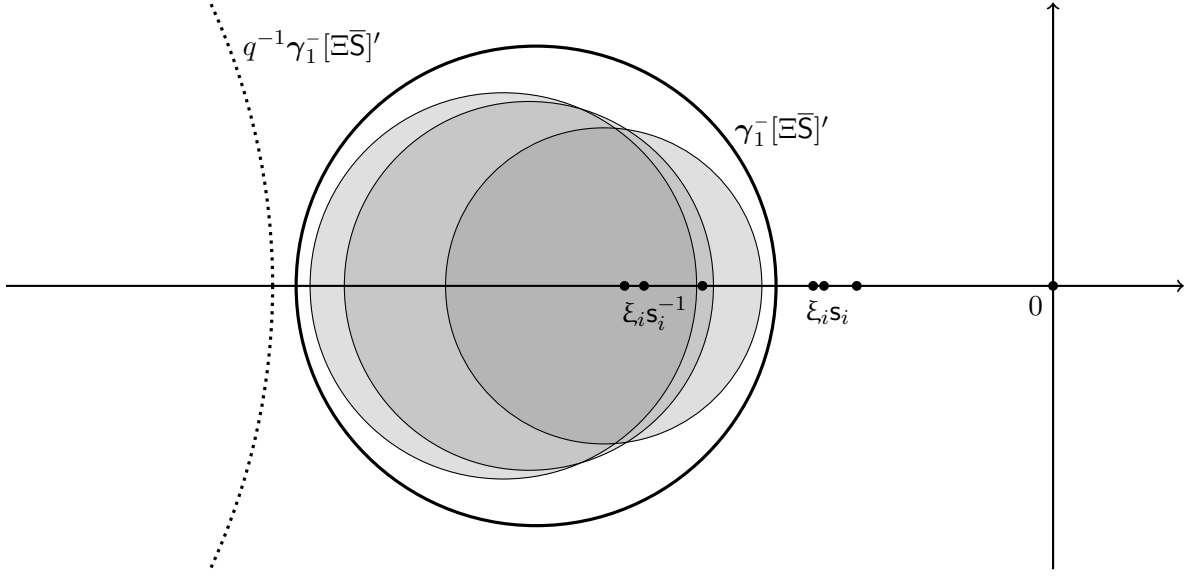
\begin{figure}[htbp]
		\begin{center}
		\begin{tikzpicture}
			[scale=2.9]
			\def\pt{0.02}
			\def\q{.355}
			\def\ss{.56}
			\def\ccc{-2.06}
			\def\rrr{.725}
			\def\cccc{-2.52}
			\def\rrrr{.886}
			\def\ccccc{-2.403}
			\def\rrrrr{.846}
			\draw[fill, color=gray, opacity=.25] (\ccc,0) circle (\rrr);
			\draw[fill, color=gray, opacity=.25] (\cccc,0) circle (\rrrr);
			\draw[fill, color=gray, opacity=.25] (\ccccc,0) circle (\rrrrr);
			\draw (\ccc,0) circle (\rrr);
			\draw (\cccc,0) circle (\rrrr);
			\draw (\ccccc,0) circle (\rrrrr);
			\draw[->, thick] (-4.8,0) -- (.6,0);
		  	\draw[->, thick] (0,-1.3) -- (0,1.3);
		  	\draw[fill] (-.9,0) circle (\pt);
		  	\draw[fill] (-1.1,0) circle (\pt);
		  	\draw[fill] (-1.05,0) circle (\pt) node [below, xshift=7pt] {$\ip_i\SP_i\phantom{^{-1}}$};
		  	\draw[fill] (-.9/\ss,0) circle (\pt);
		  	\draw[fill] (-1.1/\ss,0) circle (\pt);
		  	\draw[fill] (-1.05/\ss,0) circle (\pt) node [below, xshift=4pt] {$\ip_i\SP_i^{-1}$};
		  	\draw[fill] (0,0) circle (\pt) node [below left] {$0$};
		  	\draw[very thick] (-2.37,0) circle (1.1) node 
		  	[below,xshift=93,yshift=70] 
		  	{$\contqi {\ipb\SPBB}{1}'$};
		  	\draw[dotted, very thick] (-2.37/\q+1.1/\q,0) arc (0:25:1.1/\q);
		  	\draw[dotted, very thick] (-2.37/\q+1.1/\q,0) arc (0:-25:1.1/\q);
		  	\node at (-2.37/\q+1.1/\q+.18,1.1) {$q^{-1}\contqi {\ipb\SPBB}{1}'$};
		\end{tikzpicture}
		\end{center}
	  	\caption{Discs $\tilde B_x^{(r)}$ (shaded)
	  	for the proof of Proposition \ref{prop:prelim_qcorr},
	  	and a possible choice of the deformed contour $\contqi {\ipb\SPBB}1'$. 
	  	A part of the contour $q^{-1}\contqi {\ipb\SPBB}1'$
	  	is shown dotted, explaining why $q$ should be small for the contours
	  	$\contqi {\ipb\SPBB}j'$ to exist.}
	  	\label{fig:Appolonian}
	\end{figure}

	Therefore, for restricted values of parameters
	we can deform the contours $\contqi {\ipb\SPBB}j$
	to $\contqi {\ipb\SPBB}j'$, and the points $u_i^{-1}$ will be inside the contour
	$\contqi {\ipb\SPBB}1'$. On the deformed contours the summation \eqref{EGoverG_computation} 
	can be performed inside the integral,
	yielding the identity \eqref{qcorr_prelim} between
	rational functions for restricted values of parameters.
	We conclude that 
	\eqref{qcorr_prelim} then holds for all values of parameters as
	described in the claim, because 
	for them the contour integral represents the same rational function.
\end{proof}

To state our final result for the $q$-correlation
functions, we 
need the following integration contour:
\begin{definition}\label{def:cont_gat_U}
	Let $u_1,\ldots,u_n>0$, and assume that $u_i\ne q u_j$
	for any $i,j$. Define the 
	contour $\contn{\bar\UU}$ to be a union of sufficiently
	small positively oriented circles around all the points 
	$\{u_i^{-1}\}$, such that the interior of 
	$\contn{\bar\UU}$ does not intersect 
	with $q^{\pm1}\contn{\bar\UU}$, and
	the points $\sh_1(\ipb\SPB)=\{\ip_x\SP_x\}_{x\in\Z_{\ge1}}$ are 
	outside the contour $\contn{\bar\UU}$.
\end{definition}

For $u_i>0$ and
$q\cdot \max_{i}u_i<\min_i u_i$,
let the $q^{-1}$-nested contours $\contqi{\bar\UU}j$, $j=1,\ldots,k$, be 
defined analogously to 	
$\contqi{\ipb\SPBB}j$ of Definition \ref{def:bar_cont}
(but the $\contqi{\bar\UU}j$'s encircle the points $u_i^{-1}$).
In this case the contour $\contqi{\bar\UU}1$
can also play the role of
$\contn{\bar\UU}$ of Definition
\ref{def:cont_gat_U}.

With these contours we can now formulate the final result of
the computation in 
\S \ref{sub:computation_of_gnurhow}--\S \ref{sub:_q_correlation_functions}:
\begin{theorem}\label{thm:qcorr}
	The nested contour integral formula 
	\eqref{qcorr_prelim} 
	for the $q$-correlation functions of the 
	dynamics $\Xp_{\{u_t\};\RHO}$ at $\text{time}=n$
	holds in each of the following three cases:
	\begin{enumerate}[\bf1.]
		\item Let $q$, $\SPB$, and $\ipb$ satisfy
		\eqref{stochastic_weights_condition_qsxi}
		and \eqref{assumptions_one_better}, the points
		$u_i^{-1}$ be inside the integration contour $\contqi{\ipb\SPBB}1$,
		and the $u_i$'s satisfy \eqref{admissibility_RHO_conditions}.
		Then \eqref{qcorr_prelim} holds with the 
		integration contours $w_j\in\contqi{\ipb\SPBB}j$
		of Definition \ref{def:bar_cont}.
		\item 
		Under
		\eqref{stochastic_weights_condition_qsxi}
		and \eqref{assumptions_one_better},
		let $u_i>0$ for all $i$ and $u_i\ne qu_j$ for any $i,j$.
		Then 
		\eqref{qcorr_prelim} holds when all the integration
		contours are the same, $w_j\in\contn{\bar\UU}$ (described in 
		Definition \ref{def:cont_gat_U}).
		In this case we can symmetrize the integrand
		similarly to the proof of Corollary \ref{cor:spatial_biorth}, and 
		the formula takes the form
		\begin{multline}\label{qcorr_symmetrized}
			\E_{\UU;\RHO}\big(\corre{\mm+1^{k}} q\big)=
			\frac{(-1)^{k}q^{\frac{k(k+1)}{2}}}{(1-q)^{k}k!}
			\oint\limits_{\contn{\bar\UU}}\frac{d w_1}{2\pi\i}
			\ldots
			\oint\limits_{\contn{\bar\UU}}\frac{d w_k}{2\pi\i}
			\prod_{1\le \aind\ne \bind\le k}\frac{w_\aind-w_\bind}{w_\aind-qw_\bind}
			\\\times
			(-\sh_1\SPB)^{\mm}\frac{\F^{\conj}_{\mm}
			(w_1^{-1},\ldots,w_k^{-1}\md\sh_1\ipb,\sh_1\SPB)}{w_1 \ldots w_k}
			\prod_{i=1}^{k}
			\prod_{j=1}^{n}\frac{1-qu_jw_i}{1-u_jw_i}.
		\end{multline}
		\item 
		Under 
		\eqref{stochastic_weights_condition_qsxi}--\eqref{stochastic_weights_condition_u}
		and \eqref{assumptions_one_better},
		let $\UU$ have the form 
		\begin{multline}
			\UU=(u_1,qu_1,\ldots,q^{J-1}u_1,u_2,qu_2,\ldots,q^{J-1}u_2,
			\ldots,u_{n'},qu_{n'},\ldots,q^{J-1}u_{n'}),\\
			\textnormal{
			where
			$n=Jn'$ with some $J\in\Z_{\ge1}$, \quad
			$u_i>0$,\quad and \quad
			$q\cdot \max_{i}u_i<\min_i u_i$.}
			\label{U_fused_for_contours}
		\end{multline}
		Then 
		\eqref{qcorr_prelim} holds with the integration
		contours
		$w_j\in\contqi{\bar\UU}j$.
		In this case the double product in the integrand
	 	takes the form
		$\displaystyle\prod\nolimits_{i=1}^{k}\prod\nolimits_{j=1}^{n'}\frac{1-q^{J}u_jw_i}{1-u_jw_i}$.\footnote{Since the definition of the contours $\contqi{\bar\UU}j$ does not depend
		on $J\in\Z_{\ge1}$, we can analytically continue
		the nested contour integral formula
		in $q^{J}$ (similarly to the 
		discussion in \S \ref{sub:general_j_dynamics_and_q_hahn_degeneration}).
		We will employ this continuation in 
		\S \ref{sub:moments_of_q_hahn_and_q_boson_systems}
		below.}
	\end{enumerate}
\end{theorem}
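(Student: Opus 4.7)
The proof has three parts, with Part 1 already established as Proposition \ref{prop:prelim_qcorr}. My plan is to obtain Parts 2 and 3 from Part 1 by deforming integration contours, relying on the rationality (Lemma \ref{lemma:qcorr_rational}) of the left-hand side.

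The key observation is that in \eqref{qcorr_prelim}, viewed as a function of each $w_i$ with other $w_\ell$'s fixed, the integrand has simple poles only at (a) $w_i = \ip_x\SP_x$ for $x\ge 1$, arising from $\pow_{\mm_i}(w_i^{-1}\md \sh_1\ipb,\sh_1\SPB)$, and (b) $w_i = u_j^{-1}$, arising from $\prod_j(1-qu_jw_i)/(1-u_jw_i)$. In Part 1 the contour $\contqi{\ipb\SPBB}j$ encircles the points $\ip_x\SP_x^{-1}$ (which are \emph{not} poles of the integrand) and, by hypothesis, the $u_j^{-1}$, while excluding the $\ip_x\SP_x$; so only residues at the $u_j^{-1}$ contribute. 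For Part 3 the contours $\contqi{\bar\UU}j$ encircle only the $u_j^{-1}$'s, again excluding the $\ip_x\SP_x$, and also preserve the $q^{-1}$-nesting structure needed to control the cross-term poles $w_\aind = qw_\bind$. Since both contour systems enclose exactly the same poles of the integrand with the same nesting convention, they define the same rational function of $\UU,\ipb,\SPB,q$: the bridge is to construct intermediate contours $C_j$ which in one limit degenerate to $\contqi{\ipb\SPBB}j$ and in another to $\contqi{\bar\UU}j$, homotopic throughout without crossing the poles (a), allowing analytic continuation of \eqref{qcorr_prelim} from the Part 1 regime where $u_j^{-1}$ lies inside $\contqi{\ipb\SPBB}1$ (necessarily requiring complex/negative $u_j$) to the Part 3 regime $u_j > 0$.

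For Part 2 the contour $\contn{\bar\UU}$ is not $q^{-1}$-nested but instead the same for all $w_i$, with the separation condition $\contn{\bar\UU}\cap q^{\pm 1}\contn{\bar\UU}=\emptyset$ guaranteeing that for $w_\bind\in\contn{\bar\UU}$ the point $qw_\bind$ lies in $q\contn{\bar\UU}$, hence outside $\contn{\bar\UU}$, so cross-term residues do not enter. Because all $w_i$ run over the same contour, the integrand may be symmetrized: summing $\sigma[\,\cdot\,]$ over $\sigma\in\Sym_k$ and dividing by $k!$, the antisymmetrization formula \eqref{F_symm_formula} (after converting the product $\prod_i w_i^{-1}\pow_{\mm_i}(w_i^{-1}\md\sh_1\ipb,\sh_1\SPB)$ with the Vandermonde-type cross-terms) collapses into $(-\sh_1\SPB)^{\mm}\F^{\conj}_{\mm}(w_1^{-1},\ldots,w_k^{-1}\md\sh_1\ipb,\sh_1\SPB)/(w_1\cdots w_k)$, which is \eqref{qcorr_symmetrized}. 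Verifying that the symmetric integral over $\contn{\bar\UU}^k$ agrees with the nested form then follows from the same pole-bookkeeping principle: for $q$ sufficiently close to zero (so $\contn{\bar\UU}$ can be chosen as a small loop around a single connected cluster of the $u_j^{-1}$), both integral representations evaluate to the same residue sum, and rationality of both sides extends the identity.

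The main obstacle is the analytic continuation step: the Part 1 and Part 3 parameter regimes are disjoint (they require $u_j^{-1}$ to lie on opposite sides of $0$ relative to the $\ip_x\SP_x^{-1}$), so contour deformation cannot be carried out continuously within either real regime alone. The argument must allow $u_j$ to vary in the complex plane, simultaneously deforming the contours $C_j$ (remaining $q^{-1}$-nested, always enclosing exactly the $u_j^{-1}$'s and no $\ip_x\SP_x$) so that no pole of the integrand is crossed. Because the integral then depends rationally on $\UU$ throughout this deformation, and Part 1 identifies its value as the LHS in a nonempty open subset of the parameter space, the identity persists under specialization to the Part 2 and Part 3 regimes.
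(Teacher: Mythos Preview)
Your overall strategy---recognizing that only the $u_j^{-1}$ are genuine poles of the integrand inside the contours, and that the identity extends by rationality---matches the paper's. However, there is a real gap in your Part~2 argument. When you deform $\contqi{\cdot}{2}$ down to $\contqi{\cdot}{1}$ (or equivalently compare the nested integral with the same-contour one), you cross the pole $w_2=q^{-1}w_1$ coming from the cross-term $(w_1-qw_2)^{-1}$. Your assertion that ``both integral representations evaluate to the same residue sum'' for small $q$ is precisely what needs proof; taking $q$ small does not make this pole disappear. The paper closes this gap by an explicit computation: the residue at $w_2=q^{-1}w_1$ of $\frac{w_1-w_2}{w_1-qw_2}\prod_j\frac{(1-qu_jw_1)(1-qu_jw_2)}{(1-u_jw_1)(1-u_jw_2)}$ equals $(1-q)q^{-2}w_1\prod_j \frac{q(1-qu_jw_1)}{q-u_jw_1}$, which is \emph{regular} at every $w_1=u_j^{-1}$ and therefore integrates to zero over the small $w_1$-contour. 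This one-line residue check is the missing ingredient.

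A second issue concerns Part~3. The paper orders the argument $1\to 2\to 3$: once Part~2 holds for generic distinct $u_i$, one returns to nested contours $\contqi{\bar\UU}{j}$ (same residue-vanishing argument, reversed) and then specializes $u_2\to qu_1$, etc. This last step is delicate because it violates the hypothesis $u_i\ne qu_j$ of Part~2; the paper observes that moving $u_2^{-1}$ outside $\contqi{\bar\UU}{1}$ but inside $\contqi{\bar\UU}{2}$ and then setting $u_2=qu_1$ kills the residue at $(w_1,w_2)=(u_2^{-1},u_1^{-1})$ (via the numerator factor $u_2-qu_1$), consistent with the telescoping $\prod_{m=0}^{J-1}\frac{1-q^{m+1}u_jw}{1-q^mu_jw}=\frac{1-q^Ju_jw}{1-u_jw}$. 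Your direct $1\to 3$ analytic-continuation route does not address this specialization, and without it you have only established Part~3 for generic (non-fused) $\UU$.
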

\begin{proof}
	{\bf1.\/} This is Proposition \ref{prop:prelim_qcorr}. 
	
	\smallskip
	\noindent
	{\bf2.\/}
	To prove the second case, 
	start with 
	\eqref{qcorr_prelim} 
	with contours $w_j\in\contqi{\ipb\SPBB}j$ and
	$q$, $\SPB$, $\ipb$, and $\{u_i\}$ fixed, and observe the following effect.
	The integrand 
	\begin{align}
		\prod_{1\le \aind<\bind\le k}\frac{w_\aind-w_\bind}{w_\aind-qw_\bind}
		\prod_{i=1}^{k}\bigg(w_i^{-1}\pow_{\mm_{i}}(w_{i}^{-1}\md\sh_1\ipb,\sh_1\SPB)
		\prod_{j=1}^{n}\frac{1-qu_jw_i}{1-u_jw_i}\bigg)
		\label{qcorr_proof_integrand}
	\end{align}
	has 
	only the poles $w_1=u_j^{-1}$, $j=1,\ldots,n$,
	inside the contour $\contqi{\ipb\SPBB}1$, because the other poles $\infty$
	and $\{\ip_i\SP_i\}_{i\in\Z_{\ge1}}$ are outside 
	$\contqi{\ipb\SPBB}1$.
	Deform the integration contour 
	$\contqi{\ipb\SPBB}2$ so that it becomes the same as $\contqi{\ipb\SPBB}1$,
	thus picking the residue at $w_2=q^{-1}w_1$. We see that 
	\begin{align*}
		\Res_{w_2=q^{-1}w_1}
		\frac{w_1-w_2}{w_1-qw_2}
		\bigg(\prod_{j=1}^{n}\frac{1-qu_jw_1}{1-u_jw_1}
		\frac{1-qu_jw_2}{1-u_jw_2}\bigg)
		=(1-q)q^{-2}w_1
		\prod_{j=1}^{n}\frac{q(1-qu_jw_1)}{q-u_jw_1}.
	\end{align*}
	Hence, the residue at 
	$w_2=q^{-1}w_1$ is regular in $w_1$ on the contour 
	$\contqi{\ipb\SPBB}1$, and thus vanishes after the $w_1$
	integration. 
	Continuing this argument in a similar way, we may deform all integration contours to be $\contqi{\ipb\SPBB}1$.
	In other words, we see that the integral \eqref{qcorr_prelim}
	can be computed by taking only the residues at points
	$w_i=u_{j_i}^{-1}$, where $i=1,\ldots,k$ and 
	$\{j_1,\ldots,j_k\}\subseteq\{1,\ldots,n\}$.
	
	Next, observe that the points $\ip_x\SP_x^{-1}$ are not
	poles of the integrand \eqref{qcorr_proof_integrand},
	and so the requirement that the contour
	$\contqi{\ipb\SPBB}1$ encircles these points can be dropped.
	Thus, we may take $u_i>0$, and the 
	integration contours to be $\contn{\bar\UU}$ instead of $\contqi{\ipb\SPBB}1$.
	Symmetrizing the integration variables finishes the second case.

	\smallskip
	\noindent
	{\bf3.\/}
	This case can be obtained as a limit of the second case. 
	Namely, 
	under assumptions of case 2 and also assuming 
	$q\cdot \max_{i}u_i<\min_i u_i$,
	let us first pass to the $q^{-1}$--nested contours
	$\contqi{\bar\UU}j$,
	which start from $\contn{\bar\UU}=\contqi{\bar\UU}1$.
	This can be done following the above argument in case 2,
	because the integration in both cases
	$\contn{\bar\UU}$ and $\contqi{\bar\UU}j$ involves the same residues.

	Next, if $u_2\ne q u_1$, 
	then the integrand in \eqref{qcorr_prelim}
	has nonzero residues at both
	$(w_1,w_2)=(u_1^{-1},u_2^{-1})$
	and
	$(w_1,w_2)=(u_2^{-1},u_1^{-1})$,
	and so 
	both $u_{1,2}^{-1}$ must be inside
	$\contqi{\bar\UU}1$ to produce the correct rational function.
	However, if $u_2=qu_1$, then the second residue vanishes due to the presence
	of the factor $u_2-qu_1$. 
	One can readily check that the same effect occurs 
	when we first move $u_2^{-1}$
	outside the contour $\contqi{\bar\UU}1$
	(but still inside $\contqi{\bar\UU}2$),
	and then set $u_2=qu_1$. 
	This agrees with the 
	presence of the factors 
	$\frac{1-q^{2}u_1w_i}{1-u_1w_i}$ in the integrand
	after setting $u_2=qu_1$,
	which do not have poles at $u_2^{-1}=(qu_1)^{-1}$.
	One also sees that after taking residue at $w_1=u_1^{-1}$,
	the pole at $w_2=u_1^{-1}$ disappears, 
	but there is a new pole at $w_2=(qu_1)^{-1}$.
	
	Continuing on,
	we see that for the 
	contours $\contqi{\bar\UU}j$
	we can specialize $\UU$ to \eqref{U_fused_for_contours},
	and the contour integration will still yield the 
	correct rational function
	(i.e., the corresponding specialization of the left-hand
	side of \eqref{qcorr_prelim}).
	This establishes the
	third case.
\end{proof}

% subsection _q_correlation_functions (end)

\subsection{Remark. From observables to duality, and back} % (fold)
\label{sub:duality_from_observables}

Formula \eqref{qcorr_symmetrized} for the $q$-correlation functions
readily suggests a certain \emph{self-duality} 
relation associated with the inhomogeneous stochastic higher spin six vertex model.
Denote
$H(\nu;\mm):=\corre{\mm+1^{k}} q(\nu)$.
Then \eqref{qcorr_symmetrized}
implies
\begin{align}\label{duality_from_observables}
	\sum_{\eta\in\signp k}T(\mm\to\eta)\E_{\UU\cup u;\RHO}H(\bullet;\eta)=
	\E_{\UU;\RHO}H(\bullet;\mm),
\end{align}
where
$T(\mm\to\eta):=
q^{-k}\frac{(-\sh_1\SPB)^{\mm}}{(-\sh_1\SPB)^{\eta}}\G_{\eta/\mm}\big((qu)^{-1}\md\sh_1\ipbb,\sh_1\SPB\big)$.
In \eqref{duality_from_observables} by ``$\bullet$'' we mean the variables in which the expectation is applied.
To see \eqref{duality_from_observables},
apply the operator $T$ inside the integral, and note that 
\eqref{Pieri1} is equivalent to 
\begin{align}\label{Pieri1_equivalent}
	\sum_{\eta\in\signp{k}}
	T(\mm\to\eta)
	(-\sh_1\SPB)^{\eta}\F^{\conj}_{\eta}(w_1^{-1},\ldots,w_k^{-1}\md\sh_1\ipb,\sh_1\SPB)
	=\prod_{i=1}^{k}
	\frac{1-u w_i}{1-quw_i}
	(-\sh_1\SPB)^{\mm}\F^{\conj}_{\mm}(w_1^{-1},\ldots,w_k^{-1}\md\sh_1\ipb,\sh_1\SPB).
\end{align}
	
On the other hand, adding the new parameter $u$ to the specialization $\UU$
in \eqref{duality_from_observables} corresponds to time evolution, i.e., 
to the application of the operator $\Qp_{u;\RHO}$ \eqref{Qp_RHO}. That is, 
the left-hand side of \eqref{duality_from_observables}
can be written as
\begin{multline*}
	\sum_{\la\in\signp {n+1}}\sum_{\eta\in\signp k}T(\mm\to\eta)\MM_{\UU\cup u;\RHO}(\la\md\ipb,\SPB)H(\la;\eta)
	\\=
	\sum_{\mu\in\signp n}
	\MM_{\UU;\RHO}(\mu\md\ipb,\SPB)
	\sum_{\eta\in\signp k}
	T(\mm\to\eta)
	\sum_{\la\in\signp {n+1}}
	\Qp_{u;\RHO}(\mu\to\la)
	H(\la;\eta).
\end{multline*}
Since the right-hand side of \eqref{duality_from_observables}
involves the expectation with respect to the 
same measure $\MM_{\UU;\RHO}$ and since 
identity \eqref{duality_from_observables} holds
for arbitrary $u_i$'s, this suggests the following \emph{duality relation}:
\begin{align}\label{duality_relation}
	\Qp_{u;\RHO}HT^{\textnormal{transpose}}=H,
\end{align}
where the operators
$\Qp_{u;\RHO}$
and 
$T^{\textnormal{transpose}}$ 
are applied in the first and the second variable in $H$, respectively.
Similar duality relations can be written down by considering $q$-moments which are 
computed in Theorem \ref{thm:multi_moments} below.

\medskip

It is worth noting that (self-)dualities like \eqref{duality_relation} can sometimes be independently proven from the very definition of the 
dynamics, and then utilized to 
produce nested contour integral formulas for the 
observables of these dynamics. This can be thought of as 
an alternative way to 
proving results like Theorem \ref{thm:qcorr}. Let us outline 
this argument.
Applying $(\Qp_{u;\RHO})^{n}$ to \eqref{duality_relation} gives
\begin{align*}
	(\Qp_{u;\RHO})^{n+1}HT^{\textnormal{transpose}}
	=(\Qp_{u;\RHO})^{n}H.
\end{align*}
Taking the expectation in both sides above, we arrive back at
our starting point \eqref{duality_from_observables}:
\begin{align}\label{duality_relation_2}
	(\E_{(u^{n+1});\RHO}H)T^{\textnormal{transpose}}
	=\E_{(u^{n});\RHO}H,\qquad
	(u^{m}):=(\underbrace{u,\ldots,u}_{\textnormal{$m$ times}}),
\end{align}
where, as before, the expectation of $H=H(\nu;\mm)$ is taken with 
respect to the probability distribution 
in $\nu$, and the operator 
$T^{\textnormal{transpose}}$ acts on $\mm$.
Thus, knowing \eqref{duality_relation} and passing to 
\eqref{duality_relation_2}, one gets a closed system 
of linear equations for the observables
$\E_{u^{n};\RHO}H(\bullet;\mm)$,
where $n$ runs over $\Z_{\ge0}$,
and $\mm$ --- over $\signp k$.
This system can sometimes be reduced to 
a simpler system of free evolution equations
subject to certain two-body boundary conditions, 
and the latter can be solved explicitly in terms of 
nested contour integrals.

This alternative route towards explicit formulas for 
averaging of observables was taken
(for various degenerations of the higher spin six vertex model)
in \cite{BorodinCorwinSasamoto2012}, \cite{BorodinCorwin2013discrete},
\cite{Corwin2014qmunu}.
Duality for the (homogeneous) higher spin six vertex model
started from infinitely
many particles at the leftmost location
was considered in~\cite{CorwinPetrov2015}.

\begin{remark}
	An advantage of this alternative 
	route starting from duality \eqref{duality_relation} is that 
	it implies
	equations
	\eqref{duality_relation_2} for 
	arbitrary (sufficiently nice) initial conditions,
	because one can take an arbitrary expectation
	in the last step leading to 
	\eqref{duality_relation_2}.\footnote{This is 
	in contrast with 
	\eqref{duality_from_observables} which
	is implied by 
	\eqref{qcorr_symmetrized}, and thus holds only 
	for the dynamics 
	$\Xp$
	started from the empty initial configuration.}
	This argument could lead to nested contour integral formulas
	for arbitrary initial conditions,
	similarly to what is done in
	\cite{BorodinCorwinPetrovSasamoto2013}
	and \cite{BCPS2014}.
	We will not discuss duality relations
	or formulas with
	arbitrary initial conditions here.
\end{remark}

% subsection duality_from_observables (end)

% section observables_of_interacting_particle_systems (end)

\section{$q$-moments of the height function} % (fold)
\label{sec:_q_moments_of_the_height_function_of_interacting_particle_systems}

In this section we compute 
another type of observables
of the stochastic dynamics $\Xp_{\{u_t\};\RHO}$
started from the empty initial configuration --- the $q$-moments
of its height function.

\subsection{Height function and its $q$-moments} % (fold)
\label{sub:height_function_and_its_q_moments}

Let $\nu\in\signp n$. Define the \emph{height function}
corresponding to $\nu$ as follows:
\begin{align*}
	\nonumber
	\HT_\nu(x):=\#\{j\colon \nu_j\ge x\},\qquad x\in\Z.
\end{align*}
Clearly, $\HT_\nu(x)$ is a nonincreasing function of $x$,
$\HT_\nu(0)=n$, and $\HT_\nu(+\infty)=0$.
In this section we will compute the (multi-point) $q$-moments
\begin{align*}
	\E_{\UU;\RHO}\prod_{i=1}^{\ell}q^{\HT_{\nu}(x_i)}
	=
	\sum_{\nu\in\signp n}\MM_{\UU;\RHO}(\nu\md\ipb,\SPB)
	\prod_{i=1}^{\ell}q^{\HT_{\nu}(x_i)}
	\nonumber
\end{align*}
of the height function,
where $x_1\ge \ldots\ge x_{\ell}\ge1$ are arbitrary.
Note that the above summation ranges only over
signatures with
$\nu_n\ge1$.

\begin{lemma}\label{lemma:qmom_rational}
	Fix $n\in\Z_{\ge0}$ and $u_1,\ldots,u_n$ satisfying
	\eqref{admissibility_RHO_conditions}. Then for any 
	$\ell$ and $x_1\ge \ldots\ge x_{\ell}\ge1$,
	the $q$-moments
	$\E_{\UU;\RHO}\prod_{i=1}^{\ell}q^{\HT_{\nu}(x_i)}$
	are rational functions in the $u_i$'s
	and the parameters $q$, $\ipb$, and $\SPB$. 
	% Moreover, this function depends only
	% on finite subsets of $\ipb$ and $\SPB$.
\end{lemma}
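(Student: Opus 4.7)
My plan is to mimic the argument of Lemma~\ref{lemma:qcorr_rational}: expand $\prod_{i=1}^\ell q^{\HT_\nu(x_i)}$ into indicator functions, rewrite the resulting probabilities as partition functions of constrained path ensembles, and reduce to finite sums of rational weights by using stochasticity in the unconstrained tail. First I would write
\begin{align*}
\prod_{i=1}^{\ell}q^{\HT_\nu(x_i)}
= \sum_{(k_1,\ldots,k_\ell)\in\{0,\ldots,n\}^\ell}
q^{k_1+\ldots+k_\ell}\,
\mathbf{1}_{\HT_\nu(x_1)=k_1,\,\ldots,\,\HT_\nu(x_\ell)=k_\ell},
\end{align*}
which is a \emph{finite} sum since $\HT_\nu(x)\in\{0,1,\ldots,n\}$. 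This reduces the claim to showing that the probability $\Prob_{\UU;\RHO}(\HT_\nu(x_i)=k_i,\ \forall i)$ is a rational function of the parameters for each fixed tuple $(k_1,\ldots,k_\ell)$; the finite linear combination of such probabilities with coefficients $q^{k_1+\ldots+k_\ell}$ is then automatically rational.

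Next, I would use \eqref{MM_U_RHO_particular} and \eqref{Qp_RHO} to express this probability as a sum over signatures $\nu\in\signp n$ (with $\nu_n\ge1$ and the prescribed constraints) of partition functions of up-right path ensembles in $\Z_{\ge1}\times\{1,\ldots,n\}$ with stochastic vertex weights $\Lmatr_{\ip_x u_y,\SP_x}$, boundary data as in $\Xp_{\{u_t\};\RHO}$. By arrow conservation, $\HT_\nu(x_i)$ equals the total number of horizontal arrows crossing the vertical divider between columns $x_i-1$ and $x_i$. Since $x_1\ge x_2\ge\ldots\ge x_\ell\ge 1$, every such divider lies inside the finite rectangle $R:=\{1,\ldots,x_1\}\times\{1,\ldots,n\}$. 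I would therefore split each path ensemble into its restriction to $R$ and its restriction to the infinite tail $R^{c}=\{x_1+1,x_1+2,\ldots\}\times\{1,\ldots,n\}$. The event $\{\HT_\nu(x_i)=k_i,\ \forall i\}$ depends only on the behavior in $R$, together with the single integer $k_1$ giving the number of horizontal arrows entering $R^{c}$ through its left boundary. Summing over all admissible path configurations in $R$ compatible with the constraints produces only finitely many terms, each a rational function of the parameters; and for any fixed left-boundary arrow profile on $R^{c}$ (with $k_1$ horizontal arrows distributed among $n$ rows), summing the stochastic weights over all path configurations in $R^{c}$ telescopes to $1$ via \eqref{L_sum_to_one}, applied vertex by vertex. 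The resulting factorization (finite rational sum)$\,\times\,1$ yields the desired rationality.

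The main obstacle, and the only subtle point, is justifying the interchange of the infinite summation over $\nu\in\signp n$ with the factorization across $R$ and $R^{c}$: one must ensure that the telescoping sum over configurations in the tail converges absolutely and independently of the behavior in $R$. This is exactly what the admissibility condition \eqref{admissibility_RHO_conditions} guarantees, since it bounds $\bigl|\Lmatr_{\ip_x u_y,\SP_x}(0,1;0,1)\bigr|$ uniformly in $x$ by $1-\epsilon$ for some $\epsilon>0$; the same argument given at the end of the proof of Lemma~\ref{lemma:qcorr_rational} (column-by-column probability conservation) then applies verbatim and completes the proof.
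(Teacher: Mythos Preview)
Your proposal is correct and follows essentially the same approach as the paper's (one-sentence) proof, which simply refers back to Lemma~\ref{lemma:qcorr_rational}: fix the finitely many possible values of the height function, interpret the constrained probability as a partition function of paths, split at a vertical divider, and use stochasticity to collapse the infinite tail to $1$. One small slip: with $R=\{1,\ldots,x_1\}\times\{1,\ldots,n\}$ the number of arrows entering $R^{c}$ from the left is $\HT_\nu(x_1+1)$, not $k_1=\HT_\nu(x_1)$; either shift the split to $R=\{1,\ldots,x_1-1\}\times\{1,\ldots,n\}$ so that $k_1$ is indeed the interface count, or additionally sum over the (finitely many) arrow profiles at the $x_1\to x_1+1$ interface --- neither change affects the argument.
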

\begin{proof}
	This is established similarly to Lemma \ref{lemma:qcorr_rational},
	because if $\HT_\nu(x_1)\in\{0,1,\ldots,n\}$ is fixed, then there is a 
	fixed number of the coordinates of $\nu$
	belonging to an infinite range, and the summation 
	over them produces a rational function.
\end{proof}

We will first use the $q$-correlation functions
discussed in \S \ref{sec:observables_of_interacting_particle_systems}
to compute one-point $q$-moments 
$\E_{\UU;\RHO}q^{\ell\,\HT_{\nu}(x)}$. 
The formula for these one-point $q$-moments
allows to formulate an analogous multi-point
statement, and we will then present its verification proof.
Thus, the one-point formula will be proven in two different ways.

% subsection height_function_and_its_q_moments (end)

\subsection{One-point $q$-moments from $q$-correlations } % (fold)
\label{sub:one_point_q_moments_from_q_correlations_}

Let us first establish an algebraic
identity connecting one-point
$q$-moments with $q$-correlation functions. 
In fact,
the identity holds even before taking the expectation:
\begin{lemma}\label{lemma:moments_from_qcorr}
	For any $x\ge 1$, $\ell\ge0$, and a signature
	$\nu$, we have
	\begin{align}\label{moments_from_qcorr}
		q^{\ell\,\HT_\nu(x)}
		=
		\sum_{k=0}^{\ell}
		(-q)^{-k}\binom{\ell}{k}_{q}
		(q;q)_{k}
		\sum_{\mm_1\ge \ldots\ge \mm_k\ge x}
		\corr{(\mm_1,\ldots,\mm_k)}\nu q.
	\end{align}
\end{lemma}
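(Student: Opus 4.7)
The plan is to show that \eqref{moments_from_qcorr} is a pointwise combinatorial identity in $\nu$: no probability enters, only the monotonicity of $\nu$ and the threshold $x$ play a role. My first step will be to unfold the definition of $\corr{(\mm_1,\ldots,\mm_k)}\nu q$ and collapse the outer sum $\sum_{\mm_1\ge\cdots\ge\mm_k\ge x}$. Because $\nu$ is nonincreasing, any strictly increasing tuple $(i_1,\ldots,i_k)$ with $\nu_{i_j}\ge x$ for each $j$ yields an eligible $\mm$, namely $(\nu_{i_1},\ldots,\nu_{i_k})$, and this correspondence is a bijection. Since $\nu$ is nonincreasing, $\nu_j\ge x$ is equivalent to $j\le H:=\HT_\nu(x)$. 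So the double sum collapses to
\[
\sum_{\mm_1\ge\cdots\ge\mm_k\ge x}\corr{(\mm_1,\ldots,\mm_k)}\nu q=\sum_{1\le i_1<\cdots<i_k\le H}q^{i_1+\cdots+i_k},
\]
which is the elementary symmetric polynomial $e_k(q,q^2,\ldots,q^H)$.

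Second, I will invoke the standard $q$-evaluation $e_k(q,q^2,\ldots,q^H)=q^{k(k+1)/2}\binom{H}{k}_q$, substitute it back into the right-hand side of \eqref{moments_from_qcorr}, and combine it with the prefactor $(-q)^{-k}\binom{\ell}{k}_q(q;q)_k$. After the obvious simplification of $q$-powers, the lemma reduces to the purely algebraic $q$-identity
\[
q^{\ell H}\;=\;\sum_{k=0}^{\ell}(-1)^k q^{\binom{k}{2}}\binom{\ell}{k}_q (q;q)_k \binom{H}{k}_q,\qquad \ell,H\in\Z_{\ge0},
\]
in which the signature $\nu$ and the cutoff $x$ have disappeared entirely.

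Third, I will prove this $q$-identity from the $q$-binomial theorem. Setting $Q:=q^H$ and rewriting $(q;q)_k\binom{H}{k}_q=\prod_{j=0}^{k-1}(1-Qq^{-j})=(-Q)^kq^{-\binom{k}{2}}(Q^{-1};q)_k$ transforms the claim into $Q^{\ell}=\sum_{k=0}^{\ell}Q^k\binom{\ell}{k}_q(Q^{-1};q)_k$. Expanding $(Q^{-1};q)_k=\sum_{j=0}^{k}(-1)^jq^{\binom{j}{2}}\binom{k}{j}_q Q^{-j}$ by the $q$-binomial theorem, regrouping the double sum by $m:=k-j$ using the identity $\binom{\ell}{k}_q\binom{k}{j}_q=\binom{\ell}{m}_q\binom{\ell-m}{j}_q$, and applying the collapse $\sum_{j\ge 0}(-1)^jq^{\binom{j}{2}}\binom{\ell-m}{j}_q=(1;q)_{\ell-m}=\mathbf{1}_{m=\ell}$ kills every term except $m=\ell$, producing exactly $Q^\ell$. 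The only nontrivial bookkeeping is in tracking signs and $q$-powers during the substitution $Q=q^H$, and in observing that $(1;q)_{\ell-m}$ vanishes whenever $m<\ell$; once those are in place, the argument is entirely formal, and there is no genuine obstacle.
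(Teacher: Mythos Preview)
Your proof is correct and takes a genuinely different, more direct route than the paper.

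The paper first derives a closed formula for $\corr{(x_1^{\ell_1},\ldots,x_m^{\ell_m})}\nu q$ in terms of the increments $\Delta\HT_\nu(x_j)$ (formula \eqref{corr_via_HT}), then proves a single-variable inversion identity \eqref{basic_inversion_formula} via generating series and the $q$-binomial theorem, iterates it over all locations $x,x+1,\ldots$ to obtain \eqref{inversion_formula_1}, and finally specializes $A_j=q^{\Delta\HT_\nu(x+j-1)}$ to recover \eqref{moments_from_qcorr}. You instead observe at the outset that summing over all $\mm_1\ge\cdots\ge\mm_k\ge x$ collapses the correlator sum to $e_k(q,\ldots,q^{H})=q^{\binom{k+1}{2}}\binom{H}{k}_q$ with $H=\HT_\nu(x)$, so the entire lemma reduces to a single $q$-identity in the scalar $H$, which you then prove by two applications of the finite $q$-binomial theorem. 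Your route is shorter and more transparent for this particular lemma; what the paper's approach buys is the intermediate relation \eqref{corr_via_HT}, which the authors set up as the natural starting point for the multi-point generalization (though, as they remark, it does not in fact lead to a convenient multi-point computation, and they switch to a verification argument there anyway).
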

\begin{proof}
	Denote
	$\Delta\HT_\nu(x):=\HT_\nu(x)-\HT_\nu(x+1)$;
	this is the number of parts of $\nu$ that are equal to $x$.
	First, let us express the quantities 
	$\corr\mm\nu q$ through the height function. 
	We start with the case $\mm=(x^\ell)=(x,\ldots,x)$.
	We have
	\begin{align*}
		\corr{(x^{\ell})}\nu q&=
		\sum_{\substack{\IS=\{i_1<\ldots<i_\ell\}\subseteq\{1,\ldots,n\}
		\\\nu_{i_1}=\ldots=
		\nu_{i_\ell}=x}}q^{i_1+\ldots+i_\ell}
		\\&=q^{\frac{\ell(\ell+1)}{2}}
		q^{\ell\,\HT_\nu(x+1)}
		\binom{\Delta\HT_{\nu}(x)}{\ell}_{q}
		\\&=
		q^{\frac{\ell(\ell+1)}{2}}
		\frac{\big(q^{\Delta\HT_{\nu}(x)};q^{-1}\big)_{\ell}}{(q;q)_{\ell}}\,q^{\ell\,\HT_\nu(x+1)},
	\end{align*}
	where the second equality follows similarly
	to the computation of the partition function
	\eqref{Zjj_formula}.

	For general
	\begin{align*}
	 	\mm=(x_1^{\ell_1},\ldots,x_m^{\ell_m}):=
	 	(\underbrace{x_1,\ldots,x_1}_{\textnormal{$\ell_1$ times}},
	 	\ldots,\underbrace{x_m,\ldots,x_m}_{\textnormal{$\ell_m$ times}}),
	\end{align*}
	where $x_1>\ldots>x_m\ge0$ and $\boldsymbol\ell
	=(\ell_1,\ldots,\ell_m)\in\Z_{\ge0}^{m}$,
	the summation over $\IS$ in \eqref{q_def_correlation_thing}
	is clearly equal to the product of individual summations 
	corresponding to each $x_j$, $j=1,\ldots,m$. Therefore, 
	\begin{align}\label{corr_via_HT}
		\corr{(x_1^{\ell_1},\ldots,x_m^{\ell_m})}\nu q=
		\prod_{j=1}^{m}
		q^{\frac{\ell_j(\ell_j+1)}{2}}
		\frac{\big(q^{\Delta\HT_{\nu}(x_j)};q^{-1}\big)_{\ell_j}}{(q;q)_{\ell_j}}\,q^{\ell_j\HT_\nu(x_j+1)}.
	\end{align}

	Our next goal is to invert relation \eqref{corr_via_HT}.
	Let us write down certain abstract inversion formulas which will
	lead us to the desired statement.
	In these formulas, we will assume that 
	$A,B,A_0,A_1,A_2,\ldots$ are indeterminates.
	Let us also denote
	\begin{align*}
		T_i(A):=q^{\frac{i(i+1)}2}\frac{(A;q^{-1})_{i}}{(q;q)_i},
		\qquad
		R_i:=\frac{(-q)^{i}}{(q;q)_{i}}.
	\end{align*}
	Note that by the very definition,
	$T_i(A)=0$ for $i<0$, 
	$T_0(A)=1$,
	and $T_i(1)=\mathbf{1}_{i=0}$.
	Moreover, 
	$R_i=0$ for $i<0$, and $R_0=1$. 
	
	The first inversion formula is
	\begin{align}\label{basic_inversion_formula}
		A^n R_n=\sum_{k=0}^{n}T_k(A)R_{n-k}.
	\end{align}
	Indeed, multiply the above identity by $B^{n}$, and sum over $n\ge0$.
	The left-hand side gives, by the $q$-Binomial Theorem,
	\begin{align*}
		\sum_{n=0}^{\infty}(AB)^{n}\frac{(-q)^{n}}{(q;q)_{n}}=
		\frac{1}{(-ABq;q)_{\infty}},
	\end{align*}
	and in the right-hand side we first sum over $n\ge k$ and then over $k\ge0$, which yields
	\begin{multline*}
		\sum_{k=0}^{\infty}
		q^{\frac{k(k+1)}2}\frac{(A;q^{-1})_{k}}{(q;q)_k}
		B^{k}
		\sum_{n=k}^{\infty}
		\frac{(-q)^{n-k}}{(q;q)_{n-k}}
		B^{n-k}=
		\frac{1}{(-Bq;q)_{\infty}}
		\sum_{k=0}^{\infty}
		q^{\frac{k(k+1)}2}\frac{(A;q^{-1})_{k}}{(q;q)_k}
		B^{k}
		\\=
		\frac{1}{(-Bq;q)_{\infty}}
		\sum_{k=0}^{\infty}
		\frac{(A^{-1},q)_{k}}{(q;q)_k}
		(-q)^k A^k B^{k}
		=\frac{1}{(-Bq;q)_{\infty}}
		\frac{(-Bq;q)_{\infty}}{(-ABq;q)_{\infty}}=
		\frac{1}{{(-ABq;q)_{\infty}}},
	\end{multline*}
	where we have used the $q$-Binomial Theorem twice.
	This establishes \eqref{basic_inversion_formula},
	because the generating series of both its sides coincide.\footnote{In 
	the above manipulations with infinite series 
	we assume that $0\le q<1$ and that $A$ and $B$ are sufficiently small.
	Alternatively, it is enough to think that 
	we are working with formal power series.}

	Replace $A$ by $A_1$ in
	\eqref{basic_inversion_formula}, 
	multiply it by $A_2=A_2^{k}A_2^{n-k}$,
	and apply \eqref{basic_inversion_formula} to 
	$A_2^{n-k}R_{n-k}$ in the right-hand side. 
	Continuing this process with $A_3,\ldots,A_N$, we obtain for any
	$\ell\ge0$ and $N\ge1$:
	\begin{align}\label{inversion_formula_1}
		(A_1 \ldots A_N)^{\ell}=
		\sum_{\mathbf{k}\in\Z^{N}_{\ge0}}\frac{R_{\ell-|\mathbf{k}|}}{R_\ell}\prod_{j=1}^{N}T_{k_j}(A_j)
		\big(A_{j+1}A_{j+2}\ldots A_{N}\big)^{k_j},
	\end{align}
	where the sum is over all (unordered)
	nonnegative integer vectors $\mathbf{k}=(k_1,\ldots,k_N)$ of length $N$.
	Here and below $|\mathbf{k}|$ stands for 
	$k_1+\ldots+k_N$.
	Clearly, the sum ranges only over $\mathbf{k}$ with $|\mathbf{k}|\le\ell$.
	Note that if only finitely many of
	the indeterminates $A_j$
	differ from $1$, then one can send $N\to+\infty$ in \eqref{inversion_formula_1}
	and sum over integer vectors $\mathbf{k}$ of arbitrary length.
	
	If we set $A_j:=q^{\Delta\HT_\nu(x+j-1)}$ and send $N\to+\infty$, 
	the left-hand side of \eqref{inversion_formula_1} becomes
	$q^{\ell\,\HT_\nu(x)}$, and in the right-hand side we obtain
	\begin{align*}
		T_{k_j}(A_j)\big(A_{j+1}A_{j+2}\ldots \big)^{k_j}=
		q^{\frac{k_j(k_j+1)}2}\frac{(q^{\Delta\HT_{\nu}(x+j-1)};q^{-1})_{k_j}}{(q;q)_{k_j}}\,
		q^{k_j\HT_\nu(x+j)}.
	\end{align*}
	Therefore, the product of these quantities
	in \eqref{inversion_formula_1} matches formula
	\eqref{corr_via_HT} for 
	$\corr{\mm}\nu q$,
	where the point $x+j-1$ enters the signature
	$\mm$ with multiplicity $k_j\ge0$.
	This yields the desired formula.
\end{proof}
\begin{remark}
	Using a similar approach as in the above lemma, 
	one can write down more complicated formulas expressing
	$\prod_{i=1}^{\ell}q^{\HT_{\nu}(x_i)}$
	for any $x_1\ge \ldots\ge x_{\ell}\ge1$
	through the quantities $\corr{\mm}\nu q$. 
	However, except for the one-point case,
	these expressions do not seem to be convenient for
	computing the $q$-moments.
	Therefore, in \S \ref{sub:verification_for_multi_point_q_moments} 
	below we present a verification-style 
	proof for the multi-point $q$-moments.
\end{remark}

\begin{definition}\label{def:circular_contours_around_0}
	Fix $\ell\in\Z_{\ge1}$.
	Assume that \eqref{stochastic_weights_condition_qsxi} 
	and \eqref{assumptions_one_better} hold. 
	Let $u_i>0$ and $u_i\ne qu_j$ for any $i,j$.
	Then the integration contour $\contn{\bar\UU}$ 
	encircling all $u_i^{-1}$
	is well-defined (see
	Definition \ref{def:cont_gat_U}).
	Let also $c_0$ be a positively oriented circle around
	zero
	which is sufficiently small.
	Let $r>q^{-1}$ be such that 
	$q\contn{\bar\UU}$ does not intersect
	$r^{\ell}c_0$, and $r^{\ell}c_{0}$ does not encircle any of the 
	points $\{\ip_i\SP_i\}_{i\in\Z_{\ge1}}$. Denote
	$\contnz{\bar\UU}j:=\contn{\bar\UU}\cup r^{j}c_0$, where $j=1,\ldots,\ell$.
	See Fig.~\ref{fig:big_Gamma_contours}.
\end{definition}

\begin{figure}[htbp]
	\begin{center}
	\begin{tikzpicture}
		[scale=2.9]
		\def\pt{0.02}
		\def\q{.7}
		\def\ss{.56}
		\draw[->, thick] (-2.2,0) -- (2.6,0);
	  	\draw[->, thick] (0,-1) -- (0,1);
	  	\draw[fill] (-.9,0) circle (\pt);
	  	\draw[fill] (-1,0) circle (\pt) node [below, xshift=7pt] 
	  	{$\ip_i\SP_i\phantom{^{-1}}$};
	  	\draw[fill] (-1.05,0) circle (\pt) ;
	  	\draw[fill] (-.9/\ss,0) circle (\pt);
	  	\draw[fill] (-1/\ss,0) circle (\pt);
	  	\draw[fill] (1.5,0) circle (\pt) node [below, xshift=7pt] {$u_j^{-1}$};
	  	\draw[fill] (1.7,0) circle (\pt) ;
	  	\draw[fill] (1.4,0) circle (\pt) ;
	  	\draw[fill] (-1.05/\ss,0) circle (\pt) node [below, xshift=4pt] {$\ip_i\SP_i^{-1}$};
	  	\draw[fill] (0,0) circle (\pt) node [below left] {$0$};
	  	\draw (0,0) circle (.23) node [xshift=18,yshift=10] {$rc_0$};
	  	\def\rrrrr{1.9}
	  	\draw (0,0) circle (.23*\rrrrr) node [xshift=40,yshift=28] {$r^{2}c_0$};
	  	\draw[dotted] (0,0) circle (.24*\rrrrr*\q);
	  	\draw (0,0) circle (.23*\rrrrr*\rrrrr) node [xshift=53,yshift=61] {$r^{3}c_0$};
	  	\draw[dotted] (0,0) circle (.24*\rrrrr*\rrrrr*\q);
	  	\draw (1.6,0) circle (.26) node [below,xshift=-10,yshift=38] {$\contn{\bar\UU}$};
	  	\draw[dotted] (1.6*\q,0) circle (.26*\q);
	\end{tikzpicture}
	\end{center}
  	\caption{A possible choice of integration contours 
  	$\contnz{\bar\UU}1=\contn{\bar\UU}\cup rc_0$, 
  	$\contnz{\bar\UU}2=\contn{\bar\UU}\cup r^{2}c_0$, 
  	and 
  	$\contnz{\bar\UU}3=\contn{\bar\UU}\cup r^{3}c_0$
  	for $\ell=3$ in 
  	Definition \ref{def:circular_contours_around_0}.
  	Contours $q\contn{\bar\UU}$ and $q\contnz{\bar\UU}2$, $q\contnz{\bar\UU}3$ are shown dotted.
	}
  	\label{fig:big_Gamma_contours}
\end{figure}

We are now in a position to compute the one-point $q$-moments:
\begin{proposition}\label{prop:one_moment}
	Assume that \eqref{stochastic_weights_condition_qsxi} 
	and \eqref{assumptions_one_better} hold. 
	Let $u_i>0$ and $u_i\ne qu_j$ for any $i,j$.
	Then for any $\ell\in\Z_{\ge0}$ and $x\in\Z_{\ge1}$ 
	we have
	\begin{multline}\label{one_moment}
		\E_{\UU;\RHO}q^{\ell\,\HT_{\nu}(x)}=
		q^{\frac{\ell(\ell-1)}2}
		\oint\limits_{\contnz{\bar\UU}1}\frac{d w_1}{2\pi\i}
		\ldots
		\oint\limits_{\contnz{\bar\UU}\ell}\frac{d w_\ell}{2\pi\i}
		\prod_{1\le \aind<\bind\le \ell}\frac{w_\aind-w_\bind}{w_\aind-qw_\bind}
		\\\times
		\prod_{i=1}^{\ell}\bigg(
		w_i^{-1}
		\prod_{j=1}^{x-1}
		\frac{\ip_j-\SP_jw_i}{\ip_j-\SP_j^{-1}w_i}
		\prod_{j=1}^{n}\frac{1-qu_jw_i}{1-u_jw_i}
		\bigg).
	\end{multline}
\end{proposition}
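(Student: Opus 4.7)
The plan is to derive \eqref{one_moment} by combining Lemma~\ref{lemma:moments_from_qcorr} with the symmetric contour integral formula \eqref{qcorr_symmetrized} for the $q$-correlation functions. First, I would apply Lemma~\ref{lemma:moments_from_qcorr} pointwise in $\nu$ and take the $\MM_{\UU;\RHO}$-expectation to write
\[
\E_{\UU;\RHO}q^{\ell\,\HT_\nu(x)}=\sum_{k=0}^\ell(-q)^{-k}\binom{\ell}{k}_q(q;q)_k\sum_{\mm_1\ge\cdots\ge\mm_k\ge x}\E_{\UU;\RHO}\corre{\mm}q,
\]
and then substitute formula \eqref{qcorr_symmetrized} (case 2 of Theorem~\ref{thm:qcorr}, which applies under exactly the hypotheses of the proposition) for $\E_{\UU;\RHO}\corre{\mm}q$. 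This gives a $k$-fold integral over $\contn{\bar\UU}$ whose integrand contains the symmetrized cross-product $\prod_{\aind\ne\bind}\tfrac{w_\aind-w_\bind}{w_\aind-qw_\bind}$ times the factor $(-\sh_1\SPB)^{\mm-1^k}\F^{\conj}_{\mm-1^k}(\bar w\md\sh_1\ipb,\sh_1\SPB)$.

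Next, I would interchange the summation over $\mm$ with the integration and evaluate the resulting sum in closed form. Writing $\mm-1^k=\eta+(x-1)^k$ with $\eta\in\signp k$, the shift formula~\eqref{F_shifts} together with the easily verified identity $\conj_{\sh_1\SPB}(\eta+(x-1)^k)=\conj_{\sh_x\SPB}(\eta)$ factor the $\mm$-sum as an explicit $w$-dependent prefactor times $\sum_{\eta\in\signp k}(-\sh_x\SPB)^{\eta}\F^{\conj}_\eta(\bar w\md\sh_x\ipb,\sh_x\SPB)$. The latter equals $1$ because it is the total mass of the probability measure $\MM_{\bar w;\RHO}$ with inhomogeneity parameters shifted by $x-1$. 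A short algebraic manipulation then identifies the prefactor with $\prod_{i=1}^k\prod_{j=1}^{x-1}\tfrac{\ip_j-\SP_jw_i}{\ip_j-\SP_j^{-1}w_i}$, the factor appearing in \eqref{one_moment}.

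At this point $\E_{\UU;\RHO}q^{\ell\,\HT_\nu(x)}$ is expressed as $\sum_{k=0}^\ell c_k\,I_k$ with $c_k=q^{k(k-1)/2}\binom{\ell}{k}_q\tfrac{(q;q)_k}{(1-q)^k k!}$ and $I_k$ a symmetric $k$-fold integral on $\contn{\bar\UU}$ having the desired integrand. To identify this with the RHS of \eqref{one_moment}, I would expand each integration over $\contnz{\bar\UU}j=\contn{\bar\UU}\cup r^j c_0$ as a sum over subsets $I\subseteq\{1,\ldots,\ell\}$ indicating which variables are evaluated by residue at $w_i=0$ via the circle $r^i c_0$. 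The nested choice $r>q^{-1}$ of radii forces every pair $\aind<\bind$ with $\aind\in I$ to contribute a factor $q^{-1}$ to the cross-product, so the total cross-term factor becomes $q^{-\sum_{i\in I}(\ell-i)}$ while the residue of the single-variable part $\tfrac{1}{w_i}\cdot[\text{other factors}]$ at $w_i=0$ is simply $1$. Grouping by $k=\ell-|I|$ and invoking $\sum_{J\subseteq\{0,\ldots,\ell-1\},\,|J|=m}q^{\sum_{j\in J}j}=q^{m(m-1)/2}\binom{\ell}{m}_q$ recovers the combinatorial coefficient, while the factor $\tfrac{(q;q)_k}{(1-q)^k k!}$ bridging the ``nested'' and ``symmetrized'' integrals comes from the symmetrization identity
\[
\sum_{\sigma\in\Sym_k}\sigma\Bigl(\prod_{\aind<\bind}\tfrac{w_\aind-w_\bind}{w_\aind-qw_\bind}\Bigr)=\tfrac{(q;q)_k}{(1-q)^k}\prod_{\aind\ne\bind}\tfrac{w_\aind-w_\bind}{w_\aind-qw_\bind},
\]
which reduces to footnote~\ref{symm_footnote} after factoring out the symmetric denominator.

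The main obstacle is justifying the interchange of the infinite sum over $\mm$ with the $w$-integration, which requires the admissibility of $w_i^{-1}$ (for $w_i$ on $\contn{\bar\UU}$) with respect to the shifted parameters $\sh_x\ipb,\sh_x\SPB$. When this is not directly available on the natural contours, I would appeal to Lemma~\ref{lemma:qmom_rational} to conclude that both sides of \eqref{one_moment} are rational functions in $\UU,\ipb,\SPB$, and then extend the identity by analytic continuation from an open set of parameters on which all manipulations are unambiguously valid, following exactly the rational-function continuation strategy used in the proofs of Proposition~\ref{prop:skew_G_integral_formula} and Theorem~\ref{thm:spec_orthogonality}.
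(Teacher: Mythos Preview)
Your proposal is correct and follows essentially the same route as the paper's proof: take expectations in Lemma~\ref{lemma:moments_from_qcorr}, substitute the $q$-correlation formula \eqref{qcorr_symmetrized}, shift the $\mm$-summation by $x-1$ and recognize it as a total probability (hence equal to~$1$), and finally collapse the resulting sum over $k$ into the single nested integral on the contours $\contnz{\bar\UU}j$.

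There are two minor differences worth noting. For the last step the paper simply cites \cite[Lemma~4.21]{BorodinCorwinSasamoto2012} (restated as Lemma~\ref{lemma:four_twenty_one}); your residue-by-residue expansion of the contours $\contnz{\bar\UU}j=\contn{\bar\UU}\cup r^{j}c_0$ with the $q$-binomial bookkeeping is exactly a proof of that lemma, so you are effectively reproving it inline. For the interchange of the $\mm$-sum with the $w$-integration, the paper avoids analytic continuation altogether: since $w_i\in\contn{\bar\UU}$ has $\Re(w_i)>0$, the points $w_i^{-1}$ satisfy \eqref{admissibility_RHO_conditions} directly (with the shifted parameters), so the sum over $\eta$ converges absolutely on the contour. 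Your analytic-continuation workaround would also succeed, but the direct argument is shorter.
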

\begin{proof}
	Taking the expectation with respect to 
	$\MM_{\UU;\RHO}$ in both sides of
	\eqref{moments_from_qcorr} and using \eqref{qcorr_symmetrized} 
	in the right-hand side,
	we obtain 
	\begin{multline*}
		\E_{\UU;\RHO}q^{\ell\,\HT_{\nu}(x)}=
		\sum_{k=0}^{\ell}
		\binom{\ell}{k}_{q}
		\frac{q^{\frac{k(k-1)}{2}}(q;q)_{k}}{(1-q)^{k}k!}
		\sum_{\mm_1\ge \ldots\ge \mm_k\ge x-1}\,
		\oint\limits_{\contn{\bar\UU}}\frac{d w_1}{2\pi\i}
		\ldots
		\oint\limits_{\contn{\bar\UU}}\frac{d w_k}{2\pi\i}
		\prod_{1\le \aind\ne \bind\le k}\frac{w_\aind-w_\bind}{w_\aind-qw_\bind}
		\\\times
		(-\sh_1\SPB)^{\mm}\frac{\F^{\conj}_{\mm}
		(w_1^{-1},\ldots,w_k^{-1}\md\sh_1\ipb,\sh_1\SPB)}{w_1 \ldots w_k}
		\prod_{i=1}^{k}
		\prod_{j=1}^{n}\frac{1-qu_jw_i}{1-u_jw_i}.
	\end{multline*}
	Because $\mm_k\ge x-1$, we can subtract $(x-1)$ from all parts of 
	$\mm$. We readily have
	\begin{multline*}
		(-\sh_1\SPB)^{\mm}
		\F^{\conj}_{\mm}
		(w_1^{-1},\ldots,w_k^{-1}\md\sh_1\ipb,\sh_1\SPB)
		=
		(-\SP_1)^{k}\ldots(-\SP_{x-1})^{k}
		\prod_{i=1}^{k}\prod_{j=1}^{x-1}
		\frac{\ip_jw_i^{-1}-\SP_j}{1-\SP_j\ip_jw_i^{-1}}
		\\\times
		\underbrace{
		(-\sh_{x}\SPB)^{\mm-(x-1)^{k}}\F^{\conj}_{\mm-(x-1)^{k}}
		(w_1^{-1},\ldots,w_k^{-1}\md\sh_{x}\ipb,
		\sh_{x}\SPB)
		}_
		{
		\textnormal{
		$\MM_{(w_1^{-1},\ldots,w_{k}^{-1});\RHO}
		\big(\mm-(x-2)^{k}\md\sh_{x-1}\ipb,\sh_{x-1}\SPB\big )$
		by \eqref{MM_U_RHO_particular}}},
	\end{multline*}
	where we have also used the fact that $\mm_k-(x-2)\ge1$.
	The probability weight 
	$\MM_{(w_1^{-1},\ldots,w_{k}^{-1});\RHO}$
	above is the only thing which now depends on $\mm$,
	and the summation over all $\mm$ of these weights
	gives $1$. 
	This summation can be performed 
	under the integral 
	because on the contour $\contn{\bar\UU}$ we have $\Re(w_j)>0$,
	and so conditions \eqref{admissibility_RHO_conditions}
	with $u_j$ replaced by $w_j^{-1}$
	hold for all our values of $\ipb$ and $\SPB$.
	We see that this summation over $\mm$ yields
	\begin{multline*}
		\E_{\UU;\RHO}q^{\ell\,\HT_{\nu}(x)}=
		\sum_{k=0}^{\ell}
		\binom{\ell}{k}_{q}
		q^{\frac{k(k-1)}{2}}
		\oint\limits_{\contn{\bar\UU}}\frac{d w_1}{2\pi\i}
		\ldots
		\oint\limits_{\contn{\bar\UU}}\frac{d w_k}{2\pi\i}
		\prod_{1\le \aind<\bind\le k}\frac{w_\aind-w_\bind}{w_\aind-qw_\bind}
		\\\times
		\frac{1}{w_1 \ldots w_k}
		\prod_{i=1}^{k}\prod_{j=1}^{x-1}
		\frac{\ip_j-\SP_jw_i}{\ip_j-\SP_j^{-1}w_i}
		\prod_{i=1}^{k}
		\prod_{j=1}^{n}\frac{1-qu_jw_i}{1-u_jw_i}.
	\end{multline*}
	Here we have applied the symmetrization formula (footnote$^{\ref{symm_footnote}}$)
	to rewrite $(q;q)_k/(1-q)^k$, which canceled
	the factor $1/k!$ and half of the product over $\aind\ne \bind$.

	Finally, the summation over $k$ in the above formula can be eliminated 
	by changing the integration contours
	with the
	help of \cite[Lemma\;4.21]{BorodinCorwinSasamoto2012} 
	(which we recall as Lemma \ref{lemma:four_twenty_one} below for convenience). 
	This completes the proof of the desired identity \eqref{one_moment}.
\end{proof}
\begin{lemma}[{\cite[Lemma\;4.21]{BorodinCorwinSasamoto2012}}]\label{lemma:four_twenty_one}
	Let $\ell\ge1$ and $f(w)$ with $f(0)=1$
	be a meromorphic function in $\C$
	having no poles in a disc around $0$.
	Then we have
	\begin{multline}
		\oint\limits_{\contnz{\bullet}1}\frac{dw_1}{2\pi\i}
		\ldots
		\oint\limits_{\contnz{\bullet}\ell}\frac{dw_\ell}{2\pi\i}
		\prod_{1\le \aind<\bind\le \ell}\frac{w_\aind-w_\bind}{w_\aind-qw_\bind}
		\prod_{i=1}^{\ell}\frac{f(w_i)}{w_i}
		\\=\sum_{k=0}^{\ell}
		\binom{\ell}{k}_{q}
		q^{\frac12k(k-1)-\frac12\ell(\ell-1)}
		\oint\limits_{\contn{\bullet}}\frac{dw_1}{2\pi\i}
		\ldots
		\oint\limits_{\contn{\bullet}}\frac{dw_k}{2\pi\i}
		\prod_{1\le \aind<\bind\le k}\frac{w_\aind-w_\bind}{w_\aind-qw_\bind}
		\prod_{i=1}^{k}\frac{f(w_i)}{w_i},
		% \label{four_twenty_one}
		\nonumber
	\end{multline}
	where 
	as $\contn{\bullet}$ we can take an arbitrary closed contour not encircling $0$,
	and all other contours and conditions on them are analogous to 
	Definition \ref{def:circular_contours_around_0}.
\end{lemma}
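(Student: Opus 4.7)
The plan is to induct on $\ell$, isolating the $w_\ell$-integration and reducing to the $(\ell-1)$-variable identity applied to a conjugated function. The base case $\ell=1$ is immediate: writing $\contnz\bullet1=\contn\bullet\cup rc_0$ and using that $f$ is regular near $0$ with $f(0)=1$ gives $\oint_{rc_0}f(w)/w\,dw/(2\pi\i)=1$, matching the right-hand side (for which $\binom{1}{0}_q=\binom{1}{1}_q=1$ and the $q$-powers are trivial).

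For the inductive step, I would factor
\[
I_\ell=\oint_{\contnz\bullet\ell}\frac{f(w_\ell)}{w_\ell}\,H(w_\ell)\,\frac{dw_\ell}{2\pi\i},
\]
where $H(w_\ell)$ is the $(\ell-1)$-variable integral over $\contnz\bullet1,\dots,\contnz\bullet{\ell-1}$, retaining the cross-terms $\prod_{i<\ell}(w_i-w_\ell)/(w_i-qw_\ell)$. Introducing $\tilde f_w(z):=q\,f(z)(z-w)/(z-qw)$, which satisfies $\tilde f_w(0)=1$, I rewrite $H(w_\ell)=q^{-(\ell-1)}$ times the $(\ell-1)$-variable left-hand side of the lemma with $\tilde f_{w_\ell}$ in place of $f$. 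The conditions $r>q^{-1}$ and $q\contn\bullet\cap r^\ell c_0=\emptyset$ guarantee that the extra pole of $\tilde f_{w_\ell}$ at $z=qw_\ell$ stays outside every inner contour $\contnz\bullet j$ for $j\leq\ell-1$, so the induction hypothesis applies and expresses $H(w_\ell)$ as a $q$-binomial sum of $k$-variable integrals $\tilde J_k(w_\ell)$ over $(\contn\bullet)^k$ (each still carrying the factors $\prod_i(w_i-w_\ell)/(w_i-qw_\ell)$).

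It then remains to integrate against $f(w_\ell)/w_\ell$ over $\contnz\bullet\ell=\contn\bullet+r^\ell c_0$. On $\contn\bullet$ the extra factors merge with the cross-product to produce the full $(k+1)$-variable integrand on $(\contn\bullet)^{k+1}$, giving $J_{k+1}$. On $r^\ell c_0$ the only enclosed pole of the $w_\ell$-integrand is $w_\ell=0$ (the poles $w_\ell=w_i/q$ with $w_i\in\contn\bullet$ lie outside by non-intersection, and the poles of $f$ lie outside by hypothesis), with residue $f(0)\cdot 1=1$, producing $J_k$. Summing the two contributions and reindexing the $J_{k+1}$-sum by $m=k+1$, the coefficient of $J_m$ simplifies via $\binom{m-1}{2}+(m-1)=\binom{m}{2}$, $\binom{\ell}{2}-\binom{\ell-1}{2}=\ell-1$, and the $q$-Pascal rule $\binom{\ell-1}{m-1}_q+q^m\binom{\ell-1}{m}_q=\binom{\ell}{m}_q$ to the desired $\binom{\ell}{m}_q q^{\binom{m}{2}-\binom{\ell}{2}}$.

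The main obstacle is the contour bookkeeping needed for the induction hypothesis to apply to $\tilde f_{w_\ell}$: because the extra pole at $z=qw_\ell$ moves with $w_\ell$, and $w_\ell$ ranges over both $\contn\bullet$ and $r^\ell c_0$, one must verify in both cases that this pole sits outside all the reduced contours $\contnz\bullet j$ for $j\leq\ell-1$. The nesting $r^jc_0\subsetneq r^{j+1}c_0$, the condition $r>q^{-1}$ (so that $qr^{j+1}>r^j$), and the non-intersection $q\contn\bullet\cap r^\ell c_0=\emptyset$ together do exactly this. Once the bookkeeping is done, the residue calculation and $q$-binomial algebra are routine.
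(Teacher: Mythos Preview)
Your inductive argument is correct, and the algebra at the end checks out: the $q^k$ coming from $\tilde f_{w_\ell}$ combines with $q^{-(\ell-1)}$ and the exponents $\binom{m-1}{2}+(m-1)=\binom{m}{2}$, $\binom{\ell-1}{2}+(\ell-1)=\binom{\ell}{2}$ to feed directly into the $q$-Pascal identity. The one place to be a bit more careful is the claim that the pole $w_\ell=w_i/q$ (with $w_i\in\contn\bullet$) lies outside $r^\ell c_0$. The stated non-intersection condition is about $q\contn\bullet$, not $q^{-1}\contn\bullet$; what actually makes your claim true is that $c_0$ is taken ``sufficiently small'' in Definition~\ref{def:circular_contours_around_0}, so that $r^\ell c_0$ stays inside a disc of radius $q^{-1}\operatorname{dist}(\contn\bullet,0)$. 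This is a minor point and does not affect the validity of the argument.

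The paper itself does not give a proof of this lemma --- it is quoted from \cite{BorodinCorwinSasamoto2012}. However, essentially the same computation appears inside the proof of Lemma~\ref{lemma:rid_of_around_0} (in a slightly more general form, with a different $f_{x_i}$ for each variable), and there the method is \emph{direct} rather than inductive: each contour $\contnz{\bullet}j$ is split as $\contn\bullet+r^jc_0$, the integral is expanded into $2^\ell$ terms indexed by subsets $\IS\subseteq\{1,\dots,\ell\}$, and for $i\notin\IS$ the circle $r^ic_0$ is shrunk to $0$ (in increasing order of $i$), each shrinking contributing a factor $q^{-(\ell-i)}$ from the cross-product. Summing over subsets of fixed size $k$ then yields the $q$-binomial coefficient. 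Your induction amounts to organizing the same residue calculus recursively by peeling off the outermost variable; the direct approach is shorter but requires tracking which $q$-powers arise from which subset, while your approach localizes that bookkeeping to a single $q$-Pascal identity.
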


\begin{remark}[Fredholm determinants]\label{rmk:Fredholm}
	Using a general approach outlined in \cite{BorodinCorwinSasamoto2012},
	the
	one-point $q$-moment formula
	of Proposition \ref{prop:one_moment}
	(as well as its degenerations discussed in \S \ref{sec:degenerations_of_moment_formulas})
	can be employed to obtain
	Fredholm determinantal
	expressions for the $q$-Laplace transform 
	$\E_{\UU;\RHO}\left({1}/{(\zeta q^{\HT_{\nu}(x)};q)_{\infty}}\right)$
	of the 
	height function,
	which may be suitable for asymptotic analysis.
	We will not pursue this direction here.
\end{remark}

% subsection one_point_q_moments_from_q_correlations_ (end)

\subsection{Multi-point $q$-moment formula} % (fold)
\label{sub:verification_for_multi_point_q_moments}

By analogy with existing multi-point
$q$-moment formulas for
related systems\footnote{Namely, $q$-TASEPs
\cite{BorodinCorwinSasamoto2012},
\cite{BorodinCorwin2013discrete},
$q$-Hahn TASEP
\cite{Corwin2014qmunu}, and the
homogeneous stochastic higher spin six vertex model
\cite{CorwinPetrov2015}. Note that
however all these systems start with infinitely many particles at
the leftmost location, and in our system a new particle is always
added at location $1$, so that the corresponding 
degenerations of
Theorem \ref{thm:multi_moments} do not follow from those works.}
we can formulate a generalization 
of Proposition \ref{prop:one_moment}:
\begin{theorem}\label{thm:multi_moments}
	Assume that \eqref{stochastic_weights_condition_qsxi} 
	and \eqref{assumptions_one_better} hold. 
	Let $u_i>0$ and $u_i\ne qu_j$ for any $i,j=1,\ldots,n$.
	Then for any integers $x_1\ge \ldots\ge x_{\ell}\ge1$
	the corresponding $q$-moment of the
	dynamics $\Xp_{\{u_t\};\RHO}$ at $\text{time}=n$ is given by
	\begin{multline}\label{multi_moments}
		\E_{\UU;\RHO}\prod_{i=1}^{\ell}q^{\HT_{\nu}(x_i)}=
		q^{\frac{\ell(\ell-1)}2}
		\oint\limits_{\contnz{\bar\UU}1}\frac{d w_1}{2\pi\i}
		\ldots
		\oint\limits_{\contnz{\bar\UU}\ell}\frac{d w_\ell}{2\pi\i}
		\prod_{1\le \aind<\bind\le \ell}\frac{w_\aind-w_\bind}{w_\aind-qw_\bind}
		\\\times
		\prod_{i=1}^{\ell}\bigg(
		w_i^{-1}
		\prod_{j=1}^{x_i-1}
		\frac{\ip_j-\SP_jw_i}{\ip_j-\SP_j^{-1}w_i}
		\prod_{j=1}^{n}\frac{1-qu_jw_i}{1-u_jw_i}
		\bigg),
	\end{multline}
	where the integration contours
	are described in Definition \ref{def:circular_contours_around_0}.
	% Note that in contrast with \eqref{one_moment},
	% the integrand in \eqref{multi_moments}
	% is not symmetric 
	% in $w_1,\ldots,w_\ell$.
\end{theorem}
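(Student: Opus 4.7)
My plan is to build the multi-point formula on top of the one-point formula (Proposition 9.4) via a Markov-evolution verification. By Lemma 9.1, both sides of (9.3) are a priori rational functions in the parameters $q, \ipb, \SPB, \UU$, so it suffices to prove the identity on a sufficiently rich open subset of the parameter space and then extend by analytic continuation of rational functions. Inside this good range one may freely deform contours and perform residue calculus, which is what makes the whole approach tractable.

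Within the good range, the plan is to induct on the time parameter $n$, exploiting the Markov step $\MM_{\UU;\RHO}\Qp_{u_{n+1};\RHO} = \MM_{\UU\cup u_{n+1};\RHO}$ from (5.2). A duality-style calculation in the spirit of \S8.6 --- apply the skew Cauchy identity (Proposition 4.2) to transfer the action of $\Qp_{u_{n+1};\RHO}$ from the probability weights onto the observable $\prod_i q^{\HT_\nu(x_i)}$, where it becomes an explicit difference operator in the spatial variables $(x_1,\ldots,x_\ell)$ --- furnishes a first-order linear recursion in $n$ for $H_n(\mathbf{x}) := \E_{\UU;\RHO}\prod_i q^{\HT_\nu(x_i)}$, with initial condition $H_0 \equiv 1$ arising from $\HT_\varnothing \equiv 0$. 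On the integral side, incrementing $n \mapsto n+1$ in (9.3) simply multiplies the integrand by $\prod_i(1-qu_{n+1}w_i)/(1-u_{n+1}w_i)$, and the task is to verify, via contour deformation through the new poles at $w_j = u_{n+1}^{-1}$, that this reproduces the action of the spatial difference operator. The one-point case (Proposition 9.4), already established, serves as the anchor for an inner induction on $\ell$ whenever needed to break the verification into single-variable pieces.

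The hardest step will be matching the boundary, or coalescence, behavior of the two sides. When two consecutive $x_i$'s coincide, the spatial difference operator on the probabilistic side picks up extra two-body contributions; on the integral side these must be supplied by residues coming from the inner circles $r^j c_0$ inside the contours $\contnz{\bar\UU}{j} = \contn{\bar\UU}\cup r^j c_0$. The prefactor $q^{\ell(\ell-1)/2}$ and the specific $q$-nesting condition $r > q^{-1}$ in Definition 9.3 are dictated precisely by this bookkeeping: the contours are engineered so that the nested-integral expression solves a free evolution with two-body boundary conditions that exactly matches the Markov evolution from above. Tracking these residues carefully, across both types of contours and across the merging of neighboring $x_i$'s, is the main technical obstacle.

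An alternative route I would pursue in parallel --- combinatorially heavier but avoiding duality --- is to establish a multi-point extension of Lemma 9.2 expressing $\prod_i q^{\HT_\nu(x_i)}$ as a finite linear combination of the $q$-correlation functions $\corre{\mm}{q}$ from \S8.4, combine this with the integral formula of Theorem 8.8, and then apply a multi-dimensional version of Lemma 9.5 to collapse the resulting weighted sum of nested integrals into the single compact nested integral on the right-hand side of (9.3). This second route has the advantage of reducing everything to contour-combinatorial identities that do not invoke duality, at the cost of producing and simplifying rather intricate intermediate expressions; the coalescence mechanism then appears, equivalently, as the mechanism by which Lemma 9.5-type contour combinations reassemble the $r^j c_0$ pieces of the final contours $\contnz{\bar\UU}{j}$.
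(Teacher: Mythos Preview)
Your proposal does not match the paper's proof, and your main route has a real gap.

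\textbf{What the paper actually does.} The paper gives a direct verification of the right-hand side, with no induction on $n$ and no duality. First (Lemma~\ref{lemma:rid_of_around_0}) it shows that the family of identities \eqref{multi_moments} is equivalent to a family of identities \eqref{multi_moments_rid_of_around_0} in which the circles around $0$ are removed from all contours and the observable is replaced by $\prod_{i=1}^\ell\bigl(q^{i-1}-q^{\HT_\nu(x_i)}\bigr)$. The right-hand side $R(u_1,\ldots,u_n)$ of \eqref{multi_moments_rid_of_around_0} is then computed by residues at $w_i=u_{\sigma(i)}^{-1}$ for injective $\sigma\colon\{1,\ldots,\ell\}\to\{1,\ldots,n\}$, expanded in the basis $\{\F_\lambda^\conj(u_1,\ldots,u_n\mid\sh_1\ipb,\sh_1\SPB)\}_\lambda$ (Lemma~\ref{lemma:expansion_r_la_exists}), and the coefficients $r_\lambda$ are extracted via the inverse Plancherel transform (Lemma~\ref{lemma:computing_multi_moment_integral}). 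A combinatorial summation over $\sigma$ (Lemma~\ref{lemma:about_inversions_summation}) then yields $r_\lambda=(-\sh_1\SPB)^\lambda\prod_{i=1}^\ell\bigl(q^{i-1}-q^{\HT_\lambda(x_i-1)}\bigr)$, which by \eqref{MM_U_RHO_particular} is exactly the left-hand side. The Plancherel theory of \S\ref{sec:orthogonality_relations} is the engine; duality never enters.

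\textbf{The gap in your main route.} Your induction on $n$ hinges on the claim that the skew Cauchy identity (Proposition~\ref{prop:skew_Cauchy}) lets you ``transfer the action of $\Qp_{u_{n+1};\RHO}$ \ldots\ onto the observable $\prod_i q^{\HT_\nu(x_i)}$, where it becomes an explicit difference operator in the spatial variables''. This is precisely the duality statement \eqref{duality_relation}, and the skew Cauchy identity does not supply it: Proposition~\ref{prop:skew_Cauchy} intertwines $\F$-type and $\G$-type weights, not height-function indicators. As \S\ref{sub:duality_from_observables} makes explicit, the paper only \emph{derives} such a duality \emph{from} the moment formula, and the introduction states that the present approach is designed to \emph{circumvent} duality rather than assume it. Without an independent proof of the duality relation (and of uniqueness of solutions to the resulting evolution equations with your boundary conditions), your induction step is not justified.

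\textbf{Your alternative route.} Expressing $\prod_i q^{\HT_\nu(x_i)}$ through the $q$-correlation functions $\corre{\mm}{q}$ and invoking Theorem~\ref{thm:qcorr} is in principle possible, but the paper explicitly flags it (Remark following Lemma~\ref{lemma:moments_from_qcorr}) as not convenient beyond the one-point case: the required multi-point inversion of \eqref{corr_via_HT} does not factor nicely, and no multi-dimensional analogue of Lemma~\ref{lemma:four_twenty_one} is available to collapse the resulting sum of integrals. This is why the paper switches to the Plancherel verification instead.
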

\begin{corollary}\label{cor:multi_moments_fused}
	Under 
	\eqref{stochastic_weights_condition_qsxi}--\eqref{stochastic_weights_condition_u}
	and \eqref{assumptions_one_better},
	let the parameters $\UU$ have the form \eqref{U_fused_for_contours}. Then the 
	$q$-moments 
	$\E_{\UU;\RHO}\prod_{i=1}^{\ell}q^{\HT_{\nu}(x_i)}$
	are given by the same formula as \eqref{multi_moments},
	but with integration contours 
	$w_j\in\contqiz{\bar\UU}j:=\contqi{\bar\UU}j\cup r^{j}c_0$.
\end{corollary}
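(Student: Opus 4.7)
The plan is to deduce Corollary \ref{cor:multi_moments_fused} from Theorem \ref{thm:multi_moments} by a two-step argument: (i) a contour deformation that is valid for sufficiently clustered non-fused parameters, and (ii) analytic continuation/specialization to the fused values in \eqref{U_fused_for_contours}. This follows the same strategy as the passage from Part 2 to Part 3 in the proof of Theorem \ref{thm:qcorr}.

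First I would start with Theorem \ref{thm:multi_moments} applied to non-fused parameters $(u_1,\dots,u_n)$ with $u_i>0$, $u_i\ne qu_j$, and additionally clustered so that $q\cdot\max_i u_i<\min_i u_i$. Under this extra constraint, $\contn{\bar\UU}$ from Definition \ref{def:cont_gat_U} can be realized as the smallest contour $\contqi{\bar\UU}1$ of the $q^{-1}$-nested family. The next task is to deform, for each $j=1,\dots,\ell$, the contour $\contnz{\bar\UU}j=\contqi{\bar\UU}1\cup r^{j}c_{0}$ to $\contqiz{\bar\UU}j=\contqi{\bar\UU}j\cup r^{j}c_{0}$, keeping the small circle $r^{j}c_{0}$ around the origin intact and enlarging only the piece around the poles $\{u_i^{-1}\}$. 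Poles of the extra factor $\prod_{j=1}^{x_i-1}(\ip_j-\SP_j w_i)/(\ip_j-\SP_j^{-1} w_i)$ sit at $w_i=\ip_j\SP_j$, which lie outside $\contqi{\bar\UU}j$ by the construction of these contours, so they do not intervene; the only candidate new residues come from the factors $1/(w_{\aind}-qw_{\bind})$ with $\aind<\bind$.

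The heart of the deformation is then to show that these cross-residues do not contribute. This is exactly the computation performed in the proof of Part 2 of Theorem \ref{thm:qcorr}: taking $\Res_{w_\bind=q^{-1}w_\aind}$ in the integrand produces a function of the remaining variables whose only singularities in $w_\aind$ are at $w_\aind=qu_j^{-1}$, and these points are outside $\contqi{\bar\UU}\aind$ whenever the inner contour is chosen small enough. Iterating this observation across the ordered integrations in $w_1,\dots,w_\ell$ shows that the enlargement from $\contn{\bar\UU}$ to $\contqi{\bar\UU}j$ does not change the value of the integral, establishing formula \eqref{multi_moments} with contours $\contqiz{\bar\UU}j$ for clustered non-fused~$\UU$.

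The final step is specialization. By Lemma \ref{lemma:qmom_rational} the left-hand side is a rational function of $u_1,\dots,u_n$, and the right-hand side is a rational function of the $u_i$'s in any open region where the contours $\contqi{\bar\UU}j$ can be chosen continuously. The defining conditions on $\contqi{\bar\UU}j$ require only that $\{u_i^{-1}\}$ lie inside the innermost contour and that the $q^{-1}$-nesting can be maintained — both of which hold on an open neighborhood containing the fused specialization, since in the fused tuple the ratio $u_i^{-1}/u_{i+1}^{-1}=q$ precisely places $u_{i+1}^{-1}$ inside $\contqi{\bar\UU}{\aind}$ when $u_i^{-1}\in\contqi{\bar\UU}{\aind-1}$. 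Moreover, after fusion, $\prod_{j=1}^{n}(1-qu_jw_i)/(1-u_jw_i)$ telescopes into $\prod_{j=1}^{n'}(1-q^{J}u_j w_i)/(1-u_j w_i)$, giving the stated form of the integrand. Matching the two rational functions on this open set yields the corollary.

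The main obstacle I anticipate is justifying the vanishing of cross-residues in the second step cleanly for multi-point observables: one must verify that after taking $\Res_{w_\bind=q^{-1}w_\aind}$ the $x$-dependent factors $\prod_{j=1}^{x_\aind-1}(\ip_j-\SP_jw_\aind)/(\ip_j-\SP_j^{-1}w_\aind)$ and $\prod_{j=1}^{x_\bind-1}(\ip_j-\SP_jw_\bind)/(\ip_j-\SP_j^{-1}w_\bind)$ do not introduce new poles of the reduced integrand inside $\contqi{\bar\UU}{\aind}$. This should follow from the requirement built into Definition \ref{def:bar_cont} that $\sh_1(\ipb\SPB)$ lies outside the nested contours, combined with the inequalities \eqref{assumptions_one_better}; nonetheless, carefully tracking the nesting conditions across all $\ell$ integrations is the delicate part of the argument.
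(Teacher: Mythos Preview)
Your overall plan matches the paper's own proof: both argue ``as in case~3 of Theorem~\ref{thm:qcorr}'', first passing to $q^{-1}$-nested contours and then specializing $\UU$ to the fused tuple. Step~(i) is fine, and the cross-residue cancellation you worry about is not the delicate point: the extra $x$-dependent factor $\prod_{j}(\ip_j-\SP_j w_i)/(\ip_j-\SP_j^{-1}w_i)$ has its poles at $\ip_j\SP_j$, which lie outside all the $\contqi{\bar\UU}\bind$'s, and after taking $\Res_{w_\bind=q^{-1}w_\aind}$ the only new $w_\aind$-poles are at $qu_j^{-1}$ and $q\ip_j\SP_j$, both outside $\contqi{\bar\UU}1$.

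The genuine gap is in step~(ii). Your analytic-continuation argument asserts that the conditions ``$\{u_i^{-1}\}$ lie inside the innermost contour'' and ``$q^{-1}$-nesting can be maintained'' hold simultaneously on an open set containing the fused specialization. They do not: if $u_2=qu_1$ then $u_2^{-1}=q^{-1}u_1^{-1}$, and a contour $\gamma$ that contains both $u_1^{-1}$ and $q^{-1}u_1^{-1}$ while not intersecting $q^{-1}\gamma$ and not encircling~$0$ cannot exist for $0<q<1$ (a short estimate with $\gamma$ a circle shows this forces $q>1$). Thus the contour configuration you established in step~(i) is geometrically impossible at the fused limit, and the contour integral with \emph{those} contours does not represent the same rational function there.

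What the paper actually does (and what case~3 of Theorem~\ref{thm:qcorr} spells out) is not pure analytic continuation but a sequence of explicit moves: first push $u_2^{-1}$ outside $\contqi{\bar\UU}1$ (still inside $\contqi{\bar\UU}2$), which a priori changes the rational function by the residue at $w_1=u_2^{-1}$; then observe that when one sets $u_2=qu_1$, that residue vanishes because of the factor $1-qu_1w_1$ in the numerator, while simultaneously $\frac{1-qu_1w}{1-u_1w}\cdot\frac{1-qu_2w}{1-u_2w}$ telescopes to $\frac{1-q^2u_1w}{1-u_1w}$, removing the pole at $w=(qu_1)^{-1}$ from the inner integrations. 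Iterating over all $q$-powers yields the fused contours and the claimed integrand. Your sketch should replace the analytic-continuation paragraph with this residue-tracking argument; once that is done, the proof is complete and coincides with the paper's.
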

\begin{proof}[Proof of Corollary \ref{cor:multi_moments_fused}]
	Assume that Theorem \ref{thm:multi_moments} holds.
	We argue as in the proof of
	case 3 in 
	Theorem \ref{thm:qcorr}, by
	first taking $\UU$ with $u_i>0$
	and
	$q\cdot \max_{i}u_i<\min_i u_i$,
	which allows to immediately pass to 
	the nested contours $\contqiz{\bar\UU}j$
	in \eqref{multi_moments}. 
	Then we can move $u_2^{-1}$ outside 
	$\contqi{\bar\UU}1$
	but still inside
	$\contqi{\bar\UU}2$, set $u_2=qu_1$,
	and continue specializing the rest of 
	$\UU$ to \eqref{U_fused_for_contours} in a similar way.
	This specialization inside the integral will coincide
	with the same specialization of the 
	left-hand side of \eqref{multi_moments}, 
	and thus the corollary is established.
\end{proof}
\begin{remark}
	In Theorem \ref{thm:multi_moments}
	(as well as in case 2 of Theorem \ref{thm:qcorr})
	the 
	conditions \eqref{stochastic_weights_condition_qsxi}
	and \eqref{assumptions_one_better} can be partially dropped or 
	replaced by more general ones,
	because the integration contour 
	$\contn{\bar\UU}$ is defined only using the $u_i$'s. 
	Here we will not discuss more 
	general values of $q$, $\SPB$, or $\ipb$.
\end{remark}

The rest of this subsection is devoted to the proof of 
Theorem \ref{thm:multi_moments}. The proof is of verification type: we start with 
the nested contour integral in the  
right-hand side of \eqref{multi_moments}, and  
rewrite it as an expectation with respect to
$\MM_{\UU;\RHO}$.

\begin{lemma}\label{lemma:rid_of_around_0}
	Under the assumptions of Theorem \ref{thm:multi_moments}, 
	the collection of
	identities \eqref{multi_moments} 
	(for all $\ell\ge1$ and all $x_1\ge \ldots\ge x_\ell\ge1$)
	follows from a collection of identities of the following form:
	\begin{multline}\label{multi_moments_rid_of_around_0}
		\E_{\UU;\RHO}
		\prod_{i=1}^{\ell}
		\big(q^{i-1}-q^{\HT_\nu(x_i)}\big)
		=
		(-1)^{\ell}q^{\frac{\ell(\ell-1)}2}
		\oint\limits_{\contn{\bar\UU}}\frac{d w_1}{2\pi\i}
		\ldots
		\oint\limits_{\contn{\bar\UU}}\frac{d w_\ell}{2\pi\i}
		\prod_{1\le \aind<\bind\le \ell}\frac{w_\aind-w_\bind}{w_\aind-qw_\bind}
		\\\times
		\prod_{i=1}^{\ell}\bigg(
		w_i^{-1}
		\prod_{j=1}^{x_i-1}
		\frac{\ip_j-\SP_jw_i}{\ip_j-\SP_j^{-1}w_i}
		\prod_{j=1}^{n}\frac{1-qu_jw_i}{1-u_jw_i}
		\bigg).
	\end{multline}
	That is, removing the parts of the contours around $0$
	leads to a modification of 
	the left-hand side, as shown above.
\end{lemma}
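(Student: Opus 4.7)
The plan is to split each contour $\contnz{\bar\UU}i = \contn{\bar\UU} \cup r^i c_0$ and expand the $\ell$-fold integral on the right-hand side of \eqref{multi_moments} as a sum over subsets $I \subseteq \{1,\ldots,\ell\}$, where $w_i$ runs over the small circle $r^i c_0$ for $i \in I$ and over $\contn{\bar\UU}$ for $i \notin I$. For each $i \in I$, the only singularity of the integrand inside $r^i c_0$ is the simple pole $w_i = 0$ coming from the factor $w_i^{-1}$: the potential pole $w_i = qw_j$ with $j > i$ and $w_j \in r^j c_0$ lies outside $r^i c_0$ since $|qw_j| = qr^j > r^{j-1} \ge r^i$ follows from $r > q^{-1}$, while the possible poles at $u_k^{-1}$ and $\ip_k\SP_k^{-1}$ are bounded away from $0$ and hence outside $r^i c_0$ for sufficiently small $c_0$. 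Thus each $r^i c_0$-integral reduces to the residue at $w_i = 0$.

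I would compute these residues iteratively in increasing order of $I = \{i_1 < \cdots < i_{|I|}\}$. At each step $w_{i_k} = 0$, the single-variable factors $\frac{\ip_j - \SP_j w}{\ip_j - \SP_j^{-1} w}$ and $\frac{1-qu_jw}{1-u_jw}$ collapse to $1$, each cross factor $\frac{w_\aind - w_{i_k}}{w_\aind - qw_{i_k}}$ with $\aind < i_k$ becomes $1$, and each $\frac{w_{i_k} - w_\bind}{w_{i_k} - qw_\bind}$ with $\bind > i_k$ becomes $q^{-1}$. Since all previously removed indices are less than $i_k$, this step contributes $q^{-(\ell-i_k)}$ and the cumulative residue factor is $q^{\sum_{i \in I} i - |I|\ell}$. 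The remaining integrand then has exactly the structure of the right-hand side of \eqref{multi_moments_rid_of_around_0} with $\ell' = \ell - |I|$ variables $\{w_i\}_{i \notin I}$ on contours $\contn{\bar\UU}$ and positions $\{x_i\}_{i \in \bar I}$ in the inherited decreasing order. Substituting \eqref{multi_moments_rid_of_around_0} for each such reduced integral and combining the exponents via $\binom{\ell}{2} - \binom{\ell-|I|}{2} = |I|\ell - \binom{|I|+1}{2}$ yields
\begin{equation*}
\text{RHS of \eqref{multi_moments}} = \sum_{I \subseteq \{1,\ldots,\ell\}} (-1)^{\ell - |I|}\, q^{\sum_{i \in I} i - \binom{|I|+1}{2}} \, \E_{\UU;\RHO} \prod_{j=1}^{\ell - |I|} \bigl(q^{j-1} - q^{\HT_\nu(x_{\bar I, j})}\bigr),
\end{equation*}
where $x_{\bar I, j}$ denotes the $j$-th element of $\{x_i\}_{i \in \bar I}$ in the original (decreasing) ordering.

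Matching this expression against the left-hand side of \eqref{multi_moments} reduces the lemma to the universal polynomial identity
\begin{equation*}
\prod_{i=1}^\ell a_i \;=\; \sum_{I \subseteq \{1,\ldots,\ell\}} (-1)^{\ell - |I|}\, q^{\sum_{i \in I} i - \binom{|I|+1}{2}} \prod_{j=1}^{\ell - |I|} \bigl(q^{j-1} - a_{\bar I, j}\bigr)
\end{equation*}
in free indeterminates $a_1, \ldots, a_\ell$. I would verify this by induction on $\ell$, splitting the right-hand sum according to whether $\ell \in I$. For $\ell \notin I$ one writes $(q^{\ell - |I| - 1} - a_\ell)$ as a two-term difference; the $-a_\ell$ part, summed over $I \subseteq \{1,\ldots,\ell-1\}$, equals $-a_\ell$ times the inductive hypothesis for $\ell - 1$ (reproducing $\prod_{i=1}^\ell a_i$ after the sign shift $(-1)^{\ell - |I|} = -(-1)^{\ell - 1 - |I|}$), while the $q^{\ell - |I| - 1}$ part cancels termwise against the $\ell \in I$ contributions under the reindexing $I \mapsto I \setminus \{\ell\}$, using $\binom{|I|+1}{2} - \binom{|I|}{2} = |I|$ to match exponents. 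The main obstacle is the careful bookkeeping of the $q$-exponents both in the iterative residue computation and in the inductive cancellation; once these are aligned, the argument is purely combinatorial.
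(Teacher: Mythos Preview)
Your proposal is correct and follows essentially the same approach as the paper: split each contour into its $\contn{\bar\UU}$ and $r^i c_0$ pieces, evaluate the residues at $w_i=0$ for the indices on the small circles, substitute \eqref{multi_moments_rid_of_around_0} for each surviving integral, and reduce to a polynomial identity proved by induction on $\ell$. The only cosmetic differences are that the paper indexes by the complementary subset $\IS=\bar I$ (indices on $\contn{\bar\UU}$) and establishes the resulting identity by specializing $X_1$ to two values rather than by your case split on $\ell\in I$; these are equivalent organizations of the same argument.
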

This lemma should also hold in the opposite direction (that identities
\eqref{multi_moments} imply \eqref{multi_moments_rid_of_around_0}), 
but we do not need this statement.
\begin{proof}
	The right-hand side of \eqref{multi_moments} can be written as
	\begin{align}\label{rid_of_around_0_proof}
		q^{\frac{\ell(\ell-1)}2}\oint\limits_{\contnz{\bar\UU}1}\frac{dw_1}{2\pi\i}
		\ldots
		\oint\limits_{\contnz{\bar\UU}\ell}\frac{dw_\ell}{2\pi\i}
		\prod_{1\le \aind<\bind\le \ell}\frac{w_\aind-w_\bind}{w_\aind-qw_\bind}
		\prod_{i=1}^{\ell}\frac{f_{x_i}(w_i)}{w_i},
	\end{align}
	where each
	$f_x(w)$, $x\in\Z_{\ge1}$,
	is a meromorphic (in fact, rational) function without 
	poles in a disc around $0$,
	and $f_x(0)=1$.

	Split the integral in \eqref{rid_of_around_0_proof}
	into $2^{n}$ integrals indexed by subsets $\IS\subseteq\{1,\ldots,\ell\}$
	determining that $w_i$ for $i\notin \IS$
	are integrated around $0$, while other $w_i$'s
	are integrated over $\contn{\bar\UU}$. 
	Let $|\IS|=k$, $k=0,\ldots,\ell$, and also denote
	$\|\IS\|:=\sum_{i\in \IS}i$.
	Let 
	$\IS=\{i_1<\ldots<i_k\}$ and 
	$\{1,2,\ldots,\ell\}\setminus \IS=\{p_1<\ldots<p_{\ell-k}\}$.
	The contours around $0$ 
	(corresponding to $w_{p_{j}}$) 
	can be
	shrunk to $0$ in the order $p_1,\ldots,p_{\ell-k}$ 
	without crossing any other poles, 
	and each such 
	integration produces the factor $q^{-(\ell-p_j)}$ coming from the 
	cross-product over $\aind<\bind$.
	Thus, 
	\eqref{rid_of_around_0_proof} becomes 
	(after renaming $w_{i_j}=z_j$)
	\begin{align}\label{rid_of_around_0_proof1}
		\sum_{k=0}^{\ell}\,\sum_{{\IS=\{i_1<\ldots<i_k\}\subseteq\{1,\ldots,\ell\}}}
		q^{k\ell-
		\|\IS\|}
		\oint\limits_{\contn{\bar\UU}}\frac{dz_1}{2\pi\i}
		\ldots
		\oint\limits_{\contn{\bar\UU}}\frac{dz_{k}}{2\pi\i}
		\prod_{1\le \aind<\bind\le k}\frac{z_\aind-z_\bind}{z_\aind-qz_\bind}
		\prod_{j=1}^{k}\frac{f_{x_{i_j}}(z_j)}{z_j}.
	\end{align}
	The above summation now involves integrals as in the 
	right-hand side of \eqref{multi_moments_rid_of_around_0}. 
	If the latter identity holds, then we can 
	rewrite each such integral as a certain expectation as in the 
	left-hand side of \eqref{multi_moments_rid_of_around_0}. 
	Relation \eqref{multi_moments} 
	now follows from a formal identity 
	in indeterminates $\hat{q},X_1,\ldots,X_\ell$:
	\begin{align}\label{rid_of_around_0_proof2}
		\sum_{k=0}^{\ell}\,\sum_{{\IS=\{i_1<\ldots<i_k\}\subseteq\{1,\ldots,\ell\}}}
		\hat {q}^{\frac{(\ell-k)(\ell-k+1)}2}
		(X_{i_1}-\hat{q}^{i_1})(X_{i_2}-\hat{q}^{i_2-1})
		\ldots(X_{i_k}-\hat{q}^{i_k-k+1})
		=X_1 \ldots X_\ell,
	\end{align}
	where we have matched $\hat{q}$ to $q^{-1}$ in \eqref{rid_of_around_0_proof}
	and \eqref{rid_of_around_0_proof1}.
	To establish \eqref{rid_of_around_0_proof2}, 
	observe that both its sides are linear in $X_1$,
	and so it suffices to show that the identity holds 
	at two points, say, $X_1=\hat q$ and $X_1=\infty$. 
	Substituting each of these values into \eqref{rid_of_around_0_proof2}
	leads to an equivalent
	identity with $\ell$ replaced by $\ell-1$.
	Namely, for $X_1=\hat q$ we obtain
	\begin{align*}
		\sum_{k=0}^{\ell-1}\,\sum_{{\IS=\{1<i_1<\ldots<i_k\}\subseteq\{1,\ldots,\ell\}}}
		\hat {q}^{\frac{(\ell-1-k)(\ell-1-k+1)}2}
		(\hat q^{-1}X_{i_1}-\hat{q}^{i_1-1})
		\ldots(\hat q^{-1}X_{i_k}-\hat{q}^{i_k-k})
		=\hat q^{1-\ell}X_2 \ldots X_\ell,
	\end{align*}
	which becomes \eqref{rid_of_around_0_proof2} with $\ell-1$
	after setting $Y_i=\hat q^{-1}X_{i+1}$.
	Dividing \eqref{rid_of_around_0_proof2} 
	by $X_1$ and letting $X_1\to\infty$, we obtain
	\begin{align*}
		\sum_{k=1}^{\ell}\,\sum_{{\IS=\{1=i_1<\ldots<i_k\}\subseteq\{1,\ldots,\ell\}}}
		\hat {q}^{\frac{((\ell-1)-(k-1))((\ell-1)-(k-1)+1)}2}
		(X_{i_2}-\hat{q}^{i_2-1})
		\ldots(X_{i_k}-\hat{q}^{i_k-k+1})
		=X_2 \ldots X_\ell,
	\end{align*}
	which becomes \eqref{rid_of_around_0_proof2} 
	with $\ell-1$
	after
	setting $Y_i=X_{i+1}$.
	Thus,
	\eqref{rid_of_around_0_proof2} follows by induction,
	which implies the lemma.
\end{proof}

Below in this subsection we will assume that the $u_j$'s are pairwise distinct. 
If \eqref{multi_moments} and 
\eqref{multi_moments_rid_of_around_0} 
hold for distinct $u_j$'s, then
when some of the $u_j$'s
coincide the same formulas can be obtained by 
a simple substitution. Indeed, this is because both sides of each of the
identities are
a priori rational functions in
the $u_j$'s (cf. footnote$^{\ref{footnote:rational_continuation}}$).

Denote the right-hand side of 
\eqref{multi_moments_rid_of_around_0}
by $R(u_1,\ldots,u_n)$. First, 
let us show that there exists a decomposition of
$R(u_1,\ldots,u_n)$ into the functions $\F_\la^{\conj}$:
\begin{lemma}\label{lemma:expansion_r_la_exists}
	For certain restricted values of the parameters $\ipb$, $\SPB$, and $u_1,\ldots,u_n$, and for $q$ sufficiently close to $1$,
	the integral in the right-hand side of \eqref{multi_moments_rid_of_around_0} can
	be written as
	\begin{align}\label{expancoeff}
		R(u_1,\ldots,u_n)=
		\sum_{\la\in\signp n}
		\expancoeff{\la} \F_\la^{\conj}(u_1,\ldots,u_n\md\sh_1\ipb,\sh_1\SPB),
	\end{align}
	where the sum over $\la$ converges uniformly in 
	$u_j\in\contq{\ipbb\SPB}j$, $j=1,\ldots,n$.
\end{lemma}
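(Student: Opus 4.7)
The plan is to expand the integrand of $R(\UU)$ via the Cauchy identity \eqref{usual_Cauchy} with parameters shifted so as to produce exactly the basis $\F_\la^{\conj}(\,\cdot\,\md\sh_1\ipb,\sh_1\SPB)$ in which the target expansion is written. Concretely, with $\WWW=(w_1,\ldots,w_\ell)$, Corollary~\ref{cor:usual_Cauchy} applied to the shifted parameters gives
\begin{equation*}
\prod_{i=1}^{\ell}\prod_{j=1}^{n}\frac{1-qu_jw_i}{1-u_jw_i}=\frac{\prod_{j=1}^{n}(1-\SP_1\ip_1 u_j)}{(q;q)_n}\sum_{\la\in\signp n}\F_\la(\UU\md\sh_1\ipb,\sh_1\SPB)\,\G^{\conj}_{\la}(\WWW\md\sh_1\ipbb,\sh_1\SPB).
\end{equation*}
Substituting this inside the definition of $R(\UU)$, absorbing the factor $\conj_{\sh_1\SPB}(\la)$ to convert $\F_\la$ into $\F_\la^{\conj}$, and (assuming the interchange of summation and $\WWW$-integration is justified) performing the remaining $\WWW$-integration termwise yields the claimed expansion with
\begin{equation*}
\expancoeff\la=\frac{(-1)^\ell q^{\binom{\ell}{2}}\prod_{j}(1-\SP_1\ip_1 u_j)}{(q;q)_n\,\conj_{\sh_1\SPB}(\la)}\oint\cdots\oint\prod_{\aind<\bind}\frac{w_\aind-w_\bind}{w_\aind-qw_\bind}\,\G^{\conj}_{\la}(\WWW\md\sh_1\ipbb,\sh_1\SPB)\prod_{i=1}^\ell\bigg(\frac{1}{w_i}\prod_{j=1}^{x_i-1}\frac{\ip_j-\SP_jw_i}{\ip_j-\SP_j^{-1}w_i}\bigg).
\end{equation*}
These coefficients are manifestly independent of $\UU$, as required.

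The interchange of the Cauchy sum with the $\WWW$-integration requires the admissibility condition $\adm{u_j}{w_i}$ of Definition~\ref{def:admissible} (with the shifted parameters) to hold uniformly for every pair $(u_j,w_i)$ with $u_j\in\contq{\ipbb\SPB}j$ and $w_i\in\contn{\bar\UU}$, together with a definite contraction ratio $1-\epsilon<1$ in the sufficient condition \eqref{admissible_sufficient}. This is precisely the role of the two hypotheses in the lemma: the ``restricted values'' assumption allows us to place the $u_j$-contours close to the locus $\ipbb\SPB$ and the $w_i$-contour near $\ipb\SPB$, making each admissibility ratio uniformly small, while the assumption that $q$ is close to $1$ guarantees that the $n$ nested contours $\contq{\ipbb\SPB}1,\ldots,\contq{\ipbb\SPB}n$ can be chosen arbitrarily close to one another (the required $q$-nesting becomes vacuous as $q\to 1$). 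On any such configuration of contours, both the Cauchy sum and the $w$-integrals converge absolutely, justifying Fubini.

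The main obstacle is to upgrade termwise pointwise validity to uniform convergence of $\sum_\la\expancoeff\la\F^\conj_\la$ on $u_j\in\contq{\ipbb\SPB}j$. For this one needs matching geometric bounds: $|\G^\conj_\la(\WWW\md\sh_1\ipbb,\sh_1\SPB)|\le C\rho^{\,|\la|}$ uniformly on the $\WWW$-contour (which, after $\WWW$-integration, inherits the same bound for $|\expancoeff\la|$), together with $|\F^\conj_\la(\UU\md\sh_1\ipb,\sh_1\SPB)|\le C'\tilde\rho^{\,|\la|}$ uniformly on the $\UU$-contours, where $\rho\tilde\rho<1$. Both estimates should follow by bounding each $\pow_{\la_i}$ factor in the symmetrization formulas \eqref{F_symm_formula}--\eqref{G_symm_formula} by the admissibility ratios of \eqref{admissible_sufficient}; the uniform smallness of these ratios under the stated ``restricted values'' and the $q\to 1$ hypothesis then produces a summable majorant that is independent of $\UU$ on the contour, yielding uniform convergence and completing the proof.
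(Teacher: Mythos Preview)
Your expansion has a genuine gap: the coefficient $\expancoeff\la$ you display contains the factor $\prod_{j=1}^n(1-\SP_1\ip_1 u_j)$, which depends on $\UU$. Neither the conjugation factor $\conj_{\sh_1\SPB}(\la)$ nor the $\WWW$-integral involves the $u_j$'s, so nothing cancels this factor, and your assertion that ``these coefficients are manifestly independent of $\UU$'' is false. With $\UU$-dependent coefficients the lemma is not established in the form required downstream, where the inverse Plancherel transform $\Pltransi n$ is applied in the $\UU$ variables precisely to extract $\UU$-independent $\expancoeff\la$.

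The paper circumvents this by expanding via the \emph{unshifted} Cauchy identity combined with the $\RHO$ specialization (cf.\ \eqref{EGoverG_computation} and Proposition~\ref{prop:Gnu_spec_RHO_w}):
\[
\prod_{i,j}\frac{1-qu_jw_i}{1-u_jw_i}=\frac{1}{(\SP_0^2;q)_n}\prod_{i=1}^n\frac{1-\ip_0\SP_0 u_i}{1-\ip_0\SP_0^{-1}u_i}\sum_{\mu\colon\mu_n\ge1}\F^\conj_\mu(\UU\md\ipb,\SPB)\,\G_\mu(\RHO,\WWW\md\ipbb,\SPB).
\]
The shift relation \eqref{F_shifts} then gives $\F^\conj_\mu(\UU\md\ipb,\SPB)=\prod_i\frac{\ip_0 u_i-\SP_0}{1-\SP_0\ip_0 u_i}\,\F^\conj_{\mu-1^n}(\UU\md\sh_1\ipb,\sh_1\SPB)$, and the two $\UU$-dependent prefactors combine to the constant $(-\SP_0)^n$. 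This exact cancellation---which hinges on the extra denominator $1-\ip_0\SP_0^{-1}u_i$ supplied by the $\RHO$ specialization---is what produces $\UU$-independent $\expancoeff\la$, and it is absent from your direct shifted-Cauchy route. Your discussion of admissibility, the role of $q$ close to $1$ for the nested $u$-contours, and the geometric bounds needed for uniform convergence is otherwise along the right lines and mirrors the paper's argument.
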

\begin{proof}
	Write the product 
	in \eqref{multi_moments_rid_of_around_0} 
	as a sum over $\mu\in\signp n$
	using the 
	Cauchy identity (Corollary \ref{cor:usual_Cauchy}):
	\begin{align}
		\prod_{i=1}^{\ell}\prod_{j=1}^{n}\frac{1-qu_jw_i}{1-u_jw_i}=
		\frac1{(\SP_0^{2};q)_{n}}
		\prod_{i=1}^{n}\frac{1-\ip_0\SP_0 u_i}{1-\ip_0\SP_0^{-1}u_i}
		\sum_{\mu\in\signp n\colon \mu_n\ge1}
		\F^{\conj}_{\mu}(u_1,\ldots,u_n\md\ipb,\SPB)
		\G_{\mu}(\RHO,w_1,\ldots,w_\ell\md\ipbb,\SPB),
		\label{expancoeff_proof}
	\end{align}
	where we also used the specialization $\RHO$
	\eqref{RHO_spec} and
	Proposition \ref{prop:Gnu_spec_RHO_w}.
	This is possible if $\adm{u_i}{w_j}$ for all $i,j$,
	and the $u_i$'s satisfy \eqref{admissibility_RHO_conditions}.
	These conditions 
	can be achieved by restricting the parameters 
	$u_i$, $\ipb$, and $\SPB$,
	and deforming the contour $\contn{\bar\UU}$
	similarly to the proofs of 
	Theorem \ref{thm:spec_orthogonality} and
	Proposition \ref{prop:prelim_qcorr} (all
	the $\ip_j\SP_j^{-1}$'s should be close together, and the $u_i^{-1}$'s
	should be close to these points). The integrand in \eqref{multi_moments_rid_of_around_0}
	is regular at each $\ip_x\SP_x^{-1}$, so the $w_j$-contour
	$\contn{\bar\UU}$ can be deformed to $\contn{\ipb\SPBB}$. Note that we do not need to restrict the
	parameter $q$ yet, because the $w_j$'s lie on the same contour.
	
	Now for the restricted $\{u_i\}$, $\ipb$, and $\SPB$,
	the sum over $\mu$ in \eqref{expancoeff_proof} 
	converges uniformly in $w_i$ belonging to the deformed contours $\contn{\ipb\SPBB}$. 
	Thus, the integration
	in the $w_j$'s can be performed for each $\mu$ separately.
	These integrals 
	involving $\G_{\mu}(\RHO,w_1,\ldots,w_\ell\md\ipbb,\SPB)$
	obviously do not introduce any new 
	dependence on the $u_i$'s. 
	Therefore, the right-hand side of \eqref{expancoeff_proof}
	depends on the $u_i$'s only through $\F^{\conj}_{\mu-1^{n}}(u_1,\ldots,u_n\md\sh_1\ipb,\sh_1\SPB)$
	(cf.~\eqref{F_shifts}), which yields expansion \eqref{expancoeff}.
	
	It remains to show the uniform convergence of \eqref{expancoeff}
	in $u_j\in\contq{\ipbb\SPB}j$.
	We see that the coefficients in \eqref{expancoeff} have the form
	\begin{multline*}
		\expancoeff{\mu-1^{n}}=
		\frac{(-1)^{\ell}(-\SP_0)^{n}q^{\frac{\ell(\ell-1)}2}}{(\SP_0^{2};q)_{n}}
		\oint\limits_{\contn{\ipb\SPBB}}\frac{d w_1}{2\pi\i}
		\ldots
		\oint\limits_{\contn{\ipb\SPBB}}\frac{d w_\ell}{2\pi\i}
		\prod_{1\le \aind<\bind\le \ell}\frac{w_\aind-w_\bind}{w_\aind-qw_\bind}
		\\\times
		\G_{\mu}(\RHO,w_1,\ldots,w_\ell\md\ipbb,\SPB)
		\prod_{i=1}^{\ell}
		\bigg(w_i^{-1}
		\prod_{j=1}^{x_i-1}
		\frac{\ip_j-\SP_jw_i}{\ip_j-\SP_j^{-1}w_i}\bigg).
	\end{multline*}
	One can see that these coefficients grow in $\mu$
	not faster than of order $\exp\{c|\mu|\}$ for some constant $c>0$.
	To ensure uniform convergence one thus needs estimates of the form
	$\left|\frac{u_i-\ip_x^{-1}\SP_x}{u_i-\ip_x^{-1}\SP_x^{-1}}\right|<c_1<1$,
	that is, the points $u_i$ must be close to the $\ip_x^{-1}\SP_x$'s
	and on the contours $\contq{\ipbb\SPB}i$ at the same time. 
	This can be achieved by restricting the parameters $\ipb$ and $\SPB$
	further if necessary, and also by taking $q$ sufficiently close to $1$, 
	so that the contours $\contq{\ipbb\SPB}i$
	are sufficiently close to the $\ip_x^{-1}\SP_x$'s.
\end{proof}
The integral formula for the coefficients $\expancoeff\la$ in the proof of the above lemma 
does not seem to be convenient for their direct computation. 
We will instead rewrite 
$R(u_1,\ldots,u_n)$
by integrating over the $w_i$'s in \eqref{multi_moments_rid_of_around_0},
and then employ orthogonality of the functions $\F_\la$ 
to extract the $\expancoeff\la$'s. This will imply that 
$R(u_1,\ldots,u_n)$ is equal to the left-hand side of \eqref{multi_moments_rid_of_around_0},
yielding Theorem \ref{thm:multi_moments}.

\medskip

The 
integral in \eqref{multi_moments_rid_of_around_0}
can be computed by taking residues
at $w_i=u_{\sigma(i)}^{-1}$ for all $i=1,\ldots,\ell$,
where $\sigma$ runs over all maps $\{1,\ldots,\ell\}\to\{1,\ldots,n\}$
(we will see below that other residues do not participate).
Denote the residue corresponding to $\sigma$ by $\Res_{\sigma}$,
and also denote
$\JS:=\sigma(\{1,\ldots,\ell\})$. Because of the factors $w_{\aind}-w_{\bind}$,
the same $u_{\sigma(i)}^{-1}$ cannot participate twice, so $\sigma$ must be injective.
Thus,
in contrast with 
\eqref{multi_moments},
the integral in the right-hand side of 
\eqref{multi_moments_rid_of_around_0} vanishes if $\ell>n$.
Note however that since $\HT_\nu(x)\le n$ for all $x\in\Z_{\ge1}$, 
the product in the left-hand side also vanishes for $\ell>n$,
as it should be. Therefore, it suffices to consider only the case~$\ell\le n$.

The integral in \eqref{multi_moments_rid_of_around_0} can be
written in the form
\begin{align*}
	(-1)^{\ell}q^{\frac{\ell(\ell-1)}2}
	\oint\limits_{\contn{\bar\UU}}\frac{d w_1}{2\pi\i}
	\ldots
	\oint\limits_{\contn{\bar\UU}}\frac{d w_\ell}{2\pi\i}
	\prod_{1\le \aind<\bind\le \ell}\frac{w_\aind-w_\bind}{w_\aind-qw_\bind}
	\prod_{i=1}^{\ell}\bigg(
	f_{x_i}(w_i;\sigma)
	\prod_{j=1}^{\ell}\frac{1-qu_{\sigma(j)}w_i}{1-u_{\sigma(j)}w_i}
	\bigg),
\end{align*}
with 
\begin{align*}
	f_x(w;\sigma):=w^{-1}
	\prod\limits_{j=1}^{x-1}
	\frac{\ip_j-\SP_jw}{\ip_j-\SP_j^{-1}w}\prod\limits_{j\notin
	\JS}\frac{1-qu_jw}{1-u_jw}.
\end{align*}
Taking residues at $w_1=u_{\sigma(1)}^{-1},\ldots,
w_\ell=u_{\sigma(\ell)}^{-1}$ (in this order),
we see that
\begin{align}
	\Res\nolimits_{\sigma}&=q^{\frac{\ell(\ell-1)}2}
	f_{x_1}(u_{\sigma(1)}^{-1};\sigma)
	\frac{1-q}{u_{\sigma(1)}}(-1)^{\ell-1}
	\prod_{j=2}^{\ell}
	\frac{u_{\sigma(1)}-qu_{\sigma(j)}}{u_{\sigma(1)}-u_{\sigma(j)}}
	\nonumber\\&\hspace{70pt}\times
	\Res_{w_\ell=u_{\sigma(\ell)}^{-1}}
	\ldots
	\Res_{w_2=u_{\sigma(2)}^{-1}}\,
	\prod_{2\le \aind<\bind\le \ell}\frac{w_\aind-w_\bind}{w_\aind-qw_\bind}
	\prod_{i=2}^{\ell}\bigg(
	f_{x_i}(w_i;\sigma)
	\prod_{j=2}^{\ell}\frac{1-qu_{\sigma(j)}w_i}{1-u_{\sigma(j)}w_i}
	\bigg)
	\nonumber\\&=q^{\frac{\ell(\ell-1)}2}
	f_{x_1}(u_{\sigma(1)}^{-1};\sigma)
	f_{x_2}(u_{\sigma(2)}^{-1};\sigma)
	\frac{(1-q)^{2}}{u_{\sigma(1)}u_{\sigma(2)}}
	(-1)^{\ell-2}
	\prod_{j=2}^{\ell}
	\frac{u_{\sigma(1)}-qu_{\sigma(j)}}{u_{\sigma(1)}-u_{\sigma(j)}}
	\prod_{j=3}^{\ell}
	\frac{u_{\sigma(2)}-qu_{\sigma(j)}}{u_{\sigma(2)}-u_{\sigma(j)}}
	\nonumber\\&\hspace{70pt}\times
	\Res_{w_\ell=u_{\sigma(\ell)}^{-1}}
	\ldots
	\Res_{w_3=u_{\sigma(3)}^{-1}}\,
	\prod_{3\le \aind<\bind\le \ell}\frac{w_\aind-w_\bind}{w_\aind-qw_\bind}
	\prod_{i=3}^{\ell}\bigg(
	f_{x_i}(w_i;\sigma)
	\prod_{j=3}^{\ell}\frac{1-qu_{\sigma(j)}w_i}{1-u_{\sigma(j)}w_i}
	\bigg)
	\nonumber\\&=\textnormal{etc.}
	\nonumber\\&=
	(1-q)^{\ell}q^{\frac{\ell(\ell-1)}2}\prod_{i=1}^{\ell}
	\frac{f_{x_i}(u_{\sigma(i)}^{-1};\sigma)}{u_{\sigma(i)}}
	\prod_{1\le \aind<\bind\le \ell}
	\frac{u_{\sigma(\aind)}-qu_{\sigma(\bind)}}{u_{\sigma(\aind)}-u_{\sigma(\bind)}}
	\nonumber\\&=
	(1-q)^{\ell}q^{\frac{\ell(\ell-1)}2}
	\prod_{i=1}^{\ell}
	\prod\limits_{j=1}^{x_i-1}
	\frac{\ip_ju_{\sigma(i)}-\SP_j}{\ip_ju_{\sigma(i)}-\SP_j^{-1}}
	\prod_{\aind\in \JS,\;
	\bind\notin \JS}
	\frac{u_{\aind}-qu_{\bind}}
	{u_{\aind}-u_{\bind}}
	\prod_{1\le \aind<\bind\le \ell}
	\frac{u_{\sigma(\aind)}-qu_{\sigma(\bind)}}{u_{\sigma(\aind)}-u_{\sigma(\bind)}}.
	% \label{RES_SIGMA}
	\nonumber
\end{align}
In particular, we see that each step does not introduce 
any new poles inside
the integration contours besides $u_{j}^{-1}$.
Therefore,
\begin{align}\label{R_function_definition}
	R(u_1,\ldots,u_n)=\sum_{\sigma\colon
	\{1,\ldots,\ell\}\to\{1,\ldots,n\}}
	\Res\nolimits_\sigma(u_1,\ldots,u_n),
\end{align}
with 
$\Res\nolimits_\sigma$ as above.
This identity clearly holds for generic complex $u_1,\ldots,u_n$
(and not only for $u_i>0$)
because both
sides are rational functions in the $u_i$'s.

\medskip

Let us now apply the inverse Plancherel
transform $\Pltransi n$ of
Definition \ref{def:Plancherel_transforms}
(but without the factor $\conj_{\SPB}(\la)$ and with the shifted parameters
$\sh_1\ipb$, $\sh_1\SPB$)
to $R(u_1,\ldots,u_n)$ to recover the coefficients
$\expancoeff\la$ in \eqref{expancoeff}. This is possible
under the restrictions of Lemma \ref{lemma:expansion_r_la_exists}
because the series in the right-hand side 
of \eqref{expancoeff} converges uniformly
in the $u_i$'s on the contours involved
in $\Pltransi n$.
The application of this slightly modified transform to 
$R(u_1,\ldots,u_n)$
written as \eqref{R_function_definition}
can be 
performed separately for each $\sigma$, and 
the result is the following:
\begin{lemma}\label{lemma:computing_multi_moment_integral}
	Under
	\eqref{stochastic_weights_condition_qsxi} and 
	\eqref{assumptions_one_better},
	for any $\sigma\colon
	\{1,\ldots,\ell\}\to\{1,\ldots,n\}$
	and $\la\in\signp n$, we have
	\begin{multline}\label{computing_multi_moment_integral}
		(1-q)^{-n}
		\oint\limits_{\contq {\ipbb\SPB}1}\frac{d u_1}{2\pi\i}
		\ldots
		\oint\limits_{\contq {\ipbb\SPB}n}\frac{d u_n}{2\pi\i}
		\prod_{1\le \aind<\bind\le n}\frac{u_\aind-u_\bind}{u_\aind-qu_\bind}
		\Res\nolimits_\sigma(u_1,\ldots,u_n)
		\prod_{i=1}^{n}u_i^{-1}\pow_{\la_{i}}(u_{i}^{-1}\md\sh_1\ipbb,\sh_1\SPB)
		\\=\mathbf{1}_{\{\textnormal{$\la_{\sigma(i)}\ge x_i-1$ 
		for all $i$}\}}
		\cdot (-\sh_1\SPB)^{\la}
		q^{\inv(\sigma)}q^{\sigma(1)+\ldots+\sigma(\ell)}q^{-\ell}(1-q)^{\ell}.
	\end{multline}
	Here the integration contours are described
	in Definition \ref{def:orthogonality_contours},
	and $\inv(\sigma)$ is the number of 
	inversions in $\sigma$, i.e., the 
	number of pairs $(i,j)$ with 
	$i<j$ and $\sigma(i)>\sigma(j)$.
\end{lemma}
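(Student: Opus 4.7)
Plan: The proof is a direct verification. First, I substitute the explicit form of $\Res_\sigma$ already computed in the text, together with the elementary identity
\[
\prod_{j=1}^{x_i-1}\frac{\ip_ju-\SP_j}{\ip_ju-\SP_j^{-1}} = \frac{(-\sh_1\SPB)^{(x_i-1)}}{1-q}\,(1-\SP_{x_i}\ip_{x_i}u)\,\pow_{x_i-1}(u\md\sh_1\ipb,\sh_1\SPB),
\]
which follows from $\frac{\ip_ju-\SP_j}{\ip_ju-\SP_j^{-1}}=-\SP_j\frac{\ip_ju-\SP_j}{1-\SP_j\ip_ju}$ together with the definition of $\pow$. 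This rewrites the $A_\sigma$ part of $\Res_\sigma$ in terms of $\pow$ functions with the shifted parameters $\sh_1\ipb,\sh_1\SPB$, modulo polynomial prefactors $(1-\SP_{x_i}\ip_{x_i}u_{\sigma(i)})$.

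The heart of the computation is the reduction identity
\[
(1-\SP_{x_i}\ip_{x_i}u)\pow_{x_i-1}(u\md\sh_1\ipb,\sh_1\SPB)\cdot u^{-1}\pow_{\la}(u^{-1}\md\sh_1\ipbb,\sh_1\SPB) = (1-q)\,u^{-1}\pow_{\la-x_i+1}(u^{-1}\md\sh_{x_i}\ipbb,\sh_{x_i}\SPB),
\]
valid for $\la\ge x_i-1$. When $\la\le x_i-2$, the left-hand side simplifies to $\tfrac{(1-q)\ip_{\la+1}}{1-\SP_{\la+1}\ip_{\la+1}u}\prod_{j=\la+2}^{x_i-1}\tfrac{\ip_ju-\SP_j}{1-\SP_j\ip_ju}$, whose poles lie outside the contour $\contq{\ipbb\SPB}{\sigma(i)}$. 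This dichotomy is the mechanism producing the indicator $\mathbf{1}_{\la_{\sigma(i)}\ge x_i-1\,\forall i}$ on the right-hand side of \eqref{computing_multi_moment_integral}.

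To rigorously deduce the vanishing when some $\la_{\sigma(i)}<x_i-1$, I would recast the integrand in the framework of Lemma~\ref{lemma:general_lemma}. Introducing the permutation $\tau\in\Sym_n$ with $\tau(i)=\sigma(i)$ for $i\le\ell$ and $\tau$ increasing on $\{\ell+1,\ldots,n\}\to\JSB$, one rewrites
\[
\prod_{\aind<\bind}\frac{u_\aind-u_\bind}{u_\aind-qu_\bind}\cdot B_\sigma(u) = \prod_{1\le\aind<\bind\le n}\frac{u_{\tau(\aind)}-qu_{\tau(\bind)}}{u_{\tau(\aind)}-u_{\tau(\bind)}}\cdot\prod_{\substack{\aind,\bind\in\JSB\\\aind<\bind}}\frac{u_\aind-u_\bind}{u_\aind-qu_\bind}.
\]
Choosing $f_m,g_l$ in that lemma to match our $u^{-1}\pow_{\la_j}(u^{-1}\md\sh_1\ipbb,\sh_1\SPB)$ and the shifted $u_{\sigma(i)}$-factors, the pole-structure hypotheses hold and the vanishing follows by the argument of the lemma's proof.

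In the complementary case, iterated residues from the innermost contour outward produce $(-\sh_1\SPB)^\la(1-q)^n$ via the residue-at-infinity of each $u_j^{-1}\pow_{\la_j}$ factor (as in the $n=1$, $\ell=0$ sanity check computed from $-\Res_\infty$). Combined with the $(1-q)^{-n}$ prefactor, the $(1-q)^\ell$ from the reduction identity, and the $q^{\ell(\ell-1)/2}$ already in $\Res_\sigma$, this yields the $(1-q)^\ell q^{-\ell}(-\sh_1\SPB)^\la$ factor of the target. The main technical obstacle is the combinatorial bookkeeping of the remaining $q^{\inv(\sigma)+\sigma(1)+\ldots+\sigma(\ell)}$ prefactor: the $\inv(\sigma)$ contribution comes from the $\sgn(\tau)$-rearrangement identity \eqref{general_lemma_proof1} applied to the composite cross-factor, while the $\sigma(1)+\ldots+\sigma(\ell)$ contribution encodes the ``positions'' of the $\sigma(i)$-contours along the nested hierarchy. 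A cleaner route avoiding direct combinatorics is to first establish the identity for $\sigma=\mathrm{id}$ (when $\inv(\sigma)=0$ and $\sigma(i)=i$) via the straightforward factorization afforded by the full cancellation of the cross-terms, and then lift to general $\sigma$ by a transposition/induction argument using the symmetry of the integral under simultaneous permutation of the $u_i$'s with their respective nested contours.
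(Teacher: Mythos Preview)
Your reduction identity and the resulting dichotomy (poles inside versus outside $\contq{\ipbb\SPB}{\sigma(i)}$ depending on whether $\la_{\sigma(i)}\ge x_i-1$) is correct and matches the paper's pole analysis. The residue-at-infinity computation for the nonvanishing case is also the right idea. But there are two genuine gaps.

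First, your vanishing argument is incomplete. Lemma~\ref{lemma:general_lemma} as stated does not directly apply here: it compares two length-$k$ signatures and concludes they must be equal, whereas you need a different conclusion on a mixed setup with only $\ell$ of the $n$ variables carrying the extra $\pow_{x_i-1}$-type factor. What actually works is the \emph{running-maximum} argument from that lemma's proof, applied directly: when $\sigma(i)$ is a running maximum of $\sigma$, the cross-term cancellations let you shrink $\contq{\ipbb\SPB}{\sigma(i)}$ freely, forcing $\la_{\sigma(i)}\ge x_i-1$. But this only handles running maxima. The paper then uses a short but essential monotonicity observation: if $\sigma(j)$ is not a running maximum, pick $i<j$ with $\sigma(i)>\sigma(j)$; since $x_j\le x_i$ and $\la_{\sigma(i)}\le\la_{\sigma(j)}$, the inequality $x_i\le\la_{\sigma(i)}+1$ propagates to $x_j\le\la_{\sigma(j)}+1$. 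Your proposal does not mention this step, and without it the indicator $\mathbf{1}_{\{\la_{\sigma(i)}\ge x_i-1\text{ for all }i\}}$ is not established.

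Second, your handling of the $q$-power $q^{\inv(\sigma)+\sigma(1)+\cdots+\sigma(\ell)}$ is not rigorous. The suggested route --- prove it for $\sigma=\mathrm{id}$ and then lift via ``symmetry of the integral under simultaneous permutation of the $u_i$'s with their nested contours'' --- does not work: the nested contours $\contq{\ipbb\SPB}{1},\ldots,\contq{\ipbb\SPB}{n}$ are genuinely asymmetric in the index, so permuting variables together with contours changes the integral. The paper instead computes the factor directly: after expanding the contours to infinity in the order $\contq{\ipbb\SPB}{1},\ldots,\contq{\ipbb\SPB}{n}$ (outermost first --- your ``innermost outward'' is backwards and would cross the $u_\aind=qu_\bind$ poles), the three cross-products contribute exactly $q^{\inv(\sigma)}q^{\ell(j_1-1)+(\ell-1)(j_2-j_1-1)+\cdots+(j_\ell-j_{\ell-1}-1)}=q^{\inv(\sigma)}q^{\sigma(1)+\cdots+\sigma(\ell)-\ell(\ell+1)/2}$, where $j_1<\cdots<j_\ell$ are the ordered elements of $\JS$. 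Combined with the $q^{\ell(\ell-1)/2}$ already in $\Res_\sigma$, this gives the stated result without any induction.
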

\begin{proof}
	We need to compute
	\begin{multline*}
		\oint\limits_{\contq {\ipbb\SPB}1}\frac{d u_1}{2\pi\i}
		\ldots
		\oint\limits_{\contq {\ipbb\SPB}n}\frac{d u_n}{2\pi\i}
		\prod_{1\le \aind<\bind\le n}\frac{u_\aind-u_\bind}{u_\aind-qu_\bind}
		\prod_{\aind\in \JS,\;
		\bind\notin \JS}
		\frac{u_{\aind}-qu_{\bind}}
		{u_{\aind}-u_{\bind}}
		\prod_{1\le \aind<\bind\le \ell}
		\frac{u_{\sigma(\aind)}-qu_{\sigma(\bind)}}{u_{\sigma(\aind)}-u_{\sigma(\bind)}}\\\times
		\prod_{i=1}^{\ell}\bigg(
		u_{\sigma(i)}^{-1}\pow_{\la_{\sigma(i)}}(u_{\sigma(i)}^{-1}\md\sh_1\ipbb,\sh_1\SPB)
		\prod\limits_{j=1}^{x_i-1}
		\frac{\ip_ju_{\sigma(i)}-\SP_j}{\ip_ju_{\sigma(i)}-\SP_j^{-1}}
		\bigg)
		\prod_{\bind\notin\JS}u_\bind^{-1}\pow_{\la_{\bind}}(u_{\bind}^{-1}\md\sh_1\ipbb,\sh_1\SPB).
	\end{multline*}
	Observe the following:
	\begin{itemize}
		\item The product $\prod\limits_{j=1}^{x_i-1}
		\frac{\ip_ju_{\sigma(i)}-\SP_j}{\ip_ju_{\sigma(i)}-\SP_j^{-1}}$, $x_i\ge1$,
		has zeros at $u_{\sigma(i)}=\SP_j\ip_j^{-1}$ for $j=1,\ldots,x_i-1$,
		and poles at
		$u_{\sigma(i)}=\SP_j^{-1}\ip_j^{-1}$ for $j=1,\ldots,x_i-1$.
		\item 
		The term $u_{\sigma(i)}^{-1}\pow_{\la_{\sigma(i)}}(u_{\sigma(i)}^{-1}\md\sh_1\ipbb,\sh_1\SPB)$,
		$\la_{\sigma(i)}\ge0$,
		has zeros at
		$u_{\sigma(i)}=\SP_j^{-1}\ip_j^{-1}$ for $j=1,\ldots,\la_{\sigma(i)}$,
		and poles at
		$u_{\sigma(i)}=\SP_j\ip_j^{-1}$ for $j=1,\ldots,\la_{\sigma(i)}+1$.
	\end{itemize}
	This implies that the following cancellations of poles:
	\begin{itemize}
		\item If $x_i\ge\la_{\sigma(i)}+2$, then the integrand does not have poles 
		$\SP_j\ip_j^{-1}$
		inside the contour $\contq {\ipbb\SPB}{\sigma(i)}$. In this case, if we can shrink this contour
		without picking residues at $u_{\sigma(i)}=qu_\bind$ for any $\bind>\sigma(i)$,
		then the whole integral vanishes.

		\item If $x_i\le\la_{\sigma(i)}+1$, then the integrand does not have poles
		$\SP_j^{-1}\ip_j^{-1}$ outside the contour $\contq {\ipbb\SPB}{\sigma(i)}$. 
		Note also that for $\bind\notin\JS$,
		the integrand also 
		does not have poles
		$\SP_j^{-1}\ip_j^{-1}$ outside the contour $\contq {\ipbb\SPB}{\bind}$.
		The integrand, however, has simple poles at each
		$u_{i}=\infty$. 
	\end{itemize}
	If $\sigma(i)>\max\big(\sigma(1),\ldots,\sigma(i-1)\big)$ is a running
	maximum, then the contour $\contq {\ipbb\SPB}{\sigma(i)}$ can be shrunk without picking
	residues at $u_{\sigma(i)}=qu_\bind$. Indeed,
	the factors $u_{\sigma(i)}-qu_\bind$ in the denominator with $\bind\notin\JS$ are
	canceled out by the product over $\aind\in\JS$ and $\bind\notin\JS$,
	and all the factors 
	$u_{\sigma(i)}-qu_{\sigma(j)}$ with $\sigma(i)<\sigma(j)$
	are present in the other product
	over $1\le\aind<\bind\le \ell$.
	Therefore, the whole integral vanishes unless
	$x_i\le\la_{\sigma(i)}+1$ for each such running maximum $\sigma(i)$.

	Next, if the latter condition holds, then we also have 
	$x_j\le\la_{\sigma(j)}+1$ for all $j=1,\ldots,\ell$. Indeed, if $\sigma(j)$
	is not a running maximum, then there exists $i<j$ with $\sigma(i)>\sigma(j)$ (as $\sigma(i)$
	we can take the previous running maximum),
	and it remains to recall that both the $\la_p$'s and the $x_p$'s are ordered:
	\begin{align*}
		x_j\le x_i\le \la_{\sigma(i)}+1\le \la_{\sigma(j)}+1.
	\end{align*}
		
	Assuming now that $x_j\le\la_{\sigma(j)}+1$ for all $j$, we can expand 
	the contours $\contq {\ipbb\SPB}1,\ldots,\contq {\ipbb\SPB}n$ (in this order) to infinity,
	and evaluate the integral by taking minus residues at that point.
	The single products over $i=1,\ldots,\ell$ and $\bind\notin\JS$
	produce
	the factor $(1-q)^{n}(-\sh_1\SPB)^{\la}$.
	Let the ordered sequence of 
	elements of $\JS$ be
	$\JS=\{j_1<\ldots<j_\ell\}$.
	One can readily see that
	the 
	three remaining cross-products lead to the factor
	\begin{align*}
		q^{\inv(\sigma)}q^{\ell(j_1-1)+(\ell-1)(j_2-j_1-1)+\ldots+(j_\ell-j_{\ell-1}-1)}=
		q^{\inv(\sigma)}q^{j_1+\ldots+j_\ell}q^{-\frac{\ell(\ell+1)}{2}}.
	\end{align*}
	This completes the proof.
\end{proof}

The coefficient $\expancoeff{\la}$ in \eqref{expancoeff} 
is thus equal to the sum of
the right-hand sides of \eqref{computing_multi_moment_integral}
over all $\sigma\colon\{1,\ldots,\ell\}\to\{1,\ldots,n\}$. 
This sum can be computed using the following lemma:

\begin{lemma}\label{lemma:about_inversions_summation}	
	Let $X_1,X_2,\ldots$ be indeterminates, $k\in\Z_{\ge1}$,
	and $n_1,\ldots,n_k\in\Z_{\ge1}$ be arbitrary. 
	We have the following identity:
	\begin{multline*}
		\sum_{\substack{1\le i_1 \le n_1,\ldots,1\le i_k\le n_k\\
		\textnormal{$(i_1,\ldots,i_k)$ pairwise distinct}}}
		X_{i_1}X_{i_2+\inv_{\le 2}}\ldots X_{i_k+\inv_{\le k}}
		\\=(X_1+\ldots+X_{n_1})
		(X_2+\ldots+X_{n_2})
		\ldots
		(X_k+\ldots+X_{n_k}),
	\end{multline*}
	where $\inv_{\le p}:=\#\{j< p\colon i_j>i_p\}$.
	By agreement, the right-hand side is 
	zero if one of the sums is empty.
\end{lemma}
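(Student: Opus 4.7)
I will prove the identity by induction on $k$, with the base case $k=1$ being the immediate equality $\sum_{i_1=1}^{n_1}X_{i_1}=X_1+\cdots+X_{n_1}$. For the inductive step, I would fix a pairwise distinct tuple $(i_1,\ldots,i_{k-1})$ with $1\le i_p\le n_p$ and observe that the shifted subscripts $i_p+\inv_{\le p}$ for $p<k$ depend only on $i_1,\ldots,i_p$, hence are unaffected by the choice of $i_k$. Therefore, after factoring out $X_{i_1}X_{i_2+\inv_{\le 2}}\cdots X_{i_{k-1}+\inv_{\le k-1}}$ and using the inductive hypothesis for the product of the first $k-1$ factors on the right, the problem reduces to verifying the single-sum identity
\begin{equation}\label{inner_sum_plan}
\sum_{\substack{1\le i_k\le n_k\\ i_k\notin S}} X_{i_k+\#\{s\in S\colon s>i_k\}} \;=\; X_k+X_{k+1}+\cdots+X_{n_k},
\end{equation}
where $S:=\{i_1,\ldots,i_{k-1}\}$.

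Next, I would note that in the intended range of parameters (and certainly in the application to Lemma~\ref{lemma:computing_multi_moment_integral}, where the condition $x_1\ge\cdots\ge x_\ell$ forces $n_1\le n_2\le\cdots\le n_k$) one has $i_j\le n_j\le n_k$ for every $j<k$, so $S\subseteq\{1,\ldots,n_k\}$ and $|S|=k-1$. The heart of the argument is then a clean bijective statement: the map
\begin{equation*}
f\colon \{1,\ldots,n_k\}\setminus S \longrightarrow \{k,k+1,\ldots,n_k\},\qquad f(i):=i+\#\{s\in S\colon s>i\},
\end{equation*}
is a bijection. I would prove this by rewriting
\begin{equation*}
f(i)-|S| \;=\; i-\#\{s\in S\colon s<i\} \;=\; \#\big(\{1,\ldots,i\}\setminus S\big),
\end{equation*}
which identifies $f(i)-|S|$ with the rank of $i$ inside the sorted complement $\{1,\ldots,n_k\}\setminus S$. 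As $i$ ranges over this complement, the rank ranges over $\{1,\ldots,n_k-|S|\}$, hence $f(i)$ ranges bijectively over $\{|S|+1,\ldots,n_k\}=\{k,\ldots,n_k\}$. Summing $X_{f(i)}$ over the left-hand side of \eqref{inner_sum_plan} produces exactly the $k$-th factor on the right, completing the induction.

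The argument is essentially bookkeeping, so there is no serious technical obstacle; the only subtle point is ensuring $S\subseteq\{1,\ldots,n_k\}$, which uses the monotonicity $n_1\le\cdots\le n_k$ present in the application (when this monotonicity fails, any factor on the right that is empty forces the left side to vanish as well by a pigeonhole argument, since one cannot fit $p$ distinct values into $\{1,\ldots,n_p\}$ when $n_p<p$, so the convention stated in the lemma is automatically consistent). Once the bijection $f$ is in place, the induction immediately telescopes into the claimed product, establishing the lemma.
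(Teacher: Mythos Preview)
Your approach is essentially the same as the paper's: both argue by induction on $k$, reducing to the claim that for fixed pairwise distinct $i_1,\ldots,i_{k-1}$ the map $i_k\mapsto i_k+\#\{j<k: i_j>i_k\}$ is a bijection from $\{1,\ldots,n_k\}\setminus\{i_1,\ldots,i_{k-1}\}$ onto $\{k,\ldots,n_k\}$. The paper asserts this bijection as evident from a picture, while your rank argument (rewriting $f(i)-(k-1)$ as the position of $i$ in the sorted complement) makes it explicit.

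One correction: your parenthetical pigeonhole claim for the non-monotone case is false. With $k=2$, $n_1=2$, $n_2=1$, the only admissible tuple is $(i_1,i_2)=(2,1)$, contributing $X_2X_{1+1}=X_2^2$ on the left, while the right is $(X_1+X_2)\cdot 0=0$. The identity can fail even when no factor is empty: with $k=2$, $n_1=3$, $n_2=2$, the tuple $(3,2)$ contributes an extra $X_3X_3$ that has no match on the right. So the lemma as literally stated (``arbitrary'' $n_j$) is not correct; it holds precisely under $n_1\le n_2\le\cdots\le n_k$, which is the hypothesis you correctly isolated and which is satisfied in the application (where $n_p=\HT_\la(x_p-1)$ with $x_1\ge\cdots\ge x_\ell$ and $\HT_\la$ nonincreasing). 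Your proof of that case is complete; drop the parenthetical and state the monotonicity as an assumption.
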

\begin{proof}
	It suffices to show that the map
	\begin{align*}
		(i_1,\ldots,i_k)\mapsto(i_1,i_2+\inv_{\le2},\ldots,i_k+\inv_{\le k})
	\end{align*}
	is a bijection between the sets
	\begin{multline*}
		\{1\le i_1 \le n_1,\ldots,1\le i_k\le n_k,
		\; \textnormal{$(i_1,\ldots,i_k)$ pairwise distinct}
		\}
		\\\textnormal{and}\quad
		\{1\le j_1\le n_1,\,2\le j_2\le n_2,\ldots,k\le j_k\le n_k\}.
	\end{multline*}
	By induction, this statement will follow if 
	we show that for any pairwise distinct
	$1\le i_1,\ldots,i_{k-1}\le n_{k-1}$,
	the map
	$i_k\mapsto i_k+\inv_{\le k}$ is a bijection between
	\begin{align*}
		\{1\le j\le n_k\colon j\notin\{i_1,\ldots,i_{k-1}\}\}
		\quad\textnormal{and}\quad
		\{k,k+1,\ldots,n_k\}.
	\end{align*}
	But the latter fact is evident from Fig.~\ref{fig:bijection},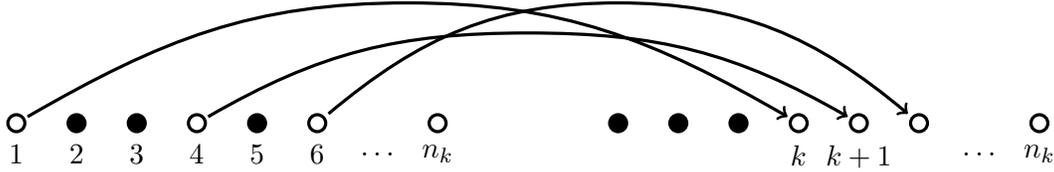
\begin{figure}[htpb]
		\begin{tikzpicture}
			[scale=.8,very thick]
			\draw (0,0) circle(4pt) node[below, yshift=-4] {$1$};
			\draw[fill] (1,0) circle(4pt) node[below, yshift=-4] {$2$};
			\draw[fill] (2,0) circle(4pt) node[below, yshift=-4] {$3$};
			\draw (3,0) circle(4pt) node[below, yshift=-4]  {$4$};
			\draw[fill] (4,0) circle(4pt) node[below, yshift=-4] {$5$};
			\draw (5,0) circle(4pt) node[below, yshift=-4] {$6$};
			\node [below, yshift=-7] at (6,0) {$\ldots$};
			\draw (7,0) circle(4pt) node[below, yshift=-4] {$n_k$};
			\draw[fill] (10,0) circle(4pt) node[below, yshift=-4] {};
			\draw[fill] (11,0) circle(4pt) node[below, yshift=-4] {};
			\draw[fill] (12,0) circle(4pt) node[below, yshift=-4] {};
			\draw (13,0) circle(4pt) node[below, yshift=-4] {$k$};
			\draw (14,0) circle(4pt) node[below, yshift=-4] {$k+1$};
			\draw (15,0) circle(4pt) node[below, yshift=-4] {};
			\node [below, yshift=-7] at (16,0) {$\ldots$};
			\draw (17,0) circle(4pt) node[below, yshift=-4] {$n_k$};
			\node (i1) at (0,0) {};
			\node (i2) at (3,0) {};
			\node (i3) at (5,0) {};
			\node (j1) at (13,0) {};
			\node (j2) at (14,0) {};
			\node (j3) at (15,0) {};
			\draw[->] (i1) [out=30,in=180] to (6.5,2) [out=0,in=150] to (j1);
			\draw[->] (i2) [out=30,in=180] to (8.5,1.5) [out=0,in=150] to (j2);
			\draw[->] (i3) [out=40,in=180] to (10,2) [out=0,in=140] to (j3);
		\end{tikzpicture}
		\caption{A bijection used in the proof of Lemma \ref{lemma:about_inversions_summation}.}
		\label{fig:bijection}
	\end{figure}
	as the map $i_k\mapsto i_k+\inv_{\le k}$
	simply corresponds to stacking together the elements of
	$\{1\le j\le n_k\colon j\notin\{i_1,\ldots,i_{k-1}\}\}$.
\end{proof}

By Lemma \ref{lemma:computing_multi_moment_integral},
the right-hand side of \eqref{computing_multi_moment_integral}
takes the form
\begin{multline*}
	\expancoeff{\la}=q^{-\ell}(1-q)^{\ell}(-\sh_1\SPB)^{\la}
	\sum_{\substack{\sigma(1),\ldots,\sigma(\ell)\in\{1,\ldots,n\}\\
	\textnormal{pairwise distinct}}}
	q^{\sigma(1)+\ldots+\sigma(\ell)+\inv(\sigma)}\mathbf{1}_{\textnormal{$\la_{\sigma(i)}\ge x_i-1$ for all $i$}}
	\\=q^{-\ell}(1-q)^{\ell}(-\sh_1\SPB)^{\la}
	\sum_{\substack{1\le \sigma(1)\le \HT_{\la}(x_1-1),\ldots,1\le \sigma(\ell)\le \HT_{\la}(x_\ell-1)
	\\\textnormal{pairwise distinct}}}q^{\sigma(1)+\ldots+\sigma(\ell)+\inv(\sigma)}.
\end{multline*}
where we have recalled that the height function
is defined as $\HT_{\la}(x-1)=\max\{j\colon \la_j\ge x-1\}$.
We can now apply Lemma \ref{lemma:about_inversions_summation}
with $X_i=q^{i}$, and conclude that the above sum 
factorizes as
\begin{align}\label{expancoeff_computed}
	\expancoeff{\la}=q^{-\ell}(1-q)^{\ell}(-\sh_1\SPB)^{\la}\prod_{i=1}^{\ell}
	(q^{i}+q^{2}+\ldots+q^{\HT_\la(x_i-1)})=
	(-\sh_1\SPB)^{\la}\prod_{i=1}^{\ell}
	(q^{i-1}-q^{\HT_\la(x_i-1)}).
\end{align}
Therefore, we have finally computed the right-hand side of \eqref{multi_moments_rid_of_around_0},
and it is equal to 	
\begin{align}
	\sum_{\la\in\signp n}
	(-\sh_1\SPB)^{\la}
	\F_\la^{\conj}(u_1,\ldots,u_n\md
	\sh_1\ipb,\sh_1\SPB)
	\prod_{i=1}^{\ell}
	(q^{i-1}-q^{\HT_\la(x_i-1)})
	=
	\sum_{\la\in\signp n}
	\prod_{i=1}^{\ell}
	(q^{i-1}-q^{\HT_\la(x_i)})
	\MM_{\UU;\RHO}(\la\md\ipb,\SPB)\label{expancoeff_computed2}
\end{align}
(because $\MM_{\UU;\RHO}$ is given by \eqref{MM_U_RHO_particular}),
which is the same as the left-hand side of \eqref{multi_moments_rid_of_around_0} by the very definition. 
Identity \eqref{multi_moments_rid_of_around_0} is thus established
under the restrictions of Lemma \ref{lemma:expansion_r_la_exists}.
However, as both sides of this identity are rational functions
in all the variables and parameters (cf. Lemma \ref{lemma:qmom_rational} for the left-hand side and
formula \eqref{R_function_definition} for the right-hand side),
we conclude that these restrictions can be dropped as long as the sum over $\la$ 
in \eqref{expancoeff_computed2} converges. 
This implies Theorem \ref{thm:multi_moments}.

% subsection verification_for_multi_point_q_moments (end)

% section _q_moments_of_the_height_function_of_interacting_particle_systems (end)

\section{Degenerations of moment formulas} % (fold)
\label{sec:degenerations_of_moment_formulas}

Here we apply $q$-moment formulas from 
\S \ref{sec:_q_moments_of_the_height_function_of_interacting_particle_systems}
to rederive 
$q$-moment formulas for the
stochastic six vertex model, the ASEP, 
$q$-Hahn, and $q$-Boson systems 
obtained earlier in the
literature (see references below).
In some cases we also present their inhomogeneous
generalizations.

\subsection{Moment formulas for the stochastic six vertex model and the ASEP} % (fold)
\label{sub:moments_of_six_vertex_model_and_asep}

Recall the stochastic six vertex model described in \S \ref{sub:asep_degeneration}. 
That is, we take the parameters $\SP_x\equiv q^{-\frac12}$ for all $x\in\Z_{\ge0}$, and consider the 
dynamics $\Xp_{\{u_t\};\RHO}$ 
in which at each discrete time step, a new particle 
is born at location $1$ (\S \ref{sub:interacting_particle_systems}). 
For this dynamics to be an honest Markov process (i.e., with nonnegative transition probabilities), 
we require that all other parameters satisfy
\begin{align}\label{six_vertex_nontrivial_nonnegativity}
	\textnormal{$0<q<1$,\quad $\ip_j>0$,\quad $u_i>0$,\quad and\quad $\ip_ju_i> q^{-\frac12}$ for all $i,j$,}
\end{align}
and that the $\ip_j$'s are uniformly bounded away from $0$ and $\infty$.
(Another range with $q>1$ will lead to a trivial limit shape
for the stochastic six vertex model, see the discussion in \S \ref{sub:asep_degeneration}.)
\begin{corollary}\label{cor:six_vertex_moments}
	Assume that \eqref{six_vertex_nontrivial_nonnegativity} holds, and that 
	the $\ip_j$'s satisfy $q\cdot M_{\ipb}<m_{\ipb}$ (we use notation 
	\eqref{min_max_notation}). Moreover, let $u_i\ne q u_j$ for any $i,j=1,\ldots,n$. 
	Then the $q$-moments of the height function of the inhomogeneous stochastic
	six vertex model 
	$\Xp_{\{u_t\};\RHO}$ at $\text{time}=n$ are given by
	\begin{multline}\label{six_vertex_moments}
		\E_{\UU;\RHO}^{\textnormal{six vertex}}
		\prod_{i=1}^{\ell}q^{\HT_{\nu}(x_i)}=
		q^{\frac{\ell(\ell-1)}2}
		\oint\limits_{\contnz {\bar\UU}1}\frac{d w_1}{2\pi\i}
		\ldots
		\oint\limits_{\contnz {\bar\UU}\ell}\frac{d w_\ell}{2\pi\i}
		\prod_{1\le \aind<\bind\le \ell}\frac{w_\aind-w_\bind}{w_\aind-qw_\bind}
		\\\times
		\prod_{i=1}^{\ell}\bigg(
		w_i^{-1}
		\prod_{j=1}^{x_i-1}
		\frac{\ip_j-q^{-\frac12}w_i}{\ip_j-q^{\frac12}w_i}
		\prod_{j=1}^{n}\frac{1-qu_jw_i}{1-u_jw_i}
		\bigg)
	\end{multline}
	for any $\ell\in\Z_{\ge1}$ and $x_1\ge \ldots\ge x_\ell\ge1$.
	The integration contours above are as in Definition \ref{def:circular_contours_around_0}.
\end{corollary}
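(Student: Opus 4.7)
The strategy is to derive \eqref{six_vertex_moments} by specializing the master formula \eqref{multi_moments} of Theorem \ref{thm:multi_moments} to $\SP_x \equiv q^{-1/2}$. As recalled in \S\ref{sub:motivation} and \S\ref{sub:asep_degeneration}, this choice collapses the Verma module to its two-dimensional irreducible quotient, so vertical multiplicities become bounded by $1$ and the stochastic weights $\Lmatr_{u,q^{-1/2}}$ of \eqref{vertex_weights_stoch} reduce to the six classical stochastic six vertex weights of Fig.~\ref{fig:six_vertex}. Under \eqref{six_vertex_nontrivial_nonnegativity} these weights are nonnegative, so $\Xp_{\{u_t\};\RHO}$ coincides with the inhomogeneous stochastic six vertex model. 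Direct substitution $\SP_j=q^{-1/2}$, $\SP_j^{-1}=q^{1/2}$ transforms the factor $(\ip_j-\SP_jw_i)/(\ip_j-\SP_j^{-1}w_i)$ in the integrand of \eqref{multi_moments} into the one in \eqref{six_vertex_moments}, so the only content of the corollary is the legitimacy of this specialization.

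Since $q^{-1/2}\notin(-1,0)$, Theorem \ref{thm:multi_moments} does not apply as stated, and I would argue by analytic continuation. Both sides of \eqref{multi_moments} are a priori rational functions of the parameters: the left-hand side by Lemma \ref{lemma:qmom_rational} under admissibility \eqref{admissibility_RHO_conditions}, and the right-hand side manifestly as a finite residue sum. In the six vertex regime the admissibility condition reads $|(\ip_iu_j-q^{-1/2})/(\ip_iu_j-q^{1/2})|<1-\epsilon$ uniformly in $i,j$, which follows from $\ip_iu_j>q^{-1/2}$ together with the uniform bounds on the $\ip_j$'s. Hence the expectation in \eqref{six_vertex_moments} converges absolutely and defines a rational function of $\SPB$ in a neighborhood of $(q^{-1/2},q^{-1/2},\ldots)$.

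On the right-hand side, the contours $\contnz{\bar\UU}j$ depend only on $q$, the $u_i$'s, and the points $\sh_1(\ipb\SPB)=\{\ip_xq^{-1/2}\}_{x\ge1}$. Under \eqref{six_vertex_nontrivial_nonnegativity} one has $u_i^{-1}<\ip_jq^{1/2}<\ip_jq^{-1/2}$, so sufficiently small positively oriented circles around $\{u_i^{-1}\}$ are disjoint from $\{\ip_xq^{-1/2}\}$; the assumption $u_i\ne qu_j$ supplies the $q$-nestedness required in Definition \ref{def:cont_gat_U}; and $qM_{\ipb}<m_{\ipb}$ provides the separation needed for the concentric contours $r^jc_0$ around $0$ of Definition \ref{def:circular_contours_around_0}. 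The main (mild) obstacle is a pole-tracking argument: along a continuous complex deformation of $\SPB$ from a configuration satisfying \eqref{stochastic_weights_condition_qsxi}--\eqref{assumptions_one_better} to the six vertex value $\SPB=(q^{-1/2},q^{-1/2},\ldots)$, one checks that no singularity of the integrand (at $u_j^{-1}$, $\ip_x\SP_x$, $\ip_x\SP_x^{-1}$, or $q^{-1}w_j$) crosses any of the contours $\contnz{\bar\UU}j$, so the right-hand side represents a single rational function of $\SPB$ throughout. Specialization at $\SP_x=q^{-1/2}$ then yields \eqref{six_vertex_moments}.
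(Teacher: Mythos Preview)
Your proposal is correct and follows essentially the same approach as the paper: both argue that \eqref{six_vertex_moments} is the specialization $\SP_x\equiv q^{-1/2}$ of Theorem \ref{thm:multi_moments}, justified by analytic continuation since both sides of \eqref{multi_moments} are rational in all parameters, together with the checks that admissibility \eqref{admissibility_RHO_conditions} holds in the six vertex regime and that the contours $\contnz{\bar\UU}j$ continue to exist and capture the same residues. The paper states the residue preservation more tersely (``the integrations \ldots are sums over the same sets of residues''), while you spell out the pole-tracking along a deformation of $\SPB$; these are the same argument at different levels of detail. One minor imprecision: the condition $u_i\ne qu_j$ is not about $q$-nestedness of $\contn{\bar\UU}$ (Definition \ref{def:cont_gat_U} uses a single contour) but rather ensures that $\contn{\bar\UU}$ can be chosen disjoint from $q^{\pm1}\contn{\bar\UU}$.
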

In the homogeneous case $u_i\equiv u$, $\ip_j\equiv 1$, this formula essentially
reduces to \cite[Thm.\;4.12]{BCG6V} which was proven by a different method.
\begin{proof}
	The claim follows
	from Theorem \ref{thm:multi_moments} because
	both sides of the identity \eqref{multi_moments} 
	are rational functions in all parameters,
	and, moreover, 
	the integrations in the right-hand sides of \eqref{multi_moments} and \eqref{six_vertex_moments}
	are sums over the same sets of residues. Indeed, our conditions
	\eqref{six_vertex_nontrivial_nonnegativity} imply that 
	$u_i^{-1}<\SP^{-1}\ip_j<\SP\ip_j$ (where $\SP=q^{-\frac12}$), and 
	so the contours 
	$\contnz {\bar\UU}j$ exist and yield the same residues. Note also that even though now $\SP_j= q^{-1/2}>1$
	instead of belonging to $(-1,0)$,
	conditions \eqref{admissibility_RHO_conditions} 
	(ensuring the existence of the measure $\MM_{\UU;\RHO}$)
	readily follow from \eqref{six_vertex_nontrivial_nonnegativity}.
\end{proof}

Let us now consider the continuous time limit of the stochastic six vertex model
to the ASEP. In \S \ref{sub:asep_degeneration} we have 
described this limit in the case of a fixed number of particles, 
but the dynamics $\Xp_{\{u_t\};\RHO}$ (in which at each time step 
a new particle is born at location $1$) in this limit
also produces a meaningful initial condition for the ASEP.
Indeed, setting $\ip_j= 1$, $u_i= q^{-\frac12}+(1-q)q^{-\frac12}\epsilon$,
we see that for $\epsilon=0$ the configuration $\la\in\signp n$
of the stochastic six vertex model at $\text{time}=n$ is simply
$\la=(n,n-1,\ldots,1)$. For small $\epsilon>0$, at times $n=\lfloor t\epsilon^{-1}\rfloor$, the configuration of the 
stochastic six vertex model will be a finite perturbation of 
$(n,n-1,\ldots,1)$ near the diagonal. Thus, shifting the lattice coordinate 
as $\la_i=n+1+y_i$ with $y_i\in\Z$, we see that in the limit $\epsilon\searrow0$ 
the initial condition for the ASEP 
becomes $y_1(0)=-1$, $y_2(0)=-2,\ldots$, which is known as the \emph{step initial configuration}.

\begin{corollary}\label{cor:ASEP_moments}
	For $0<q<1$, any $\ell\in\Z_{\ge1}$, and $(x_1\ge \ldots\ge x_\ell)\in\Z^{\ell}$,
	the $q$-moments of the height function of the ASEP
	$\HT_{\textnormal{ASEP}}(x):=\#\{i\colon y_i\ge x\}$
	started from the step initial configuration
	$y_i(0)=-i$
	are given by
	\begin{multline}\label{multi_ASEP_moments}
		\E^{\textnormal{ASEP, step}}\prod_{i=1}^{\ell}
		q^{\HT_{\textnormal{ASEP}}(x_i)}
		=
		q^{\frac{\ell(\ell-1)}2}
		\oint\limits_{\contnz {\sqrt q}1}\frac{d w_1}{2\pi\i}
		\ldots
		\oint\limits_{\contnz {\sqrt q}\ell}\frac{d w_\ell}{2\pi\i}
		\prod_{1\le \aind<\bind\le \ell}\frac{w_\aind-w_\bind}{w_\aind-qw_\bind}
		\\\times
		\prod_{i=1}^{\ell}\bigg(
		w_i^{-1}
		\bigg(\frac{1-q^{-\frac12}w_i}{1-q^{\frac12}w_i}\bigg)^{x_i}
		\exp\Big\{-\frac{(1-q)^{2}}{(1-q^{\frac12}w_i)(1-q^{\frac12}/w_i)}\,t\Big\}
		\bigg),
	\end{multline}
	for any time $t\ge0$. 
	Each integration contour
	$\contnz {\sqrt q}j$ consists of 
	two positively oriented circles --- one is a small circle around $\sqrt q$, and 
	another one is a circle $r^{j}c_0$ around zero as in Definition \ref{def:circular_contours_around_0}.
\end{corollary}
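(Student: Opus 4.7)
The strategy is to obtain the ASEP formula as an $\epsilon\to 0$ limit of the six vertex formula \eqref{six_vertex_moments}, after first specializing the six vertex parameters as described in \S\ref{sub:asep_degeneration}. Concretely, I take $\SP_x\equiv q^{-1/2}$, $\ip_j\equiv 1$, and set $u_j=q^{-1/2}\bigl(1+(1-q)\epsilon\bigr)$ for all $j$, run the six vertex model for $n=\lfloor t/\epsilon\rfloor$ time steps, and convert six vertex spatial coordinates to ASEP coordinates via $\la_i=n+1+y_i$, so that the six vertex height $\HT_{\nu}(n+1+x)$ equals the ASEP height $\HT_{\textnormal{ASEP}}(x)$.

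For the left-hand side, one invokes the convergence of the specialized stochastic six vertex dynamics $\Xp_{\{u_t\};\RHO}$ to the ASEP started from the step configuration $y_i(0)=-i$; since $0<q<1$, the random variables $\prod_i q^{\HT_{\nu}(\cdot)}$ take values in $(0,1]$ and bounded convergence gives convergence of the expectations. For the right-hand side, I apply the substitution $x_i\mapsto n+1+x_i$ in \eqref{six_vertex_moments}, which contributes an extra factor $\bigl((1-q^{-1/2}w_i)/(1-q^{1/2}w_i)\bigr)^{n}$ to the integrand, and combine it with the product over $j=1,\dots,n$ of the $u$-factors. The main computation (which I leave as a routine expansion to first order in $\epsilon$, exponentiating the resulting $n$-th power) is the identity
\[
\lim_{\epsilon\to 0}\left(\frac{1-q^{-1/2}w_i}{1-q^{1/2}w_i}\cdot \frac{1-q u_j w_i}{1-u_j w_i}\right)^{\lfloor t/\epsilon\rfloor}
=\exp\!\left\{-\frac{(1-q)^{2}}{(1-q^{1/2}w_i)(1-q^{1/2}/w_i)}\,t\right\},
\]
obtained after noting the algebraic identity
\[
\frac{q^{-1/2}w_i}{(1-q^{-1/2}w_i)(1-q^{1/2}w_i)}=\frac{-1}{(1-q^{1/2}w_i)(1-q^{1/2}/w_i)}.
\]
This yields the desired integrand on the right-hand side of \eqref{multi_ASEP_moments}.

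On the contour side, the points $u_j^{-1}$ all coalesce to $q^{1/2}$ as $\epsilon\to 0$, so the nested component $\contn{\bar\UU}$ of $\contnz{\bar\UU}j$ can be chosen as a single small circle around $q^{1/2}$ uniformly in small $\epsilon$ (shrinking it if necessary so that it and $q\cdot\contn{\bar\UU}$ remain disjoint from the circles $r^j c_0$ and from one another after multiplication by $q$). The circles $r^j c_0$ around the origin can be kept fixed. Thus the contours $\contnz{\bar\UU}j$ deform into $\contnz{\sqrt q}j$ in the limit, and the convergence of the integrand is uniform on a fixed compact set avoiding the points $q^{1/2}$ and $q^{-1/2}$, so one may pass to the limit under the integral.

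The step that requires genuine care is justifying the convergence of the six vertex expectation to the ASEP expectation: the bounded convergence argument needs the almost sure convergence of the height function $\HT_{\nu}(n+1+x_i)$ to $\HT_{\textnormal{ASEP}}(x_i,t)$ under an appropriate coupling of the two processes, which follows from the standard graphical construction sketched in \S\ref{sub:asep_degeneration} together with the fact that in the regime $n=\lfloor t/\epsilon\rfloor$ only finitely many arrows (near the diagonal) deviate from the deterministic rigid shift, so the configuration is a finite perturbation of $(n,n-1,\dots,1)$ whose evolution in the rescaled coordinates converges to the ASEP jump dynamics. Once this weak convergence is in place, the rest is the analytic computation described above.
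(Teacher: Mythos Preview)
Your proposal is correct and follows essentially the same approach as the paper: specialize the six vertex formula \eqref{six_vertex_moments} with $\ip_j\equiv 1$, $u_j=q^{-1/2}(1+(1-q)\epsilon)$, shift the spatial coordinate by $n+1$, and take $n=\lfloor t/\epsilon\rfloor\to\infty$; the paper performs the same first-order expansion to produce the exponential and handles the contours identically. Your added discussion of the bounded convergence argument for the left-hand side is more explicit than the paper (which leaves the probabilistic convergence implicit from \S\ref{sub:asep_degeneration}), but the analytic content is the same.
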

\begin{proof}
	Start with the moment formula 
	\eqref{six_vertex_moments} for 
	$\ip_j= 1$, $u_i= q^{-\frac12}+(1-q)q^{-\frac12}\epsilon$,
	$x_i=n+1+x_i'$, 
	and $n=\lfloor t \epsilon^{-1}\rfloor$. Here $x_i'\in\Z$ are the shifted labels of moments.
	As $\epsilon\searrow0$,
	the contours $\contnz{\bar\UU}j$
	will still contain the same parts around zero, and the 
	part $\contn{\bar\UU}$ encircling the $u_i^{-1}$'s will turn into a small circle 
	$\contn{\sqrt{q}}$
	around~$\sqrt{q}$
	(because $\sqrt{q}$ is the limit of $u^{-1}$ as $\epsilon\searrow0$).
	Let us now look at the integrand. We have
	\begin{align*}
		\bigg(\frac{1-q^{-\frac12}w}{1-q^{\frac12}w}\bigg)^{x_i-1}\bigg(\frac{1-quw}{1-uw}\bigg)^{n}
		&=
		\bigg(\frac{1-q^{-\frac12}w}{1-q^{\frac12}w}\bigg)^{n+x_i'}
		\bigg(
		\frac{1-q^{\frac12}w}{1-q^{-\frac12}w}-
		\epsilon\,\frac{(1-q)^{2}}{(1-q^{-\frac12}w)(1-q^{\frac12}/w)}+O(\epsilon^{2})
		\bigg)^{n}
		\\&=
		\bigg(\frac{1-q^{-\frac12}w}{1-q^{\frac12}w}\bigg)^{x_i'}
		\bigg(
		1-\epsilon\,\frac{(1-q)^{2}}{(1-q^{\frac12}w)(1-q^{\frac12}/w)}+O(\epsilon^{2})
		\bigg)^{\lfloor t \epsilon^{-1}\rfloor}.
	\end{align*}
	In the limit as $\epsilon\searrow0$, the second factor turns into the 
	exponential. 
	Thus, renaming $x_i'$ back to $x_i$, we arrive at the desired claim.
\end{proof}

When $x_1=\ldots=x_\ell$, formula \eqref{multi_ASEP_moments} essentially coincides with 
the one obtained in \cite[Thm.\;4.20]{BorodinCorwinSasamoto2012} using 
duality. The multi-point generalization \eqref{multi_ASEP_moments}
of that formula seems to be new.

The paper \cite{BorodinCorwinSasamoto2012}
also deals with other multi-point observables. Namely, denote 
\begin{align*}
	\tilde Q_{x}:=
	q^{\HT_{\textnormal{ASEP}}(x+1)}
	\mathbf{1}_{\textnormal{there is a particle at location $x$}}.
\end{align*}
The expectations 
$\E^{\textnormal{ASEP}}\big(\tilde Q_{x_1}\ldots \tilde Q_{x_k}\big)$,
$x_1> \ldots>x_k$
(for the step, and in fact also for the step-Bernoulli initial conditions),
were computed in \cite[Cor.\;4.14]{BorodinCorwinSasamoto2012}.
The duality statement pertaining to these observables
dates back to \cite{schutz1997dualityASEP}.
Then the expectation of $q^{\ell\,\HT_{\textnormal{ASEP}}(x)}$ was recovered
from these multi-point observables \cite[Thm.\;4.20]{BorodinCorwinSasamoto2012}.

Note that for the ASEP, expectations of 
$\tilde Q_{x_1}\ldots \tilde Q_{x_k}$ are essentially the same as the 
$q$-correlation functions (\S \ref{sub:_q_correlation_functions}). 
In fact, our proof
of Proposition \ref{prop:one_moment} 
(recovering one-point $q$-moments from the $q$-correlation functions)
somewhat mimics the ASEP approach mentioned above, 
but dealing with a higher spin system introduces the need for 
the more complicated observables \eqref{q_def_correlation_thing}.

% subsection moments_of_six_vertex_model_and_asep (end)

\subsection{Starting from infinitely many particles at location $1$} % (fold)
\label{sub:starting_from_infinitely_many_particles_at_zero}

Let us now return to generic values of $\SP_j$, and consider the limit transition
from $\Xp_{\{u_t\};\RHO}$
to the dynamics $\Xii_{\{u_t\}}$ 
described in Remark \ref{rmk:infinitely_at_one}.
Recall that to pass to the dynamics $\Xii_{\{u_t\}}$,
one should take
$\ip_1=\SP_1$, and let 
the $\UU$ parameters be 
$(1,q,q^{2},\ldots,q^{K-1},u_1,\ldots,u_n)$. 
For the process to have nonnegative transition probabilities in the $K\to+\infty$
limit (and be nontrivial), we should take $\SP_1^{2}<0$ and
$u_i>0$, while all other parameters
$\SP_x$, $\ip_x$ ($x\ge2$), 
and $q$ should satisfy \eqref{stochastic_weights_condition_qsxi}.
Under these assumptions, in the $K\to+\infty$
limit we obtain Markov dynamics
$\Xii_{\{u_t\}}$ which starts from the configuration
$1^{\infty}2^{0}3^{0}\ldots$. 
Moreover, we also need to 
assume that 
the nested integration contours (in the corollary below) 
are well-defined,
which requires
$m_{\ipb|\SPB|}>q M_{\ipb|\SPB|}$.

\begin{corollary}\label{cor:Xinfinity_moments}
	Under the assumptions described above,
	the $q$-moments of $\Xii_{\{u_t\}}$ at $\text{time}=n$
	have the form
	\begin{multline}\label{Xinfinity_moments}
		\E^{\Xii}_{\UU}\prod_{i=1}^{\ell}q^{\HT_{\nu}(x_i)}=
		(-1)^{\ell}q^{\frac{\ell(\ell-1)}2}
		\oint\limits_{\contq{\ipb\SPB}1}\frac{d w_1}{2\pi\i}
		\ldots
		\oint\limits_{\contq{\ipb\SPB}\ell}\frac{d w_\ell}{2\pi\i}
		\prod_{1\le \aind<\bind\le \ell}\frac{w_\aind-w_\bind}{w_\aind-qw_\bind}
		\\\times
		\prod_{i=1}^{\ell}\bigg(
		\frac{1}{w_i(1-w_i)}
		\prod_{j=1}^{x_i-1}
		\frac{\ip_j-\SP_jw_i}{\ip_j-\SP_j^{-1}w_i}
		\prod_{j=1}^{n}\frac{1-qu_jw_i}{1-u_jw_i}
		\bigg),
	\end{multline}
	where $\text{time}=n$, $\ell\in\Z_{\ge1}$, and 
	$x_1\ge \ldots\ge x_\ell\ge1$ are arbitrary.
	The integration contours $\contq{\ipb\SPB}j$
	are $q$-nested (as in Definition \ref{def:orthogonality_contours}),
	encircle $\{\SP_1^{2},\ip_2\SP_2,\ldots\}$,
	and leave outside $0$, $1$, and $u_i^{-1}$ for all $i$.
	We also assume that $\ip_1=\SP_1$  in \eqref{Xinfinity_moments}.
\end{corollary}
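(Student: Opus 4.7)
The plan is to derive \eqref{Xinfinity_moments} by applying the multi-point $q$-moment formula of Theorem \ref{thm:multi_moments} (in its fused form Corollary \ref{cor:multi_moments_fused}) to the time-parameter sequence $\UU_{K} := (1, q, q^{2}, \ldots, q^{K-1}, u_{1}, \ldots, u_{n})$ with $\ip_{1} = \SP_{1}$, and then letting $K \to +\infty$. By Remark \ref{rmk:infinitely_at_one}, running $\Xp_{\{v_{t}\};\RHO}$ for $K + n$ time steps with exactly this time sequence and with $\ip_{1} = \SP_{1}$ converges as $K \to +\infty$ to the distribution of $\Xii_{\{u_{t}\}}$ at time $n$, so
\begin{equation*}
\E^{\Xii}_{\UU}\prod_{i=1}^{\ell} q^{\HT_{\nu}(x_{i})} = \lim_{K\to\infty} \E_{\UU_{K};\RHO}\prod_{i=1}^{\ell} q^{\HT_{\nu}(x_{i})}.
\end{equation*}

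Because the first $K$ entries of $\UU_{K}$ form a $q$-geometric string, Theorem \ref{thm:multi_moments} does not apply to $\UU_{K}$ directly, but the sequential specialization argument in the proof of Corollary \ref{cor:multi_moments_fused} adapts without change to a $\UU$ consisting of one long string of length $K$ (with base $1$) together with $n$ trivial strings of length one. The resulting contour integral representation features the telescoped factor
\begin{equation*}
\prod_{v\in\UU_{K}}\frac{1-qvw}{1-vw}=\frac{1-q^{K}w}{1-w}\prod_{j=1}^{n}\frac{1-qu_{j}w}{1-u_{j}w},
\end{equation*}
and $q^{-1}$-nested contours that enclose only $\{1, u_{1}^{-1}, \ldots, u_{n}^{-1}\}$ (the telescoped points $q^{-k}$, $k=1,\dots,K-1$, are pushed outside during the fusion deformation) together with circles $r^{j}c_{0}$ around the origin. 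As $K \to +\infty$, $q^{K}w \to 0$ uniformly on these bounded contours, and the post-limit formula has integrand
\begin{equation*}
I_{\infty}(w)=\frac{1}{w(1-w)}\prod_{j=1}^{x_{i}-1}\frac{\ip_{j}-\SP_{j}w}{\ip_{j}-\SP_{j}^{-1}w}\prod_{j=1}^{n}\frac{1-qu_{j}w}{1-u_{j}w}
\end{equation*}
and $q^{-1}$-nested contours enclosing the set $\{0, 1, u_{j}^{-1}\}$.

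The last step is to convert these $q^{-1}$-nested contours into the $q$-nested contours $\contq{\ipb\SPB}{j}$ of the statement, which enclose $\{\SP_{1}^{2}, \ip_{2}\SP_{2}, \ldots\}$ (recall $\ip_{1}\SP_{1} = \SP_{1}^{2}$) and leave $\{0, 1, u_{j}^{-1}\}$ outside. A direct degree count gives $I_{\infty}(w)=O(w^{-2})$ as $|w|\to\infty$, and $\prod_{\aind<\bind}(w_{\aind}-w_{\bind})/(w_{\aind}-qw_{\bind}) = O(1)$ in each variable at infinity, so the hypothesis of Remark \ref{rmk:drag_through_infinity} is satisfied. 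I drag the $\ell$ integration variables through infinity one at a time: each dragging reverses orientation (producing the overall $(-1)^{\ell}$), converts $q^{-1}$-nesting into $q$-nesting, and swaps the enclosed poles $\{0, 1, u_{j}^{-1}\}$ for the complementary poles $\{\ip_{j}\SP_{j}\}_{j=1}^{x_{i}-1}$ of $I_{\infty}$; enlarging to the full family $\{\SP_{1}^{2}, \ip_{2}\SP_{2}, \ldots\}$ introduces no extra residues, yielding \eqref{Xinfinity_moments}. The main obstacle is the inductive bookkeeping of the dragging in the multi-variable nested setting: at each step one must verify that the partial iterated residue of the remaining integrand at $w=\infty$ still vanishes with the cross-product factors included, and that the $q^{\pm 1}$-nesting is consistently inverted as the orientations flip.
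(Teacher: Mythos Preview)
Your proposal is correct and follows essentially the same route as the paper: apply the fused moment formula (Corollary \ref{cor:multi_moments_fused}) to $\UU_{K}=(1,q,\ldots,q^{K-1},u_1,\ldots,u_n)$ with $\ip_1=\SP_1$, let $K\to\infty$ in the integrand (the telescoped factor $\frac{1-q^{K}w}{1-w}$ becomes $\frac{1}{1-w}$), and then drag the contours through infinity to convert $q^{-1}$-nested contours around $\{0,1,u_j^{-1}\}$ into $q$-nested contours around $\{\SP_1^2,\ip_2\SP_2,\ldots\}$, picking up the sign $(-1)^\ell$. The ``main obstacle'' you flag is not really one: since each $I_\infty(w_i)$ is $O(w_i^{-2})$ and the cross-product is $O(1)$ in each variable, the paper simply drags $\contqiz{\bar\UU}\ell,\contqiz{\bar\UU}{\ell-1},\ldots,\contqiz{\bar\UU}1$ in that order (outermost first), which automatically turns the $q^{-1}$-nesting into $q$-nesting without any intermediate residues appearing.
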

Note that if $x_\ell=1$, then the integrand has no $w_\ell$-poles
inside the smallest contour $\contq{\ipb\SPB}\ell$,
and thus vanishes, as it should be for the left-hand side of \eqref{Xinfinity_moments} because $\HT_\nu(1)=+\infty$.
\begin{proof}
	Since before the limit the parameters $\UU$ have the form 
	\eqref{U_fused_for_contours}, we must use Corollary \ref{cor:multi_moments_fused}
	instead of Theorem \ref{thm:multi_moments}, and 
	take the integration contours to be 
	$\contqiz{\bar\UU}j$, $j=1,\ldots,\ell$.
	After setting $\ip_1=\SP_1$ with $\SP_1^{2}<0$,
	the integration contours $\contqiz{\bar\UU}j$
	are $q^{-1}$-nested around $\{1,u_1^{-1},\ldots,u_n^{-1}\}$,
	leave $\{\SP_1^{2},\ip_2\SP_2,\ldots\}$ outside, and contain parts $r^{j}c_0$
	around zero (cf. Definition \ref{def:circular_contours_around_0}).
	This readily implies
	that the 
	substitution $\ip_1=\SP_1$ is allowed because the resulting 
	integral is the sum of the same residues
	as before the substitution.

	Now, since the integration contours do not change in the 
	limit as $K\to+\infty$, let us take it in the integrand. 
	Then
	the product over $i=1,\ldots,\ell$ becomes
	\begin{align*}
		\prod_{i=1}^{\ell}\bigg(
		\frac{1}{w_i(1-w_i)}
		\prod_{j=1}^{x_i-1}
		\frac{\ip_j-\SP_jw_i}{\ip_j-\SP_j^{-1}w_i}
		\prod_{j=1}^{n}\frac{1-qu_jw_i}{1-u_jw_i}
		\bigg).
	\end{align*}
	We see that 
	the integrand for $K=+\infty$ is regular at infinity,
	so we can 
	drag the integration contours 
	$\contqiz{\bar\UU}\ell,\ldots,\contqiz{\bar\UU}1$
	(in this order)
	through infinity, and they turn into $q$-nested and negatively oriented
	contours 
	$\contq{\ipb\SPB}j$
	around $\{\SP_1^{2},\ip_2\SP_2,\ldots\}$, which leave 
	$\{u_1^{-1},\ldots,u_n^{-1}\}$, $0$ and $1$ outside.
	Note that the first group of points lies in the
	left half-plane, while $u_i^{-1}>0$.
	
	In the proof we have assumed \eqref{assumptions_one_better},
	so that the contours $\contqiz{\bar\UU}j$ are well-defined.
	However, this assumption can be dropped in \eqref{Xinfinity_moments}
	because both sides of \eqref{Xinfinity_moments} are a priori rational
	functions in all parameters.
	This implies the desired claim.
\end{proof}

In the homogeneous case $\ip_j\equiv 1$ and $\SP_j\equiv \SP$,
the result of Corollary \ref{cor:Xinfinity_moments}
was obtained in 
\cite[Thm. 4.1]{CorwinPetrov2015} using duality. More precisely, to obtain 
the system considered in that paper, one needs to take $\SP_1^{2}=\SP$ and $\SP_j\equiv \SP$ for $j\ge2$,
so that the probabilities with which particles leave location~$1$
are in agreement with what is going on at all further locations.

% subsection starting_from_infinitely_many_particles_at_zero (end)

\subsection{Moment formulas for $q$-Hahn and $q$-Boson systems} % (fold)
\label{sub:moments_of_q_hahn_and_q_boson_systems}

As explained in \S \ref{ssub:_q_hahn_particle_system},
the $q$-Hahn particle system $\Xih$ depending on $J\in\Z_{\ge1}$ is obtained from the process 
$\Xii_{\{u_t\}}$ by fusion
\begin{align*}
	\UU=(1,q,\ldots,q^{J-1},\ldots,1,q,\ldots,q^{J-1})\qquad
	\textnormal{($n$ groups)}
\end{align*}
and by setting $\SP_j=\ip_j$ for all $j\in\Z_{\ge2}$, with $\SP_j^{2}<0$. 
The process $\Xih$ also starts with infinitely many particles at $1$.
\begin{corollary}\label{cor:qHahn_moments}
	Let $J\in\Z_{\ge1}$ and $\SP_j^{2}<0$ for all $j$.
	Moreover, let us assume that $m_{|\SPB^{2}|}>qM_{|\SPB^{2}|}$,
	so the integration contours below exist. Then
	for any $\ell\in\Z_{\ge1}$ and $x_1\ge \ldots\ge x_\ell\ge1$,
	the moments of $\Xih$ at $\text{time}=n$ have the form:
	\begin{multline}\label{qHahn_moments}
		\E^{\textnormal{$q$-Hahn}}\prod_{i=1}^{\ell}q^{\HT_{\nu}(x_i)}=
		(-1)^{\ell}q^{\frac{\ell(\ell-1)}2}
		\oint\limits_{\contq{\SPB^{2}}1}\frac{d w_1}{2\pi\i}
		\ldots
		\oint\limits_{\contq{\SPB^{2}}\ell}\frac{d w_\ell}{2\pi\i}
		\prod_{1\le \aind<\bind\le \ell}\frac{w_\aind-w_\bind}{w_\aind-qw_\bind}
		\\\times
		\prod_{i=1}^{\ell}\left(
		\bigg(\frac{1-q^{J}w_i}{1-w_i}\bigg)^{n}
		\frac{1}{w_i(1-w_i)}
		\prod_{j=1}^{x_i-1}
		\frac{1-w_i}{1-\SP_j^{-2}w_i}
		\right),
	\end{multline}
	where the integration contours 
	$\contq{\SPB^{2}}j$ are $q$-nested around $\{\SP_j^{2}\}_{j\in\Z_{\ge1}}$
	and leave $0$ and $1$ outside.
\end{corollary}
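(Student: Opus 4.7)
The plan is to derive \eqref{qHahn_moments} from Corollary \ref{cor:Xinfinity_moments} by carrying out the fusion and parameter specialization described in \S\ref{sub:general_j_dynamics_and_q_hahn_degeneration}: the $q$-Hahn system $\Xih$ at time $n$ is identified with the dynamics $\Xii_{\{u_t\}}$ at time $nJ$, with $\UU$ equal to $n$ concatenated copies of $(1,q,\ldots,q^{J-1})$ and with $\ip_j = \SP_j$ for all $j \ge 1$ (the equality $\ip_1 = \SP_1$ is already built into Corollary~\ref{cor:Xinfinity_moments}).

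First I would substitute $\ip_j = \SP_j$ for $j\ge 2$ in \eqref{Xinfinity_moments}. Each factor $(\ip_j - \SP_j w_i)/(\ip_j - \SP_j^{-1} w_i)$ collapses to $(1 - w_i)/(1 - \SP_j^{-2} w_i)$, and the enclosed set $\{\SP_1^{2}, \ip_2 \SP_2, \ldots\}$ becomes $\{\SP_j^{2}\}_{j\ge 1}$, so the contours turn into $\contq{\SPB^{2}}j$. Then I would perform fusion by specializing the spectral variables to the geometric progression described above. Under this specialization the product over $j$ telescopes:
\[
\prod_{k=1}^{n}\prod_{m=0}^{J-1}\frac{1-q^{m+1}w_i}{1-q^{m}w_i}=\left(\frac{1-q^{J}w_i}{1-w_i}\right)^{n},
\]
which matches the integrand in \eqref{qHahn_moments} exactly.

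The main obstacle is justifying that this parameter specialization is legitimate, since the resulting values $u_j = q^{k}$ violate the genericity hypothesis $u_i\ne qu_j$ of Corollary \ref{cor:Xinfinity_moments} and include coincident points. I would follow the pattern of the proof of Corollary \ref{cor:multi_moments_fused}: both sides of \eqref{Xinfinity_moments} are a priori rational functions in the $u_j$'s and in $\ipb$, $\SPB$ (the left-hand side by the analogue of Lemma \ref{lemma:qmom_rational} for $\Xii$, the right-hand side tautologically as a contour integral of a rational function whose relevant residues are continuous in the parameters). One starts with generic $u_j>0$ satisfying $q\cdot\max_j u_j<\min_j u_j$, which allows the contour configuration of Corollary \ref{cor:Xinfinity_moments}, then deforms the contours and specializes the $u_j$'s one at a time exactly as in the proof of Corollary \ref{cor:multi_moments_fused}, checking at each step that no pole crosses a contour. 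After the telescoping all residual $u$-dependence of the integrand is the single factor $((1-q^{J}w_i)/(1-w_i))^{n}$, whose only pole at $w_i=1$ is positive and thus automatically separated from the negative points $\SP_j^{2}\in(-1,0)$ enclosed by $\contq{\SPB^{2}}j$, so the contour configuration survives the specialization.

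Finally I would verify that the condition $m_{|\SPB^{2}|}>qM_{|\SPB^{2}|}$ is exactly what is needed for the $q$-nested contours $\contq{\SPB^{2}}j$ to exist, that $0$ and $1$ lie outside them (automatic since $\SP_j^{2}<0$), and that the admissibility hypotheses used along the way can be relaxed by the rational-continuation argument, as in the last paragraph of the proof of Corollary \ref{cor:Xinfinity_moments}. Everything else is bookkeeping.
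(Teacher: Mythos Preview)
Your approach is correct and is exactly what the paper does: its entire proof reads ``Immediately follows from Corollary~\ref{cor:Xinfinity_moments},'' and the paragraph preceding the statement records precisely the specialization $\ip_j=\SP_j$ and $\UU=(1,q,\ldots,q^{J-1},\ldots,1,q,\ldots,q^{J-1})$ that you carry out, with the same telescoping. One small point: your ``main obstacle'' is not actually present, because Corollary~\ref{cor:Xinfinity_moments} does not carry the hypothesis $u_i\ne qu_j$ --- there the points $u_i^{-1}$ lie \emph{outside} the $q$-nested contours $\contq{\ipb\SPB}j$ (in contrast to Theorem~\ref{thm:multi_moments} where they are inside), so specializing the $u_i$'s to a geometric progression with ratio $q$ never moves a pole across a contour, and the substitution is immediate without invoking the deformation argument of Corollary~\ref{cor:multi_moments_fused}.
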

\begin{proof}
	Immediately follows from Corollary \ref{cor:Xinfinity_moments}.
\end{proof}
The $q$-moment formula \eqref{qHahn_moments} 
holds when $J\in\Z_{\ge1}$ and $\SP_x^{2}<0$ for all $x$ (case 2 in
\eqref{qHahn_weights_condition}), but can be also analytically continued
to other values of parameters. For example, \eqref{qHahn_moments} also holds
when $0<q<1$, $q^{J}$ is regarded as an independent parameter,
$\SP_x^{2}<q^{J}\SP_x^{2}<1$ and $q^{J}\SP_x^{2}>0$ for all $x$ 
(case 1 in
\eqref{qHahn_weights_condition}), and, moreover, 
$m_{|\SPB^{2}|}>qM_{|\SPB^{2}|}$ holds.
Since $0\ne \SP_x^{2}<1$, we see that the contours $\contq{\SPB^{2}}j$ 
leaving $0$ and $1$ outside
also make sense in this case.

In the homogeneous case $\SP_j\equiv \SP$, $q$-moments 
\eqref{qHahn_moments} 
of the $q$-Hahn process
were computed in \cite{Corwin2014qmunu} using duality (cf. the discussion
in \S \ref{sub:duality_from_observables}). 
An inhomogeneous generalization of this duality 
(which differs from the inhomogeneity considered in \eqref{qHahn_moments})
was also written down in that paper. Namely, returning to the notation of
Remark \ref{rmk:analityc_continuation_in_Xe}, consider the
$q$-Hahn TASEP
in which each particle $x_j$ jumps 
according to the distribution 
$\phi_{q,\smu_j,\snu_j}(\cdot\md\gap_j)$.
When the parameters $\snu_j\equiv \snu$
are homogeneous
and the $\smu_j$'s are arbitrary,
duality relations for this process were obtained in
\cite{Corwin2014qmunu}.
However, 
the corresponding evolution equations
were solved (yielding contour integral
formulas for observables)
in \cite{Corwin2014qmunu}
only when the parameters 
$\smu_j\equiv \smu$ are also homogeneous.
The remaining case 
when the $\snu_j$'s are homogeneous and the $\smu_j$'s are not
does not seem to fall under our framework. 
Corollary \ref{cor:qHahn_moments} provides another
``solvable'' case of the inhomogeneous $q$-Hahn TASEP, 
when both the $\snu_j$'s and the $\smu_j$'s are inhomogeneous,
but $\smu_j/\snu_j\equiv q^{J}=\textnormal{const}$.

\medskip

Let us now turn to the stochastic $q$-Boson system which is obtained from the
$q$-Hahn process by setting 
$J=1$ and $\SP_j^{2}=-\epsilon a_j$ with $a_j>0$, and speeding 
up the time by a factor of $\epsilon^{-1}$
(see \S \ref{ssub:qBoson}). For the nested contours in the corollary below
to make sense, we must also require that 
$\min\{a_i\}<q\cdot\max\{a_i\}$.
\begin{corollary}\label{cor:qBoson_moments}
	Under the above assumptions, the $q$-moments of the height function 
	of the $q$-Boson process (started with infinitely many particles at $1$) 
	have the form
	\begin{multline}\label{qBoson_moments}
		\E^{\textnormal{$q$-Boson}}\prod_{i=1}^{\ell}q^{\HT_{\nu}(x_i)}=
		(-1)^{\ell}q^{\frac{\ell(\ell-1)}2}
		\oint\limits_{\contq{-\mathbf{a}}1}\frac{d w_1}{2\pi\i}
		\ldots
		\oint\limits_{\contq{-\mathbf{a}}\ell}\frac{d w_\ell}{2\pi\i}
		\prod_{1\le \aind<\bind\le \ell}\frac{w_\aind-w_\bind}{w_\aind-qw_\bind}
		\\\times\prod_{i=1}^{\ell}\bigg(
		\frac{e^{(1-q)tw_i}}{w_i}
		\prod_{j=1}^{x_i-1}
		\frac{a_j}{a_j+w_i}
		\bigg),
	\end{multline}
	where $t\ge0$ is the time and
	$\ell\in\Z_{\ge1}$ and $x_1\ge x_2\ge \ldots\ge x_\ell\ge1$ are arbitrary.
	The integration contours $\contq{-\mathbf{a}}j$
	are $q$-nested around the points $\{-a_i\}$,
	and do not contain $0$.
\end{corollary}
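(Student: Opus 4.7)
The plan is to obtain \eqref{qBoson_moments} as the $\epsilon\to 0$ limit of the $q$-Hahn moment formula \eqref{qHahn_moments} with $J=1$, under the scaling $\SP_j^{2} = -\epsilon a_j$, $n=\lfloor t\epsilon^{-1}\rfloor$, combined with the rescaling of integration variables $w_i=\epsilon\tilde w_i$.

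First I would substitute these values into \eqref{qHahn_moments} and rescale the contours. Under $w_i\mapsto\epsilon\tilde w_i$, the contour $\contq{\SPB^{2}}j$ around $\{-\epsilon a_j\}$ (which excludes $0$ and $1$) becomes a $q$-nested contour around $\{-a_j\}$ excluding $0$; the point $1$ is sent to $\epsilon^{-1}$ and drops out of the picture, so the rescaled contour matches $\contq{-\mathbf{a}}j$ in the target formula. The Jacobian $\epsilon^{\ell}$ from $\prod dw_i$ exactly cancels the $\epsilon^{-\ell}$ arising from $\prod w_i^{-1}$, while the cross-ratios $(w_\aind-w_\bind)/(w_\aind-qw_\bind)$ and the prefactor $(-1)^{\ell}q^{\ell(\ell-1)/2}$ are scale-invariant. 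The factor $(1-w_i)^{-1}$ tends to $1$, the product $\prod_{j=1}^{x_i-1}(1-w_i)/(1-\SP_j^{-2}w_i) = \prod_{j=1}^{x_i-1}(1-\epsilon\tilde w_i)/(1+\tilde w_i/a_j)$ tends to $\prod_{j=1}^{x_i-1}a_j/(a_j+\tilde w_i)$, and
\begin{align*}
\left(\frac{1-q\epsilon\tilde w_i}{1-\epsilon\tilde w_i}\right)^{\!\lfloor t/\epsilon\rfloor} = \bigl(1+(1-q)\epsilon\tilde w_i + O(\epsilon^{2})\bigr)^{\lfloor t/\epsilon\rfloor}\longrightarrow e^{(1-q)t\tilde w_i}.
\end{align*}
These convergences are uniform on the limiting contours, which are compact and bounded away from the poles of the integrand (at $\tilde w_i=-a_j$, $\tilde w_i=0$, and at the resonances $\tilde w_\aind = q\tilde w_\bind$), so dominated convergence justifies passing to the limit under the $\ell$-fold contour integral.

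The remaining task is to show that the left-hand side of \eqref{qHahn_moments} under the above scaling converges to the left-hand side of \eqref{qBoson_moments}, i.e., that the $q$-moments of the $\Xih$ height function at time $\lfloor t\epsilon^{-1}\rfloor$ converge to the corresponding $q$-moments of the $q$-Boson process at time $t$. Since $q\in(0,1)$ gives $\prod_i q^{\HT_\nu(x_i)}\in(0,1]$, bounded convergence reduces this to the weak convergence of the underlying particle dynamics. As noted in \S\ref{ssub:qBoson}, under the scaling $\SP_j^{2}=-\epsilon a_j$ the one-step $q$-Hahn transition kernel $\phi_{q,q\SP_j^{2},\SP_j^{2}}$ concentrates on $\{0,1\}$ with $\Prob(1\mid m)=\epsilon a_j(1-q^{m})+O(\epsilon^{2})$, and iterating this kernel $\lfloor t\epsilon^{-1}\rfloor$ times yields convergence to the $q$-Boson semigroup; this is the main (and essentially the only) analytic obstacle, but it follows from standard generator-convergence or infinitesimal-coupling arguments for Markov chains on locally finite state spaces.
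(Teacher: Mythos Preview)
Your proof is correct and follows essentially the same route as the paper: set $J=1$, $\SP_j^{2}=-\epsilon a_j$, $n=\lfloor t\epsilon^{-1}\rfloor$ in \eqref{qHahn_moments}, rescale $w_i=\epsilon\tilde w_i$, and let $\epsilon\searrow 0$. The paper's proof is terser---it treats the convergence of the left-hand side as already established by the discussion in \S\ref{ssub:qBoson} and does not spell out the uniform-convergence argument for the integrand---but your added details are accurate and do not change the approach.
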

\begin{proof}
	Set $J=1$, $\SP_j^{2}=-\epsilon a_j$, and $n=\lfloor t \epsilon^{-1} \rfloor$ in 
	\eqref{qHahn_moments}, and change the variables as $w_i=\epsilon w'_i$. Then the integral in 
	\eqref{qHahn_moments} becomes
	\begin{align*}
		\oint\limits_{\contq{-\mathbf{a}}1}\frac{d w'_1}{2\pi\i}
		\ldots
		\oint\limits_{\contq{-\mathbf{a}}\ell}\frac{d w'_\ell}{2\pi\i}
		\prod_{1\le \aind<\bind\le \ell}\frac{w'_\aind-w'_\bind}{w'_\aind-qw'_\bind}
		\prod_{i=1}^{\ell}\left(
		\bigg(\frac{1-q \epsilon w'_i}{1- \epsilon w'_i}\bigg)^{\lfloor t \epsilon^{-1} \rfloor}
		\frac{1}{w'_i(1- \epsilon w'_i)}
		\prod_{j=1}^{x_i-1}
		\frac{1- \epsilon w'_i}{1+w'_i/a_j}
		\right).
	\end{align*}
	Sending $\epsilon\searrow0$ and renaming $w_i'$
	back to $w_i$, we 
	arrive at the desired formula.
\end{proof}
The continuous time moment formula \eqref{qBoson_moments} 
first appeared
in \cite{BorodinCorwin2011Macdonald} and \cite{BorodinCorwinSasamoto2012}. 
Analogous formulas for discrete time $q$-TASEPs 
(corresponding to discrete time $q$-Boson systems as 
in \S \ref{sub:general_j_dynamics_and_q_hahn_degeneration})
were obtained in
\cite{BorodinCorwin2013discrete},
and they can also be derived from the $q$-Hahn moment formula \eqref{qHahn_moments}.

It is worth noting that a recent paper \cite{Wang2015inhomqTASEP} contains a 
formula for transition probabilities of the inhomogeneous $q$-Boson system,
which is essentially equivalent to the 
$q$-Boson version of the 
first Plancherel isomorphism
of Theorem \ref{thm:Plancherel}.

% \begin{remark}
% 	Moment formulas of Corollaries \ref{cor:six_vertex_moments}, 
% 	\ref{cor:ASEP_moments}, \ref{cor:Xinfinity_moments}, 
% 	\ref{cor:qHahn_moments}, and \ref{cor:qBoson_moments}
% 	give rise to certain Fredholm determinantal expressions 
% 	for one-point distributions of the corresponding stochastic dynamics,
% 	which may be suitable for asymptotic analysis.
% 	We will not discuss this direction here.
% \end{remark}

% subsection moments_of_q_hahn_and_q_boson_systems (end)

% section degenerations_of_moment_formulas (end)

\providecommand{\bysame}{\leavevmode\hbox to3em{\hrulefill}\thinspace}
\providecommand{\MR}{\relax\ifhmode\unskip\space\fi MR }
% \MRhref is called by the amsart/book/proc definition of \MR.
\providecommand{\MRhref}[2]{%
  \href{http://www.ams.org/mathscinet-getitem?mr=#1}{#2}
}
\providecommand{\href}[2]{#2}

\end{document}